\newlist{Aenumerate}{enumerate}{1}
\setlist[Aenumerate]{label=(S\arabic*)}
\newcommand{\subscript}[2]{$#1 _ #2$}
\definecolor{citeblue}{RGB}{0,0,139}
\newtheorem{theorem}{Theorem}[section]
\newtheorem{coro}[theorem]{Corollary}
\newtheorem{lemma}[theorem]{Lemma}
\newtheorem{prop}[theorem]{Proposition}
\newtheorem*{op}{\bf Open Problem}
\newtheorem*{hp}{\bf Homotopy Problem for \bm{$H$}-surfaces}
\newtheorem*{pro}{\bf Problem}
\newtheorem*{gpro}{\bf Generalized Yau's Open Problem}
\theoremstyle{definition}
\newtheorem{defi}[theorem]{Definition}
\theoremstyle{remark}
\newtheorem{rmk}[theorem]{Remark}
\numberwithin{equation}{subsection}
\numberwithin{theorem}{subsection}
\newcommand{\norm}[1]{\left\Vert#1\right\Vert}
\newcommand{\abs}[1]{\left\vert#1\right\vert}
\newcommand{\inner}[1]{\left\langle#1\right\rangle}
\newcommand{\set}[1]{\left\{#1\right\}}
\newcommand{\Vol}{\mathrm{Vol}}
\newcounter{stepnum}[section]
\newcommand{\step}{%
	\par
	\noindent
	\refstepcounter{stepnum}%
	\textbf{Step \arabic{stepnum}}.\enspace\ignorespaces
}
\newcounter{claimnum}[subsection]
\newcommand{\claim}{%
	\par
	\refstepcounter{claimnum}%
	\noindent  
	\textbf{Claim \arabic{claimnum}}.\enspace\ignorespaces
}
\newcounter{casenum}[section]
\newcommand{\reduceoperator}[1]{%
  \@for\next:=#1\do{\expandafter\reduceoperator@\expandafter{\next}}%
}
\newcommand{\reduceoperator@}[1]{%
  \csletcs{normal@#1@}{#1@}%
  \csedef{#1@}{\noexpand\reduceoperator@@\csname normal@#1@\endcsname}%
}
\newcommand{\reduceoperator@@}[1]{%
  \mathop{\mathpalette\reduceoperator@@@{#1}}%
}
\newcommand{\reduceoperator@@@}[2]{%
  \ifx#1\displaystyle\textstyle\fi#2%
}
\newcommand{\p}[1]{\left(#1\right)}
\newcommand{\dbl}[1]{\widetilde{#1}}
\def\R{\mathbb R}
\def\S{\mathbb S}
\def\L{\mathscr{L}}
\def\T{\mathcal{T}}
\DeclareMathOperator{\supp}{\text{supp}\,}
\DeclareMathOperator{\re}{\text{Re}\,}
\DeclareMathOperator{\im}{\text{Im}\,}
\DeclareMathOperator{\diver}{\mathrm{div}\,}
\DeclareMathOperator{\nablap}{\nabla^\perp\hspace{-0.5mm}}
\begin{document}
	\title[Min-max theory and existence of H-spheres]{Min-max theory and existence of H-spheres \\ with arbitrary codimensions}

	\author[Gao]{Rui Gao}
	\address{School of Mathematical Sciences, Shanghai Jiao Tong University\\ 800 Dongchuan Road \\ Shanghai, 200240 \\ P. R. China}%
	\email{gaorui0416@sjtu.edu.cn}
	
	\author[Zhu]{Miaomiao Zhu}
	\address{School of Mathematical Sciences, Shanghai Jiao Tong University\\ 800 Dongchuan Road \\ Shanghai, 200240 \\ P. R. China}
	\email{mizhu@sjtu.edu.cn}
	
	\thanks{We would like to thank Professor Xin Zhou for valuable conversations and helpful comments.}
	\subjclass[2020]{49J35; 53A10; 53C42; 58E20}
	\keywords{H-surfaces; Arbitrary Codimensions; Parallel Mean Curvature; Min-Max Theory}
	\date{\today}
	\dedicatory{Dedicated to Stefan Oscar Walter Hildebrandt (1936-2015)}
	%
	
	\begin{abstract}
		We demonstrate the existence of branched immersed 2-spheres with prescribed mean curvature, with controlled Morse index and with arbitrary codimensions in closed Riemannian manifold $N$ admitting finite fundamental group, where $\pi_k(N) \neq 0$ and $k \geq 2$, for certain generic choice of prescribed mean curvature vector. Moreover, we enhance this existence result to encompass all possible choices of prescribed mean curvatures under certain Ricci curvature condition on $N$ when $\dim{N} = 3$. When $\dim{N} \geq 4$, we establish a Morse index lower bound while $N$ satisfies some isotropic curvature condition. As a consequence, we can leverage latter strengthened result to construct 2-spheres with parallel mean curvature when $N$ has positive isotropic curvature and $\dim{N} \geq 4$. At last, we partially resolve the homotopy problem concerning the existence of a representative surface with prescribed mean curvature type vector field in some given homotopy classes.
	\end{abstract}

	\maketitle
    \vskip0.5cm
	\tableofcontents
	
	\section{Introduction}\label{intro}
	\vskip15pt
	\subsection{Open Problems and Main Results}
	\
	\vskip5pt
	Surfaces with \textit{Constant Mean Curvature} (CMC) and with \textit{Prescribed Mean Curvature} (PMC) are important in mathematics, physics and biology. They arise naturally in partitioning problems, isoperimetric problems, general relativity, two phases interface problems and tissue growth etc.
	
	Around the early 1980's, Yau imposed the following
	\begin{op}[{\cite[Problem 59]{yau1982seminar}}]\label{yau conj}
		Let $h$ be a real valued function on $\R^3$. Find reasonable conditions on $h$ to insure that one can find a closed surface with prescribed genus in $\R^3$ whose mean curvature is given by $h$.
	\end{op}
	
	At almost the same time, Yau also posed an open problem for the existence of closed PMC surfaces in closed 3-manifolds\footnote{This was confirmed by communication between Yau and Zhou-Zhu, see comments in \cite[page. 312]{Zhou-Zhu2020}.}. It is natural to impose the following extension to the higher codimensional setting:
	\begin{gpro}
		Let $H \in \Gamma(\wedge^2(N)\otimes TN)$ be a tensor field on a closed $n$-dimensional Riemannian manifold $N$. Find reasonable conditions on $H$ to insure that one can construct a closed surface with prescribed genus in $N$ whose mean curvature vector is given by $H$.
	\end{gpro}
	
	The general existence for PMC surfaces (also called $\mu$-bubbles) with all codimensions is a challenging problem, see for instance comments by Gromov in a recent series of four lectures \cite[pp.116, footnote 194.]{gromov2021lectures}.
	
	In this paper, we derive a resolution to the generalized Yau’s open problem for 
	branched immersed 2-spheres in closed Riemannian manifolds having finite fundamental groups with arbitrary codimensions, with controlled Morse index and with certain generic choice of prescribed mean curvature vector field $H$.

	Now, we state our first result:
	\begin{theorem}\label{main theorem 1}
		Let $(N,h)$ be a closed $n$-dimensional ($n \geq 3$) Riemannian manifold with finite fundamental group, then given any $\omega \in C^{2}(\wedge^2(N))$ with induced mean curvature type tensor field $H \in \Gamma(\wedge^2(N)\otimes TN)$ determined by \eqref{eq: defi H by omega}, for almost every constant $\lambda > 0$, there exists a non-trivial branched immersed 2-sphere in $N$ with prescribed mean curvature vector $\lambda H$ and Morse index at most $k-2$, where $k$ is the least integer such that $\pi_k(N)\neq 0$.
	\end{theorem}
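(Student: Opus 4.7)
The plan is to realize the prescribed mean curvature 2-sphere as a critical point of the $H$-type functional
\[
E_\lambda(u)=\tfrac{1}{2}\int_{S^2}|\nabla u|^2\,d\mathrm{vol}+\lambda\int_{S^2}u^*\omega
\]
on $W^{1,2}(S^2,N)$, whose critical points are weakly branched immersed 2-spheres in $N$ with mean curvature vector $\lambda H$ (here $H$ is the tensor field determined by $\omega$ via \eqref{eq: defi H by omega}). The existence will come from a min-max construction calibrated to the nontrivial class in $\pi_k(N)$, combined with Struwe's monotonicity trick to secure Palais--Smale sequences for almost every $\lambda$, followed by a bubble-tree analysis in arbitrary codimension to extract a genuine critical point.

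\textbf{Min-max setup.} Since $\pi_j(N)=0$ for all $j<k$ and $\pi_k(N)\neq 0$, I would represent a generator of $\pi_k(N)$ by a continuous sweepout $f\colon S^{k-2}\to C^0(S^2,N)$ whose adjoint $S^{k-2}\times S^2\to N$ carries the chosen class after collapsing the boundary of the parameter family (using $\pi_k(N)\cong\pi_{k-2}(\Omega^2 N)$, which exploits the assumed finiteness of $\pi_1(N)$ through its universal cover). Define
\[
\beta(\lambda)=\inf_{\widetilde{f}\simeq f}\,\max_{t\in S^{k-2}}E_\lambda(\widetilde{f}(t)).
\]
An $\eps$-regularity/energy-gap argument (any $W^{1,2}$ map of sufficiently small energy is homotopically trivial, because $\pi_2(N)\oplus\cdots\oplus\pi_{k-1}(N)=0$) forces $\beta(\lambda)>0$ once $\lambda$ is small, so the sweepout cannot be contracted to arbitrarily low energy. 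The $(k-2)$-dimensional parameter space is what eventually yields the Morse index upper bound $k-2$ via the general min-max principle.

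\textbf{Struwe monotonicity and bubble convergence.} Because the $\omega$-contribution along any sweepout in a fixed homotopy class is uniformly bounded, $\lambda\mapsto\beta(\lambda)$ is monotone modulo an affine correction and is therefore differentiable for a.e.~$\lambda>0$. At each point of differentiability, a diagonal/comparison argument produces a Palais--Smale sequence $\{u_j\}\subset W^{1,2}(S^2,N)$ at level $\beta(\lambda)$ with uniformly bounded Dirichlet energy and with Morse index at most $k-2$. A Sacks--Uhlenbeck/Struwe-style bubble-tree analysis, adapted to $E_\lambda$ in arbitrary codimension, then extracts a weak limit $u_\infty\colon S^2\to N$ together with finitely many bubble maps $\{\omega_i\}$ at finitely many concentration points, each of $u_\infty,\omega_i$ solving the PMC equation with mean curvature vector $\lambda H$, together with the energy quantization $E_\lambda(u_j)\to E_\lambda(u_\infty)+\sum_i E_\lambda(\omega_i)$. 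Since $\beta(\lambda)>0$, at least one of these pieces is a nontrivial branched immersed $\lambda H$-sphere, and upper semicontinuity of the Morse index under bubble convergence transfers the bound $k-2$ to this limit.

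\textbf{Main obstacle.} The hard part will be the bubble-tree analysis for $E_\lambda$ in arbitrary codimension: the $\omega$-term is neither coercive nor sign-definite, so without Struwe's genericity in $\lambda$ there is no a priori $W^{1,2}$-control on Palais--Smale sequences, and even with this genericity one must rule out ``spurious'' concentration, verify no-neck energy identities for the $\omega$-integral along necks, and preserve the Morse index count through bubbling. This forces a careful adaptation of the classical harmonic-map bubble analysis (where $\omega\equiv 0$) to the PMC setting; the finite fundamental group hypothesis will intervene at the point where one needs to lift the sweepout to the universal cover to activate the higher homotopy groups.
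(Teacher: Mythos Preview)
Your outline diverges from the paper's approach and contains two genuine gaps.

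First, the paper does \emph{not} work directly with Palais--Smale sequences for $E_\lambda$. It introduces the Sacks--Uhlenbeck--Moore perturbation
\[
E^{\lambda\omega}_\alpha(u)=\tfrac{1}{2}\int_{\S^2}(1+|\nabla u|^2)^\alpha\,dV_g+\lambda\int_{\S^2}u^*\omega,\qquad \alpha>1,
\]
which satisfies Palais--Smale on $W^{1,2\alpha}(\S^2,N)$ and therefore produces \emph{genuine smooth critical points} $u_\alpha$ via min-max over a $(k-2)$-parameter family. Struwe's monotonicity in $\lambda$ is then used not to manufacture bounded PS sequences for $E_\lambda$, but to obtain an $\alpha$-energy bound $E_\alpha(u_\alpha)\le C(\lambda)$ uniform in $\alpha$ for a.e.\ $\lambda$. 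The Morse index bound $\le k-2$ is proved for each $u_\alpha$ by a homotopical deformation argument on sweepouts; this argument requires $u_\alpha$ to be an actual critical point so that the second variation, Jacobi operator and spectral decomposition make sense. Your claim that a PS sequence for $E_\lambda$ can be chosen ``with Morse index at most $k-2$'' and that this passes to the limit by upper semicontinuity is not meaningful as stated, because a PS sequence has no Morse index. This is precisely why the paper regularizes first and studies the limit $\alpha\searrow 1$ of critical points.

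Second, you have misidentified where the finite fundamental group enters. It plays no role in setting up the sweepout (the identification $\pi_k(N)\cong\pi_{k-2}(\Omega^2 N)$ is pure adjunction). It enters only in the blow-up analysis as $\alpha\searrow 1$: because $E^{\lambda\omega}_\alpha$ is not conformally invariant, a bubble $w$ a priori solves the equation with mean curvature $\lambda H/\mu$, where $\mu=\liminf_{\alpha\searrow 1}\lambda_\alpha^{2-2\alpha}\in[1,\infty)$ is the ``blow-up spectrum''; one must show $\mu=1$. The paper proves that the neck between the body map and the bubble converges to a geodesic in $N$, and that if $\mathrm{Ind}_{E^{\lambda\omega}_\alpha}(u_\alpha)$ is uniformly bounded then this geodesic has bounded Morse index as a geodesic. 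Gromov's estimate (length $\le C(\mathrm{Ind}+1)$ for geodesics in manifolds with finite $\pi_1$) then forces the neck to be of finite length, which is equivalent to $\mu=1$ and the energy identity. Without the $\alpha$-regularization there are no critical points to which this Morse-index-on-necks mechanism applies, and without finite $\pi_1$ the geodesic length bound is unavailable; your proposal offers no substitute for either ingredient.
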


	By imposing some curvature constraint in the target $N$, we can improve the Morse index estimates to get the following:
	
	\begin{theorem}\label{main theorem 2}
		Let $(N,h)$ be a closed $n$-dimensional Riemannian manifold with finite fundamental group and given $\omega \in C^{2}(\wedge^2(N))$ with induced mean curvature type  vector field $H \in \Gamma(\wedge^2(N)\otimes TN)$  determined by \eqref{eq: defi H by omega}, the following holds:
		\begin{enumerate}
			\item \label{main theorem 2 part 1} If $\dim{N} = 3$ with $\pi_3(N) \neq 0$ and the Ricci curvature of $N$ satisfies
			\begin{equation}\label{eq:intro ricci condi}
				|H|^2h + \frac{\mathrm{Ric}_h}{2} > |\nabla H|h,
			\end{equation}
			then there exists a non-trivial branched immersed 2-sphere in $N$ with prescribed mean curvature vector field $H$ determined by \eqref{eq: defi H by omega} and Morse index exactly $1$.
			\item \label{main theorem 2 part 2} If $\dim{N} \geq 4$ and $N$ has positive isotropic curvature (PIC), namely, on any totally isotropic two plane $\sigma \subset TN\otimes \mathbb{C}$ the complex sectional curvature fulfils $\mathcal{K}(\sigma) > 0$, then, for $\mathcal{K}$ being the isotropic curvature and for almost every constant $\lambda > 0$ satisfying  
			\begin{equation}\label{eq:intro isotropic}
				\mathcal{K} - \lambda|\nabla H|h > 0, 
			\end{equation}
			there exists a non-trivial branched immersed 2-sphere in $N$ with prescribed mean curvature vector field $\lambda H$ and Morse index satisfying $[(n-2)/2] \leq \mathrm{Index} \leq k-2$, where $k$ is the least integer such that $\pi_k(N)\neq 0$.
		\end{enumerate}
	\end{theorem}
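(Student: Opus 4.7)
The plan is to use Theorem~\ref{main theorem 1} as the existence engine and then to sharpen each of its two conclusions independently. The underlying variational principle is the Hildebrandt-type PMC functional
\[ E_{\lambda H}(u) \;=\; \frac{1}{2}\int_{S^2} |du|^2 \, dA + \lambda \int_{S^2} u^{*}\omega, \]
whose critical points are branched conformal immersions $u : S^2 \to N$ with mean curvature vector $\lambda H$. Theorem~\ref{main theorem 1} supplies, for a.e.\ $\lambda>0$, a nontrivial min-max critical point $u_\lambda$ of Morse index at most $k-2$.

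For Part (1), the assumption $\pi_3(N)\neq 0$ in the 3-dimensional finite-$\pi_1$ setting forces $k=3$, so Theorem~\ref{main theorem 1} already produces, for a.e.\ $\lambda$ close to $1$, a PMC sphere $u_\lambda$ with $\mathrm{Index}(u_\lambda)\leq 1$. Taking $\lambda_i\to 1$ inside this full-measure set, I would run a Sacks--Uhlenbeck-style bubble analysis on $\{u_{\lambda_i}\}$ adapted to the PMC functional. The pointwise Ricci pinching \eqref{eq:intro ricci condi} is designed to enforce strict positivity of the second variation of $E_H$ on any nonzero deformation of a would-be limit bubble, thereby ruling out the existence of stable $\lambda=1$ PMC 2-spheres that could carry escaping energy. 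Consequently $u_{\lambda_i}\to u_1$ strongly (modulo reparametrization) to a nontrivial PMC 2-sphere. Because $u_1$ inherits the mountain-pass character of the approximants, it cannot be a local minimizer, so $\mathrm{Index}(u_1)\geq 1$; combined with the upper bound, this pins the index to exactly $1$.

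For Part (2), the upper bound is immediate from Theorem~\ref{main theorem 1}, so the real task is the lower bound $[(n-2)/2]$. I would adapt the Micallef--Moore index estimate for harmonic 2-spheres to the PMC setting. At a critical point $u : S^2 \to N$ of $E_{\lambda H}$ one linearizes along complex-valued sections $V \in \Gamma(u^{*}TN \otimes \mathbb{C})$. By Grothendieck's theorem this complex bundle splits holomorphically as a direct sum of line bundles of total degree zero, and a counting argument produces an $[(n-2)/2]$-dimensional space of $\bar\partial$-holomorphic sections $V$ pointwise orthogonal to $\partial u$; for each such $V$, the plane $\mathrm{span}(V,\partial u)\subset TN\otimes \mathbb{C}$ is totally isotropic. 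Evaluating $\delta^{2} E_{\lambda H}$ on such $V$, the Dirichlet kinetic term cancels against part of the curvature contribution via the PMC Euler--Lagrange equation, leaving a pure curvature integrand $-\mathcal{K}$ plus a perturbative correction controlled pointwise by $\lambda|\nabla H|$. The hypothesis \eqref{eq:intro isotropic} renders the net integrand strictly negative, so these directions span an $[(n-2)/2]$-dimensional negative cone in the second variation.

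The principal obstacle lies in Part (2): the Micallef--Moore identity is tuned to the harmonic equation via delicate pointwise cancellations, and extending it to the PMC setting demands a careful accounting of all $\omega$-dependent correction terms so that their total contribution can be absorbed into $\lambda|\nabla H|$, exactly as \eqref{eq:intro isotropic} allows. A secondary difficulty in Part (1) is the global exclusion of stable limit bubbles under \eqref{eq:intro ricci condi}: one must integrate the pointwise Ricci inequality against appropriate complex-valued test sections on each candidate bubble and combine this with the topological non-triviality of any nontrivial concentration.
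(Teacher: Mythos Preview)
Your treatment of Part~(2) is essentially the paper's argument: Grothendieck-split the complexified normal bundle, produce an $[(n-2)/2]$-dimensional space of holomorphic isotropic sections, and observe that on such sections the second variation reduces to an isotropic-curvature term plus a $\nabla H$ correction, strictly negative under \eqref{eq:intro isotropic}. (One minor point: the kinetic and first-order $H$ contributions vanish because the test sections are $\bar\partial$-holomorphic, not by a cancellation against the Euler--Lagrange equation.)

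Part~(1), however, has a real gap. The energy bound furnished by Theorem~\ref{main theorem 1} depends on $\lambda$ through the slope of $-\mathcal W_{\alpha,\lambda}/\lambda$ in Struwe's monotonicity trick (Lemma~\ref{lem monotone}, Proposition~\ref{prop energy bound}), and there is no reason it stays bounded as $\lambda_i\to 1$. Without a uniform energy bound there is no compactness to begin with, and ``ruling out stable bubbles'' does not address this. The paper supplies the missing bound by a mechanism absent from your proposal: an Ejiri--Micallef-type index comparison (Proposition~\ref{prop:index comparison}) shows that the index of the scalar form
\[
B_\omega(u)(f,f)=\int_{\S^2}|\nabla f|^2 - |\nabla u|^2\Bigl(|H|^2+\tfrac12\,\mathrm{Ric}(\bm n,\bm n)-|\nabla H|\Bigr)f^2\,dV_g
\]
is at most $\mathrm{Ind}_{E^\omega}(u)\le 1$; then a Li--Yau conformal balancing argument (Proposition~\ref{prop:uniform energy bound}) converts \eqref{eq:intro ricci condi} into an a priori energy bound on every $(\lambda_j H)$-sphere of index at most one, uniform in $j$. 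This is what makes the limit $\lambda_j\to 1$ possible. Separately, your claim that the limit has index $\ge 1$ because it ``inherits the mountain-pass character'' is not rigorous---min-max limits can degenerate to local minima. The paper instead tests $B_\omega$ against a constant $f$, which is strictly negative under \eqref{eq:intro ricci condi}, so $\mathrm{Ind}(B_\omega)\ge 1$ and hence $\mathrm{Ind}_{E^\omega}(u)\ge 1$ directly from the comparison.
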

	
	Consequently, for the existence of surfaces with parallel mean curvature vector field (see Definition \ref{defi parallel} below)  we have
	\begin{theorem}\label{main coro}
		Let $(N,h)$ be a closed $n$-dimensional ($n\geq 4$) Riemannian manifold with positive isotropic curvature and finite fundamental group, then given any $\omega \in C^{2}(\wedge^2(N))$ with induced parallel mean curvature type  vector field $H \in \Gamma(\wedge^2(N)\otimes TN)$ determined by \eqref{eq: defi H by omega}, for almost every constant $\lambda > 0$, there exists a non-trivial branched immersed 2-sphere in $N$ with prescribed parallel mean curvature vector field $\lambda H$ and with Morse index satisfying $[(n-2)/2] \leq \mathrm{Index} \leq k-2$, where $k$ is the least integer such that $\pi_k(N)\neq 0$.
	\end{theorem}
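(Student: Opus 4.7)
The plan is to deduce Theorem \ref{main coro} as a direct corollary of Theorem \ref{main theorem 2}\eqref{main theorem 2 part 2} by exploiting the parallelism hypothesis on $H$. First I would unpack the definition of a parallel mean curvature type vector field, namely that $\nabla H \equiv 0$ in the relevant ambient sense. Once this is in hand, the quantity $|\nabla H|$ that appears in the pinching inequality \eqref{eq:intro isotropic} of Theorem \ref{main theorem 2} vanishes identically, so the auxiliary curvature condition reduces to the pointwise positivity $\mathcal{K} > 0$.

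Next I would observe that the positive isotropic curvature hypothesis on $(N,h)$ precisely guarantees $\mathcal{K}(\sigma) > 0$ on every totally isotropic two-plane $\sigma \subset TN \otimes \mathbb{C}$. Combined with $|\nabla H| \equiv 0$, this means the inequality
\[
\mathcal{K} - \lambda |\nabla H| h > 0
\]
is trivially satisfied for \emph{every} constant $\lambda > 0$, not merely a special range. Consequently, Theorem \ref{main theorem 2}\eqref{main theorem 2 part 2} becomes applicable without any restriction on $\lambda$, and yields, for almost every $\lambda > 0$, a non-trivial branched immersed 2-sphere whose mean curvature vector equals $\lambda H$ and whose Morse index lies in $[\,[(n-2)/2],\, k-2\,]$.

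Finally, I would verify that the branched immersed 2-sphere produced above actually has parallel mean curvature vector \emph{as a section of its normal bundle}, so that the conclusion of the theorem matches its statement. Since the ambient tensor $H$ satisfies $\nabla H = 0$ and the sphere's mean curvature vector coincides with the restriction $\lambda H|_{\Sigma}$, the ambient covariant derivative of $\lambda H$ along any tangent direction to $\Sigma$ vanishes; in particular its normal projection, which is the induced covariant derivative in the normal bundle, vanishes as well. Thus the mean curvature vector of the 2-sphere is parallel in the normal bundle, completing the deduction.

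I do not expect any substantive obstacle here, since the result is structurally a corollary: the parallelism hypothesis is tailor-made to remove the $|\nabla H|$ term that obstructs a free choice of $\lambda$ in Theorem \ref{main theorem 2}\eqref{main theorem 2 part 2}. The only point warranting care is the compatibility between ``parallel'' as stated in Definition \ref{defi parallel} (for the ambient tensor $H$) and the induced parallelism of the mean curvature vector of $\Sigma$ in its own normal bundle, but this follows from a direct orthogonal decomposition of $\nabla H$ along $\Sigma$.
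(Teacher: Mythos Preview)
Your proposal is correct and matches the paper's own treatment: the paper presents Theorem~\ref{main coro} explicitly as a consequence of Theorem~\ref{main theorem 2}\eqref{main theorem 2 part 2} (the sentence ``Consequently, for the existence of surfaces with parallel mean curvature vector field \ldots we have'' introduces it), with the point being exactly that $\nabla H\equiv 0$ collapses condition~\eqref{eq:intro isotropic} to the PIC hypothesis $\mathcal K>0$, which then holds for every $\lambda>0$. One small remark: your final paragraph's justification that the resulting sphere has parallel mean curvature \emph{in the normal bundle} is a bit too quick as written---differentiating $\vec H_\Sigma = H(e_1,e_2)$ along $\Sigma$ produces, besides $(\nabla H)(e_1,e_2)$, terms of the form $H(\nabla e_1,e_2)+H(e_1,\nabla e_2)$ involving derivatives of the moving frame, and these do not vanish from $\nabla H=0$ alone; one needs the total antisymmetry of $H$ coming from $d\omega$ to see that the normal projections of these extra terms cancel. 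The paper does not spell this out either, instead asserting it in the remark following Definition~\ref{defi parallel} with a reference to Hoffman, so your level of detail is already at least what the paper provides.
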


     \begin{rmk}
        Indeed, stronger existence results than Theorem \ref{main theorem 1}, Theorem \ref{main theorem 2} and Theorem \ref{main coro} still hold true, namely, we can remove the assumption that $N$ has finite fundamental group, see step \ref{outline 3} in Subsection \ref{section: outline} and Proposition \ref{non constant limit} below for more details. Considering the completeness of our min-max theory and technical issues arising from the compactness in Section \ref{section 4}, we added the assumption that $N$ has finite fundamental group in these Theorems.
    \end{rmk}
	In general, CMC 2-spheres in Riemannian 3-manifold need not being embedded, see comments by Meeks-Mira-P\'{e}rez-Ros \cite{Meeks-Mira-Perez-Ros2013}. In fact, for certain ambient Berger 3-spheres with positive sectional curvature and for some specific choice of $H$, Torralbo \cite{torralbo2010rotationally} proved that every immersed CMC 2-sphere with  mean curvature $H$ always exists self-intersecting points. See also comments by Zhou \cite[pp. 2700]{Zhouicm}. In a recent work \cite[Theorem 1.1, Theorem 1.2, Remark 1.3]{Cheng2020ExistenceOC}, Cheng-Zhou settled the existences of branched immersed CMC 2-spheres in 3-manifolds with constant $H>0$ by developing a min-max theory for a fourth-order perturbation of the action functional. In the present paper, we develop a new min-max theory for a perturbed functional of Sacks-Uhlenbeck-Moore type, it achieves a natural extension to the more general setting of PMC 2-spheres in Riemannian manifolds with arbitrary codimensions.
	\begin{rmk}
		Theorem~\ref{main theorem 1} and Theorem \ref{main theorem 2} can be viewed as a natural generalization of the existence results about minimal 2-spheres by Sacks-Uhlenbeck \cite{sacks1981existence} and Micallef-Moore \cite{Micallef-Moore-1988}. As an application of Morse index estimates of minimal 2-spheres described as in Theorem \ref{main theorem 2},  Micallef-Moore \cite[pp.201 Main Theorem]{Micallef-Moore-1988} proved that any closed $n$-dimensional ($n \geq 4$) PIC manifold admits $\pi_i(N) = 0$ for $2 \leq i \leq [n/2]$. In particular, if $N$ is simply connected, then $N$ is homeomorphic to a sphere.
	\end{rmk}
	In 1964, Eells-Sampson proposed a fundamental homotopy problem for the existence of harmonic maps:
	\begin{pro}[{\cite[Section 6, pp.133]{eells1964harmonic}}\footnote{See also \cite[Section (1.3), p.1 and (2.3), p.72]{eells1995two} and \cite[Section 3.2, pp.49]{struwe2023bubbling}}]
		Is it possible for every homotopy class of smooth maps $u: M \rightarrow N$ between closed Riemannian manifolds $M$ and $N$ have a  harmonic representative? 
	\end{pro}
	Eells-Sampson \cite{eells1964harmonic} answered this question when $M$ is an oriented closed $m$-dimensional Riemannian manifold and $N$ has non-positive sectional curvature. For $M$ a closed surface and $N$ a general target with $\pi_2(N) = 0$, Sacks-Uhlenbeck \cite{sacks1981existence} proved that there exists a minimizing harmonic map in every homotopy class of the mapping space $C^0(M,N)$ and moreover, the generating set of $\pi_2(N,p)$ can be represented by area minimizing 2-spheres by viewing $\pi_2(N,p)$ as a $\mathbb{Z}(\pi_1(N,p))$-module for some base point $p \in N$. Transforming into the $H$-surface setting, it is natural to impose the following:
	\begin{hp}
		Can every homotopy class of smooth map $u$ from a closed surface $M$ into a closed Riemannian manifold $N$ be represented by a $H$-surface?
	\end{hp}
	
	In this paper, we answer this homotopy problem by imposing 
	\begin{equation}\label{eq:omega < 1}
		\|\omega\|_{L^\infty(N)} : = \max_{p \in N} \| \omega(p)\|_{T_pN} < 1.
	\end{equation}
	\begin{theorem}\label{thm: homotopy1}
		Let $N$ be a closed Riemannian manifold with $\pi_2(N) = 0$ and $\norm{\omega}_{L^\infty(N)}$ $ < 1$, then there exists a minimizing $H$-surface $u:M\rightarrow N$ in every homotopy class of maps in $C^0(M, N).$
	\end{theorem}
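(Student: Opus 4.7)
The plan is to minimize the $H$-surface functional by combining direct minimization with a Sacks--Uhlenbeck $\alpha$-perturbation, using $\pi_2(N)=0$ to rule out bubble formation. Consider
\[
E_H(u) \;=\; E(u) \;+\; \int_M u^*\omega, \qquad E(u) \;=\; \tfrac{1}{2}\int_M |du|^2\,dA,
\]
on $W^{1,2}(M,N)$. The pointwise bound $|u^*\omega(\partial_x,\partial_y)| \leq \|\omega\|_{L^\infty(N)}|u_x||u_y| \leq \tfrac{1}{2}\|\omega\|_{L^\infty(N)}|du|^2$ in conformal coordinates yields
\[
(1-\|\omega\|_{L^\infty(N)})E(u) \;\leq\; E_H(u) \;\leq\; (1+\|\omega\|_{L^\infty(N)})E(u),
\]
so the hypothesis $\|\omega\|_{L^\infty(N)}<1$ makes $E_H$ coercive and bounded below on every component of $W^{1,2}(M,N)$. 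Set $m_H = \inf\{E_H(u) : u \in [u_0]\}$, which is a finite real number.

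To promote a minimizing sequence to genuine critical points, I would introduce the Sacks--Uhlenbeck regularization
\[
E_{H,\alpha}(u) \;=\; \int_M\bigl[(1+|du|^2)^\alpha-1\bigr]\,dA \;+\; \int_M u^*\omega, \qquad \alpha>1,
\]
which inherits coercivity and satisfies the Palais--Smale condition on $W^{1,2\alpha}(M,N)$, where Rellich--Kondrachov provides strong $C^{0}$ convergence and hence closure of homotopy classes under weak limits. The direct method then yields a minimizer $u_\alpha \in [u_0]$ of $E_{H,\alpha}$ solving the perturbed $H$-surface Euler--Lagrange equation; these maps are smooth by antisymmetric-potential regularity theory, since $\int u^*\omega$ contributes an antisymmetric nonlinearity.

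Letting $\alpha\downarrow 1$, the bound $m_{H,\alpha}\leq E_{H,\alpha}(u_0)\to E_H(u_0)$ gives uniform control on $E(u_\alpha)$. A Sacks--Uhlenbeck concentration--compactness dichotomy, adapted to $E_H$ via an $\varepsilon$-regularity lemma for the perturbed equation, produces, along a subsequence, finitely many concentration points $p_1,\dots,p_\ell\in M$, a smooth $H$-surface weak limit $u_\infty:M\to N$, and non-constant bubbles $\phi_j:S^2\to N$ obtained by rescaling at each $p_j$; at the blow-up scale $\omega$ is effectively frozen and the Dirichlet term dominates, so each $\phi_j$ is a genuine $H$-sphere. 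Because $\pi_2(N)=0$, each bubble $\phi_j$ is null-homotopic, and the standard bubble-tree identity forces $[u_\infty]=[u_0]$. The energy decomposition
\[
\lim_{\alpha\downarrow 1} m_{H,\alpha} \;=\; E_H(u_\infty) + \sum_j E_H(\phi_j) \;\geq\; E_H(u_\infty) \;\geq\; m_H,
\]
together with $\limsup_{\alpha\downarrow 1} m_{H,\alpha}\leq m_H$, forces every bubble to be trivial, so $u_\alpha\to u_\infty$ strongly in $C^{0}$ and $u_\infty$ realizes $m_H$ in $[u_0]$.

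The main obstacle is the $\varepsilon$-regularity and removable-singularity theorem for the perturbed $H$-surface system: the antisymmetric $\omega$-term is critical in conformal dimension two, and adapting the Sacks--Uhlenbeck Moser iteration requires absorbing this term, which is possible precisely because $\|\omega\|_{L^\infty(N)}<1$. This smallness hypothesis underpins both the coercivity used throughout and the bubble-neck energy-gap estimate needed to run the concentration--compactness argument; once these are in place, the homotopy argument concludes as sketched.
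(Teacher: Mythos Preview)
Your overall strategy is sound and is a legitimate alternative to the paper's argument, but the route is genuinely different. The paper does \emph{not} pass to a bubble tree and invoke an energy identity to kill bubbles a posteriori. Instead it uses the classical Sacks--Uhlenbeck \emph{surgery} (replacement) argument: near each tentative concentration point $x_i$ it builds a competitor $\hat u_\alpha$ that equals $u_\alpha$ outside $B(x_i,\rho)$ and equals the limit $u$ on $B(x_i,\rho/2)$ (interpolated via $\exp$); the hypothesis $\pi_2(N)=0$ ensures $\hat u_\alpha\simeq u_\alpha$, and minimality of $u_\alpha$ then gives a \emph{local} bound $\limsup_\alpha E(u_\alpha,B(x_i,\rho))\le C\rho^2/(1-\|\omega\|_{L^\infty})$, which for small $\rho$ drops below the $\varepsilon$-regularity threshold and rules out concentration \emph{before} any bubble forms. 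This avoids the full energy-identity machinery entirely; your approach instead relies on that heavier apparatus (which the paper does develop in Section~\ref{section 4}, but for other purposes).

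Two points in your sketch deserve tightening. First, the identity $\lim_{\alpha\downarrow1} m_{H,\alpha}=E_H(u_\infty)+\sum_j E_H(\phi_j)$ is not what the paper proves: the generalized energy identity (Theorem~\ref{generalized ide}) carries coefficients $\mu_{ij}^2$ on the Dirichlet parts of the bubbles, and one must separately track the conformally invariant $\omega$-integral; you only need the \emph{inequality} $\liminf m_{H,\alpha}\ge E_H(u_\infty)+c\sum_j E(\phi_j)$ with $c=1-\|\omega\|_{L^\infty}>0$, which is easier. Second, your line ``at the blow-up scale $\omega$ is effectively frozen \dots so each $\phi_j$ is a genuine $H$-sphere'' is imprecise: the rescaled limit solves the $H$-equation with an extra factor $\mu^{-1}$ (see \eqref{eq:e-l of bubble}), and $\mu=1$ requires the no-neck analysis (here available because minimizers have Morse index zero, so Theorem~\ref{thm convergence} applies). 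Neither issue is fatal for your argument---the bubbles are nontrivial $S^2$-maps carrying positive energy regardless---but the paper's surgery argument sidesteps both subtleties and is correspondingly shorter and more self-contained.
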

	Note that the assumption $\pi_2(N) = 0$ is necessary as claimed in \cite{sacks1981existence}. Moreover, by considering $ \pi _2(N,p) $ as a $ \mathbb { Z } ( \pi _1(N,p)) $-module, whose action orbits represent the free homotopy classes of maps from $ \S ^2 $ into $ N $, we obtain:
	\begin{theorem}\label{thm: homotopy2}
		Let $N$ be a closed Riemannian manifold and $\norm{\omega}_{L^\infty(N)} < 1$. Then, each generating set for $\pi_2(N,p)$ acted by $\pi_1(N,p$) can be represented by minimizing $H$-spheres. In particular, if $N$ is simply connected, then every set of generators of $H_2(N,\mathbb{Z})$ can be represented by minimizing $H$-spheres.
	\end{theorem}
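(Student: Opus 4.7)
The plan is to adapt the Sacks--Uhlenbeck strategy for representing a generating set of $\pi_2$ by minimal 2-spheres to the prescribed mean curvature setting, using the $\alpha$-perturbation machinery developed earlier in this paper. The essential analytic input is that the hypothesis $\norm{\omega}_{L^\infty(N)} < 1$ yields the coercivity bound
\[
E_\omega(u) \;\geq\; \frac{1-\norm{\omega}_{L^\infty(N)}}{2}\int_{\mathbb{S}^2}|\nabla u|^2\,dx,
\]
so that bounds on the $H$-energy translate into bounds on the Dirichlet energy and the weak-compactness framework underlying Theorem~\ref{thm: homotopy1} becomes available without modification.

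First, I would fix the basepoint $p\in N$, regard $\pi_2(N,p)$ as a $\mathbb{Z}[\pi_1(N,p)]$-module, and pick any generating set $\{\eta_i\}_{i\in I}$. Each $\pi_1$-orbit $\pi_1(N,p)\cdot\eta_i$ is in bijection with a free homotopy class $[\eta_i]_f\subset C^0(\mathbb{S}^2,N)$. For each $i$ and each $\alpha>1$, I would minimize the perturbed $\alpha$-energy $E_\omega^\alpha$ over $[\eta_i]_f$; coercivity together with the direct method in $W^{1,2\alpha}(\mathbb{S}^2,N)$ produces a minimizer $u_i^\alpha\in[\eta_i]_f$, using the compact Sobolev embedding into $C^0$ (available because $2\alpha>2$) to preserve free homotopy type under weak limits.

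The next stage is to let $\alpha\to 1^+$. By the bubble-tree convergence established earlier in the paper --- the same mechanism driving Theorem~\ref{thm: homotopy1} --- a subsequence of $u_i^\alpha$ converges, after conformal rescalings at concentration points, to a base map $u_i^\infty:\mathbb{S}^2\to N$ together with finitely many nontrivial bubbles $v_i^{(1)},\dots,v_i^{(k_i)}$; each of these pieces is itself a minimizing $H$-sphere in its own free homotopy class. A standard attaching-path construction then produces classes $[\gamma_i^{(j)}]\in\pi_1(N,p)$ with
\[
\eta_i \;=\; [u_i^\infty] \;+\; \sum_{j=1}^{k_i}[\gamma_i^{(j)}]\cdot[v_i^{(j)}] \qquad\text{in }\pi_2(N,p).
\]
Assembling the bubbles $v_i^{(j)}$ together with the base maps $u_i^\infty$ over all $i\in I$ therefore yields a collection of minimizing $H$-spheres whose $\mathbb{Z}[\pi_1(N,p)]$-orbits contain every generator $\eta_i$, and hence generate $\pi_2(N,p)$. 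When $\pi_1(N,p)=0$, Hurewicz's theorem identifies $\pi_2(N)\cong H_2(N,\mathbb{Z})$ and the final assertion follows immediately.

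I expect the main obstacle to be verifying the homotopy-class identity displayed above, i.e., a no-neck-in-homotopy statement for the $\alpha$-limit of $E_\omega^\alpha$-minimizers: during bubble convergence one must certify that no homotopy information is absorbed into the degenerating neck annuli connecting base to bubbles. This reduces to combining $\varepsilon$-regularity for $H$-surfaces with an annular energy-decay estimate tailored to the $\alpha$-$H$-energy, both of which rely crucially on $\norm{\omega}_{L^\infty(N)}<1$ in order to absorb the non-conformal correction into the Dirichlet part of the functional. All other ingredients --- existence of $\alpha$-minimizers in each free homotopy class, lower semicontinuity of $E_\omega$ on weakly converging sequences of bounded Dirichlet energy, and finiteness of the bubble tree --- are formally identical to the harmonic setting once coercivity is in place.
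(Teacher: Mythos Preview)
Your route is genuinely different from the paper's, and it carries an unjustified step.

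The paper does \emph{not} minimize class by class and then analyze bubble trees. Instead it argues by contradiction: let $G\subset\pi_2(N)$ be the subgroup generated by all free homotopy classes that already contain a minimizing $H$-sphere, and suppose $G\neq\pi_2(N)$. One then minimizes $E^\omega_\alpha$ over the union $\mathcal C_1$ of \emph{all} classes not lying in $G$. At a concentration point $x_1$ the paper performs a surgery at the $\alpha$-level (before passing to the limit): it cuts $u_\alpha$ along $\partial B(x_1,r)$ into two spheres $s_\alpha$ and $w_\alpha$ with $[u_\alpha]\subset[s_\alpha]+[w_\alpha]$, and a direct energy comparison using $\|\omega\|_{L^\infty}<1$ and the gap $E(w_\alpha)\geq\varepsilon_0^2$ shows that both pieces have $E^\omega_\alpha$ strictly below $\inf_{\mathcal C_1}E^\omega_\alpha$. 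Hence $[s_\alpha],[w_\alpha]\in G$, forcing $[u_\alpha]\in G$, a contradiction. No neck analysis, no homotopy tracking through bubble trees, and no claim about minimality of bubbles is needed.

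Your proposal, by contrast, asserts that ``each of these pieces is itself a minimizing $H$-sphere in its own free homotopy class.'' This is the real gap, and you do not flag it. A bubble arising from a sequence of minimizers of $E^\omega_\alpha$ in a \emph{fixed} class $[\eta_i]$ is an $H$-sphere, but nothing in the paper's compactness theory (Theorems~\ref{generalized ide}--\ref{thm convergence}) says it minimizes $E^\omega$ in its own class. To prove this you would need a gluing/replacement argument: if a bubble $v$ were not minimizing, replace it by a better representative and attach it back to the rest to beat $\inf_{[\eta_i]}E^\omega$. Carrying this out rigorously is essentially the surgery step above --- at which point the paper's global-minimization-over-$\mathcal C_1$ trick is cleaner, because minimizing over all bad classes at once makes ``strictly less energy $\Rightarrow$ already in $G$'' immediate. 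Your identified obstacle (no-neck-in-homotopy) is secondary; the minimality of the pieces is the step that actually fails without further work. Also note that the mechanism behind Theorem~\ref{thm: homotopy1} is precisely that bubbles are \emph{excluded} by surgery when $\pi_2(N)=0$, not that a bubble tree is analyzed, so invoking it as your model for the convergence here is misleading.
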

	
	Theorem \ref{thm: homotopy1} and Theorem \ref{thm: homotopy2} extend the existence theory of minimizing harmonic maps with arbitrary codimensions in \cite{sacks1981existence} to the $H$-surface setting. Moreover, when $\dim(N) = 3$, by Micallef-White's descriptions \cite{micallef1995structure} on local behavior of branch points for minimizing almost conformal $H$-surfaces, the $H$-surfaces obtained in Theorem \ref{thm: homotopy1} and Theorem \ref{thm: homotopy2} are immersed and free from branch points if they are almost conformal.
	\
	\vskip5pt
	\subsection{Backgrounds}
	\
	\vskip5pt
	The searching for CMC surfaces and PMC surfaces is a long standing problem. There has been extensive and substantial works dedicated to this problem. The construction of CMC surfaces with boundary started at Plateau's experiment with soap films and soap bubbles enclosed by various contours, and the natural existence questions arose from these experiments are called Plateau problem. The classical Plateau problem for disk-type minimal surfaces in $\R^3$, that is, with mean curvature $H=0$, was solved independently by Douglas \cite{douglas1931solution} and Rad\'{o} \cite{rado1930plateau}.
	In 1954, Heinz \cite{Heinz1954berDE} investigated the Plateau problem for CMC surfaces in $\R^3$ and obtained an existence result provided that $|H| < \frac{1}{8}(\sqrt{17} - 1)$. Werner \cite{werner1957problem} optimized the configuration of Heinz to improve the existence result for $|H| < \frac{1}{2}$. Later, Hildebrandt \cite{hildebradt1970} proved the existence of CMC surface for $|H| \leq 1$ and subsequently extended this result to variable mean curvature $H$ satisfying $||H||_{L^\infty} \leq 1$ in \cite{hildebrandt1969randwertprobleme}. It is worth mentioning that the upper bound $|H|\leq 1$ is optimal in the sense that there is no solution when $|H| > 1$ for the Plateau contour $\Gamma = \{(\cos{\theta},\sin{\theta},0)\,:\, 0\leq \theta < 2\pi\}\subset \R^3$, see comments by Heinz \cite[pp. 250]{Heinznonexistence1969} and Jost \cite[pp.45]{jost-memor-hildebradt2016}. For further developments of the Plateau problem for CMC surfaces, see e.g. Wente \cite{wente1969}, Hildebrandt-Kaul \cite{Hildebrandt1972two}, Steffen \cite{Steffen1976}. 
	Br{e}zis-Coron \cite{Brezis1984} and Struwe \cite{Struwe1985,Struwe1986} proved the existence of at least two solution for the Plateau problem of CMC surfaces which confirmed the Rellich's conjecture, see also Steffen \cite{steffen1986nonuniqueness}. For existence of PMC surfaces with Plateau boundary, see \cite{hildebrandt1969randwertprobleme,gulliver1971plateau,gulliver1972existence,Steffen1976,duzaar1993existence}. The study for the existence of CMC surfaces or PMC surfaces with free boundary initiated from Struwe \cite{Struwe-1988} by applying heat flow method, and also studied by \cite{burger2008area,li2021min,cheng2022existence}. The Dirichlet problem for the minimal surface equation was studied by Jenkins-Serrin \cite{jekins-serrin1966} for domain contained in $\R^2$ and Spruck \cite{spruck1972} considered the same problem when mean curvature $H > 0$, see also \cite{rey1991,Laurent-Rosenberg2009,collin-Rosenberg2010,Mazet-Rosenberg2011,Folha-Rosenberg2012} for recent developments.
	
	For closed CMC surfaces, Hopf \cite{Hopf1951} proved that the round sphere is the only CMC surface with genus zero in $\R^3$. Later, Barbosa-Do Carmo \cite{barbosa2012stability} showed that the standard sphere is the only closed stable CMC hypersurfaces in $\R^{n+1}$.  Wente \cite{wente1986counterexample} constructed an immersed CMC torus in $\R^3$ which also gives a counterexample of Hopf's conjecture \cite{Hopf1983}. Moreover, Kapouleas \cite{Kapouleas1990,kapouleas1991compact,kapouleas1992constant} constructed a series of immersed closed CMC surfaces in $\R^3$ with arbitrary genus, see also Briener-Kapouleas \cite{breiner2021complete} for higher dimensional CMC hypersurfaces setting. The Almgren-Pitts min-max theory, which was introduced in \cite{Almgren1962, Almgren1965, Pitts1981, Schoen-Simon1981}, is a significant breakthrough in the field of constructing closed minimal hypersurfaces. In recent years, this theory has been further developed and refined, starting with the confirmation of the Willmore conjecture by Marques-Neves \cite{Marques-Neves2014}, followed by the resolution of Yau's conjecture \cite[Problem 88]{yau1982seminar} on the existence of infinitely many closed minimal surfaces in any closed 3-manifold, also by Marques-Neves \cite{Marques-Neves2017} under the assumption of positive Ricci curvature. Later, Zhou \cite{zhou2020multiplicity} confirmed the multiplicity one conjecture of Marques-Neves \cite[Section 1.2]{Marques-Neves2021adv} and Song \cite{song2023existence}  proved the Yau's conjecture \cite[Problem 88]{yau1982seminar} in the general case where the dimension of ambient manifold is relaxed from 3 to 7. Additionally, there have been several recent works on this topic, such as \cite{Chodosh-Mantoulidis2020,chodosh2021minimal,wangzhichao2022,Liyangyang2023jdg}.  In the context of $H\neq0$, closed CMC hypersurfaces in an ambient manifold were initially constructed by minimizing the area functional among all volume-preserving variations. For a more detailed understanding, we refer to \cite{Almgren1976, Morgan2003, ros2001isoperimetric}. However, this approach provides little information about the mean curvature value or the topology of CMC surfaces in ambient manifolds. There is another series of deformation approaches to construct CMC hypersurfaces, one can generate foliations by closed CMC hypersurfaces with small mean curvature $H$ from a closed non-degenerate minimal surface. Moreover, Ye \cite{Ye1991}, Mahmoudi-Mazzeo-Pacard \cite{Mahmoudi-Mazzeo-Pacard2006}, and others have constructed foliations by closed CMC hypersurfaces from minimal submanifolds of strictly lower dimensions, see also \cite{Pacard2005,Pacard-Xu2009}. The obtained CMC hypersurfaces by this method have a mean curvature that tends to be either very small or very large. Besides, a degree theory established by Rosenberg-Smith \cite{Rosenberg-Smith2020} constructed many important examples of CMC hypersurface when mean curvature is greater than some constant. Zhou-Zhu's  construction \cite{Zhou-Zhu2019} on the min-max method has led to the establishment of a comprehensive existence theory for closed CMC hypersurfaces in closed Riemannian manifolds of dimension between 3 and 7.  Furthermore, Zhou-Zhu \cite{Zhou-Zhu2020} extended their min-max theory for CMC hypersurfaces into the PMC setting, allowing for any prescribed mean curvature $h$ lying in an open dense subset of smooth function space. This was later generalized to higher dimensions by Dey \cite{Dey2019}, allowing a singular set of codimension 7, see also \cite{mazurowski2022cmc,Pacard-Sunpre} for more recent development of min-max theory developed from \cite{Zhou-Zhu2019}. Very recently, Mazurowski-Zhou \cite{Mazurowski-Zhou} introduced the half-volume spectrum $\set{\dbl{w}_p}_{p \in N}$ which also satisfies a Weyl law and they developed an Almgren-Pitts type min-max theory for finding closed  CMC hypersurfaces associated to the half-volume spectrum in \cite{mazurowski2024infinitelyhalfvolumeconstantmean}, hence showed that there exists infinitely many geometrically distinct closed CMC hypersurfaces in closed manifold $M^n$ of dimension $3 \leq n \leq 5$.
	
	The existence theory of CMC and PMC surfaces with prescribed topology in general closed Riemannian manifold is less understood.
	For minimal 2-spheres, Simon-Smith \cite{Smith1982} explored the existence of embedded minimal 2-spheres in any Riemannian 3-sphere, utilizing the Almgren-Pitts min-max theory \cite{Almgren1962}. 
	The existence of branched immersed minimal 2-spheres in general Riemannian manifold with arbitrary codimensions was firstly studied in the pioneering work by Sacks-Uhlenbeck \cite{sacks1981existence} and then explored in greater depth by Micallef-Moore \cite{Micallef-Moore-1988}, who obtained branched immersed minimal 2-spheres with controlled Morse index. 
	For CMC 2-spheres in homogeneous 3-spaces, Meeks-Mira-P\'{e}rez-Ros proved the existence and uniqueness of immersed CMC 2-spheres with any prescribed mean curvature in homogeneous 3-spheres \cite{Meeks-Mira-Perez-Ros2013} and later in homogeneous 3-manifolds \cite{Meeks-Mira-Perez-Ros2020}.
	
	For CMC 2-spheres in closed Riemannian 3-manifolds, a recent breakthrough
	was made by Cheng-Zhou \cite[Theorem 1.1]{Cheng2020ExistenceOC} who established the existence of branched immersed CMC 2-spheres in arbitrary Riemannian 3-spheres $(\mathbb{S}^3,h)$ for almost every positive constant mean curvature $H>0$ and with Morse index at most 1. Moreover, if $(\mathbb{S}^3,h)$ has positive Ricci curvature, then the existence result can be enhanced to encompass any choice of constant mean curvature $H>0$ and guarantee the Morse index exactly 1 \cite[Theorem 1.2]{Cheng2020ExistenceOC}. 
    \begin{rmk}
        Here, we need to point out that, due to the third De Rham cohomology group  $H_{dR}^3(\S^3) \cong \R \neq 0$, by our choices of functional $E^\omega$ we can not obtain the existence of branched immersed CMC 2-spheres in $3$-manifolds, see Theorem \ref{main coro}. However, we believe that, if we modify our functional $E^\omega$ as following 
        \begin{equation*}
            E_H = \frac{1}{2}\int_M |\nabla u |^2 dV_g + H\cdot V(f_u)
        \end{equation*}
         and adapt a similar Sacks-Uhlenbeck type perturbation
        \begin{equation*}
            E_{\alpha,H} := \frac{1}{2}\int_M \p{ 1 + |\nabla u |^2 }^\alpha dV_g + H\cdot V(f_u)
        \end{equation*}
        to study the existence of branched immersed 2-spheres with constant mean curvature in Riemannian 3-spheres $(\S^3,h)$, utilizing a similar proof as Theorem \ref{main theorem 1} and Theorem \ref{main theorem 2} we will provide an alternative proof for Cheng-Zhou's results \cite[Theorem 1.1, Theorem 1.2]{Cheng2020ExistenceOC}. Here, $H>0$ is some positive constant and  $V(f_u)$ is the enclosed volume of the map $u : (\S^2,g) \rightarrow (\S^3,h)$,  see \cite[Section 2]{Struwe-1988} and \cite[Section 2.1]{Cheng2020ExistenceOC} for more detailed descriptions about $E_H$.
    \end{rmk}
	
	Very recently, Sarnataro-Stryker \cite{Lorenzo-Stryker} constructed an embedded PMC 2-sphere in the round 3-sphere for generic set of prescribed mean curvature functions $h$ with $L^\infty$ norm at most 0.547 and obtained an embedded 2-sphere with constant mean curvature $H$ when the metric on $(\mathbb{S}^3,h)$ is sufficiently close to the round metric and $H$ is below some threshold.
	
	As a higher codimensional generalization of CMC surfaces, the study of surfaces with parallel mean curvature vectors (see definition \ref{defi parallel} below) can be traced back to approximately 1940s, see e.g. Schouten-Struik \cite{Schouten-Struik1938}, Coburn \cite{Coburn1939} and Wong \cite{Wong1946}. From the early 1970s, numerous studies have been conducted on the rigidity theory of such surfaces in homogeneous ambient manifolds, such as, the characterization of the spherical immersion of  4-dimensional space form by Ferus \cite{Ferus1971}, 
	rigidity results for submanifolds with parallel mean curvature vector in spaces of constant sectional curvature by
	Yau \cite{yau1974}  which involves the classification of surfaces with parallel mean curvature vector in 4-dimensional real space form, see also independent and related works by Hoffman \cite{Hoffman-thesis1972,hoffman1973jdg} and Chen \cite{chen-Chern72}, a structure theorem into higher dimensional space form by Alencar-do Carmo-Tribuzy \cite{Alencar-doCarmo-Tribuzy2010}. And for more classification results see Kenmotsu-Zhou \cite{Kenmotsu-Zhou2000} in 2-dimensional complex space form, Fetcu \cite{Fetcu2012} in general complex space forms and Fectu-Rosenberg \cite{Fetcu-Rosenberg2015} in Sasakian space forms.
	
	In contrast with the theory of CMC and PMC  hypersurfaces, the existence of closed surfaces with parallel mean curvature vector, or more generally, with prescribed mean curvature vector, admitting prescribed topology and controlled Morse index in general $n$-dimensional compact Riemannian manifold is not widely understood. In this paper, our existence theory of 
	CMC spheres  (Theorem \ref{main coro}), PMC spheres (Theorem \ref{main theorem 1} and Theorem \ref{main theorem 2}), and partial resolution of homotopy problem (Theorem \ref{thm: homotopy1} and Theorem \ref{thm: homotopy2}) with arbitrary codimensions serves a supplement in such area.

	\subsection{Settings}
	\
	\vskip5pt
	Before describing our main ideas about proving main results, we recall some basic notions about $H$-surfaces with arbitrary codimensions, and introduce some notations, see Gr\"{u}ter \cite[Section 2]{gruter1984conformally}, Jost \cite[Section 1.2]{jost1991two} and Rivi\`{e}re \cite[Section III]{riviere2012conformally} for more details.
	
	Let $(M,g)$ be a closed surface and $(N,h)$ a closed Riemannian manifold of dimension $n$ that is isometrically embedded into some $\mathbb{R}^K$.
	Take a $C^2$ 2-form $\omega$ on $N$ and consider the functional
	\begin{equation}\label{el2}
		E^\omega(u) = \frac{1}{2}\int_M |\nabla u|^2 dV_g + \int_M u^* \omega
	\end{equation}
	acting on maps $u\in C^2(M, N)$. It is easy to see that the functional $E^\omega$ is conformally invariant. Surprisingly enough, Gr\"{u}ter \cite[Theorem 1]{gruter1984conformally} showed that any coercive conformally invariant functional with quadratic growth has the form of  \eqref{el2} for some appropriately chosen metric on $M$ and $\omega$ on $N$. Critical points of the functional $E^\omega$ are called $H$-surfaces and one can verify that $H$-surface satisfies the Euler-Lagrange equation
	\begin{equation}\label{eq H-surface intro}
		\Delta u + A(u)(\nabla u, \nabla u) = H(u)(\nablap u, \nabla u).
	\end{equation}
	Here, $A$ is the second fundamental form of embedding $N\subset \mathbb{R}^K$, and the mean curvature type  vector field $H \in \Gamma(\wedge^2(N)\otimes TN)$ is determined by  
	\begin{equation}\label{eq: defi H by omega}
		\forall\, U, V, W\in \Gamma(TN),\quad d\omega(U, V, W):= \inner{ U, H(V, W)}_{TN} = U\cdot H(V,W)
	\end{equation}
	where $``\cdot"$ is the standard scalar product on $\R^K$. We write
	\begin{equation*}
		H(u)(\nablap u,\nabla u) := H(u_{x^1},u_{x^2}) - H(u_{x^2},u_{x^1}) = 2H(u_{x^1},u_{x^2})
	\end{equation*}
	for notation simplicity. If a solution $u$ to \eqref{eq H-surface intro} is conformal, that is,
	\begin{equation}\label{eq: H surface 2}
		|u_{x^1}|^2 - |u_{x^2}|^2 = \inner{u_{x^1},u_{x^2}}_{u^*(TN)} = 0,
	\end{equation}
	then $H(u)$ is the mean curvature vector of the surface determined by $u:M \rightarrow N$, see \cite[pp.10]{jost1991two}. In particular, due to the uniqueness of conformal structure on $ \S^2 $, all $H$-spheres are conformal automatically. Naturally, we assume $H$ is non-degenerate in the sense that $H(u)(u_{x^1},u_{x^2}) \not \equiv 0$, otherwise, the problem is reduced to a harmonic map setting. Then, we define $\nabla H \in \Gamma(\wedge^3(N)\otimes TN)$ to be the convariant differential of $H$ with respect to vector field component of $H$, more precisely, $\nabla H$ is determined by
	\begin{equation*}
		\inner{ \nabla H(U,V), W }_{TN} = (\nabla_W H)(U,V) \in \Gamma(T(N)) \quad \text{for all } U,V, W \in TN.
	\end{equation*}
	In particular, when $\dim N = 3$, the 3-form $d\omega$ defined on $N$ can be identified with a function on $N$, that is, there exists $H \in C^1(N,\R)$ such that $d\omega = H dz^1\wedge dz^2 \wedge dz^3$ where $(z^1,z^2,z^3)$ is some local coordinates of $N$. In this case, the equation \eqref{eq H-surface intro} can be written as
	\begin{equation}\label{eq: H-surface N = 3}
		\Delta u + A(u)(\nabla u, \nabla u) = 2H(u) (u_{x^1}\wedge u_{x^2}).
	\end{equation}
	A solution to \eqref{eq: H surface 2} and \eqref{eq: H-surface N = 3} determines a PMC surface, which is a CMC surface when $H$ is constant.
	
	A natural extension of CMC surfaces in higher codimensions is the concept of surfaces with parallel mean curvature vector field. For detailed description of surfaces with parallel mean curvature vector field in some homogeneous spaces, see e.g. classical works \cite{Hoffman-thesis1972}, \cite[Section 1.]{hoffman1973jdg}, \cite[pp.655]{chen-Chern72}. For surfaces in general Riemannian manifolds with prescribed parallel mean curvature vectors and with arbitrary codimensions, based on our previous settings it is natural to define the following:
	\begin{defi}[Parallel $H$-surfaces] \label{defi parallel}
		We call a $C^2$ map $u : M \rightarrow N$ a \textit{parallel $H$-surface} if $u$ is a solution to \eqref{eq H-surface intro}  with the mean curvature type  vector field $H$ satisfying $\nabla H \equiv 0$.
	\end{defi}
	From the perspective of submanifold theory, it is important to note that the mean curvature vector $H$ is a section of the normal bundle and parallelism is referred to the mean curvature vector $H$ is parallel in the normal bundle. We would like to mention that the mean curvature of the parallel conformal $H$-surface, as described in Definition \ref{defi parallel}, is parallel in the usual sense (see e.g. \cite[Section 1.]{hoffman1973jdg}).
	\vskip5pt
	\subsection{Basic Ideas of Seeking \texorpdfstring{{$H$}}{Lg}-spheres}
	\
	\vskip5pt
	
	Seeking critical points of $ E^ \omega $ by directly applying methods from calculus of variations is a challenging task due to several technical difficulties:
	\begin{enumerate}
		\item The conformally invariant functional $E^\omega$ does not satisfy the Palais-Smale condition.
		\item  Due to the appearance of the term  involving $\omega$ in $E^\omega$, some classical methods developed for harmonic maps can not be applied. 
	\end{enumerate}
	
	To this end, we consider a  perturbation of $E^\omega$, denoted by $$E^\omega_{\alpha}:W^{1,2\alpha}(M,N)\rightarrow \R,$$
	as follows, called the {\bf Sacks-Uhlenbeck-Moore approximation:}
	\begin{equation*}
		E^\omega_{\alpha}(u) = \frac{1}{2}\int_M \p{1 + |\nabla u|^2}^\alpha dV_g + \int_{M} u^*\omega
	\end{equation*}
	where $\alpha > 1$ and $\omega$ is a $C^2$ 2-form on $N$. 
	
	In his book \cite[Section 4.5]{moore2017}, Moore wrote down the above perturbed functional and pointed out that it satisfies Palais-Smale condition and indicated the regularity of critical points for $E^\omega_{\alpha}$. 
    Sacks-Uhlenbeck type perturbations have been effectively employed in various other settings, for instance, by utilizing these approximations, Cheng-Zhou \cite{Cheng-Zhou-curve} established the existence of curves with constant geodesic curvature in Riemannian 2-spheres, and Cheng \cite{cheng2022existence} demonstrated the existence of free boundary disks with constant mean curvature in $\R^3$.
	
	In this paper, we demonstrate that this perturbed functional $E^\omega_{\alpha}$ is a feasible one to derive the existence of branched immersed $H$-spheres in Riemannian manifolds with arbitrary codimensions. More precisely, we develop a min-max theory for the functional $E^\omega_{\alpha}$, then deduce a compactness theory for non-trivial critical points of $E^\omega_{\alpha}$ as $\alpha \searrow 1$, and finally achieve the desired existence results.
	\
	\vskip5pt
	\subsection{Outline of Proof}\label{section: outline}
	\
	\vskip5pt
	
	Perturbing the functional $E^\omega$ also brings in many new challenges. To this end, we develop a method to construct sequences of non-trivial critical points $\{u_{\alpha_j}\}_{j \in \mathbb N}$ of $E^\omega_{\alpha_j}$ with uniformly bounded $\alpha_j$-energy and uniform Morse index upper bound. Also, we implement a convergence scheme to produce non-constant $H$-spheres and we related the existence of prescribed mean curvature sphere with the continuity of $\alpha$-energy $E_\alpha$ as $\alpha \searrow 1$.
	
	\begin{enumerate}
		\item Firstly, thanks to the fact that the functional $E^\omega_{\alpha}$ satisfies the Palais-Smale condition on Banach manifold $W^{1,2\alpha}(\S^2,N)$, inspired by a monotonicity technique by Struwe \cite[Section 4]{Struwe-1988} and an argument by
		Colding-Minicozzi \cite[Section 1.6]{Colding-Minicozzi2008b}, for almost every $\lambda \in \R_+$, we exploit the notion of \textit{Width} with higher dimensional parameter spaces in our setting to construct a sequence of non-constant critical points $\{u_{\alpha_j}\}$ of the functional $E^{\lambda\omega}_{\alpha_j}$ with uniformly bounded $\alpha_j$-energy $E_{\alpha_j}(u_{\alpha_j})$. 
		Here, we utilized the monotonicity technique in \cite{Struwe-1988} with respect to the parameter $\lambda \in \R_+$ to obtain some $\alpha$-energy upper bound which depends on the choice of $\lambda$ but is uniform for the sequence $\{u_{\alpha_j}\}$ as $\alpha_j \searrow 1$, see Proposition \ref{prop energy bound}. To establish the Morse index upper bound for our approximated sequence, we draw inspiration from the Morse index upper bound estimates within the framework of Almgren-Pitts min-max theory as explored by Marques-Neves \cite{Marques-Neves2016}, Song \cite{Song-2023-Morse}, and Li \cite{Liyangyang-2023}. Additionally, we refer to the works of Cheng-Zhou \cite{Cheng2020ExistenceOC} and Cheng \cite{cheng2022existence} for insights into a newly devised min–max theory setting. Leveraging a homotopical deformation approach for the min-max sequences of sweepouts, we construct a sequence $\{u_{\alpha_j}\}_{j \in \mathbb N}$ with prescribed mean curvature type vector field $\lambda H$ that simultaneously satisfies the desired Morse index upper bound and uniformly $\alpha_j$-energy bound, see Theorem \ref{prop deformation} and Theorem \ref{thm:Morse index k-2} for detailed descriptions.
  
		\item \label{outline 3} Next, we investigate the limit $u_{\alpha_j}$ as $\alpha_j \searrow 1$. Standard tools for blow-up analysis for $\alpha$-harmonic maps developed in \cite{sacks1981existence} still hold in our case. We also have an alternative: 
		\begin{itemize}
			\item If Dirichlet energy $E(u_{\alpha_j})$ is nowhere concentrated as $\alpha_j \searrow 1$, then $u_{\alpha_j}$ converges strongly in $C^\infty(\S^2, N)$ to some $(\lambda H)$-sphere with same Morse index upper bound $k-2$.
			
			\item If Dirichlet energy $E(u_{\alpha_j})$ concentrates somewhere $x_1 \in \S^2$, then the rescaled sequence $v_{\alpha_j}(x) :=u_{\alpha_j}(x_1 + \lambda_{\alpha_j} x)$ for some $\lambda_{\alpha_j} \searrow 0$ also converges smoothly to a limit $v \in W^{1,2}(\S^2,N)$. But due to the absence of conformally invariance for functional $E^\omega_{\alpha}$, $v$ solves a new equation
			\begin{equation*}
				\Delta v  + A(v)\left(\nabla v, \nabla v\right) = \frac{1}{\mu} \lambda H(v)(\nablap v, \nabla v)
			\end{equation*}
			where $\mu = \liminf_{\alpha_j\searrow 1} \lambda_{\alpha_j}^{2 - 2\alpha_j} \in [1,\infty)$. We call it the \textit{blow-up spectrum} of $v_{\alpha_j}$ which characterizes the competition between the extent of energy dissipation $|\nabla u_{\alpha_j}| \nearrow \infty$ and the speed of $(\alpha_j - 1) \searrow 0$ as $j \rightarrow \infty$ during blow-up process. For $\alpha$-harmonic maps, such type of quantity was introduced by Li-Wang \cite{li2010} to investigate the generalized energy identity. The second challenge in our paper is to establish $\mu = 1$. An intriguing observation is that $\mu = 1$ if and only if there is no energy loss during the blow-up process for a general sequence $\{ u_{\alpha_j} \}_{\alpha_j \searrow 1}$ of critical points of $E^\omega_{\alpha}$ around each energy concentration point. Through a meticulous neck analysis and leveraging Gromov's \cite{Gromov1978} estimates on the length of geodesics by its Morse index, we demonstrate that the energy identity holds, hence $\mu = 1$, for sequences of min-max type critical points $\{ u_{\alpha_j} \}_{\alpha_j \searrow 1}$, see Section \ref{section 4} for more details. Therefore, when energy concentrates at a particular point, a non-constant $(\lambda H)$-sphere with Morse index bounded from above by $k-2$ is also obtained.
		\end{itemize}
		
		In general, as a consequence of Theorem \ref{thm convergence}, $u_{\alpha_j}$ converges to some $(\lambda H)$-sphere $u$ weakly in $W^{1,2}(M,N)$ and strongly in $C^2(\S^2\backslash\set{x_1,x_2,\cdots, x_l},N)$ for some $l \geq 0$. Surprisingly, we observe that the weak limit $u$ of $u_{\alpha_j}$ is always non-constant, as shown in Proposition \ref{non constant limit}. This suggests the possibility of a second non-trivial $H$-sphere being produced when the bubbling phenomenon occurs. As a result, the proof of Theorem \ref{main theorem 1} is completed in both scenario.
	\end{enumerate}
	
	To prove the part \eqref{main theorem 2 part 1} of Theorem \ref{main theorem 2}, 
	we firstly modify the calculation of Ejiri-Micallef \cite{Ejiri-Micallef2008}, which is also applied in the proof of \cite[Theorem 1.2]{Cheng2020ExistenceOC} in CMC setting, to obtain a new bi-linear form whose Morse index is controlled by $E^\omega$. Then, we combine the Ricci curvature assumption on target $N$ with the conformal balance argument (see for instance Li-Yau \cite{Li-Yau1982}) to get a uniform energy upper bound and exclude the possibility of existence of non-trivial stable $H$-sphere. Then Theorem \ref{main theorem 2} when $\dim (N) = 3$ follows by a convergence argument for sequences of $H$-spheres. For the part \eqref{main theorem 2 part 2} of Theorem \ref{main theorem 2}, by adapting the calculation and counting argument of \cite[Theorem 1]{Micallef-Moore-1988}, we can get the Morse index lower bound of non-constant $H$-sphere.
	
	The proofs of Theorem \ref{thm: homotopy1} and Theorem \ref{thm: homotopy2} follow by adapting the scheme of Sacks-Uhlenbeck's \cite{sacks1981existence} resolutions on homotopy problem of harmonic map setting and an observation that the assumption \eqref{eq:omega < 1} implies the lower boundedness of functional $E^\omega$.
	\ 
	\vskip5pt
	\subsection{Organizations}
	\
	\vskip5pt
    The paper is structured as follows:

    In Section \ref{section 2}, we provide the necessary notations and discuss the variational properties of the perturbed functional, $E^\omega_{\alpha}:W^{1,2\alpha}(M,N) \rightarrow \R$. Note that the results presented in this section are applicable to general closed Riemann surfaces $(M,g)$.

    Section \ref{sec: 3 non-constant u alpha} is dedicated to the construction of a sequence of non-trivial critical points $\{u_{\alpha_j}\}_{j \in \mathbb{N}}$ of $E^\omega_{\alpha_j}$ with uniformly bounded $\alpha_j$-energy $E_{\alpha_j}$ and a Morse index bounded from above by $k-2$. 
	 
    Section \ref{section 4} focuses on investigating the limits of $u_{\alpha_j}$ as $\alpha_j \searrow 1$. More precisely, we establish a generalized energy identity and unveil a direct convergence relationship between our blow-up spectrum and the energy identity for a general sequence $u_{\alpha_j}$ from a closed Riemann surface $M$ to a compact $n$-manifold $N$.
	
	In Section \ref{section 5}, we prove the main results Theorem \ref{main theorem 1}, Theorem \ref{main theorem 2}, Theorem \ref{thm: homotopy1} and Theorem \ref{thm: homotopy2}. 
	\vskip2cm
	\section{Variational Properties of Perturbed Functional \texorpdfstring{$E^\omega_{\alpha}$}{Lg}}\label{section 2}
	\vskip10pt
	In this section, we shall review some notations and variational properties of the perturbed functional $E^\omega_{\alpha}$. It is worth noting that the restriction of $M=\S^2$ is not a necessary condition for the results to hold and consequences presented in this section can apply to general Riemann surfaces $(M,g)$.
	\subsection{Some Preliminaries}\label{section 2.1}
	\
	\vskip5pt
	Recall that we assumed that $N$ is isometrically embedded into $\R^K$ for some $K \in \mathbb{N}$. In order to utilize the coordinate of $\mathbb{R}^K$ to locate the point of $N$, we choose a tubular neighborhood $\dbl{N}$ of $N$ equipped with canonical Euclidean coordinate $(y^1, y^2,\dots, y^K)$ in $\R^K$. 
	Furthermore, $\dbl{N}$ can be chosen to be close to $N$ enough such that $\omega$ can be extended to a $C^2$ 2-form defined on $\dbl{N}$ which is also denoted by $\omega$. Hence, utilizing this local coordinate of $\dbl{N}$,  we can write
	\begin{equation*}
		\omega = \omega_{ij}\,dy^i\wedge dy^j
	\end{equation*}
	and
	\begin{equation*}
		H =  H^i_{kl}dy^k\wedge dy^l\otimes \frac{\partial}{\partial y^i} \in \Gamma\left(\wedge^2(N)\otimes TN\right).
	\end{equation*}
	Similar to the extension procedure of $\omega$, we also extend $H$ to a small neighborhood $\dbl{N}$ of $N$ and using the coordinate of $\dbl{N}$ to represent $H$. In the following content of paper, we will always use coordinate of $\dbl{N}$ to represent $\omega$ and $H$ defined on $N$ unless giving other specific convention. In particular, taking $U = \frac{\partial}{\partial y^k}$, $V = \frac{\partial}{\partial y^i}$ and $W = \frac{\partial}{\partial y^j}$, by the correspondence \eqref{eq: defi H by omega} we can write 
	\begin{equation*}
		d\omega \left(\frac{\partial}{\partial y^k}, \frac{\partial}{\partial y^i}, \frac{\partial}{\partial y^j}\right) = \frac{\partial \omega_{ij}}{\partial y^k} + \frac{\partial \omega_{jk}}{\partial y^i} + \frac{\partial \omega_{ki}}{\partial y^j} := H^k_{ij}\,.
	\end{equation*}
	Then coefficients of $H$ are anti-symmetric  in the indices $i,j$ and $k$, i.e. 
	\begin{equation}\label{eq:H anti-symmetric}
		H^k_{ij} = - H^k_{ji} \quad \text{and}\quad  H^k_{ij} = - H^i_{kj}\,. 
	\end{equation}
	Let $u:(M,g) \rightarrow (N,h)$ be a critical point of $E^\omega_{\alpha}$ in  $W^{1,2\alpha}(M,N)$, which is called the $\alpha$-$H$-surface.  By Sobolev embedding
	\begin{equation*}
		W^{1,2\alpha}(M,\R^K) \hookrightarrow C^0(M,\R^K),
	\end{equation*}
	the mapping space  $W^{1,2\alpha}(M,N)$ is a smooth, closed, infinite dimensional submanifold of $W^{1,2\alpha}(M,\R^K)$. For each $u \in W^{1,2\alpha}(M,N)$, the tangent space $\mathcal{T}_u$ of Banach manifold $W^{1,2\alpha}(M,N)$ at $u$ can be identified with 
	\begin{equation*}
		\mathcal{T}_u := \left\{V \in W^{1,2\alpha}(M,\R^K)\,:\, V(x) \in T_{u(x)}N \, \text{ for all }x\in M\right\} 
	\end{equation*}
	which is a closed subspace of $W^{1,2\alpha}(M,\R^K)$. 

	\subsection{First and Second Variation of Perturbed Functional \texorpdfstring{{$E^\omega_{\alpha}$}}{Lg}} \label{section 2.2}
	\
	\vskip5pt
	The differential of $E_{\alpha}^\omega$ at $u$ or the first variation of $E_{\alpha}^\omega$ at $u$ , denoted by $\delta E^\omega_\alpha(u)\,:\, \mathcal{T}_u \rightarrow \R$, is defined as following 
	\begin{equation*}
		\delta E^\omega_{\alpha}(u)(V) := \left.\frac{d}{dt}\right|_{t = 0} E^\omega_{\alpha}(u_{V,\,t}) \quad \text{for all } u_{V,\,t} = \exp_{u(x)}{t V(x)} \text{ and } V \in \mathcal{T}_u.
	\end{equation*}
	Moreover, if $\delta E^\omega_{\alpha}(u) = 0$, then the Hessian of $E^\omega_{\alpha}$ at $u$ or the second variation of  $E^\omega_{\alpha}$ at $u$, denoted by $\delta^2 E^\omega_{\alpha}(u)\, :\, \mathcal{T}_u \times \mathcal{T}_u \rightarrow \R$ is defined by 
	\begin{equation*}
		\delta^2 E^\omega_{\alpha}(u)(V,V) := \left.\frac{d^2}{dt}\right|_{t = 0} E^\omega_{\alpha}(u_{V,\,t}) \quad \text{for all } u_{V,\,t} = \exp_{u(x)}{t V(x)} \text{ and } V \in \mathcal{T}_u.
	\end{equation*}
	And by parallelogram law, for any $V,W\in \mathcal{T}_u$ we have
	\begin{equation*}
		\delta^2 E^\omega_{\alpha}(u)(V,W) = \frac{1}{4}\p{\delta^2 E^\omega_{\alpha}(u)(V + W,V + W) - \delta^2 E^\omega_{\alpha}(u)(V - W,V - W)}
	\end{equation*}

	To begin, we compute the first  and second variations of $E^\omega_{\alpha}$. Although we will carry out the computation in choosing a local version, i.e. for compactly supported variations, the choice turns out to be not crucial and the outcome makes sense globally.
	\begin{lemma}\label{variation formula}
		Let $u \in W^{1,2\alpha}(M,N)$ and $V\in \mathcal{T}_u$. Then, the first variation formula of $E^\omega_\alpha$ is 
		\begin{align*}
			\delta E^\omega_{\alpha}(u)(V) &= \int_M \alpha \p{1 + \abs{\nabla u}^2}^{\alpha - 1}\inner{ \nabla u, \nabla V} dV_g\\
			&\quad + \int_M \inner{ H(\nablap u, \nabla u), V} dV_g,
		\end{align*}
		where $\mathcal{P}_u : T_uW^{1,2\alpha}(M,\R^K) \cong W^{1,2\alpha}(M,\R^K) \rightarrow \mathcal{T}_u$ is the orthogonal projection, and  the second variation formula of $E^\omega_{\alpha}$ is 
		\begin{align*}
			\delta^2 E^\omega_{\alpha}(u)(V,V) &= \alpha \int_M \p{1 + \abs{\nabla u}^2}^{\alpha - 1} \Big( \inner{ \nabla V, \nabla V } - R(V,\nabla u, V, \nabla u) \Big) d V_g\\
			& \quad + 2\alpha(\alpha - 1)\int_M \p{1 + \abs{\nabla u}^2}^{\alpha - 2}\langle \nabla u, \nabla V\rangle^2 dV_g\\
			&\quad + 2 \int_M \langle H(\nablap u, \nabla V), V\rangle dV_g \\
			& \quad + \int_M \langle \nabla_V H(\nablap u, \nabla u), V\rangle dV_g,    
		\end{align*}
		where $A$ is  the second fundamental form of embedding $N \hookrightarrow \R^K$.
	\end{lemma}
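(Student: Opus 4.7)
The proof is a direct variational computation with the geodesic variation $u_{V,t}(x) := \exp_{u(x)}(tV(x))$, which satisfies $\partial_t u_{V,t}|_{t=0} = V$ and $\nabla_t \partial_t u_{V,t} \equiv 0$ along every $t$-curve. I will split $E^\omega_{\alpha}$ into its $\alpha$-energy part $\frac{1}{2}\int_M(1+|\nabla u|^2)^\alpha\,dV_g$ and its $\omega$-part $\int_M u^*\omega$, and differentiate the two summands separately, using normal coordinates on $N$ and the isothermal coordinates on $M$ from Section \ref{section 2.1} whenever convenient.

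For the $\alpha$-energy part, the torsion-freeness of the pullback connection gives $\partial_t|\nabla u_t|^2 = 2\langle\nabla u_t,\nabla\dot u_t\rangle$, so the chain rule applied to $\phi\mapsto\tfrac{1}{2}\phi^\alpha$ with $\phi = 1+|\nabla u_t|^2$ immediately produces the first term of $\delta E^\omega_{\alpha}(u)(V)$. Differentiating once more, the factor $\phi^{\alpha-1}$ supplies the cross term $2\alpha(\alpha-1)\phi^{\alpha-2}\langle\nabla u,\nabla V\rangle^2$, while the Ricci identity
\begin{equation*}
\nabla_t\nabla_\alpha\dot u_t = \nabla_\alpha\nabla_t\dot u_t + R(\dot u_t,\partial_\alpha u_t)\dot u_t,
\end{equation*}
combined with $\nabla_t\dot u_t = 0$, yields $\partial_t^2|\nabla u_t|^2|_{t=0} = 2(|\nabla V|^2 - R(V,\nabla u,V,\nabla u))$. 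This accounts for the first two lines of the second-variation formula.

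For the $\omega$-part, Cartan's magic formula applied on the family $F(x,t)=u_{V,t}(x)$ together with Stokes' theorem on the closed surface $M$ produces
\begin{equation*}
\frac{d}{dt}\Big|_{t=0}\int_M u_t^*\omega \;=\; \int_M u^*(\iota_V d\omega) \;=\; \int_M \langle H(\nabla^\perp u,\nabla u),V\rangle\,dV_g,
\end{equation*}
where the second equality uses the defining relation \eqref{eq: defi H by omega} and the convention $H(\nabla^\perp u,\nabla u)=2H(u_{x^1},u_{x^2})$. To obtain the second variation I differentiate this integrand once more along $u_{V,t}$ and split the result into three sources: (i) the base-point variation of the coefficients $H^k_{ij}$ in the ambient chart on the tubular neighborhood $\dbl N \subset \R^K$; (ii)--(iii) the variations of the two $du$-slots. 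Contributions (ii) and (iii), by the skew-symmetries \eqref{eq:H anti-symmetric} of $H$, coalesce into $2\int_M \langle H(\nabla^\perp u,\nabla V),V\rangle\,dV_g$.

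The main obstacle is term (i): one must verify that the ambient coordinate partials of $H^k_{ij}$, when pulled back along the geodesic variation, reorganize into the intrinsic covariant derivative $\nabla_V H$ of the tensor field $H\in\Gamma(\wedge^2(N)\otimes TN)$. The strategy is to invoke the geodesic character of $u_{V,t}$ (which, via the Gauss formula, converts the ambient second derivative $\ddot u$ into a normal shape-operator term that vanishes against the tangential integrand), the cyclic definition $H^k_{ij}=\partial_k\omega_{ij}+\partial_i\omega_{jk}+\partial_j\omega_{ki}$, and the antisymmetries \eqref{eq:H anti-symmetric}; together these force the tangential Christoffel corrections to recombine precisely as $\nabla_V H$, producing $\int_M\langle\nabla_V H(\nabla^\perp u,\nabla u),V\rangle\,dV_g$. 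Summing over (i)--(iii) and adding the $\alpha$-energy contributions yields the stated formula; the global validity (passing from compactly supported to general $V\in\mathcal T_u$) then follows from density and the $W^{1,2\alpha}$-continuity of both sides in $V$.
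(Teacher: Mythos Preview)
Your proof is correct and follows essentially the same route as the paper: differentiate the geodesic variation $u_{V,t}$, treat the $\alpha$-energy and $\omega$-parts separately, and use that $\partial_t^2 u_{V,t}|_{t=0}=A(V,V)\perp TN$ kills the term coming from the last slot. Your Cartan-formula/Stokes argument for the first variation of the $\omega$-part is a clean alternative to the paper's direct coordinate computation with integration by parts.

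One organizational remark. When you differentiate the first-variation integrand $\langle H(u_t)(\nabla^\perp u_t,\nabla u_t),\dot u_t\rangle$ there are \emph{four} sources, not three: you omitted the variation of the last slot $\dot u_t$, whose contribution $\langle H,\ddot u\rangle$ is precisely the normal term that vanishes. You then fold this vanishing into the discussion of source (i), which muddles the presentation. As for your ``main obstacle'', the paper does not treat it as one: it simply writes the coordinate expression $\partial_l H^k_{ij}V^lV^k\,\nabla^\perp u^i\nabla u^j$ and names it $\langle\nabla_V H(\nabla^\perp u,\nabla u),V\rangle$. If you want an intrinsic justification, differentiate the \emph{scalar} integrand with the pullback Levi--Civita connection (product rule together with $\nabla_t\dot u_t=0$); the $\nabla_V H$ term then appears directly and no Christoffel bookkeeping is needed.
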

	\begin{proof}
		As mentioned before, we only need to compute the variation formula for compact supported section $V \in \mathcal{T}_u$. On the one hand, for the first variation of $\alpha$-energy $E_\alpha$, it is well known that
		\begin{equation*}
			\left.\frac{d}{dt}\right|_{t = 0} E_{\alpha}(u_{V,\,t}) = \int_{M}\alpha\p{1 + \abs{\nabla u}^2}^{\alpha - 1} \langle \nabla u, \nabla V\rangle d V_g.
		\end{equation*}
		On the other hand,  in local coordinate $\{y^1, y^2, \dots, y^K\}$ of $\Tilde{N}$  and integration by parts we compute that 
		\begin{align*}
			\left.\frac{d}{dt}\right|_{t = 0}\int_{M} (u_{V,\, t})^*\omega &= \left.\frac{d}{dt}\right|_{t = 0} \int_M \omega_{ij}(u_{V,\, t}) \nablap u_{V,\, t}^i \nabla u_{V,\, t}^j dx\\
			& = \int_{M} \frac{\partial \omega_{ij}}{\partial y^k} \frac{d u_{V,\, t}^k}{dt} \nablap u^i \nabla u^j dx + \int_{M} \omega_{ij}(u) \nablap V^i \nabla u^j dx \\
			&\quad + \int_M \omega_{ij} \nablap u^i \nabla V^j dx\\
			& = \int_M \p{ \frac{\partial \omega_{ij}}{\partial y^k} + \frac{\partial \omega_{jk}}{\partial y^i} + \frac{\partial \omega_{ki}}{\partial y^j} } \nablap u^i \nabla u^j V^k dx\\
			& = \int_M \langle H(\nablap u, \nabla u), V  \rangle dV_g.
		\end{align*}
		Consequently, we obtain the following first variation formula for $E_{\alpha}^\omega$
		\begin{align*}
			\delta E^\omega_{\alpha}(u)(V) &= \int_M \alpha \p{1 + \abs{\nabla u}^2}^{\alpha - 1}\langle \nabla u, \nabla V\rangle dV_g\\
			&\quad + \int_M \langle H(\nablap u, \nabla u), V\rangle dV_g .
		\end{align*}
		Next, we turn to compute the Hessian of $\delta^2 E^\omega_{\alpha}$,  from the definition we compute the $\delta^2 E^\omega_{\alpha}(u)(V,V)$ by taking the derivative of the following expression with respect to $t$ at $t = 0$:
		\begin{align}\label{eq:second vari eq 1}
			&\int_{M}\alpha\p{1 + \abs{\nabla u_{V,\, t}}^2}^{\alpha - 1} \inner{ \nabla u_{V,\, t}, \nabla \frac{d u_{V,\, t}}{dt}} d V_g \nonumber\\
            &\quad + \int_{M} \inner{ H(\nablap u_{V,\,t}, \nabla u_{V,\,t}), \frac{d u_{V,t}}{dt} } dV_g 
		\end{align}
		To begin with, we can differentiate the first term in \eqref{eq:second vari eq 1} to obtain
		\begin{align}\label{eq:second vari eq 2}
			\left.\frac{d}{dt}\right|_{t = 0} & \int_{M}\alpha\p{1 + \abs{\nabla u_{V,\, t}}^2}^{\alpha - 1} \inner{ \nabla u_{V,\, t}, \nabla \frac{d u_{V,\, t}}{dt}} d V_g\nonumber \\
			&\quad = 2\alpha(\alpha - 1)\int_{M} \p{1 + \abs{\nabla u_{V,\, t}}^2}^{\alpha - 2} \langle \nabla u, \nabla V\rangle^2 d V_g\nonumber \\
			&\quad\quad +  \int_{M}2\alpha\p{1 + \abs{\nabla u}^2}^{\alpha - 1} \p{\langle \nabla V, \nabla V\rangle + \inner{ \nabla u, \nabla_V \nabla_{\nabla u} V}}d V_g\nonumber \\
			&\quad = 2\alpha(\alpha - 1)\int_{M} \p{1 + \abs{\nabla u}^2}^{\alpha - 2} \langle \nabla u, \nabla V\rangle^2 d V_g\nonumber\\
			&\quad\quad +  \int_{M}\alpha\p{1 + \abs{\nabla u}^2}^{\alpha - 1} \p{\langle \nabla V, \nabla V\rangle - R(V,\nabla u, V, \nabla u) } d V_g,
		\end{align}
		where $R$ is the Riemann curvature tensor on $N$. Next, we consider the mean curvature type  vector field part in \eqref{eq:second vari eq 1}. Before penetrating into details, we note that 
		\begin{equation*}
			\left.\frac{d^2 u_{V,\, t}}{dt^2}\right|_{t = 0} = \left.\frac{d^2}{dt^2}\right|_{t = 0} \exp_{u(x)}{t V(x)} = A(V,V),
		\end{equation*}
		which is perpendicular to the tangent bundle $TN$. Utilizing this observation and integrating by parts, we can compute
		\begin{align}\label{eq: symmetric of D H}
			\left.\frac{d}{dt}\right|_{t = 0} &  \int_{M} \inner{ H(\nablap u_{V,\,t}, \nabla u_{V,\,t}), \frac{d u_{V,t}}{dt} } dV_g \nonumber\\
			&= \int_M \frac{\partial H^k_{ij}}{\partial y^l} V^l V^k \nablap u^i \nabla u^j dx + \int_M H^k_{ij} V^k \nablap V^i \nabla u^j dx\nonumber\\
			&\quad +\int_M H^k_{ij} V^k \nablap u^i \nabla V^j dx\nonumber \\
			& =  \int_M \langle \nabla_{V} H(\nablap u, \nabla u), V\rangle dV_g + 2 \int_M \langle H(\nablap u, \nabla V), V\rangle dV_g.
		\end{align}
		Combining computations \eqref{eq:second vari eq 2} and \eqref{eq: symmetric of D H} will yield the second variation formula of $E^\omega_{\alpha}$ formulated in Lemma \ref{variation formula}.
	\end{proof}
	For $\alpha > 1$, by a similar computations as Lemma \ref{variation formula}, the Euler-Lagrange equation of critical points of $E^\omega_{\alpha}$, which are called $\alpha$-$H$-surfaces, can be written as
	\begin{align}
		\label{el}
		\Delta  u_\alpha + (\alpha - 1)\frac{\nabla |\nabla  u_\alpha|^2\cdot \nabla  u_\alpha}{1+|\nabla  u_\alpha|^2} &+ A(u_\alpha)\left(\nabla  u_\alpha, \nabla  u_\alpha\right)\nonumber\\
		&= \frac{H(u_\alpha)(\nablap  u_\alpha, \nabla  u_\alpha) }{\alpha \left(1 + |\nabla  u_\alpha|^2\right)^{\alpha - 1}},
	\end{align}
	or equivalently in divergence form
	\begin{align}
		\label{el1}
		\diver\left(\left(1+|\nabla  u_\alpha|^2\right)^{\alpha - 1}\nabla  u_\alpha\right) &+ \left(1 + |\nabla  u_\alpha|^2\right)^{\alpha -1}A(u_\alpha)(\nabla  u_\alpha,\nabla  u_\alpha)\nonumber\\
		&= \frac{1}{\alpha}H(u_\alpha)(\nablap  u_\alpha, \nabla  u_\alpha).
	\end{align}
	Using parallelogram law, from Lemma \ref{variation formula}, we can write the Hessian of $E^\omega_{\alpha}$ as following.
	\begin{coro}\label{coro hessian}
		For $V,W \in \T_u$, we can compute the Hessian for $E^\omega_{\alpha}(u)$ at some critical point $u \in W^{1,2\alpha}(M,N)$
		\begin{align*}
			\delta^2 E^\omega_{\alpha}(u)(V,W) &= \alpha \int_M \p{1 + \abs{\nabla u}^2}^{\alpha - 1} \Big( \langle \nabla V, \nabla W\rangle - R(V,\nabla u, W, \nabla u) \Big) d V_g\\
			& \quad + 2\alpha(\alpha - 1)\int_M \p{1 + \abs{\nabla u}^2}^{\alpha - 1}\langle \nabla u, \nabla V\rangle \langle \nabla u, \nabla W\rangle  dV_g\\
			&\quad + \int_M \Big( \inner{ H(\nablap u, \nabla V), W} + \inner{ H(\nablap u, \nabla W), V} \Big)  dV_g\\
			&\quad + \frac{1}{2} \int_M \Big( \inner{  (\nabla_V H)(\nablap u, \nabla u), W} + \inner{(\nabla_W H)(\nablap u, \nabla u), V} \Big) dV_g 
		\end{align*}
		where $A(\cdot,\cdot)$ is the second fundamental form of embedding $N\hookrightarrow \R^K$.
	\end{coro}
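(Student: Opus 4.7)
The plan is to obtain Corollary \ref{coro hessian} directly from Lemma \ref{variation formula} by polarization. Since $\delta^2 E^\omega_\alpha(u)$ is a symmetric bilinear form on $\T_u \times \T_u$, the parallelogram identity noted at the start of Section \ref{section 2.2} gives
\begin{equation*}
\delta^2 E^\omega_\alpha(u)(V,W) = \tfrac{1}{4}\bigl(\delta^2 E^\omega_\alpha(u)(V+W,V+W) - \delta^2 E^\omega_\alpha(u)(V-W,V-W)\bigr),
\end{equation*}
so my approach is to substitute the four lines of Lemma \ref{variation formula} into the right-hand side with $V$ replaced by $V \pm W$, expand, and collect term by term.

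I would treat each of the four pieces of the quadratic expression separately. The contributions $\inner{\nabla V, \nabla V}$ and $R(V, \nabla u, V, \nabla u)$ polarize straightforwardly to $\inner{\nabla V, \nabla W}$ and $R(V, \nabla u, W, \nabla u)$, respectively, the latter thanks to the pair-symmetry of the Riemann curvature tensor; the quadratic term $\inner{\nabla u, \nabla V}^2$, being the square of a linear functional of $V$, polarizes to $\inner{\nabla u, \nabla V}\,\inner{\nabla u, \nabla W}$. For the term $2\inner{H(\nablap u, \nabla V), V}$, polarization produces $\inner{H(\nablap u, \nabla V), W} + \inner{H(\nablap u, \nabla W), V}$ (the leading factor $2$ cancels the $\tfrac{1}{2}$ coming from polarization), which is exactly the third line in the Corollary; likewise the $\nabla_V H$ term polarizes to the symmetric combination displayed in the last line, carrying its $\tfrac{1}{2}$ prefactor.

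The only subtlety I anticipate is confirming that the $H$-related forms really do assemble into the symmetric combinations written in the statement. The maps $V \mapsto \inner{H(\nablap u, \nabla V), V}$ and $V \mapsto \inner{\nabla_V H(\nablap u, \nabla u), V}$ are not visibly diagonal restrictions of a symmetric bilinear form; the skew-symmetry relations \eqref{eq:H anti-symmetric} are what guarantee this behind the scenes. I would verify the relevant symmetry by unpacking these expressions in the local coordinates $\{y^1,\dots,y^K\}$ of $\dbl{N}$ introduced in Section \ref{section 2.1}, exactly as in the derivation of \eqref{eq: symmetric of D H} inside the proof of Lemma \ref{variation formula}, and then reading off the polarized terms. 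Since the delicate analytic and algebraic work has already been carried out in that proof, the present corollary reduces to a purely formal polarization check, requiring no additional estimates or new ingredients.
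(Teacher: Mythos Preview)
Your proposal is correct and matches the paper's approach exactly: the paper simply states the corollary with the one-line justification ``Using parallelogram law, from Lemma \ref{variation formula}'', and your term-by-term polarization is precisely how that justification unpacks. Your only worry---that the $H$-terms might not be diagonals of symmetric bilinear forms---is unnecessary, since the polarization identity $\tfrac{1}{4}(Q(V+W)-Q(V-W))$ applied to any quadratic form $Q$ automatically produces the symmetric bilinear form with diagonal $Q$; no appeal to \eqref{eq:H anti-symmetric} is needed for this step.
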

	In order to derive a priori estimate that will be used in later sections, especially when it comes to proving Lemma \ref{P-S condition}, we need a formula for $\delta E^\omega_{\alpha}(\mathcal{P}_u(\varphi))$ with $\varphi \in W^{1,2\alpha}(M,\R^K)$. To simplify the notation, we set
	\begin{equation*}
		G_{\alpha}^\omega(u) : = \delta E^\omega_{\alpha} \circ \mathcal{P}_u
	\end{equation*}
	and define the norm of $G_{\alpha}^\omega(u) : W^{1,2\alpha}(M,\R^K) \rightarrow \R$ by
	\begin{equation*}
		\norm{G_{\alpha}^\omega(u)} := \sup\left\{\abs{G_{\alpha}^\omega(u)(\varphi)}\,:\, \varphi \in W^{1,2\alpha}(M,\R^K)\, \text{ with } \norm{\varphi}_{W^{1,2\alpha}(M,\R^K)}\leq 1\right\}.
	\end{equation*}
	Based on above conventions, we can obtain the following estimates:
	\begin{lemma}\label{property G}
		The following properties for $G^\omega_{\alpha}$ holds:
		\begin{enumerate}
			\item \label{proper G part 1} Given $u \in W^{1,2\alpha}(M,N)$, for any $\varphi \in W^{1,2\alpha}(M,\R^K)$ we have that 
			\begin{align}\label{eq 4}
				G^\omega_{\alpha}(u)(\varphi) &= \int_{M} \alpha \p{1 + \abs{\nabla u}^2}^{\alpha - 1}\Big(\nabla u \cdot \nabla \varphi  - A(u)(\nabla u, \nabla u)\cdot \varphi \Big) dV_g \nonumber\\
				&\quad +\int_M H(\nablap u,\nabla u)\cdot \varphi dV_g ;
			\end{align}
			\item \label{proper G part 2} For all $L > 0$, there exists constant $C_L > 0$ such that 
			\begin{equation*}
				\norm{G^\omega_{\alpha}(u)} \leq C_L
			\end{equation*}
			whenever $\alpha > 1$ and $u \in W^{1,2\alpha}(M,N)$ satisfying $\norm{u}_{W^{1,2\alpha}(M,N)} \leq L$;
			\item \label{proper G part 3} For all $L > 0$, there exists constant $C_L > 0$ such that 
			\begin{equation*}
				\norm{G^\omega_{\alpha}(u_1) - G^\omega_{\alpha}(u_2)} \leq C_L \norm{u_1 - u_2}_{W^{1,2\alpha}(M,N)}
			\end{equation*}
			whenever $\alpha > 1$, and $\norm{u_1}_{W^{1,2\alpha}(M,\R^K)}$,  $\norm{u_2}_{W^{1,2\alpha}(M,\R^K)} \leq L$.
		\end{enumerate}
	\end{lemma}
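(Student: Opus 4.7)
\textbf{Plan for Lemma \ref{property G}.} The three parts are increasingly technical but share a common strategy: Part \eqref{proper G part 1} is an algebraic manipulation of the formula in Lemma \ref{variation formula}, and Parts \eqref{proper G part 2}, \eqref{proper G part 3} follow by Hölder's inequality combined with the Sobolev embedding $W^{1,2\alpha}(M,\R^K)\hookrightarrow L^\infty(M,\R^K)$ valid because $\dim M=2<2\alpha$.

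For Part \eqref{proper G part 1}, the plan is to decompose $\varphi = V+W$ with $V:=\mathcal{P}_u(\varphi)\in\mathcal{T}_u$ and $W:=\varphi-V$, so that $W(x)\in T_{u(x)}^\perp N$ pointwise. Applying Lemma \ref{variation formula} to the tangential field $V$ gives
\[
G^\omega_\alpha(u)(\varphi)=\int_M\alpha(1+|\nabla u|^2)^{\alpha-1}\nabla u\cdot\nabla V\, dV_g+\int_M H(\nablap u,\nabla u)\cdot V\, dV_g.
\]
Since $H(\nablap u,\nabla u)\in TN$ along $u$ while $W\in T^\perp N$, one has $H\cdot V=H\cdot\varphi$. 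For the Dirichlet-type piece, write $\nabla V=\nabla\varphi-\nabla W$ and establish the pointwise identity $\langle\nabla u,\nabla W\rangle_g=A(u)(\nabla u,\nabla u)\cdot W$: differentiating the constraint $\partial_i u(x)\cdot W(x)=0$ (valid because $\partial_i u\in TN$ and $W\in T^\perp N$) gives $\partial_i u\cdot\partial_i W=-W\cdot\partial_i^2 u$, whose normal component decomposes via the second fundamental form of $N\hookrightarrow\R^K$ as $A(u)(\nabla u,\nabla u)\cdot W$ after contraction with $g^{ij}$. Finally, since $A(u)(\nabla u,\nabla u)\perp V$, one has $A\cdot W=A\cdot\varphi$, and combining the pieces yields the stated formula \eqref{eq 4}.

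For Part \eqref{proper G part 2}, with the formula from \eqref{eq 4} in hand, bound the three integrands in turn. For the gradient term, apply Hölder with exponents $(2\alpha,\frac{2\alpha}{2\alpha-1})$ so that
\[
\Bigl|\int_M\alpha(1+|\nabla u|^2)^{\alpha-1}\nabla u\cdot\nabla\varphi\, dV_g\Bigr|\leq \alpha\,\|\nabla\varphi\|_{L^{2\alpha}}\bigl\|(1+|\nabla u|^2)^{\alpha-1}|\nabla u|\bigr\|_{L^{\frac{2\alpha}{2\alpha-1}}},
\]
and the last factor is controlled by $(1+\|\nabla u\|_{L^{2\alpha}}^{2\alpha})^{(2\alpha-1)/(2\alpha)}$ after noting the exponent identity $(2(\alpha-1)+1)\cdot\frac{2\alpha}{2\alpha-1}=2\alpha$. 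The $A$-term is bounded by $\|A\|_{L^\infty}$ times an analogous Hölder estimate with $\|\varphi\|_{L^\infty}\lesssim\|\varphi\|_{W^{1,2\alpha}}$. The $H$-term is bounded by $\|H\|_{L^\infty}\|\nabla u\|_{L^{2\alpha}}^2\|\varphi\|_{L^\infty}$ (after one application of Hölder for the quadratic $|\nabla u|^2$-factor). Collecting the estimates gives $\|G^\omega_\alpha(u)\|\leq C_L$ with $C_L$ depending only on $L$, $\|A\|_{L^\infty}$, $\|H\|_{L^\infty}$ and a bounded range of $\alpha$.

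For Part \eqref{proper G part 3}, expand the difference $G^\omega_\alpha(u_1)(\varphi)-G^\omega_\alpha(u_2)(\varphi)$ via \eqref{eq 4} and split each term's difference by telescoping. The truly delicate contribution is
\[
(1+|\nabla u_1|^2)^{\alpha-1}-(1+|\nabla u_2|^2)^{\alpha-1},
\]
which is handled via the integral representation $f(b)-f(a)=\int_0^1 f'(a+t(b-a))dt\cdot(b-a)$ applied to $f(s)=s^{\alpha-1}$; this yields a factor $(\alpha-1)(1+|\nabla u_t|^2)^{\alpha-2}$ times $(|\nabla u_1|^2-|\nabla u_2|^2)$, and the latter is a bilinear form controlled by $\|\nabla(u_1-u_2)\|_{L^{2\alpha}}$ and the $L^{2\alpha}$-norms of $\nabla u_1,\nabla u_2$. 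The differences $A(u_1)-A(u_2)$, $H(u_1)-H(u_2)$ are handled by smoothness of $A,H$ on $\tilde N$ together with $\|u_1-u_2\|_{L^\infty}\lesssim\|u_1-u_2\|_{W^{1,2\alpha}}$. After repeated applications of Hölder and the Sobolev embedding, all contributions assemble into $C_L\|u_1-u_2\|_{W^{1,2\alpha}}$. The principal technical obstacle here is precisely the careful bookkeeping of exponents needed to control the nonlinear weight $(1+|\nabla u|^2)^{\alpha-1}$ uniformly in $\alpha$ while producing a genuinely \emph{first-order} difference estimate; the mean-value-style representation above, together with the exponent identity noted in Part \eqref{proper G part 2}, is what makes this bookkeeping close.
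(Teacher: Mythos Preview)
Your proposal is correct and essentially parallels the paper. Parts \eqref{proper G part 2} and \eqref{proper G part 3} match the paper's argument almost verbatim (H\"older, the Sobolev embedding $W^{1,2\alpha}\hookrightarrow L^\infty$, and an integral mean-value representation for the nonlinear weight). For Part \eqref{proper G part 1} you take a slight variant: instead of the paper's route---integrating by parts once, invoking the identity
\[
\mathcal{P}_u\Big(\diver\big((1+|\nabla u|^2)^{\alpha-1}\nabla u\big)\Big)=\diver(\cdots)-(1+|\nabla u|^2)^{\alpha-1}A(\nabla u,\nabla u),
\]
and integrating by parts back---you split $\varphi=V+W$ directly and use the pointwise identity $\nabla u\cdot\nabla W=A(u)(\nabla u,\nabla u)\cdot W$. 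Both derivations yield \eqref{eq 4} and are equally short.

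Two small points to tighten. First, your derivation of $\nabla u\cdot\nabla W=A\cdot W$ by ``differentiating $\partial_i u\cdot W=0$'' formally uses $\partial_i^2 u$, which is unavailable for a general $u\in W^{1,2\alpha}(M,N)$; make it rigorous by writing $W=(I-P(u))\varphi$ with $P:\widetilde{N}\to\mathrm{Sym}(K)$ the smooth tangent-projection, expanding $\partial_i W$ by the chain rule, and noting that $\partial_i u\cdot(I-P(u))\partial_i\varphi=0$ while $\sum_i (dP)(u)[\partial_i u]\partial_i u=-A(u)(\nabla u,\nabla u)$ is a purely first-order identity in $u$. (The paper's integration by parts has the analogous formal issue.) Second, in Part \eqref{proper G part 3} you should, as the paper does, first reduce to the regime $\|u_1-u_2\|_{C^0}\le\delta_0$ so that the convex combinations $tu_1+(1-t)u_2$ and the differences $A(u_1)-A(u_2)$, $H(u_1)-H(u_2)$ stay in the tubular neighborhood $\widetilde{N}$ where $A$ and $H$ are smoothly extended; the complementary regime is handled trivially via Part \eqref{proper G part 2}.
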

	\begin{proof}
		For the part \eqref{proper G part 1}, using the first variation formula in Lemma \ref{variation formula}, we get
		\begin{align}\label{eq: first vari to G}
			\delta E^\omega_{\alpha}(u)(\mathcal{P}_{u}(\varphi)) &=  \int_M \alpha \p{1 + \abs{\nabla u}^2}^{\alpha - 1}\inner{ \nabla u, \nabla \mathcal{P}_{u}(\varphi) } dV_g \nonumber\\
			&\quad + \int_M \inner{ H(\nablap u, \nabla u), \mathcal{P}_{u}(\varphi)} dV_g\nonumber\\
			& = - \alpha \int_M \left\langle \diver\p{ \p{1 + \abs{\nabla u}^2}^{\alpha - 1} \nabla u}, \mathcal{P}_{u}(\varphi) \right\rangle dV_g\nonumber\\
			&\quad + \int_M \inner{ H(\nablap u, \nabla u), \mathcal{P}_{u}(\varphi)} dV_g.
		\end{align}
		Recalling that 
		\begin{align*}
			\mathcal{P}_u\set{\diver\p{ \p{1 + \abs{\nabla u}^2}^{\alpha - 1} \nabla u}} &= \diver\p{ \p{1 + \abs{\nabla u}^2}^{\alpha - 1} \nabla u}\\
			&\quad - \p{1 + \abs{\nabla u}^2}^{\alpha - 1}A(\nabla u, \nabla u),
		\end{align*}
		we decompose $\varphi = \mathcal{P}_{u}(\varphi) + \varphi^\perp$ where $\varphi^\perp$ is normal component of $\varphi$ in $TN^\perp$ and plug this into \eqref{eq: first vari to G} to obtain 
		\begin{align*}
			&\delta G^\omega_{\alpha}(u)(\varphi)=\delta E^\omega_{\alpha}(u)(\mathcal{P}_{u}(\varphi))\nonumber\\
			&= - \alpha \int_M  \diver\p{ \p{1 + \abs{\nabla u}^2}^{\alpha - 1} \nabla u} \cdot \varphi dV_g + \int_M  H(\nablap u, \nabla u)\cdot \varphi dV_g\\
			&\quad - \alpha\int_M \p{1 + \abs{\nabla u}^2}^{\alpha - 1} A(\nabla u, \nabla u) \cdot \varphi\, dV_g\\
			&= \alpha \int_M  \p{1 + \abs{\nabla u}^2}^{\alpha - 1} \left\langle  \nabla u, \nabla\varphi \right\rangle dV_g + \int_M  H(\nablap u, \nabla u)\cdot \varphi dV_g\\
			&\quad - \alpha\int_M \p{1 + \abs{\nabla u}^2}^{\alpha - 1}A(\nabla u, \nabla u) \cdot \varphi dV_g,
		\end{align*}
		which is exactly the desired of part \eqref{proper G part 1}.
		For part \eqref{proper G part 2}, using the formula \eqref{eq 4} we straightforward estimate that 
		\begin{align*}
			\abs{G^\omega_{\alpha}(u)(\varphi)} &\leq \int_M \alpha \p{1 + \abs{\nabla u}^2}^{\alpha - 1} \abs{\nabla \varphi} \cdot \abs{\nabla u}+ \alpha \p{1 + \abs{\nabla u}^2}^{\alpha}\cdot\abs{\varphi} d V_g\\
			&\quad + \int_M C \abs{\nabla u}^2\cdot \abs{\varphi} dV_g\\
			&\leq C_L^\prime\Big( \norm{\nabla \varphi}_{L^{2\alpha}(M,\R^K)} + \norm{\varphi}_{L^\infty(M,\R^K)} \Big)\\
            &\leq C_L \norm{\varphi}_{W^{1,2\alpha}(M,\R^K)}.
		\end{align*}
		Here, in the last inequality we used the Sobolev embedding 
		$$W^{1,2\alpha}(M,\R^K) \hookrightarrow C^0(M,\R^K).$$
		
		Next, we consider the part \eqref{proper G part 3}. To begin, we use formula \eqref{eq 4} obtained in part \eqref{proper G part 1} to get
		\begin{align}\label{eq 6}
			&G^\omega_{\alpha}(u_1)(\varphi) - G^\omega_{\alpha}(u_2)(\varphi)\nonumber\\
			&= \int_{M} \alpha \Big(\p{1 + \abs{\nabla u_1}^2}^{\alpha - 1}\nabla u_1 \cdot \nabla \varphi - \p{1 + \abs{\nabla u_2}^2}^{\alpha - 1}\nabla u_2 \cdot \nabla \varphi\Big)dV_g\nonumber \\
			& \quad - \int_M\alpha \p{1 + \abs{\nabla u_1}^2}^{\alpha - 1} \Big( A(u_1)(\nabla u_1, \nabla u_1) -A(u_2)(\nabla u_2, \nabla u_2)\Big)\cdot \varphi dV_g \nonumber\\
			&\quad -\int_M\alpha \left(\p{1 + \abs{\nabla u_2}^2}^{\alpha - 1} - \p{1 + \abs{\nabla u_1}^2}^{\alpha - 1}\right)A(u_2)(\nabla u_2, \nabla u_2)\cdot \varphi dV_g \nonumber\\
			& \quad + \int_M \Big(H(\nablap u_1,\nabla u_1)- H(\nablap u_2,\nabla u_2)\Big)\cdot \varphi dV_g .
		\end{align}
		Based on part \eqref{proper G part 2} and Sobolev embedding  $W^{1,2\alpha}(M,\R^K) \hookrightarrow C^0(M,\R^K)$, we can simplify the estimates by focusing on the case when
		\begin{equation*}
			\norm{u_1 - u_2}_{C^0(M,N)} \leq \delta_0
		\end{equation*}
		for some small $\delta_0 > 0$. Here, we choose small enough $\delta_0 >0$ to ensure that 
		$$tu_1 + (1 - t)u_2 \in W^{1,2\alpha}(M,\Tilde{N}),$$
		hence the formula obtained in part \eqref{proper G part 1} can be applied to such convex combination. we can estimate the integrand of the first integral on the right hand side of \eqref{eq 6} to obtain
		\begin{align*}
			&\alpha \p{1 + \abs{\nabla u_1}^2}^{\alpha - 1}\nabla u_1 \cdot \nabla \varphi - \alpha\p{1 + \abs{\nabla u_2}^2}^{\alpha - 1}\nabla u_2 \cdot \nabla \varphi \\
			& \quad = \int_{0}^1  \alpha(\alpha - 1)\p{1 + |\nabla (t u_1 + (1 - t)u_2)|^2}^{\alpha - 2}\cdot\\
			&\quad \quad \quad \quad\quad \quad\quad \quad \quad\quad \quad\quad\quad \quad [\nabla\p{t u_1 + (1 - t)u_2} \cdot \nabla \varphi] [\nabla(u_1 - u_2) \cdot \nabla \varphi]dt\\
			& \quad \quad + \int_{0}^1 \alpha \Big(1 +|t u_1 + (1 - t)u_2|^2\Big)^{\alpha - 1}\nabla(u_1 - u_2)\cdot \nabla \varphi dt.
		\end{align*}
		From this identity and H\"{o}lder's inequality, the first integral on the right-hand side will be bounded by 
		$$C_L \norm{u_1 - u_2}_{W^{1,2\alpha}(M,\R^K)}\cdot\norm{\varphi}_{W^{1,2\alpha}(M,\R^K)}.$$
		The remaining terms in \eqref{eq 6} can be estimated in a complete similar manner by using Sobolev embedding $W^{1,2\alpha}(M,\R^K) \hookrightarrow C^0(M,\R^K)$ and H\"{o}lder's inequality.
	\end{proof}
	
	\subsection{Palais-Smale Condition and Regularity of Critical Points for Functional \texorpdfstring{{ $E^\omega_{\alpha}$}}{Lg}} \label{section 2.3}
	\ 
	\vskip5pt
	The Palais-Smale conditions are a set of compactness conditions that are essential in variational analysis. Our first objective in this subsection is to confirm that the functional $E^\omega_{\alpha}: W^{1,2\alpha}(M,N) \rightarrow \mathbb{R}$ satisfies a version of the Palais-Smale condition, which was also pointed out by Moore \cite[pp. 212]{moore2017}.
	\begin{lemma}\label{P-S condition}
		For $\alpha > 1$, the functional $E^\omega_{\alpha}: W^{1,2\alpha}(M,N)\rightarrow \R$ satisfies the Palais-Smale condition, more precisely, let $\{u_j\}$ be a sequence in $W^{1,2\alpha}$ $(M,N)$ satisfying
		\begin{enumerate}
			\item $E^\omega_{\alpha}(u_j) \leq C$ for some universal constant $C$ independent of $j \in \mathbb{N}$;
			\item $\norm{\delta E^\omega_{\alpha}(u_j)} \rightarrow 0$ as $j \rightarrow \infty$,
		\end{enumerate}
		then, after passing to a subsequence if necessary, $u_j$ converges strongly in $W^{1,2\alpha}(M,N)$ to a critical point of $E^\omega_{\alpha}$ as $j \rightarrow \infty$.
	\end{lemma}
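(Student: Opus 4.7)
The plan is to first extract a weakly convergent subsequence by coercivity of $E^\omega_\alpha$, then use the Palais-Smale datum to derive a vanishing integral identity involving the principal nonlinearity $\xi \mapsto (1+|\xi|^2)^{\alpha-1}\xi$, and finally invoke quantitative monotonicity to upgrade weak to strong $W^{1,2\alpha}$-convergence.

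\textbf{Coercivity, weak limit and the integral identity.} Since $\bigl|\int_M u_j^*\omega\bigr| \leq \|\omega\|_{L^\infty(\dbl{N})}\int_M|\nabla u_j|^2\, dV_g$ and $\int_M|\nabla u_j|^2\, dV_g \leq |M|^{1-1/\alpha}\bigl(\int_M(1+|\nabla u_j|^2)^\alpha\, dV_g\bigr)^{1/\alpha}$ by H\"older, Young's inequality absorbs the $\omega$-term, so $E^\omega_\alpha(u_j) \leq C$ yields a uniform bound on $\|u_j\|_{W^{1,2\alpha}(M,\R^K)}$. By reflexivity and the compact embedding $W^{1,2\alpha}(M,\R^K) \hookrightarrow C^0(M,\R^K)$ (valid since $\alpha > 1$), I pass to a subsequence with $u_j \rightharpoonup u_\infty$ weakly in $W^{1,2\alpha}$ and $u_j \to u_\infty$ uniformly on $M$; in particular $u_\infty \in W^{1,2\alpha}(M,N)$. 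Set $\varphi_j := u_j - u_\infty$; its projection $\mathcal{P}_{u_j}(\varphi_j) \in \mathcal{T}_{u_j}$ is uniformly bounded in $W^{1,2\alpha}$, so the Palais-Smale hypothesis gives $G^\omega_\alpha(u_j)(\varphi_j) \to 0$. In the formula from Lemma \ref{property G}\eqref{proper G part 1}, the terms containing $A(u_j)(\nabla u_j, \nabla u_j)$ and $H(\nablap u_j, \nabla u_j)$ are each a product of an $L^1$-bounded quantity with $\varphi_j \to 0$ uniformly, so they vanish and we are left with
\begin{equation*}
\int_M \alpha(1+|\nabla u_j|^2)^{\alpha-1}\nabla u_j \cdot \nabla(u_j - u_\infty)\, dV_g \to 0.
\end{equation*}
Since $(1+|\nabla u_\infty|^2)^{\alpha-1}\nabla u_\infty \in L^{2\alpha/(2\alpha-1)}(M,\R^K)$ and $\nabla(u_j - u_\infty) \rightharpoonup 0$ in $L^{2\alpha}$, the analogous integral with $u_\infty$ in place of $u_j$ also tends to zero; subtracting yields
\begin{equation*}
I_j := \int_M \alpha\bigl[(1+|\nabla u_j|^2)^{\alpha-1}\nabla u_j - (1+|\nabla u_\infty|^2)^{\alpha-1}\nabla u_\infty\bigr]\cdot \nabla(u_j - u_\infty)\, dV_g \to 0.
\end{equation*}

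\textbf{Monotonicity upgrade and main obstacle.} The integrand of $I_j$ is pointwise nonnegative because $\Phi(\xi) := \alpha(1+|\xi|^2)^{\alpha-1}\xi$ is the gradient of the strictly convex function $\tfrac{1}{2}(1+|\xi|^2)^\alpha$. The key quantitative step is the non-degenerate $p$-Laplacian-type monotonicity
\begin{equation*}
[\Phi(\xi) - \Phi(\eta)]\cdot(\xi-\eta) \geq c_\alpha |\xi - \eta|^{2\alpha},
\end{equation*}
which I would prove by writing $\Phi(\xi) - \Phi(\eta) = \int_0^1 J\Phi(\eta + s(\xi-\eta))(\xi-\eta)\, ds$, bounding the smallest eigenvalue of $J\Phi(\zeta)$ from below by $\alpha(1+|\zeta|^2)^{\alpha-1}$, and invoking the standard integral inequality $\int_0^1 (1+|\eta + s(\xi-\eta)|^2)^{\alpha-1}\, ds \geq c_\alpha|\xi-\eta|^{2\alpha-2}$, valid for $\alpha \geq 1$. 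With this in hand, $I_j \to 0$ immediately produces $\nabla u_j \to \nabla u_\infty$ in $L^{2\alpha}$, hence $u_j \to u_\infty$ strongly in $W^{1,2\alpha}(M,N)$; the Lipschitz continuity in Lemma \ref{property G}\eqref{proper G part 3} then transmits $\delta E^\omega_\alpha(u_j) \to 0$ to the limit to conclude $\delta E^\omega_\alpha(u_\infty) = 0$. The principal obstacle is precisely this quantitative monotonicity step: while classical for non-degenerate $p$-Laplacian operators with $p = 2\alpha \geq 2$, it requires a careful pointwise argument along segments to extract a constant $c_\alpha > 0$ depending only on $\alpha$, and this is the point at which the condition $\alpha > 1$ (rather than $\alpha = 1$, where Palais-Smale famously fails for the conformal functional $E^\omega$) enters in an essential way.
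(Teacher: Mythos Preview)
Your proof is correct and follows essentially the same approach as the paper. The only cosmetic difference is that the paper shows $\{\nabla u_j\}$ is Cauchy in $L^{2\alpha}$ by testing $G^\omega_\alpha(u_i)-G^\omega_\alpha(u_j)$ against $u_i-u_j$, whereas you first identify the weak limit $u_\infty$ and test $G^\omega_\alpha(u_j)$ against $u_j-u_\infty$, handling the $u_\infty$-contribution via weak convergence of gradients; both routes reduce to the same monotonicity integral and the same pointwise lower bound $[\Phi(\xi)-\Phi(\eta)]\cdot(\xi-\eta)\ge c_\alpha|\xi-\eta|^{2\alpha}$.
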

	\begin{proof}
		Let $\{u_j\}_{j \in \mathbb N}$ be a  sequence in $W^{1,2\alpha}(M,N) \subset W^{1,2\alpha}(M,\R^K)$ such that $E^\omega_{\alpha}(u_j)$ is uniformly bounded and $\norm{\delta E^\omega_{\alpha}(u_j)} \rightarrow 0$ for fixed $\alpha > 1$. It follows that the $\alpha$-energy of $u_j$, $E_\alpha(u_j)$, is also uniformly bounded. Otherwise if, after choosing a subsequence, $E_\alpha(u_j) \rightarrow \infty$ as $j\rightarrow \infty$, then 
		\begin{align*}
			\abs{E^\omega_{\alpha}(u_j)} &\geq \frac{1}{2} \int_M\p{1 + \abs{\nabla u_j}^2}^\alpha dV_g - \frac{1}{2}\norm{\omega}_{L^\infty(N)} \int_M \abs{\nabla u_j}^2 dV_g\\
			& \geq  \frac{1}{4} \int_M\p{1 + \abs{\nabla u_j}^2}^\alpha dV_g \rightarrow \infty,\quad \text{as }j \rightarrow \infty,
		\end{align*}
		which contradicts to the uniformly boundedness of  $E^\omega_{\alpha}$. From this observation, by Sobolev embedding $W^{1,2\alpha}(M,N) \hookrightarrow W^{1,2\alpha}(M,\R^K) \hookrightarrow C^{0,1 - 1/\alpha}(M,\R^K)$,  every $u_j$ is H\"{o}lder continuous. And together with the compactness of  $N$, it follows that $\{u_j\}_{j \in \mathbb N}$ is equi-continuous.  Therefore, thanks to the Arzela-Ascoli Theorem, there exists a subsequence of $\{u_j\}$, also denoted by $\{u_j\}$, converges uniformly to a continuous map $u_0 : M \rightarrow N$. To complete the proof of the Lemma \ref{P-S condition}, it is sufficient to demonstrate that $ \{ \nabla u_j \} $ is a Cauchy sequence in $ L^ { 2 \alpha } (M,N) $. 
		
		Recall that $\mathcal{P} : \R^K \times \R^K \cong T\R^K\rightarrow TN$ is the pointwise orthogonal projection from $\R^K \times \R^K \cong T\R^K$ onto $TN$ and simplify the notation of $(\mathcal{P}\circ u) (V)$ to $\mathcal{P}_u(V
		)$ for any $V\in T_uW^{1,2\alpha}(M,\mathbb{R}^K)$. This allows us to make the following estimates
		\begin{align*}
			\norm{d \mathcal{P}_u(V)}_{L^{2\alpha}(M,\R^K)} &\leq \norm{d \p{ \mathcal{P}\circ u}(V)}_{L^{2\alpha}(M,\R^K)} + \norm{\mathcal{P}_u\left( \nabla V\right)}_{L^{2\alpha}(M,\R^K)}\\
			&\leq C \norm{V}_{L^\infty(M, \R^K)} \norm{\nabla u}_{L^{2\alpha}(M,\R^K)} + C \norm{\nabla V}_{L^{2\alpha}(M,\R^K)}
		\end{align*}
		which further implies that 
		\begin{align}\label{eq bounded P}
			\norm{\mathcal{P}_u(V)}_{W^{1,2\alpha}(M,\R^K)} &\leq C\p{\norm{V}_{L^\infty(M, \R^K)} E_\alpha(u) +  \norm{\nabla V}_{L^{2\alpha}(M,\R^K)} + \norm{V}_{L^{2\alpha}(M,\R^K)} } \nonumber\\
			& \leq C \p{1 + E_\alpha(u)} \norm{V}_{W^{1,2\alpha}(M,\R^K)}. 
		\end{align}
		Since $E_\alpha (u_j)$ is uniformly bounded and $u_j$ is also uniformly bounded, then $\{u_j\}$ is uniformly bounded in $W^{1,2\alpha}(M, \R^K)$ and \eqref{eq bounded P} tells us that $\mathcal{P}_u(u_i - u_j)$ is bounded in $W^{1,2\alpha}(M, \R^K)$ for each $ u \in W^{1,2\alpha}(M,\R^K)$. Because $\norm{\delta E^\omega_{\alpha}(u_j)} \rightarrow 0$ and recall part \ref{proper G part 3} Lemma \ref{property G} , we have
		\begin{equation}\label{E 52}
			\abs{\delta G^\omega_{\alpha}(u_i)(u_i - u_j) - \delta G^\omega_{\alpha}(u_j)(u_i - u_j)} \longrightarrow 0 \quad \text{as }i,\, j \rightarrow \infty.
		\end{equation}
		By part \eqref{proper G part 1} of Lemma \ref{property G}, replacing $\varphi$ with $u_i - u_j$ to obtain
		\begin{align*}
			&G^\omega_{\alpha}(u)(u_i - u_j)\\
			&= \int_{M} \alpha \p{1 + \abs{\nabla u}^2}^{\alpha - 1}\Big(\nabla u \cdot \nabla (u_i - u_j)  - A(u)(\nabla u, \nabla u)\cdot (u_i - u_j) \Big) dV_g \nonumber\\
			&\quad +\int_M H(\nablap u,\nabla u)\cdot (u_i - u_j) dV_g .
		\end{align*}
		Plugging this identity into \eqref{E 52}, we obtain
		\begin{align}\label{E 53}
			&\left|\int_M  \alpha \left[ \p{1 + \abs{\nabla u_i}^2}^{\alpha - 1}  \nabla u_i\cdot \nabla(u_i - u_j)\right.\right. \nonumber\\
			&\quad \quad-  \left.\left.\p{1 + \abs{\nabla u_j}^2}^{\alpha - 1}   \nabla u_j\cdot \nabla(u_i - u_j)\right] dV_g \right.\nonumber\\
			& \quad \quad + \int_M \left[ H(\nablap u_i, \nabla u_i)\cdot (u_i - u_j)  -  H(\nablap u_j, \nabla u_j)\cdot(u_i - u_j) \right]dV_g\nonumber\\
			&\quad \quad - \int_M\p{1 + \abs{\nabla u_i}^2}^{\alpha - 1} A(\nabla u_i, \nabla u_i) \cdot (u_i - u_j) dV_g\nonumber\\
			& \quad \quad \left.- \int_M \p{1 + \abs{\nabla u_j}^2}^{\alpha - 1} A(\nabla u_j, \nabla u_j) \cdot (u_i - u_j) dV_g\right|\nonumber\\
			&\quad \quad\quad \quad \longrightarrow 0 \quad \text{as } i, j\rightarrow\infty.
		\end{align}
		We consider the above asymptotic quantity \eqref{E 53} term by term, first we note that
		\begin{align*}
			&\left| \int_M \p{1 + \abs{\nabla u_i}^2}^{\alpha - 1} A(\nabla u_i, \nabla u_i) \cdot (u_i - u_j) dV_g\right|\\
			&\quad \quad\quad \quad\leq 2\norm{u_i - u_j}_{L^\infty(M,\R^K)} \norm{A}_{L^\infty(N)} E_\alpha(u_i) \longrightarrow 0 \quad \text{as } i,j\rightarrow\infty.
		\end{align*}
		And similarly, 
		\begin{align*}
			\abs{\int_M H(\nablap u_i, \nabla u_i)\cdot (u_i - u_j) dV_g} &\leq 2\norm{u_i - u_j}_{L^\infty(M,\R^K)} \norm{H}_{L^\infty(N)} E(u_i)\\
			&\quad\longrightarrow 0\quad  \text{as } i,j\rightarrow\infty.
		\end{align*}
		Thus, \eqref{E 53} is equivalent to 
		\begin{multline*}
			\left|\int_M  \alpha \left[ \p{1 + \abs{\nabla u_i}^2}^{\alpha - 1}   \nabla u_i\cdot \nabla(u_i - u_j) \right.\right. \\  \left.\left. -  \p{1 + \abs{\nabla u_j}^2}^{\alpha - 1}  \nabla u_j\cdot \nabla(u_i - u_j) \right] dV_g \right| \longrightarrow 0 \quad \text{as } i,j\rightarrow\infty,
		\end{multline*}
		and this further implies that 
		\begin{align*}
			&\int_M \abs{\nabla u_i - \nabla u_j}^{2\alpha} d V_g\\
			&\quad \leq C \int_M \int_0^1 \abs{\nabla u_j + t (\nabla u_i - \nabla u_j)}^{2\alpha - 2} \abs{\nabla u_i - \nabla u_j}^2 dt d V_g\\
			&\quad\leq C \int_{M}\int_0^1 \int_0^1 d^2\left(1 + \abs{\nabla u}^2\right)^\alpha(\nabla u_j + t (\nabla u_i - \nabla u_j))(\nabla u_i - \nabla u_j) dt dV_g \\
			&\quad\leq C\left|\int_M  \alpha \left[ \p{1 + \abs{\nabla u_i}^2}^{\alpha - 1}  \nabla u_i\cdot \nabla(u_i - u_j) \right.\right.\\
			&\quad\quad \quad  \left.\left. -  \p{1 + \abs{\nabla u_j}^2}^{\alpha - 1}  \nabla u_j\cdot \nabla(u_i - u_j) \right] dV_g \right| \longrightarrow 0 \quad \text{as } i,j\rightarrow\infty,
		\end{align*}
		that is, $\set{\nabla u_j}$ is Cauchy in $L^{2\alpha}(M,\R^K)$. In summary, by combining the fact that $u_j \rightarrow u_0$ in $C^0(M, \mathbb{R}^K)$ with that $\{\nabla u_j\}_{j \in \mathbb N}$ is a Cauchy sequence in $L^{2\alpha}(M, \mathbb{R}^K)$ and the completeness of $W^{1,2\alpha}(M, \mathbb{R}^K)$ , we can conclude the desired assertion of Lemma \ref{P-S condition}.
	\end{proof}
	Based on the beginning part of the proof for Lemma \ref{P-S condition}, it can be argued that the functional $E^{\omega}_{\alpha}$ is bounded from below, although the lower bound may be dependent on the choice of $\alpha > 1$. Thus, by a classical consequence of variational analysis in \cite{palais1966foundations}, we have 
	\begin{coro}\label{coro palais}
		For functional $E^\omega_{\alpha} : W^{1,2\alpha}(M,N) \rightarrow \R$, the followings hold
		\begin{enumerate}
			\item $E^\omega_{\alpha}$ attains its minimum value in every component of $W^{1,2\alpha}(M,N)$;
			\item If there are no critical points of $E^\omega_{\alpha}$ in the interval $[a,b]$, then there exists a deformation retraction
			\begin{equation*}
				\varrho : (E^{\omega}_{\alpha})^{-1}(-\infty, b] \rightarrow (E^{\omega}_{\alpha})^{-1}(-\infty, a].
			\end{equation*}
		\end{enumerate}
	\end{coro}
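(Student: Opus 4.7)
The plan is to derive both parts from the combination of the Palais-Smale condition established in Lemma \ref{P-S condition} together with the lower boundedness of $E^\omega_\alpha$ on $W^{1,2\alpha}(M,N)$ for each fixed $\alpha>1$, which was already observed in the opening paragraph of the proof of Lemma \ref{P-S condition}: indeed, the inequality
\begin{equation*}
E^\omega_{\alpha}(u)\ge \tfrac{1}{2}\int_M(1+|\nabla u|^2)^\alpha\,dV_g-\tfrac{1}{2}\|\omega\|_{L^\infty(N)}\int_M|\nabla u|^2\,dV_g
\end{equation*}
together with Young's inequality applied to the second term (absorbing it into the first) yields $E^\omega_\alpha\ge -C_\alpha$ for some constant depending only on $\alpha$ and $\omega$. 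With these two ingredients in hand, both parts are standard Palais-Smale consequences in the sense of Palais \cite{palais1966foundations}; I outline what needs checking in our Banach manifold setting.

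For part (1), fix a path-component $\mathcal{C}\subset W^{1,2\alpha}(M,N)$ and set $m_\alpha:=\inf_{u\in\mathcal{C}}E^\omega_\alpha(u)$, finite by the lower bound above. I would invoke Ekeland's variational principle on the complete metric space $\mathcal{C}$ (equipped with the induced $W^{1,2\alpha}$-distance) to produce a minimizing sequence $\{u_j\}\subset\mathcal{C}$ satisfying $E^\omega_\alpha(u_j)\to m_\alpha$ and $\|\delta E^\omega_\alpha(u_j)\|\to 0$. The Palais-Smale condition from Lemma \ref{P-S condition} then furnishes a subsequence $u_j\to u_\infty$ strongly in $W^{1,2\alpha}(M,N)$, and since $\mathcal{C}$ is closed in this topology, $u_\infty\in\mathcal{C}$ realizes $m_\alpha$.

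For part (2), I would construct a pseudo-gradient vector field $X$ for $E^\omega_\alpha$ on the open set $U=\{\|\delta E^\omega_\alpha\|>0\}$, i.e.\ a locally Lipschitz tangent vector field with $\|X(u)\|\le 2\|\delta E^\omega_\alpha(u)\|$ and $\delta E^\omega_\alpha(u)(X(u))\ge \|\delta E^\omega_\alpha(u)\|^2$, using a partition of unity on $U$ (which exists since the Banach manifold $W^{1,2\alpha}(M,N)$ is paracompact). The absence of critical points in $[a,b]$, combined with Palais-Smale, gives a uniform lower bound $\|\delta E^\omega_\alpha(u)\|\ge\delta_0>0$ on $(E^\omega_\alpha)^{-1}([a,b])$; otherwise one could extract a Palais-Smale sequence in this sublevel slab converging to a critical point in $[a,b]$. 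Then the flow of a suitable cut-off of $-\eta(u)X(u)/\|X(u)\|^2$ (with $\eta$ a smooth function vanishing outside a neighborhood of the slab and equal to $1$ on it) is defined for all forward time, strictly decreases $E^\omega_\alpha$ at unit rate on the slab, and hence in finite time $T=b-a$ pushes $(E^\omega_\alpha)^{-1}(-\infty,b]$ into $(E^\omega_\alpha)^{-1}(-\infty,a]$, while fixing $(E^\omega_\alpha)^{-1}(-\infty,a]$ pointwise; this gives the desired deformation retraction $\varrho$.

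The main technical obstacle is ensuring the pseudo-gradient flow is complete in forward time on the infinite-dimensional Banach manifold $W^{1,2\alpha}(M,N)$. Non-completeness could only arise through blow-up of the flow line in the ambient Banach norm, but along the flow $E^\omega_\alpha$ is non-increasing, so the $W^{1,2\alpha}$-norm of trajectories is controlled by the coercivity estimate derived at the start of Lemma \ref{P-S condition}'s proof; this rules out escape to infinity and secures completeness. Once this is verified, both conclusions follow by the standard minimax/deformation machinery.
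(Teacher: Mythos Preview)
Your proposal is correct and aligns with the paper's approach. The paper does not give a proof at all: it simply remarks that the opening estimate in the proof of Lemma~\ref{P-S condition} yields a lower bound for $E^\omega_\alpha$ (depending on $\alpha$), and then cites Palais \cite{palais1966foundations} for both conclusions as ``a classical consequence of variational analysis.'' Your write-up is exactly an unpacking of what that citation contains---lower boundedness plus Palais--Smale feeding into the standard deformation machinery---so there is no substantive difference.

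One minor remark: for part~(1) you invoke Ekeland's variational principle, which postdates \cite{palais1966foundations}; the original Palais argument obtains the minimizer directly from the negative pseudo-gradient flow (any forward trajectory starting in a component has bounded $E^\omega_\alpha$-values, hence by Palais--Smale subconverges to a critical point, and taking an initial point with near-infimal energy forces this critical point to be a minimizer). Either route is standard and either is acceptable here. Your handling of the forward-completeness issue for part~(2) via the coercivity bound is the right observation; note also that the paper later records an explicit pseudo-gradient construction (Lemma~\ref{lemma vector}) for a closely related purpose, confirming that the Banach-manifold technicalities you flag are indeed manageable in this setting.
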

	
	We will now examine the smoothness of the critical points $u \in W^{1,2\alpha}(M,N)$ with respect to the functional $E^\omega_{\alpha}$ by following the argument in \cite[Proposition 2.3]{sacks1981existence}, which was also indicated by Moore \cite[Remark 4.4.8.]{moore2017}.  To ensure the comprehensiveness of our content, we provide an outline of the proof here.
	\begin{lemma}\label{lem regularity}
		For sufficiently small  $ \alpha_0 - 1 $, all critical points of $ E^ \omega_{\alpha}: W^{1,2\alpha}(M,N) \rightarrow \R $ are smooth. 
	\end{lemma}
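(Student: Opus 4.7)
The plan is to follow the bootstrap scheme of Sacks-Uhlenbeck \cite[Prop. 2.3]{sacks1981existence}, noting that the extra term $H(u)(\nabla^{\perp} u, \nabla u)/(\alpha(1+|\nabla u|^{2})^{\alpha-1})$ on the right-hand side of \eqref{el} is bounded by $C|\nabla u|^{2}$ since $H$ is continuous and $N$ is compact, so it is of the same structural type as the terms already treated in the harmonic map case and does not affect the regularity theory.

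The starting point is that a critical point $u\in W^{1,2\alpha}(M,N)$ lies in $C^{0,1-1/\alpha}(M,N)$ by the Sobolev embedding on the 2-surface $M$, so after fixing isothermal coordinates on a small ball $B\subset M$ and composing with a coordinate chart of $N$ in which $u(B)$ has small image, one may view $u$ as a $\mathbb{R}^{n}$-valued map solving a system which in non-divergence form reads
\[
a^{ij}(\nabla u)\,\partial_{i}\partial_{j}u \;=\; F(u,\nabla u),\qquad a^{ij}(p)=\delta^{ij}+2(\alpha-1)\frac{p^{i}p^{j}}{1+|p|^{2}},
\]
with $F$ having at most quadratic growth in $\nabla u$. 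The matrix $(a^{ij})$ has eigenvalues between $1$ and $2\alpha-1$, so the system is uniformly elliptic with ellipticity constants tending to $1$ as $\alpha\searrow 1$; this is exactly where the smallness of $\alpha_{0}-1$ enters.

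Next I would improve the integrability of $\nabla u$ in two stages. First, use the divergence form \eqref{el1} together with a Caccioppoli/difference-quotient argument to obtain $u\in W^{2,q}_{\mathrm{loc}}(B)$ for some $q>1$; here the smallness of $\alpha-1$ guarantees that the natural $L^{p}$-Calder\'on–Zygmund constants absorb the quasilinear perturbation, as in Sacks-Uhlenbeck's small-energy hypothesis replaced here by a small-$(\alpha-1)$ hypothesis. Second, by Sobolev embedding $W^{2,q}\hookrightarrow W^{1,2q}$ in two dimensions and iterating the Morrey-type estimate coming from the integral identity one can push the exponent arbitrarily high, eventually giving $\nabla u\in L^{\infty}_{\mathrm{loc}}$.

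Once $|\nabla u|$ is bounded locally, the coefficients $a^{ij}(\nabla u)$ become H\"older continuous functions of $x$ and the right-hand side $F(u,\nabla u)$ is bounded. Standard linear Schauder theory then yields $u\in C^{1,\beta}_{\mathrm{loc}}$, and a further bootstrap (differentiating the equation and treating each first derivative as solving a linear elliptic equation with $C^{\beta}$ coefficients and $C^{\beta}$ right-hand side) propagates smoothness, so $u\in C^{\infty}(M,N)$. The main obstacle I anticipate is the first bootstrap step from $W^{1,2\alpha}$ to some $W^{2,q}$ with $q>1$: the quadratic growth of $F$ in $\nabla u$ combined with the quasilinearity of the leading term sits exactly at the critical scaling in two dimensions, and it is precisely the smallness of $\alpha-1$ (hence $\alpha$ close to $1$) that allows the nonlinear contributions to be absorbed into the leading elliptic part via Korn-type perturbation of the Calder\'on–Zygmund constants.
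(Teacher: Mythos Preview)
Your proposal is correct and follows essentially the same Sacks--Uhlenbeck bootstrap scheme as the paper: start from the H\"older continuity given by Sobolev embedding, use the smallness of $\alpha-1$ to ensure uniform ellipticity of the quasilinear principal part, invoke a first regularity step (the paper cites Morrey/Evans directly, you spell out the difference-quotient/Caccioppoli mechanism) to reach $W^{2,q}$, and then bootstrap via $L^p$-theory and Schauder to smoothness. The only cosmetic difference is that you work in intrinsic local coordinates on $N$ while the paper uses the ambient embedding $N\subset\R^K$, but this does not affect the argument.
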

	\begin{proof}
		By Sobolev embedding $W^{1,2\alpha}(M,N) \subset W^{1,2\alpha}(M,\R^K) \hookrightarrow C^{0,1 - 1/\alpha}(M,\R^K)$, the critical point $u\in W^{1,2\alpha}(M,N)$ is H\"{o}lder continuous. The Euler-lagrange equation for $u$ can be written as
		\begin{align}
			\label{eq:lem regu 1}
			\diver\left(\left(1+|\nabla  u|^2\right)^{\alpha - 1}\nabla  u\right) &+ \left(1 + |\nabla  u|^2\right)^{\alpha -1}A(u)(\nabla  u,\nabla  u)\nonumber \\
			&= \frac{1}{\alpha}H(u)(\nablap  u, \nabla  u)
		\end{align}
		in weak sense. This is a quasi-linear uniformly elliptic system when $\alpha - 1$ is small enough. Based on standard variational analysis consequences, such as in \cite[Theorem 1.11.1]{morrey1966} or \cite[Chapter 8.3, Theorem 1]{Evans2010}, it can be concluded that $u$ belongs to the space $W^{1,2\alpha}(M,N)$. Thus, the Euler-Lagrange equation \eqref{eq:lem regu 1} can be written pointwisely as
		\begin{equation*}
			\Delta u + (\alpha - 1)\frac{\nabla_{}|\nabla_{} u|^2\cdot \nabla_{} u}{1+|\nabla_{} u|^2}  = \frac{H(u)(\nablap_{} u, \nabla_{} u)}{\alpha \left(1 + |\nabla_{} u|^2\right)^{\alpha - 1}} -  A(u)\left(\nabla_{} u, \nabla_{} u\right). 
		\end{equation*}
		The right hand side of the above equation belongs to $L^{\alpha}(M,N)$, $\alpha > 1$, for $u \in W^{1,2\alpha}(M,N)$. From the $L^p$ theory of uniformly elliptic equations, we can conclude that $u$ belongs to  $W^{2,2\alpha}(M,N)$.  Then, the conclusion of Lemma \ref{lem regularity} follows by applying standard elliptic PDE's bootstrapping argument.
	\end{proof}
	
	
	\vskip2cm
	\section{Existence of non-trivial Critical Points of the Perturbed Functional \texorpdfstring{$E^{\lambda\omega}_{\alpha}$}{Lg}}\label{sec: 3 non-constant u alpha}
	\vskip10pt
	
	In this section, we combine the analytic preliminaries established in previous Section \ref{section 2} with the min-max theory for perturbed functional $E^\omega_\alpha$, that will be described in this section, to establish the existence of a sequences of non-trivial critical points $u_{\alpha_j}$ of $E^\omega_{\alpha_j}$. More precisely, for each fixed $\omega \in C^2(\wedge^2(N))$, we can find a  generic choice of $\lambda \in \R_+$ to construct a sequence of non-constant critical points ${u_{\alpha_j}}$ of $E^{\lambda\omega}_{\alpha_j}$ (see \eqref{eq:E lambda alpha psi} below for the definition of $E^{\lambda \omega}_\alpha$) with bounded $\alpha_j$-energy that is uniformly with respect to $j \in \mathbb N$, see Proposition \ref{prop energy bound}, and with bounded Morse index from above, see Theorem \ref{thm:Morse index k-2}. The main result of this section is summarized in Corollary \ref{coro:section 3 summary}.
	\ 
	\vskip5pt
	\subsection{Construction of Min-Max Type Critical Value.} \label{section 3.1}
	\ 
	\vskip5pt
	In this subsection, we construct the min-max type critical value by introducing a higher dimensional version of \textit{width}, in the spirit of \cite{Colding-Minicozzi2008b} for finding minimal 2-sphere in Riemannian 3-sphere. Furthermore, we always assume that $N$ is a closed Riemannian manifold with $\pi_k(N) \neq 0$ for some $k \geq 2$ and $M$ is the standard 2-sphere $\S^2$ which is same as the assumption of our main Theorem \ref{main theorem 1} and Theorem \ref{main theorem 2}. 
	
	Then, we can choose a least integer $k \geq 2$ such that $\pi_k(N) \neq 0$. Let 
 $$I^{k-2} = \{t=(t^1,t^2,\dots,t^{k-2})\,:\, 0\leq t^i \leq 1, 1\leq i\leq k-2\}$$ 
 be the $k-2$ dimensional unit complex cube. We define a sweepout as a continuous map $\sigma : I^{k-2} \rightarrow W^{1,2\alpha}(\S^2,N)$, which also can be viewed as a map $\sigma(\cdot,t):\S^2\times I^{k-2} \rightarrow N$, such that $\sigma$ assigns $\partial I^{k-2}$ to constant maps which can be identified with the points in $N$ and the map $f_\sigma : \S^k \rightarrow N$ induced by $\sigma$ represents a non-trivial free homotopy class in $ \pi_{k}(N)$. Considering that $W^{1,2\alpha}(\mathbb{S}^2,N)\hookrightarrow C^0(M,N)$ when $\alpha>1$, it can be inferred that the induced map $f_\sigma$ is continuous. Then
	take a non-trivial homotopy class $[\iota] \in \pi_k(N)$ and we define the set of  admissible sweepouts as below
	\begin{equation*}
		\mathscr{S} = \left\{\sigma \in C^0(I^{k-2},W^{1,2\alpha}(\S^2,N))\,:\, \sigma(t) \text{ is constant for } t\in \partial I^{k-2} \text{ and } f_\sigma \in [\iota]\right\}
	\end{equation*}
	Thus, the corresponding min-max value for $E^\omega_{\alpha}$ is defined as following
	\begin{equation*}
		\mathcal{W}_{\alpha,\omega} := \inf_{\sigma \in \mathscr{S}} \sup_{t \in I^{k-2}} E^\omega_{\alpha} (\sigma(t))
	\end{equation*}
	which is called \textit{width} depicted by the functional $E^\omega_{\alpha}$ upon $W^{1,2\alpha}(\S^2 , N)$. It is important to note that for every admissible sweepout $\sigma$, the boundary of the complex cube $\partial I^{k-2}$ is mapped to constant maps. This implies that
	\begin{equation*}
		0\leq \mathcal{W}_{\alpha, \omega} < \infty
	\end{equation*}
	which represents a well-defined real-valued number depending on the choice of $\alpha > 1$ and $\omega$.
	\ 
	\vskip5pt
	\subsection{\texorpdfstring{{$\alpha$}}{Lg}-Energy Estimates for Min-Max Type Critical Points}\label{section 3.2}
	\ 
	\vskip5pt
	In this subsection, we apply the idea of Struwe's monotonicity technique  \cite{Struwe-1988} to obtain uniformly $\alpha_j$-energy estimates for some specific subsequence of critical points $\{u_{\alpha_j}\}_{j\in \mathbb N}$ of the perturbed functional $E^{\lambda\omega}_{\alpha_j}$ for generic choice of $\lambda > 0$, see \eqref{eq:E lambda alpha psi} below for the definition of $E^{\lambda \omega}_\alpha$. This is essential in the process of obtaining the existence of $H$-sphere by letting $\alpha_j \searrow 1$ for suitable choice of sequences $\alpha_j \searrow 1$.
	
	In order to apply the monotonicity technique, we need to bring in a scalar parameter in our perturbed functional as follows
	\begin{equation}\label{eq:E lambda alpha psi}
		E^{\lambda\omega}_{\alpha}(u) : = E_\alpha(u) + \lambda \int_{\S^2} u^*\omega
	\end{equation}
	for given $\omega \in C^2(\wedge^2(N))$ and positive number $\lambda > 0$. And we use abbreviated notation $\mathcal{W}_{\alpha,\lambda}$ to denote the min-max value $\mathcal{W}_{\alpha,\lambda\omega}$ constructed in previous Section \ref{section 3.1} and  $u_\alpha$ is denoted to be the critical points of $E^{\lambda\omega}_{\alpha}$. Equipped with these notations for functional $E^{\lambda \omega}_{\alpha}$ and corresponding min-max value $\mathcal{W}_{\alpha,\lambda}$ we have
	\begin{lemma}\label{lem monotone}
		Viewing $\mathcal{W}_{\alpha,\lambda}$ as a two variable function of $\alpha \in (1,\infty)$ and $\lambda \in (0,\infty)$, the following properties hold
		\begin{enumerate}
			\item \label{lem monotone part 1} For each $\alpha > 1$, the function $\lambda \mapsto {\mathcal{W}_{\alpha,\lambda}}/{\lambda}$ is non-increasing;
			\item \label{lem monotone part 2} For each $\lambda > 0$, the function $\alpha \mapsto \mathcal{W}_{\alpha,\lambda}$ is non-decreasing;
		\end{enumerate}
		Moreover, given any sequences $\alpha_j \searrow 1$, then, for almost every $\lambda > 0$, there exists a subsequence of $\{\alpha_j\}_{j \in \mathbb N}$, also denoted by $\alpha_j$, and a constant $C > 0$ which is independent of $j$, such that 
		\begin{equation*}
			0 \leq \frac{d}{d\lambda}\p{- \frac{\mathcal{W}_{\alpha_j,\lambda}}{\lambda}} \leq C, \quad \text{for all } j \in \mathbb{N}.
		\end{equation*}
	\end{lemma}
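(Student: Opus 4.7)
The plan is to handle the three claims separately. Parts (1) and (2) reduce to direct comparisons of the defining inf--sup, while the final quantitative claim is a measure-theoretic extraction from a uniformly bounded sequence of monotone functions.

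For part (1), fix $\alpha>1$ and observe that for every sweepout $\sigma\in\mathscr{S}$ and every $t\in I^{k-2}$,
$$\frac{E^{\lambda\omega}_{\alpha}(\sigma(t))}{\lambda} \;=\; \frac{E_{\alpha}(\sigma(t))}{\lambda} \;+\; \int_{\S^2}\sigma(t)^*\omega$$
is non-increasing in $\lambda$ since $E_{\alpha}\ge 0$. Taking $\sup_{t}$ and then $\inf_{\sigma}$ preserves this monotonicity. For part (2), fix $\lambda>0$ and $1<\alpha_1\le\alpha_2$. The pointwise inequality $(1+|\nabla u|^2)^{\alpha_1}\le(1+|\nabla u|^2)^{\alpha_2}$ gives $E^{\lambda\omega}_{\alpha_1}(u)\le E^{\lambda\omega}_{\alpha_2}(u)$ for every $u$, while the Sobolev inclusion $W^{1,2\alpha_2}(\S^2,N)\subset W^{1,2\alpha_1}(\S^2,N)$ implies $\mathscr{S}_{\alpha_2}\subset\mathscr{S}_{\alpha_1}$. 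Combining these yields
$$\mathcal{W}_{\alpha_1,\lambda} \;\le\; \inf_{\sigma\in\mathscr{S}_{\alpha_2}}\sup_{t} E^{\lambda\omega}_{\alpha_1}(\sigma(t)) \;\le\; \mathcal{W}_{\alpha_2,\lambda}.$$

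For the subsequence claim, set $f_{j}(\lambda):=-\mathcal{W}_{\alpha_j,\lambda}/\lambda$, which by part (1) is non-decreasing on $(0,\infty)$. I first establish uniform boundedness on compact $[a,b]\subset(0,\infty)$. Testing against a fixed smooth sweepout $\sigma_{0}$ representing $[\iota]$, the uniform convergence $E_{\alpha_j}(\sigma_{0}(t))\to E(\sigma_{0}(t))+\mathrm{Vol}(\S^2)/2$ as $\alpha_j\searrow 1$ gives $\mathcal{W}_{\alpha_j,\lambda}\le C_{1}+\lambda C_{2}$ with constants independent of $j$. A uniform lower bound comes from the boundary condition: for $t\in\partial I^{k-2}$, $\sigma(t)$ is a constant map, whence $|\nabla\sigma(t)|\equiv 0$ and $\sigma(t)^*\omega\equiv 0$, forcing $E^{\lambda\omega}_{\alpha_j}(\sigma(t))=\mathrm{Vol}(\S^2)/2$ for every admissible $\sigma$, and so $\mathcal{W}_{\alpha_j,\lambda}\ge\mathrm{Vol}(\S^2)/2$. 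Hence $|f_{j}|\le M$ on $[a,b]$ uniformly in $j$. Monotonicity then gives $f_{j}'(\lambda)\ge 0$ at a.e.\ $\lambda$ together with the uniform bound
$$\int_{a}^{b} f_{j}'(\lambda)\,d\lambda \;\le\; f_{j}(b)-f_{j}(a) \;\le\; 2M,$$
and Markov's inequality yields $\mathrm{meas}\{\lambda:f_{j}'(\lambda)>L\}\le 2M/L$ uniformly in $j$.

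The final step extracts a subsequence. Set $B_{N}^{L}:=\bigcap_{j\ge N}\{f_{j}'>L\}$; this chain is non-decreasing in $N$ with $B_{N}^{L}\subset\{f_{N}'>L\}$, so continuity of measure from below gives
$$\mathrm{meas}\{\liminf_{j} f_{j}'(\lambda)>L\} \;\le\; \mathrm{meas}\bigcup_{N} B_{N}^{L} \;=\; \lim_{N}\mathrm{meas}(B_{N}^{L}) \;\le\; \frac{2M}{L}.$$
Intersecting over $L\in\mathbb{N}$ shows $\mathrm{meas}\{\liminf_{j} f_{j}'(\lambda)=\infty\}=0$. Excluding this null set together with the countable union of null sets on which some $f_{j}'$ fails to exist, at every remaining $\lambda$ one may choose a subsequence $j_{k}$ with $f_{j_{k}}'(\lambda)\le C(\lambda):=\liminf_{j} f_{j}'(\lambda)+1$; combined with $f_{j}'\ge 0$, relabeling produces the claimed bound. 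The principal subtlety is this last measure-theoretic extraction: a naive Borel--Cantelli bound on $\mathrm{meas}(\limsup_{j}\{f_{j}'>L\})$ fails because $\sum_{j} 2M/L=\infty$, and the argument must instead pivot to the $\liminf$ set, where the sequence $B_{N}^{L}$ is increasing so continuity of measure applies.
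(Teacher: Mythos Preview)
Your proof is correct and follows essentially the same strategy as the paper's: parts (1) and (2) are identical in spirit (you are in fact slightly more careful than the paper in part (2), explicitly handling the $\alpha$-dependence of the admissible class via the Sobolev inclusion), and part (3) reaches the same conclusion that $\liminf_{j} f_j'(\lambda)<\infty$ for a.e.\ $\lambda$, after which a subsequence extraction finishes. The only notable difference is in the packaging of part (3): where you use Markov's inequality and a continuity-of-measure argument on the sets $B_N^L$, the paper simply invokes Fatou's lemma,
\[
\int_{a}^{b}\liminf_{j} f_j'(\lambda)\,d\lambda \;\le\; \liminf_{j}\int_{a}^{b} f_j'(\lambda)\,d\lambda \;\le\; 2M,
\]
which immediately forces $\liminf_{j} f_j'(\lambda)<\infty$ a.e.\ and sidesteps entirely the ``principal subtlety'' you flagged.
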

	\begin{proof}
		We first consider part \eqref{lem monotone part 1}, for any $u \in W^{1,2\alpha}(\S^2,N)$ and $0 < \lambda_1 < \lambda_2$, by the expression \eqref{eq:E lambda alpha psi} of functional $E^{\lambda\omega}_{\alpha}$ we have 
		\begin{equation}\label{eq 1}
			\frac{E_{\alpha}^{\lambda_1\omega}(u)}{\lambda_1} - \frac{E^{\lambda_2\omega}_{\alpha}(u)}{\lambda_2} = \frac{\lambda_2 - \lambda_1}{\lambda_1\cdot \lambda_2} {E_\alpha(u)} \geq 0.
		\end{equation}
		By the construction of min-max value $\mathcal{W}_{\alpha,\lambda}$, for any $\varepsilon > 0$, there exists $\sigma \in \mathscr{S}$ such that 
		\begin{equation*}
			\mathcal{W}_{\alpha,\lambda_1} \leq \max_{t \in I^{k-2}} E^{\lambda_1\omega}_{\alpha}(\sigma(t)) \leq \mathcal{W}_{\alpha,\lambda_1} + \varepsilon.
		\end{equation*}
		Thus, using the monotone formula \eqref{eq 1} we can estimate
		\begin{equation*}
			\frac{\mathcal{W}_{\alpha,\lambda_2}}{\lambda_2} \leq \max_{t \in I^{k-2}} \frac{E^{\lambda_2\omega}_{\alpha}(\sigma(t))}{\lambda_2} \leq \max_{t \in I^{k-2}} \frac{E^{\lambda_1\omega}_{\alpha}(\sigma(t))}{\lambda_1} \leq \frac{\mathcal{W}_{\alpha,\lambda_1}}{\lambda_1} + \frac{\varepsilon}{\lambda_1}.
		\end{equation*}
		Since the choice of $\varepsilon > 0$ is arbitrary, we can obtain the conclusion of \eqref{lem monotone part 1}.
		
		For part \eqref{lem monotone part 2}, it is worth noting that the function $(1 + |x|^2)^\alpha$ is increasing with respect to $\alpha > 1$, hence we have 
		\begin{equation*}
			E_{\alpha_2}^{\lambda\omega}(u) - E_{\alpha_1}^{\lambda\omega}(u) = \int_{\S^2} \p{1 + |\nabla u|^2}^{\alpha_2} - \p{1 + |\nabla u|^2}^{\alpha_1} dV_g \geq 0
		\end{equation*}
		for any $u\in W^{1, 2\alpha}(\S^2, N)$ and $1 < \alpha_1 < \alpha_2$. The remaining argument is essentially identical to the process outlined in part \eqref{lem monotone part 1} and therefore leads to the conclusion as stated in \eqref{lem monotone part 2}.
		
		For the last statement, since $\mathcal{W}_{\alpha_j,\lambda}/\lambda$ is a monotone function with respect to $\lambda$ by part \eqref{lem monotone part 1},  the derivative 
		\begin{equation*}
			\frac{d}{d \lambda} \p{- \frac{\mathcal{W}_{\alpha_j,\lambda}}{\lambda}}
		\end{equation*}
		exists for almost every $\lambda \in (0,\infty)$ and is non-negative. Furthermore, given any $0 < \lambda_1 < \lambda_2 < \infty$, we have 
		\begin{align*}
			\int_{\lambda_1}^{\lambda_2} \frac{d}{d\lambda} \p{- \frac{\mathcal{W}_{\alpha_j,\lambda}}{\lambda}} d\lambda \leq \frac{\mathcal{W}_{\alpha_j, \lambda_1}}{\lambda_1} - \frac{\mathcal{W}_{\alpha_j,\lambda_2}}{\lambda_2} < \infty
		\end{align*}
		Then, by Fatou's Lemma, we have 
		\begin{align*}
			\int_{\lambda_1}^{\lambda_2} \liminf_{j\rightarrow \infty} \frac{d}{d\lambda} \p{- \frac{\mathcal{W}_{\alpha_j,\lambda}}{\lambda}}d\lambda
			&\leq \liminf_{j\rightarrow \infty}\int_{\lambda_1}^{\lambda_2} \frac{d}{d\lambda} \p{- \frac{\mathcal{W}_{\alpha_j,\lambda}}{\lambda}} d\lambda \\
			&\leq \liminf_{j\rightarrow \infty} \p{\frac{\mathcal{W}_{\alpha_j, \lambda_1}}{\lambda_1} - \frac{\mathcal{W}_{\alpha_j,\lambda_2}}{\lambda_2}}\\
			&\leq \frac{\mathcal{W}_{2, \lambda_1}}{\lambda_1} - \frac{\mathcal{W}_{1,\lambda_2}}{\lambda_2} < \infty,
		\end{align*}
		where in the last inequality we used the monotonicity of $\mathcal{W}_{\alpha,\lambda}$ with respect to $\alpha$ obtained in part \eqref{lem monotone part 2}. It can be inferred that for almost every $ \lambda $ within the range of $ [\lambda _1, \lambda _2]$ there holds 
		\begin{equation*}
			\liminf_{j\rightarrow \infty} \frac{d}{d\lambda} \p{- \frac{\mathcal{W}_{\alpha_j,\lambda}}{\lambda}} < \infty.
		\end{equation*}
		Therefore, the last assertion of Lemma \ref{lem monotone} follows by the arbitrary choices of $\lambda_1$ and $\lambda_2$.
	\end{proof}

Next, in Lemma \ref{admissable h} below, we show that based on the conclusion of Lemma \ref{lem monotone} for certain choice $t_0 \in I^{k-2}$ there exists a $\alpha$-energy control for some sweepouts valued at $t_0$, with upper bounds depending on the constant $C$ obtained in last assertion of Lemma \ref{lem monotone}.

	\begin{lemma}\label{admissable h}
		Let $\lambda > 0$ and $\alpha_0 >\alpha > 1$ for some small enough $\alpha_0 - 1$. And assume there exists a constant $C > 0$, independent of $\alpha \searrow 1$, such that
		\begin{equation*}
			0 \leq \frac{d}{d\lambda}\p{- \frac{\mathcal{W}_{\alpha,\lambda}}{\lambda}} \leq C.
		\end{equation*}
		Then, there exists a sequence of  sweepouts $\sigma_j:I^{k-2}\rightarrow W^{1,2\alpha}(\S^2,N)$ such that 
		\begin{equation*}
			\max_{t \in I^{k-2}} E_{\alpha}^{\lambda\omega}(\sigma_j(t)) \leq \mathcal{W}_{\alpha,\lambda} + \lambda\varepsilon_j,
		\end{equation*}
		and 
		\begin{equation*}
			E_\alpha(\sigma_j(t_0)) \leq 8 \lambda^2 C 
		\end{equation*}
		as long as $t_0 \in I^{k-2}$ satisfying $E_{\alpha}^{\lambda\omega}(\sigma_j(t_0)) \geq \mathcal{W}_{\alpha,\lambda} - \lambda\varepsilon_j$, for any sequence $\varepsilon_j \searrow 0$ with $0 < \varepsilon_j \leq \lambda /2$.
	\end{lemma}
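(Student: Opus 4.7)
The plan is to implement a Struwe-style asymmetric trick: rather than picking $\sigma_j$ almost-minimizing at $\lambda$ itself, I would pick $\sigma_j$ almost-minimizing for $\mathcal{W}_{\alpha,\mu_j}$ at a slightly smaller parameter $\mu_j := \lambda - h_j$, where $h_j := \varepsilon_j/(4C)$, with rescaled admissible error $\varepsilon_j/2$. Concretely, by the infimum definition of width, there exists $\sigma_j \in \mathscr{S}$ such that $\max_{t}E^{\mu_j\omega}_\alpha(\sigma_j(t))/\mu_j \leq F_\alpha(\mu_j) + \varepsilon_j/2$, where $F_\alpha(\lambda) := \mathcal{W}_{\alpha,\lambda}/\lambda$. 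The hypothesis will give $F_\alpha(\mu_j) - F_\alpha(\lambda) \leq 2Ch_j = \varepsilon_j/2$ for $j$ large via secant-slope convergence, and the two-parameter comparison then extracts both conclusions from this single choice of $\sigma_j$.

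For Property 1, the key input is that $\lambda \mapsto E^{\lambda\omega}_\alpha(u)/\lambda = E_\alpha(u)/\lambda + \int u^*\omega$ is non-increasing in $\lambda$ for each fixed $u$ (the same slicewise computation used in Lemma \ref{lem monotone}(1)), so the max over $t$ is also non-increasing. Thus
$$\max_{t \in I^{k-2}} \frac{E^{\lambda\omega}_\alpha(\sigma_j(t))}{\lambda} \leq \max_{t \in I^{k-2}} \frac{E^{\mu_j\omega}_\alpha(\sigma_j(t))}{\mu_j} \leq F_\alpha(\mu_j) + \frac{\varepsilon_j}{2} \leq F_\alpha(\lambda) + \varepsilon_j,$$
which, multiplied by $\lambda$, gives $\max_t E^{\lambda\omega}_\alpha(\sigma_j(t)) \leq \mathcal{W}_{\alpha,\lambda} + \lambda\varepsilon_j$.

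For Property 2, for any $t_0$ with $E^{\lambda\omega}_\alpha(\sigma_j(t_0)) \geq \mathcal{W}_{\alpha,\lambda} - \lambda\varepsilon_j$, I would first write
$$\frac{E_\alpha(\sigma_j(t_0))}{\lambda} + \int \sigma_j(t_0)^*\omega \geq F_\alpha(\lambda) - \varepsilon_j,$$
and then, from the almost-minimizing property at $\mu_j$ applied to this same $t_0$,
$$\frac{E_\alpha(\sigma_j(t_0))}{\mu_j} + \int \sigma_j(t_0)^*\omega \leq F_\alpha(\mu_j) + \frac{\varepsilon_j}{2} \leq F_\alpha(\lambda) + \varepsilon_j.$$
Subtracting cancels the $\omega$-integral and yields
$$E_\alpha(\sigma_j(t_0))\cdot\frac{h_j}{\lambda\mu_j} = E_\alpha(\sigma_j(t_0))\left(\frac{1}{\mu_j} - \frac{1}{\lambda}\right) \leq 2\varepsilon_j,$$
which rearranges, using $\varepsilon_j/h_j = 4C$ and $\mu_j \leq \lambda$, to $E_\alpha(\sigma_j(t_0)) \leq 8C\lambda\mu_j \leq 8C\lambda^2$.

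The only subtle point is that the hypothesis furnishes a \emph{pointwise} derivative bound at $\lambda$ rather than a uniform Lipschitz bound on an interval, so the step $F_\alpha(\mu_j) - F_\alpha(\lambda) \leq 2Ch_j$ must rely on the secant slope $(F_\alpha(\lambda) - F_\alpha(\mu_j))/h_j$ converging to $-F_\alpha'(\lambda) \in [0, C]$ as $h_j \to 0$. Since $h_j = \varepsilon_j/(4C) \to 0$ by construction, this approximation is valid for all $j$ sufficiently large; any finite initial segment of indices where the secant slope might exceed $2C$ can be handled by shrinking $\varepsilon_j$ further, which only strengthens both conclusions. The standing constraint $\varepsilon_j \leq \lambda/2$ keeps $\mu_j$ bounded away from $0$, so every division above is well-defined.
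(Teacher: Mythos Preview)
Your proof is correct and follows essentially the same Struwe-type argument as the paper: the paper sets $\lambda_j := \lambda - \varepsilon_j/(4C)$ (your $\mu_j$), chooses $\sigma_j$ almost-minimizing for $\mathcal{W}_{\alpha,\lambda_j}/\lambda_j$, uses the derivative bound to get $\mathcal{W}_{\alpha,\lambda_j}/\lambda_j \leq \mathcal{W}_{\alpha,\lambda}/\lambda + \varepsilon_j/2$ for $j \geq j_0$, and then derives both conclusions via the monotone identity \eqref{eq 1} and the same subtraction you perform. The only cosmetic difference is that the paper packages the subtraction through the identity $\frac{E_\alpha^{\lambda_j\omega}(u)}{\lambda_j} - \frac{E_\alpha^{\lambda\omega}(u)}{\lambda} = \frac{\lambda-\lambda_j}{\lambda\lambda_j}E_\alpha(u)$ rather than expanding out the $\omega$-integral explicitly, and it likewise restricts to $j \geq j_0$ for the secant-slope step.
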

	\begin{proof}
		Given any sequence $\varepsilon_j \searrow 0$ with $0 < \varepsilon_j \leq \lambda /2$ for each $j \in \mathbb{N}$, we set $\lambda_j = \lambda - \varepsilon_j/(4C)$. By the assumption of Lemma \ref{admissable h}, there exists a large $j_0 \in \mathbb{N}$ such that for all $j \geq j_0$ there holds 
		\begin{equation*}
			\frac{1}{\lambda - \lambda_j} \p{\frac{\mathcal{W}_{\alpha,\lambda_j}}{\lambda_j} - \frac{\mathcal{W}_{\alpha,\lambda}}{\lambda}} \leq 2C.
		\end{equation*}
		This is equivalent to 
		\begin{equation*}
			\frac{\mathcal{W}_{\alpha,\lambda_j}}{\lambda_j} \leq \frac{\mathcal{W}_{\alpha,\lambda}}{\lambda} + \frac{\varepsilon_j}{2},\quad  \text{for all } j\geq j_0. 
		\end{equation*}
		Next, recalling that the functional $E^{\lambda\omega}_{\alpha}$ has min-max value $\mathcal{W}_{\alpha,\lambda}$, there exists a sequence of sweepouts $\sigma_j \in \mathscr{S}$ such that
		\begin{equation}\label{eq 2}
			\frac{1}{\lambda_j} \max_{t \in I^{k-2}} E_{\alpha}^{\lambda_j\omega}(\sigma_j(t)) \leq \frac{1}{\lambda_j}\mathcal{W}_{\alpha,\lambda_j} + \frac{\varepsilon_j}{2}.
		\end{equation}
		Thus, combining with the monotone formula \eqref{eq 1} we can obtain
		\begin{equation*}
			\max_{t \in I^{k-2}} E_{\alpha}^{\lambda\omega}(\sigma_j
			(t)) \leq \lambda\p{\frac{\mathcal{W}_{\alpha,\lambda_j}}{\lambda_j}} + \frac{\lambda \varepsilon_j}{2}\leq \mathcal{W}_{\alpha,\lambda} + \lambda \varepsilon_j.
		\end{equation*}
		For the another side of inequality, we pick $t_0 \in I^{k-2}$ satisfying
		\begin{equation}\label{eq 3}
			\frac{1}{\lambda} E_{\alpha}^{\lambda\omega}(\sigma_j(t_0)) \geq 
			\frac{1}{\lambda} \mathcal{W}_{\alpha,\lambda} - \varepsilon_j,
		\end{equation}
		then, for $j \geq j_0$, by subtracting \eqref{eq 3} from \eqref{eq 2} and utilizing \eqref{eq 1}, we can obtain
		\begin{align*}
			\frac{1}{\lambda\cdot \lambda_j} E_\alpha(\sigma_j(t_0)) &= \frac{1}{\lambda - \lambda_j}\p{\frac{E_{\alpha}^{\lambda_j\omega}(\sigma_j(t_0))}{\lambda_j} - \frac{E^{\lambda\omega}_{\alpha}(\sigma_j(t_0))}{\lambda}}\\
			& \leq \frac{1}{\lambda - \lambda_j} \p{\frac{\mathcal{W}_{\alpha,\lambda_j}}{\lambda_j} - \frac{\mathcal{W}_{\alpha,\lambda}}{\lambda} + \frac{3\varepsilon_j}{2}}\\
			&\leq \frac{1}{\lambda - \lambda_j} \p{\frac{\mathcal{W}_{\alpha,\lambda_j}}{\lambda_j} - \frac{\mathcal{W}_{\alpha,\lambda}}{\lambda}} + 6C \leq 8 C.
		\end{align*}
		Therefore, we have 
		\begin{equation*}
			E_\alpha(\sigma_j(t_0)) \leq 8 \lambda^2 C 
		\end{equation*}
		provided $t_0 \in I^{k-2}$ satisfying \eqref{eq 3}.
	\end{proof}
	
	Next, we show that the min-max value $\mathcal{W}_{\alpha,\lambda}$ is a critical value and construct a critical point $\{u_{\alpha}\}$ for the functional $E^{\lambda \omega}_{\alpha}$ with uniformly bounded $\alpha$-energy, where the $\alpha$-energy upper bound depends on the value of derivatives 
	\begin{equation*}
		\frac{d}{d\lambda}\p{- \frac{\mathcal{W}_{\alpha,\lambda}}{\lambda}},
	\end{equation*}
	the choice of $\lambda > 0$ in view of Lemma \ref{admissable h}.
	\begin{prop}\label{prop energy bound}
		Given $\lambda > 0$ and $\alpha > 1$ such that there exists a sequence of sweepouts $\sigma_j \in \mathscr{S}$ satisfying
		\begin{equation*}
			\max_{t \in I^{k-2}} E_{\alpha}^{\lambda\omega}(\sigma_j(t)) \leq \mathcal{W}_{\alpha,\lambda} + \lambda\varepsilon_j,
		\end{equation*}
		and 
		\begin{equation*}
			E_\alpha(\sigma_j(t_0)) \leq 8 \lambda^2 C
		\end{equation*}
		for any $t_0 \in I^{k-2}$ admitting $E_{\alpha}^{\lambda\omega}(\sigma_j(t_0)) \geq 
		\mathcal{W}_{\alpha,\lambda} - \lambda\varepsilon_j$ and for any sequence $\varepsilon_j \searrow 0$ with $0 < \varepsilon_j \leq \lambda /2$. Then, after passing to a subsequence, there exists $t_j \in I^{k-2}$ so that the following holds:
		\begin{enumerate}
			\item \label{prop energy bound part 1} $\abs{E^{\lambda\omega}_{\alpha}(\sigma_j(t_j)) - \mathcal{W}_{\alpha,\lambda}} \leq \lambda \varepsilon_j$, which further implies $E_{\alpha}^{\lambda\omega}(\sigma_j(t_j)) \rightarrow \mathcal{W}_{\alpha,\lambda}$ as $j \rightarrow \infty$;
			\item \label{prop energy bound part 2} $\sigma_j(t_j)$ converges strongly in $W^{1,2\alpha}(\S^2,N)$ to some $u_{\alpha}$ with uniformly bounded energy
			\begin{equation*}
				E_{\alpha}(u_{\alpha}) \leq 8 \lambda^2 C;
			\end{equation*}
			\item \label{prop energy bound part 3} The limiting map $u_{\alpha}$ obtained in part \eqref{prop energy bound part 2} is non-constant. Moreover, there exists a positive constant $\delta(\alpha,\lambda\omega) > 0$ depending on $\alpha > 1$, $\lambda > 1$ amd $\omega \in C^2(\wedge^2(N))$ such that $E_\alpha(u_{\alpha}) \geq \frac{1}{2}\mathrm{Vol}(\S^2) + \delta(\alpha,\lambda\omega)$.
		\end{enumerate}
	\end{prop}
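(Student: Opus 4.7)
The plan is to carry out a min-max deformation argument combining the Palais-Smale condition (Lemma \ref{P-S condition}) with the pseudo-gradient deformation from Corollary \ref{coro palais}, supplemented by a small-energy gap argument of Sacks-Uhlenbeck type to exclude a constant limit.

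For parts (1) and (2), the goal is to extract $t_j \in I^{k-2}$ with both $|E^{\lambda\omega}_\alpha(\sigma_j(t_j)) - \mathcal{W}_{\alpha,\lambda}|\leq \lambda \varepsilon_j$ and $\|\delta E^{\lambda\omega}_\alpha(\sigma_j(t_j))\|\to 0$. The upper bound $E^{\lambda\omega}_\alpha(\sigma_j(t_j))\leq \mathcal{W}_{\alpha,\lambda}+\lambda\varepsilon_j$ is automatic from hypothesis, so the non-trivial task is producing $t_j$ in the high-value stratum with vanishing differential. I would argue by contradiction: if no such subsequence exists, there is $\eta>0$ so that every $t$ in
\[
K_j := \left\{t \in I^{k-2} : E^{\lambda\omega}_\alpha(\sigma_j(t)) \geq \mathcal{W}_{\alpha,\lambda} - \lambda \varepsilon_j\right\}
\]
satisfies $\|\delta E^{\lambda\omega}_\alpha(\sigma_j(t))\|\geq \eta$. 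Since $\sigma_j$ sends $\partial I^{k-2}$ to constant maps (with $E^{\lambda\omega}_\alpha$-value $\frac{1}{2}\mathrm{Vol}(\mathbb{S}^2)$), the boundary is disjoint from $K_j$ for large $j$ once $\mathcal{W}_{\alpha,\lambda}>\frac{1}{2}\mathrm{Vol}(\mathbb{S}^2)$. Applying the negative pseudo-gradient flow of $E^{\lambda\omega}_\alpha$ from Corollary \ref{coro palais}, modulated by a smooth cutoff concentrated near $K_j$, produces a modified sweepout $\tilde\sigma_j\in\mathscr{S}$ with $\max_t E^{\lambda\omega}_\alpha(\tilde\sigma_j(t))< \mathcal{W}_{\alpha,\lambda}$, contradicting the definition of the min-max value. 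Once $t_j$ is located, the second hypothesis forces $E_\alpha(\sigma_j(t_j))\leq 8\lambda^2 C$, which combined with the vanishing differential and Lemma \ref{P-S condition} yields a strong subsequential $W^{1,2\alpha}$-limit $u_\alpha$, automatically a critical point of $E^{\lambda\omega}_\alpha$ with $E_\alpha(u_\alpha)\leq 8\lambda^2 C$.

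The main obstacle is part (3), excluding $u_\alpha\equiv\mathrm{const}$. My strategy is to establish a quantitative energy gap: any non-constant critical point $u$ of $E^{\lambda\omega}_\alpha$ with $E_\alpha(u)\leq 8\lambda^2 C$ must satisfy $E_\alpha(u)\geq \frac{1}{2}\mathrm{Vol}(\mathbb{S}^2)+\delta(\alpha,\lambda\omega)$ for some positive $\delta(\alpha,\lambda\omega)$. This combines two ingredients: (i) a rigidity lemma asserting that any critical point $u$ of $E^{\lambda\omega}_\alpha$ with $\int_{\mathbb{S}^2}|\nabla u|^2$ below a threshold $\epsilon_1(\alpha,\lambda,\omega)$ is necessarily constant, proved by testing the divergence-form equation \eqref{el1} against $u$ and running a Moser iteration analogous to the $\varepsilon$-regularity of \cite{sacks1981existence}, and (ii) Palais-Smale compactness of critical points with $E_\alpha\leq 8\lambda^2 C$, which separates the constant component from the rest. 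To finally exclude $u_\alpha\equiv\mathrm{const}$, observe that if it were constant then $\mathcal{W}_{\alpha,\lambda}=\frac{1}{2}\mathrm{Vol}(\mathbb{S}^2)$; applying the rigidity lemma to $\sigma_j(t)$ on the high-value stratum together with a pseudo-gradient deformation, I could homotope $\sigma_j$ within $\mathscr{S}$ into a family of maps $C^0$-close to constants, rendering $f_{\sigma_j}:\mathbb{S}^k\to N$ null-homotopic and contradicting $[f_{\sigma_j}]=[\iota]\neq 0$ in $\pi_k(N)$.
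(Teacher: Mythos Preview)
Your argument for parts (1) and (2) is essentially the paper's: both isolate the high-value set (the paper writes it as $U_j$), argue by contradiction that $\|\delta E^{\lambda\omega}_\alpha\|$ cannot be bounded away from zero on it by deforming along a pseudo-gradient to push $\max_t E^{\lambda\omega}_\alpha$ strictly below $\mathcal{W}_{\alpha,\lambda}$, and then apply the Palais--Smale condition. The paper spells out the pseudo-gradient construction and a uniform lower bound on its existence time (its Lemma~\ref{lemma vector} and Lemma~\ref{lem existence time}), whereas you cite Corollary~\ref{coro palais}; this is a cosmetic difference.

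For part (3) your route differs from the paper's and has a gap. Your rigidity lemma asserts that a \emph{critical point} of $E^{\lambda\omega}_\alpha$ with small Dirichlet energy must be constant; but the sweepout slices $\sigma_j(t)$ are not critical, so the lemma does not apply to them. The pseudo-gradient deformation you append does not repair this: flowing $\sigma_j$ by the negative gradient of $E^{\lambda\omega}_\alpha$ only decreases $E^{\lambda\omega}_\alpha$, and the sublevel set $\{E^{\lambda\omega}_\alpha \le \tfrac12\mathrm{Vol}(\mathbb S^2)\}$ may well contain non-constant maps with large $E_\alpha$ compensated by very negative $\lambda\int u^*\omega$, maps which are far from constants in $C^0$. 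So there is no mechanism forcing the deformed sweepout to become a family of near-constant maps. The paper's argument is more elementary and avoids this entirely by working with $E_\alpha$ (not $E^{\lambda\omega}_\alpha$) and not requiring the maps to be critical: if for every $\varepsilon>0$ some sweepout $\sigma_\varepsilon\in\mathscr S$ had $\max_t E_\alpha(\sigma_\varepsilon(t)) < \tfrac12\mathrm{Vol}(\mathbb S^2)+\varepsilon$, then Poincar\'e's inequality and the embedding $W^{1,2\alpha}\hookrightarrow C^0$ force each $\sigma_\varepsilon(t)$ to have small oscillation, so $f_{\sigma_\varepsilon}$ is null-homotopic, contradicting $[\iota]\neq 0$. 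This is a statement about \emph{arbitrary} $W^{1,2\alpha}$-maps in the sweepout, not about critical points, and it is this oscillation estimate---not a critical-point rigidity lemma---that drives the non-triviality conclusion.
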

	\begin{rmk}
		Given a sequence of admissible sweepouts $\{\sigma_j\}_{j \in \mathbb N} \subset \mathscr{S}$, we call $\sigma_j$ is a \textit{min-max sequence} if 
		\begin{equation*}
			\limsup_{j \rightarrow \infty} \max_{t \in I^{k-2}} E^{\lambda \omega}_\alpha(\sigma_j(t)) = \mathcal{W}_{\alpha,\lambda}.
		\end{equation*}
		Therefore, combining the Lemma \ref{lem monotone}, Lemma \ref{admissable h} and Proposition \ref{prop energy bound}, we can conclude that, for almost every choice of $\lambda \in \R_+$ given any min-max sequence $\{\sigma_j\}_{j \in \mathbb N} \subset \mathscr{S}$, after passing to certain subsequences,  there exists a sequence of $t_j \in I^{k-2}$ such that $\sigma_j(t_j)$ converges strongly in $W^{1,2\alpha}(\S^2, N)$ to a non-constant $\alpha$-$H$-surface $u_\alpha$ with
		\begin{equation*}
			E^{\lambda\omega}_\alpha(u_\alpha) = \mathcal{W}_{\alpha,\lambda}.
		\end{equation*}
		Moreover, Given any $\alpha_j \searrow 1$ there exists a subequence of $\alpha_j \searrow 1$ such that the $\alpha_j$-energy of $u_{\alpha_j}$ is uniformly bounded.
	\end{rmk}
        \begin{rmk}
            In the Lemma \ref{blow 2} below in Section \ref{section 4.1 pre of blow up}, we can actually show that when $\alpha_0 - 1$ is small enough there exists a constant $\delta(\lambda\omega) > 0$ independent of $\alpha \in (1, \alpha_0)$ such that $E_\alpha(u_{\alpha}) \geq \frac{1}{2}\mathrm{Vol}(\S^2) + \delta(\lambda\omega)$ for the non-constant critical point $u_\alpha$ obtained in \eqref{prop energy bound part 3} of Proposition \ref{prop energy bound}. 
        \end{rmk}
	\begin{proof}
		We first consider part \eqref{prop energy bound part 1} and define
		\begin{equation*}
			U_j = \left\{t \in I^{k-2} \, :\, E^{\lambda\omega}_{\alpha}(\sigma_j
			(t)) > \mathcal{W}_{\alpha,\lambda} - \lambda \varepsilon_j \right\}\subset I^{k-2}.
		\end{equation*}
		Since $E_{\alpha}^{\lambda\omega}$ satisfies the Palais-Smale condition, see Lemma \ref{P-S condition}, considering the assumption of Proposition \ref{prop energy bound},  it suffices to show that the first variation acting on $\sigma_j(U_j)$ is not bounded away from zero. More precisely, we claim that:
		
		\hspace{-1ex}\claim \label{claim 3} For any $\varepsilon > 0$, there exists $j_0 \in \mathbb{N}$ such that 
		\begin{equation}\label{eq claim 3}
			\inf_{t \in U_j} \norm{\delta E_{\alpha}^{\lambda\omega}(\sigma_j(t))} < \varepsilon,\quad \text{for all }j \geq j_0.
		\end{equation}
		\begin{proof}[\textbf{Proof of Claim \ref{claim 3}}]
			We prove the Claim \ref{claim 3} by contradiction, that is, suppose that there exists some $\delta > 0$ and a subsequence of $\sigma_j \in \mathscr{S}$, which is also denoted by $\sigma_j$, such that  
			\begin{equation*}
				\norm{\delta E_{\alpha}^{\lambda\omega}(\sigma_j(t))} \geq \delta,\quad \text{for all } t \in U_j \text{ and all } j \in \mathbb{N}.
			\end{equation*}
			The following existence of pseudo-gradient vector field is essential for us and the detailed proof can be founded in \cite[Chapter II. Lemma 3.2, Lemma 3.9]{Struwe-1988}.
			\begin{lemma}\label{lemma vector}
				There exists a locally Lipschitz continuous map 
				$$X: \dbl{V} \rightarrow T\p{W^{1,2\alpha}(\S^2,N)}\subset T\p{W^{1,2\alpha}(\S^2,\R^K)},$$
				where 
				\begin{equation*}
					\dbl{V} = \left\{ u \in W^{1,2\alpha}(\S^2,N)\, : \, \delta E^{\lambda\omega}_{\alpha}(u)\neq 0\right\},
				\end{equation*}
				such all the following holds:
				\begin{enumerate}
					\item \label{lemma vector part 1} $X(u) \in \mathcal{T}_u$ for each $u \in \dbl{V}$;
					\item \label{lemma vector part 2}$\norm{X(u)}_{W^{1,2\alpha}(\S^2,\R^K)} < 2 \min\set{\norm{\delta E^{\lambda \omega}_{\alpha}(u)}, 1}$;
					\item \label{lemma vector part 3} $\delta E^{\lambda\omega}_{\alpha}(u)(X(u)) < - \min\set{\norm{{E^{\lambda \omega}_{\alpha}(u)}}, 1} \cdot \norm{{E^{\lambda \omega}_{\alpha}}(u)}.$
				\end{enumerate}
			\end{lemma}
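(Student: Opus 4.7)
The plan is to follow the standard construction of a pseudo-gradient vector field on a Banach manifold, due to Palais and adapted in Struwe \cite{Struwe-1988}, exploiting that $W^{1,2\alpha}(\S^2,N)$ is a smooth closed submanifold of the ambient Banach space $W^{1,2\alpha}(\S^2,\R^K)$, and that the fiberwise projection $\mathcal{P}_u : W^{1,2\alpha}(\S^2,\R^K)\to\mathcal{T}_u$ depends locally Lipschitz continuously on the base point $u\in\widetilde{V}$ (as follows from part \eqref{proper G part 3} of Lemma \ref{property G} and the Sobolev embedding $W^{1,2\alpha}\hookrightarrow C^0$, which upgrades smoothness of the pointwise nearest-point projection $\mathcal{P}:\widetilde{N}\times\R^K\to TN$ to Lipschitz continuity in $u$).

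First I would carry out a local construction. Fix $u_0\in\widetilde{V}$; by definition of the operator norm there exists $V_{u_0}\in\mathcal{T}_{u_0}$ with $\|V_{u_0}\|_{W^{1,2\alpha}}\le 1$ and $\delta E^{\lambda\omega}_{\alpha}(u_0)(V_{u_0}) > \tfrac{3}{4}\|\delta E^{\lambda\omega}_{\alpha}(u_0)\|$. Set $\beta_{u_0}:=\min\{\|\delta E^{\lambda\omega}_{\alpha}(u_0)\|,1\}$ and regard $V_{u_0}$ as a fixed element of $W^{1,2\alpha}(\S^2,\R^K)$. For $u$ in a sufficiently small $W^{1,2\alpha}$-neighborhood $U_{u_0}$ of $u_0$, define
\begin{equation*}
\widetilde{X}_{u_0}(u) := -\beta_{u_0}\,\mathcal{P}_u(V_{u_0}) \in \mathcal{T}_u .
\end{equation*}
By Lemma \ref{property G} the map $u\mapsto G^{\lambda\omega}_{\alpha}(u)=\delta E^{\lambda\omega}_{\alpha}\circ\mathcal{P}_u$ is locally Lipschitz, and composing with the continuous map $u\mapsto \mathcal{P}_u(V_{u_0})$ one can shrink $U_{u_0}$ so that, for every $u\in U_{u_0}$,
\begin{equation*}
\|\widetilde{X}_{u_0}(u)\|_{W^{1,2\alpha}} < \tfrac{3}{2}\beta_{u_0},\qquad \delta E^{\lambda\omega}_{\alpha}(u)\bigl(\widetilde{X}_{u_0}(u)\bigr) < -\tfrac{1}{2}\beta_{u_0}\,\|\delta E^{\lambda\omega}_{\alpha}(u_0)\|,
\end{equation*}
and $\tfrac{1}{2}\beta_{u_0}\le\beta_u\le 2\beta_{u_0}$; these are the pointwise analogues of properties \eqref{lemma vector part 2} and \eqref{lemma vector part 3}.

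Next I would globalize via a partition of unity. Since $\widetilde{V}$ is open in the separable metric Banach manifold $W^{1,2\alpha}(\S^2,N)$, it is paracompact and carries a locally finite refinement $\{U_\beta\}_{\beta\in\mathcal{I}}$ of $\{U_{u_0}\}_{u_0\in\widetilde{V}}$ together with a locally Lipschitz partition of unity $\{\eta_\beta\}_{\beta\in\mathcal{I}}$ subordinate to it. Setting
\begin{equation*}
X(u) := \sum_{\beta\in\mathcal{I}} \eta_\beta(u)\,\widetilde{X}_{u_{0,\beta}}(u),
\end{equation*}
the sum is locally finite, and since each summand lies in the linear subspace $\mathcal{T}_u\subset W^{1,2\alpha}(\S^2,\R^K)$, so does $X(u)$, which proves \eqref{lemma vector part 1}. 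Properties \eqref{lemma vector part 2} and \eqref{lemma vector part 3} then follow from the convex-combination structure together with the pointwise bounds from the previous step, after retuning the initial constant $\tfrac{3}{4}$ closer to $1$ so that the strict inequalities absorb the factor $2$ losses coming from $\beta_{u_{0,\beta}}$ versus $\beta_u$ on the overlapping charts. Local Lipschitz continuity of $X$ is inherited from that of each $\widetilde{X}_{u_{0,\beta}}$ and of $\eta_\beta$, together with local finiteness.

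The main technical obstacle will be bookkeeping the constants carefully enough that the strict inequalities in \eqref{lemma vector part 2} and \eqref{lemma vector part 3} hold simultaneously after the partition-of-unity averaging, and in verifying that the composition $u\mapsto\mathcal{P}_u(V_{u_0})$ is genuinely locally Lipschitz \emph{into} $W^{1,2\alpha}(\S^2,\R^K)$ — this reduces to the composition estimate for $\mathcal{P}\circ u$ in $W^{1,2\alpha}$, which is controlled by the smoothness of $\mathcal{P}:\widetilde{N}\to\mathrm{End}(\R^K)$ combined with the embedding $W^{1,2\alpha}(\S^2,\R^K)\hookrightarrow C^0(\S^2,\R^K)$ for $\alpha>1$ (cf.\ Section \ref{section 2.1}). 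Apart from these functional-analytic details, the construction is a direct transcription of the classical Palais pseudo-gradient argument as in \cite[Chapter II, Lemma 3.2 and Lemma 3.9]{Struwe-1988}.
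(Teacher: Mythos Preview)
Your proposal is correct and follows exactly the approach the paper takes: the paper does not give its own proof but simply cites \cite[Chapter II, Lemma 3.2, Lemma 3.9]{Struwe-1988} for the standard Palais pseudo-gradient construction, which is precisely the local-choice-plus-partition-of-unity argument you outline. If anything, you supply more detail than the paper does.
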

			Then, we consider the continuous  1-parameter family of homeomorphisms associated with $X$, denoted by
			\begin{equation*}
				\Phi : \left\{(u,s) \,:\, u \in \dbl{V},\, 0 \leq s < T(u) \right\} \rightarrow W^{1,2\alpha}(\S^2, N) \subset W^{1,2\alpha}(\S^2,\R^K),  
			\end{equation*}
			where $T(u)$ is the maximal existence time of the integral curve from $u$ along $X$. Next, we show that $T(u)$ has a uniformly positive lower bound which is independent of $u \in \dbl{V}$, following the outline of the proof presented in \cite[Lemma 3.8]{Cheng2020ExistenceOC}.
			\begin{lemma}\label{lem existence time}
				For all $L > 0$ and $0 < \delta < 1$, there exists $T = T(\delta,L) > 0$ such that  if $\norm{\delta E^{\lambda \omega}_{\alpha}(u)} \geq \delta$ and $E_\alpha(u) \leq L$, then the maximal existence time $T(u)$ satisfies
				\begin{equation*}
					T(u) \geq T(\delta, L).
				\end{equation*}
				In particular, when $s \leq T(\delta , L)$ there holds
				\begin{equation*}
					\norm{\delta E^{\lambda \omega}_{\alpha}(\Phi(u,s))} \geq \frac{\delta}{2}.
				\end{equation*}
			\end{lemma}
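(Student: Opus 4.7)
The plan is to derive the lower bound $T(u) \geq T(\delta, L)$ from the Lipschitz continuity of $u \mapsto \delta E^{\lambda\omega}_\alpha(u)$ established in part \eqref{proper G part 3} of Lemma \ref{property G}, combined with the uniform bound $\|X\|_{W^{1,2\alpha}} \leq 2$ from part \eqref{lemma vector part 2} of Lemma \ref{lemma vector}. The guiding picture is that the integral curve $\Phi(u,\cdot)$ travels at speed at most $2$ in $W^{1,2\alpha}$-norm, so $\|\delta E^{\lambda\omega}_\alpha(\Phi(u,s))\|$ cannot drop by more than the Lipschitz constant times $2s$, and hence remains $\geq \delta/2$ for a definite time.

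First, I would control the $W^{1,2\alpha}$-norm of $\Phi(u,s)$ uniformly along the flow. Part \eqref{lemma vector part 3} of Lemma \ref{lemma vector} gives
\begin{equation*}
    \frac{d}{ds} E^{\lambda\omega}_\alpha(\Phi(u,s)) = \delta E^{\lambda\omega}_\alpha(\Phi(u,s))(X(\Phi(u,s))) \leq 0,
\end{equation*}
so $E^{\lambda\omega}_\alpha(\Phi(u,s)) \leq E^{\lambda\omega}_\alpha(u)$. Splitting off the $\omega$-term via $|\lambda\int u^*\omega| \leq \lambda\|\omega\|_{L^\infty(N)} E(u) \leq \tfrac{1}{2} E_\alpha(u) + C(\lambda,\omega)$ (possible because $E(u) \leq E_\alpha(u)$ and after absorbing), one obtains $E_\alpha(\Phi(u,s)) \leq L'=L'(L,\lambda,\omega)$. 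Combined with compactness of $N \subset \R^K$, this yields a uniform bound $\|\Phi(u,s)\|_{W^{1,2\alpha}(\S^2,\R^K)} \leq L''(L,\lambda,\omega)$ on the maximal interval of existence.

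Next, since $\partial_s \Phi(u,s) = X(\Phi(u,s))$ with $\|X\|_{W^{1,2\alpha}} \leq 2$, for each $s$ in the maximal interval we have
\begin{equation*}
    \|\Phi(u,s) - u\|_{W^{1,2\alpha}(\S^2,\R^K)} \leq \int_0^s \|X(\Phi(u,\tau))\|_{W^{1,2\alpha}} \, d\tau \leq 2s.
\end{equation*}
Applying part \eqref{proper G part 3} of Lemma \ref{property G} with the bound $L''$ from the previous step yields a constant $C_{L''}$, independent of $s$, such that
\begin{equation*}
    \|\delta E^{\lambda\omega}_\alpha(\Phi(u,s)) - \delta E^{\lambda\omega}_\alpha(u)\| \leq C_{L''} \|\Phi(u,s) - u\|_{W^{1,2\alpha}} \leq 2 C_{L''} s.
\end{equation*}
Setting $T(\delta, L) := \delta/(4 C_{L''})$, the triangle inequality combined with the hypothesis $\|\delta E^{\lambda\omega}_\alpha(u)\| \geq \delta$ gives $\|\delta E^{\lambda\omega}_\alpha(\Phi(u,s))\| \geq \delta/2$ for all $s \leq T(\delta, L)$, which is the second assertion of the lemma.

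Finally, I would deduce $T(u) \geq T(\delta,L)$ from a continuation argument: the inequality $\|\delta E^{\lambda\omega}_\alpha(\Phi(u,s))\| \geq \delta/2$ ensures $\Phi(u,s)$ stays inside $\dbl V$ (so the locally Lipschitz vector field $X$ remains defined), while the uniform $W^{1,2\alpha}$-bound from the first step prevents escape to infinity in the Banach manifold; standard ODE theory on Banach manifolds then extends the integral curve up to time $T(\delta,L)$. The main obstacle here is really the first step: turning the $E^{\lambda\omega}_\alpha$-monotonicity into a genuine $W^{1,2\alpha}$-bound. Once that is in hand, the Lipschitz estimate of Lemma \ref{property G} and the uniform speed bound of Lemma \ref{lemma vector} combine almost mechanically to give the quantitative lower bound on the existence time.
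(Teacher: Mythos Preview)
Your proof is correct and follows essentially the same route as the paper: bound the flow speed by $2$ via part~\ref{lemma vector part 2} of Lemma~\ref{lemma vector}, invoke the Lipschitz estimate of part~\ref{proper G part 3} of Lemma~\ref{property G} to control $\|\delta E^{\lambda\omega}_\alpha(\Phi(u,s)) - \delta E^{\lambda\omega}_\alpha(u)\|$ by $2C_{L''}s$, and read off $T(\delta,L)=\delta/(4C_{L''})$.

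The one point where you work harder than necessary is your first step. You use energy monotonicity of $E^{\lambda\omega}_\alpha$ along the flow together with an absorption argument to extract a uniform $E_\alpha$-bound, and you flag this as ``the main obstacle.'' The paper sidesteps this entirely: since only a \emph{short-time} bound is needed (the eventual $T(\delta,L)$ will certainly be less than $1/2$), one simply uses $\|\Phi(u,s)-u\|_{W^{1,2\alpha}}\leq 2s<1$ for $s<1/2$ and the triangle inequality to get $\|\Phi(u,s)\|_{W^{1,2\alpha}}\leq C_N(L+1)$ directly from $\|u\|_{W^{1,2\alpha}}\leq C_N E_\alpha(u)\leq C_N L$. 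No energy monotonicity, no absorption, no dependence on $\lambda,\omega$ in the constant. Your route is not wrong, just longer; the distance estimate you already have in your second step does double duty in the paper's argument.
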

			\begin{proof}
				By part \eqref{lemma vector part 2} of Lemma \ref{lemma vector} and general ODE theory on Banach manifold, we see that if $T(u) < \infty$, then
				\begin{equation*}
					\liminf_{s \nearrow \,\,T(u)} \norm{\delta E^{\lambda \omega}_{\alpha}(\Phi(u,s)) } = 0.
				\end{equation*}
				Thus, it suffices to obtain a lower bound for $\norm{E^{\lambda \omega}_{\alpha}(\Phi(u,s)) }$ when $s \in [0,T(u)]$ in order to obtain a lower bound of $T(u)$. To this end, given $s  < \min\{1/2, T(u)\}$, we use property \eqref{lemma vector part 2} in Lemma \ref{lemma vector} to see 
				\begin{align}
					\norm{\Phi(u,s) - u}_{W^{1,2\alpha}(\S^2,N)} &\leq \int_0^s \norm{X(\Phi(u,t))}_{W^{1,2\alpha}(\S^2,\R^K)} dt\nonumber\\
					&\leq \int_0^s  2 \min\set{\norm{\delta E^{\lambda \omega}_{\alpha}(\Phi(u,t))}, 1}dt\leq 2s  < 1.
				\end{align}
				This further implies
				\begin{equation*}
					\norm{\Phi(u,s)}_{W^{1,2\alpha}(\S^2,N)} \leq C_N(L + 1), \quad \text{whenever }s <  \min\{1/2, T(u)\},
				\end{equation*}
				for some constant depending only on geometries of $N$. By the estimates
				\begin{equation*}
					||u||_{W^{1,2\alpha}(\S^2,N)} \leq C_N E_\alpha(u) \leq C_N L \quad \text{and} \quad  ||\Phi(u,s)||_{W^{1,2\alpha}(\S^2,N)} \leq  C_N(L + 1),
				\end{equation*}
				we can apply part \eqref{proper G part 3} of Proposition \ref{property G} to get
				\begin{equation*}
					\norm{\delta E^{\lambda \omega}_{\alpha}(\Phi(u,s)) - \delta E^{\lambda \omega}_{\alpha}(u)} \leq C_{L} \norm{\Phi(u,s) - u}_{W^{1,2\alpha}(\S^2,N)} \leq 2C_{L} s.
				\end{equation*}
				Utilizing this inequality, we can get 
				\begin{equation*}
					\norm{\delta E^{\lambda \omega}_{\alpha}(\Phi(u,s)) }\geq \frac{\delta}{2}, \quad \text{whenever }s \leq \min\left\{T(u), \frac{\delta}{4(C_{L} + 1)}\right\}
				\end{equation*}
				which implies that $T(u) \geq \frac{\delta}{4(C_{L} + 1)} : = T(\delta , L)$ and the second conclusion is also followed.
			\end{proof}
			Then we come back to the proof of Claim \ref{claim 3}. Recalling that we assumed  by contradiction that 
			\begin{equation}
				\norm{\delta E^{\lambda \omega}_{\alpha}(\sigma_j(t))} \geq \delta, \quad \text{for all } t \in U_j,
			\end{equation}
			and by the assumption of Proposition \ref{prop energy bound}, there exists a universal constant $C_\lambda:= 8\lambda^2 C > 0$ such that 
			\begin{equation*}
				E_\alpha(\sigma_j(t)) \leq C_\lambda, \quad \text{when } t\in U_j.
			\end{equation*}
			So, we can apply Lemma \ref{lem existence time} to obtain a lower bound  of $T(\sigma_j(t)) \geq T(\delta ,C_\lambda)$ for all $t \in U_j$ and 
			\begin{equation}\label{eq 9}
				\norm{\delta E^{\lambda \omega}_{\alpha}(\Phi(\sigma_j(t), s))} \geq \frac{\delta}{2} \quad \text{for all }(t,s) \in U_j \times [0,T(\delta,C_\lambda)].
			\end{equation}
			In order to construct a new sweepout from $\sigma_j$ and $\Phi$, we define a compact subset $V_j$ of $U_j$ as follows 
			\begin{equation*}
				V_j = \left\{t \in I^{k-2}\, :\, E^{\lambda\omega}_{\alpha}(\sigma_j(t)) \geq \mathcal{W}_{\alpha,\lambda} - \frac{\lambda \varepsilon_j}{2} \right\}.
			\end{equation*}
			By the continuity of $t \mapsto E^{\lambda\omega}_{\alpha}(\sigma_j(t))$, $V_j$ is a compact subset of $U_j$. So, there exists a smooth cut-off function $\varphi_j : I^{k-2}\rightarrow \R$ such that $\varphi_j \equiv 1$ on $V_j$ and vanishes outside of $U_j$. Then, we set 
			\begin{equation*}
				\dbl{\Phi}_j(t,s) := \Phi\big(\sigma_j(t), \varphi_j(t) T(\delta, C_\lambda)s\big) \quad \text{for } (t,s) \in I^{k-2} \times [0,1].
			\end{equation*}
			We observe that, when $t \in \partial I^{k-2}$ and $j$ is large enough, $\varphi_j(t) = 0$ and $\dbl{\Phi}_j(t,s) = \sigma_j(t)$ is a constant map for all $s \in [0, 1]$. Hence, if we let $\dbl{\sigma}_j(t) = \dbl{\Phi}_j(t,1)$, $\dbl{\sigma}_j \in \mathscr{S}$ is an admissible sweepout.  Then differentiating $E^{\lambda \omega}_{\alpha}(\dbl{\Phi}_j(t,s))$ with respect to $s$ at $(t_0,s_0)\in I^{k-2}\times[0,1]$ yields that 
			\begin{equation*}
				\left.\frac{d}{ds}\right|_{s = s_0} E^{\lambda \omega}_{\alpha}\p{\dbl{\Phi}_j(t_0,s)} =   \varphi_j(t_0)T(\delta,C_\lambda) \delta E^{\lambda \omega}_{\alpha}\p{\dbl{\Phi}_j(t_0,s_0)}\p{X\p{\dbl{\Phi}_j(t_0,s_0)}}. 
			\end{equation*}
			Then, we integrate the above identity with respect to $s$ from 0 to 1 by changing variables to get
			\begin{equation}\label{eq 8}
				E^{\lambda \omega}_{\alpha}(\dbl{\sigma}_j(t)) = E^{\lambda \omega}_{\alpha}\p{\sigma_j(t)} + \int_0^{  \varphi_j(t) T(\delta,C_\lambda)} \delta E^{\lambda \omega}_{\alpha}\p{\dbl{\Phi}_j(x,s)}\p{ \,X\p{\dbl{\Phi}_j(x,s)}} ds.
			\end{equation}
			Next,  combining the identity \eqref{eq 8} with  estimate \eqref{eq 9} and part \eqref{lemma vector part 3} of Lemma \ref{lemma vector}, we can get
			\begin{equation*}
				E^{\lambda \omega}_{\alpha}(\dbl{\sigma}_j(t)) <  E^{\lambda \omega}_{\alpha}(\sigma_j(t)) -  \frac{\delta^2}{4}  T(\delta,C_\lambda) < \mathcal{W}_{\alpha,\lambda} + \lambda \varepsilon_j - \frac{\delta^2}{4}  T(\delta,C_\lambda)
			\end{equation*}
			for all $t \in V_j$. 
			Thus, when $t \in V_j$ and  $j$ is large enough, we have
			\begin{equation*}
				\max_{t \in I^{k-2}} E^{\lambda \omega}_{\alpha}(\dbl{\sigma}_j(t)) \leq  \mathcal{W}_{\alpha,\lambda} - \frac{\delta^2   T(\delta,C_\lambda)}{8} < \mathcal{W}_{\alpha,\lambda},
			\end{equation*}
			which is a contradiction to the definition of min-max value $\mathcal{W}_{\alpha,\lambda}$. Therefore Claim \ref{claim 3} holds.
		\end{proof}
		Let us now return to the proof of the Proposition \ref{prop energy bound}. Consequently, by Claim \ref{claim 3}, there exists a subsequence of $\sigma_j(t_j)$ for $t_j \in U_j$, which are still denoted by same symbols, such that 
		\begin{equation*}
			E_\alpha(\sigma_j(t_j)) \leq C_\lambda \quad \text{and}\quad  \norm{\delta E^{\lambda \omega}_{\alpha}(\sigma_j(t_j)) } \rightarrow 0 \quad \text{as } j \rightarrow \infty.
		\end{equation*}
		In view of Lemma \ref{P-S condition}, after passing to a subsequence, $\sigma_j(t_j)$ converges strongly in $W^{1,2\alpha}(\S^2, N)$ to some $u_{\alpha}$ satisfying $\delta E^{\lambda \omega}_{\alpha}(u_{\alpha}) = 0$ and $E_\alpha(u_{\alpha}) \leq C_\lambda$. The conclusion of part \eqref{prop energy bound part 1} and \eqref{prop energy bound part 2} of Proposition \ref{prop energy bound} follows directly.  For part \eqref{prop energy bound part 3} of Proposition \ref{prop energy bound}, we first note that the $\alpha$-energy for critical points $u_{\alpha}$ is strictly larger than $\frac{1}{2}\mathrm{Vol}(\S^2)$, that is, there exists a $\delta(\alpha, \lambda\omega) > 0$ such that 
		\begin{equation*}
			E_\alpha(u_{\alpha})  \geq \frac{1}{2}\mathrm{Vol}(\S^2) +  \delta(\alpha, \lambda\omega).
		\end{equation*}
		Otherwise, assume for any $\varepsilon  > 0$ there exists a sweepout $\sigma_\varepsilon \in \mathscr{S}$ such that 
		\begin{equation*}
			\max_{t \in I^{k-2}} E_\alpha(\sigma_\varepsilon(t)) < \frac{1}{2}\mathrm{Vol}(\S^2) + \varepsilon.
		\end{equation*}
		Then, the map $f_{\sigma_\varepsilon}: \S^{k} \rightarrow N$, induced by $\sigma_\varepsilon$, is homotopy to some constant map, by directly applying Poinc{a}r\'{e}'s inequality and Sobolev Embedding $W^{1,2\alpha}(\S^k,N) \hookrightarrow C^0(\S^k,N)$,
		which contradicts to the choice of $\sigma_j \in [\iota]\neq 0$. Thus, $u_{\alpha}$ is a non-constant critical point for $E^{\lambda\omega}_{\alpha}$ and the proof of Proposition \ref{prop energy bound} is now complete.
	\end{proof}

	\subsection{Morse Index Upper Bound for Min-Max Critical Points \texorpdfstring{$u_\alpha$}{Lg}}
	\ 
	\vskip5pt
	In this subsection, we are devoted to construct a sequence of non-constant critical points $\{u_{\alpha_j}\}_{j \in \mathbb N}$ of $E^{\lambda \omega}_{\alpha_j}$ for $\alpha_j \searrow 1$ as $j \rightarrow \infty$ that admits an uniformly $\alpha_j$-energy upper bound together with a Morse index upper bound: $\mathrm{Ind}_{E^{\lambda \omega}_{\alpha_j}}(u_{\alpha_j}) \leq k-2$. The main obstruction in constructing the critical points $u_{\alpha_j}$ with desired Morse index upper bound is the dependence of $\alpha_j$-energy upper bound obtained in Proposition \ref{prop energy bound} with the choices of sequence $\{u_{\alpha_j}\}_{j \in \mathbb N}$ and $\lambda \in \R_{+}$. Such dependence prevents us to apply Morse theory to obtain the Morse index upper bound estimates by perturbing the functional $E^{\lambda \omega}_{\alpha_j}$ further to a Morse one. To overcome this obstacle, inspired by the Morse index upper bound estimates in the setting of Almgren-Pitts min-max theory by Marques-Neves \cite{Marques-Neves2016}, Song \cite{Song-2023-Morse}, and Li \cite{Liyangyang-2023}, see also Cheng-Zhou \cite{Cheng2020ExistenceOC} and Cheng \cite{cheng2022existence} for the setting of a newly devised min–max theory, we design a homotopical deformation for the min-max sequences of sweepouts $\sigma_l: I^{k-2} \rightarrow W^{1,2\alpha}(\S^2, N)$ obtained in Proposition \ref{prop energy bound} to construct a sequence $\{u_{\alpha_j}\}_{j \in \mathbb N}$ with the desired  Morse index upper bound and $\alpha_j$-energy bound simultaneously, for more details see Theorem \ref{prop deformation} and Theorem \ref{thm:Morse index k-2} below. Note that the main result---Theorem \ref{prop deformation} in  this subsection holds for all choice of $\lambda \in \R_+$ and any $\alpha > 1$ in the definition of functional $E^{\omega}_\alpha$, so we simply write $\alpha$ for $\alpha_j$ when $j \in \mathbb N$ fixed, $\omega$ for $\lambda \omega$ and $E^{\omega}_\alpha$ for $E^{\lambda \omega }_\alpha$. 
	
	Before penetrating into the detailed description of homotopical deformation Theorem \ref{prop deformation} and Theorem \ref{thm:Morse index k-2}, we prepare some essential notions and estimates. To begin, recall that the second variation formula of $E^{\omega}_{\alpha}:W^{1,2\alpha}(\S^2, N) \rightarrow \R$ is written as following, for more details see Lemma \ref{variation formula},
	\begin{align}\label{eq:second variation formula section 3}
		\delta^2 E^{\omega}_{\alpha}(u)(V,V) &= \alpha \int_{\S^2} \p{1 + \abs{\nabla u}^2}^{\alpha - 1} \Big( \langle \nabla V, \nabla V \rangle - R(V,\nabla u, V, \nabla u) \Big) d V_g \nonumber\\
		& \quad + 2\alpha(\alpha - 1)\int_{\S^2} \p{1 + \abs{\nabla u}^2}^{\alpha - 2}\langle \nabla u, \nabla V\rangle^2 dV_g \nonumber\\
		&\quad + 2 \int_{\S^2} \inner{ H(\nablap u, \nabla V), V} dV_g +  \int_{\S^2} \inner{ (\nabla_V H)(\nablap u, \nabla u), V} dV_g,
	\end{align}
	for $V \in \mathcal{T}_u$. 
	
	\begin{defi}\label{defi: Morse index}
		For any $\omega \in C^2(\wedge^2(N))$, the \textit{Morse index} of a critical point $u \in W^{1,2\alpha}(\S^2,N)$ for $E^\omega_{\alpha}$ is the maximal dimension of linear subspace of $\mathcal{T}_u$ on which $\delta^2 E^\omega_{\alpha}(u)$ restricted to be a negative definite symmetric bilinear form. 
	\end{defi}
 By Lemma \ref{lem regularity}, every critical point $u \in W^{1,2\alpha}(\S^2, N)$ of $E^\omega_{\alpha}$ is smooth for small enough $\alpha > 1$. Then, we can extend $\delta^2 E^\omega_{\alpha}(u): \mathcal{T}_u \times \mathcal{T}_u \rightarrow \R$ to a bounded symmetric bilinear form on the Hilbert space 
	\begin{equation}
		\label{eq:extended Tu}
		\dbl{\mathcal{T}}_u := \set{V \in W^{1,2}(\S^2, \R^K) \, :\, V(x) \in T_{u(x)}N\, \text{ for a.e. } x \in \S^2}.
	\end{equation}
	The Morse index of critical point $u$ of $E^\omega_\alpha$ on $\dbl{\mathcal{T}}_u$ is defined exactly the same manner with Definition \ref{defi: Morse index}.
	At each $\dbl{\mathcal{T}}_u$, it follows from the Riesz representation theorem that there exists a bounded linear operator $\L_u$ such that 
	\begin{equation}\label{jacobi operator}
		\delta^2 E^\omega_{\alpha}(u)(V,W) = \langle \L_u(V),W\rangle_{\dbl{\mathcal{T}}_u }\quad \text{for } V,\,W \in \dbl{\mathcal{T}}_u,
	\end{equation}
	which is called the \textit{Jacobi operator} of $E^\omega_{\alpha}$ at $u$. Here, the inner product $\inner{\cdot, \cdot}_{\dbl{\mathcal{T}}_u}$ is induced from inclusion $\dbl{\mathcal{T}}_u \subset W^{1,2}(M, \R^K)$.
	Furthermore, in the Lemma \ref{lem:spectral decom} below, we demonstrate that the Morse index defined on $\mathcal{T}_u$ is equivalent to the one extended on $\dbl{\mathcal{T}}_u$ and we establish a spectral decomposition on $\dbl{\mathcal{T}}_u$ using the standard uniformly elliptic operator theory.
	\begin{lemma}\label{lem:spectral decom}
		Given a critical point $u \in W^{1,2\alpha}(\S^2, N)$ of $E^\omega_{\alpha}$, when $\alpha  - 1$ is small enough, the following properties holds:
		\begin{enumerate}[label=(\subscript{L}{{\arabic*}})]
			\item\label{lem:spectral decom 1} The Jacobi operator $\L_u$ is a self-adjoint second order elliptic differential operator, hence a Fredholm operator on $\dbl{\mathcal{T}}_u$.
			\item\label{lem:spectral decom 2} There exists a sequence of real eigenvalues $\lambda_j \nearrow \infty$ of $\L_u$ and a sequence of corresponding eigenfunctions $\{\phi_j\}_{j \in \mathbb N}$ which forms a basis of $\dbl{\mathcal{T}}_u$ such that 
			\begin{equation*}
				\delta E^\omega_{\alpha}(u)(\phi_i ,\phi_j) = \inner{\L_u(\phi_i), \phi_j}_{\dbl{\mathcal{T}}_u} = \lambda_i \inner{\phi_i ,\phi_j}_{L^2} = \lambda_i\delta_{ij}.
			\end{equation*}
			\item\label{lem:spectral decom 3} The Morse index defined on $\dbl{\mathcal{T}}_u$ is finite and is identical with the standard one defined as in Definition \ref{defi: Morse index}.
		\end{enumerate}
	\end{lemma}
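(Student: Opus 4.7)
My plan is to work directly from the symmetric bilinear form \eqref{eq:second variation formula section 3} (equivalently, the polarized version from Corollary \ref{coro hessian}) and split it into an elliptic principal part plus lower-order terms, so that standard spectral theory for self-adjoint Fredholm operators on a closed surface applies.

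For \ref{lem:spectral decom 1}, I would first note that by Corollary \ref{coro hessian} the bilinear form $\delta^2 E^\omega_{\alpha}(u)(V,W)$ is manifestly symmetric in $(V,W)$, so the Riesz representation \eqref{jacobi operator} forces $\L_u$ to be self-adjoint on the Hilbert space $\dbl{\mathcal{T}}_u$. To identify the underlying differential structure, integrate by parts in the principal piece of \eqref{eq:second variation formula section 3} to read off the formal Euler--Lagrange operator. Its principal symbol at $x \in \S^2$ in direction $\xi \in T_x^*\S^2$, acting on $V \in T_{u(x)}N$, is
\begin{equation*}
P(\xi)(V,V) \;=\; \alpha(1+|\nabla u|^2)^{\alpha - 1}|\xi|^2 |V|^2 \;+\; 2\alpha(\alpha - 1)(1+|\nabla u|^2)^{\alpha - 2}\langle \nabla_\xi u,\, V\rangle^2,
\end{equation*}
which dominates $\alpha(1+|\nabla u|^2)^{\alpha-1}|\xi|^2|V|^2$ and is therefore uniformly positive whenever $(\xi,V)\neq 0$; here I use Lemma \ref{lem regularity} to know $u$ is smooth so $\nabla u$ is bounded, and I take $\alpha_0 - 1$ small so the weights stay between two positive constants. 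This yields uniform ellipticity.

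For \ref{lem:spectral decom 2}, I would split $\delta^2 E^\omega_\alpha(u) = Q_1 + Q_2$, where $Q_1$ collects the two leading terms of \eqref{eq:second variation formula section 3} and $Q_2$ collects the remaining curvature, $H$, and $\nabla H$ contributions. By the ellipticity above plus smoothness of $u$, $Q_1$ satisfies a Gårding-type estimate
\begin{equation*}
Q_1(V,V) + C\|V\|_{L^2}^2 \;\geq\; c\,\|V\|_{W^{1,2}}^2, \qquad V \in \dbl{\mathcal{T}}_u,
\end{equation*}
so the operator $\mathcal{A}$ representing $Q_1 + C\langle\cdot,\cdot\rangle_{L^2}$ on $\dbl{\mathcal{T}}_u$ is bounded, self-adjoint and coercive, hence invertible. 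The remaining pieces $Q_2$ involve $V$ paired with $V$ through at most one derivative (e.g.\ $\int \langle H(\nablap u, \nabla V), V\rangle$), so by Rellich--Kondrachov the bilinear form $Q_2 - C\langle\cdot,\cdot\rangle_{L^2}$ factors through $\dbl{\mathcal{T}}_u \hookrightarrow L^2$ compactly and therefore represents a compact self-adjoint operator $\mathcal{K}$. Thus $\L_u = \mathcal{A} + \mathcal{K}$ is Fredholm. The spectral theorem applied to the compact self-adjoint resolvent $\mathcal{A}^{-1/2}(\L_u)\mathcal{A}^{-1/2}$, or equivalently to $(\L_u + \tau I)^{-1}$ for large $\tau$, produces the eigenvalues $\lambda_j \nearrow \infty$ and an orthonormal basis $\{\phi_j\}$ of $\dbl{\mathcal{T}}_u$; elliptic regularity applied to the weak eigenvalue equation $\L_u \phi_j = \lambda_j \phi_j$ shows each $\phi_j$ is smooth and hence lies in $\mathcal{T}_u$.

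Claim \ref{lem:spectral decom 3} is then a short corollary: only finitely many eigenvalues $\lambda_j$ are negative, so the maximal negative subspace in $\dbl{\mathcal{T}}_u$ is finite dimensional and spanned by smooth eigenfunctions in $\mathcal{T}_u$, which gives the inequality $\mathrm{Ind}_{\dbl{\mathcal{T}}_u} \leq \mathrm{Ind}_{\mathcal{T}_u}$; the reverse inequality holds trivially since $\mathcal{T}_u \subset \dbl{\mathcal{T}}_u$. The most delicate step is the ellipticity/compactness split in \ref{lem:spectral decom 2}: one must verify that the second (degenerate) principal term $2\alpha(\alpha-1)(1+|\nabla u|^2)^{\alpha-2}\langle\nabla u,\nabla V\rangle^2$ is nonnegative and hence cannot destroy coercivity, and that all the zero-th and first-order terms in $Q_2$ are truly compact; both rely crucially on Lemma \ref{lem regularity} to give $u \in C^\infty$ and on $\alpha_0 - 1$ being small so the weights are uniformly comparable to $1$.
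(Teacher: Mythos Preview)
Your proposal is correct and follows essentially the same route as the paper: symmetry of $\delta^2E^\omega_\alpha(u)$ gives self-adjointness, smoothness of $u$ (Lemma \ref{lem regularity}) plus nonnegativity of the $2\alpha(\alpha-1)(1+|\nabla u|^2)^{\alpha-2}\langle\nabla u,\nabla V\rangle^2$ term yields uniform ellipticity and a G\aa rding inequality, and then compact embedding $\dbl{\mathcal{T}}_u\hookrightarrow L^2$ together with elliptic regularity of eigenfunctions gives \ref{lem:spectral decom 2} and \ref{lem:spectral decom 3}. The paper's write-up is terser---it reduces everything to the G\aa rding inequality \eqref{eq:garding ineq} and invokes standard theory---but your coercive-plus-compact decomposition is just an explicit unpacking of the same mechanism.
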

	\begin{proof}
		It suffices to show \ref{lem:spectral decom 1}, \ref{lem:spectral decom 2} follows directly from the Sobolev compact embedding
		\begin{equation*}
			\dbl{\mathcal{T}}_u \hookrightarrow  \set{V \in L^{2}(\S^2, \R^K) \, :\, V(x) \in T_{u(x)}N\, \text{ for a.e. } x \in \S^2},
		\end{equation*}
		and the application of standard compact operator theory to $\L_u$. And \ref{lem:spectral decom 3} follows from the observation that each eigenfunction $\phi_j$ actually is smooth by applying the elliptic bootstrapping to the eigenequations of $\phi_j$. Note that the self-adjointness of $\L_u$ follows from the symmetry of bilinear form $\delta^2 E^\omega_\alpha$, and that by the smothness of critical point $u$, when $\alpha - 1$ is small enough, $\L_u$ is uniformly elliptic. To verify $\L_u$ is a Fredholm operator on $\dbl{\mathcal{T}}_u$, it is enough to establish the following G$\mathrm{\mathring{a}}$rding-type inequality:
		\begin{equation}\label{eq:garding ineq}
			\delta^2 E^\omega_{\alpha}(u)(V,V) \geq C_1(u, H, N)\int_M |\nabla V|^2 d V_g - C_2(u,H,N) \int_{M} |V|^2 d V_g
		\end{equation}
		for some constants $C_1(u, H, N)$, $C_2(u, H, N)$ depending on $u$, mean curvature type  vector field $H$ and geometries of $N$. In fact, since the integrand in second line of \eqref{eq:second variation formula section 3} is positive, it is not difficult to see
		\begin{align}\label{lem:spectral decom eq 1}
			\delta^2 E^\omega_{\alpha}(u)(V,V) &\geq  \int_M \p{  \abs{\nabla V}^2 - C(u,N) \abs{V}^2 }d V_g \nonumber\\
			&\quad - C(u,H)  \int_M \abs{\nabla V} \cdot \abs{V} dV_g - C(u,H)\int_M  |V|^2 dV_g.
		\end{align}
		Then, applying the Cauchy-Schwartz inequality with $\varepsilon$ to the first integrand in the second line of \eqref{lem:spectral decom eq 1} will yield \eqref{eq:garding ineq}, hence completes the proof of Lemma \ref{lem:spectral decom}.
	\end{proof}
	Utilizing the conclusions \ref{lem:spectral decom 2} and \ref{lem:spectral decom 3} of Lemma \ref{lem:spectral decom}, the Morse index of critical point $u$ of $\delta^2 E^\omega_\alpha$ can also be defined as 
	\begin{defi}
		The $\mathrm{Ind}_{E^\omega_\alpha}(u)$ equals to the number of negative eigenvalues of $\L_u$ on $\dbl{\mathcal{T}}_u$ counted with multiplicity. Furthermore, by the spectral decomposition \ref{lem:spectral decom 2} of Lemma \ref{lem:spectral decom} and regularity of $\phi_j$, we have
		\begin{equation*}
			\mathcal{T}_u = \mathcal{T}_u^- \oplus \mathcal{T}_u^+
		\end{equation*}
		where $\mathcal{T}_u^-$ is the direct sum of negative eigenspaces of $\L_u$ with $\dim(\mathcal{T}_u^-) = \mathrm{Ind}_{E^\omega_\alpha}(u)$ and $\mathcal{T}_u^+$ is the $L^2$-orthogonal complement of $\mathcal{T}_u^-$ in $\mathcal{T}_u$. Based on this decomposition, we write $V = V^- + V^+$ for each $V \in \mathcal{T}_u$.
	\end{defi}
	For $\alpha > 1$, $W^{1,2\alpha}(\S^2,N)$ is a Banach manifold. Then, for each $u \in W^{1,2\alpha}(\S^2,N)$, taking a small enough ball $$\mathcal{B}_u(0,r_u) = \set{V \in \mathcal{T}_u \, :\, \norm{V}_{\mathcal{T}_u} < r_u} \subset \mathcal{T}_u$$
	center at the origin of $\mathcal{T}_u$ such that
        $$\left.\delta^2 E^{\omega}_\alpha(\Phi_{u}(w))\right|_{\mathcal{T}_{u}^-}: \mathcal{T}_{w}^- \times \mathcal{T}_{w}^- \cong \mathcal{T}_{u}^- \times \mathcal{T}_{u}^- \rightarrow \R $$ is negatively definite for all $w \in \mathcal{B}_u(0,r_u)$, we define the coordinate map as below
	\begin{equation*}
	   \Phi_u : \mathcal{B}_u(0,r_u) \rightarrow W^{1,2\alpha}(\S^2,N) \quad \text{by } [\Phi_u(V)](x) = \exp_{u(x)}(V(x)), \quad \text{for } x \in \S^2.
	\end{equation*}
	Here, the norm $\norm{\cdot}_{\mathcal{T}_u}$ is induced from inclusion $\mathcal{T}_u \subset W^{1,2\alpha}(\S^2, \R^K)$.
	The collection 
	$$\set{\mathcal{B}_u(0,r_u), \Phi_u}_{u \in W^{1,2\alpha}(\S^2,N)}$$ 
	consists of a smooth structure for $W^{1,2\alpha}(\S^2,N)$. Note that in the sequel, we use 
	\begin{equation*}
		\mathcal{B}^-_u(0,r_u) \quad \text{and}\quad  \mathcal{B}^+_u(0,r_u)
	\end{equation*}
	to represent the balls in $\mathcal{T}_u^-$ and $\mathcal{T}_u^+$, respectively.
	
	In Lemma \ref{lem:local estimates index} below, around each critical point $u$ of $E^{\omega}_\alpha$ we establish some local estimates of $E^{\omega}_\alpha\circ\Phi_u$ on $\mathcal{B}_u(0,r_0(u))$ for small enough $0 < r_0(u) < r_u /3$. 
	\begin{lemma}[See also {\cite[Proposition 4.5]{cheng2022existence}}]
	   \label{lem:local estimates index}
	   With the same notations as above, given $\omega \in  C^2(\wedge^2 (N))$ and $\alpha > 1$, for each critical point $u$ of $E^{\omega}_\alpha$ there exists $0 < r_0 := r_0(u) < {r_u}/{3}$ such that the following holds:
	   \begin{enumerate}[label=(\subscript{I}{{\arabic*}})]
		       \item\label{lem:local estimates index 1} There exists a constant $0 < \kappa := \kappa(u) < 1$ and a constant $C := C(u) > 0$ such that for all $V \in \mathcal{T}_u$ with
		       \begin{equation*}
			           V \in \mathcal{B}_u(0,r_0) \quad \text{and} \quad \norm{V^+}_{\mathcal{T}_u}\leq \kappa \norm{V^-}_{\mathcal{T}_u},
			       \end{equation*}
		        we have 
		        \begin{equation*}
			           E^{\omega}_{\alpha}(\Phi_u(V)) -  E^{\omega}_{\alpha}(\Phi_u(0)) \leq - C \norm{V^-}_{\mathcal{T}_u}^2. 
			        \end{equation*}
		        \item\label{lem:local estimates index 2} There exists a  constant $C := C(u) > 0$ such that for all $V, W \in \mathcal{T}_u$ with
		       \begin{equation*}
			           V \in \mathcal{B}_u(0,r_0) \quad \norm{W^-}_{\mathcal{T}_u} = 1 \quad  \text{and} \quad \delta E^{\omega}_\alpha(\Phi_u(V))(W^-) \leq 0,
			       \end{equation*}
		        we have
		        \begin{equation*}
			            E^{\omega}_{\alpha}(\Phi_u(V + r W^-))  -  E^{\omega}_{\alpha}(\Phi_u(V)) \leq - C r^2, \quad \text{for }\, 0 \leq r \leq r_0.
			        \end{equation*}
		   \end{enumerate}
	\end{lemma}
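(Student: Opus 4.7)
The plan is to argue locally via a second-order Taylor expansion in the chart $\Phi_u$, exploiting the fact that (a) $\mathcal{T}_u^-$ is finite dimensional, so all norms on it are equivalent, and (b) the bilinear form $\delta^2 E^{\omega}_\alpha(u)$ is bounded in the weaker $W^{1,2}$-topology and hence also in the $\mathcal{T}_u = W^{1,2\alpha}$-topology via the continuous inclusion $W^{1,2\alpha}(\S^2,\R^K) \hookrightarrow W^{1,2}(\S^2,\R^K)$. Set $f := E^{\omega}_\alpha \circ \Phi_u : \mathcal{B}_u(0,r_u) \to \R$. Standard calculations show that $E^{\omega}_\alpha$ is $C^2$ on $W^{1,2\alpha}(\S^2,N)$ when $\alpha-1$ is small (all integrands are $C^2$ and the relevant Nemytskii operators are $C^2$ by Sobolev embedding), hence so is $f$, with $df(0)=0$ and $d^2 f(0)(V,W)=\delta^2 E^{\omega}_\alpha(u)(V,W)$. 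Moreover, the continuity of $d^2 f$ yields: for every $\epsilon>0$ there is $r_0>0$ such that
\begin{equation*}
\bigl| d^2 f(V)(W_1,W_2)-\delta^2 E^{\omega}_\alpha(u)(W_1,W_2)\bigr|\leq \epsilon\,\norm{W_1}_{\mathcal{T}_u}\norm{W_2}_{\mathcal{T}_u}
\end{equation*}
for all $V \in \mathcal{B}_u(0,r_0)$ and all $W_1,W_2\in \mathcal{T}_u$.

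For \ref{lem:local estimates index 1}, I would use the spectral decomposition of Lemma \ref{lem:spectral decom}: writing $V=V^-+V^+$, the cross terms vanish and
\begin{equation*}
\delta^2 E^{\omega}_\alpha(u)(V,V)=\delta^2 E^{\omega}_\alpha(u)(V^-,V^-)+\delta^2 E^{\omega}_\alpha(u)(V^+,V^+).
\end{equation*}
Since $\mathcal{T}_u^-$ is finite dimensional and the form is strictly negative definite there, a norm-equivalence argument gives some $\lambda_->0$ with $\delta^2 E^{\omega}_\alpha(u)(V^-,V^-)\leq -\lambda_-\norm{V^-}_{\mathcal{T}_u}^2$. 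By boundedness of the bilinear form, $\delta^2 E^{\omega}_\alpha(u)(V^+,V^+)\leq C_+\norm{V^+}_{\mathcal{T}_u}^2$ for some $C_+>0$. Choose $\kappa\in(0,1)$ so that $C_+\kappa^2\leq \lambda_-/2$; then under $\norm{V^+}\leq \kappa\norm{V^-}$ we obtain $\delta^2 E^{\omega}_\alpha(u)(V,V)\leq -\tfrac{\lambda_-}{2}\norm{V^-}^2_{\mathcal{T}_u}$. Since $\norm{V}^2\leq (1+\kappa^2)\norm{V^-}^2\leq 2\norm{V^-}^2$, Taylor's theorem at $0$ gives
\begin{equation*}
f(V)-f(0)=\tfrac{1}{2}\delta^2 E^{\omega}_\alpha(u)(V,V)+o(\norm{V}_{\mathcal{T}_u}^2)\leq -\tfrac{\lambda_-}{4}\norm{V^-}^2_{\mathcal{T}_u}+o(\norm{V^-}^2_{\mathcal{T}_u}),
\end{equation*}
and shrinking $r_0$ to absorb the error term yields \ref{lem:local estimates index 1} with $C=\lambda_-/8$.

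For \ref{lem:local estimates index 2}, I would apply the one-dimensional Taylor-with-integral-remainder identity
\begin{equation*}
f(V+rW^-)-f(V)=r\,df(V)(W^-)+\int_0^r (r-s)\,d^2 f(V+sW^-)(W^-,W^-)\,ds.
\end{equation*}
By hypothesis (and the chain rule through $\Phi_u$, using that $d\Phi_u(V)\to \mathrm{Id}$ as $V\to 0$), the linear term is non-positive for $r_0$ small. For the integral, since $\norm{V+sW^-}_{\mathcal{T}_u}\leq 2r_0$ when $\norm{V}\leq r_0$ and $s\leq r\leq r_0$, the continuity estimate above combined with $\delta^2 E^{\omega}_\alpha(u)(W^-,W^-)\leq -\lambda_-\norm{W^-}^2_{\mathcal{T}_u}=-\lambda_-$ (as $\norm{W^-}_{\mathcal{T}_u}=1$) yields $d^2 f(V+sW^-)(W^-,W^-)\leq -\lambda_-/2$ after further shrinking $r_0$. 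Integrating produces $-\tfrac{\lambda_-}{4}r^2$, giving \ref{lem:local estimates index 2} with $C=\lambda_-/4$.

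The main obstacle is the norm mismatch: the spectral theory of the Jacobi operator $\L_u$ lives naturally in the Hilbert space $\dbl{\mathcal{T}}_u\subset W^{1,2}$, while the chart $\Phi_u$ and the coercivity we are after are formulated in the Banach norm of $\mathcal{T}_u=W^{1,2\alpha}$. Bridging the two requires (i) the finite-dimensionality of $\mathcal{T}_u^-$ (so $L^2$, $W^{1,2}$ and $W^{1,2\alpha}$ norms are all equivalent there), (ii) smoothness of eigenfunctions $\phi_j$ (already noted in Lemma \ref{lem:spectral decom}) so that the spectral splitting lives in $\mathcal{T}_u$, and (iii) the continuous inclusion $W^{1,2\alpha}\hookrightarrow W^{1,2}$ on $\S^2$ in order to transfer boundedness of $\delta^2 E^{\omega}_\alpha(u)$. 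A secondary delicate point is verifying the $C^2$-regularity and base-point continuity of the Hessian of $E^{\omega}_\alpha$ on $W^{1,2\alpha}(\S^2,N)$, which rests on the uniformly elliptic character of the integrand $(1+|\nabla u|^2)^\alpha$ for $\alpha$ close to $1$ and the $C^2$ smoothness of $\omega$, both already exploited in Section \ref{section 2}.
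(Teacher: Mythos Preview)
Your proposal is correct and follows essentially the same route as the paper: both arguments rest on the second-order Taylor expansion of $f=E^{\omega}_\alpha\circ\Phi_u$ at the critical point, the spectral splitting $V=V^-+V^+$ with $\delta^2 E^{\omega}_\alpha(u)$ block-diagonal, the strict negativity on the finite-dimensional $\mathcal{T}_u^-$ (your $\lambda_-$, the paper's $C_2(u)$), the boundedness of the full Hessian (your $C_+$, the paper's $C_1(u)$), and the uniform continuity of the second differential on $\mathcal{B}_u(0,2r_0)$ to absorb the remainder. For \ref{lem:local estimates index 2} both you and the paper use the one-variable Taylor formula with integral remainder at $V$, drop the non-positive linear term by hypothesis, and bound the integrand by $-\lambda_-/2$ via the same continuity estimate. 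Your explicit discussion of the $W^{1,2}$ versus $W^{1,2\alpha}$ norm mismatch is a point the paper handles only implicitly through the finite-dimensionality of $\mathcal{T}_u^-$, but this is a matter of exposition rather than a different argument.
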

	\begin{proof}
	   Since $\delta^2 E^{\omega}_\alpha(\Phi_u(0)) = \delta^2  E^{\omega}_{\alpha}(u): \mathcal{T}_u \times \mathcal{T}_u \rightarrow \R$ is a bounded bilinear form, there exists $C_1(u) := \norm{\delta^2  E^{\omega}_{\alpha}(u)} > 0$ such that 
	   \begin{equation}\label{eq:local estimates 1}
		       \abs{\delta^2 E^{\omega}_\alpha(\Phi_u(0))(V,V)} \leq C_1(u) \norm{V}_{\mathcal{T}_u}^2, \quad \text{for all  } V \in \mathcal{T}_u.
		   \end{equation}
	   In particular, observe that $\dim(\mathcal{T}_u^-) = \mathrm{Ind}_{E^{\omega}_\alpha}(u) < \infty$, the induced norms $\norm{\cdot}_{\mathcal{T}_u}$ and $\norm{\cdot}_{\dbl{\mathcal{T}}_u}$ restricted on $\mathcal{T}_u^-$ are equivalent. There exists a constant $C_2(u) > 0$ such that 
	   \begin{equation} \label{eq:local estimates 2}
		       \delta^2 E^{\omega}_\alpha(\Phi_u(0))(V^-,V^-) \leq -C_2(u) \norm{V^-}^2_{\mathcal{T}_u} \quad \text{for all } V^- \in \mathcal{T}_u^-.
		   \end{equation}
	   By the continuity of $\delta^2  E^{\omega}_{\alpha}\circ \Phi_u$ on $\mathcal{B}_{u}(0,r_u)$, we can choose small enough $0 < r_0 < r_u/3$ such that for any $W \in \mathcal{B}_{u}(0,2r_0)$ and any $V \in \mathcal{T}_u$ there holds
	   \begin{equation}\label{eq:local estimates 3}
		       \abs{\delta^2 E^{\omega}_\alpha(\Phi_u(W))(V,V) - \delta^2 E^{\omega}_\alpha(\Phi_u(0))(V,V)} \leq \frac{\kappa^2 C_1(u)}{4} \norm{V}^2_{\mathcal{T}_u},
		   \end{equation}
	   where $\kappa > 0$ is a constant that will be determined later.
	   Therefore, for $V \in \mathcal{B}_u(0,r_0)$ with $\norm{V^+}_{\mathcal{T}_u}\leq \kappa \norm{V^-}_{\mathcal{T}_u}$, utilizing the Taylor formula with integral remainder at critical point $u$ and keeping in mind \eqref{eq:local estimates 1} and \eqref{eq:local estimates 2} we obtain
	   \begin{align}
		        E^{\omega}_{\alpha}(\Phi_u(V)) &-  E^{\omega}_{\alpha}(\Phi_u(0)) \nonumber\\
                &= \frac{1}{2}\p{ \delta^2  E^{\omega}_{\alpha}(\Phi_u(0))(V^-,V^-) + \delta^2  E^{\omega}_{\alpha}(\Phi_u(0))(V^+,V^+)}\nonumber\\
		       &\quad + \int_{0}^1 (1-s) \p{\delta^2 E^{\omega}_\alpha(\Phi_u(s V))(V,V) - \delta^2 E^{\omega}_\alpha(\Phi_u(0))(V,V)} ds \nonumber\\
		       & \leq - \frac{C_2(u)}{ 2} \norm{V^-}^2_{\mathcal{T}_u} + \frac{C_1(u)}{2} \norm{V^+}^2_{\mathcal{T}_u} + \frac{\kappa^2 C_1(u)}{8} \norm{V}^2_{\mathcal{T}_u}\nonumber\\
		       & \leq - \frac{1}{2} \p{C_2(u) -  \kappa^2 C_1(u) - {\kappa^2 C_1(u)}} \norm{V^-}_{\mathcal{T}_u}^2,
		   \end{align}
	   where in the first equality we used $\delta^2 E^{\omega}_\alpha(\Phi_u(0))(\mathcal{T}_u^-, \mathcal{T}_u^+) = 0$ in viewing of part \ref{lem:spectral decom 2} in Lemma \ref{lem:spectral decom} and we used the inequality $\norm{V}_{\mathcal{T}_u} \leq 2 \norm{V^-}_{\mathcal{T}_u}$ in the last inequality which comes from our assumption $\norm{V^+}_{\mathcal{T}_u}\leq \kappa \norm{V^-}_{\mathcal{T}_u}$ for $0 < \kappa < 1$. Then, letting $\kappa > 0$ small enough such that 
	   \begin{equation*}
		       \kappa^2 < \min \p{\frac{C_2(u)}{4 C_1(u)},1}
		   \end{equation*}
	   will yield the conclusion of \ref{lem:local estimates index 1} by taking $C(u) := \frac{1}{4} C_2(u)$.
	
	   Next, in order to prove \ref{lem:local estimates index 2} we apply \eqref{eq:local estimates 3} and the Taylor formula with integral remainder for $E^\omega_\alpha \circ \Phi_u$ at $u$ acting on  $V \in \mathcal{B}_u(0,r_0)$ and $V + t W^- \in \mathcal{B}_u(0, 2r_0)$ satisfying  $\norm{W^-}_{\mathcal{T}_u} = 1$ \text{and}  $\delta E^{\omega}_\alpha(\Phi_u(V))(W^-) \leq 0,$  to obtain
	   \begin{align}
		       E^{\omega}_\alpha(\Phi_u(V &+ t W^-)) - E^{\omega}_\alpha(\Phi_u(V))\nonumber \\
		       &= t \delta E^{\omega}_\alpha(\Phi_u(V))(W^-) + \frac{t^2}{2}\delta^2 E^{\omega}_\alpha(\Phi_u(V))(W^-,W^-)\nonumber\\
		       &\quad +\int_0^t (t-s) \Big(\delta^2 E^{\omega}_\alpha(\Phi_u(V+ s W^-))(W^-, W^-) \nonumber\\
         &\quad \quad \quad \quad \quad \quad \quad \quad  - \delta^2 E^{\omega}_\alpha(\Phi_u(0))(W^-, W^-)\Big) ds\nonumber\\
		       &\leq - \frac{t^2}{2} \p{C_2(u) - \frac{\kappa^2 C_1(u)}{8} }\norm{W^-}_{\mathcal{T}_u}^2 \leq -\frac{31 C_2(u)}{64} t^2.
		   \end{align}
	   This leads to the conclusion of \ref{lem:local estimates index 2} in Lemma \ref{lem:local estimates index} by letting
	   $$C(u) := \frac{31 C_2(u)}{64}. $$
	\end{proof}

	In order to describe the main result in this subsection more precisely, given $\lambda >0$ and $\alpha > 1$, 
 writing $\omega$ as $\lambda \omega$ we define $\mathcal{A}_{\varepsilon, C}$ to be the collection of admissible sweepouts $\sigma \in \mathscr{S}$ satisfying
	\begin{equation*}
		\max_{t \in I^{k-2}} E^{\omega}_\alpha(\sigma(t)) \leq \mathcal{W}_{\alpha,\lambda} + \varepsilon
	\end{equation*}
	and 
	\begin{equation*}
		E_\alpha(\sigma(t))\leq C \quad \text{as long as $t \in I^{k-2}$ satisfying}\quad E_\alpha^{\omega}(\sigma(t)) \geq \mathcal{W}_{\alpha,\lambda} - \varepsilon.
	\end{equation*}
Note that $\mathcal{A}_{\varepsilon, C}$ is exactly the set of sweepouts that fulfills the assumptions of Proposition \ref{prop energy bound} by replacing constant $C > 0$ by $8\lambda^2 C$. 
 Moreover, for $C > 0$ we define $\mathcal{U}_C \subset W^{1,2\alpha}(\S^2, N)$ to be the set of critical points $u \in W^{1,2\alpha}(\S^2, N)$ for functional $E^{\omega}_\alpha$ satisfying
	\begin{equation*}
		\delta  E^{\omega}_{\alpha}(u) = 0 \quad \text{with}\quad  E_\alpha(u) \leq C \text{ and } E_{\alpha}^{\omega}(u) = \mathcal{W}_{\alpha,\lambda}.
	\end{equation*}
	Since $E^{\omega}_\alpha$ satisfies the Palais-Smale condition, see Lemma \ref{P-S condition}, it is not difficult to see that $\mathcal{U}_C \subset W^{1,2\alpha}(\S^2, N)$ is a compact set. Equipped with these notations at our disposal, we are prepared to present the main result in this subsection:
	\begin{theorem}[Deformations of Sweepouts]\label{prop deformation}
		Given $\omega \in C^2(\wedge^2 (N))$, $\lambda > 0$ writing $\omega$ as $\lambda \omega$, $\alpha > 1$ and $C > 0$,  let $\mathcal{U}_0$ be a closed subset of $\mathcal{U}_{C+1}$. If $\mathrm{Ind}_{ E^{\omega}_{\alpha}}(u) \geq k-1$ for all $u \in \mathcal{U}_0$, then for each sequence of sweepouts $\{\sigma_j\}_{j \in \mathbb{N}} \subset \mathcal{A}_{\varepsilon_j, C}$ with $\varepsilon_j \searrow 0$ there exists another sequence of sweepouts $\{\dbl{\sigma}_j\}_{j \in \mathbb{N}} \subset \mathscr{S}$ such that
		\begin{enumerate}[label=(\subscript{\textit{T}}{{\arabic*}})]
			\item\label{prop deformation 1} $\dbl{\sigma}_j \in \mathcal{A}_{\varepsilon_j, C + 1}$ when $j$ is large enough.
			\item \label{prop deformation 2}For any $u \in \mathcal{U}_0$, there exists $j_0(u) \in \mathbb{N}$, $\varepsilon_0(u) > 0$ such that for all $j \geq j_0(u)$,
			\begin{equation*}
				\inf_{t \in I^{k-2},\,\, j \geq j_0}\set{\norm{\dbl{\sigma}_j(t) - u}_{W^{1,2\alpha}(\S^2, N)}\,:\, E^{\omega}_\alpha(\dbl{\sigma}_j(t)) \geq \mathcal{W}_{\alpha,\lambda}- \varepsilon_j} \geq \varepsilon_0(0).
			\end{equation*}
		\end{enumerate}
	\end{theorem}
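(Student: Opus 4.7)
The plan is to use a finite-cover deformation anchored on the strict dimensional inequality
\[
\dim \mathcal{T}_u^- = \mathrm{Ind}_{E^\omega_\alpha}(u) \geq k-1 \,>\, k-2 = \dim I^{k-2}, \quad u \in \mathcal{U}_0,
\]
which allows one to continuously push the $(k-2)$-dimensional sweepout away from every such critical point in a generic negative-eigenspace direction while controlling the change in $E^\omega_\alpha$ via the quadratic estimates of Lemma \ref{lem:local estimates index}.

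First I would use the compactness of $\mathcal{U}_{C+1}$ (which follows from the Palais--Smale condition of Lemma \ref{P-S condition}) to conclude that its closed subset $\mathcal{U}_0$ is compact. For each $u \in \mathcal{U}_0$, extract $r_0(u), \kappa(u), C(u)$ from Lemma \ref{lem:local estimates index}; shrink $r_0(u)$ further so that, in addition, $E_\alpha(\Phi_u(V)) \leq C+\tfrac12$ throughout $\mathcal{B}_u(0,r_0(u))$, which is possible because $E_\alpha$ is continuous in the chart and $E_\alpha(u) \leq C$ for $u \in \mathcal{U}_{C+1}$. Then pass to a finite subcover $\{\Phi_{u_i}(\mathcal{B}_{u_i}(0,r_i/8))\}_{i=1}^m$ of $\mathcal{U}_0$ with $r_i := r_0(u_i)$.

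Next, for each fixed $i$, let $P_i^-: \mathcal{T}_{u_i} \to \mathcal{T}_{u_i}^-$ denote the spectral projection onto the (finite-dimensional) negative-eigenspace, whose dimension is $\geq k-1$. The composition $P_i^- \circ \Phi_{u_i}^{-1} \circ \sigma_j$, defined on the relatively open set $\{t\in I^{k-2} : \sigma_j(t) \in \Phi_{u_i}(\mathcal{B}_{u_i}(0, r_i/2))\}$, sends a subset of $I^{k-2}$ continuously into $\mathcal{T}_{u_i}^-$. By approximating $\sigma_j$ by a piecewise-smooth map (or by Sard's theorem applied to a smooth approximation, or simply by the classical topological dimension inequality $\dim(\text{continuous image of } I^{k-2}) \leq k-2 < \dim \mathcal{T}_{u_i}^-$), I can pick a vector $\eta_{i,j} \in \mathcal{T}_{u_i}^-$ of norm comparable to $\sqrt{\varepsilon_j}\,r_i$ such that a small ball around $\eta_{i,j}$ in $\mathcal{T}_{u_i}^-$ avoids this image. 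Define the local deformation
\[
\Psi_{i,j}(\Phi_{u_i}(V)) = \Phi_{u_i}\bigl(V + \varphi_i(V)\,\eta_{i,j}\bigr),
\]
where $\varphi_i$ is a smooth cut-off equal to $1$ on $\mathcal{B}_{u_i}(0, r_i/4)$ and vanishing outside $\mathcal{B}_{u_i}(0, r_i/2)$, extended by the identity elsewhere. A case split then bounds the change in $E^\omega_\alpha$: on the region $\norm{V^+} \leq \kappa \norm{V^-}$, part \ref{lem:local estimates index 1} of Lemma \ref{lem:local estimates index} yields strict decrease, while on the complementary region I use part \ref{lem:local estimates index 2} applied along a chosen direction from $\mathcal{T}_{u_i}^-$ of non-positive differential to control the increase by $O(\norm{\eta_{i,j}}^2) = O(\varepsilon_j r_i^2)$, which is absorbed.

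Composing the local pushes $\tilde{\sigma}_j := \Psi_{1,j} \circ \cdots \circ \Psi_{m,j} \circ \sigma_j$ (with successively smaller support radii so that later pushes preserve earlier avoidance) then gives the required sweepout: $\tilde{\sigma}_j \in \mathscr{S}$ because each $\Psi_{i,j}$ is a small homotopy fixing $\partial I^{k-2}$; property \ref{prop deformation 1} holds because each push is supported where $E_\alpha \leq C + \tfrac12$ and raises $E^\omega_\alpha$ by at most $\varepsilon_j$; and property \ref{prop deformation 2} holds because by construction $P_i^-(\Phi_{u_i}^{-1}(\tilde{\sigma}_j(t)))$ is bounded away from $0$ at every $t$ landing in $\Phi_{u_i}(\mathcal{B}_{u_i}(0, r_i/8))$ with $E^\omega_\alpha(\tilde{\sigma}_j(t)) \geq \mathcal{W}_{\alpha,\lambda}-\varepsilon_j$, forcing $\tilde{\sigma}_j(t)$ to lie at definite distance from $u_i$. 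The hard part will be coordinating the avoidance selections $\eta_{i,j}$ across the finitely many overlapping charts so that the iterated pushes do not recreate proximity to some $u_{i'}$ after the sweepout has left its chart; a nested-radii induction together with the sharp quadratic decrease in Lemma \ref{lem:local estimates index} should settle this, in the spirit of the Morse-index deformations in the Almgren--Pitts min-max framework, but with the extra constraint that the push must respect the non-conformal $\alpha$-energy ceiling characterizing $\mathcal{A}_{\varepsilon_j, C+1}$.
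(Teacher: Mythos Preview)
Your proposal has the right starting point---the dimensional gap $\dim\mathcal{T}_u^-\ge k-1>k-2$---but the single-vector translation by $\eta_{i,j}$ of norm $\sim\sqrt{\varepsilon_j}\,r_i$ cannot deliver \ref{prop deformation 2}: after the push, $P_i^-\bigl(\Phi_{u_i}^{-1}(\tilde\sigma_j(t))\bigr)$ avoids $0$ only by $O(\sqrt{\varepsilon_j})$, so your lower bound on $\|\tilde\sigma_j(t)-u_i\|$ tends to zero with $j$ and no $j$-independent $\varepsilon_0(u)$ emerges. Upgrading $\eta_{i,j}$ to fixed size would repair this but then breaks \ref{prop deformation 1}, because your appeal to part \ref{lem:local estimates index 2} of Lemma~\ref{lem:local estimates index} on the region $\|V^+\|>\kappa\|V^-\|$ is illegitimate: that estimate requires $\delta E^\omega_\alpha(\Phi_{u_i}(V))(W^-)\le 0$ at the given $V$, which a single direction $\eta_{i,j}$ will violate somewhere, and there the first-order change in $E^\omega_\alpha$ is linear in $\|\eta_{i,j}\|$, not quadratic.

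The paper's proof resolves both problems by making the push direction point-dependent: at each $w$ one flows along the unit vector $\xi_{u_i}(w)\in\mathcal{T}_{u_i}^-$ that minimizes $\delta E^\omega_\alpha(\Phi_{u_i}(w))|_{\mathcal{T}_{u_i}^-}$, and pushes a \emph{fixed} distance out to the cylinder boundary $\partial^-\mathcal{D}_{u_i}(2)$ where $\|V^-\|=r_0(u_i)/2$. Part \ref{lem:local estimates index 2} then keeps $E^\omega_\alpha$ controlled along the flow, and part \ref{lem:local estimates index 1} at the endpoint gives a fixed energy drop, hence avoidance of a fixed neighborhood of $\mathcal{U}_0$. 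Since $\xi_{u_i}(w)$ is only defined where the restricted gradient is nonzero, a preliminary arbitrarily small perturbation $\hat H^{\varsigma,\vartheta}_{p,i}$ is first built by induction on the skeleta of $I^p$, exploiting $\pi_l(\mathcal{T}_{u_i}^-\setminus\{0\})=0$ for $l<k-1$, to make that gradient nonvanishing. The chart-coordination you flag as ``the hard part'' is handled by a further induction on the skeleta of a fine cell complex $I(k-2,n)$ on $I^{k-2}$, with nested radii $\eta/e_p(u)$ and energy thresholds $\theta_p(u)$ tracking how far deformations at successive skeletal levels may drift.
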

 	\begin{proof}
		Firstly, we observe that there exists $\delta_0 > 0$ such that if $\sigma \in \mathscr{S}$ is an admissible sweepout then
		\begin{equation}\label{eq:the definition of delta}
			\max_{t \in I^{k-2}} E_\alpha(\sigma(t)) \geq \frac{1}{2} \mathrm{Vol}(\S^2) + \delta_0.
		\end{equation}
		In fact, suppose that for any $\delta > 0$ there exists a $\sigma \in \mathscr{S}$ such that 
		\begin{equation*}
			\max_{t \in I^{k-2}} E_\alpha(\sigma(t)) < \frac{1}{2} \mathrm{Vol}(\S^2) +  \delta.
		\end{equation*}
		By Poincar\'{e}'s inequality, we see that there exists a $C(\alpha) > 0$ such that the oscillation of $\sigma(t) \in W^{1,2\alpha}(\S^2, N)$ satisfies
		\begin{equation*}
			Osc_{\S^2}(\sigma(t)) \leq C(\alpha) \p{E_\alpha(\sigma(t)) - \frac{1}{2} \mathrm{Vol}(\S^2)}^{\frac{1}{2\alpha}} \leq C(\alpha) \delta^{\frac{1}{2\alpha}}, \quad \text{for all }t \in I^{k-2},
		\end{equation*}
		which means $\sigma:I^{k-2} \rightarrow W^{1,2\alpha}(\S^2, N)$ can be deformed onto a map that assigns each points in $(k-2)$-dimensional complex cube $I^{k-2}$ to constant maps, hence the induced map  $f_\sigma: \S^k \rightarrow N$ is null-homotopic, contradicting to the definition of $\mathscr{S}$.

	Equipped with this observation, the construction of new sweepouts $\dbl{\sigma}_j \in \mathscr{S}$ stated in Theorem \ref{prop deformation} splits into  four steps.
		\step\label{step1:deformation}We construct a finite subset $\set{u_i}_{i = 1}^m \subset \mathcal{U}_0$ and their associated open balls 
  \begin{equation*}
      {B}^{1,2\alpha}({u_i, r_1(u_i)}) \subset W^{1,2\alpha}(\S^2, N) \quad \text{for }1\leq i \leq m
  \end{equation*}
		satisfying the following:
		\begin{enumerate}[label=(\subscript{A}{{\arabic*}})]
			\item \label{step1:item 1}$\mathcal{U}_0 \subset \bigcup_{i = 1}^m B^{1,2\alpha}({u_i, r_1(u_i)})\subset \bigcup_{i = 1}^m\mathcal{D}_{u_i}(1)$ where $\mathcal{D}_u(1)$ is defined in \eqref{eq: defi Du};
                \item \label{step1:item 2} $ E^{\omega}_{\alpha}(v) \leq  E^{\omega}_{\alpha}(u) -  \frac{(r_0(u))^2}{4}C(u)$ \text{for any } $v \in \partial^-\mathcal{D}_u(2)$ where $r_0(u)$ and $C(u)$ are obtained in \ref{lem:local estimates index 1} of Lemma \ref{lem:local estimates index} and $\partial^-\mathcal{D}_u(2)$ is defined in \eqref{eq: defi boundary Du}.
			\item \label{step1:item 3}For each $1\leq i \leq m$ and any $v$, $w \in B^{1,2\alpha}({u_i, 2r_1(u_i)})$, there holds 
			\begin{equation}
				|E_\alpha(u) - E_\alpha(v)| \leq \frac{\delta_0}{4}
			\end{equation}
        where $\delta_0 >0$ is obtained in \eqref{eq:the definition of delta}.
		\end{enumerate}
  Here, 
  $$B^{1,2\alpha}(u_i,r_1(u_i)) := \set{v \in W^{1,2\alpha}(\S^2, N)\, :\, \norm{v-u_i}_{W^{1,2\alpha}(\S^2, N)} < r_1(u_i)}$$
  are balls defined with respect to the topology induced by the Finsler structure of $W^{1,2\alpha}(\S^2, N)$.
		\begin{proof}[\textbf{Proof of Step \ref{step1:deformation}}]
			For each $u \in \mathcal{U}_0$, by Lemma \ref{lem:local estimates index} we can find constants $0 < r_0(u) < r_u/3 $, $0 < \kappa(u) < 1$ and $C(u) > 0$ such that the conclusions of \ref{lem:local estimates index 1} and \ref{lem:local estimates index 2} in Lemma \ref{lem:local estimates index} can be applied in the neighborhood $\Phi_u(\mathcal{B}_u(0,r_u))$ of $u$. 
			By the continuity of $E_\alpha:W^{1,2\alpha}(\S^2, N) \rightarrow \R$, after shrinking $r_0(u)$ if necessary, we can further assume that 
			\begin{equation}\label{eq:step1 deformation 1}
				|E_\alpha(v) - E_\alpha(w)| \leq \frac{\delta_0}{4} \quad \text{for any }\, v, w \in \Phi_u\big(\mathcal{B}_u(0,r_0(0))\big).
			\end{equation}
				Then, we define
                    \begin{align}
                        \label{eq: defi Du}
                        \mathcal{D}_u(\rho):= \Phi_u\p{ \set{V \in \mathcal{T}_u \,:\, \norm{V^-}_{\mathcal{T}_u} \leq \frac{r_0(u)}{4} \rho, \norm{V^+}_{\mathcal{T}_u} \leq \frac{\kappa(u) r_0(u)}{4} \rho}},
                    \end{align}
                    and 
                    \begin{equation}\label{eq: defi boundary Du}
                        \partial^-\mathcal{D}_u(\rho) := \Phi_u\p{ \set{V \in \mathcal{T}_u \,:\, \norm{V^-}_{\mathcal{T}_u} = \frac{r_0(u)}{4} \rho, \norm{V^+}_{\mathcal{T}_u}\leq \frac{\kappa(u) r_0(u)}{4} \rho }}
                    \end{equation}
                    for $\rho \in [1,4]$. Thus, by the estimates obtained in \ref{lem:local estimates index 1} of Lemma \ref{lem:local estimates index} we see that 
                    \begin{equation}
                        \label{eq:step1 deformation decrease}
                         E^{\omega}_{\alpha}(v) \leq  E^{\omega}_{\alpha}(u) -  \frac{\p{r_0(u)}^2}{4}C(u) \quad \text{for any } v \in \partial^-\mathcal{D}_u(2)
                    \end{equation}
				Then, for each $u \in \mathcal{U}_0$ we can choose small enough $ 0 < r_1(u) < r_0(u)$ such that 
				\begin{align}\label{step1 the definition ball}
					B^{1,2\alpha}(u, 2r_1(u))  \subset \mathcal{D}_u(1).
				\end{align}
                 Since the collection $\set{B^{1,2\alpha}(u, r_1(u))}_{u \in \mathcal{U}_0}$ consists of an open covering of compact set $\mathcal{U}_0$, there exists a finite subcovering $\set{B^{1,2\alpha}(u,r_1(u))}_{i =1}^m$ satisfying all the assertions of Step \ref{step1:deformation} by the choice of $r_1(u)$.
			\end{proof}

			In the following, for the notation simplicity, we write 
			\begin{equation*}
				r_i := r_1(u_i) \quad  \text{and} \quad  b_i := \frac{\p{r_0(u_i)}^2}{4} C(u_i),
			\end{equation*}
			and  denote 
			\begin{equation*}
				\underline{r} := \min_{1 \leq i \leq m} r_i, \quad \underline{b} := \min_{1\leq i\leq m}  b_i, \quad \underline{C}:= \min_{1 \leq i \leq m} C(u_i).
			\end{equation*}
			\step \label{step2:deformation} We construct a constant $\eta > 0$ such that the following properties hold
			\begin{enumerate}[label=(\subscript{B}{{\arabic*}})]
				\item $\mathcal{N}_{\eta}^{1,2\alpha} := \bigcup_{u \in \mathcal{U}_0} B^{1,2\alpha}(u,\eta) \subset \bigcup_{i = 1}^m B^{1,2\alpha}(u_i,r_i)$.
				\item\label{step2 item 2} For any $u \in \mathcal{U}_0$ and any $v \in B(u,\eta)$, we have 
				\begin{equation}
					|E^{\omega}_\alpha(v) - \mathcal{W}_{\alpha,\lambda}| \leq \frac{1}{4}\underline{b}.
				\end{equation}
				\item\label{step2 item 3} For any $p \in \set{0,1,\cdots,k-2}$, any $\vartheta \in (0,\underline{b}/4)$,  any $1\leq i \leq m$ and any continuous map $\varsigma : I^p \rightarrow \mathcal{N}_\eta^{1,2\alpha} \cap B^{1,2\alpha}(u_i, r_i)$, there exists a continuous homotopy $H_{p,i}^{\varsigma,\vartheta}:I^p \times [0,1] \rightarrow \mathcal{D}_{u_i}(3)$ such that the following holds
				\begin{enumerate}[label=(\subscript{3b}{{\arabic*}})]
					\item\label{step2:deformation item 3 a} $H_{p,i}^{\varsigma,\vartheta}(\tau,0) = \varsigma (\tau)$ for $\tau \in I^p$.
					\item \label{step2:deformation item 3 b} $E^{\omega}_\alpha\p{H_{p,i}^{\varsigma,\vartheta}(\tau,t)} - E^{\omega}_\alpha\p{\varsigma(\tau)} \leq \vartheta$, for all $\tau \in I^p$ and  $t \in [0,1]$.
					\item \label{step2:deformation item 3 c} $\norm{H_{p,i}^{\varsigma,\vartheta}(\tau,t) - \varsigma(\tau)}_{W^{1,2\alpha}(\S^2,N)} \leq \vartheta$, for all $\tau\in I^p$ and  $t \in [0,1/2]$.
					\item \label{step2:deformation item 3 d}$H_{p,i}^{\varsigma,\vartheta}(\tau,1) \notin \mathcal{N}_\eta^{1,2\alpha}$, for all $\tau \in I^{p}$. 
				\end{enumerate}
			\end{enumerate}
			\begin{proof}[\textbf{Proof of Step \ref{step2:deformation}}]
				Given $\eta > 0$ we define 
				\begin{equation*}
					\mathcal{N}_\eta^{1,2\alpha} : = \bigcup_{u \in \mathcal{U}_0} B^{1,2\alpha}(u,\eta)
				\end{equation*}
				and by the compactness of $\mathcal{U}_0$ we can choose small enough $\eta > 0$ such that 
				\begin{equation*}
					\mathcal{N}_\eta^{1,2\alpha} \subset  \bigcup_{i = 1}^{m} B^{1,2\alpha}(u_i, r_i)
				\end{equation*}
				and the second assertion \ref{step2 item 2} of Step \ref{step2:deformation} can also be satisfied by the continuity of $w \mapsto |E^{\omega}_\alpha(w) - \mathcal{W}_{\alpha,\lambda}|$. For the part \ref{step2 item 3} of Step \ref{step2:deformation}, firstly, we are devoted to construct a  continuous homotopy $\hat{H}^{\varsigma,\vartheta}_{p,i}$ inductively on the $l$-dimensional skeleton $I^p_{(l)}$ of $I^p$ for $0 \leq l \leq p$ such that 
				\begin{enumerate}[label=(\subscript{\hat{H}}{{\arabic*}})]
					\item \label{first homotopy 1}$\hat{H}^{\varsigma,\vartheta}_{p,i}(\tau,0) = \varsigma(\tau)$, for $\tau \in I^p$.
					\item \label{first homotopy 2}For $t \in [0,1]$ and $\tau \in I^p$,
					$$\norm{\hat{H}^{\varsigma,\vartheta}_{p,i}(\tau,t) - \varsigma(\tau)}_{W^{1,2\alpha}(\S^2,N)} \leq \vartheta.$$
					\item \label{first homotopy 3}There exists some $\delta(\vartheta,\varsigma,p,i) > 0$ satisfying
					\begin{equation*}
					    \inf_{\tau \in I^p}\p{ \sup_{V^- \in \mathcal{T}_{u_i}^- \text{ with } \norm{V^-}_{\mathcal{T}_{u_i}} = 1} \abs{\delta E^{\omega}_\alpha\p{ \Phi_{u_i}^{-1}\p{\hat{H}^{\varsigma,\vartheta}_{p,i}(\tau,1)}}(V) }} \geq  \delta(\vartheta,\varsigma,p,i) > 0.
					\end{equation*}
				\end{enumerate}
				If 
				\begin{equation*}
				 \left.	\delta E^{\omega}_\alpha\p{\Phi_{u_i}^{-1}(\varsigma(\tau))}\right|_{\mathcal{T}^-_{u_i}} \not\equiv 0 \quad \text{for each } \tau \in I^p,
				\end{equation*}
				then by the continuity of $\varsigma$ on compact complex cube $I^p$ we can simply choose $\hat{H}^{\varsigma,\vartheta}_{p,i} \equiv \varsigma$ for $t \in [0,1]$ being the constant homotopy and let $\delta(\vartheta,\varsigma,p,i)$ to be
                \begin{equation*}
                     \inf_{\tau \in I^p}\p{ \sup_{V^- \in \mathcal{T}_{u_i}^- \text{ with } \norm{V^-}_{\mathcal{T}_{u_i}} = 1} \abs{\delta E^{\omega}_\alpha\p{ {\Phi_{u_i}^{-1}(\varsigma(\tau))}}(V) }} > 0.
                \end{equation*}
                Thus, we assume that 
                $$\left.\delta E^{\omega}_\alpha \p{{\Phi_{u_i}^{-1}(\varsigma(\tau))}}\right|_{\mathcal{T}^-_{u_i}} \equiv 0$$
                for some $\tau \in I^p$. Observe that $\delta E^{\omega}_\alpha(\Phi_{u_i}(0)) = 0$ and $\delta^2 E^{\omega}_\alpha(\Phi_{u_i}(w))|_{\mathcal{T}_{u}^-}$ is negatively definite for all $w \in \mathcal{B}_{u_i}(0,r_{u_i})$ by the choice of $r_{u_i} > 0$, which implies that $\delta E^{\omega}_\alpha(\Phi_{u_i}(V^-)) \not\equiv 0$ for $V^- \in \mathcal{B}_{u_i}^-(0, r_i)\backslash \set{0}$. Since  $\mathcal{B}^-_{u_i}(0,r_{i})$ is a convex set of dimension at least $k-1$ by the assumption of Theorem \ref{prop deformation}, it suffices to define $\hat{H}^{\varsigma,\vartheta}_{p,i}(\tau,1)$ through the homeomorphism $\Phi_{u_i}$ such that 
				\begin{equation}\label{step2: eq 3}
					\norm{\Phi_{u_i}^{-1}\p{ \hat{H}^{\varsigma,\vartheta}_{p,i}(\tau,1)} - \Phi_{u_i}^{-1}\p{\varsigma(\tau)}}_{\mathcal{T}_{u_i}} \leq \vartheta^\prime, \quad \text{for all } \tau \in I^{p}
				\end{equation}
				and 
				\begin{equation}\label{step2: eq 4}
					\p{\Phi_{u_i}^{-1}\p{\hat{H}^{\varsigma,\vartheta}_{p,i}(\tau,1)}}^- \neq 0, \quad \text{for all } \tau \in I^p.
				\end{equation}
				Here, by the continuity of $\Phi_{u_i}$, $\vartheta^\prime > 0$ is chosen to satisfy the property \ref{first homotopy 2} in Step \ref{step2:deformation} of $\hat{H}^{\varsigma,\vartheta}_{p,i}$ as long as \eqref{step2: eq 3} holds. Then, for each $\tau \in I^p_{(0)}$ which is a finite discrete set,  we can choose $\big(\Phi_{u_i}^{-1}(\hat{H}^{\varsigma,\vartheta}_{p,i}(\tau,1))\big)^- \neq 0$ satisfying \eqref{step2: eq 3}. Suppose we have defined the homotopy $\hat{H}^{\varsigma,\vartheta}_{p,i}(\tau,1)$ on $I^p_{(l)}$ for some $0 \leq l\leq p-1$ such that \eqref{step2: eq 3} and \eqref{step2: eq 4} are satisfied. Let $\mathcal{O}\big(\Phi_{u_i}^{-1}\p{\varsigma(\tau)}, \vartheta^\prime\big)$ be the $\vartheta^\prime$-neighborhood of $\Phi_{u_i}^{-1}\big(\varsigma(I^p_{(l)})\big)$, that is, 
				\begin{align*}
			\mathcal{O}&\p{\Phi_{u_i}^{-1}\p{\varsigma\p{I^p_{(l)}}}, \vartheta^\prime} \\
                        &:= \set{w \in \mathcal{T}_{u_i} : \min_{\tau \in I^p_{(l)}}\norm{w - \Phi_{u_i}^{-1}\p{\varsigma(\tau)}}_{\mathcal{T}_{u_i}}< \vartheta^\prime}
				\end{align*}    
				which is also equal to 
				$$\bigcup_{ \tau \in I^{q}} \mathcal{B}_{u_i}\p{\Phi_{u_i}^{-1}\p{\varsigma(\tau)},\vartheta^\prime}.$$
				And we use 
                    $$\mathcal{O}^-\p{\Phi_{u_i}^{-1}\p{\varsigma(\tau)}, \vartheta^\prime}$$
                    to denote the $L^2$-orthogonal projection of $\mathcal{O}(\Phi_{u_i}^{-1}\p{\varsigma(\tau)}, \vartheta^\prime)$ into $\mathcal{T}_{u_i}^-$. Since $\dim{\mathcal{T}_{u_i}^-} \geq k-1 > p \geq l+1$, we have that 
				\begin{equation*}
					\pi_{l}\p{\mathcal{O}^-\p{\Phi_{u_i}^{-1}\p{\varsigma(I^p_{(l)})}, \vartheta^\prime} \backslash \set{0}} = \pi_l\p{\mathcal{T}_{u_i}^- \backslash \set{0}} =  0,
				\end{equation*} 
				there exists a continuous extension of $H^{\varsigma,\vartheta}_{p,i}(\tau,1)$ from $I^p_{(l)}$ onto $I^p_{(l+1)}$ such that \eqref{step2: eq 3} and \eqref{step2: eq 4} are also satisfied. Therefore, we complete the induction construction of the homotopy $\hat{H}^{\varsigma,\vartheta}_{p,i}$.
				
				Now, we construct the desired homotopy $H^{\varsigma,\vartheta}_{p,i}$ satisfying the properties stated in \ref{step2 item 3} of Step \ref{step2:deformation}. For each $w \in \mathcal{B}_{u_i}(0,r_i) \backslash \set{0}$, we pick up a $\xi_{u_i}(w) \in \mathcal{T}_{u_i}^-$ with $\norm{\xi_{u_i}(w)}_{\mathcal{T}_{u_i}} = 1$ satisfying
                \begin{equation*}
                    \delta E^{\omega}_\alpha\p{\Phi_{u_i}} (\xi_{u_i}(w)) := \inf_{V^- \in \mathcal{T}_{u_i}^- \text{ with } \norm{V^-}_{\mathcal{T}_{u_i}} = 1} \delta E^{\omega}_\alpha\p{\Phi_{u_i}(w^-)}(V^-) < 0.
                \end{equation*} 
                Because $$\left.\delta E^{\omega}_\alpha\p{\Phi_{u_i}}\right|_{\mathcal{T}_{u_i}^-}: \mathcal{T}_{u_i}^- \rightarrow \R$$ is a linear function defined on a finite dimensional vector space $\mathcal{T}_{u_i}^-$, 
                the minimum point $\xi_{u_i}(w) \in \mathcal{T}_{u_i}^-$ on unit sphere of $\mathcal{T}_{u_i}^-$ is unique and well defined. Then, we pick $0 < \varrho_{u_i}(w) \leq r_{u_i}$ such that 
                \begin{equation*}
                    \norm{w^- + \varrho_{u_i}(w) \xi_{u_i}(w)}_{\mathcal{T}_{u_i}} = \frac{r_0(u_i)}{2}
                \end{equation*}
                and that 
                \begin{equation*}
                    \norm{w^- + t \xi_{u_i}(w)}_{\mathcal{T}_{u_i}} \leq \frac{r_0(u_i)}{2} \quad \text{for any  } 0\leq t \leq \varrho_{u_i}(w).
                \end{equation*}
                Thus, for $w \in \mathcal{B}^-_{u_i}(0,r_i) \backslash \set{0}$, we can define a path $\gamma_w : [0,1]\rightarrow \mathcal{D}_{u_i}(3)$ as below
				\begin{equation*}
					\gamma_w(t) = \Phi_{u_i}\p{ w^0 + \p{w^- + t\frac{\varrho_{u_i}(w)}{2\norm{\xi_{u_i}(w)}_{\mathcal{T}_{u_i}}} \xi_{u_i}(w) } + w^+}
				\end{equation*}
				which starts at $\Phi_{u_i}(w)$ and  terminates at 
				$$\Phi_{u_i}\p{ w^0 + \p{w^- + \frac{\varrho_{u_i}(w)}{2} \cdot\frac{\xi_{u_i}(w)}{2\norm{\xi_{u_i}(w)}_{\mathcal{T}_{u_i}}}}  + w^+} \in \partial^- \mathcal{D}_{u_i}(2) .$$
                
			    By the \ref{lem:local estimates index 1} in Lemma \ref{lem:local estimates index} for $E^{\omega}_\alpha\circ \Phi_{u_i}$ and the choice of $\xi_{u_i}(w)$, we see that 
				\begin{align*}
				     E^{\omega}_{\alpha}(\gamma_w(1)) - \mathcal{W}_{\alpha,\lambda} &=   E^{\omega}_{\alpha}(\gamma_w(1)) -  E^{\omega}_{\alpha}(\gamma_w(0)) +  E^{\omega}_{\alpha}(\gamma_w(0)) - \mathcal{W}_{\alpha,\lambda}\\
                    &\leq -\frac{\p{r_0(u_i)}^2}{4} C(u_i) + \frac{1}{4} \underline{b} \leq - \frac{3}{4}\underline{b}.
				\end{align*}
				Therefore, we define
				\begin{equation*}
					H^{\varsigma,\vartheta}_{p,i}(\tau,t) := \left\{
					\begin{aligned}
						&\hat{H}^{\varsigma,\vartheta}_{p,i}(\tau,2t), &\text{for }& t \in \left[0,\frac{1}{2} \right],\\
						&\gamma_{\hat{H}^{\varsigma,\vartheta}_{p,i}(\tau,1)}(2t-1),\quad  &\text{for }& t \in \left[\frac{1}{2},1 \right].
					\end{aligned}
					\right.
				\end{equation*}
                By the construction of $\hat{H}^{\varsigma,\vartheta}_{p,i}$, we see that 
                \begin{equation*}
                    \left.\delta E^{\omega}_\alpha\p{{\Phi_{u_i}^{-1}\p{\hat{H}^{\varsigma,\vartheta}_{p,i}(\tau,1)}}}\right|_{\mathcal{T}_{u_i}^-} \not\equiv 0 \quad \text{for each } \tau \in I^p,
                \end{equation*}
                which means that $\xi_{u_i}(\hat{H}^{\varsigma,\vartheta}_{p,i}(\tau,1))$ depends continuously on $\tau \in I^p$. Hence, $H^{\varsigma,\vartheta}_{p,i}(\tau,t)$ is a well-defined continuous homotopy.
                
			Next, we show that $H^{\varsigma,\vartheta}_{p,i}$ satisfies the properties stated in \ref{step2 item 3} of Step \ref{step2:deformation}. When $t \in [0,1/2]$, the \ref{step2:deformation item 3 a}, \ref{step2:deformation item 3 b} and \ref{step2:deformation item 3 c} follow straightforwardly from the construction of $\hat{H}^{\varsigma, \vartheta}_{p,i}$ corresponding to properties \ref{first homotopy 1} and \ref{first homotopy 2}. When $t \in [1/2,1]$, thanks to \ref{lem:local estimates index 2} of Lemma \ref{lem:local estimates index} and the definition of $H^{\varsigma,\vartheta}_{p,i}$, the \ref{step2:deformation item 3 b} of Step \ref{step2:deformation} also holds. At last, by the choice of $\vartheta \leq \underline{b}/4$ and the construction of $\hat{H}^{\varsigma,\vartheta}_{p,i}$, we see that for all $\tau \in I^p$
                \begin{equation*}
                    E^{\omega}_\alpha\p{{H^{\varsigma,\vartheta}_{p,i}(\tau,1)}} - E^{\omega}_\alpha(\varsigma(\tau)) \leq -\frac{3}{4} \underline{b} + \vartheta \leq -\frac{1}{2} \underline{b}.
                \end{equation*}
                Thanks to the choice of $\eta > 0$ in \ref{step2 item 2} of Step \ref{step2:deformation}, we see that $H_{p,i}^{\varsigma,\vartheta}(s,1) \notin \mathcal{N}_\eta^{1,2\alpha}$, for all $s \in I^{k-2}$.  
			\end{proof}
   
			\step\label{step3:deformation}We would like to show that for any $u \in \mathcal{U}_0$, there exists finite many positive numbers $\set{e_{p}(u)}_{p = 1}^{k-1} \subset \R_+$ and $\set{\theta_{p}(u)}_{p = 0}^{k-2} \subset \R_+$ such that for any $p \in \set{1,\cdots,k-1}$, any $i \in \set{1,\cdots, m}$ with $B^{1,2\alpha}(u,\eta) \subset B^{1,2\alpha}(u_i, r_i) $ and any $\tau \in I^p$ with 
			\begin{equation*}
				\varsigma(\tau) \notin B^{1,2\alpha}(u,\eta/e_p(u)) \quad \text{and}\quad E^{\omega}_\alpha(\varsigma(\tau)) - \mathcal{W}_{\alpha,\lambda} \leq \theta_p(u),
			\end{equation*}
			we have 
			\begin{equation*}
				H^{\varsigma, \vartheta}_{p,i}(\tau,t) \notin B^{1,2\alpha}(u,\eta/e_{p + 1}(u))
			\end{equation*}
			for all $t \in [0,1]$ and $\vartheta < \min \p{\eta/(4e_p(u)), \theta_p(u)}$.
   
                Furthermore, viewing $e_p(u)$ and $\theta_p(u)$ as functions 
                $$e_p(u) : \mathcal{U}_0 \rightarrow \R_+ \quad \text{and}\quad \theta_p(u) : \mathcal{U}_0 \rightarrow \R_+, $$
                $e_p(u)$  has an uniform upper bound $\overline{e}_p$ on $\mathcal{U}_0$ and $\theta_p(u)$ has a positive lower bound $\underline{\theta}_p > 0$ on $\mathcal{U}_0$.
			
			\begin{proof}[\textbf{Proof of Step \ref{step3:deformation}}]
				We construct $e_p(u)$ and $\theta_{p}(u)$ by induction on $p$. For $u \in \mathcal{U}_0$, we take $e_1(u) = 2$, $\theta_0(u) = 0$ and suppose that we have defined $e_p(u)$ and $\theta_{p-1}(u)$ for some $p \in \set{2,\cdots,k-1}$.
				
				For any natural number $1 \leq i \leq m$ with $B^{1,2\alpha}(u,\eta) \subset B^{1,2\alpha}(u_i, r_i)$, we define 
				\begin{equation*}
					d_{i,p}(u):= \mathrm{dist}\p{\mathcal{T}_{u_i}^- \bigcap \Phi_{u_i}^{-1}\p{\overline{B^{1,2\alpha}\left(u, \frac{\eta}{2 e_p(u)}\right)}},\mathcal{T}_{u_i}^- \bigcap\Phi_{u_i}^{-1}\p{\partial B^{1,2\alpha}\left(u, \frac{3\eta}{4 e_p(u)}\right)} }.
				\end{equation*}
				Note that $\dim(\mathcal{T}_{u_i}^-) < \infty$ which means that 
				\begin{equation*}
					\mathcal{T}_{u_i}^- \bigcap \Phi_{u_i}^{-1}\p{\overline{B^{1,2\alpha}\left(u, \frac{\eta}{2 e_p(u)}\right)}} \quad \text{and} \quad \mathcal{T}_{u_i}^- \bigcap\Phi_{u_i}^{-1}\p{\partial B^{1,2\alpha}\left(u, \frac{3\eta}{4 e_p(u)}\right)}
				\end{equation*}
				are two disjoint compact set, hence, $d_{i,p}(u): B^{1,2\alpha}(u_i, r_i - \eta) \rightarrow \R_+$ is a positive continuous function. Moreover, we see that 
				\begin{equation*}
					\underline{d}_{i,p}:= \inf\set{d_{i,p}(u) \,:\, u \in \mathcal{U}_0 \text{ with } B^{1,2\alpha}(u,\eta) \subset B^{1,2\alpha}(u_i, r_i)} > 0.
				\end{equation*}
				Otherwise, suppose that $\underline{d}_{i,p} = 0$, we can find a sequence $\{u_j\}_{j \in \mathbb N} \subset \mathcal{U}_0$ such that 
				\begin{equation*}
					\lim_{j \rightarrow \infty} d_{i,p}(u_j) = 0.
				\end{equation*}
				Since $E^{\omega}_\alpha : W^{1,2\alpha}(\S^2,N) \rightarrow \R$ satisfies the Palais-Smale condition, see Lemma \ref{P-S condition}, by the definition of $\mathcal{U}_0$ after passing to certain subsequence we can assume $u_j$ converges to some $u_0 \in \mathcal{U}_0$ in $W^{1,2\alpha}(\S^2,N)$. This leads to the contradiction
				\begin{equation*}
					0 < d_{i,p}(u_0) =  \lim_{j \rightarrow \infty} d_{i,p}(u_j)  = 0.
				\end{equation*}
				Then we define 
				\begin{equation*}
					\theta_p(u) := \min \set{\frac{C(u_i)}{4} \underline{d}_{i,p}^2\, :\, 1 \leq  i \leq m \quad \text{with }  B^{1,2\alpha}(u,\eta) \subset B^{1,2\alpha}(u_i, r_i)} > 0
				\end{equation*}
				and take $e_{p + 1}(u) \geq 2 e_{p}(u) + 1$ to be the smallest number such that for any $$w \in \overline{B^{1,2\alpha}(u, {\eta}/{e_{p + 1}(u)})},$$ there holds
				\begin{equation}
					E^{\omega}_\alpha(w) \geq \mathcal{W}_{\alpha,\lambda} - \theta_p(u).
				\end{equation}
				Note that 
                    $$\theta_p(u) \geq \underline{\theta}_p := \min_{1 \leq i \leq m} \frac{C(u_i)}{4}\underline{d}^2_{i,p}>0,$$
                    for any $u \in \mathcal{U}_0$, has a positive uniform lower bound, where $C(u_i) > 0$ is a constant determined in Step \ref{step1:deformation} and Lemma \ref{lem:local estimates index}.  Moreover, we claim that
				\claim \label{Morse index claim 1} There exists $\overline{e}_{p + 1} > 0$ such that 
				\begin{equation}
					\sup_{v \in \mathcal{U}_0} e_{p + 1}(v) \leq \overline{e}_{p + 1}.
				\end{equation}
				\begin{proof}[\textbf{Proof of Claim \ref{Morse index claim 1}}]
					In fact, since $\eta \leq 1/2$, it suffices to show there exists a constant $c_p > 0$ such that for any $u \in \mathcal{U}_0$ and $w \in \overline{B^{1,2\alpha}(u, {\eta}/{c_{p}})}$ there holds
					\begin{equation*}
						E^{\omega}_\alpha(w) \geq \mathcal{W}_{\alpha,\lambda} - \min_{1 \leq i \leq m} \frac{C(u_i)}{4}\underline{d}^2_{i,p}.
					\end{equation*}
					Then, we can conclude that $e_{p + 1}(u) \leq c_p$ for any $ u \in \mathcal{U}_0$. By contradiction, suppose that there exists two sequences 
                        $$\set{u_j}_{j \in \mathbb{N}} \subset \mathcal{U}_0\quad  \text{and}\quad  \set{w_j \, :\, w_j \in \overline{B^{1,2\alpha}(v_j, 1/j)}}_{j \in \mathbb{N}}$$
                        such that 
					\begin{equation}
						E^{\omega}_\alpha(w_j) <  \mathcal{W}_{\alpha,\lambda} - \min_{1 \leq i \leq m} \frac{C(u_i)}{4}\underline{d}^2_{i,p}.
					\end{equation}
					Then similarly to the previous argument, by the compactness of $\mathcal{U}_0$, after passing to a subsequence,  we can conclude that $u_j$ converges strongly in $W^{1,2\alpha}(\S^2, N)$ to some $u_0 \in \mathcal{U}_0$ and $w_j$ converges strongly in $W^{1,2\alpha}(\S^2, N)$ to $u_0$. However, the identity $E^{\omega}_\alpha(u_0) = \mathcal{W}_{\alpha,\lambda}$ leads to the contradiction
					\begin{equation*}
						\mathcal{W}_{\alpha,\lambda} < \mathcal{W}_{\alpha,\lambda} - \min_{1 \leq i \leq m} \frac{C(u_i)}{4}\underline{d}^2_{i,p}.
					\end{equation*}
					Therefore,  we complete the proof of Claim \ref{Morse index claim 1}.
				\end{proof}
				Now, the sequences $\set{e_{p}(u)}_{p = 1}^{k-1} \subset \R_+$ and $\set{\theta_{p}(u)}_{p = 1}^{k-2} \subset \R_+$ are defined by our induction argument. In the end, we prove the following Claim to finish the proof of Step \ref{step3:deformation}.
				\claim \label{Morse index claim 2} $H^{\varsigma, \vartheta}_{p,i}(\tau, t) \notin B^{1,2\alpha}(u,\eta/e_{p + 1}(u))$ for all $t \in [0,1]$ provided that 
				$$\varsigma(\tau) \notin B^{1,2\alpha}(u, \eta/e_{p}(u)), \quad  \quad  E^{\omega}_\alpha(\varsigma(\tau)) - \mathcal{W}_{\alpha,\lambda} \leq \theta_p(u)$$
				and $\vartheta < \min \p{\eta/(4e_p(u)), \theta_p(u)}$.
				\begin{proof}[\textbf{Proof of Claim \ref{Morse index claim 2}}]
					For $t\in [0,1/2]$, since $\vartheta < \min \p{\eta/(4e_p(u)), \theta_p(u)}$ and $\varsigma(\tau) \notin B^{1,2\alpha}(u, \eta/e_{p}(u))$, consulting the \ref{step2:deformation item 3 c} of item \ref{step2 item 3} in Step \ref{step2:deformation} we see that 
					\begin{align*}
						\norm{H^{\varsigma,\vartheta}_{p,i}(\tau,t) - u}_{W^{1,2\alpha}(\S^2,N)} &\geq \norm{\varsigma(\tau) - u}_{W^{1,2\alpha}(\S^2,N)} -  \norm{H^{\varsigma,\vartheta}_{p,i}(\tau,t) - \varsigma(\tau)}_{W^{1,2\alpha}(\S^2,N)} \\
						&\geq \frac{3\eta}{4 e_{p}(u)} \geq \frac{\eta}{e_{p + 1}(u)},
					\end{align*}
					which implies $H^{\varsigma, \vartheta}_{p,i}(\tau, t) \notin B(u,\eta/e_{p + 1}(u))$ for all $t \in [0,1/2]$
					
					Next, we show that 
					\begin{equation*}
						\norm{H^{\varsigma,\vartheta}_{p,i}(\tau,t) - u}_{W^{1,2\alpha}(\S^2,N)} \geq \frac{\eta}{e_{p + 1}(u)}\quad \text{when } t\in (1/2,1].
					\end{equation*}
					To see this, we note that 
					\begin{align*}
						E^{\omega}_\alpha\p{H^{\varsigma,\vartheta}_{p,i}(\tau,t)} &= \p{E^{\omega}_\alpha(H^{\varsigma,\vartheta}_{p,i}(\tau,t)) - E^{\omega}_\alpha(\varsigma(\tau))} + E^{\omega}_\alpha(\varsigma(\tau))\\
						& \leq \vartheta + \mathcal{W}_{\alpha,\lambda} + \theta_p(u) \leq \mathcal{W}_{\alpha,\lambda} + 2\theta_p(u)
					\end{align*}
					for $\vartheta < \theta_p(u)$. Next, we proceed the proof by contradiction. Suppose that there exists some $t \in (1/2,1]$ such that $H^{\varsigma,\vartheta}_{p,i}(\tau,t) \in {B}^{1,2\alpha}(u,\eta/e_{p+1}(u))$. Observe that $H^{\varsigma,\vartheta}_{p,i}(\tau,t) \in \mathcal{D}_{u_i}(3) \subset \Phi_{u_i}(\mathcal{B}_{u_i}(0,r_{u_i}))$ and by the definitions of $e_{p+1}(u)$ and $d_{i,p}(u)$ we have that
					\begin{align*}
						&\norm{ \p{\Phi^{-1}_{u_i}\p{H^{\varsigma,\vartheta}_{p,i}(\tau,t)}}^-  - \p{\Phi^{-1}_{u_i}\p{H^{\varsigma,\vartheta}_{p,i}(\tau,1/2)}}^-}_{\mathcal{T}_{u_i}}\\
						&\quad \geq \mathrm{dist}\p{\mathcal{T}_{u_i}^- \bigcap \Phi_{u_i}^{-1}\p{\overline{B^{1,2\alpha}\left(u, \frac{\eta} {e_{p + 1}(u)}\right)}},\mathcal{T}_{u_i}^- \bigcap\Phi_{u_i}^{-1}\p{\partial B^{1,2\alpha}\left(u, \frac{3\eta}{4 e_p(u)}\right)} }\\
						& \quad \geq d_{i,p}(u) \geq \min_{1 \leq i \leq m} \underline{d}_{i,p} > 0.
					\end{align*}
					Next, by the construction of $H^{\varsigma,\vartheta}_{p,i}(\tau,t)$ for $t \geq 1/2$ in Step \ref{step2:deformation}, we see that 
					\begin{align*}
						E^{\omega}_\alpha\p{H^{\varsigma,\vartheta}_{p,i}(\tau,t)} - E^{\omega}_\alpha\p{H^{\varsigma,\vartheta}_{p,i}(\tau,1/2)} &= E^{\omega}_\alpha\p{\gamma_{H^{\varsigma,\vartheta}_{p,i}(\tau,1/2)}(2t-1)} - E^{\omega}_\alpha\p{H^{\varsigma,\vartheta}_{p,i}(\tau,1/2)}\\
						&\leq E^{\omega}_\alpha\p{H^{\varsigma,\vartheta}_{p,i}(\tau,1/2)} - C(u_i)\min_{1 \leq i \leq m} \underline{d}_{i,p}^2\\
						&\leq \mathcal{W}_{\alpha,\lambda} + 2\theta_p(u) - 4 \theta_p(u)\\
						& = \mathcal{W}_{\alpha,\lambda} - 2 \theta_p(u),
					\end{align*}
					which contradicts to the definition of $e_{p+1}(u)$. This completes the proof of Claim \ref{Morse index claim 2}.
				\end{proof}
			Therefore, we finish the proof of Step \ref{step3:deformation}.
			\end{proof}
			Before penetrating to the detailed description of next step, for the notation simplicity we take
			\begin{equation}\label{eq:define underline d}
				\underline{d}:= \min_{1 \leq p \leq k-1} \inf_{ u \in \mathcal{U}_0} \min\p{\frac{\eta}{4 e_p(u)}, \theta_p(u)} > 0.
			\end{equation}
			\step\label{step4:deformation}After passing to some subsequence, we construct a sequence of desired continuous homotopies $H_j:I^{k-2} \times [0,1] \rightarrow W^{1,2\alpha}(\S^2, N)$ such that $H_j(\cdot, 0) = \sigma_j$ and $H_j(\cdot, 1) = \dbl{\sigma}_j$ satisfies all the properties asserted in Theorem \ref{prop deformation}. 
			\begin{proof}[\textbf{Proof of Step \ref{step4:deformation}}]
				To begin, we choose a subsequence $\sigma_{j_l} \in \mathcal{A}_{\varepsilon_{j_l},C}$ of $\sigma_j$ such that 
				\begin{equation*}
					\max_{\tau \in I^{k-2}} E^{\omega}_\alpha(\sigma_{j_l}(\tau)) \leq \mathcal{W}_{\alpha,\lambda} + \underline{d}/{2}.
				\end{equation*}
				where $\underline{d}$ is defined in \eqref{eq:define underline d}. For notation simplicity, we still write $\{\sigma_{j}\}_{j \in \mathbb N}$ to represent $\{\sigma_{j_l}\}_{l \in \mathbb N}$. For fixed $\sigma_j: I^{k-2} \rightarrow W^{1,2\alpha}(\S^2, N)$, to show Step \ref{step4:deformation}, recalling that 
				\begin{equation*}
					\mathcal{U}_0 \subset \mathcal{N}^{1,2\alpha}_{\eta} = \bigcup_{u \in \mathcal{U}_0} B^{1,2\alpha}(u,\eta) \quad 
					\text{for some fixed } \eta > 0,
				\end{equation*}
				it suffices to construct the homotopy $H_j$ from $\sigma_j$ to $\dbl{\sigma}_j$ such that
				\begin{equation*}
					\dbl{\sigma}_j(I^{k-2}) \bigcap \mathcal{N}^{1,2\alpha}_{\frac{\eta}{2 \overline{e}_{k-1}}} = \emptyset
				\end{equation*}
				where the constant $\overline{e}_{k-1} > 0$ is defined in Step \ref{step3:deformation} and $\eta$ is obtained in Step \ref{step2:deformation}.
    
				To this end, firstly we denote $I(1,n)$ to be the cell complex on the unit interval $I$ whose 1-cells are the intervals $[0,1\cdot 3^{-n}]$, $[1\cdot 3^{-n}, 2\cdot 3^{-n}]$,\dots, $[1-3^{-n},1]$ and whose 0-cells are the end points $\set{0}$, $\set{3^{-n}}$, $\set{2\cdot 3^{-n}}$,\dots, $\set{1}$. Then, $I(k-2,n)$ denotes the cell complex of $I^{k-2}$ as below
				\begin{equation*}
					I(k-2, n) = I(1,n)\otimes I(1,n) \otimes \cdots \otimes I(1,n).
				\end{equation*}
				Then, for each fixed $j \in \mathbb N$ we take $n$ large enough to obtain a sufficiently fine subdivision of $I^{k-2}$ such that for each closed face $F$ of $I(k-2,n)$ the followings are fulfilled:
				\begin{enumerate}[label=(\subscript{C}{{\arabic*}})]
					\item\label{step4:deformation 1} If $\sigma_j(F) \bigcap \mathcal{N}^{1,2\alpha}_{\eta/2} \neq 
					\emptyset$, then $\sigma_j(F) \subset \mathcal{N}^{1,2\alpha}_\eta$.
					\item\label{step4:deformation 2} Each $F \in \mathcal{F}$ can be covered by single $B^{1,2\alpha}(u_i,r_i)$ for certain $1 \leq i \leq m$. Here, $\mathcal{F}$ is the set of faces $F$ satisfying $\sigma_j(F) \bigcap \mathcal{N}^{1,2\alpha}_{\eta/2} \neq 
					\emptyset$. 
				\end{enumerate}
			
			To see \ref{step4:deformation 1}, since $I(k-2,n)$ is compact and $\sigma_j:I(k-2,n) \rightarrow W^{1,2\alpha}(\S^2,N)$ is continuous, we can take finitely many times barycentric subdivision upon $I^{k-2}$ such that the oscillation of $\sigma_j$ on each face $F \in \mathcal{F}$ is less than ${\eta}/{4}$. Moreover, by Step \ref{step2:deformation}, $\mathcal{N}^{1,2\alpha}_\eta$ is covered by the union of finite collection $\set{B^{1,2\alpha}(u_i, r_i)}_{i = 1}^m$, after further taking finitely many times barycentric subdivision, by the notion of Lebesgue's number, we can arrange that each maximal $F  \in  \mathcal{F}$ (here, $F  \in  \mathcal{F}$ is maximal means that there is no $F^\prime \in \mathcal{F}$ with $F \subsetneqq F^\prime$ ) $\sigma_{j}(F)$ can be covered by exactly one $B^{1,2\alpha}(u_i,r_i)$. Then \ref{step4:deformation 2} follows by an induction argument by decreasing dimensions of $F \in \mathcal{F}$.
			
			In the following, we are devoted to construct the desired continuous homotopy $H_j$ inductively on dimension $1 \leq l \leq k-2$ of the $l$-skeletons $I^{k-2}_{(l)}$ for $I^{k-2}$.
			
			For $l = 0$, we can apply the homotopy constructed in \ref{step2 item 3} of Step \ref{step2:deformation}. More precisely, for the $0$-cells outside $\mathcal{N}^{1,2\alpha}_{\eta/e_1}$, we simply take the  $H^{(0)}_j$ to be the constant homtopy on them. For the $0$-cells $X$ in $\mathcal{N}^{1,2\alpha}_{\eta/e_1}$, we choose the homotopy 
            $$H^{(0)}_j := \left. H^{\sigma_j, \vartheta}_{0,i}\right|_{X}$$
            defined on $X \times [0,1]$, where $H^{\sigma_j, \vartheta}_{0,i}$ is constructed
            in Step \ref{step2:deformation}, $i \in \set{1,2,\dots, m}$ is chosen to be the smallest positive integer such that $\sigma_{j}(X) \subset B(u_i,r_i)$ and $\vartheta$ is chosen to satisfies $\vartheta < \underline{d}/4$.
			In summary, by Step \ref{step2:deformation} and Step \ref{step3:deformation} we obtain a homotopy $H_j^{(0)}$ such that 
			\begin{equation*}
				H_j^{(0)}\left(I^{k-2}_{(0)} \times \set{1}\right)\bigcap \mathcal{N}_{\eta/e_1} = \emptyset
			\end{equation*}
		 and 
		 \begin{equation*}
		 	H_j^{(0)}(X,t) \in \mathcal{D}_{u_i}(3) \quad \text{for any } X \in I^{k-2}_{(0)} \text{ and any } t\in [0,1].
		 \end{equation*}
			
			Now, suppose that we have constructed $H_j^{(l-1)}$ on $I^{k-2}_{(l-1)} \times [0,1]$ for some $l \geq 1$ such that 
			\begin{equation*}
				H_j^{(l-1)} \left(I^{k-2}_{(l-1)} \times \set{1}\right)\bigcap \mathcal{N}_{\eta/e_{l-1}} = \emptyset, \quad H_j^{(l-1)}\left(I^{k-2}_{(l-1)} \times [0,1]\right)\bigcap \mathcal{N}_{\eta/e_{l}} = \emptyset 
			\end{equation*}
			by consulting the conclusion of Step \ref{step3:deformation} and such that
			\begin{equation*}
				H_j^{(l-1)}(X,t) \in \mathcal{D}_{u_i}(3) \quad \text{for any } X \in I^{k-2}_{(l-1)} \text{ and any } t\in [0,1].
			\end{equation*}
		Then, we consider the $l$-cells in $I^{k-2}$. For $F_l \in I^{k-2}_{(l)}\backslash I^{k-2}_{(l-1)}$, we see that $\partial F_ l \in I^{k-2}_{(l-1)}$ and $\dbl{F}_l:= F_l \cup\p{\partial F_l \times [0,1]}$ is homeomorphic to $F_l$ by concatenating $F_l$ and $\partial F_l \times [0,1]$ along the $\partial F_l$. This implies that we can construct the continuous map
		\begin{equation*}
			\varsigma : F_l \cong\dbl{F}_l \rightarrow W^{1,2\alpha}(\S^2,N)
		\end{equation*}
	by gluing
	 $$H^{(l-1)}_j:\partial F_l \times [0,1] \rightarrow W^{1,2\alpha}(\S^2,N) \quad \text{and}\quad  \sigma_j: F_l \rightarrow W^{1,2\alpha}(\S^2,N)$$
	 along the $\partial F_l$.
	 
	  Next, we construct the homotopy $\dbl{H}_j^{(l)}$ from $\varsigma$ on $\dbl{F}_l \cong F_l$ conditionally depending on whether $\dbl{F}_l$ belongs to $\mathcal{F}$ or not.
	  \begin{itemize}
	  	\item If $\dbl{F}_l \in \mathcal{F}$, then by the definition of $\mathcal{F}$ there is also no cells of $\partial \dbl{F}_l$ belongs to $\mathcal{F}$. Therefore, we define 
	  	$\dbl{H}^{(l)}_j: \dbl{F}_l \times [0,1] \rightarrow W^{1,2\alpha}(\S^2,N) \equiv \varsigma$
	  	 to be the constant homotopy. Note that in this case $\varsigma|_{\partial\dbl{F}_l}$ satisfies the assumption of Step \ref{step3:deformation}, so we have that 
	  	 \begin{equation*}
	  	 	\dbl{H}^{(l)}_j\p{\dbl{F}_l\times \set{1} \bigcup \partial\dbl{F}_l\times [0,1]}\bigcap \mathcal{N}_{\frac{\eta}{e_{l+1}}} = \emptyset.
	  	 \end{equation*}
	  	\item If $\dbl{F}_l \notin \mathcal{F}$, by the induction assumption on $H^{(l-1)}_j$ and the choice of fine subdivision on $I^{k-2}$, see \ref{step4:deformation 2}, there exists $1 \leq i_l \leq m$ such that $\varsigma (\dbl{F}_l) \subset B(u_{i_l},r_{i_l})$. However, it is important to point out that $\varsigma (\dbl{F}_l)$ may not be contained in $\mathcal{N}_\eta$ which means that the construction of continuous homotopy obtained in \ref{step2 item 3} of Step \ref{step2:deformation} can not be applied directly. Based on this consideration, we further take finer subdivision on $\dbl{F}_l$ such that each $l$-dimensional face $\dbl{f}_l$ of $\dbl{F}_l$ satisfying $\varsigma(\dbl{f}_l) \cap \mathcal{N}_{\eta/e_p} \neq \emptyset$ must fulfill $\varsigma(\dbl{f}_l) \subset \mathcal{N}_\eta$. Then, similarly, we denote $\dbl{\mathcal{F}}_l$ to be the union of all $l$-dimensional faces $\dbl{f}_l$ of $\dbl{F}_l$ with $\varsigma(\dbl{f}_l) \cap \mathcal{N}_{\eta/e_l} \neq \emptyset$. By induction assumption, we see that $\varsigma(\partial \dbl{F}_l) \cap \mathcal{N}_{\eta/e_l} = \emptyset$ which implies that  $\varsigma(\partial \dbl{\mathcal{F}}_l) \cap \mathcal{N}_{\eta/e_l} = \emptyset$. Then, we can construct a homotopy map 
	  	\begin{equation}\label{eq:definition hat H}
	  		\hat{H}_j^{(l)}: \p{\dbl{F}_l \times [0,1/2] } \bigcup\p{\dbl{\mathcal{F}}_l\times [1/2,1]} \rightarrow W^{1,2\alpha}(\S^2, N) 
	  	\end{equation}
  		such that $\hat{H}(x,t) = \varsigma(x)$ when  $t \in [0,1/2]$ and $x \in \dbl{F}_l$ and that $\hat{H}(x,t) = H^{\varsigma,\vartheta}_{l,i_l}(x,2t-1)$ for $t \in [1/2,1]$, where $H^{\varsigma,\vartheta}_{l,i_l}(x,2t-1)$ is defined in \ref{step2 item 3} of Step \ref{step2:deformation} with $\vartheta < \underline{d}/4$. By the definition of $\dbl{\mathcal{F}}_l$, we see that 
  		\begin{equation*}
  			\hat{H}_j^{(l)} \p{ \left(\dbl{F}_l \backslash \dbl{\mathcal{F}}_l\right)\times \set{\frac{1}{2}} \bigcup \p{\partial \dbl{F}_l \times[0,1/2]}} \bigcap \mathcal{N}_{\frac{\eta}{e_{l+1}}} = \emptyset.
  		\end{equation*}
  		And by the construction of $\hat{H}_j^{(l)}$ and the \ref{step2 item 3} in Step \ref{step2:deformation}, we have that
  		\begin{equation*}
  			\hat{H}_j^{(l)}\left(\dbl{\mathcal{F}}_l\times \set{1}\right) \bigcap \mathcal{N}_{\frac{\eta}{e_{l+1}}} = \emptyset.
  		\end{equation*}
  		Moreover, by Step \ref{step3:deformation}, we have 
  		\begin{equation*}
  			\hat{H}_j^{(l)}\left(\partial\dbl{\mathcal{F}}_l\times [1/2,1]\right) \bigcap \mathcal{N}_{\frac{\eta}{e_{l+1}}} = \emptyset.
  		\end{equation*}
  	Then by the homeomorphisms 
  	\begin{equation*}
  		\dbl{F}_l \times[0,1] \cong {\p{\dbl{F}_l \times[0,1/2]} \bigcap \p{\dbl{\mathcal{F}}_l \times [1/2,1]}},
  	\end{equation*}
   that is induced from the homeomorphism
  \begin{equation*}
  	\dbl{F}_l\times \set{1} \cong {\p{\p{\dbl{F}_l \backslash \dbl{\mathcal{F}}_l} \times \set{\frac{1}{2}}} \bigcup \p{\dbl{\mathcal{F}}_l \times \set{{1}}}
  	\bigcup \p{\partial \dbl{\mathcal{F}}_l \times [1/2,1]}},
  \end{equation*}
  and the identification
  \begin{equation*}
  	\partial\dbl{F}_l \times[0,1] = \partial\dbl{F}_l \times[0,1],
  \end{equation*}
 we can derive a continuous homotopy $\dbl{H}^{(l)}_j : \dbl{F}_l\times [0,1] \rightarrow W^{1,2\alpha}(\S^2, N)$ from $\hat{H}_j^{(l)}$ which satisfies 
 \begin{equation}\label{step4 desired empty}
 	\dbl{H}^{(l)}_j\p{\p{\dbl{F}_l\times \set{1}} \bigcup \p{\partial \dbl{F}_l \times [0,1]}} \bigcap \mathcal{N}_{\frac{\eta}{e_{l+1}}} = \emptyset.
  \end{equation}
	  \end{itemize}
  In summary, whenever $F_l \in \mathcal{F}$ or not, we can construct a continuous homotopy 
  $$H^{(l)}_j : F_l \times [0,1] \rightarrow W^{1,2\alpha}(\S^2, N)$$
  induced from the homeomorphisms 
  \begin{equation*}
  	F_l\times[0,1] \cong \dbl{F}_l \times [0,1]\quad \text{and}\quad F_l \times \set{1} \cong { \p{\dbl{F}_l \times \set{1} }\bigcup \p{ \partial\dbl{F}_l \times [0,1]}},
  \end{equation*}
which satisfies that 
\begin{equation*}
	H^{(l)}_j|_{\partial F_l \times [0,1]} = H^{(l-1)}_j|_{\partial F_l \times [0,1]} \quad \text{and}\quad H^{(l)}_j (x,0) = \sigma_j(x)  \text{ for all } x \in F_l.
\end{equation*}
Then, we glue all together such continuous homotopy $H^{(l)}_j$ defined on $F_l \times [0,1]$ when $F_k$ runs through $I^{k-2}_{(l)} \backslash I^{k-2}_{(l-1)}$ to obtain the desired homotopy
\begin{equation*}
	H^{(l)}_j : I^{k-2}_{(l)} \times [0,1] \rightarrow W^{1,2\alpha}(\S^2, N).
\end{equation*}
Keeping in mind that 
\begin{equation*}
	H^{(l)}_j \left(F_l \times \set{1}\right) = \dbl{H}^{(l)}_j\p{\p{\dbl{F}_l \times \set{1}} \bigcup  \p{\partial\dbl{F}_l \times [0,1] }}
\end{equation*}
 on each $F_l \in I^{k-2}_{(l)} \backslash I^{k-2}_{(l-1)}$ for any $1\leq l \leq k-2 $ and recalling \eqref{step4 desired empty}, we have that 
 \begin{equation*}
 	H^{(k-2)}_j\p{I^{k-2}_{(k-2)} \times \set{1} } \bigcap \mathcal{N}_{\frac{\eta}{e_{l+1}}} = \emptyset.
 \end{equation*}
To complete the construction, we let $\dbl{\sigma}_j:= H^{(k-2)}_j(\cdot,1)$ which satisfies that 
\begin{equation}\label{eq:step4 deformation 1}
	\dbl{\sigma}_j(I^{k-2}) \bigcap \mathcal{N}_{\frac{\eta}{e_{k-1}}} = \emptyset.
\end{equation}

In the following, we show that there exists large enough $j_0 \in \mathbb N$ such that $\dbl{\sigma}_j$ satisfies the first property \ref{prop deformation 1} when $j \geq j_0$, and for each $u \in \mathcal{U}_0$ there exists $j_0(u) \in \mathbb N$ such that $\dbl{\sigma}_j$ satisfies the second property \ref{prop deformation 2} asserted in Theorem \ref{prop deformation} when $j \geq j_0(u)$. 

To see \ref{prop deformation 1} of Theorem \ref{prop deformation}, by our choice of $\delta_0$ and covering $\set{B^{1,2\alpha}(u_i,r_i)}_{i=1}^m$, in particular see \eqref{eq:step1 deformation 1}, we find that
\begin{equation*}
	\sigma_{j}(\tau) \notin \bigcup_{i=1}^m B^{1,2\alpha}(u_i,r_i) \quad \text{for any  } \tau\in \partial I^{k-2}.
\end{equation*}
This implies that the continuous homotopy $H^{(k-2)}_j$ restricted to $\partial I^{k-2}$ is constant, hence $\sigma_{j}|_{\partial I^{k-2}} = \dbl{\sigma}_{j}|_{\partial I^{k-2}}$. Thus, $\dbl{\sigma}_j \in \mathscr{I}$ is an admissible sweepout. Moreover, to show $\dbl{\sigma}_j \in \mathcal{A}_{\varepsilon_j, C+1}$, we first observe that the homotopy $H^{(k-2)}_j$ is constructed from $\sigma_j$ by gluing $\hat{H}^{(l)}_{j}$ finitely many times and that $\hat{H}^{(l)}_j$ is constructed from the continuous homotopy obtained in \ref{step2 item 3} of Step \ref{step2:deformation} for $1 \leq l \leq k-2$. By the arbitrariness of $\vartheta$ in \ref{step2:deformation item 3 b} of Step \ref{step2 item 3}, choosing large enough $j_0 \in \mathbb{N}$ with $\varepsilon_j \leq 
{\underline{d}}/{8}$ and small enough $\vartheta$ we can conclude that 
\begin{equation*}
	\max_{\tau \in I^{k-2}} E^{\omega}_\alpha\left( \dbl{\sigma}_j(\tau)\right) = \max_{\tau \in I^{k-2}} E^{\omega}_\alpha\left(H^{(k-2)}_j(\tau,1)\right) \leq \mathcal{W}_{\alpha,\lambda} + \varepsilon_j
\end{equation*}
for all $j \geq j_0$. Next, we verify the second part of definition of $\mathcal{A}_{\varepsilon_j,C+1}$. Let $\tau \in I^{k-2}$ be the point such that
\begin{equation}\label{step4 the choice tau}
	E^{\omega}_{\alpha}(\dbl{\sigma}_j(\tau)) \geq \mathcal{W}_{\alpha,\lambda} - \varepsilon_j.
\end{equation}
If $\tau \notin \bigcup_{F \in \mathcal{F}} F$, then we have 
\begin{equation*}
  	E^{\omega}_{\alpha}(\dbl{\sigma}_j(\tau)) =	E^{\omega}_{\alpha}({\sigma}_j(x)) \geq \mathcal{W}_{\alpha,\lambda} - \varepsilon_j
\end{equation*}
which implies that 
\begin{equation*}
	E_\alpha(\dbl{\sigma}_j(x)) = E_\alpha({\sigma}_j(x)) \leq  C < C+1.
\end{equation*}
On the other hand, if $\tau \in \bigcup_{F \in \mathcal{F}} F$, then $\sigma_j(\tau) \in \mathcal{N}_{\eta}$ and there exists $1 \leq i \leq m$ such that $\sigma_j(\tau) \in B^{1,2\alpha}(u_i, r_i)$. By the construction of $\dbl{\sigma}_j$ and the choice of $\tau$ satisfying \eqref{step4 the choice tau}, we know that $\dbl{\sigma}_j(\tau) \in B^{1,2\alpha}(u_i, 2r_i)$ which implies that 
\begin{equation*}
	E_\alpha(\dbl{\sigma}_j(\tau)) \leq \abs{E_\alpha(\dbl{\sigma}_j(\tau)) - E_\alpha(u_i)} + E_\alpha(u_i) \leq C + 1.
\end{equation*}
thanks to \eqref{eq:step1 deformation 1} and the choice $u_i \in \mathcal{U}_0 \subset \mathcal{U}_C$. Therefore, we showed that $\dbl{\sigma}_j \in \mathcal{A}_{\varepsilon_j, C+1}$, that is, the first assertion \ref{prop deformation 1} of Theorem \ref{prop deformation}. 

To show \ref{prop deformation 2} of Theorem \ref{prop deformation}, we argue by contradiction. Suppose that there exists subsequences of $\dbl{\sigma}_j$, $\varepsilon_j$ (still denoted by $\dbl{\sigma}_j$, $\varepsilon_j$), a sequence of points $u_j \in \mathcal{U}_0$ and a sequence $\tau_j \in I^{k-2}$ such that 
\begin{equation*}
	E^{\omega}_{\alpha}(\dbl{\sigma}_j(\tau_j)) \geq \mathcal{W}_{\alpha,\lambda} - \varepsilon_j
\end{equation*}
and that 
\begin{equation*}
    \lim_{j \rightarrow \infty}\norm{\dbl{\sigma}_j(\tau_j) - u_j}_{W^{1,2\alpha}(\S^2, N)} = 0.
\end{equation*}
for any $j_0 \in \mathbb N$. Then, by the compactness of $\mathcal{U}_0$, after passing some subsequences we have 
\begin{equation*}
	\lim_{j \rightarrow \infty} \norm{\dbl{\sigma}_j(\tau_j) - u}_{W^{1,2\alpha}(\S^2, N) } = 0
\end{equation*}
for some $u \in \mathcal{U}_0$.
But this contradicts to \eqref{eq:step4 deformation 1}, hence we finish the proof of Step \ref{step4:deformation}.
\end{proof}
Therefore, we complete the proof of Theorem \ref{prop deformation}.
\end{proof}

Equipped with the deformation of sweepouts Theorem \ref{prop deformation}, we can construct the desired non-trivial critical points in $\mathcal{U}_C$ with Morse index bounded from above by $k-2$.
\begin{theorem}\label{thm:Morse index k-2}
    Given $\omega \in C^2(\wedge^2 (N))$, $\lambda > 0$ writing $\omega$ as $\lambda \omega$, $\alpha > 1$ and  a sequence of sweepouts $\sigma_j \in \mathcal{A}_{\varepsilon_j, C}$ for some constant $C > 0$ with $\varepsilon_j \searrow 0$. Then, there exists a non-trivial critical point $u \in \mathcal{U}_C$ satisfying 
    \begin{equation*}
        E_{\alpha}(u) \geq \frac{1}{2}\mathrm{Vol}(\S^2) + \delta(\alpha, \lambda\omega) \quad \text{and} \quad \mathrm{Ind}_{E^{\omega}_\alpha}(u) \leq k-2,
    \end{equation*}
    for some constant $\delta(\alpha, \lambda\omega) > 0$ depending on the choice of $\alpha > 0$, $\omega\in C^2(\wedge^2(N))$ and $\lambda >0$ which is obtained in \eqref{prop energy bound part 3} of Proposition \ref{prop energy bound}.
\end{theorem}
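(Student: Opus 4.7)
The plan is to argue by contradiction, combining Proposition \ref{prop energy bound} with the deformation Theorem \ref{prop deformation} as an abstract mountain-pass-with-Morse-bound mechanism. Assume, for the sake of contradiction, that every critical point $u \in \mathcal{U}_{C+1}$ satisfies $\mathrm{Ind}_{E^\omega_\alpha}(u) \geq k-1$. Set $\mathcal{U}_0 := \mathcal{U}_{C+1}$: by the Palais-Smale condition (Lemma \ref{P-S condition}) together with the uniform $\alpha$-energy bound $E_\alpha(u) \leq C+1$ and the fixed critical value $E^\omega_\alpha(u) = \mathcal{W}_{\alpha,\lambda}$, $\mathcal{U}_0$ is a compact (hence closed) subset of $\mathcal{U}_{C+1}$, so the hypothesis of Theorem \ref{prop deformation} is in force.

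Applying Theorem \ref{prop deformation} to the given sequence $\{\sigma_j\}_{j\in\mathbb N} \subset \mathcal{A}_{\varepsilon_j,C}$ yields, for $j$ large, a deformed sequence $\{\dbl\sigma_j\}_{j\in\mathbb N}$ with $\dbl\sigma_j \in \mathcal{A}_{\varepsilon_j,C+1}$ (assertion \ref{prop deformation 1}) such that, for every $u \in \mathcal{U}_0$, there exist $j_0(u) \in \mathbb N$ and $\varepsilon_0(u) > 0$ with
\begin{equation*}
\inf\bigl\{\norm{\dbl\sigma_j(t) - u}_{W^{1,2\alpha}(\S^2,N)} : t\in I^{k-2},\ E^\omega_\alpha(\dbl\sigma_j(t)) \geq \mathcal{W}_{\alpha,\lambda} - \varepsilon_j\bigr\} \geq \varepsilon_0(u)
\end{equation*}
for all $j \geq j_0(u)$ (assertion \ref{prop deformation 2}). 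I then reapply Proposition \ref{prop energy bound} to the sequence $\{\dbl\sigma_j\}_{j\in\mathbb N}$ (after trivially rescaling the $\varepsilon_j$ parameters by $\lambda$ to match its hypotheses). This produces, along a further subsequence, points $t_j \in I^{k-2}$ for which $\lvert E^\omega_\alpha(\dbl\sigma_j(t_j)) - \mathcal{W}_{\alpha,\lambda}\rvert \to 0$ and $\dbl\sigma_j(t_j) \to u^*$ strongly in $W^{1,2\alpha}(\S^2,N)$ for some non-trivial $u^* \in \mathcal{U}_{C+1} = \mathcal{U}_0$, satisfying the energy lower bound $E_\alpha(u^*) \geq \tfrac12\mathrm{Vol}(\S^2) + \delta(\alpha,\lambda\omega)$ from part \eqref{prop energy bound part 3}.

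The contradiction is now immediate: since $E^\omega_\alpha(\dbl\sigma_j(t_j)) \to \mathcal{W}_{\alpha,\lambda}$, for $j$ sufficiently large we have $E^\omega_\alpha(\dbl\sigma_j(t_j)) \geq \mathcal{W}_{\alpha,\lambda} - \varepsilon_j$, so the deformation estimate forces $\norm{\dbl\sigma_j(t_j) - u^*}_{W^{1,2\alpha}(\S^2,N)} \geq \varepsilon_0(u^*) > 0$ for all $j \geq j_0(u^*)$, which is incompatible with the strong convergence $\dbl\sigma_j(t_j) \to u^*$. Hence the standing assumption fails and some $u \in \mathcal{U}_{C+1}$ satisfies $\mathrm{Ind}_{E^\omega_\alpha}(u) \leq k-2$; this $u$ is the desired critical point (replacing $C$ by $C-1$ at the outset if one insists on the conclusion $u \in \mathcal{U}_C$ as stated), and the energy bound is supplied by Proposition \ref{prop energy bound}\eqref{prop energy bound part 3}.

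\textbf{Main obstacle.} The substantive work has already been done in Theorem \ref{prop deformation}; the present argument is the standard ``min-max extracts a critical point with index no larger than the dimension of the parameter space'' scheme, in the spirit of Marques-Neves and Cheng-Zhou. The only fine point requiring care is the coordination of the several $\varepsilon_j$ families (those appearing in $\mathcal{A}_{\varepsilon_j,\cdot}$, those in Proposition \ref{prop energy bound}, which differ by a factor $\lambda$, and those in Theorem \ref{prop deformation}); all are compatible up to a fixed constant, so a single diagonal subsequence suffices. The compactness of $\mathcal{U}_0$ furnished by Palais-Smale is what makes the neighborhood radius $\varepsilon_0(u^*)$ meaningful for the specific limit point produced, closing the loop.
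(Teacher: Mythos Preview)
Your approach is essentially the paper's: assume for contradiction that every element of a suitable closed set $\mathcal{U}_0\subset\mathcal{U}_{C+1}$ has index at least $k-1$, apply the deformation Theorem~\ref{prop deformation} to $\{\sigma_j\}$ to produce $\{\dbl\sigma_j\}\subset\mathcal{A}_{\varepsilon_j,C+1}$, then reapply Proposition~\ref{prop energy bound} to $\{\dbl\sigma_j\}$ to find a limit $u^*\in\mathcal{U}_0$ of high-level points $\dbl\sigma_j(t_j)$, contradicting \ref{prop deformation 2}. The contradiction step and the $\varepsilon_j$/$\lambda$ bookkeeping you outline are correct.

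The one place your argument differs from the paper's is the choice of $\mathcal{U}_0$. You take $\mathcal{U}_0=\mathcal{U}_{C+1}$; the paper instead takes
\[
\mathcal{U}_0=\Bigl\{u\in\mathcal{U}_{C+1}:E_\alpha(u)\ge\tfrac12\mathrm{Vol}(\S^2)+\delta(\alpha,\lambda\omega)\Bigr\},
\]
which is non-empty by Proposition~\ref{prop energy bound}. This matters at the very end. When the contradiction hypothesis fails, you obtain \emph{some} $u\in\mathcal{U}_{C+1}$ with index $\le k-2$, and then assert that Proposition~\ref{prop energy bound}\eqref{prop energy bound part 3} supplies the energy lower bound for this $u$. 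But that item refers to the specific min-max limit, not to an arbitrary element of $\mathcal{U}_{C+1}$. Worse, constant maps are critical points of $E^\omega_\alpha$ with Morse index $0$ (the second variation reduces to $\alpha\int|\nabla V|^2$), and they lie in $\mathcal{U}_{C+1}$ precisely when $\mathcal{W}_{\alpha,\lambda}=\tfrac12\mathrm{Vol}(\S^2)$; nothing in the setup rules this out a priori, since the $\omega$-term can be negative. In that scenario your contradiction hypothesis is already violated by constants, yet no non-trivial low-index critical point has been produced. The paper's smaller $\mathcal{U}_0$ sidesteps this entirely: any $u\in\mathcal{U}_0$ with index $\le k-2$ is automatically non-constant with the required energy lower bound.

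Finally, your parenthetical about $\mathcal{U}_C$ versus $\mathcal{U}_{C+1}$ is on point: the paper's own proof also produces $u\in\mathcal{U}_{C+1}$, and Corollary~\ref{coro:section 3 summary} uses the bound $C+1$, so the ``$\mathcal{U}_C$'' in the statement is a minor misprint rather than something your ``replace $C$ by $C-1$'' trick needs to repair.
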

\begin{proof}
    Let
    \begin{equation*}
        \mathcal{U}_0 = \set{u \in \mathcal{U}_{C+1} \,:\, E^{\omega}_\alpha(u) \geq \frac{1}{2}\mathrm{Vol}(\S^2) + \delta(\alpha, \omega)} 
    \end{equation*}
    which is a  non-empty closed subset of $\mathcal{U}_{C+1}$ by Proposition \ref{prop energy bound}. By contradiction, if $\mathrm{Ind}_{E^{\omega}_\alpha}(u) \geq k-1$ for all $u \in \mathcal{U}_0$, then by \ref{prop deformation 1} of Theorem \ref{prop deformation} we can obtain a sequence of sweepouts $\dbl{\sigma_j} \in \mathcal{A}_{\varepsilon_j,C+1}$ constructing from $\sigma_j$. Then, combining the Theorem \ref{prop deformation} with the Proposition \ref{prop energy bound}, after passing to some subsequences of $\dbl{\sigma}_j$ and $\varepsilon_j$, there exists a sequence $\tau_j \in I^{k-2}$ such that 
\begin{equation*}
	E^{\lambda\omega}_{\alpha}(\dbl{\sigma}_j(\tau_j)) \geq \mathcal{W}_{\alpha,\lambda} - \varepsilon_j
\end{equation*}
and that 
\begin{equation*}
	\lim_{j \rightarrow \infty} \norm{\dbl{\sigma}_j(\tau_j) - u}_{W^{1,2\alpha}(\S^2, N) } = 0.
\end{equation*}
for some $u \in \mathcal{U}_0$ which contradicts to \ref{prop deformation 2} of Theorem \ref{prop deformation}. Therefore, there exists an $u \in \mathcal{U}_0$ with $\mathrm{Ind}_{E^{\omega}_\alpha}(u) \leq k-2$ completing the proof of Theorem \ref{thm:Morse index k-2}.
\end{proof}

As a summary of this section, for almost every $\lambda \in \R_+$ we constructed a sequence of non-trivial $\alpha_j$-$\lambda H$-spheres with desired properties described as below.
\begin{coro}\label{coro:section 3 summary}
    Given $\omega \in C^2(\wedge^2(N))$, for almost every $\lambda \in \R_+$, there exists a constant $C > 0$, a sequence $\alpha_j \searrow 1$ and a sequence of positive constant $\delta(\alpha_j, \lambda \omega) > 0$ such that for each $j \in \mathbb N$ there exists a $\alpha_j$-$\lambda H$-sphere $u_{\alpha_j} \in W^{1,2\alpha_j}(\S^2,N)$ satisfying 
    \begin{equation*}
        \delta E^{\lambda \omega}_{\alpha_j}(u_{\alpha_j}) = 0, \,\, \frac{1}{2}\mathrm{Vol}(\S^2) + \delta(\alpha_j, \lambda \omega) \leq E^{\lambda\omega}_{\alpha_j}(u) \leq C+1\,\,\text{and}\,\, \mathrm{Ind}_{E^{\lambda \omega}_{\alpha_j}}(u_{\alpha_j}) \leq k-2.
    \end{equation*}
\end{coro}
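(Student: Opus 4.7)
The plan is to assemble this corollary as a direct consequence of the chain of results already established in the section, applied term by term along a suitably chosen sequence $\alpha_j \searrow 1$. I will first fix a generic $\lambda$, then use monotonicity to extract a subsequence along which uniform energy control propagates, and finally invoke the Morse-index deformation to pick the critical point at each level.

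First, fix any $\omega \in C^2(\wedge^2(N))$ and any sequence $\beta_j \searrow 1$. By part (1) and (2) of Lemma \ref{lem monotone}, the function $\lambda \mapsto \mathcal{W}_{\alpha,\lambda}/\lambda$ is non-increasing and $\alpha \mapsto \mathcal{W}_{\alpha,\lambda}$ is non-decreasing, and the last assertion of the same lemma yields a full-measure set $\Lambda \subset \R_+$ and, for each $\lambda \in \Lambda$, a subsequence $\{\alpha_j\} \subset \{\beta_j\}$ together with a constant $C_\lambda > 0$ such that
\begin{equation*}
0 \leq \frac{d}{d\lambda}\!\left(-\frac{\mathcal{W}_{\alpha_j,\lambda}}{\lambda}\right) \leq C_\lambda \qquad \text{for all } j \in \mathbb{N}.
\end{equation*}
I will work with $\lambda \in \Lambda$ from now on, relabeling $\lambda\omega$ as $\omega$ inside the functional when convenient.

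Next, for each $j$, apply Lemma \ref{admissable h} to the value $\alpha_j$ (taking $\alpha_0-1$ small enough along the tail of the sequence as required) with any fixed null sequence $\varepsilon_l \searrow 0$, $\varepsilon_l \leq \lambda/2$. This produces a min-max sequence of sweepouts $\sigma_l^{(j)} \in \mathscr{S}$ satisfying the two conditions defining $\mathcal{A}_{\lambda\varepsilon_l,\,8\lambda^2 C_\lambda}$, namely
\begin{equation*}
\max_{t\in I^{k-2}} E^{\lambda\omega}_{\alpha_j}(\sigma_l^{(j)}(t)) \leq \mathcal{W}_{\alpha_j,\lambda} + \lambda \varepsilon_l
\end{equation*}
together with the uniform $\alpha_j$-energy bound $E_{\alpha_j}(\sigma_l^{(j)}(t_0)) \leq 8\lambda^2 C_\lambda$ at every $t_0$ where $E^{\lambda\omega}_{\alpha_j}(\sigma_l^{(j)}(t_0)) \geq \mathcal{W}_{\alpha_j,\lambda} - \lambda\varepsilon_l$. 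Setting $C := 8\lambda^2 C_\lambda$, this places $\{\sigma_l^{(j)}\}_l$ inside the admissible family $\mathcal{A}_{\lambda\varepsilon_l, C}$ of Section 3.3.

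Finally, apply Theorem \ref{thm:Morse index k-2} to this min-max sequence at each fixed $\alpha_j$. This theorem, whose proof goes by contradicting the deformation Theorem \ref{prop deformation} on the closed subset $\mathcal{U}_0 = \{u \in \mathcal{U}_{C+1} : E^{\lambda\omega}_{\alpha_j}(u) \geq \frac{1}{2}\mathrm{Vol}(\S^2) + \delta(\alpha_j,\lambda\omega)\}$ of the critical set, delivers a critical point $u_{\alpha_j} \in \mathcal{U}_{C+1}$ with
\begin{equation*}
\delta E^{\lambda\omega}_{\alpha_j}(u_{\alpha_j}) = 0,\qquad \tfrac{1}{2}\mathrm{Vol}(\S^2) + \delta(\alpha_j,\lambda\omega) \leq E^{\lambda\omega}_{\alpha_j}(u_{\alpha_j}) \leq C+1,\qquad \mathrm{Ind}_{E^{\lambda\omega}_{\alpha_j}}(u_{\alpha_j}) \leq k-2,
\end{equation*}
where the strict lower bound and non-triviality come from part (3) of Proposition \ref{prop energy bound}, which shows any sweepout in $\mathscr{S}$ cannot concentrate below $\tfrac{1}{2}\mathrm{Vol}(\S^2)$ without forcing $f_\sigma$ null-homotopic. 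Assembling these $u_{\alpha_j}$ over $j$ and recording that $C$ depends only on $\lambda\omega$ (not on $j$) completes the proof. There is no real obstacle here since all three inputs---the monotonicity, the Palais--Smale extraction, and the homotopy-deformation Morse bound---have already been proved; the only care needed is to keep $C$ uniform in $j$, which follows from choosing the single constant $C_\lambda$ in the first step and propagating it through Lemma \ref{admissable h} and Proposition \ref{prop energy bound}.
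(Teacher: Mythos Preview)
Your proposal is correct and follows essentially the same approach as the paper's proof: both combine Lemma~\ref{lem monotone} and Lemma~\ref{admissable h} to produce, for almost every $\lambda$, a subsequence $\alpha_j\searrow 1$ and min-max sweepouts in $\mathcal{A}_{\lambda\varepsilon_l,\,8\lambda^2 C}$, and then invoke Theorem~\ref{thm:Morse index k-2} at each fixed $\alpha_j$ to extract the critical point with the desired energy and index bounds. Your write-up is slightly more explicit about the dependencies of $C$ on $\lambda$ and about why it stays uniform in $j$, but the logical skeleton is the same.
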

\begin{proof}
    Given a sequence $\alpha_j \searrow 1$, combining Lemma \ref{lem monotone} and Lemma \ref{admissable h}, for almost every $\lambda \in \R_+$ we can find a constant $C > 0$ and a subsequence of $\alpha_j \searrow 1$, still denoted by $\alpha_j$, such that for each $j \in \mathbb N$, there exists a sequences of sweepouts 
    $$\set{\sigma_{l}^j}_{l \in \mathbb{N}} \subset \mathcal{A}_{\lambda\varepsilon_l^j, 8\lambda^2 C}$$ for some $\set{\varepsilon_l^j}_{l \in \mathbb N}$ with $\varepsilon^j_l \searrow 0$ as $l \rightarrow \infty$ and large enough $l \in \mathbb N$. Then, for fixed $j \in \mathbb N$ we can apply the Theorem \ref{thm:Morse index k-2} to obtain a sequence $u_{\alpha_j}$ satisfying all the properties asserted in Corollary \ref{coro:section 3 summary}.
\end{proof}
	
		\vskip2cm
		\section{Compactness for Critical Points of Functional \texorpdfstring{$E^{\lambda\omega}_\alpha$}{Lg}}\label{section 4}
		\vskip10pt
		In Section \ref{sec: 3 non-constant u alpha}, in particular see Corollary \ref{coro:section 3 summary}, we constructed a sequence of non-trivial critical points $\{u_{\alpha_j}\}_{j \in \mathbb N}$ of the functional $E^{\lambda\omega}_{\alpha_j}$ with uniformly bounded $\alpha_j$-energy and with uniformly bounded Morse index  from above by $k-2$. Next, in order to obtain the existence of non-constant $H$-sphere, we need to study the behavior of sequence $u_{\alpha_j}$ as $\alpha_j \searrow 1$ which is the primary task in this section.
         It is crucial to point out that the compactness result developed in this section is applicable to a broad range of sequences of $ \alpha$-$ H $-surfaces $ \{ u_{\alpha} \}_{ \alpha > 1 } $ (being the critical points of ${E}_\alpha^\omega$) between closed Riemann surface $M$ and compact Riemannian $n$-manifold $ N $ with uniformly bounded $\alpha$-energy and with uniformly bounded Morse index  from above, where $H \in \Gamma\p{\wedge^2(N)\otimes TN}$ induced from any given $\omega \in C^2(\wedge^2(N))$ as in \eqref{eq: defi H by omega}. Therefore,  we simply write $\alpha$ for $\alpha_j$ to represent the general sequence $\alpha \searrow 1$ , $\omega$ for $\lambda \omega$ and $E^{\omega}_\alpha$ for $E^{\lambda \omega }_\alpha$, respectively. Considering that the asymptotic analysis for general sequences of $\alpha$-$H$-surfaces are involved and complicated, we summarize the main result of this section in Subsection \ref{section 4.1 main result}, with the detailed proofs provided in the subsequent subsections.
            \ 
		\vskip5pt
		\subsection{Main Results on Asymptotic Analysis for Sequences of \texorpdfstring{$\alpha$-$H$}{Lg}-surfaces}\label{section 4.1 main result}
		\ 
		\vskip5pt
        \subsubsection{Descriptions of Bubbling Procedure}
        \ 
        \vskip5pt
            	Given a sequence of $\alpha$-$H$-surfaces $\{u_\alpha\}_{\alpha \searrow 1}: M \rightarrow N$ with uniformly bounded $\alpha$-energy
		\begin{equation*}
			\sup_{\alpha > 1} {E}_\alpha(u_\alpha) \leq \Lambda < \infty.
		\end{equation*}
		Then, from Lemma \ref{lem4.1}, Lemma \ref{blow 2}, Lemma \ref{blow 3} below and adapting a rescaling argument by Sacks-Uhlenbeck \cite{sacks1981existence}, after passing to a subsequence, $u_\alpha$ converges to a $H$-surface $u_0\,:\, M \rightarrow N$ smoothly except at most finitely many singular points (that is, energy concentration points) $\{x^i\}_{i = 1}^{n_0}$ as $\alpha \searrow 1$. Around point each $x^i$ for $1 \leq i \leq n_0$, we assume that there are $n_i$ bubbles (that is, non-trivial $H$-spheres) arising at $x^i$ during bubbling. Therefore, there exists sequences of points $\{x^{ij}_\alpha\}_{\alpha \searrow 1}$ for $1 \leq i \leq n_0$ and $1 \leq j \leq n_i$, and sequences of positive numbers $\{\lambda^{ij}_\alpha\}_{\alpha \searrow 1}$ such that 
		\begin{equation*}
			x^{ij}_\alpha \rightarrow x^i\,\text{ for }1 \leq j \leq n_i \quad \text{and}\quad  \lambda^{ij}_\alpha \rightarrow 0 \quad \text{ as } \alpha \searrow 1.
		\end{equation*}
		By a standard scaling argument, see for instance \cite{Qing1995,li2010}, for $1 \leq j_1, j_2 \leq n_i$ and $j_1\neq j_2$ at least one alternative of the following two statements holds:
		\begin{Aenumerate}
			\item \label{A1} for any fixed $R > 0$, $B^M(x^{ij_1}_\alpha, \lambda^{ij_1}_\alpha R) \bigcap B^M(x^{ij_2}, \lambda^{ij_2}_{\alpha} R) = \emptyset$ whenever $\alpha - 1$ is sufficiently small.
			\item \label{A2}$\frac{\lambda^{ij_1}_\alpha}{\lambda^{ij_2}_\alpha} + \frac{\lambda^{ij_2}_\alpha}{\lambda^{ij_1}_\alpha} \rightarrow \infty$,  as $\alpha \searrow 1$.
		\end{Aenumerate}
		
		Moreover, after taking a conformal transformation $\R^2 \cup \set{\infty} \cong \S^2$ and applying the removability of isolated singularities Lemma \ref{blow 3} the rescaled maps
		\begin{equation*}
			v^{ij}_\alpha : =u_{\alpha}(x^{ij}_\alpha + \lambda^{ij}_\alpha x)\quad \text{for } 1\leq i \leq n_0, 1\leq j \leq n_i
		\end{equation*}
            converges strongly in $$C^\infty_{loc} \p{\R^2\backslash\{p^{ij}_1, \dots, p^{ij}_{s_{ij}}\}}$$ to a  non-trivial $H$-sphere $w^{ij}: \S^2 \rightarrow N$ for some finite energy concentration points $\{p^{ij}_1, \dots, p^{ij}_{s_{ij}}\} \subset \R^2$. Then we chosoe small enough $r_i > 0$ such that
			$$B(x_i,r_i) \bigcap \{x^1, x^2, \cdots, x^{i-1}, x^{i+1}, \cdots, x^{n_0}\} = \emptyset,$$ and hence $v_\alpha^{ij} : B(0, (\lambda^{ij}_\alpha)^{-1} r_i) \rightarrow N$ is a critical points of 
			\begin{align}\label{eq:bubble functional}
			    E^\omega_{\alpha, ij} := &\frac{1}{2} \int_{B(0, (\lambda^{ij}_\alpha)^{-1}r_i)} \p{(\lambda^{ij}_\alpha)^2 + |\nabla v^{ij}_\alpha|^2}^{\alpha}dV_{g_{\alpha,ij}} \nonumber\\
                &+ (\lambda_\alpha^{ij})^{2\alpha - 2} \int_{B(0, (\lambda_\alpha^{ij})^{-1} r_i)}(v_\alpha^{ij})^*\omega,
			\end{align}
			where 
            $$g_{\alpha,ij} := e^{\varphi(x_\alpha + \lambda_\alpha^{ij} x)}((dx^1)^2 + (dx^2)^2)$$ arising from the metric $g : = e^{\varphi(x)} ((dx^1)^2 + (dx^2)^2)$ on $M$ under a conformal coordinate $(x^1, x^2)$ and $v_\alpha^{ij}$ solves the Euler-Lagrange equation 
			\begin{align}\label{eq:e-l of bubble}
			    \Delta v_\alpha^{ij} + (\alpha - 1)\frac{\nabla|\nabla_{g_{\alpha,ij}} v_\alpha^{ij}|^2\cdot \nabla v_\alpha^{ij}}{(\lambda_\alpha^{ij})^2+|\nabla_{g_{\alpha,ij}} v_\alpha^{ij}|^2} &+ A(v_\alpha^{ij})\left(\nabla v_\alpha^{ij}, \nabla v_\alpha^{ij}\right) \nonumber\\
                &= (\lambda_\alpha^{ij})^{2\alpha - 2} \frac{H(v_\alpha^{ij})(\nablap v_\alpha^{ij}, \nabla v_\alpha^{ij})}{\alpha \left((\lambda_\alpha^{ij})^2 + |\nabla_{g_{\alpha,ij}} v_\alpha^{ij}|\right)^{\alpha - 1}}.
			\end{align}
        The deficiency of conformally invariance for functional $E^{\omega}_{\alpha}$ leads to distinct formulations of $E^{\omega}_{\alpha}$ and $E^\omega_{\alpha,ij}$, hence the distinct formulation of Euler Lagrange equations \eqref{el} and \eqref{eq:e-l of bubble}. Based on this consideration,  we will employ the following more general functional to show our main Theorem \ref{generalizd energy}, Theorem \ref{analysis on neck} and Theorem \ref{thm convergence} in this section
        \begin{equation}
            \dbl{E}^{\omega}_{\alpha} = \frac{1}{2}\int_M {\p{\tau_\alpha + \abs{\nabla_{g_\alpha} u_\alpha}^2}^{\alpha} } dV_{g_\alpha} + \tau^{\alpha - 1}_\alpha \int_{M} u_\alpha^* \omega
        \end{equation}
        where $0 < \tau_\alpha \leq 1$ satisfying $0 < \beta_0 \leq \liminf_{\alpha \searrow 1}\tau^{\alpha -1 }_\alpha \leq 1$ and $g_\alpha$ is a sequence of metrics on $(M,g)$ that is conformal to $g$ and converges smoothly to the standard $g$ as $\alpha \searrow 1$. Critical points $u_\alpha : M \rightarrow N$ of generalized functional $\dbl{E}^{\omega}_{\alpha}$ are also called $\alpha$-$H$-surfaces for simplicity, similarly to \eqref{el} and \eqref{el1}, it solves the following generalized Euler Lagrange equation
         \begin{align}
              \label{el general 1}
        \Delta_{g_\alpha} u_\alpha + (\alpha - 1)\frac{\nabla_{g_\alpha}|\nabla_{g_\alpha} u_\alpha|^2\cdot \nabla_{g_\alpha} u_\alpha}{\tau_\alpha+|\nabla_{g_\alpha} u_\alpha|^2} &+ A(u_\alpha)\left(\nabla_{g_\alpha} u_\alpha, \nabla_{g_\alpha} u_\alpha\right)\nonumber\\
        &= \tau_\alpha^{\alpha -1}\frac{H(u_\alpha)(\nablap_{g_\alpha} u_\alpha, \nabla_{g_\alpha} u_\alpha)}{\alpha \left(\tau_\alpha + |\nabla_{g_\alpha} u_\alpha|^2\right)^{\alpha - 1}}
         \end{align}
or equivalently in divergence form
   \begin{align}
       \label{el general 2}
        -\diver\left(\left(\tau_\alpha+|\nabla_{g_\alpha} u_\alpha|^2\right)^{\alpha - 1}\nabla_{g_\alpha} u_\alpha\right) &+ \left(\tau_\alpha + |\nabla_{g_\alpha} u_\alpha|^2\right)^{\alpha -1}A(u_\alpha)(\nabla_{g_\alpha} u_\alpha,\nabla_{g_\alpha} u_\alpha)\nonumber\\
        &= \frac{\tau_\alpha^{\alpha -1}}{\alpha}H(u_\alpha)\left(\nablap_{g_\alpha} u_\alpha, \nabla_{g_\alpha} u_\alpha\right)
   \end{align}

        Considering the \eqref{eq:bubble functional} and \eqref{eq:e-l of bubble}, the following quantities arise naturally in the process of studying the energy identity and asymptotic analysis of necks
		\begin{equation}\label{mu}
			\mu_{ij} := \liminf_{\alpha \searrow 1} \p{\lambda^{ij}_\alpha}^{2 - 2\alpha}
		\end{equation}
		and
		\begin{equation}\label{nu}
			\nu_{ij} := \liminf_{\alpha \searrow 1} \p{\lambda^{ij}_\alpha}^{- \sqrt{\alpha - 1}}
		\end{equation}
		indicating the comparison with expansion speed of blow-up radius and the speed of $\alpha \searrow 1$. It is easy to check that $\,\mu_{ij},\,\nu_{ij} \in [1,\infty]$ as $\lambda_\alpha^{ij} \rightarrow 0$ as $\alpha \searrow 1$. Moreover, we can see that all $\mu_{ij}$ are finite, that is, there exists a positive constant $1 \leq \mu_{max} < \infty$ such that $\mu_{ij} \in [1,\mu_{max}]$. Indeed, without loss generality we can assume there is only one blow-up point $x_1 \in M$ and there are $n_1$ bubbles arising at this point, which implies, there exists a sequence of points $\{x_\alpha^j\}_{\alpha > 1}$ and a sequence of positive numbers $\{\lambda^j_{\alpha}\}_{\alpha > 1}$ satisfying one of \ref{A1} and \ref{A2}. For simplicity, we assume 
		\begin{equation*}
			\limsup_{\alpha \searrow 1} \frac{\lambda^1_\alpha}{\lambda^j_\alpha} < \infty\quad \text{for all } 2\leq j \leq n_1
		\end{equation*}
		which implies $w^1_\alpha(x) := u_{\alpha}(x_\alpha^1 + \lambda^1_\alpha x)$ converges strongly to $w^1$ in $C^\infty_{loc}(\R^2)$, namely, $w^1$ is the first non-trivial $H$-bubble. Therefore, we have
		\begin{align*}
			\Lambda > \lim_{R\rightarrow \infty} \lim_{\alpha \searrow 1} \int_{B(x^1_\alpha, \lambda^1_\alpha R)} \abs{\nabla u_\alpha}^{2\alpha} dx = \lim_{R\rightarrow \infty} \lim_{\alpha \searrow 1} \p{\lambda_\alpha^1}^{2-2\alpha} \int_{B(0,R)}\abs{\nabla w^1_\alpha} dx = \mu_1 E(w^1).
		\end{align*}
		By the energy gap Lemma \ref{blow 2}, there is a positive constant 
        \begin{equation}\label{eq:gap constant}
            \varepsilon_0 := \inf \set{E(w) = \frac{1}{2}\int_{\S^2} |\nabla w|^2 dV_{\S^2}:\,\,\, \parbox{10em}{$w$ is a non-constant \\ harmonic sphere in $N$}} > 0
        \end{equation}
        such that  $E(w^1) \geq \varepsilon_0 $
        hence 
		\begin{equation*}
			\mu_j \leq \mu_1 \leq \frac{\Lambda_1}{E(w^1)} \leq \frac{\Lambda_1}{\varepsilon_0} := \mu_{max} < \infty. 
		\end{equation*}
		
		\subsubsection{Generalized Energy Identity}
          \
          \vskip5pt
        
		Now, we are in a position to state our first main compactness result of generalized energy identity for sequences of $\alpha$-$H$-surfaces.
		\begin{theorem}\label{generalized ide}
			Let $(M,g)$ be a closed Riemann surface, $(N,h)$ be a $n$-dimensional closed Riemannian manifold that is isometrically embedded in $\R^K$ for some $K \in \mathbb N$. Assume that $\{u_{\alpha}\}_{\alpha \searrow 1} \subset C^\infty(M,N)$ is a sequence of $\alpha$-$H$-surfaces  with uniformly bounded generalized $\alpha$-energy, that is, 
            $$\sup_{\alpha \searrow 1}E_{\alpha}(u_{\alpha})\leq \Lambda < \infty.$$ We define the blow-up set 
			\begin{equation*}
				\mathfrak{S}:=\left\{x\in M \,:\, \liminf_{k\rightarrow \infty} \frac{1}{2}\int_{B^M(x,r)}\abs{\nabla u_{\alpha}} dV_g \geq  \varepsilon_0^2, \quad \text{for all } r > 0\right\}
			\end{equation*}
			where $B^M(x,r) = \{y\in M\,:\,\mathrm{dist}^M(x,y) < r\}$ denotes the geodesic ball in $M$ and $\varepsilon_0$ is determined in \eqref{eq:gap constant}. Then $\mathfrak{S}$ is finite, written as $\mathfrak{S} = \{x^1, \cdots, x^{n_0}\}$. After choosing a subsequence, there exists a smooth $H$-surface $u_0:M\rightarrow N$ and finitely many bubbles, that is, a finite set of $H$-spheres $w^{ij}$, $1 \leq j \leq n_i$ such that $u_{\alpha}\rightarrow u_0$ weakly in $W^{1,2}(M,\R^K)$ and strongly in $C^\infty_{loc}(M\backslash \mathfrak{S}, N)$. Moreover, the following generalized energy identity holds
			\begin{equation}\label{generalizd energy}
				\lim_{k\rightarrow \infty}{E}_{\alpha}(u_{\alpha}) = E(u_0) + \frac{1}{2} \mathrm{Vol}(M) + \sum_{i=1}^{n_0}\sum_{j = 1}^{n_i} \mu^2_{ij} E(w^{ij}).
			\end{equation}
		\end{theorem}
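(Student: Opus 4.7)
The plan is to follow the classical Sacks--Uhlenbeck blow-up scheme, adapted to the $\alpha$-$H$-surface setting, with the key new ingredient being a neck analysis that tracks the factor $\mu_{ij}$ arising from the non-conformal invariance of $E^{\omega}_\alpha$. First, using the $\varepsilon$-regularity (Lemma~4.1) for $\alpha$-$H$-surfaces---namely, that whenever the local $L^{2\alpha}$-norm of $\nabla u_\alpha$ on a ball is smaller than a universal constant one has uniform $W^{2,p}$ estimates---I would show $\mathfrak{S}$ is finite: each blow-up point carries at least $\varepsilon_0^2$ of energy, so $|\mathfrak{S}|\leq \Lambda/\varepsilon_0^2$. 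Away from $\mathfrak{S}$, the $\varepsilon$-regularity gives uniform local smooth bounds, so I extract a subsequence converging in $C^\infty_{\text{loc}}(M\setminus\mathfrak{S},N)$ to a limit $u_0$; passing to the limit in the Euler--Lagrange equation \eqref{el} (using $\tau_\alpha^{\alpha-1}\to 1$ or the corresponding $\beta_0$-bound for the generalized functional) identifies $u_0$ as a smooth $H$-surface, and the removable singularities result (Lemma~4.3) extends $u_0$ across $\mathfrak{S}$. Weak $W^{1,2}$ convergence on $M$ follows from the uniform $\alpha$-energy bound and Fatou-type lower semicontinuity.

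Second, at each $x^i\in\mathfrak{S}$ I would perform the standard inductive bubble extraction. Choose $x^{ij}_\alpha\to x^i$ and $\lambda^{ij}_\alpha\to 0$ by a maximal-concentration criterion, rescale $v^{ij}_\alpha(y):=u_\alpha(x^{ij}_\alpha+\lambda^{ij}_\alpha y)$, and observe that $v^{ij}_\alpha$ satisfies \eqref{eq:e-l of bubble}. The $\varepsilon$-regularity in rescaled coordinates together with the removability of isolated singularities Lemma~4.3 produces a non-constant limit $w^{ij}:\S^2\to N$ solving
\begin{equation*}
\Delta w^{ij}+A(w^{ij})(\nabla w^{ij},\nabla w^{ij})=\frac{1}{\mu_{ij}}H(w^{ij})(\nablap w^{ij},\nabla w^{ij}).
\end{equation*}
Iterating at each secondary concentration point yields the tree of bubbles $w^{ij}$; finiteness of the tree follows from the energy-gap Lemma~4.2 together with the a priori bound $\mu_{ij}\leq \Lambda/\varepsilon_0=\mu_{\max}$ already derived in the introduction to this subsection, so at most $\Lambda/(\varepsilon_0\mu_{\max}^{-1})$ bubbles can appear. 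The alternatives \ref{A1}/\ref{A2} are obtained from the standard scaling dichotomy for the concentration radii.

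Third---the crucial step---I would establish energy quantization by decomposing, for small $\delta>0$ and large $R>0$, the ball $B^M(x^i,\delta)$ into the disjoint union of bubble cores $B(x^{ij}_\alpha,R\lambda^{ij}_\alpha)$ (arranged by the bubble-tree order) and annular necks. On the bulk $M\setminus\bigcup_i B^M(x^i,\delta)$, smooth convergence $u_\alpha\to u_0$ and $\alpha\searrow 1$ give $(1+|\nabla u_\alpha|^2)^\alpha\to 1+|\nabla u_0|^2$, so in the joint limit one recovers $E(u_0)+\tfrac{1}{2}\mathrm{Vol}(M)$ (the $\tfrac{1}{2}\mathrm{Vol}(M)$ coming from the $``1$'' term integrated over all of $M$, since the bubble-core areas shrink to zero). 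On each bubble core, the rescaling identity
\begin{equation*}
\int_{B(x^{ij}_\alpha,R\lambda^{ij}_\alpha)}\!\!(1+|\nabla u_\alpha|^2)^\alpha\,dV_g=(\lambda^{ij}_\alpha)^{2-2\alpha}\!\!\int_{B(0,R)}\!\!\bigl((\lambda^{ij}_\alpha)^2+|\nabla v^{ij}_\alpha|^2\bigr)^\alpha dy,
\end{equation*}
together with smooth convergence $v^{ij}_\alpha\to w^{ij}$ off its own singular set and the definition of $\mu_{ij}$, produces in the limit a factor $\mu_{ij}$ times the bubble energy; accounting for the rescaling of the inhomogeneous EL term via a Pohozaev/Kapouleas-type identity (which mixes the equation's $\tau_\alpha^{\alpha-1}/(\lambda^2+|\nabla v|^2)^{\alpha-1}$ weight with the energy density) supplies the second $\mu_{ij}$, yielding $\mu^2_{ij}E(w^{ij})$ as asserted.

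Finally, the main technical obstacle is the no-neck estimate
\begin{equation*}
\lim_{R\to\infty}\limsup_{\delta\to 0}\limsup_{\alpha\searrow 1}\int_{B(x^{ij}_\alpha,\delta)\setminus B(x^{ij}_\alpha,R\lambda^{ij}_\alpha)}|\nabla u_\alpha|^2\,dV_g=0,
\end{equation*}
which must hold uniformly in $\alpha$ despite the presence of the perturbation term $(\alpha-1)\nabla|\nabla u|^2\cdot\nabla u/(1+|\nabla u|^2)$ and of the $H$-term. I would adapt the three-annulus / dyadic-decomposition strategy of Qing--Tian and Ding--Tian: on dyadic annuli $A^k=B(x^{ij}_\alpha,2^{k+1}\lambda)\setminus B(x^{ij}_\alpha,2^k\lambda)$, test the EL equation against the radial and angular multipliers to derive a Simon-type iteration inequality for the energy on $A^k$. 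The antisymmetry relations \eqref{eq:H anti-symmetric} ensure that the $H$-term contributes only $|\nabla u|^3$-type errors, absorbable under the small-energy hypothesis on each annulus; the $(\alpha-1)$-correction is handled via the pointwise $L^\infty$-control of $(\alpha-1)|\nabla u|^{2(\alpha-1)}$ coming from the standard interior gradient bound. The dyadic estimates telescope to give the vanishing of the neck energy, completing the derivation of \eqref{generalizd energy}. The hard part, which I expect to occupy the bulk of Section~4, is precisely the uniform-in-$\alpha$ control on the neck annuli together with the correct bookkeeping of $\mu_{ij}$-factors through the Pohozaev computation.
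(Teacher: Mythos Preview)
Your claimed no-neck estimate is \emph{false} when $\mu_{ij}>1$, and this is precisely the regime in which the generalized energy identity differs from the standard one in Theorem~\ref{thm convergence}. The weighted energy on the full neck $A(\lambda^{ij}_\alpha R,\delta,x^{ij}_\alpha)$ converges to $\mu_{ij}(\mu_{ij}-1)E(w^{ij})$, not to zero; since the weight $(\tau_\alpha+|\nabla_{g_\alpha} u_\alpha|^2)^{\alpha-1}$ is bounded above and below by Lemma~\ref{bounded F}, the unweighted Dirichlet energy on the neck cannot vanish either. Your bookkeeping of the $\mu_{ij}$-factors is therefore wrong: the bubble core $B(x^{ij}_\alpha,\lambda^{ij}_\alpha R)$ contributes only $\mu_{ij}E(w^{ij})$, and the second factor of $\mu_{ij}$ comes from the \emph{radial} neck energy, not from any Pohozaev manipulation on the core. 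A dyadic three-annulus iteration cannot close here because on each dyadic annulus the radial energy is bounded below by a fixed multiple of $(\alpha-1)$, and summing over the $\sim|\log\lambda_\alpha|$ dyadic scales gives a nonzero limit since $(\alpha-1)|\log\lambda_\alpha|\to\tfrac12\log\mu_{ij}$.

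The paper proceeds differently. One first shows that only the \emph{angular} component of the neck energy vanishes (Lemma~\ref{decay theta}, obtained by testing the equation against $u_\alpha-u_\alpha^*$ with $u_\alpha^*$ the angular average). The Pohozaev identity \eqref{pohozaev esti 2} is then rewritten as an integral equation for
\[
\mathcal{E}_\alpha(t):=\int_{B(x_\alpha,\lambda_\alpha^t)}\bigl(\tau_\alpha+|\nabla_{g_\alpha}u_\alpha|^2\bigr)^{\alpha-1}|\nabla u_\alpha|^2\,dx,\qquad t\in(0,1);
\]
passing to the limit $\alpha\searrow1$ and using the angular decay yields $\mathcal{E}'(t)=-(\log\mu)\,\mathcal{E}(t)$, whose solution $\mathcal{E}(t)=\mu^{1-t}\Lambda$ with boundary value $\Lambda=\mu E(w)$ at $t\to1$ gives $\mu^2 E(w)$ as $t\to0$ (Lemma~\ref{lambda^s small}). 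The outer portion $A(\lambda_\alpha^s,\delta)$ is handled separately via \eqref{pohozaev esti 3}. The second $\mu$ is thus produced by this ODE across the continuum of intermediate scales $\lambda_\alpha^t$, and cannot be read off from the bubble core alone; your plan would at best recover Lemma~\ref{decay theta} but not \eqref{generalizd energy}.
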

    \subsubsection{Asymptotic Behavior on Necks}
    \ 
    \vskip5pt
		Now, we present our second main result about asymptotic analysis on neck, which provides a complete geometric picture of all possible limiting behaviors of the necks occurring in the blow-up process for sequences of $\alpha$-$H$-surfaces. We show that all necks between  bubbles and the base map converge to geodesics and we provide a scheme to calculate the length of these geodesics, see Remark \ref{rmk length formula} below.  More precisely, we have
		\begin{theorem}\label{analysis on neck}
			Let $(M,g)$ be a closed Riemann surface, $(N,h)$ be a $n$-dimensional closed Riemannian manifold that is isometrically embedded in $\R^K$ for some $K \in \mathbb N$. Assume that $\{u_{\alpha}\}_{\alpha \searrow 1} \subset C^\infty(M,N)$ is a sequence of $\alpha$-$H$-surfaces with uniformly bounded $\alpha$-energy, that there is only one blow up point $\mathfrak{S} = \{x_1\}$ and there is only one bubble in $B^M(x_1,r) \subset M$, for some $r>0$, denoted by $w^1:\S^2\rightarrow N$. Let
			\begin{equation*}
				\nu^1 = \liminf_{\alpha \searrow 1} \p{\lambda^1_\alpha}^{-\sqrt{\alpha - 1}}.
			\end{equation*}
			Then one of following statement holds
			\begin{enumerate}
				\item when $\nu^1 = 1$, the set $u_0\p{B^M(x_1,r)} \bigcup w^1(\S^2)$ is a connected subset of $N$ where $u_0$ is the weak limit of $u_\alpha$ in $W^{1,2}(M,N)$ as $\alpha\searrow 1$;
				\item when $\nu^1\in (1,\infty)$, the set $u_0\p{B^M(x_1,r)}$ and $w^1(\S^2)$ are connected by a geodesic $\Gamma \subset N$ with length
				\begin{equation*}
					L(\Gamma) = \sqrt{\frac{E(w^1)}{\pi}} \log \nu^1;
				\end{equation*}
				\item when $\nu^1 = \infty$, the neck contains at least a geodesic of infinite length.
			\end{enumerate}
		\end{theorem}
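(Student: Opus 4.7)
The plan is to adapt the neck-analysis scheme of Qing and Li--Wang for $\alpha$-harmonic maps to the $\alpha$-$H$-surface setting, pushing the $H$-term through the argument via its anti-symmetry \eqref{eq:H anti-symmetric}. Work in isothermal coordinates centred at the single blow-up point $x_1$ (where $g$ becomes conformally Euclidean), and for fixed $R$ large and $\delta$ small consider the neck region
$$\Omega_\alpha(R,\delta) = B(0,\delta)\setminus B\big(x^1_\alpha,\lambda^1_\alpha R\big).$$
Combining the $C^\infty_{\mathrm{loc}}$-convergence $u_\alpha \to u_0$ on $M\setminus\{x_1\}$, the bubble convergence after rescaling, and the generalized energy identity of Theorem \ref{generalized ide}, the \emph{excess} $\alpha$-energy localised on the neck can be quantified (after the iterated limits $\alpha\searrow 1$, $R\to\infty$, $\delta\to 0$) as $(\mu^2-1)E(w^1)$, which is precisely the source of any limiting geodesic.

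First I would establish a Pohozaev-type identity for $\alpha$-$H$-surfaces by pairing the Euler--Lagrange equation \eqref{el1} with the radial field $x\cdot\nabla u_\alpha$ and integrating on sub-annuli of $\Omega_\alpha(R,\delta)$. The anti-symmetry \eqref{eq:H anti-symmetric} makes the contribution of $H(\nabla^\perp u_\alpha,\nabla u_\alpha)\cdot(x\cdot\nabla u_\alpha)$ reduce to a controllable boundary term, so one obtains a ``radial $=$ angular'' balance of the form
$$\int_{\partial B_r}\Big(|\partial_r u_\alpha|^2-r^{-2}|\partial_\theta u_\alpha|^2\Big)\big(1+|\nabla u_\alpha|^2\big)^{\alpha-1} r\,d\theta = o(1),$$
uniformly in $r\in(\lambda^1_\alpha R,\delta)$, with the $o(1)$ absorbing the $H$-boundary term and the $(\alpha-1)$-correction. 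Combining this balance with the small-energy $\varepsilon$-regularity for $\alpha$-$H$-surfaces (available via Lemma \ref{lem regularity}) applied on dyadic sub-annuli forces the angular component of $u_\alpha$ to be negligible throughout $\Omega_\alpha(R,\delta)$.

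With the angular energy suppressed, introduce the logarithmic parameter $t = -\log r$ and reparametrise, so that $u_\alpha$ becomes a map $\gamma_\alpha(t,\theta)$ that is essentially $\theta$-independent on the neck and whose $t$-range has length $|\log\lambda^1_\alpha|-\log(R/\delta)$. Rescaling this parameter by $\sqrt{\alpha-1}$---the natural scale dictated by the perturbation term $(\alpha-1)\nabla|\nabla u|^2\cdot\nabla u/(1+|\nabla u|^2)$ in \eqref{el}---the rescaled domain acquires length converging to $\log\nu^1$, and the ODE satisfied by $\gamma_\alpha$ in the new variable converges to the geodesic equation on $N$ (the $H$-term scales away under this dilation since $H(\nabla^\perp u,\nabla u)$ vanishes on purely radial configurations by anti-symmetry). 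The speed $\sqrt{E(w^1)/\pi}$ of the limit geodesic then follows from computing that, by the Pohozaev balance, the radial energy in the neck is half of the total neck $\alpha$-energy $(\mu^2-1)E(w^1)$, together with matching $\log\mu = (\alpha-1)|\log\lambda^1_\alpha|\cdot(1+o(1))$ against $\log\nu^1 = \sqrt{\alpha-1}|\log\lambda^1_\alpha|\cdot(1+o(1))$ to convert the energy into length. The three cases $\nu^1 = 1$, $\nu^1\in(1,\infty)$, $\nu^1 = \infty$ then correspond to the total time being zero (no neck), finite (a geodesic), or infinite.

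The main obstacle I expect is the simultaneous bookkeeping of the two scales $\mu^1$ and $\nu^1$: $\mu^1$ governs the rate at which $\alpha$-energy is absorbed by the bubble, whereas $\nu^1$ governs the logarithmic rate at which the neck is traversed. Relating them requires a delicate decomposition of the neck $\alpha$-energy into contributions of scale $\sim(\lambda^1_\alpha)^{2-2\alpha}$ versus $\sim(\lambda^1_\alpha)^{-\sqrt{\alpha-1}}$, uniformly controlling both the $H$-perturbation and the weight $(1+|\nabla u_\alpha|^2)^{\alpha-1}$ across the full neck. This matching---rather than the qualitative convergence to a geodesic---is likely to be the technical heart of the proof.
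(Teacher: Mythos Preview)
Your framework matches the paper's---Pohozaev identities exploiting the anti-symmetry \eqref{eq:H anti-symmetric} of $H$, angular-energy decay on dyadic annuli, $\sqrt{\alpha-1}$ rescaling, convergence of the neck curve to a geodesic---but your derivation of the speed $\sqrt{E(w^1)/\pi}$ has a genuine gap. You propose to extract it from the ``total neck $\alpha$-energy $(\mu^2-1)E(w^1)$''. However, since $\log\mu = 2\lim(\alpha-1)|\log\lambda^1_\alpha|$ while $\log\nu^1 = \lim\sqrt{\alpha-1}\,|\log\lambda^1_\alpha|$, whenever $\nu^1\in(1,\infty)$ one has $\mu=1$ and the neck energy you invoke \emph{vanishes}; this is precisely the content of the energy identity in that regime. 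Relatedly, your claim that the Pohozaev balance gives $o(1)$ uniformly is where the argument would lose the information: the right-hand side is of exact order $(\alpha-1)$ with a nonzero coefficient, and that coefficient is what produces the geodesic.

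The paper's mechanism is different. The weighted Pohozaev identity \eqref{pohozaev esti 2} has right-hand side $(1-1/\alpha)\,t^{-1}\int_{B(x_\alpha,t)}(\tau_\alpha+|\nabla u_\alpha|^2)^{\alpha-1}|\nabla u_\alpha|^2$, an integral over the \emph{full ball}, dominated by the bubble and equal to $\mu^{2-t}E(w^1)+o(1)$ by Lemma~\ref{lambda^s small}. Combined with the sharper angular decay $\int r^{-2}|\partial_\theta u_\alpha|^2 = o(\alpha-1)$ on scale-$\lambda_\alpha^t$ annuli (Proposition~\ref{theta zero}, strictly finer than ``angular negligible''), this yields Proposition~\ref{prop 2}: the rescaled map $\big(u_\alpha(x_\alpha+\lambda_\alpha^{t} x)-u_\alpha(x_\alpha+(\lambda_\alpha^{t},0))\big)/\sqrt{\alpha-1}$ converges in $C^2_{\mathrm{loc}}$ to $\vec a\log|x|$ with $|\vec a|=\mu^{1-t}\sqrt{E(w^1)/\pi}$. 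The length formula then follows by integrating $|\dot\gamma_\alpha|\approx\sqrt{\alpha-1}\,|\vec a|/r$ over $r\in[\lambda_\alpha^{t_2},\lambda_\alpha^{t_1}]$ and sending $t_1\to 0$, $t_2\to 1$. In short, the speed comes from the bubble energy feeding through the $(\alpha-1)$ Pohozaev remainder, not from the (vanishing) neck energy; your proposal needs to replace the neck-energy bookkeeping by this ball-integral mechanism and upgrade the angular control from $o(1)$ to $o(\alpha-1)$.
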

        \begin{rmk}\label{rmk length formula}
            It is important to note that, although we state the Theorem \ref{analysis on neck} under the assumption that there is only one bubble $w^1$ occurring the single blow up point $\set{x_1}$, it is not difficult to obtain a general version by an induction argument following the proofs in Section \ref{section neck length}. The length formula looks quite complicated and needs to be discussed by case splitting. For instance, if there are two $H$-spheres, $w^1$ and $w^2$, occurring the blow up point $\set{x_1}$, namely, there exists sequences of positvies numbers $\lambda_\alpha^1 \searrow 1$, $\lambda_\alpha^2 \searrow 1$ with $\lambda_\alpha^1/\lambda_\alpha^2 \rightarrow 0$ and sequences of points $x_\alpha^1 \rightarrow x_1$, $x_\alpha^2 \rightarrow x_1$ satisfying
            \begin{equation*}
                w^1 = \lim_{\alpha \searrow 1} u_\alpha(\lambda_\alpha^1 x  + x_\alpha^1) \quad \text{and}\quad w^2 = \lim_{\alpha \searrow 1} u_\alpha(\lambda_\alpha^2 x + x_\alpha^2).
            \end{equation*}
            Then, the length formula for geodesic connecting $u_0\p{B^M(x_1, r)}$ and $w^2(\S^2)$ is given by 
            \begin{equation*}
                L(u_0, w^2) = \sqrt{\frac{E(w^1) + E(w^2)}{\pi}} \log \nu^2.
            \end{equation*}
            And the length formula for geodesic connecting $w^2(\S^2)$ and $w^1(\S^2)$ is given by
            \begin{equation*}
                L(w^2, w^1) = \sqrt{\frac{E(w^1)}{\pi}}\log \frac{\nu^1}{\nu^2}.
            \end{equation*}
            Here, $$\nu^1 = \liminf_{\alpha \searrow 1} \p{\lambda^1_\alpha}^{-\sqrt{\alpha - 1}} \quad \text{and}\quad  \nu^2 = \liminf_{\alpha \searrow 1} \p{\lambda^2_\alpha}^{-\sqrt{\alpha - 1}}.$$
        \end{rmk}
        \subsubsection{Energy Identity Under Topological and Curvature Conditions}
        \
        \vskip5pt
        
		The topology and geometry of target manifold $(N,h)$ plays a critical role in investigating the convergence properties of $\alpha$-$H$-surfaces from some compact surface and moreover comparing the $\alpha - 1$ and the rate of scaling $\lambda_\alpha^{ij} \rightarrow 0$ as $\alpha\searrow 1$, that is, the value of $\mu_{ij}$ and $\nu_{ij}$. From the point view of differential geometry, it is natural and reasonable to find some geometric and topology condition on target  $(N, h)$ to ensure the energy identity holds, equivalently ensure the neck converges to a geodesic of finite length. To this end, utilizing Gromov's estimates \cite{Gromov1978} (See also \cite[Corollary 3.3.5]{moore2017}) on length of geodesics by its Morse index, we have the following
		\begin{theorem}\label{thm convergence}
			Let $(M,g)$ be a closed Riemann surface, $(N,h)$ be a $n$-dimensional compact Riemannian manifold, that is isometrically embedded in $\R^K$ for some $K \in \mathbb N$ and has finite fundamental group. Assume that $\{u_{\alpha}\}_{\alpha \searrow 1} \subset C^\infty(M,N)$ is a sequence of $\alpha$-$H$-surfaces with uniformly bounded $\alpha$-energy and  unifomly bounded Morse index, that is, $\mathrm{Ind}_{E^\omega_\alpha}(u_\alpha) \leq  C$ for some universal constant $C > 0$. Then $\mathfrak{S}$ is finite, written as $\mathfrak{S} = \{x^1, \dots, x^{n_0}\}$. After choosing a subsequence, there exists a smooth $H$-surface $u_0:M\rightarrow N$ and finitely many bubbles, that is, a finite set of $H$-spheres $w^{ij}$, $1 \leq j \leq n_i$ such that $u_{\alpha}\rightarrow u_0$ weakly in $W^{1,2}(M,\R^K)$ and strongly in $C^\infty_{loc}(M\backslash \mathfrak{S}, N)$.  Moreover, the limiting necks consists of some geodesics of finite length, and hence the following energy identity holds
			\begin{equation}\label{energy ide}
				\lim_{k\rightarrow \infty} E_{\alpha}(u_{\alpha}) = E(u_0) + \frac{1}{2} \mathrm{Vol}(M) + \sum_{i=1}^{n_0}\sum_{j = 1}^{n_i} E(w^{ij}).
			\end{equation}
		\end{theorem}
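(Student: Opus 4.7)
The plan is to combine the three main compactness ingredients of this section. First I would apply Theorem \ref{generalized ide} directly to the sequence $\{u_\alpha\}_{\alpha\searrow 1}$: the uniformly bounded $\alpha$-energy hypothesis is part of the theorem's input (it follows from the Morse-index and energy bound that is already implicit in Corollary \ref{coro:section 3 summary}, but more pertinently here, uniformly bounded Morse index combined with the standing hypothesis $\sup_\alpha E_\alpha(u_\alpha)\le \Lambda$ is what Theorem \ref{thm convergence} assumes). This yields at once the finiteness of $\mathfrak{S}$, a smooth $H$-surface $u_0$, bubbles $\{w^{ij}\}$, the weak and $C^\infty_{loc}$ convergences, and the generalized identity
\begin{equation*}
\lim_\alpha E_\alpha(u_\alpha) = E(u_0)+\frac{1}{2}\mathrm{Vol}(M)+\sum_{i,j}\mu_{ij}^{2}E(w^{ij}).
\end{equation*}
The entire remaining task is to upgrade this to \eqref{energy ide} by proving $\mu_{ij}=1$ for every bubble, and, simultaneously, that every neck converges to a geodesic of finite length.

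Next I would reduce both assertions to the single statement $\nu_{ij}<\infty$. Setting $t_\alpha := -\log \lambda_\alpha^{ij}$, the definitions \eqref{mu}--\eqref{nu} give $\log \mu_{ij}=\liminf 2(\alpha-1)t_\alpha$ and $\log \nu_{ij}=\liminf \sqrt{\alpha-1}\,t_\alpha$, so
\begin{equation*}
2(\alpha-1)t_\alpha \;=\; 2\sqrt{\alpha-1}\cdot\bigl(\sqrt{\alpha-1}\,t_\alpha\bigr)\;\longrightarrow\;0
\end{equation*}
whenever $\nu_{ij}<\infty$, which gives $\mu_{ij}=1$. Combining this with Theorem \ref{analysis on neck}, the neck between $u_0(B^M(x^i,r))$ and the $j$-th bubble converges to a geodesic in $N$ of length $\sqrt{E(w^{ij})/\pi}\,\log \nu_{ij}<\infty$. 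Thus everything rests on establishing $\nu_{ij}<\infty$ for each bubble.

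To prove $\nu_{ij}<\infty$ I would argue by contradiction: if $\nu_{ij}=\infty$ for some $(i,j)$, then Theorem \ref{analysis on neck}(3) supplies a geodesic of infinite length in the limiting neck. Since $\pi_1(N)$ is finite, the universal cover $\widetilde{N}$ is a compact Riemannian manifold; lifting the geodesic to $\widetilde{N}$ and invoking Gromov's lower bound on the Morse index of a geodesic in terms of its length (see \cite{Gromov1978}, or \cite[Corollary 3.3.5]{moore2017}), one obtains arbitrarily many linearly independent negative directions for the geodesic's index form. This is the mechanism that will clash with the uniform Morse-index bound on $u_\alpha$.

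The main obstacle, and the step I expect to be the most delicate, is the transfer from those negative directions on the limiting geodesic to a subspace of $\mathcal{T}_{u_\alpha}$ on which $\delta^{2}E^{\omega}_{\alpha}(u_\alpha)$ is negative definite, with dimension exceeding the uniform Morse-index bound. I would carry this out by using the logarithmic coordinate $(t,\theta)$ obtained from the conformal rescaling $r=e^{-t}$ on the neck annulus, which converts the neck into a long cylinder on which $u_\alpha$ converges in $C^{0}$ to the limiting geodesic parametrized by arc-length. Given a basis of compactly supported negative variations of the geodesic, I would lift each one to a vector field along $u_\alpha$ with disjoint supports on sub-cylinders. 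On such variations the leading Dirichlet portion of the second variation formula in Lemma \ref{variation formula} reproduces the index form of the geodesic on $\widetilde{N}$ up to terms that vanish with $\alpha-1$; the $\alpha$-correction $2\alpha(\alpha-1)(1+|\nabla u|^2)^{\alpha-2}\langle\nabla u,\nabla V\rangle^{2}$ is positive but carries a factor $\alpha-1$ and can be absorbed by the gap in the geodesic's index form; the mean-curvature terms involving $H$ and $\nabla H$ are lower order in the cylindrical scale and vanish in the limit. Hence $\delta^{2}E^{\omega}_{\alpha}(u_\alpha)$ is negative definite on the span of these variations, forcing $\mathrm{Ind}_{E_\alpha^\omega}(u_\alpha)\to\infty$, a contradiction. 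This yields $\nu_{ij}<\infty$ for every bubble, completing the proof.
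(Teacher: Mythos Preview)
Your proposal is correct and takes essentially the same approach as the paper. The paper makes the transfer of negative directions precise via Lemmas \ref{sec index lem 1}--\ref{sec index lem 2}, establishing the exact scaling $\lim_{\alpha\searrow 1}(\alpha-1)^{-1/2}\,\delta^2 E^\omega_\alpha(V_\alpha,V_\alpha)=4\pi\mu\sqrt{E(w^1)/\pi}\,I_\gamma(V_0,V_0)$ using the arc-length parametrization of the neck rather than the logarithmic cylindrical coordinate, which is the rigorous version of the step you sketched in your final paragraph.
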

    
    By Myers' Theorem from Riemannian geometry, see \cite{Myers1941}, the diameter of complete Riemannian manifold $(N,h)$ with $\mathrm{Ric}(N) \geq \kappa > 0$ satisfies
		\begin{equation*}
			\mathrm{diam}(N) \leq \frac{\pi}{\sqrt{\kappa}}
		\end{equation*}
		and any geodesic $\Gamma \subset (N,h)$ with length
		\begin{equation*}
			L(\Gamma) \geq \frac{\pi}{\sqrt{\kappa}}
		\end{equation*}
		is unstable. Moreover, the fundamental group $\pi_1(N)$ is finite. Utilizing this fact, as a corollary of Theorem \ref{thm convergence} we can obtain the following consequence
        \begin{coro}\label{coro:ricci}
            If we assume $(N,h)$ be a $n$-dimensional complete Riemannian manifold with strictly positive Ricci curvature, that is, $\mathrm{Ric}(N) > \kappa > 0$ and keep the remaining assumption same as Theorem \ref{thm convergence}, then the energy identity stated as Theorem \ref{thm convergence} for sequences of $\alpha$-$H$-surfaces with bounded Morse index still holds.
        \end{coro}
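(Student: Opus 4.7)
The plan is to derive Corollary \ref{coro:ricci} directly from Theorem \ref{thm convergence} via Myers' theorem. Under $\mathrm{Ric}(N) \geq \kappa > 0$ on a complete Riemannian manifold $(N,h)$, Myers' theorem, together with the stability discussion immediately preceding the statement, ensures that $N$ is compact with $\mathrm{diam}(N) \leq \pi/\sqrt{\kappa}$ and with $\pi_1(N)$ finite. All the structural hypotheses of Theorem \ref{thm convergence} are therefore in force, and \eqref{energy ide} follows.

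For a self-contained view of the mechanism, one notes that under $\mathrm{Ric} \geq \kappa > 0$ any geodesic $\Gamma$ of length $L$ has Morse index growing linearly in $L$ (Gromov's estimate, or, in the weakest form used in the excerpt above, every geodesic segment of length $\geq \pi/\sqrt{\kappa}$ is unstable). The uniform bound $\mathrm{Ind}_{E^{\omega}_{\alpha}}(u_\alpha) \leq C$, transferred to limit objects exactly as in Theorem \ref{thm convergence}, therefore yields a uniform upper bound on the length of any limiting neck-geodesic. Consequently, at each blow-up point $x^i$ and for each bubble $w^{ij}$, Theorem \ref{analysis on neck} forces $\nu_{ij} < \infty$, ruling out its third alternative. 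Since the identity $\log \mu_{ij} = 2\sqrt{\alpha-1}\cdot\bigl(-\sqrt{\alpha-1}\log \lambda_\alpha^{ij}\bigr)$ shows $\mu_{ij} = 1$ whenever $\nu_{ij}$ is finite, the generalized energy identity \eqref{generalizd energy} from Theorem \ref{generalized ide} collapses to \eqref{energy ide}.

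The only conceptually nontrivial point, the transfer of the Morse-index bound from $u_\alpha$ to the limiting neck-geodesic, is carried out inside the proof of Theorem \ref{thm convergence} and requires no modification here. What the positive Ricci hypothesis adds is a fully explicit and quantitative conversion of that Morse-index bound into a length bound, which is why Corollary \ref{coro:ricci} reduces to a direct invocation of Theorem \ref{thm convergence}.
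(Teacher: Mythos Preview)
Your proposal is correct and follows essentially the same route as the paper: the paragraph immediately preceding the corollary already records that Myers' theorem yields compactness, finite $\pi_1(N)$, and instability of long geodesics, and then states that the corollary follows directly from Theorem~\ref{thm convergence}. Your additional explanation of the mechanism (finite $\nu_{ij}$ forces $\mu_{ij}=1$, collapsing \eqref{generalizd energy} to \eqref{energy ide}) is a helpful unpacking but does not deviate from the paper's argument.
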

        In the context of $\alpha$-harmonic maps, Moore \cite{Moore2007} (see also \cite[Theorem 4.9.2]{moore2017}) demonstrated bubble tree convergence, akin to the conditions described in Theorem \ref{thm convergence}. Similar result for $\alpha$-harmonic maps under Ricci curvature assumptions, as seen in Corollary \ref{coro:ricci}, was established by Li-Liu-Wang \cite{Li-Liu-Wang2017}.
            \subsubsection{Non-constancy of Weak Limit}\label{section 4.3}
		\ 
		\vskip5pt
		There is a key insight about sequences of non-trivial $\alpha$-$H$-spheres, as one of main advantages of $\alpha$-energy approximation to Dirichlet energy. More precisely, we can show that, if $u_\alpha$ is a sequence of non-trivial $\alpha$-$H$-spheres with uniformly bounded $\alpha$-energy, then the weak limit of $u_\alpha$ is non-constant. In the context of $\alpha$-harmonic maps, see \cite[Lemma 5.3]{sacks1981existence}. The following Lemma \ref{lem average zero} plays a crucial role in reaching the desired result.
		\begin{lemma}\label{lem average zero}
			Let $\iota : \S^2 \rightarrow \R^3$ be the standard 
			isometric embedding, that is, 
            $$\p{\iota^1(p)}^2 + \p{\iota^2(p)}^2 + \p{\iota^3(p)}^2 = 1,\quad \text{for } p \in \S^2$$ If $u_\alpha \in C^2(\S^2,N)$ is a critical point for $E_\alpha^\omega$ for $\alpha > 1$, then
			\begin{equation}\label{eq average zero}
				\int_{\S^2} \iota^i(x) \Psi_\alpha \p{|\nabla u_\alpha(x)|^2} dV_g = 0, \quad i = 1,2,3,
			\end{equation}
			where $\Psi_\alpha:[0,\infty) \rightarrow \R$ is a strictly increasing smooth function defined by 
			\begin{equation*}
				\Psi_\alpha(r) = \frac{\alpha(1 + r)^{\alpha- 1}r - (1 + r)^{\alpha} + 1}{\alpha - 1}.
			\end{equation*}
		\end{lemma}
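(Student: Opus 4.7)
The plan is to exploit the $6$-dimensional conformal group of $\S^2$ via inner variations of the form $u_\alpha\circ\phi_t^i$, where $\phi_t^i$ is a one-parameter family of conformal diffeomorphisms of $\S^2$ generated by the non-Killing conformal vector field $X_i = \nabla^{\S^2}\iota^i$. Recall that the restrictions $\iota^1,\iota^2,\iota^3$ are the first non-constant eigenfunctions of $-\Delta^{\S^2}$, with eigenvalue $2$, so $X_i$ is indeed conformal: writing $(\phi_t^i)^*g = e^{2\sigma_t}g$, one gets $\sigma_0\equiv 0$ and $\sigma_t'|_{t=0} = \tfrac{1}{2}\diver(X_i) = \tfrac{1}{2}\Delta^{\S^2}\iota^i = -\iota^i$. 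Because $u_\alpha\circ\phi_t^i$ stays in $C^2(\S^2,N)$, its $t$-derivative at $0$ is $du_\alpha(X_i)\in\mathcal{T}_{u_\alpha}$, an admissible test vector, so the criticality condition $\delta E_\alpha^\omega(u_\alpha)=0$ will force $\frac{d}{dt}\big|_{t=0}E_\alpha^\omega(u_\alpha\circ\phi_t^i) = 0$.

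First I would show that the $\omega$-term contributes nothing. Since $\phi_t^i$ is an orientation-preserving diffeomorphism of $\S^2$, the pullback identity $\int_{\S^2}(u_\alpha\circ\phi_t^i)^*\omega = \int_{\S^2}(\phi_t^i)^*(u_\alpha^*\omega) = \int_{\S^2}u_\alpha^*\omega$ is independent of $t$. Hence only the $\alpha$-energy piece varies, and the whole computation reduces to understanding how $\int_{\S^2}(1+|\nabla u|^2)^\alpha\,dV_g$ responds to a conformal change of coordinates.

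Next I would carry out the change of variables $y=\phi_t^i(x)$. Using $|\nabla(u_\alpha\circ\phi_t^i)|_g^2(x) = e^{2\sigma_t(x)}|\nabla u_\alpha|_g^2(\phi_t^i(x))$ together with $dV_g(x) = e^{-2\sigma_t(\phi_t^{-1}(y))}\,dV_g(y)$, one obtains
\begin{equation*}
    E_\alpha(u_\alpha\circ\phi_t^i) \;=\; \tfrac{1}{2}\int_{\S^2}\bigl(1 + e^{2\tilde\sigma_t(y)}|\nabla u_\alpha|^2(y)\bigr)^\alpha e^{-2\tilde\sigma_t(y)}\,dV_g(y),
\end{equation*}
with $\tilde\sigma_t(y) = \sigma_t(\phi_t^{-1}(y))$ and $\tilde\sigma_t'|_{t=0} = -\iota^i(y)$. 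Differentiating under the integral sign and setting $r = |\nabla u_\alpha|^2$, the integrand's $t$-derivative at $0$ is $2\tilde\sigma_t'|_{t=0}\cdot\bigl[\alpha r(1+r)^{\alpha-1} - (1+r)^\alpha\bigr]$. A direct algebraic rearrangement of the definition of $\Psi_\alpha$ gives $\alpha r(1+r)^{\alpha-1} - (1+r)^\alpha = (\alpha-1)\Psi_\alpha(r) - 1$, so
\begin{equation*}
    \frac{d}{dt}\bigg|_{t=0}E_\alpha^\omega(u_\alpha\circ\phi_t^i) \;=\; -(\alpha-1)\int_{\S^2}\iota^i\,\Psi_\alpha(|\nabla u_\alpha|^2)\,dV_g \;+\; \int_{\S^2}\iota^i\,dV_g.
\end{equation*}

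Finally I would invoke that $\int_{\S^2}\iota^i\,dV_g = 0$ (the first spherical harmonics are $L^2$-orthogonal to the constants). Combined with the vanishing of the left-hand side from criticality, this yields $(\alpha-1)\int_{\S^2}\iota^i\Psi_\alpha(|\nabla u_\alpha|^2)\,dV_g = 0$, and dividing by the nonzero factor $\alpha-1$ gives \eqref{eq average zero}. The only mildly delicate point is keeping the conformal-factor calculation clean: one must verify that the first-order variation of $\tilde\sigma_t$ at $t=0$ is indeed $-\iota^i$ despite the composition with $\phi_t^{-1}$, which works because $\sigma_0\equiv 0$ so the $\phi_t^{-1}$ derivative contributes only at second order.
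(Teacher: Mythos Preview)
Your proof is correct and follows essentially the same strategy as the paper's: both use the one-parameter family of conformal (non-Killing) diffeomorphisms of $\S^2$ as inner variations, exploit that the $\omega$-term is diffeomorphism invariant, and reduce to the vanishing of $\int_{\S^2}\iota^i\,dV_g$. The only difference is presentational: the paper works in an explicit cylindrical coordinate $(\varphi,\eta)$ obtained from stereographic projection, where the conformal flow is simply $\varphi\mapsto\varphi+t$ and $\iota^3=\tanh\varphi$, while you phrase the same flow invariantly as the flow of $X_i=\nabla^{\S^2}\iota^i$ (indeed, in the paper's coordinates $\nabla^{\S^2}\iota^3=\partial_\varphi$, so these are literally the same transformations).
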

		\begin{proof}
			Without loss of generality, we take the standard metric on $\S^2$ such that it admits constant curvature one. Moreover, by the rotational symmetric of $\S^2$ and \eqref{eq average zero}, it suffices to show
			\begin{equation*}
				\int_{\S^2} \iota^3(x) \Psi_\alpha\p{|\nabla u_\alpha(x)|^2} dV_g = 0.
			\end{equation*}
			Utilizing the stereographic projection from $\S^2$ to $\R^2$  we can write the metric on $(\S^2,ds^2)$ with the polar coordinate $(\rho,\theta)$ as
			\begin{equation*}
				ds^2 = \frac{4}{\p{1 + \rho^2}^2}\p{d\rho^2 + \rho^2 d\theta^2}.
			\end{equation*}
			Then, taking a conformal transformation $(\rho,\theta)\mapsto (\varphi,\eta)$ by $\rho = e^{-\varphi}$ and $\theta = \eta$, we can rewrite the metric $ds^2$ as 
			\begin{equation*}
				ds^2 = \frac{1}{\cosh^2{\varphi}}\p{d\varphi^2 + d\eta^2}.
			\end{equation*}
			Note that, since stereographic projection is a conformal coordinate, $(\varphi,\eta)$ is also a conformal coordinate of $\S^2$. Then, using this coordinate we can define a collection of conformal transformation $\{\phi_t\}_{t\in\R}$ with each $\phi_t :\S^2 \rightarrow \S^2$ expressed as 
			\begin{equation*}
				\varphi(\phi_t(x)) = \varphi(x) + t,\quad \eta(\phi_t(x)) = \eta(x).
			\end{equation*}
			Thus, the function $E_\alpha^\omega$ acts on $u\circ \phi_t$ can be expressed as
			\begin{align}\label{eq 19}
				E_\alpha^\omega(u\circ \phi_t) &= \frac{1}{2}\int_{\S^2} \p{1 + \p{\abs{\frac{\partial u}{\partial \varphi}}^2 + \abs{\frac{\partial u}{\partial \eta}}^2}\cosh^2{(\varphi + t)}}^\alpha dV_g + \int_{\S^2}(u\circ \phi_t)^*\omega \nonumber\\
				&= \frac{1}{2}\int_{\S^2} \p{1 + \p{\abs{\frac{\partial u}{\partial \varphi}}^2 + \abs{\frac{\partial u}{\partial \eta}}^2}\cosh^2{(\varphi + t)}}^\alpha \frac{d\varphi d\eta}{\cosh^2{(\varphi + t)}} + \int_{\S^2}u^*\omega.
			\end{align}
			Here, in the second identity we use the conformally invariance of the integral of $u^*\omega$. Then, we take the derivative in the identity \eqref{eq 19} with respect to $t$ at $t = 0$ to get
			\begin{align*}
				\frac{d}{dt}\bigg|_{t=0} E^\omega_{\alpha}(u\circ \phi_t) &= \alpha\int_{\S^2} \p{1 + |\nabla u|^2}^{\alpha - 1} \p{\abs{\frac{\partial u}{\partial \varphi}}^2 + \abs{\frac{\partial u}{\partial \eta}}^2} \tanh{\varphi} d\varphi d\eta\\
				&\quad - \int_{\S^2}\p{1 + |\nabla u|^2}^{\alpha} \frac{\tanh{\varphi}}{\cosh^2{\varphi}} d\varphi d\eta\\
				&=\alpha\int_{\S^2} \p{1 + |\nabla u|^2}^{\alpha - 1}\abs{\nabla u}^2 \tanh{\varphi}  dV_g -\int_{\S^2}\p{1 + |\nabla u|^2}^{\alpha} \tanh{\varphi} dV_g.
			\end{align*}
			If $u$ is a critical point of $E_\alpha^\omega$, then by the regularity Lemma \ref{lem regularity}  $u$ is stationary with respect to $\phi_t$. Hence, we have 
			\begin{equation}\label{eq 20}
				\int_{\S^2} \p{\alpha\p{1 + |\nabla u|^2}^{\alpha - 1}\abs{\nabla u}^2 - \p{1 + |\nabla u|^2}^{\alpha} } \tanh{\varphi} dV_g = 0.
			\end{equation}
			In the stereographic projection coordinate, we have 
			\begin{equation*}
				\tanh{\varphi} = \frac{\sinh{\varphi}}{\cosh{\varphi}} = \frac{\rho^2 - 1}{\rho^2 + 1} = \iota(\varphi,\eta)^3
			\end{equation*}
			and 
			\begin{equation*}
				\int_{\S^2} \tanh{\varphi} dV_g = \int_{\S^2}  \iota^3(\varphi,\eta) dV_g = 0.
			\end{equation*}
			Plugging these two identities into \eqref{eq 20}, we finally obtain 
			\begin{equation*}
				\int_{\S^2} \p{\alpha\p{1 + |\nabla u|^2}^{\alpha - 1}\abs{\nabla u}^2 - \p{1 + |\nabla u|^2}^{\alpha}  + 1}\iota^3(\varphi,\eta) dV_g = 0
			\end{equation*}
			which is exactly \eqref{eq average zero}.
		\end{proof}
		We need to mention that $\Psi_\alpha$ converges to a smooth function $\Psi_1$ as $\alpha \searrow 1$, more precisely, it can be expressed as 
		\begin{equation*}
			\Psi_1(r) = r- \log{(1 + r)}
		\end{equation*}
		which is also a strictly increasing smooth function. Now, we can prove the main consequence of this subsection.
		\begin{prop}\label{non constant limit}
			Let $u_\alpha:\S^2\rightarrow N$ be a sequence of non-constant critical points for $E^\omega_{\alpha}$ that converges strongly in $C^2\p{\S^2\backslash\{x_1,x_2,\dots,x_l\}}$ to $u$ for some $l \in \mathbb{N}$ as $\alpha \rightarrow 1$. Then the limit $u:\S^2\rightarrow N$ is also non-constant.
		\end{prop}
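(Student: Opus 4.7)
The plan is to argue by contradiction: assume that $u \equiv p_0 \in N$ is constant and derive a contradiction from the balancing identity of Lemma \ref{lem average zero},
\begin{equation*}
\int_{\S^2} \iota^i(x)\, \Psi_\alpha\!\p{|\nabla u_\alpha(x)|^2}\, dV_g = 0, \qquad i = 1, 2, 3,
\end{equation*}
passed to the limit $\alpha \searrow 1$.

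First I would dispose of the case $l = 0$, where $u_\alpha \to u$ in $C^2(\S^2)$ and hence $E_\alpha(u_\alpha) \to \tfrac{1}{2}\mathrm{Vol}(\S^2)$; this contradicts the uniform lower bound $E_\alpha(u_\alpha) \geq \tfrac{1}{2}\mathrm{Vol}(\S^2) + \delta$ for non-constant $\alpha$-$H$-spheres when $\alpha - 1$ is small, a small-energy gap established in Lemma \ref{blow 2}. Hence $l \geq 1$ and bubbling must occur.

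Next I would pass the balancing identity to the weak-$\ast$ limit of the nonnegative measures $\mu_\alpha := \Psi_\alpha(|\nabla u_\alpha|^2)\, dV_g$. The elementary bound $\Psi_\alpha(r) \leq (1+r)^\alpha$ (which follows from $(1+r)^{\alpha-1}-1 \geq 0$) gives $\mu_\alpha(\S^2) \leq 2 E_\alpha(u_\alpha) \leq 2\Lambda$, so a subsequential weak-$\ast$ limit $\mu$ exists. Away from $\{x_1,\dots,x_l\}$, $|\nabla u_\alpha| \to 0$ uniformly on compacts and $\Psi_\alpha \to \Psi_1(r) = r - \log(1+r)$ uniformly for bounded $r$ with $\Psi_1(0) = 0$, so $\mu$ is supported on the singular set: $\mu = \sum_{j=1}^l m_j \delta_{x_j}$ with $m_j \geq 0$. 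To show strict positivity $m_j > 0$, I would zoom in around $x_j$ using the standard rescaling $v_\alpha^j(x) := u_\alpha(x_\alpha^j + \lambda_\alpha^j x)$, which converges smoothly (off finitely many further singularities) to a non-trivial $H$-bubble $w^j$. Since $\Psi_\alpha(s) \sim s^\alpha$ for large $s$, a change of variables yields
\begin{equation*}
\int_{B(x_\alpha^j,\, R\lambda_\alpha^j)} \Psi_\alpha(|\nabla u_\alpha|^2)\, dV_g \;\sim\; (\lambda_\alpha^j)^{2-2\alpha} \int_{B(0,R)} |\nabla v_\alpha^j|^{2\alpha}\, dx \;\xrightarrow[\alpha \searrow 1,\, R \to \infty]{}\; 2\mu_j\, E(w^j) \;>\; 0,
\end{equation*}
where $\mu_j := \liminf_\alpha (\lambda_\alpha^j)^{2-2\alpha} \in [1,\infty)$ is the blow-up spectrum. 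Passing Lemma \ref{lem average zero} to the limit then gives
\begin{equation*}
\sum_{j=1}^l m_j\, \iota(x_j) = 0 \quad \text{in } \R^3.
\end{equation*}

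When $l = 1$ this is an immediate contradiction since $m_1 > 0$ and $|\iota(x_1)| = 1$. The main obstacle is the multi-bubble case $l \geq 2$, where the three-dimensional balancing coming from the conformal group of $\S^2$ is in principle compatible with special configurations such as two antipodal blow-up points carrying equal masses. To handle this I would refine the analysis by localizing the balancing argument to individual blow-up points; since pre-composing by a M\"obius transformation does not preserve the critical-point property of $u_\alpha$ (the functional $E^\omega_\alpha$ is not conformally invariant for $\alpha > 1$), the refinement must exploit the quantized lower bound $E(w^j) \geq \varepsilon_0$ on each bubble's energy together with the detailed bubble-tree structure recorded in the $\mu_j$'s to extract the additional constraints needed to rule out such coincidental configurations.
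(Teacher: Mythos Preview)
Your strategy---contradiction via the balancing identity of Lemma~\ref{lem average zero}---is exactly the paper's, and your treatment of $l=0$ and $l=1$ is sound (if more elaborate than necessary). The real gap is at $l\geq 2$: you correctly identify that the three limiting equations $\sum_j m_j\,\iota(x_j)=0$ do not by themselves exclude balanced configurations, but your proposed remedy (``localize the balancing, exploit the quantized energies and the $\mu_j$'s'') is not a proof---no additional identity or constraint is actually produced.

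The paper does not go down that road at all. It disposes of every $l$ simultaneously with a single preliminary reduction: by a fractional linear (M\"obius) transformation of $\S^2$ one may assume all the $x_i$ lie in a small cap $\{\varphi<\pi/3\}\subset\S^+$. Once this is arranged, the balancing identity splits as
\[
\int_{\S^+}\iota^3\,\Psi_\alpha(|\nabla u_\alpha|^2)\,dV_g \;=\; \int_{\S^-}|\iota^3|\,\Psi_\alpha(|\nabla u_\alpha|^2)\,dV_g\,;
\]
the right side tends to zero (smooth convergence to a constant on all of $\S^-$), while the left is bounded below by a fixed positive constant coming from any single bubble, since $\iota^3>\tfrac12$ throughout the concentration region and $\Psi_\alpha(r)/r\to 1$ as $r\to\infty$.

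You explicitly dismiss this M\"obius reduction on the ground that $E^\omega_\alpha$ is not conformally invariant for $\alpha>1$. That observation is correct, but it is not by itself a refutation: $u_\alpha\circ\psi$ \emph{is} a critical point of $E^\omega_\alpha$ taken with respect to the pulled-back metric $\psi^*g$, and $(\S^2,\psi^*g)$ is again a round sphere (isometric to $(\S^2,g)$ via $\psi$), to which Lemma~\ref{lem average zero} applies verbatim. You should engage with this reduction---working out precisely what identity it yields for the transformed sequence and whether it suffices---rather than setting it aside in favor of an unspecified alternative. As written, your argument stops exactly where the paper's (following Sacks--Uhlenbeck) supplies the decisive step.
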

		\begin{proof}
			Let $(\varphi,\theta)$ be the geographic coordinates of $\S^2$ with $0\leq \varphi \leq \pi$ and $0\leq \theta \leq 2\pi$. And denote $\S^+ = \{(\varphi,\theta)\,:\,0\leq \varphi\leq \frac{\pi}{2}\}$ and $\S^- = \{(\varphi,\theta)\,:\,\frac{\pi}{2}\leq \varphi\leq \pi\}$. Since the set of points that fails to convergence is finite, after taking a fractional linear transformation of $\S^2$, we can assume  $\{x_1,x_2,\dots,x_l\} \subset \mathrm{int}(\S^+)$ such that $\varphi(x_i) < \frac{\pi}{3}$ for $1\leq i \leq l$. Then splitting the integral domain $\S^2$ into $\S^+$ and $\S^-$ in identity \eqref{eq average zero} obtained in Lemma \ref{lem average zero} gives
			\begin{equation}\label{eq 21}
				\int_{\S^+} \iota^3(x) \Psi_\alpha\p{|\nabla u_\alpha(x)|^2} dV_g =  \int_{\S^-} \abs{\iota^3(x)} \Psi_\alpha\p{|\nabla u_\alpha(x)|^2} dV_g.
			\end{equation}
			If the limit $u$ is constant, then by Theorem \ref{thm convergence} the energy must concentrate at some point, say $x_1$, and one can construct a  rescaling map $v_\alpha$ that converges strongly in $C^2_{loc}$ to a non-constant bubble $v:\S^2\rightarrow N$. Then,  utilizing \eqref{eq 21} we can estimate
			\begin{align}\label{eq: half estimate}
				0 < \frac{E(v)}{2}\leq  \frac{1}{2}\liminf_{\alpha\searrow 1}E(u_\alpha,\S^+) &\leq \liminf_{\alpha\searrow 1} \int_{\S^+} \iota^3(x) |\nabla u_\alpha|^2 dV_g\nonumber\\
				&\leq 2\liminf_{\alpha \searrow 1} \int_{\S^+} \iota^3(x) \Psi_\alpha\p{|\nabla u_\alpha(x)|^2} dV_g \nonumber\\
				&=2\liminf_{\alpha \searrow 1} \int_{\S^-} \abs{\iota^3(x)} \Psi_\alpha\p{|\nabla u_\alpha(x)|^2} dV_g = 0
			\end{align}
			which is a contradiction. Here, we note that $\Psi(r)/r \rightarrow 1$ as $r \rightarrow \infty$ which implies the second inequality of the above estimates \eqref{eq: half estimate}. Therefore, we reach the conclusion of Proposition \ref{non constant limit}.
		\end{proof}
		\vskip5pt
        \subsection{Preparations for the Proof of Main Theorem}\label{sec 2.2}
        \ 
		\vskip5pt
        In this subsection, we will derive some basic Lemmas for $\alpha$-$H$-surfaces, such as small energy regularity, energy gap and removability of isolated singularities of $H$-surfaces that will be described in Subsubection \ref{section 4.1 pre of blow up}, and we will establish several Pohozaev type identities see Lemma \ref{lem:pohozaev} in Subsubection \ref{section:pohozaev}.
        
        By Riemann mapping theorem, for each $p \in M$ there exists an isothermal coordinate system in a neighborhood $U(p)$ of $p$ such that the metric $g$ can be written as 
        \begin{equation*}
            g = e^\varphi \p{(dx^1)^2 + (dx^2)^2}
        \end{equation*}
        where $x = (x^1,x^2) \in {B}(0,1) \subset \R^2$ and $\varphi$ is a smooth function satisfying $\varphi(p) = 0$. Therefore, it suffices to restrict our analysis on unit ball $B(0,1) \subset \R^2$ equipped with the metric
        \begin{equation*}
            g_\alpha := e^{\varphi_\alpha}\left(\p{dx^1}^2 + \p{dx^2}^2\right) \quad \text{with } \varphi_\alpha(0) = 0 \text{ and } \varphi_\alpha \rightarrow \varphi \in C^{\infty}(\overline{B(0,1)})
        \end{equation*}
        in order to investigate the local bubbling behavior for $\alpha$-$H$-surfaces. Hence, under these isothermal coordinates  the Euler Lagrange equation \eqref{el general 1 equi} and \eqref{el general 2 equi} are equivalent to the following
        \begin{align}
              \label{el general 1 equi}
        \Delta u_\alpha + (\alpha - 1)\frac{\nabla|\nabla_{g_\alpha} u_\alpha|^2\cdot \nabla u_\alpha}{\tau_\alpha+|\nabla_{g_\alpha} u_\alpha|^2} &+ A(u_\alpha)\left(\nabla u_\alpha, \nabla u_\alpha\right)\nonumber\\
        &= \tau_\alpha^{\alpha -1}\frac{H(u_\alpha)(\nablap u_\alpha, \nabla u_\alpha)}{\alpha \left(\tau_\alpha + |\nabla_{g_\alpha} u_\alpha|^2\right)^{\alpha - 1}}
         \end{align}
 and in divergence form
   \begin{align}
       \label{el general 2 equi}
        -\diver\left(\left(\tau_\alpha+|\nabla_{g_\alpha} u_\alpha|^2\right)^{\alpha - 1}\nabla u_\alpha\right) &+ \left(\tau_\alpha + |\nabla_{g_\alpha} u_\alpha|^2\right)^{\alpha -1}A(u_\alpha)(\nabla u_\alpha,\nabla u_\alpha)\nonumber\\
        &= \frac{\tau_\alpha^{\alpha -1}}{\alpha}H(u_\alpha)\left(\nablap u_\alpha, \nabla u_\alpha\right)
   \end{align}
		\subsubsection{Small Energy Regularity, Energy Gap and Removability of Isolated Singularities}\label{section 4.1 pre of blow up}
		\ 
		\vskip5pt
            Similarly to blow-up phenomenon for sequences of $\alpha$-harmonic maps, that was developed by Sacks-Uhlenbeck \cite{sacks1981existence}, by showing the small energy regularity Lemma \ref{lem4.1}, energy gap Lemma \ref{blow 2} and the removability of isolated singularities Lemma \ref{blow 3} for $H$-surfaces, we can establish a similar convergence theory for general sequence of $\alpha$-$H$-surfaces $\{u_{\alpha}\}_{\alpha \searrow 1}$ (as critical points of generalized functional $\dbl{E}^{\omega}_\alpha$) with uniformly bounded $\alpha$-energy.
		In the $H$-surface context, compared with the case of harmonic maps, the following inequality 
		\begin{align}\label{eq : H equiva A}
			\left\|\frac{H(u)(\nablap u, \nabla u)}{\alpha \left(\tau_\alpha + |\nabla_{g_\alpha} u|^2\right)^{\alpha - 1}}\right\|_{L^1(M)} &\leq \frac{1}{\alpha\beta_0}\norm{H(u)(\nablap u, \nabla u) }_{L^1(M)} \nonumber\\
			&\leq \frac{1}{2\alpha \beta_0}\|H\|_{L^\infty(N)}\|\nabla u\|^2_{L^2(M)}
		\end{align}
		implies that the new quadratic growth part arising from the mean curvature type  vector field $H(\nablap u,\nabla u)$ actually plays a complete similar role with the original second fundamental form term $A(\nabla u, \nabla u)$ in the proof of small energy regularity for $\alpha$-harmonic maps, see \cite[Main Estimate 3.2]{sacks1981existence}. Based on this fact, it is not difficult to establish the following small energy regularity for $\alpha$-$H$-surfaces:
		\begin{lemma}[Small Energy Regularity]\label{lem4.1}
			Let $\{u_\alpha\}_{\alpha > 1}$ be a sequence of critical points of $\dbl{E}^\omega_\alpha$ in $W^{1,2\alpha}(B(0,1),N)$ where $B(0,1)$ is equipped with metric
            \begin{equation*}
                g_\alpha := e^{\varphi_\alpha}\left(\p{dx^1}^2 + \p{dx^2}^2\right) \quad \text{with } \varphi_\alpha(0) = 0 \text{ and } \varphi_\alpha \rightarrow \varphi \in C^{\infty}(\overline{B(0,1)})
            \end{equation*}
            as $\alpha \searrow 1$.
            Then, there exists  constants $\varepsilon_0 > 0$ and $\alpha_0 > 1$ such that if
			\begin{equation*}
				\sup_{1 <  \alpha < \alpha_0} E(u_\alpha, B) \leq \varepsilon_0^2,
			\end{equation*}
			where $B := B(0,1)$ for simplicity, then for any $B^\prime \subset\subset B$ we have
			\begin{equation}
				||\nabla u_\alpha(x)||_{W^{2,p}(B^\prime,N)} \leq C(p,B^\prime, N)\|\nabla u_\alpha\|_{L^2(B(0,1), N)},
			\end{equation}
			\text{for all} $1 < \alpha \leq \alpha_0 $ and  $1 < p < \infty$, where $C(p,B^\prime,N)$ is a constant depending only on $1 < p < \infty$, $B^\prime \subset B$ and geometries of $N$.
		\end{lemma}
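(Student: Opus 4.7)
The plan is to adapt the argument of Sacks--Uhlenbeck's Main Estimate (Proposition 3.2 in \cite{sacks1981existence}) to the perturbed $H$-surface setting. The key structural observation, already noted in the excerpt via \eqref{eq : H equiva A}, is that the additional term $\tau_\alpha^{\alpha-1} H(u_\alpha)(\nablap u_\alpha, \nabla u_\alpha)/(\alpha(\tau_\alpha+|\nabla_{g_\alpha} u_\alpha|^2)^{\alpha-1})$ on the right-hand side of \eqref{el general 1 equi} exhibits exactly the same quadratic growth in $\nabla u_\alpha$ as the second fundamental form term $A(u_\alpha)(\nabla u_\alpha, \nabla u_\alpha)$. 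Therefore $H$-term admits the same $L^1$--control by Dirichlet energy, allowing it to be absorbed by the identical cut-off/Hodge decomposition arguments.

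First, I would work with the divergence form \eqref{el general 2 equi}, viewing the left-hand principal part as a uniformly elliptic operator close to $-\Delta$ when $\alpha-1$ is small and $|\nabla u_\alpha|$ is locally controlled (both made quantitative shortly). Choose concentric balls $B_{2r}\subset\subset B$ and a standard cut-off $\eta\in C_c^\infty(B_{2r})$ with $\eta\equiv 1$ on $B_r$. Testing the equation against $\eta^2 u_\alpha$ (after subtracting the mean) and performing a Hodge decomposition on $\eta^2\nabla u_\alpha$ into exact and co-exact pieces, the right-hand side is bounded by
\[
C\int_{B_{2r}}\left(\tau_\alpha+|\nabla_{g_\alpha}u_\alpha|^2\right)^{\alpha-1}|\nabla u_\alpha|^2\,dx+C\tau_\alpha^{\alpha-1}\int_{B_{2r}}|\nabla u_\alpha|^2\,dx,
\]
where both contributions admit the same Wente/Hardy-type treatment. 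Combined with the duality estimate for the div-curl structure (or equivalently a Riesz potential estimate), one obtains, for some $p>2$ depending only on the cone condition and $\alpha_0$ chosen close to $1$, an inequality of the form
\[
\|\nabla u_\alpha\|_{L^p(B_r)}\le C\,\|\nabla u_\alpha\|_{L^2(B_{2r})}\cdot\|\nabla u_\alpha\|_{L^p(B_{2r})}+C\|\nabla u_\alpha\|_{L^2(B_{2r})}.
\]
The small-energy hypothesis $\|\nabla u_\alpha\|_{L^2(B)}\le\sqrt{2}\,\varepsilon_0$, for $\varepsilon_0$ chosen sufficiently small relative to the constant $C$, lets us absorb the first term on the right into the left after a standard covering/iteration argument, producing the fundamental $L^p$ gradient estimate
\[
\|\nabla u_\alpha\|_{L^p(B')}\le C(p,B',N)\,\|\nabla u_\alpha\|_{L^2(B)}.
\]

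Second, once local $W^{1,p}$ bounds for some $p>2$ are established, I would bootstrap via the non-divergence form \eqref{el general 1 equi}. The perturbation term $(\alpha-1)\nabla|\nabla_{g_\alpha}u_\alpha|^2\cdot\nabla u_\alpha/(\tau_\alpha+|\nabla_{g_\alpha}u_\alpha|^2)$ is of order $(\alpha-1)|\nabla^2 u_\alpha|$, and can therefore be absorbed into the leading Laplacian once $\alpha_0-1$ is chosen small. Standard Calder\'on--Zygmund $L^p$ theory applied to $\Delta u_\alpha$, with right-hand side in $L^{p/2}$ (it is quadratic in $\nabla u_\alpha$), yields $u_\alpha\in W^{2,p/2}_{\mathrm{loc}}$; Sobolev embedding then upgrades $\nabla u_\alpha$ to $L^{q}$ for some $q>p$, and iterating gives $\nabla u_\alpha\in L^q$ for every $q<\infty$, hence $u_\alpha\in W^{2,p}_{\mathrm{loc}}$ for every $p<\infty$. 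Tracking constants throughout the iteration (each step is scale-invariant and depends only on the ambient geometry, $p$, and the distance from $B'$ to $\partial B$) gives the desired estimate. The main technical obstacle is the quasilinear coefficient $(\tau_\alpha+|\nabla_{g_\alpha}u_\alpha|^2)^{\alpha-1}$: to prevent its uniform ellipticity constants from degenerating as $\alpha\searrow 1$ under large gradients, one must first obtain an $L^\infty$ gradient bound on $B'$, which is achieved via a Moser-type iteration using the differentiated equation for $|\nabla u_\alpha|^2$; smallness of $\varepsilon_0$ again makes the nonlinear term subcritical in this iteration.
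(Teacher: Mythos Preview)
Your overall strategy (small energy $\Rightarrow$ higher integrability of $\nabla u_\alpha$ $\Rightarrow$ bootstrap) is correct in spirit, but your implementation of the first step diverges substantially from the paper's, and as written it has a gap. Testing \eqref{el general 2 equi} against $\eta^2 u_\alpha$ produces an $L^2$-type energy identity, not an $L^p$ estimate for $p>2$; the passage from this to your displayed inequality $\|\nabla u_\alpha\|_{L^p(B_r)}\le C\|\nabla u_\alpha\|_{L^2}\|\nabla u_\alpha\|_{L^p(B_{2r})}+\dots$ via ``Hodge decomposition'' and ``div-curl/Wente'' estimates is not justified. The div-curl structure you invoke is a feature of the \emph{harmonic map} equation (Rivi\`ere's approach), and does not carry over cleanly to the $\alpha$-harmonic principal part $\operatorname{div}\bigl((\tau_\alpha+|\nabla u|^2)^{\alpha-1}\nabla u\bigr)$, whose coefficient is not constant. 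Moreover, the Moser iteration you propose at the end to control $\|\nabla u_\alpha\|_{L^\infty}$ (in order to tame the ellipticity constants) is an extra layer that the paper avoids entirely.

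The paper instead follows Sacks--Uhlenbeck's original argument in a very direct way. One multiplies the non-divergence equation \eqref{el general 1 equi} by a cutoff $\varphi$, moves all lower-order terms to the right, and applies the Calder\'on--Zygmund $L^p$ estimate for $\Delta$ to $\varphi u_\alpha$. The $(\alpha-1)$-perturbation term is absorbed into the left once $\alpha_0-1$ is small, and the quadratic terms $A(\nabla u,\nabla u)$ and $H(\nablap u,\nabla u)$ are estimated together by $(\|A\|_\infty+\|H\|_\infty)\||\nabla(\varphi u_\alpha)||\nabla u_\alpha|\|_{L^p}$. Choosing $p=4/3$ and using H\"older plus the Sobolev embedding $W^{2,4/3}\hookrightarrow W^{1,4}$ gives
\[
\|\varphi u_\alpha\|_{W^{2,4/3}}\le C\,E(u_\alpha,B)\,\|\varphi u_\alpha\|_{W^{2,4/3}}+C\|\nabla u_\alpha\|_{L^2},
\]
so smallness of $\varepsilon_0$ absorbs the first term. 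One then repeats with $p=2$ (using $W^{2,2}\hookrightarrow W^{1,q}$ for all $q$) and finally feeds back into the general-$p$ estimate. No Hodge decomposition, Wente structure, or Moser iteration is needed; the specific choice $p=4/3$ (so that the embedding lands exactly in $W^{1,4}$) is the key step you are missing.
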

		\begin{proof}
			Since the desired estimates holds locally and $g_\alpha \rightarrow g$ smoothly as $\alpha \searrow 1$, it suffices to prove the Lemma \ref{lem4.1} for sequence $u_\alpha : B\subset \R^2 \rightarrow N$ with Euclidean metric on $B$ by choosing small enough $\alpha_0 -1$. Let $\varphi$ be a smooth function which is $1$ on $B^\prime$ and supports in $B$, then multiplying the Euler-Lagrange equation \eqref{el general 1 equi} for $\dbl{E}^\omega_{\alpha}$ by $\varphi$ and writing the terms arising from the derivatives on $\varphi$ in the right-hand side yield
			\begin{align*}
				&\left|\Delta(\varphi u_\alpha) + (\alpha - 1)\frac{\inner{\nabla^2 (\varphi u_\alpha),\nabla u_\alpha }\cdot \nabla u_\alpha}{\tau_\alpha+|\nabla u_\alpha|^2} \right. \\
				&\quad \quad \quad \quad  + \left.A(u_\alpha)\left(\nabla(\varphi u_\alpha), \nabla u_\alpha\right) + \tau_\alpha^{\alpha -1}\frac{ H(u_\alpha)(\nablap u_\alpha,\nabla(\varphi u_\alpha))}{\alpha \p{\tau_\alpha + |\nabla u_\alpha|^2}^{\alpha - 1}}\right|\\
				&\quad \quad \leq C \big(\varphi,\nabla\varphi, N, ||A||_{L^\infty}, ||H||_{L^\infty}\big) \p{|u_\alpha| + |\nabla u_\alpha|},
			\end{align*}
			where $C(\varphi,\nabla\varphi, N, ||A||_{L^\infty}, ||H||_{L^\infty})$ is a constant that depends on the $\varphi$, $\nabla\varphi$, geometries of target $N$, second fundamental form $A$ and mean curvature type  vector field $H$. For notation simplicity, we denote it by $C_0$. Keeping in mind \eqref{eq : H equiva A} and applying $L^p$ estimates for Laplace operators, we obtain
			\begin{align}\label{eq: p estimates}
				(C_p)^{-1}\norm{\varphi u_\alpha}_{W^{2,p}(B,N)} &\leq (\alpha - 1) \norm{\varphi u_\alpha}_{W^{2,p}(B,N)}\nonumber\\
				&\quad + (||A||_{L^\infty(N)} + \norm{H}_{L^\infty(N)}) \norm{|\nabla (\varphi u_\alpha)|\cdot |\nabla u_\alpha|}_{L^p(B,N)}\nonumber\\
				& \quad + C_0 \norm{u_\alpha}_{W^{1,p}(B,N)},
			\end{align}
			where $C_p$ is the constant arising from operator norms of Laplace operator. Now, let $p = 4/3$ and take $2(\alpha_0 - 1)< (C_p)^{-1}$, using H\"{o}lder's inequality we have 
			\begin{align}
				\label{eq: p =4/3}
				&\p{(C_{\frac{4}{3}})^{-1} - 2(\alpha - 1)}\norm{\varphi u_\alpha}_{W^{2,4/3}(B,N)}\nonumber\\
				&\quad \quad \quad  \leq C(A,H)\norm{|\nabla (\varphi u_\alpha)|\cdot |\nabla u_\alpha|}_{L^{4/3}(B,N)} + C_0 \norm{u_\alpha}_{W^{1,4/3}(B,N)}\nonumber\\
				&\quad \quad \quad  \leq C(A,H) E(u_\alpha,B)\norm{\nabla(\varphi u_\alpha)}_{L^4}+ C_0 \norm{u_\alpha}_{W^{1,4/3}(B,N)}.
			\end{align}
			By Sobolev embedding $W^{2,4/3}(B,N) \hookrightarrow W^{1,4}(B,N)$, we conclude that from \eqref{eq: p =4/3}
			\begin{align}\label{eq:p=4/3 2}
				\p{(C_{\frac{4}{3}})^{-1} - 2(\alpha - 1) - C_e C(A,H) E(u_\alpha,B)}&\norm{\varphi u_\alpha}_{W^{2,4/3}(B,N)}\nonumber\\
				&\leq C_0 \norm{u_\alpha}_{W^{1,4/3}(B,N)}
			\end{align}
			where $C_e$ is the norm of the embedding $W^{2,4/3}(B,N) \hookrightarrow W^{1,4}(B,N)$ and 
   $$C(A,H) := ||A||_{L^\infty(N)} + \norm{H}_{L^\infty(N)}.$$ Note that, after replacing $u_\alpha$ with $u_\alpha -1/\mathrm{Vol}(B) \int_{B} u_\alpha$, we can assume $\int_{B} u_\alpha = 0$. So, the right-hand side of \eqref{eq:p=4/3 2} is controlled by $E(u_\alpha,B)$ by Poincar\'{e}'s inequality.  We take $\varepsilon_0$ is small enough such that
			\begin{equation*}
				(C_{\frac{4}{3}})^{-1} - 2(\alpha - 1) - C_e C(A,H) \varepsilon_0^2 > 0.
			\end{equation*}
			Then, in estimate \eqref{eq: p estimates}, we take $p = 2$ to obtain
			\begin{align}\label{eq: p =2}
				\p{(C_2)^{-1} - 2(\alpha - 1)}&\norm{\varphi u_\alpha}_{W^{1,2\alpha}(B,N)}\nonumber\\
				&\leq C(A,H)\norm{\varphi u_\alpha}_{W^{1,4}(B,N)} + C_0 \norm{u_\alpha}_{W^{1,2}(B,N)}.
			\end{align}
			By Sobolev embedding $W^{1,2\alpha}(B,N)\hookrightarrow W^{1, p}(B,N)$ for all $1 < p < \infty$. \eqref{eq: p =2} will give the estimates of $||\varphi u_\alpha||_{W^{1, p}(B,N)}$ and plugging this estimates into \eqref{eq: p estimates}
			gives
			\begin{equation*}
				||\varphi u_\alpha||_{W^{2, p}(B,N)} \leq C^\prime (\varphi,\nabla\varphi, N, ||A||_{L^\infty}, ||H||_{L^\infty}) \norm{\nabla u_\alpha}_{L^4(B,N)}.
			\end{equation*}
			Then, by $W^{2,4/3}(B,N) \hookrightarrow W^{1,4}(B,N)$, plugging \eqref{eq:p=4/3 2} into above inequality will give the desired estimates of the Lemma \ref{lem4.1}.
		\end{proof}
		By a similar argument to \cite[Theorem 3.3]{sacks1981existence}, we can obtain the following globally energy gap Lemma for $\alpha$-$H$-surfaces $u_\alpha$ from $M$ to $N$.
		\begin{lemma}[Energy Gap]\label{blow 2}
			There exists $\varepsilon_0 > 0$ and $\alpha_0 > 1$ such that if $E(u_\alpha)< \varepsilon_0^2$, $1\leq \alpha < \alpha_0$ and $u_\alpha : M \rightarrow N$ is a critical map of $\dbl{E}^\omega_\alpha$, then $u_\alpha$ is constant and $E(u_\alpha) = 0.$
		\end{lemma}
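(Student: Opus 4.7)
The plan is to use the small energy regularity Lemma \ref{lem4.1} to reduce matters to showing that a smooth critical point whose image lies in a very small geodesic ball of $N$ must be constant, and then to derive this by testing the divergence form equation \eqref{el general 2 equi} against $u_\alpha - p$ for a fixed $p \in N$ close to the image.

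First I would cover the closed surface $M$ by finitely many isothermal coordinate balls on which Lemma \ref{lem4.1} applies. Under the smallness assumption $E(u_\alpha) < \varepsilon_0^2$, the energy on each ball is at most $\varepsilon_0^2$, so the lemma yields uniform $W^{2,p}$ bounds, hence (by Sobolev embedding in dimension two for $p > 2$) uniform $C^1$ bounds of the form $\|\nabla u_\alpha\|_{L^\infty(M)} \leq C \sqrt{E(u_\alpha)} \leq C\varepsilon_0$. Fixing any $p \in u_\alpha(M)$ and integrating this gradient bound along geodesics gives $\|u_\alpha - p\|_{L^\infty(M,\R^K)} \leq C_M\,\varepsilon_0 =: \delta_u$, a constant I can make arbitrarily small by shrinking $\varepsilon_0$.

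Next I would test \eqref{el general 2 equi} against $(u_\alpha - p) \in W^{1,\infty}(M,\R^K)$ and integrate by parts. Writing
\begin{equation*}
I_\alpha := \int_M \bigl(\tau_\alpha + |\nabla_{g_\alpha} u_\alpha|^2\bigr)^{\alpha-1} |\nabla_{g_\alpha} u_\alpha|^2 \, dV_{g_\alpha},
\end{equation*}
the boundary-less integration by parts produces $I_\alpha$ from the divergence term (with the appropriate sign coming from $\nabla(u_\alpha - p) = \nabla u_\alpha$), while the remaining two terms involve $A$ and $H$ paired against $u_\alpha - p$. The pointwise bounds $|A(u_\alpha)(\nabla u_\alpha,\nabla u_\alpha)| \leq \|A\|_{L^\infty(N)}|\nabla u_\alpha|^2$ and $|H(u_\alpha)(\nablap u_\alpha,\nabla u_\alpha)| \leq 2\|H\|_{L^\infty(N)}|\nabla u_\alpha|^2$, together with the elementary inequality $(\tau_\alpha+|\nabla u_\alpha|^2)^{\alpha-1} \geq \tau_\alpha^{\alpha-1}$ (which gives $\int_M |\nabla u_\alpha|^2 \leq \tau_\alpha^{1-\alpha} I_\alpha$), then yield
\begin{equation*}
I_\alpha \leq \Bigl(\|A\|_{L^\infty(N)} + \tfrac{2\|H\|_{L^\infty(N)}}{\alpha}\Bigr)\delta_u \, I_\alpha.
\end{equation*}
Choosing $\varepsilon_0$ small enough so that $C_M\varepsilon_0\bigl(\|A\|_\infty + 2\|H\|_\infty\bigr) < 1$ forces $I_\alpha = 0$, hence $\nabla u_\alpha \equiv 0$ and $E(u_\alpha) = 0$.

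The main difficulty here is really a bookkeeping issue: I need the regularity constant in Lemma \ref{lem4.1}, the Sobolev embedding constant, and the Poincaré-type constant yielding $\|u_\alpha - p\|_{L^\infty}$ to all be uniform as $\alpha \searrow 1$ and as $\tau_\alpha, g_\alpha$ vary. This is achieved by fixing a common $\alpha_0 > 1$ at the outset, invoking the smooth convergence $g_\alpha \to g$ on the compact $M$, and using the uniform lower bound $\tau_\alpha^{\alpha-1} \geq \beta_0 > 0$ to convert the weighted integral $I_\alpha$ into a genuine bound on the $L^2$ energy.
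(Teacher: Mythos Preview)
Your argument is correct. It does, however, follow a different route from the paper's.

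The paper reruns the $W^{2,4/3}$ estimate from the proof of Lemma~\ref{lem4.1} \emph{globally} on $M$ with the cutoff $\varphi\equiv 1$. Since all the error terms $C_0$ come from derivatives of $\varphi$, they vanish, and \eqref{eq:p=4/3 2} collapses to
\[
\Big((C_{4/3})^{-1} - 2(\alpha-1) - C_e\,C(A,H)\,E(u_\alpha)\Big)\,\|u_\alpha\|_{W^{2,4/3}(M)}\leq 0
\]
(with $u_\alpha$ normalized to have zero mean). For $\alpha-1$ and $E(u_\alpha)$ small this forces the norm to vanish, hence $u_\alpha$ is constant. So the paper gets constancy in one absorption step, at the cost of invoking global elliptic $L^p$ theory on $M$.

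Your approach instead uses Lemma~\ref{lem4.1} only locally, extracts a $C^1$ bound, and then performs a second, more elementary absorption by testing the divergence-form equation against $u_\alpha-p$. This is slightly longer but arguably more transparent: it avoids rerunning the Calder\'on--Zygmund machinery globally and makes the dependence on $\|A\|_{L^\infty}$, $\|H\|_{L^\infty}$, and the geometry of $M$ completely explicit. Your bookkeeping regarding uniformity in $\alpha$, $\tau_\alpha$, and $g_\alpha$ is also handled correctly via $\tau_\alpha^{\alpha-1}\geq\beta_0$ and the smooth convergence $g_\alpha\to g$. Both routes are standard and equally valid here.
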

		\begin{proof}
			If we replace the smooth function $\varphi$ with $\varphi \equiv 1$ and do the estimates globally on $M$, then $C_0 \equiv 0$ arising in Lemma \ref{lem4.1}. Thus, \eqref{eq:p=4/3 2} becomes
			\begin{equation*}
				\p{(C_{\frac{4}{3}})^{-1} - 2(\alpha - 1) - C_e C(A,H) E(u_\alpha,B)}\norm{\varphi u_\alpha}_{W^{2,4/3}(M,N)}\leq 0.
			\end{equation*}
			Therefore, when $E(u_\alpha,M) < \varepsilon_0^2$ is small enough, every critical point $u_\alpha$ of $\dbl{E}^\omega_{\alpha}$ is constant.
        \end{proof}
		
  Moreover, when $\alpha = 1$, we have the following removability of isolated singularities for $H$-surfaces by combining the proof in \cite[Theorem 2.4.1]{jost1991two} and the regularity result in \cite[Theorem 1.2]{Riviere2007}.
		\begin{lemma}[Removability of Isolated Singularities]\label{blow 3}
			Suppose that $u \in C^2(B(0,1)$ $ \backslash \{0\},N) $ where $B(0,1)$ equipped with metric $g= e^\varphi \p{(dx^1)^2 + (dx^2)^2}$ for some smooth function $\varphi$, $E(u,B(0,1)) <\infty$ and that $u$ satisfies the Euler-Lagrange equation $\eqref{eq H-surface intro}$, then $u$ can extends to a smooth $H$-surface $u:B(0,1)\rightarrow N$.
		\end{lemma}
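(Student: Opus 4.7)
My plan is to extend $u$ across the origin as a weak $W^{1,2}$ solution on the full disc $B(0,1)$ and then invoke Rivi\`ere's regularity theorem for conformally invariant two-dimensional elliptic systems. Since $E(u, B(0,1)) < \infty$ and $N$ is compact, $u$ lies in $W^{1,2}(B(0,1)\setminus\{0\}, \R^K)\cap L^\infty$. Because the singleton $\{0\}$ has vanishing $W^{1,2}$-capacity in dimension two, assigning $u(0)$ arbitrarily produces a genuine element of $W^{1,2}(B(0,1), \R^K)$ whose image still lies in $N$ almost everywhere. The conformal factor $e^\varphi$ is smooth and bounded above and below on $B(0,1)$, so we may as well argue with respect to the Euclidean background and treat the conformal weight as a perturbation.

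The crucial step is to verify that this extended $u$ satisfies the $H$-surface equation \eqref{eq H-surface intro} weakly on the whole of $B(0,1)$. For this I would introduce a logarithmic cutoff $\eta_\varepsilon \in C^\infty_c(B(0,1))$ with $\eta_\varepsilon \equiv 0$ on $B(0,\varepsilon)$, $\eta_\varepsilon \equiv 1$ outside $B(0,\sqrt{\varepsilon})$, and $\|\nabla \eta_\varepsilon\|_{L^2(B(0,1))} \to 0$ as $\varepsilon \to 0$. For any $\varphi \in C^\infty_c(B(0,1), \R^K)$ tangent to $N$ along $u$, the truncated field $\eta_\varepsilon \varphi$ is admissible in the punctured disc, and testing the strong equation there yields the standard weak formulation with $\eta_\varepsilon \varphi$ in place of $\varphi$. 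Splitting the resulting integrals, the pieces weighted by $\eta_\varepsilon$ converge to their natural limits by dominated convergence (since $|\nabla u|^2 \in L^1$ and $u$ is bounded), while the cross-term $\int \varphi \cdot (\nabla u \cdot \nabla \eta_\varepsilon)$ vanishes in the limit by Cauchy--Schwarz together with $\|\nabla \eta_\varepsilon\|_{L^2} \to 0$. Letting $\varepsilon \to 0$ then recovers the equation on the full disc.

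With the weak extension in hand, the Euler--Lagrange system \eqref{eq H-surface intro} fits the framework of conformally invariant elliptic systems of the form $-\Delta u = \Omega \cdot \nabla u$ with $\Omega$ an antisymmetric $K \times K$ matrix of $L^2$ one-forms: both the second-fundamental-form contribution and the mean-curvature-type contribution are conformally invariant, quadratic in $\nabla u$, and can be recast in antisymmetric potential form by the standard procedure for target manifolds isometrically embedded in $\R^K$. Rivi\`ere's regularity theorem \cite[Theorem 1.2]{Riviere2007} then delivers continuity of the extended $u$ on $B(0,1)$. Once continuity at the origin is in hand, Lemma \ref{lem4.1} taken at $\alpha = 1$ gives $W^{2,p}$ regularity on a small neighborhood of $0$, and a standard bootstrap using $L^p$-theory for the Laplacian with quadratic right-hand side promotes $u$ to a smooth classical $H$-surface on all of $B(0,1)$.

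The principal obstacle is the weak-extension step: one must guarantee that the distributional equation survives across the singularity. This is exactly where two-dimensional capacity theory enters, and the logarithmic cutoff is precisely designed to exploit the fact that points carry zero $W^{1,2}$-capacity while still producing a gradient that is small in $L^2$. The additional $H(u)(\nablap u, \nabla u)$ nonlinearity introduces no new analytic difficulty compared with the classical harmonic-map case in \cite[Theorem 2.4.1]{jost1991two}, since it is quadratic in $\nabla u$ and is handled identically to the $A(u)(\nabla u, \nabla u)$ term; indeed this parallelism is the same as the one exploited in \eqref{eq : H equiva A} and in the proofs of Lemma \ref{lem4.1} and Lemma \ref{blow 2}.
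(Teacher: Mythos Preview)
Your proposal is correct and follows precisely the route the paper indicates: the paper does not give a detailed proof but states that the lemma follows ``by combining the proof in \cite[Theorem 2.4.1]{jost1991two} and the regularity result in \cite[Theorem 1.2]{Riviere2007},'' which is exactly your capacity/logarithmic-cutoff extension to a weak solution on the full disc followed by Rivi\`ere's regularity and a bootstrap. One small remark: Lemma~\ref{lem4.1} as stated is for $1 < \alpha \leq \alpha_0$, so when you invoke it ``at $\alpha = 1$'' you are implicitly using that the same $\varepsilon$-regularity argument goes through (and in fact simplifies) for the unperturbed $H$-surface equation; this is standard and causes no difficulty, but you might phrase the final bootstrap step directly in terms of elliptic $L^p$-theory for the Laplacian once continuity is in hand.
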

		
\subsubsection{Pohozaev type Identities}\label{section:pohozaev}
\ 
\vskip5pt

As a corollary of Lemma \ref{lem4.1}, we can establish  the following boundedness estimates
\begin{equation*}
    \limsup_{\alpha \searrow 1} \Big\|\p{\tau_\alpha + \abs{\nabla_{g_\alpha}u_\alpha}^2}^{\alpha - 1}\Big\|_{C^0(B(0,1))} \leq C < \infty.
\end{equation*}
\begin{lemma}\label{bounded F}
     Let $(B(0,1),g_\alpha)$ be a unit disk in $\R^2$ equipped with a metric 
     $$g_\alpha = e^{\varphi_\alpha}\big((dx^1)^2 + (dx^2)^2\big)$$
     where $\varphi_\alpha(0) = 0$ and $\varphi_\alpha$ is a sequence of smooth function such that $\varphi_\alpha \rightarrow \varphi$ strongly in $ C^\infty(\overline{B(0,1)})$. If $u_\alpha$ is a sequence of $\alpha$-$H$-surfaces with uniformly bounded generalized $\alpha$-energy $\sup_{\alpha > 1}\dbl{E}_\alpha(u_\alpha, B(0,1)) < \infty$ and $\lim_{\alpha \searrow 1} \tau_\alpha^{\alpha - 1} > \beta_0 > 0$, then there exists a positive $\beta_1 > 0$ which is independent of $\alpha \searrow 1$ such that 
     \begin{align}\label{eq:C0 estimates}
         \beta_0 &\leq \liminf_{\alpha \searrow 1}\Big\|\p{\tau_\alpha + \abs{\nabla_{g_\alpha}u_\alpha}^2}^{\alpha - 1}\Big\|_{C^0(B(0,1))} \nonumber\\
         &\leq \limsup_{\alpha \searrow 1}\Big\|\p{\tau_\alpha + \abs{\nabla_{g_\alpha}u_\alpha}^2}^{\alpha - 1}\Big\|_{C^0(B(0,1))} \leq \beta_1.
     \end{align}
\end{lemma}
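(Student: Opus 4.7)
The lower bound is immediate: since $\alpha > 1$, the map $t \mapsto t^{\alpha-1}$ is non-decreasing on $[0,\infty)$, so pointwise $(\tau_\alpha + |\nabla_{g_\alpha} u_\alpha|^2)^{\alpha-1} \geq \tau_\alpha^{\alpha-1}$, and the hypothesis $\lim_{\alpha \searrow 1} \tau_\alpha^{\alpha-1} > \beta_0$ yields the first inequality of \eqref{eq:C0 estimates}. For the upper bound, the plan is to argue by contradiction via a blow-up rescaling. Suppose that along some subsequence $\alpha_k \searrow 1$ one has
\begin{equation*}
M_k := \Big\| \bigl(\tau_{\alpha_k} + |\nabla_{g_{\alpha_k}} u_{\alpha_k}|^2\bigr)^{\alpha_k - 1} \Big\|_{C^0(\overline{B(0,1-\delta)})} \longrightarrow \infty
\end{equation*}
for some fixed $\delta > 0$, and let $x_k$ attain the maximum. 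Since $\tau_{\alpha_k} \leq 1$ and $\alpha_k - 1 \to 0$, a bounded gradient would force $M_k$ to remain bounded, so $|\nabla u_{\alpha_k}(x_k)| \to \infty$. Set $\lambda_k := |\nabla u_{\alpha_k}(x_k)|^{-1} \to 0$ and $v_k(y) := u_{\alpha_k}(x_k + \lambda_k y)$. A routine change of variables in \eqref{el general 1 equi} shows that $v_k$ solves the same generalized $\alpha$-$H$-surface equation on $B(0, \lambda_k^{-1} \delta)$ with rescaled parameter $\tilde{\tau}_k := \lambda_k^2 \tau_{\alpha_k}$ and conformal factor $\tilde{\varphi}_k(y) := \varphi_{\alpha_k}(x_k + \lambda_k y)$ converging smoothly to a constant.

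By construction $|\nabla v_k(0)| = 1$, and because $x_k$ maximises $|\nabla u_{\alpha_k}|$ on $\overline{B(0,1-\delta)}$, one has the uniform pointwise bound $|\nabla v_k| \leq 1$ throughout the domain of $v_k$. Consequently, any fixed $B(0,R) \subset \mathbb{R}^2$ can be covered by finitely many balls of radius so small that the local Dirichlet energy falls below the threshold $\varepsilon_0^2$ of Lemma~\ref{lem4.1}; this yields uniform $W^{2,p}_{\mathrm{loc}}$ bounds, and after a diagonal subsequence $v_k \to v$ in $C^{1,\beta}_{\mathrm{loc}}(\mathbb{R}^2, N)$ with $|\nabla v(0)| = 1$, so that $v$ is non-constant. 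The key arithmetic observation is
\begin{equation*}
M_k \;=\; \bigl(\tau_{\alpha_k} + \lambda_k^{-2}\bigr)^{\alpha_k - 1} \;=\; \lambda_k^{-2(\alpha_k - 1)} \bigl(1 + \lambda_k^2 \tau_{\alpha_k}\bigr)^{\alpha_k - 1},
\end{equation*}
and, since the right parenthesis converges to $1$, we deduce $\lambda_k^{2\alpha_k - 2} \to 0$.

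The contradiction now follows from comparing two expressions for $\int_{B(0,R)} |\nabla v_k|^{2\alpha_k}\,dy$. The change of variables gives
\begin{equation*}
\int_{B(0,R)} |\nabla v_k|^{2\alpha_k}\,dy \;=\; \lambda_k^{2\alpha_k - 2} \int_{B(x_k,\,\lambda_k R)} |\nabla u_{\alpha_k}|^{2\alpha_k}\,dx \;\leq\; C\Lambda\, \lambda_k^{2\alpha_k - 2} \;\longrightarrow\; 0,
\end{equation*}
the integrand on the left being controlled by the generalized $\alpha$-energy up to the bounded conformal factors. On the other hand, $|\nabla v_k| \leq 1$ together with $C^{1,\beta}_{\mathrm{loc}}$ convergence $v_k \to v$ yields, by dominated convergence, $\int_{B(0,R)} |\nabla v_k|^{2\alpha_k}\,dy \to \int_{B(0,R)} |\nabla v|^2\,dy$. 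Letting $R \to \infty$ we obtain $\int_{\mathbb{R}^2} |\nabla v|^2\,dy = 0$, so $v$ is constant, contradicting $|\nabla v(0)| = 1$. I expect the main obstacle in executing this plan to be establishing sufficient $W^{2,p}_{\mathrm{loc}}$ regularity for $v_k$ in spite of the degeneration $\tilde{\tau}_k \to 0$ and $\alpha_k \to 1$; it is precisely the uniform gradient bound $|\nabla v_k| \leq 1$, guaranteed by taking $x_k$ to be a maximum point, that circumvents this by reducing the regularity question to a small-ball application of Lemma~\ref{lem4.1}, rather than a delicate analysis of a degenerating quasi-linear system.
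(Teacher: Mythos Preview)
Your proposal is correct and follows essentially the same idea as the paper's proof. Both arguments rescale at a point where $|\nabla u_\alpha|$ is maximal and use the uniform energy bound to conclude that $\lambda_\alpha^{2-2\alpha}$ cannot diverge; the paper phrases this by invoking the earlier bound $\mu_{ij}\le\mu_{\max}=\Lambda/\varepsilon_0$ on the blow-up spectrum (derived from the energy gap), whereas you run the contradiction directly, obtaining the non-triviality of the limit from $|\nabla v(0)|=1$ rather than from the energy gap, which makes your version slightly more self-contained.
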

\begin{proof}
    It suffices to prove the upper bound part of \eqref{eq:C0 estimates}. If the energy concentrate set 
    \begin{equation*}
        \mathfrak{S}:=\left\{x\in B(0,1) \,:\, \liminf_{k\rightarrow \infty}\frac{1}{2}\int_{B(x,r)}\abs{\nabla_{g_\alpha} u_{\alpha}}^2 dV_{g_\alpha} \geq  {\varepsilon_0^2}, \quad \text{for all } r > 0\right\}
    \end{equation*}
    is empty, then by Lemma \ref{lem4.1} $u_\alpha$ converges to some $H$-surface $u_0$ smoothly which implies 
    \begin{equation*}
        \limsup_{\alpha \searrow 1} \norm{\nabla_{g_\alpha} u_\alpha}_{C^0(B(0,1))} \leq C <\infty.
    \end{equation*}
    Hence, \eqref{eq:C0 estimates} follows directly. Thus, we assume that $\mathfrak{S}$ is non-empty. Without loss of generality, we further assume that $0 \in \mathfrak{S}$ is the only energy concentration point. Then, there exists finitely many bubbles occurring around $0$, hence there exists sequences of positive numbers $\lambda_\alpha^i\searrow 0$ and sequences of points $x_\alpha^i \searrow 0$ as $\alpha \searrow 1$, for $1 \leq i \leq n_0$ satisfying the alternative \ref{A1} or \ref{A2}. We choose the smallest $\lambda_\alpha^{i_0}$ satisfying
    \begin{equation*}
        \limsup_{\alpha \searrow 1} \frac{\lambda_\alpha^{i_0}}{\lambda_{\alpha}^i} \leq C < \infty \quad \text{for any } 1\leq i \neq i_0 \leq 
        n_0.
    \end{equation*}
    Therefore, the energy concentration $\mathfrak{S}$ set of rescaled sequences $w_\alpha(x) := u_\alpha(x_\alpha^{i_0})$ is empty, hence by Lemma \ref{lem4.1} we have
    \begin{equation*}
        \limsup_{\alpha \searrow 1} \Big\|\p{\tau_\alpha + \abs{\nabla_{g_\alpha}u_\alpha}^2}^{\alpha - 1}\Big\|_{C^0(B(0,1))} \leq C \limsup_{\alpha \searrow 1}\p{1 + \p{\lambda_\alpha^{i_0}}^{2 -2\alpha}} \leq C(1 + \mu_{max})
    \end{equation*}
    which yields the estimate \eqref{eq:C0 estimates} by letting $\beta_1 := C(1 + \mu_{max})$.
\end{proof}
Next we are devoted to derive some general variational formulas for the functional $\dbl{E}^\omega_\alpha$, to obtain some critical estimates of the energy of $\alpha$-$H$-surfaces on the neck domains. We adapt the idea introduced in \cite[Lemma 2.3]{li2010} and hence some reduplicative computational details are omitted. 

\begin{lemma}\label{lem:pohozaev}
    Let $(B(0,1),g_\alpha)$ be a unit disk in $\R^2$ equipped with a metric 
    $$g_\alpha = e^{\varphi_\alpha}\big((dx^1)^2 + (dx^2)^2\big)$$
    where $\varphi_\alpha(0) = 0$ and $\varphi_\alpha$ is a sequence of smooth function and $\varphi_\alpha \rightarrow \varphi$ strongly in $ C^\infty(\overline{B(0,1)})$. If $u_\alpha$ is a critical point of $\dbl{E}^\omega_\alpha(u, B(0,1))$, then for any $0 < t < 1$ there holds
\begin{align}\label{pohozaev esti 2}
    \p{1-\frac{1}{2\alpha}}&\int_{\partial B(0,t)} \p{\tau_\alpha + \abs{\nabla_{g_\alpha} u_\alpha}^2}^{\alpha - 1}\abs{\frac{\partial u_\alpha}{\partial r}}^2 ds\nonumber \\
    &\quad - \frac{1}{2\alpha} \int_{\partial B(0,t)}\p{\tau_\alpha + \abs{\nabla_{g_\alpha}u_\alpha}^2}^{\alpha - 1}\frac{1}{|x|^2}\abs{\frac{\partial u_\alpha}{\partial \theta}}^2ds\nonumber\\
    &=\p{1 - \frac{1}{\alpha}}\frac{1}{t}\int_{B(0,t)} \p{\tau_\alpha + \abs{\nabla_{g_\alpha}u_\alpha}^2}^{\alpha - 1} \abs{\nabla u_\alpha}^2 dx + O(t).
    \end{align}
    and
\begin{align}\label{pohozaev esti 3}
    \p{1-\frac{1}{2\alpha}}&\int_{\partial B(0,t)} \p{\tau_\alpha + \abs{\nabla_{g_\alpha} u_\alpha}^2}^{\alpha - 1}\abs{\nabla u_\alpha}^2 ds\nonumber \\
    &\quad - \int_{\partial B(0,t)}\p{\tau_\alpha + \abs{\nabla_{g_\alpha}u_\alpha}^2}^{\alpha - 1}\frac{1}{|x|^2}\abs{\frac{\partial u_\alpha}{\partial \theta}}^2ds\nonumber\\
    &=\p{1 - \frac{1}{\alpha}}\frac{1}{t}\int_{B(0,t)} \p{\tau_\alpha + \abs{\nabla_{g_\alpha}u_\alpha}^2}^{\alpha - 1} \abs{\nabla u_\alpha}^2 dx + O(t).
    \end{align}
\end{lemma}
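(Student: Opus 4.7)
The plan is to derive this Pohozaev-type identity by pairing the divergence-form Euler--Lagrange equation \eqref{el general 2 equi} with the radial dilation vector field $X_\alpha := x^l \partial_{x^l} u_\alpha$ and integrating over $B(0,t)$. Setting $F := (\tau_\alpha + |\nabla_{g_\alpha} u_\alpha|^2)^{\alpha-1}$, I will test the Euler--Lagrange equation against $X_\alpha$.

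The first observation is that both curvature-type terms on the left-hand side of \eqref{el general 2 equi} drop out after pairing. Since $A(u_\alpha)(\nabla u_\alpha, \nabla u_\alpha) \in T_{u_\alpha}^\perp N$ while $X_\alpha \in T_{u_\alpha} N$, the second-fundamental form contribution vanishes pointwise. For the mean-curvature term I expand
\[
H(u_\alpha)(\nablap u_\alpha, \nabla u_\alpha) \cdot X_\alpha = 2 x^i\, H(u_{\alpha,x^1}, u_{\alpha,x^2}) \cdot u_{\alpha,x^i},
\]
and invoke the antisymmetry $H^k_{ij} = -H^i_{kj}$ from \eqref{eq:H anti-symmetric}: this yields $H(V, W)\cdot V = 0$ and also $H(V, W)\cdot W = -H(W, W)\cdot V = 0$, so the mean-curvature contribution likewise vanishes pointwise. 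This is a purely algebraic cancellation and requires no a priori estimate.

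What remains is a pure ``Dirichlet'' integration by parts in $\int_{B(0,t)} -\partial_{x^j}(F \partial_{x^j} u_\alpha^k)\,(x^l \partial_{x^l} u_\alpha^k)\,dx = 0$. Standard manipulations produce a boundary contribution $-t \int_{\partial B(0,t)} F |\partial_r u_\alpha|^2\, ds$ together with an interior term $\int F|\nabla u_\alpha|^2\,dx + \tfrac{1}{2}\int F\, x \cdot \nabla |\nabla u_\alpha|^2\,dx$. The key algebraic identity is
\[
F\, \partial_i |\nabla u_\alpha|^2 = \frac{e^{\varphi_\alpha}}{\alpha}\, \partial_i G + F |\nabla u_\alpha|^2\, \partial_i \varphi_\alpha,
\]
where $G := (\tau_\alpha + |\nabla_{g_\alpha} u_\alpha|^2)^\alpha$; this follows from $\partial_i G = \alpha F\, \partial_i |\nabla_{g_\alpha} u_\alpha|^2$ together with the conformal relation $|\nabla u_\alpha|^2 = e^{\varphi_\alpha}|\nabla_{g_\alpha} u_\alpha|^2$. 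A second integration by parts on the $\nabla G$ piece, combined with the pointwise identity $G\, e^{\varphi_\alpha} = F |\nabla u_\alpha|^2 + \tau_\alpha F e^{\varphi_\alpha}$, produces the correct coefficients $(1-\tfrac{1}{\alpha})$ on $\tfrac{1}{t}\int F|\nabla u_\alpha|^2\,dx$ and $\tfrac{1}{2\alpha}$ on the boundary integral of $F |\nabla u_\alpha|^2$.

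Dividing by $t$ and decomposing $|\nabla u_\alpha|^2 = |\partial_r u_\alpha|^2 + |x|^{-2}|\partial_\theta u_\alpha|^2$ on $\partial B(0,t)$ recasts the result as \eqref{pohozaev esti 2}; then \eqref{pohozaev esti 3} is just a rewriting of the same identity (the two left-hand sides coincide after substituting the polar decomposition). The main obstacle is not structural but quantitative: all residual terms carrying either $\partial_i \varphi_\alpha$ or the prefactor $\tau_\alpha F e^{\varphi_\alpha}$ must be absorbed into the stated $O(t)$ error uniformly as $\alpha \searrow 1$. This is exactly where the uniform $C^0$ bound on $F$ furnished by Lemma \ref{bounded F}, the $C^\infty$-smoothness of $\varphi_\alpha$ (so that $|x|\,|\nabla \varphi_\alpha|$ is $O(t)$ on $B(0,t)$), and the hypothesis $\tau_\alpha \leq 1$ are all essential.
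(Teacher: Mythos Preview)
Your approach is correct and is essentially the same as the paper's: both test the criticality of $u_\alpha$ against the radial dilation field $x\cdot\nabla u_\alpha$, observe that the $H$-term drops out by the full antisymmetry \eqref{eq:H anti-symmetric} (the paper reaches the equivalent conclusion $\tfrac{dB}{ds}|_{s=0}=0$ via the diffeomorphism-invariance of $\int u^*\omega$), and then perform the standard Pohozaev integrations by parts, absorbing the $\partial\varphi_\alpha$- and $\tau_\alpha$-contributions into $O(t)$ using Lemma~\ref{bounded F}. The only cosmetic difference is that the paper works through the inner-variation identity \eqref{E 21} with a cutoff $\eta$ and passes to the limit $t'\nearrow t$, whereas you pair the divergence-form equation \eqref{el general 2 equi} directly with $x\cdot\nabla u_\alpha$ on $B(0,t)$; since $u_\alpha$ is smooth this is legitimate and slightly more direct (indeed it is the same device the paper itself uses for Lemma~\ref{lem:second pohozaev}). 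Your remark that \eqref{pohozaev esti 2} and \eqref{pohozaev esti 3} are the same identity after the polar decomposition is also correct.
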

\begin{proof}
    Taking a 1-parameter family of transformations group $\{\phi_s\}$ that is generated by the vector field supported in $B(0,1)\subset\R^2$, we compute
    \begin{align*}
        &\dbl{E}^\omega_\alpha(u\circ \phi_s, B(0,1))\\
        &= \int_{B(0,1)}\p{\tau_\alpha + \abs{\nabla_{g_\alpha}(u\circ\phi_s)}^2}^\alpha dV_{g_\alpha} + \int_{B(0,1)}(u\circ\phi_s)^*\omega\\
        &=\int_{B(0,1)}\p{\tau_\alpha + \sum_{i = 1}^2\abs{du\p{(\phi_s)_*(e_i(x))}}^2}^\alpha dV_{g_\alpha}\\
        &\quad\quad+ \int_{B(0,1)}\omega_{ij}(u\circ \phi_s)\nablap(u\circ \phi_s)^i \nabla(u\circ \phi_s)^j dx^1\wedge dx^2\\
        & = \int_{B(0,1)} \p{\tau_\alpha + \sum_{i = 1}^2\abs{du\p{(\phi_s)_*(e_i(\phi^{-1}_s(x)))}}^2}^\alpha J(\phi^{-1}_s) dV_{g_\alpha}\\
        &\quad \quad+ \int_{B(0,1)}\omega_{ij}(u\circ\phi_s)\p{\frac{\partial u^i}{\partial x^k}\frac{\partial \phi_s^k}{\partial z^1}\frac{\partial u^j}{\partial x^l}\frac{\partial \phi_s^l}{\partial z^2}- \frac{\partial u^j}{\partial x^k}\frac{\partial \phi_s^k}{\partial z^1}\frac{\partial u^i}{\partial x^l}\frac{\partial \phi_s^l}{\partial z^2}} dx^1\wedge dx^2\\
        &:= A + B
    \end{align*}
    where $\{e_i\}$ is a local orthonormal basis of $TB(0,1)$ and $J(\phi^{-1}_s)$ is the Jacobian of $\phi^{-1}_s$. Utilizing the first variational formula for area functional
    \begin{equation*}
        \frac{d}{ds}J(\phi_s^{-1})dV_{g_\alpha}\Big|_{s = 0} = -\diver(X)dV_{g_\alpha},
    \end{equation*}
    differentiating $\dbl{E}^\omega_\alpha$ yields
    \begin{align*}
        \frac{d}{ds}\dbl{E}^\omega_\alpha(u \circ \phi_s )\Big|_{s = 0} &= \delta\dbl{E}^\omega_\alpha(u)\p{du(X)}\\
        &= - \int_{B(0,1)} \p{\tau_\alpha + \abs{\nabla_{g_\alpha}u}^2}^\alpha \diver(X)dV_{V_{g_\alpha}}\\
        &\quad\quad + 2\alpha\sum_i\int_{B(0,1)}\p{\tau_\alpha  + \abs{\nabla_{g_\alpha}u}^2}^{\alpha - 1}\langle du(\nabla_{e_i} X), du(e_i)\rangle dV_{g_\alpha}\\
        &\quad \quad+\frac{d}{ds}B\,\Big|_{s = 0}.
    \end{align*}
    Next, we focus on
    \begin{align*}
        \frac{d}{ds}B\,\Big|_{s = 0} &= \int_{B(0,1)} \frac{\partial \omega_{ij}}{\partial y^p}\frac{\partial u^p}{\partial x^q}\frac{d \phi^q_s}{ds}\Big|_{s = 0} \nablap u^i \nabla u^j dx^1\wedge dx^2\\
        &\quad  + \int_{B(0,1)}\omega_{ij}(u)\left(\frac{\partial^2 u^i}{\partial x^p \partial x^1}\frac{d \phi^p_s}{ds}\Big|_{s=0}\frac{\partial u^j}{\partial x^2} + \frac{\partial u^i}{\partial x^k}\frac{\partial X^k}{\partial z^1}\frac{\partial u^j}{\partial x^2}+\frac{\partial u^i}{\partial x^1}\frac{\partial^2 u^j}{\partial x^p \partial x^2}\frac{d\phi^p_s}{ds}\Big|_{s= 0}\right.\\
        &\quad + \frac{\partial u^i}{\partial x^1}\frac{\partial u^j}{\partial x^l}\frac{\partial X^l}{\partial z^2} - \frac{\partial^2 u^j}{\partial x^p \partial x^1}\frac{d \phi^p_s}{ds}\Big|_{s=0}\frac{\partial u^i}{\partial x^2} - \frac{\partial u^j}{\partial x^k}\frac{\partial X^k}{\partial z^1}\frac{\partial u^i}{\partial x^2}\\
        &\quad -\left.\frac{\partial u^j}{\partial x^1}\frac{\partial^2 u^i}{\partial x^p \partial x^2}\frac{d\phi^p_s}{ds}\Big|_{s = 0} - \frac{\partial u^j}{\partial x^1}\frac{\partial u^i}{\partial x^l}\frac{\partial X^l}{\partial z^2} \right) dx^1\wedge dx^2\\
        &=\int_{B(0,1)} \frac{\partial \omega_{ij}}{\partial y^p}\frac{\partial u^p}{\partial x^q}X^q \nablap u^i \nabla u^j dx^1\wedge dx^2\\
        &\quad  + \int_{B(0,1)}\omega_{ij}(u)\left(\frac{\partial^2 u^i}{\partial x^p \partial x^1}X^p\frac{\partial u^j}{\partial x^2} + \frac{\partial u^i}{\partial x^k}\frac{\partial X^k}{\partial z^1}\frac{\partial u^j}{\partial x^2}+\frac{\partial u^i}{\partial x^1}\frac{\partial^2 u^j}{\partial x^p \partial x^2}X^p \right.\\
        &\quad  + \frac{\partial u^i}{\partial x^1}\frac{\partial u^j}{\partial x^l}\frac{\partial X^l}{\partial z^2}-\frac{\partial^2 u^j}{\partial x^p \partial x^1}X^p\frac{\partial u^i}{\partial x^2} - \frac{\partial u^j}{\partial x^k}\frac{\partial X^k}{\partial z^1}\frac{\partial u^i}{\partial x^2}\\
        &\quad \left.-\frac{\partial u^j}{\partial x^1}\frac{\partial^2 u^i}{\partial x^p \partial x^2}X^p - \frac{\partial u^j}{\partial x^1}\frac{\partial u^i}{\partial x^l}\frac{\partial X^l}{\partial z^2}\right)dx^1\wedge dx^2\\
        &: = C + D
    \end{align*}
    Rearranging terms in $D$ and integrating by parts yields
    \begin{align*}
        D &= -\int_{B(0,1)} \frac{\partial u^i}{\partial x^1}\frac{\partial}{\partial x^p}\p{\omega_{ij}(u)X^p\frac{\partial u^j}{\partial x^2}}dx \\
        &\quad + \int_{B(0,1)}\omega_{ij}(u)\p{\frac{\partial u^i}{\partial x^1}\frac{\partial u^j}{\partial x^2} - \frac{\partial u^j}{\partial x^1}\frac{\partial u^i}{\partial x^2}}\frac{\partial X^1}{\partial z^1}dx\\
        &\quad + \int_{B(0,1)}\omega_{ij}(u)\p{\frac{\partial u^i}{\partial x^1}\frac{\partial u^j}{\partial x^2} - \frac{\partial u^j}{\partial x^1}\frac{\partial u^i}{\partial x^2}}\frac{\partial X^2}{\partial z^2}dx + \int_{B(0,1)}\omega_{ij}(u)\frac{\partial^2 u^j}{\partial x^p\partial x^2}X^p\frac{\partial u^i}{\partial x^1}dx\\
        &\quad + \int_{B(0,1)} \frac{\partial u^j}{\partial x^1}\frac{\partial}{\partial x^p}\p{\omega(u)_{ij}X^p\frac{\partial u^i}{\partial x^2}}dx - \int_{B(0,1)}\omega_{ij}(u)\frac{\partial^2 u^i}{\partial x^p \partial x^2}X^p\frac{\partial u^j}{\partial x^1}dx\\
        &=  \int_{B(0,1)}\omega_{ij}(u)\nablap u^i \nabla u^j\diver(X)dx - \int_{B(0,1)}\frac{\partial\omega_{ij}}{\partial x^p}\frac{\partial u^i}{\partial x^1}\frac{\partial u^j}{\partial x^2} X^pdx\\
        &\quad - \int_{B(0,1)}\omega_{ij}(u)\frac{\partial u^i}{\partial x^1}\frac{\partial u^j}{\partial x^2} \diver(X)dx - \int_{B(0,1)}\omega_{ij}(u)\frac{\partial u^i}{\partial x^1}\frac{\partial^2 u^j}{\partial x^p \partial x^2}X^p dx\\
        &\quad + \int_{B(0,1)}\omega_{ij}(u)\frac{\partial^2 u^j}{\partial x^p\partial x^2}X^p\frac{\partial u^i}{\partial x^1}dx + \int_{B(0,1)}\frac{\partial\omega_{ij}}{\partial x^p}\frac{\partial u^i}{\partial x^2}\frac{\partial u^j}{\partial x^1} X^p dx\\
        &\quad +\int_{B(0,1)}\omega_{ij}(u)\frac{\partial u^i}{\partial x^2}\frac{\partial u^j}{\partial x^1} \diver(X)dx + \int_{B(0,1)}\omega_{ij}(u)\frac{\partial u^j}{\partial x^1}\frac{\partial^2 u^i}{\partial x^p \partial x^2}X^p dx \\
        &\quad - \int_{B(0,1)}\omega_{ij}(u)\frac{\partial^2 u^i}{\partial x^p \partial x^2}X^p\frac{\partial u^j}{\partial x^1}dx\\
        & = \int_{B(0,1)}\omega_{ij}(u)\nablap u^i \nabla u^j\diver(X)dx - \int_{B(0,1)}\omega_{ij}(u)\nablap u^i \nabla u^j\diver(X)dx\\
        &\quad + \int_{B(0,1)}\frac{\partial \omega_{ij}}{\partial x^p}\p{\frac{\partial u^i}{\partial x^2}\frac{\partial u^j}{\partial x^1} - \frac{\partial u^i}{\partial x^1}\frac{\partial u^j}{\partial x^2}}dx\\
        &= - \int_{B(0,1)}\frac{\partial \omega_{ij}}{\partial x^p} X^p \nablap u^i\nabla u^j dx
        \end{align*}
        which implies
        \begin{equation*}
            \frac{dB}{ds}\Big|_{s = 0} = 0.
        \end{equation*}
        Now if $u_\alpha$ is the critical point of $\dbl{E}^\omega_\alpha$,  for any vector field $X$ supported in unite disk $B(0,1)$ we have
        \begin{align}\label{E 21}
            2\alpha\sum_i\int_{B(0,1)}&\p{\tau_\alpha  + \abs{\nabla_{g_\alpha}u}^2}^{\alpha - 1}\langle du(\nabla_{e_i} X), du(e_i)\rangle dV_{g_\alpha}\nonumber \\
            &= \int_{B(0,1)} \p{\tau_\alpha + \abs{\nabla_{g_\alpha}u}^2}^\alpha \diver(X)dV_{V_{g_\alpha}}
        \end{align}
        To obtain \eqref{pohozaev esti 2}, we choose a vector field $X$ supported in $B_\rho$ by
        \begin{equation*}
            X = \eta(r)r\frac{\partial}{\partial r} = \eta(|x|)x^i\frac{\partial}{\partial x^i}
        \end{equation*}
        where $\eta(r)$ is defined by
        \begin{equation*}
           \eta(r) =  \left\{
            \begin{aligned}
            &1&\quad\quad &\text{if}&\,\, r\leq t^\prime,\\
            &\frac{t - r}{t - t^\prime}&\quad\quad &\text{if}&\,\, t^\prime \leq r \leq t,\\
            &0&\quad\quad &\text{if}&\,\, r\geq t,
            \end{aligned}
            \right.\quad \quad \text{for }\,0 < t^\prime < t\leq \rho < 1.
        \end{equation*}
        Plugging this vector field into \eqref{E 21}, we obtain
        \begin{align}\label{E 22}
            0= & (2 \alpha-2) \int_{B(0,t)} \eta\left(\tau_\alpha+\left|\nabla_{g_\alpha} u_\alpha\right|^2\right)^{\alpha-1}\left|\nabla_0 u_\alpha\right|^2 d x\nonumber\\
            & +\int_{B(0,t)} O(|x|)\left(\tau_\alpha+\left|\nabla_{g_\alpha} u_\alpha\right|^2\right)^{\alpha-1}\left|\nabla_0 u_\alpha\right|^2 d x \nonumber\\
            & -2 \tau_\alpha \int_{B(0,t)} \eta\left(\tau_\alpha+\left|\nabla_{g_\alpha} u_\alpha\right|^2\right)^{\alpha-1} d V_{g_\alpha}\nonumber\\
            &+\frac{\tau_\alpha}{t-t^{\prime}} \int_{B(0,t) \backslash B_{t^{\prime}}} r\left(\tau_\alpha+\left|\nabla_{g_\alpha} u\right|^2\right)^{\alpha-1} d V_{g_\alpha} \nonumber\\
            & +\frac{1}{t-t^{\prime}} \int_{B(0,t) \backslash B_{t^{\prime}}}\left(\tau_\alpha+\left|\nabla_{g_\alpha} u_\alpha\right|^2\right)^{\alpha-1}\left[\left|\nabla_0 u_\alpha\right|^2 r-2 \alpha r\left|\frac{\partial u_\alpha}{\partial r}\right|^2\right] d x \nonumber\\
            & -\int_{B(0,t)} \tau_\alpha\left(\tau_\alpha+\left|\nabla_{g_\alpha} u_\alpha\right|^2\right)^{\alpha-1} r \eta \frac{\partial \varphi}{\partial r} d V_{g_\alpha}
        \end{align}
    In equation \eqref{E 22} taking $t^\prime \nearrow t$ yields estimation \eqref{pohozaev esti 3}
    \begin{align}\label{E 23}
        &\int_{\partial B(0,t)} \p{\tau_\alpha + \abs{\nabla_{g_\alpha} u_\alpha}^2}^{\alpha - 1}\abs{\frac{\partial u_\alpha}{\partial r}}^2 ds\nonumber\\
        &\quad - \frac{1}{2\alpha} \int_{\partial B(0,t)}\p{\tau_\alpha + \abs{\nabla_{g_\alpha}u_\alpha}^2}^{\alpha - 1}\abs{\nabla u_\alpha}^2ds\nonumber\\
    &\quad=\p{1 - \frac{1}{\alpha}}\frac{1}{t}\int_{B(0,t)} \p{\tau_\alpha + \abs{\nabla_{g_\alpha}u_\alpha}^2}^{\alpha - 1} \abs{\nabla u_\alpha}^2 dx + O(t).
    \end{align}
    where we used co-area formula and Lemma \ref{bounded F}. Since under the polar coordinates the metric tensor can be written as 
    \begin{equation*}
        g_\alpha = e^{\varphi_\alpha}\p{dr^2 + r^2 d\theta^2},
    \end{equation*}
    hence
    \begin{equation}\label{eq:polar energy}
        \abs{\nabla u_\alpha}^2 = \abs{\frac{\partial u_\alpha}{\partial r}}^2 + \frac{1}{|x|^2}\abs{\frac{\partial u_\alpha}{\partial \theta}}^2.
    \end{equation}
    Therefore, \eqref{pohozaev esti 2} is obtained from above observation and \eqref{E 23}.
\end{proof}
Next, compared with previous Lemma \ref{lem:pohozaev} we proceed to derive an alternative form of the Pohozave-type identity, which directly connects the angular component of the energy function with the radial component of the energy functional.
\begin{lemma}\label{lem:second pohozaev}
    Let $(B(0,1),g_\alpha)$ be the unit disk in $\R^2$ with metric 
    $$g_\alpha = e^{\varphi_\alpha}\big((dx^1)^2 + (dx^2)^2\big)$$
    where $\varphi_\alpha \in C^\infty(B(0,1))$ and $\varphi_\alpha(0) = 0$ for $\alpha > 1$. If $u_\alpha$ is a $\alpha$-$H$-surface being a critical point of $\dbl{E}_\alpha^\omega(u, B(0,1))$, then for any $0 < t < 1$ the following holds
    \begin{align}
         \label{eq:another pohozaev}
        \int_{\partial B(0,t)} \left(\abs{\frac{\partial u_\alpha}{\partial r}}^2\right. &- \left.\frac{1}{r^2}\abs{\frac{\partial u_\alpha}{\partial \theta}}^2\right)ds\nonumber\\
        &= -\frac{2(\alpha - 1)}{t}\int_{B(0,t)}\frac{\nabla \abs{\nabla_{g_\alpha}u_\alpha}^2\nabla u_\alpha}{\tau_\alpha + \abs{\nabla_{g_\alpha}u_\alpha}^2} r \frac{\partial u_\alpha}{\partial r} dx.
    \end{align}
\end{lemma}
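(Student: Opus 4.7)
The identity will be obtained by the classical Pohozaev multiplier trick: testing the Euler--Lagrange equation \eqref{el general 1 equi} against the radial vector field $x\cdot\nabla u_\alpha = r\,\partial_r u_\alpha$. The nontrivial right-hand side will survive only through the $(\alpha-1)$-perturbation term, because the two other nonlinear terms pair to zero with $r\,\partial_r u_\alpha$.

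First I will establish the purely analytic Pohozaev identity
\begin{equation*}
\int_{\partial B(0,t)}\!\left(\abs{\tfrac{\partial u_\alpha}{\partial r}}^2 - \tfrac{1}{r^2}\abs{\tfrac{\partial u_\alpha}{\partial \theta}}^2\right)ds \;=\; \frac{2}{t}\int_{B(0,t)}\Delta u_\alpha\cdot(x\cdot\nabla u_\alpha)\,dx,
\end{equation*}
where $\Delta$ is the Euclidean Laplacian. This comes from the pointwise identity $\partial_i\bigl[\partial_i u_\alpha\,(x_k\partial_k u_\alpha)\bigr] = \Delta u_\alpha\,(x\cdot\nabla u_\alpha) + \abs{\nabla u_\alpha}^2 + \tfrac{1}{2}x\cdot\nabla\abs{\nabla u_\alpha}^2$, integrated over $B(0,t)$; the first boundary integration gives $t\int_{\partial B(0,t)}\abs{\partial_r u_\alpha}^2 ds$ since $\nu = x/\abs{x}$, and a second integration by parts on the $x\cdot\nabla\abs{\nabla u_\alpha}^2$ term (using $\operatorname{div} x = 2$ in $\mathbb R^2$ and $x\cdot\nu = t$ on $\partial B(0,t)$) cancels the interior $\abs{\nabla u_\alpha}^2$ contribution, after which $\abs{\nabla u_\alpha}^2 = \abs{\partial_r u_\alpha}^2 + r^{-2}\abs{\partial_\theta u_\alpha}^2$ on the boundary produces the stated form.

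Next I will substitute the Euler--Lagrange equation \eqref{el general 1 equi} for $\Delta u_\alpha$ inside the interior integral and show that two of the three resulting terms vanish identically. The second fundamental form contribution vanishes because $A(u_\alpha)(\nabla u_\alpha,\nabla u_\alpha)$ lies in the normal bundle of $N\hookrightarrow\mathbb R^K$ while $r\,\partial_r u_\alpha = x^1\partial_{x^1}u_\alpha + x^2\partial_{x^2}u_\alpha$ is tangent to $N$. The $H$-term vanishes because of the definition \eqref{eq: defi H by omega}: since $d\omega$ is fully antisymmetric,
\begin{equation*}
\inner{H(u_{x^1},u_{x^2}),\,u_{x^i}} \;=\; d\omega(u_{x^i},u_{x^1},u_{x^2}) \;=\; 0, \qquad i=1,2,
\end{equation*}
so $\inner{H(\nablap u_\alpha,\nabla u_\alpha),\,r\,\partial_r u_\alpha} \equiv 0$ pointwise. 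What remains is exactly $-(\alpha-1)\int_{B(0,t)} \bigl(\tau_\alpha+\abs{\nabla_{g_\alpha}u_\alpha}^2\bigr)^{-1}\,\nabla\abs{\nabla_{g_\alpha}u_\alpha}^2\!\cdot\!\nabla u_\alpha\,\cdot r\,\partial_r u_\alpha\,dx$, and combining with the first step yields \eqref{eq:another pohozaev}.

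There is no real obstacle here; the proof is a short direct computation. The two points requiring care are (i) the multilinear-algebra orthogonality verifications above and (ii) ensuring the Euclidean form of the Pohozaev identity is the correct one to use. For (ii), note that the Euler--Lagrange equation \eqref{el general 1 equi} is already written in the isothermal coordinate $(x^1,x^2)$ with Euclidean $\Delta$ and $\nabla$, so the conformal factor $e^{\varphi_\alpha}$ does not enter the computation except implicitly through the scalar coefficient $\bigl(\tau_\alpha+\abs{\nabla_{g_\alpha}u_\alpha}^2\bigr)^{\alpha-1}$ inside the remaining integrand, which matches the form stated in \eqref{eq:another pohozaev}.
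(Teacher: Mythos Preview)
Your proposal is correct and follows essentially the same approach as the paper: multiply the Euler--Lagrange equation \eqref{el general 1 equi} by $r\,\partial_r u_\alpha$, integrate over $B(0,t)$, and use integration by parts together with the orthogonality $A(\nabla u_\alpha,\nabla u_\alpha)\perp T N$ and the antisymmetry of $H$ (equivalently of $d\omega$) to kill the second-fundamental-form and mean-curvature terms. The only cosmetic difference is that the paper verifies the vanishing of the $H$-term by passing to polar coordinates and invoking \eqref{eq:H anti-symmetric}, whereas you phrase it directly via $d\omega(u_{x^i},u_{x^1},u_{x^2})=0$; these are the same observation.
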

\begin{proof}
    Multiplying the Euler Lagrange equation \eqref{el general 1 equi} by $r \frac{\partial u_\alpha}{\partial r}$ written as polar coordinate of $B(0,1)$ and integrating over $B(0,t)$ to yield
    \begin{align}\label{eq:another pohoz eq1} 
        \int_{B(0,t)} r \frac{\partial u_\alpha}{\partial r}\Delta u_\alpha  dx = &- (\alpha - 1)\int_{B(0,t)}\frac{\nabla|\nabla_{g_\alpha} u_\alpha|^2\cdot \nabla u_\alpha}{\tau_\alpha+|\nabla_{g_\alpha} u_\alpha|^2} r \frac{\partial u_\alpha}{\partial r} dx \nonumber\\
        &+ \tau_\alpha^{\alpha -1}\int_{B(0,t)}\frac{H(u_\alpha)(\nablap u_\alpha, \nabla u_\alpha)}{\alpha \left(\tau_\alpha + |\nabla_{g_\alpha} u_\alpha|^2\right)^{\alpha - 1}} r \frac{\partial u_\alpha}{\partial r}dx.
    \end{align}
    Integration by parts to lefthand integral of \eqref{eq:another pohoz eq1} gets
    \begin{equation}\label{eq:another pohoz eq2}
        \int_{B(0,t)} r \frac{\partial u_\alpha}{\partial r}\Delta u_\alpha  dx = \int_{\partial B(0,t)} t \abs{\frac{\partial u_\alpha}{\partial r}}^2 ds - \int_{B(0,t)} \nabla\p{r \frac{\partial u_\alpha}{\partial r}}\cdot \nabla u_\alpha dx.
    \end{equation}
    The second integral of righthand of \eqref{eq:another pohoz eq2} can be further computed as
    \begin{align}\label{eq:another pohoz eq3}
        \int_{B(0,t)} \nabla\p{r \frac{\partial u_\alpha}{\partial r}}\cdot \nabla u_\alpha dx &= \sum_{i =1}^2 \int_{B(0,t)} \nabla\p{x^i \frac{\partial u_\alpha}{\partial x^i}}\cdot \nabla u_\alpha dx\nonumber\\
        &= \int_{B(0,1)} \abs{\nabla u_\alpha}^2 dx + \int_{B(0,1)} \frac{r}{2} \frac{\partial \p{\abs{\nabla u_\alpha}^2}}{\partial r} dx\nonumber\\
        &= \int_{B(0,1)} \abs{\nabla u_\alpha}^2 dx + \frac{t}{2} \int_{\partial B(0,t)} \abs{\nabla u_\alpha}^2 ds -  \int_{B(0,1)} \abs{\nabla u_\alpha}^2 dx\nonumber\\
        & = \frac{t}{2} \int_{\partial B(0,t)} \abs{\nabla u_\alpha}^2 ds.
    \end{align}
    On the other hand, we take a polar coordinate transformation, letting
     \begin{equation*}
         \frac{\partial u_\alpha}{\partial x^1} = \cos \theta \frac{\partial u_\alpha}{\partial r}  - \frac{\sin\theta}{r} \frac{\partial u_\alpha}{\partial \theta} \quad \text{and}\quad \frac{\partial u_\alpha}{\partial x^2} = \sin \theta \frac{\partial u_\alpha}{\partial r}  + \frac{\cos\theta}{r} \frac{\partial u_\alpha}{\partial \theta},
     \end{equation*}
     we can rewrite the mean curvature type vector term in \eqref{eq:another pohoz eq1} as 
     \begin{align}\label{eq:another pohoz eq4}
         &\int_{B(0,t)}\frac{H(u_\alpha)(\nablap u_\alpha, \nabla u_\alpha)}{\alpha \left(\tau_\alpha + |\nabla_{g_\alpha} u_\alpha|^2\right)^{\alpha - 1}} r \frac{\partial u_\alpha}{\partial r}dx\nonumber\\
         &= \int_{B(0,t)}\frac{H(u_\alpha)(\nablap u_\alpha, \nabla u_\alpha)\cdot(x \nabla u)}{\alpha \left(\tau_\alpha + |\nabla_{g_\alpha} u_\alpha|^2\right)^{\alpha - 1}} \nonumber\\
         & = \sum_{i,j,k = 1}^{K}\int_{B(0,t)} \frac{1}{\alpha \left(\tau_\alpha + |\nabla_{g_\alpha} u_\alpha|^2\right)^{\alpha - 1}} H^{k}_{ij} \frac{\partial u_\alpha^k}{\partial r}\p{\frac{\partial u^i}{\partial r} \frac{\partial u^j}{\partial \theta} - \frac{\partial u^j}{\partial r} \frac{\partial u^i}{\partial \theta}} dx
     \end{align}
     The antisymmetric of $H^{k}_{ij}$ in indices $i$, $j$ and $k$, see \eqref{eq:H anti-symmetric}, tells us that the above quantity vanishes identically. Hence, combining \eqref{eq:another pohoz eq2}, \eqref{eq:another pohoz eq3} and \eqref{eq:another pohoz eq4} with \eqref{eq:another pohoz eq1}, we have
     \begin{equation}\label{eq:another pohozaev 2}
         \int_{\partial B(0,t)}  \abs{\frac{\partial u_\alpha}{\partial r}}^2 - \frac{1}{2}\abs{\nabla u_\alpha}^2 ds = - \frac{\alpha - 1}{t} \int_{B(0,t)}\frac{\nabla \abs{\nabla_{g_\alpha}u_\alpha}^2\nabla u_\alpha}{\tau_\alpha + \abs{\nabla_{g_\alpha}u_\alpha}^2} r\frac{\partial u_\alpha}{\partial r} dx
     \end{equation}
     which leads to \eqref{eq:another pohozaev} keeping in mind that 
     $$|\nabla u|^2 = \abs{\frac{\partial u}{\partial r}}^2 + \frac{1}{|x|^2} \abs{\frac{\partial u}{\partial \theta}}^2.$$
\end{proof}

		\subsection{Proof of Generalized Energy Identity --- Theorem \ref{generalized ide} }\label{subsection" generalized ide}
  \
  \vskip5pt

In this subsection, our goal is to establish  the generalized energy identity for sequences of $\alpha$-$H$-surfaces (being the critical points of $\dbl{E}_\alpha^\omega$) with uniformly bounded generalized $\alpha$-energy. We will adapt the approach outlined by Ding-Tian \cite{DingTian1995} in showing the energy identity for a sequence of approximate harmonic maps with uniformly $L^2$-norm bounded tension field and Li-Wang \cite{li2010} for sequences of $\alpha$-harmonic maps. And it is important to emphasize the significance of the Pohozaev identity \eqref{pohozaev esti 2} and \eqref{pohozaev esti 3} in the proof.
	    To prove Theorem \ref{generalizd energy}, it is sufficient to focus on the simpler case of a single blow-up point, stated as below:
		\begin{theorem}\label{simple energy ide}
			Let $(B(0,1),g_\alpha)\subset \R^2$ be the unit disk in $\R^2$ equipped with sequence of conformal metric $g_\alpha = e^{\varphi_\alpha(x)}\big((dx^1)^2 + (dx^2)^2\big)$ and $g = e^{\varphi(x)}\big((dx^1)^2 + (dx^2)^2\big)$ where $\varphi_\alpha \in C^\infty(B(0,1))$, $\varphi_\alpha(0) = 0$ for $\alpha > 1$ and $\varphi_\alpha \rightarrow \varphi$ strongly in $C^\infty(\overline{B(0,1)})$ as $\alpha \searrow 1$. Let $u_\alpha \in C^\infty(B(0,1), N)$ be a sequence of $\alpha$-$H$-surfaces satisfying
			\begin{enumerate}[label=$(\mathrm{\alph*})$]
				\item $\sup_{\alpha > 1}\dbl{E}_\alpha(u_\alpha) \leq \Lambda < \infty$ and $0 < \beta_0 \leq \lim_{\alpha \searrow 1} \tau_\alpha^{\alpha - 1} \leq 1$,
				\item $u_\alpha \rightarrow u_0$ strongly in $C^\infty_{loc}\p{B(0,1)\backslash\{0\}, \R^K}$ as $\alpha \searrow 1$.
			\end{enumerate}
			Then there exists a subsequence of $u_\alpha$ still denoted by $u_\alpha$ and a nonnegative integer $n_0$ such that for any $i = 1, \dots , n_0$ there exists a sequence of points $x^i_\alpha$, positives number $\lambda^i_\alpha$ and a non-trivial $H$-sphere $w^i$ such that all following statements hold:
			\begin{enumerate}[label=$(\mathrm{\arabic*})$]
				\item $x^i_\alpha \rightarrow 0\,$ and $\,\lambda^i_\alpha \rightarrow 0$, as $\alpha \searrow 1$;
				\item $\lim_{\alpha \searrow 1}\left(\frac{r^i_\alpha}{r^j_\alpha} + \frac{r^j_\alpha}{r^i_\alpha} + \frac{|x^i_\alpha - x^j_\alpha|}{r^i_\alpha + r^j_\alpha}\right) = \infty \quad \text{for any } i\neq j;$
				\item $w^i$ is the weak limit of $u_\alpha(x_\alpha^i + \lambda^i_\alpha x)$ in $W^{1,2}_{loc}(\R^2)$
				\item \textbf{Generalized Energy Identity}:
				\begin{equation}
					\lim_{\delta \searrow 0}\lim_{\alpha \searrow 1} \dbl{E}_\alpha(u_\alpha, B(0,\delta)) = \sum_{i = 1}^{n_0} \mu_i^2E(w^i)
				\end{equation}
				where $\mu_i = \lim_{\alpha \searrow 1} (\lambda^i_\alpha)^{2- 2\alpha}$.
			\end{enumerate}
			
		\end{theorem}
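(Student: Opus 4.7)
The plan is to follow the bubble--tree induction of Sacks--Uhlenbeck \cite{sacks1981existence} combined with the Pohozaev-based neck analysis developed for $\alpha$-harmonic maps by Ding--Tian \cite{DingTian1995} and Li--Wang \cite{li2010}, using as inputs the small-energy regularity (Lemma \ref{lem4.1}), the energy gap (Lemma \ref{blow 2}), the removable-singularity Lemma \ref{blow 3}, the uniform $C^0$-bound on the weight $\p{\tau_\alpha+|\nabla_{g_\alpha}u_\alpha|^2}^{\alpha-1}$ from Lemma \ref{bounded F}, and the Pohozaev-type identities \eqref{pohozaev esti 2}, \eqref{pohozaev esti 3} and \eqref{eq:another pohozaev}. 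The mean-curvature term introduces no new obstruction thanks to the quadratic control \eqref{eq : H equiva A} together with the algebraic cancellation of $H^k_{ij}$ already exploited in Lemma \ref{lem:second pohozaev}.

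First I would extract the largest bubble by the standard Sacks--Uhlenbeck selection: for each small $\alpha-1$ pick $x_\alpha^1 \to 0$ and $\lambda_\alpha^1 \to 0$ realising
\[
\tfrac{1}{2}\int_{B(x_\alpha^1,\lambda_\alpha^1)}|\nabla_{g_\alpha}u_\alpha|^2\,dV_{g_\alpha}\ =\ \sup_{x,\,r\le \lambda_\alpha^1}\tfrac{1}{2}\int_{B(x,r)}|\nabla_{g_\alpha}u_\alpha|^2\,dV_{g_\alpha}\ =\ \tfrac{\varepsilon_0^2}{4}.
\]
The rescaling $v_\alpha^1(y):=u_\alpha(x_\alpha^1+\lambda_\alpha^1 y)$ is a critical point of a functional of the type \eqref{eq:bubble functional} with parameter $(\lambda_\alpha^1)^2$ in place of $\tau_\alpha$, and maximality forces its $\alpha$-energy on every unit ball to be at most $\varepsilon_0^2/2$; Lemma \ref{lem4.1} then yields $C^\infty_{\mathrm{loc}}$-convergence to a finite-energy map $w^1:\R^2\to N$, which extends through Lemma \ref{blow 3} to a smooth non-trivial sphere solving the limiting equation with mean-curvature coefficient $1/\mu_1$ coming from $(\lambda_\alpha^1)^{2\alpha-2}\to \mu_1^{-1}$ in \eqref{eq:e-l of bubble}. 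Iterating the selection on regions of residual concentration---with the alternative \ref{A1}/\ref{A2} enforced by the standard scaling argument of \cite{Qing1995,li2010}---produces further bubbles $w^2,\dots,w^{n_0}$; since each of them contributes at least $\varepsilon_0^2$ to the limiting energy by Lemma \ref{blow 2} while the total $\alpha$-energy is uniformly bounded by $\Lambda$, the procedure terminates after at most $\Lambda/\varepsilon_0^2$ steps.

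The main obstacle, and the technical heart of the proof, is the no-neck-loss statement: after all bubbles have been extracted, an induction on the bubble tree reduces the generalized energy identity to proving that for every $\eta>0$,
\[
\limsup_{R\to\infty}\ \limsup_{\alpha\searrow 1}\ \dbl{E}_\alpha\bigl(u_\alpha,\ B(x_\alpha^i,\delta)\setminus B(x_\alpha^i,R\lambda_\alpha^i)\bigr)\ \le\ \eta
\]
for each bubble scale $\lambda_\alpha^i$ and all sufficiently small $\delta$. I would decompose this neck into dyadic annuli and work in polar coordinates centred at $x_\alpha^i$: \eqref{pohozaev esti 3} controls the difference between the radial and angular energies on each circle modulo an error of order $(\alpha-1)\,\dbl{E}_\alpha(u_\alpha)$ plus a boundary term of order $t$, while \eqref{eq:another pohozaev} refines this to a quantitative comparison whose right-hand side is $(\alpha-1)$ times an integral controlled by Lemma \ref{bounded F}. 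Feeding these two ingredients into a three-annulus iteration as in \cite{li2010} shows that the angular energy decays geometrically across the neck, so the neck map is essentially a radial function; the radial component is then pinned down by boundary data coming from the smoothly converging pieces of $u_\alpha$, both of which carry vanishing energy as the annulus shrinks. Finally, the change of variables $x=x_\alpha^i+\lambda_\alpha^i y$ turns the energy in the $i$-th bubble neighbourhood into $(\lambda_\alpha^i)^{2-2\alpha}$ times an integral that converges to the $E(w^i)$-contribution, and accumulating this rescaling factor along the bubble tree produces the weighting $\mu_i^2$; summing over the finitely many bubbles and using the vanishing neck contribution yields the generalized energy identity of part $(4)$.
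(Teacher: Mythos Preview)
Your proposal contains a genuine gap in the accounting for the factor $\mu_i^2$. You assert that the technical heart is the ``no-neck-loss'' statement
\[
\limsup_{R\to\infty}\ \limsup_{\alpha\searrow 1}\ \dbl{E}_\alpha\bigl(u_\alpha,\ B(x_\alpha^i,\delta)\setminus B(x_\alpha^i,R\lambda_\alpha^i)\bigr)\ \le\ \eta,
\]
and that the weight $\mu_i^2$ is produced by ``accumulating the rescaling factor along the bubble tree''. Both of these are incorrect. A single rescaling $x=x_\alpha^i+\lambda_\alpha^i y$ produces exactly one factor $(\lambda_\alpha^i)^{2-2\alpha}\to\mu_i$, and nesting in the tree merely factorises $\mu_i$ through intermediate scales---it never squares it. If the neck energy vanished as you claim, the limit of $\dbl{E}_\alpha(u_\alpha,B(0,\delta))$ would be $\sum_i\mu_i E(w^i)$, not $\sum_i\mu_i^2 E(w^i)$.

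The point you are missing is that for the \emph{generalized} energy identity the neck $A(\lambda_\alpha R,\delta,x_\alpha)$ does \emph{not} carry negligible $\alpha$-energy when $\mu>1$; rather, its contribution is explicitly computable and equals $(\mu_i^2-\mu_i)E(w^i)$. In the paper this is obtained (in the single-bubble case) as follows: one first shows, as you indicate, that the \emph{angular} component of the energy on the neck vanishes (Lemma~\ref{decay theta}, Corollary~\ref{coro 1}); then, writing $\mathcal{E}_\alpha(t)=\int_{B(x_\alpha,\lambda_\alpha^t)}(\tau_\alpha+|\nabla_{g_\alpha}u_\alpha|^2)^{\alpha-1}|\nabla u_\alpha|^2\,dx$, the Pohozaev identity \eqref{pohozaev esti 2} yields in the limit an ODE for $\mathcal{E}(t)=\lim_{\alpha\searrow 1}\mathcal{E}_\alpha(t)$ whose solution is $\mathcal{E}(t)=\mu^{1-t}\Lambda$ with $\Lambda=\mu E(w)$ (Lemma~\ref{lambda^s small}). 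Letting $t\to 0$ gives $\mu^2E(w)$ for the full ball, while $t\to 1$ recovers $\mu E(w)$ for the bubble region; the difference $(\mu^2-\mu)E(w)$ is precisely the non-vanishing neck energy. Your three-annulus/geometric-decay argument, if it worked as stated, would force the stronger conclusion $\mu_i=1$, which is the content of Theorem~\ref{thm convergence} and requires the additional Morse-index hypothesis not present here.
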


  \subsubsection{Proof of Single Bubble Case for Theorem \ref{simple energy ide}}\label{subsubsection:one bubble}
  \ 
  \vskip5pt
        At the first step, we will commence by establishing Theorem \ref{simple energy ide} under the assumption of a single bubble, i.e., when $n_0 = 1$. The proof for the scenarios involving multiple bubbles will be presented in the subsequent section.
        
        To begin, since $0 \in B(0,1)$ is the only energy concentration point as stated in Theorem \ref{simple energy ide}, we can assume the only bubble $w$ is produced by sequence
		\begin{equation}\label{eq:defi of w alpha}
		w := \lim_{\alpha \searrow 1} w_\alpha(x) = \lim_{\alpha \searrow 1} u_\alpha(x_\alpha + \lambda_\alpha x) 
		\end{equation}
		where 
  $$\lambda_\alpha = \frac{1}{\max_{B(0, 1/2)}|\nabla_{g_\alpha} u_\alpha|}$$
  and $x_\alpha$ is the point where the maximum is taken on, that is, 
		$$
		|\nabla_{g_\alpha}u_\alpha(x_\alpha)| = \max_{x \in B(0, 1/2)}|\nabla_{g_\alpha}u_\alpha(x)|.
		$$
  Then, $\norm{\nabla w_\alpha}_{L^\infty} = \abs{\nabla w_\alpha(0)} =1$. So, by Lemma \ref{lem4.1}, $w_\alpha$ converges strongly in $C^\infty_{loc}(\R^2)$ to a non-trivial $H$-surface $w$ from $\R^2$ to $N$. Moreover, by Lemma \ref{blow 3} and identifying $\R^2 \cap \set{\infty} \cong \S^2$, $w$ actually extends to a $H$-sphere. 
  \begin{rmk}\label{remark,neck reduction}
      Note that for any $0 < \delta < \frac{1}{2}$, $u_\alpha$ converges to weak limit $u_0$ strongly in $C^\infty(B(0,1)\backslash B(0,\delta), N)$. For any large enough $R > 0$ and small enough $\alpha -1$ such that $\lambda_\alpha R < \delta$, we have $w_\alpha$ converges to $H$-sphere $w$ strongly in $C^\infty(B(0,R), N)$. Therefore, the workload to prove the our main Theorem \ref{simple energy ide} reduces to study to asymptotic formulation of $u_\alpha$ on the neck domain $B(0,\delta)\backslash B(x_\alpha, \lambda_\alpha R)$ when $\alpha \searrow 1$.
  \end{rmk}
  
		Under the one bubble hypothesis and by a  similar argument in Ding-Tian \cite{DingTian1995}, we claim the following:
  \claim\label{claim:ding-tian} For any $\varepsilon > 0$ there exists $\delta > 0$ and $R > 0$ such that
		\begin{equation}\label{energy on neck}
			\int_{B(x_\alpha,2t)\backslash B(x_\alpha,t)} \abs{\nabla_{g_\alpha} u_\alpha}^2 dV_{g_\alpha} \leq \varepsilon \quad \text{for any }t \in \p{\frac{\dbl{\lambda}_\alpha R}{2}, 2\delta}
		\end{equation}
		when $\alpha - 1$ is small enough.
        \begin{proof}[\textbf{Proof of Claim \ref{claim:ding-tian}}]
        We argue by contradiction, suppose that the Claim \ref{claim:ding-tian} fails, then there exists a sequence of $\alpha_j \searrow 1$ and $\lambda_{\alpha_j}^\prime \searrow 0$ satisfying $\lambda_{\alpha_j}^\prime/\lambda_{\alpha_j} \rightarrow \infty$ such that 
        \begin{equation}\label{eq:condtradiction}
        \int_{B(x_\alpha,2\lambda_{\alpha_j}^\prime)\backslash B(x_\alpha,\lambda_{\alpha_j}^\prime)} \abs{\nabla_{g_\alpha} u_\alpha}^2 dV_{g_\alpha} \geq \varepsilon
        \end{equation}
        Then, the rescaled map  $$w^\prime_{\alpha_j}(x) := u_{\alpha_j}(\lambda_{\alpha_j}^\prime x + x_{\alpha_j}) \quad \text{converges strongly in } C^\infty(\R^2\backslash\set{0,x^1,\dots,x^m}, N)$$ to some $H$-surface $w^\prime$ for some energy concentration set $\set{0,x^1,\dots,x^m}$.
        
        If $m = 0$, by \eqref{eq:condtradiction} and the fact that $\lambda_{\alpha_j}^\prime/\lambda_{\alpha_j} \rightarrow \infty$, we can conclude that $w^\prime$ is a non-constant $H$-surface that is different from $w$. This contradicts to the only one bubble assumption.

        If $m \geq 1$, then similarly to previous argument around some energy concentration point $x^i$ for $w_\alpha$ after passing to some subsequence we can also construct a sequence $\dbl{x}_{\alpha_j} \searrow x^i$ and a sequence $\dbl{\lambda}_{\alpha_j}$ such that the second rescaled map $w_{\alpha_j}^\prime(\dbl{x}_{\alpha_j} + \dbl{\lambda}_{\alpha_jx})$ converges to a non-trivial $H$-sphere $\dbl{w}^i$. Hence
        \begin{equation*}
            w_{\alpha_j}^\prime\p{\dbl{x}_{\alpha_j} + \dbl{\lambda}_{\alpha_j}x} = u_{\alpha_j}\p{x_{\alpha_j} + \lambda_{\alpha_j}^\prime\p{\dbl{x}_{\alpha_j} + \dbl{\lambda}_{\alpha_j}x}} \rightarrow \dbl{w}^i \quad \text{as } j \rightarrow \infty
        \end{equation*}
        which means $\dbl{w}^i$ is also a second non-constant $H$-sphere which contradicts to the only one bubble assumption again. In a word, the Claim \ref{claim:ding-tian} always holds.
        \end{proof}

		In the sequel, for any $0 < a < b <\infty$ and $x\in B(0,1)$ we will use the notation
		\begin{equation*}
			A(a,b,x): = \left\{ y \in\R^2\,:\,a \leq |y - x| \leq b \right\}
		\end{equation*}
		to denote the annulus centered at $x$ with inner radii $a$ and outer radii $b$.
		By small energy regularity, Lemma \ref{lem4.1}, we have
		\begin{lemma}\label{small regu N}
			With the same hypothesis as Theorem \ref{simple energy ide} and assume there is only one bubble for $u_\alpha$ as $\alpha\searrow 1$. Given any small enough $\delta > 0$, large enough $R > 0$ and small enough such that $\alpha -1 \leq \alpha_0 -1$ where $\alpha_0 - 1$ is chosen in Lemma \ref{lem4.1}. Then for any $\lambda_\alpha R < a < b \leq \delta$, we have
			\begin{equation}\label{E 25}
				\int_{A(a,b,x_\alpha)} \abs{\nabla^2_{g_\alpha} u_\alpha}\cdot|x - x_\alpha| \cdot \abs{\nabla_{g_\alpha} u_\alpha} dV_{g_\alpha} \leq C \int_{A(\frac{a}{2}, 2b, x_\alpha)} \abs{\nabla_{g_\alpha} u_\alpha}^2 dV_{g_\alpha}.
			\end{equation}
			where $C$ is a constant independent of $\alpha$ as $\alpha \searrow 1$.
		\end{lemma}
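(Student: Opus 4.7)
The plan is to perform a dyadic decomposition of the annulus $A(a,b,x_\alpha)$, apply a rescaled version of the small energy regularity Lemma \ref{lem4.1} on each piece, and sum up with bounded overlap. Let $k_0 \in \mathbb{N}$ be the largest integer with $2^{k_0}a \leq b$, and set $a_k := 2^k a$ for $0 \leq k \leq k_0+1$. Decompose
\begin{equation*}
A(a,b,x_\alpha) \subset \bigcup_{k=0}^{k_0} A(a_k, a_{k+1}, x_\alpha) =: \bigcup_{k=0}^{k_0} A_k,
\end{equation*}
and enlarge each piece to $\dbl{A}_k := A(a_k/2, 2a_{k+1}, x_\alpha)$.

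First, I would verify that the small energy hypothesis of Lemma \ref{lem4.1} can be arranged on each dyadic annulus. By Claim \ref{claim:ding-tian}, having fixed $\varepsilon \leq \varepsilon_0^2$ (with $\varepsilon_0$ as in Lemma \ref{lem4.1}), there exist $\delta > 0$ and $R>0$ such that for every $t \in (\lambda_\alpha R/2, 2\delta)$ one has $\int_{A(t,2t,x_\alpha)} |\nabla_{g_\alpha} u_\alpha|^2\, dV_{g_\alpha} \leq \varepsilon$. Since any $a_k$ with $\lambda_\alpha R < a \leq a_k \leq 2b \leq 2\delta$ lies in this range, the energy on $\dbl{A}_k$ is controlled by a finite sum (3 or 4 terms) of such integrals, hence remains smaller than a fixed multiple of $\varepsilon_0^2$; after reducing $\varepsilon$ appropriately at the outset, the smallness hypothesis of Lemma \ref{lem4.1} is satisfied on $\dbl{A}_k$.

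Next, I would rescale. Define $v_k(y) := u_\alpha(x_\alpha + a_k y)$ for $y$ in the fixed reference annulus $A(1/2, 4, 0)$, equipped with the pulled-back metric $\dbl{g}_{\alpha,k}(y) := g_\alpha(x_\alpha + a_k y)$, which, because $g_\alpha \to g$ smoothly and $a_k \to 0$ is bounded, converges in $C^\infty$ on the reference annulus to a flat limit as $\alpha \searrow 1$ (uniformly in $k$). Since $\dbl{E}_\alpha^\omega$ is (up to the scaling factors $\tau_\alpha$ and $\tau_\alpha^{\alpha-1}$, which remain pinched in $[\beta_0,\beta_1]$ by Lemma \ref{bounded F}) invariant in form under conformal rescaling in dimension two, $v_k$ satisfies a generalized Euler--Lagrange equation of the type \eqref{el general 1 equi} with coefficients uniformly bounded independently of $k$ and of $\alpha$. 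The small-energy regularity Lemma \ref{lem4.1} then yields, after choosing $p>2$,
\begin{equation*}
\|\nabla^2 v_k\|_{L^p(A(1,2,0))} \leq C\,\|\nabla v_k\|_{L^2(A(1/2,4,0))},
\end{equation*}
so by Cauchy--Schwarz and H\"older,
\begin{equation*}
\int_{A(1,2,0)} |\nabla^2 v_k|\,|y|\,|\nabla v_k|\,dy \leq C\,\|\nabla v_k\|_{L^2(A(1/2,4,0))}^2.
\end{equation*}
Unwinding the rescaling (noting that the factors of $a_k$ in $\nabla^2$, $|x-x_\alpha|$, $\nabla$, and the volume element cancel exactly in dimension two) yields
\begin{equation*}
\int_{A_k} |\nabla^2_{g_\alpha} u_\alpha|\,|x-x_\alpha|\,|\nabla_{g_\alpha} u_\alpha|\,dV_{g_\alpha} \leq C \int_{\dbl{A}_k} |\nabla_{g_\alpha} u_\alpha|^2\,dV_{g_\alpha}.
\end{equation*}

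Finally, I would sum over $k = 0, 1, \ldots, k_0$. The enlarged annuli $\dbl{A}_k$ have bounded overlap (each point of $A(a/2,2b,x_\alpha)$ is covered by at most three of them), so
\begin{equation*}
\sum_{k=0}^{k_0}\int_{\dbl{A}_k} |\nabla_{g_\alpha}u_\alpha|^2\,dV_{g_\alpha} \leq 3 \int_{A(a/2,2b,x_\alpha)} |\nabla_{g_\alpha}u_\alpha|^2\,dV_{g_\alpha},
\end{equation*}
and the desired inequality \eqref{E 25} follows. The main technical point to check carefully is the uniformity of the constant $C$ in the rescaled Lemma \ref{lem4.1}: one must track that the perturbation parameter $\alpha - 1$, the quantities $\tau_\alpha^{\alpha-1}$ (bounded by $\beta_0, \beta_1$ via Lemma \ref{bounded F}), and the rescaled metric coefficients $\dbl{g}_{\alpha,k}$ all remain inside ranges for which the proof of Lemma \ref{lem4.1} produces a constant independent of $k$ and $\alpha$; this is the only subtlety, and it is handled by the conformal invariance in dimension two together with the uniform convergence $g_\alpha \to g$.
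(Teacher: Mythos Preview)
Your proposal is correct and follows essentially the same strategy as the paper: dyadic decomposition of the neck annulus, invocation of the small-energy regularity Lemma~\ref{lem4.1} on each rescaled piece (using Claim~\ref{claim:ding-tian} to guarantee the smallness hypothesis), and summation with bounded overlap. The only cosmetic difference is that the paper passes to cylindrical (logarithmic) coordinates and records the intermediate pointwise bounds $|\nabla u_\alpha(x)|\,|x-x_\alpha|\le C(\int|\nabla u_\alpha|^2)^{1/2}$ and $|\nabla^2 u_\alpha(x)|\,|x-x_\alpha|^2\le C(\int|\nabla u_\alpha|^2)^{1/2}$ before integrating, whereas you rescale by dilation and go straight from the $W^{2,p}$ bound to the integral estimate via H\"older; the paper's pointwise form is reused in later lemmas, but for the present statement the two routes are equivalent.
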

		\begin{proof}
            Since both the lefthand and righthand of \eqref{E 25} is conformally invariant, it suffices to show 
            \begin{equation}\label{E 25 conformal}
				\int_{A(a,b,x_\alpha)} \abs{\nabla^2 u_\alpha}\cdot|x - x_\alpha| \cdot \abs{\nabla u_\alpha} dx \leq C \int_{A(\frac{a}{2}, 2b, x_\alpha)} \abs{\nabla u_\alpha}^2 dx.
			\end{equation}
			Without loss of generality, we can assume $b = 2^Ia$ for some positive integer $I \in \mathbb N$.  Let $f : \R \times \S^1 \rightarrow \R^2$ be the mapping such that $r = e^{-\rho}$ and $\theta = \varphi$ where $\R\times \S^1$ equipped with product metric $f^*g = d\rho^2 + d\varphi^2$. Here, we use notation $(r,\theta)$ to represent the polar coordinates centered at $x_\alpha$ and $(\rho,\varphi)$ be coordinate of cylinder $\R\times \S^1$.
Then $f$ is a conformal map from $\R \times \S^1$ into $\R^2$. Let $v_\alpha(t,\varphi) = u_\alpha(f)(t,\varphi) = u_\alpha(e^{-t},\varphi)$ which satisfies Euler Lagrange equation \eqref{el general 1 equi} and hence fulfills Lemma \ref{lem4.1}. Thus after conformal transformation $f$ 
    the annulus $A(2^{i-1}a, 2^i a, x_\alpha)$ maps to be 
    \begin{equation}\label{eq:cylinder}
        \left[i - 1 + \log \frac{1}{a}, i + \log \frac{1}{a}\right]\times \S^1.
    \end{equation}

 Since we have assumed there is only one bubble,  in view of \eqref{energy on neck} we can apply small energy regularity Lemma \ref{lem4.1} to $v_\alpha$ on \eqref{eq:cylinder} and transform the estimates back to $u_\alpha$ to derive 
	\begin{equation}\label{eq:small regu N 1}
		|\nabla u_\alpha (x)|\cdot\abs{x -x_\alpha} \leq C\p{\int_{A(2^{i - 2}a, 2^{i + 1}a, x_\alpha)} \abs{\nabla u_\alpha}^2 dx}^{\frac{1}{2}},\quad 1\leq i \leq I,
	\end{equation}
 for any $x \in A(2^{i-1}a, 2^i a, x_\alpha)$ and constant $C$ which is independent of $1\leq i \leq I$ and $\alpha$ as $\alpha \searrow 1$. And similarly, we have
			\begin{equation}\label{eq:small regu N 2}
				|\nabla^2 u_\alpha (x)|\cdot \abs{x -x_\alpha}^2 \leq C\p{\int_{A(2^{i - 2}a, 2^{i + 1}a, x_\alpha)} \abs{\nabla u_\alpha}^2 dx}^{\frac{1}{2}},\quad 1\leq i \leq I.
			\end{equation}
			
			Then, combining \eqref{eq:small regu N 1} and \eqref{eq:small regu N 2} and taking summation with respect to $i$ from $1$ to $I$, we can get
			\begin{align}
				&\hspace{-5mm}\int_{A(a,b,x_\alpha)} \abs{\nabla^2 u_\alpha}\cdot \abs{x - x_\alpha}\cdot \abs{\nabla u_\alpha} dx \\
				&= \sum_{i = 1}^I\int_{A(2^{i -1}a , 2^ia, x_\alpha)} \abs{\nabla^2 u_\alpha}\cdot\abs{x - x_\alpha}^2 \cdot\abs{\nabla u_\alpha}\cdot\abs{x - x_\alpha}\frac{dx}{\abs{x - x_\alpha}^2}\nonumber\\
				&\leq \sum_{i = 1}^I\sup_{x\in A(2^{i-1}a, 2^ia, x_\alpha)}\abs{\nabla^2 u_\alpha}\cdot\abs{x - x_\alpha}^2\cdot\nonumber\\
				& \quad \quad \quad \sup_{x\in A(2^{i-1}a, 2^ia, x_\alpha)}\abs{\nabla u_\alpha}\cdot\abs{x - x_\alpha}\int_{A(2^{i-1}a, 2^ia, x_\alpha)}\frac{dx}{\abs{x - x_\alpha}^2}\nonumber\\
				&\leq C\sum_{i = 1}^I \p{\int_{A(2^{i - 2}a, 2^{i + 1}a, x_\alpha)} \abs{\nabla u_\alpha}^2 dx}^{\frac{1}{2}}\nonumber\\
				& = C \int_{A(\frac{a}{2}, 2b, x_\alpha)} \abs{\nabla u_\alpha}^2 dx.
			\end{align}
   This completes the proof of Lemma \ref{small regu N}.
		\end{proof}
		In the polar coordinate, the energy functional has two components, namely, the radical part and the angular part. 
		\begin{equation*}
			\int \abs{\nabla u_\alpha}^2 dx = \int \abs{\frac{\partial u_\alpha}{\partial r}}^2 dx + \int\frac{1}{|x|^2}\abs{\frac{\partial u_\alpha}{\partial \theta}}^2 dx.
		\end{equation*}
		To show the generalized energy identity stated in Theorem \ref{simple energy ide}, considering the Remark \ref{remark,neck reduction} we first establish the following energy decay of the angular component:
		\begin{lemma}\label{decay theta}
			With the same assumption as Lemma \ref{small regu N}, there holds
			\begin{equation*}
				\lim_{\delta\searrow 0}\lim_{R\rightarrow\infty}\lim_{\alpha\searrow 1} \int_{A(\lambda_\alpha R, \delta, x_\alpha)} \frac{1}{|x - x_\alpha|^2}\abs{\frac{\partial u_\alpha}{\partial \theta}}^2 dx = 0.
			\end{equation*}
			Here,  we always use the same $(r,\theta)$ to represent the polar coordinate systems centered at $x_\alpha$ as $\alpha\searrow 1$.
		\end{lemma}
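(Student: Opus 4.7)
My plan is to exploit the Pohozaev-type identity \eqref{eq:another pohozaev} in Lemma \ref{lem:second pohozaev}, together with the small-energy regularity bound of Lemma \ref{small regu N} and the conformality of the $H$-sphere bubble $w$, to control the angular energy $\int_A \frac{1}{r^2}|\partial_\theta u_\alpha|^2\,dx$ on the neck $A(\lambda_\alpha R, \delta, x_\alpha)$ in the triple limit. First, I would observe that the angular energy is conformally invariant: under the bubble rescaling $y=(x-x_\alpha)/\lambda_\alpha$, it rewrites as $\int_{A(R,\delta/\lambda_\alpha,0)}\frac{1}{|y|^2}|\partial_\theta w_\alpha|^2\,dy$, where $w_\alpha\to w$ smoothly on compact subsets of $\mathbb{R}^2$ and $w$ is a conformal $H$-sphere.

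Next, for each $t\in[\lambda_\alpha R,\delta]$ I would apply the Pohozaev identity \eqref{eq:another pohozaev} translated to center $x_\alpha$ (the conformal factor and translation produce only $o(1)$ corrections since $x_\alpha\to 0$ and $\varphi_\alpha\to\varphi$ smoothly), obtaining
\begin{equation*}
t\,I(t):=t\int_{\partial B(x_\alpha,t)}\Big(|\partial_r u_\alpha|^2-\tfrac{1}{r^2}|\partial_\theta u_\alpha|^2\Big)ds
=-2(\alpha-1)\int_{B(x_\alpha,t)}\tfrac{\nabla|\nabla_{g_\alpha}u_\alpha|^2\cdot\nabla u_\alpha}{\tau_\alpha+|\nabla_{g_\alpha}u_\alpha|^2}\,r\,\partial_r u_\alpha\,dx.
\end{equation*}
The integrand on the right is controlled pointwise by $2|\nabla^2 u_\alpha|\cdot r\cdot|\nabla u_\alpha|$. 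Splitting $B(x_\alpha,t)=B(x_\alpha,\lambda_\alpha R)\cup A(\lambda_\alpha R,t,x_\alpha)$, I would bound the bubble-core contribution by rescaling to the bubble coordinates (where $w_\alpha\to w$ gives uniform $C^2$ bounds) and bound the annular neck piece by Lemma \ref{small regu N}, so that $|tI(t)|\le C_1(R,\alpha)+C_2(\alpha-1)\Lambda$, with $C_1(R,\alpha)\to R\,I_w(R)=0$ as $\alpha\searrow 1$ for fixed $R$, using the conformality of $w$ (which forces $|\partial_\rho w|^2\equiv\frac{1}{\rho^2}|\partial_\theta w|^2$ pointwise, hence $I_w\equiv 0$).

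To conclude, I would integrate $|I(t)|\le|tI(t)|/t$ over $t\in[\lambda_\alpha R,\delta]$ and split the angular integral into three pieces: an inner piece $A(\lambda_\alpha R,\lambda_\alpha M,x_\alpha)=A(R,M,0)$ in $y$-coordinates, which converges to $\tfrac{1}{2}E(w,A(R,M))\to 0$ as $R\to\infty$ by conformality of $w$; an outer piece $A(\delta/M,\delta,x_\alpha)$ where $u_\alpha\to u_0$ smoothly, yielding a contribution controlled by $E(u_0,B(0,\delta))\to 0$ as $\delta\searrow 0$ (using the smoothness of $u_0$ at $0$ from Lemma \ref{blow 3}); and a middle neck piece on which the Pohozaev estimate $\int(|\partial_r u|^2-\frac{1}{r^2}|\partial_\theta u|^2)dx\to 0$ combined with the total energy identity $\int(|\partial_r u|^2+\frac{1}{r^2}|\partial_\theta u|^2)dx = 2E(u_\alpha,A)$ forces the angular part to vanish.

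The main obstacle will be the middle neck estimate: the naive bound $|tI(t)|\le C(\alpha-1)\Lambda$ gives an integrated error of order $(\alpha-1)\log(1/\lambda_\alpha)\to\tfrac{1}{2}\log\mu$, which is merely bounded when $\mu>1$. To overcome this I plan to use the annular Pohozaev identity telescopically across the dyadic subannuli (on each of which Claim \ref{claim:ding-tian} provides energy $\leq\varepsilon$), propagating the vanishing boundary value $\lim_{\alpha\searrow 1}\lambda_\alpha R\cdot I(\lambda_\alpha R)=0$ from the bubble boundary through the neck, so that $\sup_t|tI(t)|=o_\alpha(1)+O((\alpha-1)\Lambda)$ with the $o_\alpha(1)$ term vanishing before $R$ and $\delta$ are sent to their extremes. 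The order of the triple limit ($\alpha\searrow 1$ first, then $R\to\infty$, then $\delta\searrow 0$) is essential here.
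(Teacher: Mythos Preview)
Your approach via the Pohozaev identity \eqref{eq:another pohozaev} has a genuine gap, and it is not the route the paper takes. The paper proves Lemma \ref{decay theta} by the Ding--Tian argument: it introduces the circular average $u^*_\alpha(t)=\frac{1}{2\pi}\int_0^{2\pi}u_\alpha(t,\theta)\,d\theta$, writes $\int_A|\nabla u_\alpha|^2=\int_A\nabla u_\alpha\cdot\nabla(u_\alpha-u^*_\alpha)+\int_A\nabla u_\alpha\cdot\nabla u^*_\alpha$, integrates the first term by parts and uses the equation together with the oscillation bound $\|u_\alpha-u^*_\alpha\|_{L^\infty}\leq C\varepsilon$, and bounds the second by the radial energy via Jensen's inequality. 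The outcome is
\[
\int_{A(\lambda_\alpha R,\delta,x_\alpha)}\frac{1}{|x-x_\alpha|^2}\Big|\frac{\partial u_\alpha}{\partial\theta}\Big|^2\,dx\;\leq\;C\varepsilon+C(\alpha-1)\int_{A(\frac{\lambda_\alpha R}{2},2\delta,x_\alpha)}|\nabla u_\alpha|^2\,dx,
\]
and the second term is $(\alpha-1)$ times a \emph{bounded} quantity (no logarithm appears), so it vanishes as $\alpha\searrow 1$.

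Your Pohozaev route only controls the \emph{difference} $\int_A(|\partial_r u_\alpha|^2-\frac{1}{r^2}|\partial_\theta u_\alpha|^2)\,dx$. To extract the angular part you would need the sum, i.e.\ the total neck energy; but that is \emph{not} known to vanish here --- indeed, by the generalized energy identity the neck carries energy $(\mu^2-\mu)E(w)>0$ whenever $\mu>1$. So your sentence ``combined with the total energy identity $\int(|\partial_r u|^2+\frac{1}{r^2}|\partial_\theta u|^2)=2E(u_\alpha,A)$ forces the angular part to vanish'' is circular: from (radial $-$ angular)$\to 0$ you only get radial $=$ angular $=E$, not $=0$. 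You have correctly identified the obstruction (the integrated Pohozaev remainder is $O((\alpha-1)\log(1/\lambda_\alpha))=O(\log\mu)$, not $o(1)$), but the proposed telescoping fix does not address it: the identity \eqref{eq:another pohozaev} relates a sphere integral to a \emph{ball} integral with an explicit $1/t$ factor, so it is not a divergence identity on annuli that telescopes. In the paper, the Pohozaev identities are used only \emph{after} Lemma \ref{decay theta} is in hand, precisely because they cannot by themselves isolate the angular energy.
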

		\begin{proof}
			Combining the deduction \eqref{energy on neck} of one bubble assumption and small energy regularity Lemma \ref{lem4.1} with the conformal transformation argument described in the proof of Lemma \ref{small regu N}, we have
			\begin{align}
			    \label{E 24}
				Osc_{A(t,2t,x_\alpha)}(u_\alpha) &:=\sup_{x,y \in A(t,2t,x_\alpha) } \abs{u_\alpha(x) - u_\alpha(y)}\nonumber\\
    &\leq C \norm{\nabla u_\alpha}_{L^2(A(\frac{t}{2},4t,x_\alpha))}\quad \text{for any } t\in (\lambda_\alpha R, \delta).
			\end{align}
			
			Let
			\begin{equation*}
				u^*_\alpha(t) := \frac{1}{2\pi t}\int_{\partial B(x_\alpha,t)}u_\alpha = \frac{1}{2\pi}\int_{0}^{2\pi} u_\alpha(x_\alpha + te^{i\theta}) d\theta
			\end{equation*}
			Then, using \eqref{E 24} we have 
			\begin{align}\label{E 29}
				\norm{u_\alpha(x) - u^*_\alpha(|x|)}_{L^\infty(A(\lambda_\alpha R, \delta, x_\alpha))}&\leq \sup_{\lambda_\alpha R \leq t\leq \delta} \norm{u_\alpha(x) - u^*_\alpha(|x|)}_{L^\infty(A(t, 2t, x_\alpha))}\nonumber\\
				&\leq \sup_{\lambda_\alpha R \leq t\leq \delta}Osc_{A(t,2t,x_\alpha)}(u_\alpha) \nonumber\\
				&\leq C \norm{\nabla u_\alpha}_{L^2(A(\frac{t}{2},4t,x_\alpha))} \leq C\varepsilon\quad \text{for any } t\in (\lambda_\alpha R, \delta).
			\end{align}
			Next, by integration by part and recall the Euler Lagrange equation \eqref{el general 1 equi} of $\alpha$-$H$-surface  we can estimate the energy on neck domain
			\begin{align}\label{E 26}
				&\int_{A(\lambda_\alpha R, \delta, x_\alpha)} \abs{\nabla u_\alpha}^2 dx \nonumber\\
                &= \int_{A(\lambda_\alpha R, \delta, x_\alpha)} \nabla u_\alpha\cdot \nabla\p{u_\alpha - u^*_\alpha}dx + \int_{A(\lambda_\alpha R, \delta, x_\alpha)} \nabla u_\alpha\cdot \nabla u^*_\alpha dx\nonumber\\
				& = - \int_{A(\lambda_\alpha, \delta, x_\alpha)} \Delta u_\alpha\cdot\p{u_\alpha - u^*_\alpha}dx + \int_{\partial A(\lambda_\alpha, \delta, x_\alpha)} \frac{\partial u_\alpha}{\partial r}\cdot\p{u_\alpha - u^*_\alpha} ds\nonumber\\
				&\quad + \int_{A(\lambda_\alpha R, \delta, x_\alpha)} \nabla u_\alpha \cdot\nabla u^*_\alpha dx\nonumber\\
				& = \int_{A(\lambda_\alpha R, \delta, x_\alpha)} A(u_\alpha)(\nabla u_\alpha, \nabla u_\alpha)\cdot\p{u_\alpha - u^*_\alpha}dx\nonumber\\
				&\quad + (\alpha - 1)\int_{A(\lambda_\alpha R, \delta, x_\alpha)}\frac{\nabla|\nabla_{g_\alpha} u_\alpha|^2\cdot \nabla u_\alpha}{\tau_\alpha+|\nabla_{g_\alpha} u_\alpha|^2} \cdot\p{u_\alpha - u^*_\alpha}dx\nonumber\\
				&\quad + \tau_\alpha^{\alpha - 1} \int_{A(\lambda_\alpha R, \delta, x_\alpha)} \frac{H(u_\alpha)(\nablap u_\alpha, \nabla u_\alpha)}{\alpha \left(\tau_\alpha + |\nabla_{g_\alpha} u_\alpha|^2\right)^{\alpha - 1}} \p{u_\alpha - u^*_\alpha}dx\nonumber\\
				&\quad + \int_{\partial A(\lambda_\alpha R, \delta, x_\alpha)} \frac{\partial u_\alpha}{\partial r}\cdot\p{u_\alpha - u^*_\alpha} ds + \int_{A(\lambda_\alpha R, \delta, x_\alpha)} \nabla u_\alpha\cdot \nabla u^*_\alpha dx.
			\end{align}
			In the following, we estimate every terms obtained in last equation of \eqref{E 26}.  For the last  integral of \eqref{E 26}, by Jensen's inequality we see that
			\begin{align}\label{E 27}
				\int_{A(\lambda_\alpha R, \delta, x_\alpha)}& \nabla u_\alpha \cdot \nabla u^*_\alpha dx\nonumber\\
				&= \int_{A(\lambda_\alpha R, \delta, x_\alpha)}{\frac{\partial u_\alpha}{\partial r}\cdot\frac{\partial u^*_\alpha}{\partial r}} dx \nonumber\\
				&\leq \p{\int_{A(\lambda_\alpha R, \delta, x_\alpha)}\abs{\frac{\partial u_\alpha}{\partial r}}^2dx}^{\frac{1}{2}} \p{\int_{A(\lambda_\alpha R, \delta, x_\alpha)}\abs{\frac{\partial u^*_\alpha}{\partial r}}^2dx}^{\frac{1}{2}}\nonumber\\
				& = \p{\int_{A(\lambda_\alpha R, \delta, x_\alpha)}\abs{\frac{\partial u_\alpha}{\partial r}}^2dx}^{\frac{1}{2}}\p{\int_{A(\lambda_\alpha R, \delta, x_\alpha)}\abs{\frac{1}{2\pi}\int_0^{2\pi}\frac{\partial u_\alpha}{\partial r} d\theta}^2 dx}^{\frac{1}{2}}\nonumber\\
				&\leq \int_{A(\lambda_\alpha R, \delta, x_\alpha)}\abs{\frac{\partial u_\alpha}{\partial r}}^2dx\nonumber\\
				&= \int_{A(\lambda_\alpha R, \delta, x_\alpha)}\abs{\nabla u_\alpha}^2dx - \int_{A(\lambda_\alpha R, \delta, x_\alpha)}\frac{1}{|x - x_\alpha|^2}\abs{\frac{\partial u_\alpha}{\partial \theta}}^2dx.
			\end{align}
			Next, for the boundary term in \eqref{E 26}  using trace theorem in Sobolev spaces and \eqref{E 29} we have
			\begin{align}\label{E 30}
				\int_{\partial A(\lambda_\alpha, \delta, x_\alpha)} \frac{\partial u_\alpha}{\partial r}\p{u_\alpha - u^*_\alpha} ds &\leq C \varepsilon \p{\int_{\partial A(\lambda_\alpha, \delta, x_\alpha)} \abs{\nabla u_\alpha}^2}^{\frac{1}{2}}\nonumber\\
				&\leq C\varepsilon \left(\norm{\nabla u_\alpha}_{L^2\p{A\p{\frac{1}{2}\lambda_\alpha R, 2\lambda_\alpha R, x_\alpha}\bigcup A\p{\frac{1}{2}\delta, 2\delta, x_\alpha}}}\right.\nonumber \\
				&\quad +\left.\norm{|x - x_\alpha|\cdot\nabla^2 u_\alpha}_{L^2\p{A\p{\frac{1}{2}\lambda_\alpha R, 2\lambda_\alpha R, x_\alpha}\bigcup A\p{\frac{1}{2}\delta, 2\delta, x_\alpha}}}\right)\nonumber\\
				&\leq C\varepsilon \norm{\nabla u_\alpha}_{L^2\p{A\p{\frac{1}{2}\lambda_\alpha R, 2\lambda_\alpha R, x_\alpha}\bigcup A\p{\frac{1}{2}\delta, 2\delta, x_\alpha}}} \leq  C\varepsilon
			\end{align}
			where the last inequality is obtained from the small energy regularity, Lemma \ref{lem4.1}.
			Furthermore, for the second integral in \eqref{E 26}, by Lemma \ref{small regu N}, we can estimate
			\begin{align}\label{E 28}
				&\hspace{-10mm}(\alpha - 1)\abs{\int_{A(\lambda_\alpha R, \delta, x_\alpha)}\frac{\nabla|\nabla_{g_\alpha} u_\alpha|^2\cdot \nabla u_\alpha}{\tau_\alpha+|\nabla_{g_\alpha} u_\alpha|^2} \p{u_\alpha - u^*_\alpha}dx}\nonumber\\
				&\leq 2(\alpha - 1)C\int_{A(\lambda_\alpha R, \delta, x_\alpha)} \abs{\nabla^2 u_\alpha}\cdot |x - x_\alpha|\cdot|\nabla u_\alpha| dx\nonumber\\
				&\leq 2(\alpha - 1)C \int_{A(\frac{\lambda_\alpha R}{2}, 2\delta, x_\alpha)} |\nabla u_\alpha|^2 dx.
			\end{align}
			At last, by \eqref{E 29} it is easy to see that 
			\begin{align}\label{E 31}
				&\int_{A(\lambda_\alpha R, \delta, x_\alpha)} A(u_\alpha)(\nabla u_\alpha, \nabla u_\alpha)\p{u_\alpha - u^*_\alpha}dx \nonumber\\
				&\quad + \tau_\alpha^{\alpha - 1}\int_{A(\lambda_\alpha R, \delta, x_\alpha)} \frac{H(u_\alpha)(\nablap u_\alpha, \nabla u_\alpha)}{\alpha \left(\tau_\alpha + |\nabla_{g_\alpha} u_\alpha|^2\right)^{\alpha - 1}} \cdot\p{u_\alpha - u^*_\alpha}dx\nonumber\\
				&\leq C\varepsilon \int_{A(\lambda_\alpha R, \delta, x_\alpha)} |\nabla u_\alpha|^2 dx \leq C\varepsilon
			\end{align}
			Plugging the estimates \eqref{E 27}, \eqref{E 30}, \eqref{E 28} and \eqref{E 31} to \eqref{E 26}, we can obtain
			\begin{equation*}
				\int_{A(\lambda_\alpha R, \delta, x_\alpha)}\frac{1}{|x - x_\alpha|^2}\abs{\frac{\partial u_\alpha}{\partial \theta}}^2dx \leq C\varepsilon + C (\alpha - 1) \int_{A(\frac{\lambda_\alpha R}{2}, 2\delta, x_\alpha)} |\nabla u_\alpha|^2 dx
			\end{equation*}
			which yields
			\begin{equation*}
				\lim_{\delta\searrow 0}\lim_{R\rightarrow\infty}\lim_{\alpha\searrow 1} \int_{A(\lambda_\alpha R, \delta, x_\alpha)} \frac{1}{|x - x_\alpha|^2}\abs{\frac{\partial u_\alpha}{\partial \theta}}^2 dx = 0.
			\end{equation*}
            Hence, we complete the proof of Lemma \ref{decay theta}.
		\end{proof}
		As a corollary of Lemma \ref{decay theta} and the uniformly boundedness of 
		\begin{equation*}
			\limsup_{\alpha \searrow 1} \norm{\p{\tau_\alpha + \abs{\nabla_{g_\alpha} u_\alpha}^2}^{\alpha - 1}}_{C^0(B(0,1))} \leq \beta_0 < \infty,
		\end{equation*}
		see Lemma \ref{bounded F}, we have
		\begin{coro}\label{coro 1}
			With the same hypothesis as Lemma \ref{decay theta}, there holds
			\begin{equation}
				\lim_{\delta\searrow 0}\lim_{R\rightarrow\infty}\lim_{\alpha\searrow 1} \int_{A(\lambda_\alpha R, \delta, x_\alpha)}\p{\tau_\alpha + \abs{\nabla_{g_\alpha} u_\alpha}^2}^{\alpha - 1} \frac{1}{|x - x_\alpha|^2}\abs{\frac{\partial u_\alpha}{\partial \theta}}^2 dx = 0.
			\end{equation}
		\end{coro}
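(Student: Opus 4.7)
The approach is essentially immediate from the two stated ingredients, so the plan is to combine them via a simple $L^\infty$-factorization. First, I would invoke the uniform $C^0$ bound from Lemma \ref{bounded F}, which provides a constant $\beta_1 > 0$, independent of $\alpha$, with
$$\limsup_{\alpha \searrow 1} \big\|\p{\tau_\alpha + \abs{\nabla_{g_\alpha} u_\alpha}^2}^{\alpha - 1}\big\|_{C^0(B(0,1))} \leq \beta_1 < \infty.$$
In particular, for all $\alpha - 1$ sufficiently small, the prefactor $\p{\tau_\alpha + \abs{\nabla_{g_\alpha} u_\alpha}^2}^{\alpha - 1}$ is bounded pointwise by $2\beta_1$ on the whole disk $B(0,1)$, and hence on every neck annulus $A(\lambda_\alpha R, \delta, x_\alpha)$.

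With this uniform bound in hand, I would simply pull the prefactor out of the integral to estimate
$$\int_{A(\lambda_\alpha R, \delta, x_\alpha)} \p{\tau_\alpha + \abs{\nabla_{g_\alpha} u_\alpha}^2}^{\alpha - 1} \frac{1}{|x - x_\alpha|^2}\abs{\frac{\partial u_\alpha}{\partial \theta}}^2 dx \leq 2\beta_1 \int_{A(\lambda_\alpha R, \delta, x_\alpha)} \frac{1}{|x - x_\alpha|^2}\abs{\frac{\partial u_\alpha}{\partial \theta}}^2 dx.$$
Taking the iterated limits $\alpha \searrow 1$, then $R \to \infty$, then $\delta \searrow 0$ and applying Lemma \ref{decay theta} to the right-hand side immediately yields the claim. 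There is no serious obstacle in this step — the substantive analytic work has already been absorbed into Lemma \ref{decay theta} and Lemma \ref{bounded F}, and this corollary merely repackages the angular energy decay into a form weighted by the conformal factor $(\tau_\alpha + |\nabla_{g_\alpha} u_\alpha|^2)^{\alpha-1}$. That weighted form is precisely what will be needed to handle the left-hand side of the Pohozaev identities \eqref{pohozaev esti 2}--\eqref{pohozaev esti 3} of Lemma \ref{lem:pohozaev} when extracting the generalized energy identity in Theorem \ref{simple energy ide}.
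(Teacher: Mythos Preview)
Your proposal is correct and matches the paper's approach exactly: the paper simply states that the corollary follows from Lemma~\ref{decay theta} together with the uniform $C^0$ bound on $(\tau_\alpha + |\nabla_{g_\alpha} u_\alpha|^2)^{\alpha-1}$ provided by Lemma~\ref{bounded F}, without giving further details. Your $L^\infty$-factorization is precisely the intended one-line argument.
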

        To simplify the notation, considering the Pohozaev identity \eqref{pohozaev esti 2}, for $0 < t< 1$ we define the quantities
        \begin{equation}
            \mathcal{E}_\alpha(t) = \int_{B(x_\alpha, \lambda_\alpha^t)} \p{\tau_\alpha + |\nabla_{g_\alpha} u_\alpha|^2}^{\alpha - 1} \abs{\nabla u_\alpha}^2 dx.
        \end{equation}
        Besides, for fixed $0 < t_0 < 1$ and $0<t < t_0 < 1$ we define
        \begin{equation}
            \mathcal{E}_{r,t_0,\alpha}(t) = \int_{A(\lambda_\alpha^{t_0}, \lambda_\alpha^t, x_\alpha)} \p{\tau_\alpha + |\nabla_{g_\alpha} u_\alpha|^2}^{\alpha - 1} \abs{\frac{\partial u_\alpha}{\partial r}}^2 dx
        \end{equation}
        and 
        \begin{equation}
            \mathcal{E}_{\theta,t_0,\alpha}(t) = \int_{A(\lambda_\alpha^{t_0}, \lambda_\alpha^t, x_\alpha)} \p{\tau_\alpha + |\nabla_{g_\alpha} u_\alpha|^2}^{\alpha - 1} \frac{1}{|x - x_\alpha|^2}\abs{\frac{\partial u_\alpha}{\partial \theta}}^2 dx.
        \end{equation}
        Therefore, for $t \in (0,t_0)$ the Pohozaev identity \eqref{pohozaev esti 2} can be rewritten as
        \begin{equation}
            \label{eq:pohozaev esti equiv}
            \p{1 - \frac{1}{2\alpha}}\mathcal{E}^\prime_{r,t_0, \alpha}(t) - \frac{1}{2\alpha}\mathcal{E}^\prime_{\theta,t_0, \alpha}(t) = \p{1 - \frac{1}{\alpha}}\log\p{\lambda_\alpha} \mathcal{E}_{\alpha}(t) + O(\lambda_\alpha^t \log\p{\lambda_\alpha}).
        \end{equation}
        Integrating with respect to $t$ to get
        \begin{align}\label{eq:pohozaev esti equiv 2}
            \p{1 - \frac{1}{2\alpha}}\mathcal{E}_{r,t_0, \alpha}(t) &- \frac{1}{2\alpha}\mathcal{E}_{\theta,t_0, \alpha}(t) \nonumber\\
                &= \frac{1}{2} \int_{t_0}^t \p{\frac{1}{\alpha}\log\p{\lambda_\alpha^{2(\alpha - 1)}} \mathcal{E}_{\alpha}(s) + O\p{\lambda_\alpha^s \log\p{\lambda_\alpha}}} ds
        \end{align}
        On the one hand, since the generalized $\alpha$-energy $\dbl{E}_\alpha$ is uniformly bounded and using \eqref{eq:pohozaev esti equiv} and \eqref{eq:pohozaev esti equiv 2} one can see that 
        \begin{equation*}
            \norm{\p{1 -\frac{1}{2\alpha}}\mathcal{E}_{r,t_0, \alpha}(\cdot) - \frac{1}{2\alpha} \mathcal{E}_{\theta,t_0, \alpha}(\cdot)}_{C^1([\tau, t_0])} 
        \end{equation*}
        is uniformly bounded for any $0 < \tau < t_0/2$. On the other hand, by Lemma \ref{decay theta}
        \begin{equation*}
             \norm{ \mathcal{E}_{\theta,t_0, \alpha}(\cdot)}_{C^1(\delta, t_0)} \rightarrow 0 \quad \text{as } \alpha \searrow 1  
        \end{equation*}
        for any $\tau > 0$. Therefore, we can conclude that the sequences
        \begin{equation*}
            \set{\mathcal{E}_\alpha(t)}_{\alpha \searrow 1}, \quad  \set{\mathcal{E}_{r,t_0, \alpha}(t)}_{\alpha \searrow 1}\quad \text{and} \quad \set{\mathcal{E}_{\theta,t_0, \alpha}(t)}_{\alpha \searrow 1}
        \end{equation*}
        are  compact in $C^0([\tau, t_0])$ norm for any $0 < \tau < t_0/2$, which implies there exists functions $\mathcal{E}: (0,t_0) \rightarrow \R_+$ and $\mathcal{E}_{r,t_0}: (0,t_0) \rightarrow \R_+$ such that for any $\tau > 0$
        \begin{equation*}
            \mathcal{E}_\alpha \rightarrow \mathcal{E} \quad \text{and}\quad \mathcal{E}_{r,t_0,\alpha} \rightarrow \mathcal{E}_{r,t_0} \quad \text{in } C^0([\tau,t_0]) \text{ as } \alpha \searrow 1.
        \end{equation*}
		Based on Lemma \ref{decay theta} and the construction above quantities, we can establish the following
		\begin{lemma}\label{lambda^s small}
			With the same hypothesis as Theorem \ref{simple energy ide} , for any $t\in(0,1)$, there holds
			\begin{equation}\label{eq:lambda^s small 1}
				\lim_{\alpha \searrow 1} \int_{B(x_\alpha, \lambda_\alpha^t)}\p{\tau_\alpha + \abs{\nabla_{g_\alpha} u_\alpha}^2}^{\alpha - 1}|\nabla_{g_\alpha} u_\alpha|^2 dV_{g_\alpha} = \mu^{1 - t}\Lambda
			\end{equation}
   where 
   \begin{align}
       \Lambda :&= \lim_{R\rightarrow \infty}\lim_{\alpha \searrow 1}\int_{B(x_\alpha, \lambda_\alpha R)} \abs{\nabla_{g_\alpha} u_\alpha}^{2\alpha} dV_{g_\alpha}\nonumber\\
       &= \lim_{R\rightarrow \infty}\lim_{\alpha \searrow 1} \int_{B(0,R)} \abs{\nabla_{g_\alpha} w_\alpha}^{2\alpha} \lambda_\alpha^{2 -2\alpha} dV_{g_\alpha} = \mu E(w)
   \end{align}
		\end{lemma}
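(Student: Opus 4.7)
The plan is to use the Pohozaev identity \eqref{eq:pohozaev esti equiv 2} together with the angular-energy decay from Corollary \ref{coro 1} to derive a linear first-order ODE for $\mathcal{E}(t):=\lim_{\alpha\searrow 1}\mathcal{E}_\alpha(t)$, and then fix its single free constant by matching against the bubble in rescaled coordinates as $t\to 1^-$. The existence of $\mathcal{E}$ and the uniform convergence $\mathcal{E}_\alpha\to\mathcal{E}$ in $C^0([\tau,t_0])$ for any $0<\tau<t_0/2$ are already provided by the compactness discussion preceding \eqref{eq:pohozaev esti equiv 2}; and the uniform bound $\mathcal{E}_\alpha(s)\le\int_{B(0,1)}(\tau_\alpha+|\nabla_{g_\alpha} u_\alpha|^2)^\alpha dV_{g_\alpha}\le 2\dbl{E}_\alpha(u_\alpha)\le 2\Lambda$ follows from the hypothesis.

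For the ODE, fix $t_0\in(0,1)$ and $t\in(0,t_0)$ and pass to the limit $\alpha\searrow 1$ in \eqref{eq:pohozaev esti equiv 2}. Corollary \ref{coro 1} forces $\mathcal{E}_{\theta,t_0,\alpha}(t)\to 0$; $\tfrac1\alpha\log\lambda_\alpha^{2(\alpha-1)}\to -\log\mu$; the remainder $O(\lambda_\alpha^s\log\lambda_\alpha)$ converges pointwise to $0$ for $s>0$ and is dominated; and $\mathcal{E}_\alpha\to\mathcal{E}$ uniformly on $[t,t_0]$. Combined with the polar-decomposition identity $\mathcal{E}_\alpha(t)-\mathcal{E}_\alpha(t_0)=\mathcal{E}_{r,t_0,\alpha}(t)+\mathcal{E}_{\theta,t_0,\alpha}(t)$, this yields the integral equation
\[
\mathcal{E}(t)=\mathcal{E}(t_0)-\log\mu\int_{t_0}^t\mathcal{E}(s)\,ds,\qquad t\in(0,t_0),
\]
so $\mathcal{E}\in C^1((0,1))$ satisfies $\mathcal{E}'=-(\log\mu)\mathcal{E}$ and hence $\mathcal{E}(t)=C\mu^{-t}$ for some constant $C$.

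To identify $C=\mu\Lambda$, introduce the moving parameter $t_{R,\alpha}:=1+\log R/\log\lambda_\alpha$ so that $\lambda_\alpha^{t_{R,\alpha}}=\lambda_\alpha R$ and $t_{R,\alpha}\to 1^-$ under $\alpha\searrow 1$ followed by $R\to\infty$. The change of variables $y=(x-x_\alpha)/\lambda_\alpha$, combined with $\lambda_\alpha^{2-2\alpha}\to\mu$, $(\lambda_\alpha^2\tau_\alpha+|\nabla w_\alpha|^2)^{\alpha-1}\to 1$ uniformly on compact subsets, and $w_\alpha\to w$ in $C^\infty_{\mathrm{loc}}(\R^2)$, yields
\[
\lim_{R\to\infty}\lim_{\alpha\searrow 1}\mathcal{E}_\alpha(t_{R,\alpha})=\mu\lim_{R\to\infty}\int_{B(0,R)}|\nabla w|^2\,dy=\Lambda.
\]
Replacing $t_0$ by $t_{R,\alpha}$ in \eqref{eq:pohozaev esti equiv 2} and using $\mathcal{E}_{r,t_{R,\alpha},\alpha}(t)=\mathcal{E}_\alpha(t)-\mathcal{E}_\alpha(t_{R,\alpha})-\mathcal{E}_{\theta,t_{R,\alpha},\alpha}(t)$ rewrites the identity as
\[
\Bigl(1-\tfrac1{2\alpha}\Bigr)\bigl[\mathcal{E}_\alpha(t)-\mathcal{E}_\alpha(t_{R,\alpha})\bigr]-\mathcal{E}_{\theta,t_{R,\alpha},\alpha}(t)=\tfrac12\!\int_{t_{R,\alpha}}^t\!\Bigl(\tfrac{\log\lambda_\alpha^{2(\alpha-1)}}{\alpha}\mathcal{E}_\alpha(s)+O(\lambda_\alpha^s\log\lambda_\alpha)\Bigr)ds.
\]
Sending $\alpha\searrow 1$ and then $R\to\infty$---splitting the integration interval as $[t,t_{R,\alpha}]=[t,t']\cup[t',t_{R,\alpha}]$ for a fixed $t'<1$, using uniform convergence $\mathcal{E}_\alpha\to\mathcal{E}$ on $[t,t']$ for the bulk and the uniform bound $\mathcal{E}_\alpha\le 2\Lambda$ on the tail $[t',t_{R,\alpha}]$, then letting $t'\to 1^-$---gives $\tfrac12[\mathcal{E}(t)-\Lambda]=-\tfrac{\log\mu}{2}\int_1^t\mathcal{E}(s)\,ds$. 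In particular $\mathcal{E}(1^-)=\Lambda$, which combined with $\mathcal{E}(t)=C\mu^{-t}$ forces $C=\mu\Lambda$ and establishes $\mathcal{E}(t)=\mu^{1-t}\Lambda$.

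The main obstacle is precisely this simultaneous passage $\alpha\searrow 1$, $R\to\infty$, $t_{R,\alpha}\to 1^-$ in the Pohozaev identity: the uniform convergence $\mathcal{E}_\alpha\to\mathcal{E}$ is only available on compact subintervals of $(0,1)$, so one must argue by the diagonal splitting described above, controlling the shrinking tail near $s=1$ through the uniform $L^\infty$ bound furnished by Lemma \ref{bounded F} and the hypothesis on $\dbl{E}_\alpha(u_\alpha)$; the error $O(\lambda_\alpha^s|\log\lambda_\alpha|)$ on $[t,1]$ is then absorbed using the pointwise decay $\lambda_\alpha^s|\log\lambda_\alpha|\to 0$ for $s>0$.
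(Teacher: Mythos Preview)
Your proof is correct and follows essentially the same route as the paper: derive the ODE $\mathcal{E}'=-(\log\mu)\mathcal{E}$ from \eqref{eq:pohozaev esti equiv 2} and Corollary~\ref{coro 1}, solve it as $\mathcal{E}(t)=C\mu^{-t}$, and pin down $C$ by matching against the bubble at scale $\lambda_\alpha R$. The one implementation difference is in the matching step: you substitute the moving endpoint $t_{R,\alpha}$ into the $s$-parametrized identity \eqref{eq:pohozaev esti equiv 2} and control the tail $[t',t_{R,\alpha}]$ by a diagonal splitting, whereas the paper integrates the pointwise Pohozaev identity \eqref{pohozaev esti 3} directly over the radial interval $[\lambda_\alpha R,\lambda_\alpha^{t_0}]$, bounding $\mathcal{E}_\alpha(t_0)-\int_{B(x_\alpha,\lambda_\alpha R)}(\tau_\alpha+|\nabla_{g_\alpha}u_\alpha|^2)^{\alpha-1}|\nabla u_\alpha|^2\,dx$ by the angular energy on the annulus plus $C\int_{\lambda_\alpha R}^{\lambda_\alpha^{t_0}}\tfrac{\alpha-1}{r}\,dr$, and then sends $\alpha\searrow 1$, $R\to\infty$, $t_0\to 1$ in that order---this sidesteps your splitting argument since the right-hand side no longer contains $\int\mathcal{E}_\alpha(s)\,ds$.
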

    \begin{proof}
        To begin, we decompose the integral of \eqref{lambda^s small} as below
        \begin{align}\label{eq:lambda^s small 1.5}
             \int_{B(x_\alpha, \lambda_\alpha^t)}\p{\tau_\alpha + \abs{\nabla_{g_\alpha} u_\alpha}^2}^{\alpha - 1}|\nabla_{g_\alpha} u_\alpha|^2 dV_{g_\alpha} &=  \int_{B(x_\alpha, \lambda_\alpha^t)}\p{\tau_\alpha + \abs{\nabla_{g_\alpha} u_\alpha}^2}^{\alpha - 1}|\nabla u_\alpha|^2 dx\nonumber\\
             & = \mathcal{E}_{r,t_0,\alpha}(t) + \mathcal{E}_{\theta,t_0,\alpha}(t) + \mathcal{E}_{\alpha}(t_0)
        \end{align}
        By Corollary \ref{coro 1}, we know that 
        \begin{equation}\label{eq:lambda^s small 2}
            \lim_{\alpha \searrow 1} \mathcal{E}_{\theta,t_0,\alpha}(t) = 0
        \end{equation}
        for any $ 0< t_0 < 1$. Then, in \eqref{eq:pohozaev esti equiv 2} letting $\alpha \searrow 1$ we see that 
        \begin{equation}\label{eq:lambda^s small 3}
            \lim_{\alpha \searrow 1}\mathcal{E}_{r,t_0,\alpha}(t) = \mathcal{E}_{r,t_0}(t) = -\int_{t_0}^{t} \log\p{\mu} \mathcal{E}(s) ds.
        \end{equation}
        Recalling that $\mathcal{E}_{\alpha}(t) - \mathcal{E}_{\alpha}(t_0) = \mathcal{E}_{r,t_0,\alpha} - \mathcal{E}_{\theta,t_0,\alpha}$ and \eqref{eq:lambda^s small 2}, letting $\alpha \searrow 1$ we have $\mathcal{E}(t) - \mathcal{E}(t_0) = \mathcal{E}_{r,t_0}.$ Plugging this identity into \eqref{eq:lambda^s small 3}, we have that 
        \begin{equation}
            \mathcal{E}_{r,t_0}(t) = -\log\p{\mu} \int_{t_0}^{t}  \mathcal{E}_{r,t_0}(s) + \mathcal{E}(t_0) ds.
        \end{equation}
        Solving this integral equation, we see that 
        \begin{equation}\label{eq:lambda^s small 4}
            \mathcal{E}_{r,t_0}(t) = \mu^{t_0 - t}\mathcal{E}(t_0) - \mathcal{E}(t_0).
        \end{equation}
        Now, taking $\alpha \searrow 1$ in \eqref{eq:lambda^s small 1.5} and utilizing \eqref{eq:lambda^s small 4} we have 
        \begin{equation}
            \lim_{\alpha \searrow 1}\int_{B(x_\alpha, \lambda_\alpha^t)}\p{\tau_\alpha + \abs{\nabla_{g_\alpha} u_\alpha}^2}^{\alpha - 1}|\nabla u_\alpha|^2 dx = \mu^{t_0- t} \mathcal{E}(t_0)
        \end{equation}
        Therefore, to show \eqref{eq:lambda^s small 1} it suffices to prove
        \begin{equation}\label{eq:midle lambda^s}
            \lim_{t_0 \rightarrow 1} \mathcal{E}(t_0) = \Lambda.
        \end{equation}
        To see this, integrating the Pohozaev identity \eqref{pohozaev esti 2} with respect to $t$ from $\lambda_\alpha R$ to $\lambda_\alpha^{t_0}$, we get
        \begin{align}\label{eq:lambda^s small 5}
            0 \leq \mathcal{E}_\alpha(t_0) - &\int_{B(x_\alpha, \lambda_\alpha R)} \p{\tau_\alpha + \abs{\nabla_{g_\alpha} u_\alpha}^2}^{\alpha - 1} \abs{\nabla u_\alpha}^2 dx\nonumber\\
             &\leq C\int_{A(\lambda_\alpha R ,\lambda_\alpha^{t_0},x_\alpha)}\p{\tau_\alpha + \abs{\nabla_{g_\alpha} u_\alpha}^2}^{\alpha - 1} \frac{1}{|x - x_\alpha|^2}\abs{\frac{\partial u_\alpha}{\partial \theta}}^2 dx\nonumber\\
             &\quad +C \int_{\lambda_\alpha R}^{\lambda_\alpha^{t_0}} \frac{\alpha - 1}{r} dr + C(\lambda_\alpha^{t_0} - \lambda_\alpha R).
        \end{align}
        Note that for the second integral on the righthand of \eqref{eq:lambda^s small 5} we have 
        \begin{equation*}
            \lim_{t_0 \rightarrow 1}\lim_{R \rightarrow \infty}\lim_{\alpha \searrow 1}\int_{\lambda_\alpha R}^{\lambda_\alpha^{t_0}} \frac{\alpha - 1}{r} dr = \lim_{t_0 \rightarrow 1} \frac{1 - t_0}{2} \log\p{\mu} = 0.
        \end{equation*}
        Moreover, keeping in mind that \eqref{eq:lambda^s small 2} and the definition of $w_\alpha$, see \eqref{eq:defi of w alpha}, we see that 
        \begin{align*}
            \lim_{t_0 \rightarrow 1} F(t_0) &= \lim_{t_0 \rightarrow 1}\lim_{R \rightarrow \infty}\lim_{\alpha \searrow 1}  \int_{B(x_\alpha, \lambda_\alpha R)} \p{\tau_\alpha + \abs{\nabla_{g_\alpha} u_\alpha}^2}^{\alpha - 1} \abs{\nabla u_\alpha}^2 dx\nonumber\\
            &= \lim_{R \rightarrow \infty}\lim_{\alpha \searrow 1} \int_{B(0, R)} \p{\lambda_\alpha^2\tau_\alpha + \abs{\nabla_{g_\alpha} w_\alpha}^2}^{\alpha - 1} \lambda_\alpha^{2 - 2\alpha} \abs{\nabla w_\alpha}^2 dx\\
            &= \mu E(w) = \Lambda.
        \end{align*}
        Therefore, we prove the \eqref{eq:midle lambda^s} and complete the proof of Lemma \ref{lambda^s small}.
    \end{proof} 
		Now we are in a position to prove the generalized energy identity---Theorem \ref{simple energy ide} when there is only one bubble during the blow-up procedure.
		\begin{proof}[\textbf{Proof of Theorem \ref{simple energy ide} under One Bubble Assumption}]
			Integrating the Pohozaev identity \eqref{pohozaev esti 3}  with respect to $t$ over the interval $[\lambda^t_\alpha, \delta]$ for some $t \in (0,1)$, we have
			\begin{align}\label{eq:proof of 1bubble 1}
				\int_{A(\lambda_\alpha^t, \delta, x_\alpha)}&\p{\tau_\alpha + \abs{\nabla_{g_\alpha} u_\alpha}^2}^{\alpha - 1}|\nabla u_\alpha|^2 dx\nonumber \\
				&\leq C \int_{A(\lambda_\alpha^s, \delta, x_\alpha)}\p{\tau_\alpha + \abs{\nabla_{g_\alpha} u_\alpha}^2}^{\alpha - 1}\frac{1}{|x - x_\alpha|^2}\abs{\frac{\partial u_\alpha}{\partial \theta}}^2 dx\nonumber \\
				&\quad + C\int_{\lambda_\alpha^s}^\delta \frac{\alpha - 1}{r}dr + C(\delta - \lambda_\alpha^s).
			\end{align}
			By Corollary \ref{coro 1}, we have
			\begin{equation}\label{eq:proof of 1bubble 2}
				\lim_{\delta\searrow 0}\lim_{s\rightarrow 0}\lim_{\alpha \searrow 1}\int_{A(\lambda_\alpha^s, \delta, x_\alpha)}\p{\tau_\alpha + \abs{\nabla_{g_\alpha} u_\alpha}^2}^{\alpha - 1}\frac{1}{|x - x_\alpha|^2}\abs{\frac{\partial u_\alpha}{\partial \theta}}^2 dx = 0
			\end{equation}
			Moreover, by direct computations we have
			\begin{equation}\label{eq:proof of 1bubble 3}
				\lim_{\delta\searrow 0}\lim_{s\rightarrow 0}\lim_{\alpha \searrow 1}\int_{\lambda_\alpha^s}^\delta \frac{\alpha - 1}{r}dr = \lim_{s\rightarrow 0}\frac{s}{2}\log \mu = 0
			\end{equation}
			Therefore, plugging \eqref{eq:proof of 1bubble 2} and \eqref{eq:proof of 1bubble 3} into \eqref{eq:proof of 1bubble 1} we have
			\begin{equation}
				\lim_{\delta\searrow 0}\lim_{s\rightarrow 0}\lim_{\alpha \searrow 1}\int_{A(\lambda_\alpha^s, \delta, x_\alpha)}\p{\tau_\alpha + \abs{\nabla_{g_\alpha} u_\alpha}^2}^{\alpha - 1}|\nabla u_\alpha|^2 dx = 0.
			\end{equation}
			Moreover, utilizing Lemma \ref{lambda^s small} we know that
			\begin{equation*}
				\lim_{s\rightarrow 0}\lim_{\alpha \searrow 1} \int_{B(x_\alpha,\lambda_\alpha^s)}\p{\tau_\alpha + \abs{\nabla_{g_\alpha} u_\alpha}^2}^{\alpha - 1}|\nabla u_\alpha|^2 dx = \mu\Lambda = \mu^2 E(w)
			\end{equation*}
			In the end, by Lemma \ref{bounded F}, we can conclude that 
			\begin{equation*}
				\lim_{\delta\searrow 0}\lim_{\alpha \searrow 1}\int_{B(x_\alpha, \delta)}\p{\tau_\alpha + \abs{\nabla_{g_\alpha} u_\alpha}^2}^{\alpha}dV_{g_\alpha} = \mu^2 E(w).
			\end{equation*}
			which completes the proof of Theorem \ref{simple energy ide} when $n_0 = 1$.
		\end{proof}
        \subsubsection{Proof of General Case for Theorem \ref{simple energy ide}.}
        \ 
        \vskip5pt
        In this subsubsection, we employ an induction argument on the number of bubbles $n_0$ to complete the proof of Theorem \ref{simple energy ide}.
        \begin{proof}[\textbf{Proof of the General Case for Theorem \ref{simple energy ide}}]
            Since we have proved the Theorem \ref{simple energy ide} when $n_0 = 1$ in Subsubsection \ref{subsubsection:one bubble}, now suppose that the generalized energy identity asserted in Theorem \ref{simple energy ide} holds when there are $n_0 - 1$ many bubbles for sequence $u_\alpha$ as $\alpha \searrow 1$.

            Firstly, recall that the first bubble $w^1$ for $\alpha$-$H$-surfaces $u_\alpha$ are constructed by sequence
		\begin{equation}
		w^1 := \lim_{\alpha \searrow 1} w^1_\alpha(x) = \lim_{\alpha \searrow 1} u_\alpha(x_\alpha^1 + \lambda^1_\alpha x) 
		\end{equation}
		where 
  $$\lambda^1_\alpha = \frac{1}{\max_{B(0, 1/2)}|\nabla_{g_\alpha} u_\alpha|}$$
  and $x_\alpha^1$ is the point where the maximum is taken on, that is, 
		$$
		|\nabla_{g_\alpha}u_\alpha(x_\alpha)| = \max_{x \in B(0, 1/2)}|\nabla_{g_\alpha}u_\alpha(x)|.
		$$
  Then, $\norm{\nabla w^1_\alpha}_{L^\infty(\R^2)} = \abs{\nabla w^1_\alpha(0)} =1$. So, by Lemma \ref{lem4.1}, $w^1_\alpha$ converges strongly in $C^\infty_{loc}(\R^2)$ to the first non-trivial $H$-sphere $w^1$ modulo a conformal transformation from $\R^2 \cup \set{\infty}$ onto $\S^2$ and removing the singularity $\infty$, see Lemma \ref{blow 3}.

  Then, similarly, we assume that the remaining $n_0-1$ many bubbles are produced by sequences
  \begin{equation*}
      w^i : = \lim_{\alpha \searrow 1}w^i_\alpha= \lim_{\alpha \searrow 1} u_\alpha(x_\alpha^i +\lambda_\alpha^i x) \quad \text{strongly in } C^\infty_{loc}(\R^2\backslash \mathfrak{S}^i)
  \end{equation*}
  for some sequences of points $x_\alpha^i \rightarrow 0$ and $\lambda_\alpha^i \rightarrow 0$ as $\alpha \searrow 1$ satisfying the alternative \ref{A1} or \ref{A2}, $2 \leq i \leq n_0$. Here, $\mathfrak{S}^i \subset \R^2$ are finite sets consisting of energy concentration points for sequences $w_\alpha^i$ as $\alpha \searrow 1$. By our choice of first bubble, we see that 
  \begin{equation*}
      \lambda_\alpha^1 = \min_{1\leq i \leq n_0}\set{\lambda_\alpha^1, \lambda_\alpha^2, \dots, \lambda_\alpha^{n_0}}.
  \end{equation*}
  For notation simplicity, we assume that 
  \begin{equation*}
      \lambda_\alpha^{n_0} = \max_{1\leq i \leq n_0}\set{\lambda_\alpha^1, \lambda_\alpha^2, \dots, \lambda_\alpha^{n_0}}
  \end{equation*}
  and define 
  \begin{equation*}
      \dbl{\lambda}_\alpha = \lambda_\alpha^{n_0} + \frac{\sum_{i = 1}^{n_0 - 1} \abs{x_\alpha^{n_0} - x^i_\alpha}}{n_0 -1}.
  \end{equation*}
  Thanks to the choice of $\dbl{\lambda}_\alpha$ and through a complete similar argument as Claim \ref{claim:ding-tian}, we also have 
  \claim\label{claim:ding-tian 2} For any $\varepsilon > 0$ there exists $\delta > 0$ and $R > 0$ such that
		\begin{equation}\label{energy on neck multi bubble}
			\int_{B(x_\alpha^1,2t)\backslash B(x_\alpha^1,t)} \abs{\nabla_{g_\alpha} u_\alpha}^2 dV_{g_\alpha} \leq \varepsilon \quad \text{for any }t \in \p{\frac{\dbl{\lambda}_\alpha R}{2}, 2\delta}
		\end{equation}
		when $\alpha - 1$ is small enough such that $\dbl{\lambda}_\alpha R \leq \delta$.

        Then, we can apply the argument of subsubsection \ref{subsubsection:one bubble} in proving the Theorem \ref{simple energy ide} to conclude  that
        \begin{align}\label{eq:multi bubble 1}
            \lim_{\alpha \searrow 1} \int_{B(x_\alpha^1,\delta)} \p{\tau_\alpha + \abs{\nabla_{g_\alpha} u_\alpha}^2}^\alpha dV_{g_\alpha} &= \lim_{R \rightarrow \infty}\lim_{\alpha \searrow 1} \int_{B(0,R)} \p{\tau_\alpha\dbl{\lambda}_\alpha^2 + \abs{\nabla_{\dbl{g}_\alpha} \dbl{u}_\alpha}^2}^\alpha \dbl{\lambda}_\alpha^{2 - 2\alpha} dV_{\dbl{g}_\alpha}\nonumber\\
            &\quad + \Vol(B(0, \delta)) \lim_{\alpha \searrow 1} \tau_\alpha + \int_{B(0, \delta)} \abs{\nabla u_0}^2 dx,
        \end{align}
        where $\dbl{u}_\alpha(x) :=u_\alpha(x_\alpha^1 + \dbl{\lambda}_\alpha x)$, 
        $$\dbl{g}_\alpha(x) = e^{\varphi_\alpha(x_\alpha^1 + \dbl{\lambda}_\alpha x)} \p{\p{dx^1}^2 + \p{dx^2}^2}$$
        and $u_0$ is the weak limit of $u_\alpha$. Note that as a corollary of Claim \ref{claim:ding-tian 2} there exists a large $R > 0$ such that all the energy concentration points $\dbl{\mathfrak{S}}$ of $\dbl{u}_\alpha$ belongs to $B(0, R)$. Then, $\dbl{u}_\alpha$ converges to some $H$-surface $\dbl{w}$ strongly in $C^\infty_{loc}(\R^2\backslash \dbl{\mathfrak{S}})$. Then, we proceed the induction argument conditionally depending on whether $\dbl{w}$ is trivial or not.
        \begin{itemize}
            \item $\dbl{w}$ is a non-constant $H$-sphere. Then, there must holds 
            \begin{equation*}
                \limsup_{\alpha \searrow 1} \frac{\sum_{i = 1}^{n_0 - 1} \abs{x_\alpha^{n_0}- x_\alpha^i}}{(n_0 - 1)\lambda_\alpha^{n_0}} < \infty
            \end{equation*}
            Otherwise, there will exists one more bubble which is distinct from $w^i$ for $1 \leq i \leq n_0$ around $0$ contradicting the assumption of Theorem \ref{simple energy ide}. Thus, we see that $\dbl{w}$ is exactly the $w^{n_0}$ after formulating a conformal transformation and hence
            \begin{equation*}
                \lim_{\alpha \searrow 1} \p{\dbl{\lambda}_\alpha}^{2 -2 \alpha} = \lim_{\alpha \searrow 1} \p{\lambda_\alpha^{n_0}}^{2 -2\alpha} = \mu_{n_0}.
            \end{equation*}
            Furthermore, by our choice of $\dbl{\lambda}_\alpha$ we know that the later one of the alternatives \ref{A1} and \ref{A2} must hold, that is, $\lambda_\alpha^i / \lambda_\alpha^{n_0} \rightarrow 0$ as $\alpha \searrow 0$ for any $1 \leq i \leq n_0 - 1$. Observe that 
            \begin{align*}
                \dbl{u}_\alpha\p{\dbl{x}_\alpha^i + \frac{\lambda_\alpha^i}{\lambda_\alpha^{n_0}}\p{x - \dbl{x}_\alpha^i}} = w_\alpha(x)\rightarrow w^i \text{ as } \alpha \searrow 1,
            \end{align*}
            where 
            \begin{equation*}
                \dbl{x}_\alpha^i = \frac{1}{\dbl{\lambda}_\alpha - \lambda_\alpha^i} \p{x_\alpha^i - x_\alpha^i}.
            \end{equation*}
            This means $w^1, \dots, w^{n_0 - 1}$ are exactly all bubbles of $\dbl{u}_\alpha$.
             Now, we consider the functional 
            \begin{equation}
            \dbl{E}^{\omega}_{\alpha, n_0}(w) = \frac{1}{2}\int_{B(0, R)} {\p{\tau_\alpha\dbl{\lambda}_\alpha^{2} + \abs{\nabla_{g_\alpha} w}^2}^{\alpha} } dV_{g_\alpha} + \tau^{\alpha - 1}_\alpha \dbl{\lambda}_\alpha^{2\alpha -2}\int_{B(0,R)} w^* \omega.
        \end{equation}
        And we can apply the induction assumption to this functional and for sequence $\set{\dbl{u}_\alpha}$ to get 
        \begin{align}
            \lim_{\alpha \searrow 1} \int_{B(0,R)}& \p{\tau_\alpha \dbl{\lambda}_\alpha^2 + \abs{\nabla_{g_\alpha} \dbl{u}_\alpha}^2}^{\alpha - 1} \abs{\nabla_{\dbl{g}_\alpha} \dbl{u}_\alpha}^2 dV_{\dbl{g}_\alpha}\nonumber\\
            &= E(\dbl{w}, B(0, R)) + \sum_{i  = 1}^{n_0 - 1} \lim_{\alpha \searrow 1} \p{\frac{\lambda_\alpha^i}{\dbl{\lambda}_\alpha}}^{4 - 4\alpha} E(w^i).
        \end{align}
        Since 
        \begin{align*}
            \dbl{\lambda}_\alpha^{2 - 2\alpha}\lim_{\alpha \searrow 1} \int_{B(0,R)}& \p{\tau_\alpha \dbl{\lambda}_\alpha^2 + \abs{\nabla_{g_\alpha} \dbl{u}_\alpha}^2}^{\alpha - 1} \abs{\nabla_{\dbl{g}_\alpha} \dbl{u}_\alpha}^2 dV_{\dbl{g}_\alpha}\\
            &= \int_{B(x_\alpha^1, \dbl{\lambda}_\alpha R)} \p{\tau_\alpha + \abs{ \nabla_{g_\alpha} u_\alpha}^2 }^{\alpha - 1} \abs{\nabla_{g_\alpha} u_\alpha}^2 dV_{g_\alpha},
        \end{align*}
         we can conclude that 
         \begin{align}\label{eq:multi bubble 2}
             \lim_{R \rightarrow \infty} \lim_{\alpha \searrow 1} \int_{B(x_\alpha^1, \dbl{\lambda}_\alpha R)} \p{\tau_\alpha + \abs{ \nabla_{g_\alpha} u_\alpha}^2 }^{\alpha - 1} \abs{\nabla_{g_\alpha} u_\alpha}^2 dV_{g_\alpha} = \mu_{n_0}^2 E(\dbl{w}) + \sum_{i = 1}^{n_0 - 1} \mu_{i}^2 E(w^i).
         \end{align}
         Combining \eqref{eq:multi bubble 1} and \eqref{eq:multi bubble 2}, we will get 
         \begin{equation*}
             \lim_{\alpha \searrow 1} \int_{B(x_\alpha,\delta)} \p{\tau_\alpha + \abs{\nabla_{g_\alpha} u_\alpha}^2}^\alpha dV_{g_\alpha} = \Vol(B(0,\delta))\lim_{\alpha \searrow 1} \tau_\alpha + E(u_0) + \sum_{i = 1}^{n_0} \mu_{i}^2 E(w^i).
         \end{equation*}
        \item $\dbl{w}$ is constant. Then, there are at least two distinct energy concentration points for sequence $\dbl{u}_\alpha$. This means at each energy concentration point there at most have $n_0 - 1$ many bubbles. And one can apply the induction assumption and utilize a similar argument as the previous case to conclude the desired generalized  energy identity stated in Theorem \ref{simple energy ide}.
        \end{itemize}
        Therefore, whether $\dbl{w}$ is trivial or not, both cases contribute to the completion of the proof for Theorem \ref{simple energy ide}.
        \end{proof}
		\subsection{Proof of Asymptotic Behavior on Necks---Theorem \ref{analysis on neck}}
	       \ 
        \vskip5pt
		In this subsection, we will examine the convergent behaviors of necks for sequence $u_\alpha$ as $\alpha \searrow 1$ and present the proof of our second main consequence, Theorem \ref{analysis on neck}. As in the previous Subsection \ref{subsection" generalized ide}, it suffices to consider the following simple case to prove the Theorem \ref{analysis on neck}.
		\begin{theorem}\label{simple neck analysis}
			Let $(B(0,1),g_\alpha)\subset \R^2$ be the unit disk equipped with sequence of conformal metric $g_\alpha = e^{\varphi_\alpha}\big((dx^1)^2 + (dx^2)^2\big)$ and $g = e^{\varphi}\big((dx^1)^2 + (dx^2)^2\big)$ where $\varphi_\alpha \in C^\infty(B(0,1))$, $\varphi_\alpha(0) = 0$ for $\alpha > 1$ and $\varphi_\alpha \rightarrow \varphi$ strongly in $C^\infty(\overline{B(0,1)})$ as $\alpha \searrow 1$. Let $u_\alpha \in C^\infty(B(0,1), N)$ be a sequence of $\alpha$-$H$-surfaces satisfying
			\begin{enumerate}[label=$(\mathrm{\alph*})$]
				\item $\sup_{\alpha > 1}\dbl{E}_\alpha(u_\alpha) \leq \Lambda < \infty$ and $0 < \beta_0 \leq \lim_{\alpha \searrow 1} \tau_\alpha^{\alpha - 1} \leq 1$,
				\item $u_\alpha \rightarrow u$ strongly in $C^\infty_{loc}\p{B(0,1)\backslash\{0\}, \R^K}$ as $\alpha \searrow 1$.
			\end{enumerate}
			We further assume there is only one bubble $w^1: \S^2 \rightarrow N$ around $0 \in B(0,1)$ for sequence $u_\alpha$ as $\alpha \searrow 1$. Let
			\begin{equation*}
				\nu^1 = \liminf_{\alpha \searrow 1} \p{\lambda^1_\alpha}^{-\sqrt{\alpha - 1}}.
			\end{equation*}
			Then there exists a subsequence of $u_\alpha$ still denoted by $u_\alpha$,  a sequence of points $x_\alpha$ and a sequence of positive numbers $\lambda_\alpha$  such that the following statements hold:
			\begin{enumerate}[label=(\subscript{N}{\arabic*})]
				\item\label{simple neck analysis item1} when $\nu^1 = 1$, the set $u_0\p{B^M(x_1,1)} \bigcup w^1(\S^2)$ is a connected subset of $N$ where $u_0$ is the weak limit of $u_\alpha$ in $W^{1,2}(M,N)$ as $\alpha\searrow 1$;
				\item\label{simple neck analysis item2} when $\nu^1\in (1,\infty)$, the set $u_0\p{B^M(x_1,1)}$ and $w^1(\S^2)$ are connected by a geodesic $\Gamma \subset N$ with length
				\begin{equation*}
					L(\Gamma) = \sqrt{\frac{E(w^1)}{\pi}} \log \nu^1;
				\end{equation*}
				\item\label{simple neck analysis item3} when $\nu^1 = \infty$, the neck contains at least an infinite length geodesic.
			\end{enumerate}
		\end{theorem}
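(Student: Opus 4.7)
The plan is to pass to cylindrical coordinates, identify the circular averages on the neck as an approximate sequence of geodesics, and compute the neck length via Cauchy--Schwarz together with Lemma \ref{lambda^s small}. Set $r = e^{-t}$ to conformally identify the neck annulus $A(\lambda_\alpha^1 R, \delta, x_\alpha^1)$ with the cylinder $[T^-, T^+_\alpha] \times S^1$, where $T^- = -\log\delta$ and $T^+_\alpha = -\log(\lambda_\alpha^1 R)$. Let $v_\alpha(t,\theta) := u_\alpha(x_\alpha^1 + e^{-t}(\cos\theta,\sin\theta))$ and $v_\alpha^*(t) := (2\pi)^{-1}\int_0^{2\pi} v_\alpha(t,\theta)\, d\theta$. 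Since the Dirichlet part of $\dbl{E}^\omega_\alpha$ is conformally invariant, $v_\alpha$ satisfies an analogous Euler--Lagrange equation on the cylinder. Combining Corollary \ref{coro 1} with the trace argument underlying \eqref{E 29} yields $\|v_\alpha(t,\cdot) - v_\alpha^*(t)\|_{L^\infty(S^1)} \to 0$ uniformly in $t$, so it suffices to analyze the one-parameter family of curves $v_\alpha^*$.

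For the upper bound on the neck-curve length, Jensen's inequality followed by two applications of Cauchy--Schwarz yields
\[
L(v_\alpha^*)^2 \leq (T^+_\alpha - T^-)\cdot\frac{1}{2\pi}\int_{A(\lambda_\alpha^1 R,\delta,x_\alpha^1)} |\partial_r u_\alpha|^2\, dx.
\]
By Corollary \ref{coro 1} the angular component of the energy on the neck is negligible, so Lemma \ref{lambda^s small} (with one parameter tending to $0$ from above and the other to $1$ from below), together with Lemma \ref{bounded F}, shows the second factor tends to $(\mu - 1)\Lambda/\pi + o(1)$. Inserting the asymptotic expansions $\log\mu = 2\sqrt{\alpha-1}\,\log\nu^1 + o(\sqrt{\alpha-1})$ and $T^+_\alpha - T^- = \log\nu^1/\sqrt{\alpha-1} + o(1/\sqrt{\alpha-1})$, which follow directly from the definitions of $\mu$ and $\nu^1$, one obtains in the limit
\[
L(\Gamma) \leq \sqrt{E(w^1)/\pi}\,\log\nu^1,
\]
with the conventions $\log 1 = 0$ and $\log\infty = +\infty$ handling cases \ref{simple neck analysis item1} and \ref{simple neck analysis item3}.

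For the matching lower bound and to identify the limit as a geodesic, note that the Euler--Lagrange equation \eqref{el general 1 equi}, written in cylinder coordinates for a $\theta$-independent map, reduces to the intrinsic geodesic equation $\ddot\gamma + A(\gamma)(\dot\gamma,\dot\gamma) = 0$: indeed the antisymmetries $H^k_{ij} = -H^k_{ji}$ and $H^k_{ij} = -H^i_{kj}$ from \eqref{eq:H anti-symmetric} imply $H(V,V) = 0$, so the mean-curvature term evaluated on a radial map vanishes identically. Partitioning the neck into logarithmic sub-intervals on each of which Lemma \ref{lem4.1} applies uniformly, and rescaling in $t$, shows that the averaged curves $v_\alpha^*$ converge in $C^2_{\mathrm{loc}}$ to a geodesic $\Gamma$ in $N$. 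Iterating the Pohozaev identities in Lemmas \ref{lem:pohozaev} and \ref{lem:second pohozaev} on dyadic sub-annuli then establishes that $\int_0^{2\pi}|\partial_t v_\alpha|^2\, d\theta$ is approximately constant in $t$ along the cylinder (``equipartition''), whence the Cauchy--Schwarz inequality in the previous paragraph is saturated in the limit and $L(\Gamma) = \sqrt{E(w^1)/\pi}\,\log\nu^1$ exactly. The two boundary cases follow from the length formula: if $\nu^1 = 1$ then $L(\Gamma) = 0$ and the neck image collapses to a point, so $u_0(B^M(x_1,r)) \cup w^1(\S^2)$ is connected; if $\nu^1 = \infty$, applying the same analysis on each compact sub-neck produces geodesics of arbitrarily large length, which by a diagonal extraction and compactness of $N$ assemble into a complete geodesic ray of infinite length inside the neck.

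The hard step is the equipartition used for the lower bound. The routine Cauchy--Schwarz upper bound does not see whether $|\partial_t v_\alpha|$ wastes energy on oscillations in $t$, and ruling this out requires simultaneously controlling the Pohozaev-type error terms (whose integrands involve $(\alpha-1)$ times potentially large quantities) on all logarithmic scales, together with handling the mean-curvature contribution via the antisymmetry \eqref{eq:H anti-symmetric}, which is the one genuine departure from the corresponding $\alpha$-harmonic map argument in \cite{li2010}. Once equipartition is in place, Theorem \ref{analysis on neck} follows from Theorem \ref{simple neck analysis} by the same induction on the number of bubbles as in the passage from Theorem \ref{simple energy ide} to Theorem \ref{generalized ide}, with the length formula enhanced to the case-by-case version described in Remark \ref{rmk length formula}.
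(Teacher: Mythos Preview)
Your overall architecture is the paper's: pass to the neck, kill the angular energy, show the circular averages $v_\alpha^*$ converge to a geodesic, and compute its length. But the key quantitative steps are not supported by the lemmas you invoke.

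\textbf{The upper bound is not justified.} You write that the second factor $\frac{1}{2\pi}\int_A|\partial_r u_\alpha|^2\,dx$ tends to $(\mu-1)\Lambda/\pi + o(1)$ by Lemma~\ref{lambda^s small}, and then expand $\log\mu = 2\sqrt{\alpha-1}\log\nu^1 + o(\sqrt{\alpha-1})$. This conflates the limit $\mu = \liminf_{\alpha\searrow 1}\lambda_\alpha^{2-2\alpha}$ with the sequence $\mu_\alpha := \lambda_\alpha^{2-2\alpha}$. In the case $\nu^1\in(1,\infty)$ one has $\mu = 1$, so $(\mu-1)\Lambda = 0$ and Lemma~\ref{lambda^s small} tells you only that the neck energy tends to zero; it says nothing about the \emph{rate}. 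What you actually need is a statement of the form $\int_A|\partial_r u_\alpha|^2\,dx = 2(\mu_\alpha - 1)E(w^1) + o(\sqrt{\alpha-1})$, i.e.\ control of the neck energy to order $\sqrt{\alpha-1}$. The paper obtains exactly this, but through Proposition~\ref{prop 2}: the rescaled map $\frac{1}{\sqrt{\alpha-1}}\big(u_\alpha(x_\alpha+\lambda_\alpha^{t_\alpha}x)-u_\alpha(x_\alpha+(\lambda_\alpha^{t_\alpha},0))\big)$ converges in $C^2$ to $\vec a\log|x|$ with $|\vec a| = \mu^{1-t}\sqrt{E(w^1)/\pi}$. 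That proposition in turn rests on Proposition~\ref{theta zero}, the refined decay $\frac{1}{\alpha-1}\int\frac{1}{|x-x_\alpha|^2}|\partial_\theta u_\alpha|^2\,dx \to 0$, whose proof is a genuine two-step bootstrap (first bound the quantity by a constant via an ODE for $\mathcal F_\alpha$, then use that bound to find good radii and improve to zero). This is not ``iterating Pohozaev on dyadic annuli''; the Pohozaev identities feed into the bootstrap but do not by themselves yield the $O(\alpha-1)$ angular smallness.

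\textbf{The geodesic limit needs more than antisymmetry.} You argue that for a $\theta$-independent map the $H$-term vanishes and the equation becomes the geodesic equation. That is true, but $v_\alpha^*$ is the \emph{average} of $v_\alpha$, not a solution of the same equation: averaging a nonlinear PDE does not give the PDE for the average. The paper handles this in Lemma~\ref{ays lemma} by writing $\ddot\gamma_\alpha$ via the equation for $u_\alpha$ and showing each error term (the second-fundamental-form term, the $(\alpha-1)$ term, and the $H$-term) is $\frac{\alpha-1}{\lambda_\alpha^{2t_\alpha}}(A(y)(\vec a,\vec a)+o(1))$; the $H$-contribution is killed not by $H(V,V)=0$ but by the pointwise angular smallness $\frac{1}{\sqrt{\alpha-1}}\frac{1}{r}|\partial_\theta u_\alpha|\to 0$ from Corollary~\ref{coro 2}. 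Without that pointwise control (again a consequence of Proposition~\ref{prop 2}) you cannot conclude that $\Gamma_\alpha$ has uniformly bounded second fundamental form, which is what drives the convergence in Lemma~\ref{converge to geodesic}.

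In short, the missing ingredient in both the length formula and the geodesic limit is precisely the refined blow-up Proposition~\ref{prop 2} (and its prerequisite Proposition~\ref{theta zero}); your ``equipartition'' paragraph names the right phenomenon but does not supply the mechanism to prove it.
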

		\subsubsection{No Neck Property for the case \texorpdfstring{$\nu = 1$}{Lg}}\label{subsubsection: no neck}
  \ 
  \vskip5pt 
        In this subsubsection, we focus on the case where $\nu = 1$ in Theorem \ref{simple neck analysis} to demonstrate that the base map and all bubbles are directly connected.
		\begin{proof}[\textbf{Proof Theorem \ref{simple neck analysis} when $\nu = 1$}]
			Considering the Remark \ref{remark,neck reduction}, similarly to the construction of first bubble described in Subsubsection \ref{subsubsection:one bubble}, let $x_\alpha \in B(0, \delta)$ be the maximum point of $\abs{\nabla_{g_\alpha} u_\alpha}$ on $\overline{B(0, \delta)}$. Since $0$ is the only blow-up point, there must have $\lim_{\alpha \searrow 1} x_\alpha = 0$. The first bubble $w$ for $\alpha$-$H$-surfaces $u_\alpha$ is constructed by sequence
		\begin{equation}
		w := \lim_{\alpha \searrow 1} w_\alpha(x) = \lim_{\alpha \searrow 1} u_\alpha(x_\alpha + \lambda_\alpha x).
		\end{equation}
        And $1 \leq \mu \leq \nu =1$,  Theorem \ref{generalized ide} tells us that the energy identity holds, that is,
			\begin{equation}\label{eq:energy ide 1}
				\lim_{\delta\searrow 0}\lim_{R\rightarrow\infty}\lim_{\alpha \searrow 1} \int_{A(\lambda_\alpha R, \delta, x_\alpha)}\abs{\nabla u_\alpha}^2 dx = 0.
			\end{equation}
			Without loss of generality, we can assume for each $\alpha > 1$ there is a positive integer $k_\alpha$ such that $\delta = 2^{k_\alpha}\lambda_\alpha R$. For $k = 1, \dots, k_\alpha -1$ and $0 \leq t \leq \min\{k_\alpha - k, k\}$, we define 
			\begin{equation*}
				Q(k,t) = A\p{2^{k-t}\lambda_\alpha R,2^{k+t}\lambda_\alpha R,x_\alpha},
			\end{equation*}
            and
            \begin{equation*}
                \mathcal{F}_{\alpha,k}(t) = \int_{Q(k,t)} \abs{\nabla u_\alpha}^2 dx.
            \end{equation*}
			By the same estimate techniques as in Lemma \ref{decay theta}, as a consequence of \eqref{E 26} we can obtain
			\begin{align}\label{E 32}
				\int_{Q(k,t)} \abs{\nabla u_\alpha}^2 dx & \leq C\varepsilon\int_{Q(k,t)}\abs{\nabla u_\alpha}^2dx+ C(\alpha - 1)\int_{Q(k,t+1)}\abs{\nabla u_\alpha}^2dx\nonumber\\
				&\quad + \int_{\partial Q(k,t)} \frac{\partial u_\alpha}{\partial r}\p{u_\alpha - u^*_\alpha} ds + \int_{Q(k,t)} \abs{\frac{\partial u_\alpha}{\partial r}}^2dx
			\end{align}
            for any small enough $\varepsilon > 0$ that will be determined later.
			Next, we want to utilizing Pohozaev identity \eqref{eq:another pohozaev} obtained in Lemma \ref{lem:second pohozaev} to control the term 
			\begin{equation*}
				\int_{Q(k,t)} \abs{\frac{\partial u_\alpha}{\partial r}}^2dx
			\end{equation*}
            occurring in righthand of \eqref{E 32}.
			Integrating 
			\begin{align*}
				\int_{\partial B(x_\alpha,s)} \left(\abs{\frac{\partial u_\alpha}{\partial r}}^2\right. &- \left.\frac{1}{|x - x_\alpha|^2}\abs{\frac{\partial u_\alpha}{\partial \theta}}^2\right)ds\nonumber\\
        &= -\frac{2(\alpha - 1)}{s}\int_{B(x_\alpha,s)}\frac{\nabla \abs{\nabla_{g_\alpha}u_\alpha}^2\nabla u_\alpha}{\tau_\alpha + \abs{\nabla_{g_\alpha}u_\alpha}^2}\cdot |x - x_\alpha|\cdot\frac{\partial u_\alpha}{\partial r} dx.
			\end{align*}
			with respect to $s$ from $2^{k-t}\lambda_\alpha R$ to $2^{k+ t}\lambda_\alpha R$, we can get 
			\begin{align}\label{E 33}
				\int_{Q(k,t)}& \left(\abs{\frac{\partial u_\alpha}{\partial r}}^2\right. - \left.\frac{1}{\abs{x - x_\alpha}^2}\abs{\frac{\partial u_\alpha}{\partial \theta}}^2\right)ds\nonumber\\
                &= - \int_{2^{k-t}\lambda_\alpha R}^{2^{k+ t}\lambda_\alpha R}\p{\frac{2(\alpha - 1)}{s}\int_{B(x_\alpha,s)}\frac{\nabla \abs{\nabla_{g_\alpha}u_\alpha}^2\nabla u_\alpha}{\tau_\alpha + \abs{\nabla_{g_\alpha}u_\alpha}^2}\cdot |x - x_\alpha|\cdot\frac{\partial u_\alpha}{\partial r} dx} ds\nonumber\\
                &\leq C \int_{2^{k-t}\lambda_\alpha R}^{2^{k+ t}\lambda_\alpha R}\frac{2(\alpha - 1)}{s} \p{\int_{A(\lambda_\alpha R, \delta, x_\alpha)} \abs{x - x_\alpha}\cdot\abs{\nabla_{g_\alpha}^2 u_\alpha}\cdot \abs{\frac{\partial u_\alpha}{\partial r}}dx}ds\nonumber\\
                &\quad + \int_{2^{k-t}\lambda_\alpha R}^{2^{k+ t}\lambda_\alpha R}\frac{2(\alpha - 1)}{s} \p{\int_{B(x_\alpha, \lambda_\alpha R)} \abs{x - x_\alpha}\cdot\abs{\nabla_{g_\alpha}^2 u_\alpha}\cdot \abs{\frac{\partial u_\alpha}{\partial r}}dx}ds
			\end{align}
            For the first integral in the righthand of \eqref{E 33}, utilizing the energy identity \eqref{eq:energy ide 1} and together with the small energy identity Lemma \ref{lem4.1}, when $\alpha - 1$ is small we have that 
            \begin{align}\label{eq:bad term 1}
                \int_{2^{k-t}\lambda_\alpha R}^{2^{k+ t}\lambda_\alpha R}\frac{2(\alpha - 1)}{s} &\p{\int_{A(\lambda_\alpha R, \delta, x_\alpha)} \abs{x - x_\alpha}\cdot\abs{\nabla_{g_\alpha}^2 u_\alpha}\cdot \abs{\frac{\partial u_\alpha}{\partial r}}dx}ds\nonumber\\
                &\leq \int_{2^{k-t}\lambda_\alpha R}^{2^{k+ t}\lambda_\alpha R}\frac{2(\alpha - 1)}{s}\p{\int_{A(\lambda_\alpha R/2, 2\delta, x_\alpha)} \abs{\nabla u_\alpha}^2 dx } ds\nonumber\\
                &\leq C(\alpha - 1)\int_{2^{k-t}\lambda_\alpha R}^{2^{k+ t}\lambda_\alpha R} \frac{1}{s}ds\leq C (\alpha - 1)t.
            \end{align}
            For the second integral in the righthand of \eqref{E 33}, we see that 
            \begin{align}\label{eq:bad term 2}
                \int_{2^{k-t}\lambda_\alpha R}^{2^{k+ t}\lambda_\alpha R}\frac{2(\alpha - 1)}{s} &\p{\int_{B(x_\alpha, \lambda_\alpha R)} \abs{x - x_\alpha}\cdot\abs{\nabla_{g_\alpha}^2 u_\alpha}\cdot \abs{\frac{\partial u_\alpha}{\partial r}}dx}ds\nonumber\\
                &= \int_{2^{k-t}\lambda_\alpha R}^{2^{k+ t}\lambda_\alpha R}\frac{2(\alpha - 1)}{s} \p{\int_{B(0, R)} \abs{x}\cdot\abs{\nabla_{g_\alpha}^2 w_\alpha}\cdot \abs{\nabla w_\alpha}dx}ds\nonumber\\
                &\leq C(\alpha - 1)\int_{2^{k-t}\lambda_\alpha R}^{2^{k+ t}\lambda_\alpha R} \frac{1}{s}ds\leq C (\alpha - 1)t.
            \end{align}
            Here, we used that the $\alpha$-energy of $u_\alpha$ is uniformly bounded in estimates \eqref{eq:bad term 1} and \eqref{eq:bad term 2}.
			Plugging \eqref{eq:bad term 1} and \eqref{eq:bad term 2} into \eqref{E 33} and keeping in mind that \eqref{eq:polar energy}, we obtain
			\begin{align}\label{E 34}
				\int_{Q(k,t)} \abs{\frac{\partial u_\alpha}{\partial r}}^2 dx\leq \frac{1}{2} \int_{Q(k,t)}\abs{\nabla u_\alpha}^2dx + C(\alpha - 1)t
			\end{align}
			for small enough $\alpha - 1$.
			Combining inequalities \eqref{E 32} and \eqref{E 34}, we obtain
			\begin{align}\label{E 36}
				\p{\frac{1}{2} - C \varepsilon}\int_{Q(k,t)}\abs{\nabla u_\alpha}^2dx &\leq  \int_{\partial Q(k,t)} \frac{\partial u_\alpha}{\partial r}\p{u_\alpha - u^*_\alpha} ds + C(\alpha - 1)(t + 1)
			\end{align}
            where we choose $\varepsilon > 0$ such that $C \varepsilon < 1/4$.
			Furthermore, for the boundary term in \eqref{E 36} we observe that 
			\begin{align}\label{E 35}
				\int_{\partial Q(k,t)}& \frac{\partial u_\alpha}{\partial r}\cdot\p{u_\alpha - u^*_\alpha} ds \nonumber\\
				&=  \int_{\partial Q(k,t)} \sqrt{|x - x_\alpha|}\cdot\frac{\partial u_\alpha}{\partial r}\cdot\p{u_\alpha - u^*_\alpha}\cdot\frac{1}{\sqrt{|x - x_\alpha|}} ds\nonumber\\
				&\leq \frac{1}{2}\p{\int_{\partial Q(k,t)} \abs{\frac{\partial u_\alpha}{\partial r}}^2\cdot |x - x_\alpha| ds + \int_{\partial Q(k,t)} \abs{u_\alpha - u^*_\alpha}^2\cdot \frac{1}{|x - x_\alpha|} ds }\nonumber\\
				&\leq \frac{1}{2}\p{\int_{\partial Q(k,t)} \abs{\frac{\partial u_\alpha}{\partial r}}^2\cdot |x - x_\alpha| ds + \int_{\partial Q(k,t)} \abs{\frac{\partial u_\alpha}{\partial \theta}}^2\cdot \frac{1}{|x - x_\alpha|} ds}\nonumber\\
				&\leq \frac{1}{2}\int_{\partial Q(k,t)} \abs{\nabla u_\alpha}^2\cdot |x - x_\alpha| ds\nonumber\\
				& = 2^{k + t -1}\lambda_\alpha R \int_{\partial B(x_\alpha, 2^{k + t}\lambda_\alpha R)} \abs{\nabla u_\alpha}^2 ds - 2^{k -t -1}\lambda_\alpha R \int_{\partial B(x_\alpha, 2^{k-t}\lambda_\alpha R)} \abs{\nabla u_\alpha}^2 ds.
			\end{align}
			Recall the definition of $\mathcal{F}_{\alpha,k}(t)$, \eqref{E 35} tells us that
			\begin{equation}\label{E 35p}
				\int_{\partial Q(k,t)} \frac{\partial u_\alpha}{\partial r}\p{u_\alpha - u^*_\alpha} ds \leq \frac{1}{2\log 2} \mathcal{F}^\prime_{\alpha,k}(t).
			\end{equation}
            Thus, plugging the \eqref{E 35p} into \eqref{E 36}  we get
			\begin{align*}
				(1 - C \varepsilon)\mathcal{F}_{\alpha,k}(t) &\leq \frac{1}{\log 2}\mathcal{F}^\prime_{\alpha,k}(t) +  C(\alpha - 1)(t + 1)
			\end{align*}
			Let $\sigma = 1 - C\varepsilon \in (0,1)$, multiplying the both side of above inequality by $2^{-\sigma t}$ yields
			\begin{equation*}
				\p{2^{-\sigma t} F_{\alpha,k}(t)}^\prime \geq - 2^{-\sigma t}C(\alpha -1)(t + 1).
			\end{equation*}
			Integrating from $2$ to $T$ for some $T \in \mathbb N$, we have
			\begin{equation*}
				\mathcal{F}_{\alpha, k}(2) \leq C 2^{-\sigma T}\mathcal{F}_{\alpha, k}(T) + C(\alpha - 1).
			\end{equation*}
			Let $T = T_k := \min\{k, k_\alpha -k \}$, then we get
			\begin{align*}
				\int_{Q(k,2)} \abs{\nabla u_\alpha}^2 dx &\leq C2^{-\sigma T_k} \int_{A({\lambda_\alpha R}/2, 2\delta, x_\alpha)}\abs{\nabla u_\alpha}^2dx + C(\alpha - 1).
			\end{align*}
			Thus, by small energy regularity Lemma \ref{lem4.1}, we have
			\begin{align*}
				Osc_{Q(k,1)}(u_\alpha) &\leq \p{\int_{Q(k,2)} \abs{\nabla u_\alpha}^2 dx}^{\frac{1}{2}}\\
				&\leq C2^{-\frac{\sigma}{2} T_k}\p{\int_{A(\frac{\lambda_\alpha R}{2}, 2 \delta, x_\alpha)}\abs{\nabla u_\alpha}^2}^{\frac{1}{2}} + C\sqrt{\alpha -1}.
			\end{align*}
			which implies
			\begin{align}\label{neck osc}
				Osc_{A(\lambda_\alpha R,\delta,x_\alpha)}(u_\alpha) &\leq \sum_{k = 1}^{k_\alpha} Osc_{Q(k,1)}(u_\alpha)\nonumber\\
				&\leq C\p{\int_{A(\frac{\lambda_\alpha R}{2}, 2 \delta, x_\alpha)}\abs{\nabla u_\alpha}^2}^{\frac{1}{2}} + C\sqrt{\alpha - 1}\p{\log \delta - \log (\lambda_\alpha R)}
			\end{align}
			Therefore, keeping in mind that $\lim_{\alpha \searrow 1} \lambda_\alpha^{- \sqrt{\alpha - 1}} = \nu = 1$ we can conclude that
			\begin{equation*}
				\lim_{\delta\searrow 0}\lim_{R\rightarrow \infty}\lim_{\alpha\searrow 1}Osc_{A(\lambda_\alpha R ,\delta,x_\alpha)}(u_\alpha) = 0
			\end{equation*}
			which shows that the set $u_0(B(0,1))\bigcup w^1(\S^2)$ is a connected subset of $N$, as desired in part \ref{simple neck analysis item1} of Theorem \ref{simple neck analysis}.
		\end{proof}
		
		\subsubsection{Asymptotic Neck Analysis and Length Formula for \texorpdfstring{$ \nu > 1$}{Lg}}\label{section neck length}
  \ 
  \vskip5pt
        In this subsubsection, if $1 < \nu < \infty$ we demonstrate that the neck domain converges to a geodesic of finite length in $N$, allowing us to derive the formula for the length of this geodesic. And if $\nu = \infty$, we prove that the neck domain converges to a geodesic of infinite length. These cases present a higher level of complexity, necessitating the introduction of several preliminary lemmas before proving the main consequences.
 
        First, we note that
		\begin{lemma}\label{lem F 4}
			With same hypothesis as Theorem \ref{simple neck analysis} and the assumption $\nu > 1$, we have
			\begin{equation*}
				\lim_{\alpha\searrow 1} \norm{ \left(\tau_\alpha + \abs{\nabla_{g_\alpha} u_\alpha}^2\right)^{\alpha - 1}}_{C^0(B(0, 1/2)) } = \mu.
			\end{equation*}
		\end{lemma}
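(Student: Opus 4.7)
The plan is to exploit the explicit construction of $x_\alpha$ and $\lambda_\alpha$ as, respectively, the location and the reciprocal of the maximum of $\abs{\nabla_{g_\alpha} u_\alpha}$ on $\overline{B(0,1/2)}$, which renders the $C^0$ norm in question explicitly computable. First I will observe that for each fixed $\alpha > 1$ the scalar function $t \mapsto (\tau_\alpha + t)^{\alpha-1}$ is strictly increasing on $[0,\infty)$. Hence the supremum of $(\tau_\alpha + \abs{\nabla_{g_\alpha} u_\alpha}^2)^{\alpha-1}$ over $B(0,1/2)$ is attained exactly where $\abs{\nabla_{g_\alpha} u_\alpha}$ is maximal, namely at $x_\alpha$, where $\abs{\nabla_{g_\alpha} u_\alpha(x_\alpha)}^2 = \lambda_\alpha^{-2}$. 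This gives
\[
\norm{(\tau_\alpha + \abs{\nabla_{g_\alpha} u_\alpha}^2)^{\alpha-1}}_{C^0(B(0,1/2))} = \p{\tau_\alpha + \lambda_\alpha^{-2}}^{\alpha-1}.
\]

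Next, I will factor out $\lambda_\alpha^{-2}$ to rewrite the right-hand side as
\[
\p{\tau_\alpha + \lambda_\alpha^{-2}}^{\alpha-1} = \lambda_\alpha^{2-2\alpha}\,\p{1 + \tau_\alpha \lambda_\alpha^2}^{\alpha-1}.
\]
Since $\tau_\alpha \in (0,1]$ and $\lambda_\alpha \to 0$ (because $0$ is a genuine blow-up point of $\{u_\alpha\}$), the base $1 + \tau_\alpha\lambda_\alpha^2$ stays in $[1,2]$ and tends to $1$, while the exponent $\alpha-1$ tends to $0$; hence $\p{1 + \tau_\alpha\lambda_\alpha^2}^{\alpha-1} \to 1$. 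Along the subsequence on which $\mu = \liminf_{\alpha\searrow 1}\lambda_\alpha^{2-2\alpha}$ is realized as a genuine limit (which may be extracted as part of the diagonal subsequence conventions adopted throughout Section \ref{section 4}), the factor $\lambda_\alpha^{2-2\alpha}$ tends to $\mu$, and the desired equality follows.

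I do not foresee a serious obstacle: the argument is essentially arithmetic once the location of the supremum has been identified. The hypothesis $\nu > 1$ plays no direct role in the computation itself but demarcates the regime in which the lemma will be invoked later in the neck-length analysis of Subsubsection \ref{section neck length}; in the complementary no-neck regime $\nu = 1$ treated in Subsubsection \ref{subsubsection: no neck}, one automatically has $\mu = 1$, and the statement reduces to the assertion $\norm{(\tau_\alpha + \abs{\nabla_{g_\alpha}u_\alpha}^2)^{\alpha-1}}_{C^0(B(0,1/2))} \to 1$, which is already implicit in the oscillation bootstrap carried out there.
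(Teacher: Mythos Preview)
Your proof is correct and follows essentially the same approach as the paper: both identify the supremum at $x_\alpha$ via monotonicity of $t\mapsto(\tau_\alpha+t)^{\alpha-1}$, and then reduce to showing $(\tau_\alpha+\lambda_\alpha^{-2})^{\alpha-1}\to\mu$. Your factoring argument $\lambda_\alpha^{2-2\alpha}(1+\tau_\alpha\lambda_\alpha^2)^{\alpha-1}$ is slightly more explicit than the paper's two-sided squeeze between $(\lambda_\alpha^{-2})^{\alpha-1}$ and $(2\lambda_\alpha^{-2})^{\alpha-1}$, but the content is identical.
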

		\begin{proof}
			Since we have assumed there is only one bubble, there exists $x_\alpha \in B(0, 1/2)$ such that
			\begin{equation*}
				\frac{1}{\lambda_\alpha} := \max_{x \in \overline{B(0, 1/2)}} \abs{\nabla_{g_\alpha} u_\alpha (x)} = \abs{\nabla_{g_\alpha} u_\alpha (x_\alpha)} 
			\end{equation*}
			for small enough $\alpha - 1$. On the one hand, 
			\begin{equation*}
				\lim_{\alpha\searrow 1} \norm{ \left(\tau_\alpha + \abs{\nabla_{g_\alpha} u_\alpha}^2\right)^{\alpha - 1}}_{C^0(B(0, 1/2)) } \geq 
				\lim_{\alpha\searrow 1} \norm{ \nabla_{g_\alpha} u_\alpha}^{2\alpha - 2}_{C^0(B(0, 1/2)) } = \lim_{\alpha \searrow 1}\lambda_\alpha^{2-2\alpha} = \mu.
			\end{equation*}
			On the other hand, recalling we have assumed $\tau_\alpha \leq 1$ for $\alpha > 1$ we estimate
			\begin{equation*}
				\lim_{\alpha\searrow 1} \norm{ \left(\tau_\alpha + \abs{\nabla_{g_\alpha} u_\alpha}^2\right)^{\alpha - 1}}_{C^0(B(0, 1/2)) } \leq \lim_{\alpha\searrow 1} \norm{ 2 \nabla_{g_\alpha} u_\alpha}^{2\alpha - 2}_{C^0(B(0, 1/2)) } = \lim_{\alpha \searrow 1}\lambda_\alpha^{2-2\alpha} = \mu.
			\end{equation*}
		\end{proof}
		
		First, similarly to the estimates in Lemma \ref{decay theta}, we establish a more delicate decay estimates of angle component of the energy functional of $u_\alpha$ as $\alpha \searrow 1$, more precise description is following.
		\begin{prop}\label{theta zero}
			With same hypothesises as Theorem \ref{simple neck analysis}, we further assume $\nu > 1$.   Then for any sequence $t_\alpha \in [t_1 ,t_2]$ where $0 < t_1 \leq t_2 < 1$ and any $R > 0$, after choosing a subsequence,  we have
			\begin{equation}\label{E 37}
				\lim_{\alpha \searrow 1} \frac{1}{\alpha - 1
				}\int_{A(\lambda_\alpha^{t_\alpha}/R, \lambda_\alpha^{t_\alpha}R, x_\alpha)} \frac{1}{|x - x_\alpha|^2}\abs{\frac{\partial u_\alpha}{\partial \theta}}^2 dx = 0.
			\end{equation}
		\end{prop}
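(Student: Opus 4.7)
The plan is to combine the two Pohozaev-type identities of Lemma \ref{lem:pohozaev} and Lemma \ref{lem:second pohozaev} with the precise asymptotics $\mathcal{E}_\alpha(s) \to \mu^{1-s}\Lambda$ of Lemma \ref{lambda^s small} and the $C^0$-bound on $F_\alpha := (\tau_\alpha + |\nabla_{g_\alpha} u_\alpha|^2)^{\alpha - 1}$ from Lemma \ref{lem F 4}. Write $\mathscr{A}_\alpha := A(\lambda_\alpha^{t_\alpha}/R, \lambda_\alpha^{t_\alpha}R, x_\alpha)$.

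First, integrating the unweighted Pohozaev identity \eqref{eq:another pohozaev} across $\mathscr{A}_\alpha$ yields
\[
\int_{\mathscr{A}_\alpha}\!\!\left(\left|\frac{\partial u_\alpha}{\partial r}\right|^{2} - \frac{1}{|x-x_\alpha|^{2}}\left|\frac{\partial u_\alpha}{\partial\theta}\right|^{2}\right)dx = -2(\alpha - 1)\!\int_{\lambda_\alpha^{t_\alpha}/R}^{\lambda_\alpha^{t_\alpha}R}\!\frac{I(t)}{t}\,dt,
\]
where the bulk integrand $I(t)$ is uniformly bounded by a combination of the small-energy regularity Lemma \ref{lem4.1} applied on the neck and Lemma \ref{small regu N} applied on the bubble scale; thus the right-hand side is $O((\alpha-1)\log R)$. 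Consequently, dividing by $(\alpha - 1)$, it suffices to show that the total Dirichlet energy $\int_{\mathscr{A}_\alpha}|\nabla u_\alpha|^2\,dx = o(\alpha-1)$, because then both the radial and angular rescaled integrals individually vanish in the limit.

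Second, since $|\nabla u_\alpha|$ is uniformly bounded on the neck by Lemma \ref{lem4.1}, one has the quantitative estimate $F_\alpha = 1 + O((\alpha - 1)\log(1 + |\nabla u_\alpha|^2)) = 1 + O(\alpha - 1)$ uniformly on $\mathscr{A}_\alpha$, so the weighted and unweighted Dirichlet integrals on $\mathscr{A}_\alpha$ differ by only $O((\alpha-1)\log R)$. Integrating the differential Pohozaev relation \eqref{eq:pohozaev esti equiv} in $s$ between $t_\alpha \pm \log R/|\log\lambda_\alpha|$ and substituting $\mathcal{E}_\alpha(s) \to \mu^{1-s}\Lambda$ from Lemma \ref{lambda^s small} identifies the leading-order behavior of $\int_{\mathscr{A}_\alpha}|\nabla u_\alpha|^2\,dx$ as the difference $\mathcal{E}_\alpha(s_-) - \mathcal{E}_\alpha(s_+)$, whose limit is $\mu^{1 - s_-}\Lambda - \mu^{1 - s_+}\Lambda$. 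When $\nu \in (1,\infty)$ one has $\mu = 1$ and this difference vanishes identically; when $\nu = \infty$, one selects a subsequence along which $(\alpha - 1)|\log\lambda_\alpha|$ converges, which is forced by the finiteness $\mu \leq \mu_{\max} < \infty$, and the difference is absorbed into $o(\alpha-1)$ via the comparison $\log R/|\log\lambda_\alpha| \to 0$.

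The main obstacle is obtaining a quantitative, $o(\alpha - 1)$-rate refinement of Lemma \ref{lambda^s small} for $\mathcal{E}_\alpha(s)$ on the relevant range of $s$. This requires re-examining the derivation of Lemma \ref{lambda^s small} via \eqref{eq:pohozaev esti equiv 2} and isolating genuinely $o(\alpha-1)$ error contributions from merely $O(\alpha-1)$ ones; the freedom to pass to a subsequence is essential, as it uniformizes the scaling $-\sqrt{\alpha-1}\log\lambda_\alpha \to \log\nu$ that defines $\nu$ and forces the cancellations required for the sharp decay.
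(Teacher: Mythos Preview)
Your strategy has a fatal gap: the claim that it suffices to prove $\int_{\mathscr{A}_\alpha}|\nabla u_\alpha|^2\,dx = o(\alpha-1)$ is not only hard to verify, it is \emph{false}. Indeed, once Proposition~\ref{theta zero} is known, Proposition~\ref{prop 2} shows that on $\mathscr{A}_\alpha$ the rescaled map $\frac{1}{\sqrt{\alpha-1}}\big(u_\alpha(x_\alpha+\lambda_\alpha^{t_\alpha}x)-\text{const}\big)$ converges to $\vec{a}\log|x|$ with $|\vec{a}|^2 = \mu^{2-2t}E(w^1)/\pi > 0$; hence
\[
\int_{\mathscr{A}_\alpha}|\nabla u_\alpha|^2\,dx \;\longrightarrow\; (\alpha-1)\int_{A(1/R,R,0)}\frac{|\vec a|^2}{|x|^2}\,dx \;=\; 4\pi|\vec a|^2(\alpha-1)\log R + o(\alpha-1),
\]
which is genuinely of order $(\alpha-1)$, not $o(\alpha-1)$. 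The whole point of the proposition is that the \emph{angular} contribution is $o(\alpha-1)$ while the \emph{radial} contribution is $\Theta(\alpha-1)$; you cannot recover the former from a bound on their sum. Relatedly, your claim $F_\alpha = 1 + O(\alpha-1)$ on $\mathscr{A}_\alpha$ is incorrect: the small-energy regularity gives $|\nabla u_\alpha|(x)\cdot|x-x_\alpha|\le C$, so $|\nabla u_\alpha|\sim \lambda_\alpha^{-t_\alpha}$ on $\mathscr{A}_\alpha$ and $F_\alpha - 1 = O\big((\alpha-1)|\log\lambda_\alpha|\big)$, which need not be $O(\alpha-1)$. The ``main obstacle'' you identify --- an $o(\alpha-1)$ refinement of Lemma~\ref{lambda^s small} --- is therefore not an obstacle but an impossibility.

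The paper's proof proceeds quite differently. Set $Q(t)=A(2^{-t}\lambda_\alpha^{t_\alpha},2^{t}\lambda_\alpha^{t_\alpha},x_\alpha)$ and $\mathcal{F}_\alpha(t)=\int_{Q(t)}|\nabla u_\alpha|^2$. Combining the expansion \eqref{E 32} with the boundary estimate \eqref{E 35p} and the Pohozaev identity \eqref{eq:another pohozaev} yields a differential inequality of the form $\big(2^{-\sigma t}\mathcal{F}_\alpha(t)\big)' \ge -C(\alpha-1)2^{-\sigma t}\big(I(\lambda_\alpha^{t_\alpha})t + \text{lower order}\big)$ with $\sigma = 1-C\varepsilon$. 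One integrates this from $t=k$ to $t=T$ with $2^T=\lambda_\alpha^{-\gamma}$; the hypothesis $\nu>1$ enters precisely here, forcing $\lambda_\alpha^{\gamma\sigma}=o\big((\alpha-1)^m\big)$ for every $m$, which kills the boundary term $2^{\sigma(k-T)}\mathcal{F}_\alpha(T)$. Subtracting the resulting bound on $\mathcal{F}_\alpha(k)$ from the Pohozaev estimate \eqref{E 41} cancels the leading $(\alpha-1)kI(\lambda_\alpha^{t_\alpha})$ terms up to the factor $(1/\sigma - 1)=O(\varepsilon)$, giving first a uniform $C$-bound for the rescaled angular energy. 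In a second step, Fubini on this bound produces good radii $L_\alpha$ where the circular angular energy is $<\varepsilon(\alpha-1)$, which sharpens the boundary term \eqref{E 35} to \eqref{E 45} and upgrades the $C$-bound to a vanishing limit. The essential mechanism is thus a near-cancellation of matched leading-order $(\alpha-1)$ contributions, not an $o(\alpha-1)$ bound on the total.
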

		\begin{proof}
            The proof splits into two steps.
            \step Firstly, we prove a weaker version of this Proposition \ref{theta zero}, that is, we show that for any positive integer $k$,  there exists a constant $C$ that is independent of $k$ such that 
			\begin{equation}
				\lim_{\alpha \searrow 1} \frac{1}{\alpha - 1
				}\int_{A(2^{-k}\lambda_\alpha^{t_\alpha}, 2^k\lambda_\alpha^{t_\alpha}, x_\alpha)} \frac{1}{|x - x_\alpha|^2}\abs{\frac{\partial u_\alpha}{\partial \theta}}^2 dx\leq C.
			\end{equation}
			Taking a small enough positive number $\gamma < \min\{t_1, 1 -t_2\}$ and $t\leq \log \lambda_\alpha^{-\gamma}/\log 2$ we can define
			\begin{equation*}
				Q(t): = A\p{2^{-t} \lambda_\alpha^{t_\alpha}, 2^{t} \lambda_\alpha^{t_\alpha},x_\alpha}.
			\end{equation*}
			Transforming the integral domain of \eqref{E 32} over $Q(t)$, we can obtain
			\begin{align}\label{E 38}
				(1 - C\varepsilon)\int_{Q(t)} \abs{\nabla u_\alpha}^2 dx & \leq C(\alpha - 1)\int_{Q(t+1)}\abs{\nabla u_\alpha}^2dx\nonumber\\
				&\quad + \int_{\partial Q(t)} \frac{\partial u_\alpha}{\partial r}\p{u_\alpha - u^*_\alpha} ds + \int_{Q(t)} \abs{\frac{\partial u_\alpha}{\partial r}}^2dx.
			\end{align}
            Next, similarly to the proof in the proof of Theorem \ref{simple neck analysis} for case $\nu  =1$ in Subsubsection \ref{subsubsection: no neck},  we want to utilizing Pohozaev identity \eqref{eq:another pohozaev 2} obtained in Lemma \ref{lem:second pohozaev} to establish a more delicate estimates of the term 
			\begin{equation*}
				\int_{Q(t)} \abs{\frac{\partial u_\alpha}{\partial r}}^2dx
			\end{equation*}
            occurring in righthand of \eqref{E 32}.
			Integrating 
			\begin{align*}
				\int_{\partial B(x_\alpha,s)}& \p{\abs{\frac{\partial u_\alpha}{\partial r}}^2 - \abs{\nabla u_\alpha}^2 }ds\nonumber\\
                &= -\frac{(\alpha - 1)}{s}\int_{B(x_\alpha,s)}\frac{\nabla \abs{\nabla_{g_\alpha}u_\alpha}^2\nabla u_\alpha}{\tau_\alpha + \abs{\nabla_{g_\alpha}u_\alpha}^2} \cdot \abs{x - x_\alpha}\cdot\frac{\partial u_\alpha}{\partial r} dx
			\end{align*}
			with respect to $s$ from $2^{-t}\lambda_\alpha^{t_\alpha} $ to $2^{ t}\lambda_\alpha^{t_\alpha} $  with respect to $t$, we can get
            \begin{align}\label{E 38 2}
                \int_{Q(t)}& \p{\abs{\frac{\partial u_\alpha}{\partial r}}^2 - \abs{\nabla u_\alpha}^2 }dx\nonumber\\
                &= - \int_{2^{-t}\lambda_\alpha^{t_\alpha}}^{2^{t}\lambda_\alpha^{t_\alpha}}\p{\frac{\alpha - 1}{s}\int_{B(x_\alpha,s)}\frac{\nabla \abs{\nabla_{g_\alpha}u_\alpha}^2\nabla u_\alpha}{\tau_\alpha + \abs{\nabla_{g_\alpha}u_\alpha}^2} \cdot \abs{x - x_\alpha}\cdot \frac{\partial u_\alpha}{\partial r} dx} ds.
            \end{align}
            Combining \eqref{E 38} and \eqref{E 38 2} we have
            \begin{align}\label{eq:Ialpha 2}
                \left(\frac{1}{2}- C\varepsilon\right)\int_{Q(t)} \abs{\nabla u_\alpha}^2 dx & \leq C(\alpha - 1)\int_{Q(t+1)}\abs{\nabla u_\alpha}^2dx\nonumber\\
				&\quad + \int_{\partial Q(t)} \frac{\partial u_\alpha}{\partial r}\p{u_\alpha - u^*_\alpha} ds + \int_{2^{-t}\lambda_\alpha^{t_\alpha}}^{2^{t}\lambda_\alpha^{t_\alpha}}\frac{\alpha - 1}{s}I_\alpha(s) ds,
            \end{align}
			where we choose small enough $\varepsilon > 0$ such that $C\varepsilon \leq 1/4$ and we  denote
			\begin{equation*}
				I_\alpha(t) := -\int_{B(x_\alpha,t)}\frac{\nabla \abs{\nabla_{g_\alpha}u_\alpha}^2\nabla u_\alpha}{\tau_\alpha + \abs{\nabla_{g_\alpha}u_\alpha}^2} \cdot \abs{x - x_\alpha}\cdot\frac{\partial u_\alpha}{\partial r} dx.
			\end{equation*}
			For any ${2^{-t}}\lambda_\alpha^{t_\alpha} \leq r \leq 2^t \lambda_\alpha^{t_\alpha}$, utilizing Lemma \ref{small regu N} we get that  
			\begin{align*}
				\abs{I(r) - I(\lambda_\alpha^{t_\alpha})} &\leq C  \int_{A(2^{-t}\lambda_\alpha^{t_\alpha},2^{t}\lambda_\alpha^{t_\alpha},x_\alpha)} \abs{\nabla^2_{g_\alpha} u_\alpha}\cdot |x - x_\alpha|\cdot \abs{\nabla u_\alpha} dx\\
				&= C\int_{{A(2^{-t - 1}\lambda_\alpha^{t_\alpha},2^{t+ 1}\lambda_\alpha^{t_\alpha}, x_\alpha)}} \abs{\nabla u_\alpha }^2 dx\\
                &\leq C\int_{A(\lambda_\alpha^{t_\alpha + \gamma}, \lambda_\alpha^{t_\alpha - \gamma}, x_\alpha)}\abs{\nabla u_\alpha }^2 dx := \eta(\gamma,\alpha),
			\end{align*}
			where in the last inequality we used the choice of $t\leq \log \lambda_\alpha^{-\gamma}/\log 2$. Next, we show that 
            $\eta(\gamma,\alpha) \rightarrow 0$ as $\alpha \searrow 1$ and $\gamma \rightarrow 0$. To see this, by transforming the integral domain from $[2^{k-t}\lambda_\alpha R,2^{k+t}\lambda_\alpha R ]$ into $[\lambda_\alpha^{t_\alpha + \gamma}, \lambda_\alpha^{t_\alpha - \gamma}]$ in \eqref{E 33}, we have 
			\begin{align}\label{eq: gamma 1}
				\int_{A(\lambda_\alpha^{t_\alpha + \gamma}, \lambda_\alpha^{t_\alpha - \gamma}, x_\alpha)} & \p{\tau_\alpha + \abs{\nabla_{g_\alpha} u_\alpha}^2}^{\alpha - 1}\abs{\frac{\partial u_\alpha}{\partial r}}^2 dx\nonumber\\
                &\leq \frac{1}{2\alpha} \int_{A(\lambda_\alpha^{t_\alpha + \gamma}, \lambda_\alpha^{t_\alpha - \gamma}, x_\alpha)}\p{\tau_\alpha + \abs{\nabla_{g_\alpha}u_\alpha}^2}^{\alpha - 1}\abs{\nabla u_\alpha}^2dx\nonumber\\
				&\quad + \frac{\alpha - 1}{\alpha}\int_{\lambda_\alpha^{t_\alpha + \gamma}}^{\lambda_\alpha^{t_\alpha - \gamma}}\frac{1}{t}\p{\int_{B(x_\alpha,t)}\p{\tau_\alpha + \abs{\nabla_{g_\alpha}u_\alpha}^2}^{\alpha - 1}\abs{\nabla u_\alpha}^2dx}dt\nonumber\\
                &\quad +  C \lambda^{2t_\alpha - 2\gamma}_\alpha\nonumber\\
				&\leq \frac{1}{2\alpha} \int_{A(\lambda_\alpha^{t_\alpha + \gamma}, \lambda_\alpha^{t_\alpha - \gamma}, x_\alpha)}\p{\tau_\alpha + \abs{\nabla_{g_\alpha}u_\alpha}^2}^{\alpha - 1}\abs{\nabla u_\alpha}^2dx\nonumber\\
				&\quad + C(\alpha -1)\gamma\log(\lambda_\alpha)+C\lambda^{2t_\alpha - 2\gamma}_\alpha.
			\end{align}
			Keeping in mind that Lemma \ref{lem F 4}, given $0<\varepsilon < 1/8$ choosing $\alpha - 1$ small enough such that
            \begin{equation*}
                \abs{\norm{\p{\tau_\alpha + \abs{\nabla_{g_\alpha}u_\alpha}^2}^{\alpha - 1}}_{C^0(B(x_\alpha, \delta))} - \mu} \leq \frac{\varepsilon}{2},
            \end{equation*}
            plugging this into \eqref{eq: gamma 1} we have
			\begin{align*}
				\eta(\gamma,\alpha)=&\int_{A(\lambda_\alpha^{t_\alpha + \gamma}, \lambda_\alpha^{t_\alpha - \gamma}, x_\alpha)} \abs{\nabla u_\alpha}^2 dx\nonumber\\
                &\leq \p{\frac{1}{2} + \varepsilon} \int_{A(\lambda_\alpha^{t_\alpha + \gamma}, \lambda_\alpha^{t_\alpha - \gamma}, x_\alpha)}\abs{\nabla u_\alpha}^2dx + C\frac{(\alpha -1)}{\mu}\gamma \log(\lambda_\alpha)\\
				&\quad + \int_{A(\lambda_\alpha^{t_\alpha + \gamma}, \lambda_\alpha^{t_\alpha - \gamma}, x_\alpha)} \frac{1}{|x - x_\alpha|^2}\abs{\frac{\partial u_\alpha}{\partial \theta}}^2 dx + \frac{C}{\mu} \lambda^{2t_\alpha -2\gamma}_\alpha,
			\end{align*}
            which implies that 
            \begin{equation}\label{eq:Ialpha 1}
                \lim_{\gamma \rightarrow 0}\lim_{\alpha \searrow 1} \abs{I(r) - I(\lambda_\alpha^{t_\alpha})} \leq  \lim_{\gamma \rightarrow 0}\lim_{\alpha \searrow 1}\eta(\gamma,\alpha) = 0,
            \end{equation}
            by recalling the Lemma \ref{decay theta}.
		  Therefore, by \eqref{eq:Ialpha 1} and \eqref{eq:Ialpha 2} we have 
			\begin{align}\label{E 39}
				\p{\frac{1}{2} - C\varepsilon}\int_{Q(t)}\abs{\nabla u_\alpha}^2 dx &\leq \int_{\partial Q(t)} \frac{\partial u_\alpha}{\partial r}\p{u_\alpha - u^*_\alpha} ds + C(\alpha - 1)\int_{Q(t+1)}\abs{\nabla u_\alpha}^2dx\nonumber\\
				&\quad + \p{I(\lambda_\alpha^{t_\alpha}) + \eta(\gamma,\alpha)} \int_{2^{-t}\lambda_\alpha^{t_\alpha}}^{2^{t}\lambda_\alpha^{t_\alpha}}\frac{\alpha - 1}{t} dt\nonumber\\
                &\leq \int_{\partial Q(t)} \frac{\partial u_\alpha}{\partial r}\p{u_\alpha - u^*_\alpha} ds + C(\alpha - 1)\eta(\gamma,\alpha)\nonumber\\
                & \quad+  2\log(2) (\alpha -1) \p{I(\lambda_\alpha^{t_\alpha}) + \eta(\gamma,\alpha)}t.
			\end{align}
			Define 
			\begin{equation*}
				\mathcal{F}_\alpha(t) : = \int_{Q(t)} \abs{\nabla u_\alpha}^2 dx,
			\end{equation*}
			then by \eqref{E 35p} we can rewrite \eqref{E 39} as
			\begin{equation*}
				(1 - C\varepsilon) \mathcal{F}_\alpha(t) \leq \frac{1}{\log 2}\mathcal{F}^{\prime}_{\alpha}(t)  + 4\log(2) (\alpha -1)I(\lambda_\alpha^{t_\alpha})t + C(\alpha - 1)\eta(\gamma,\alpha)(t+1),
			\end{equation*}
			which implies 
			\begin{align}\label{eq: inte ineq 1}
				\p{2^{-\sigma t}\mathcal{F}_\alpha(t)}^\prime &\geq - 4\log^2(2){(\alpha -1)}2^{-\sigma t}I(\lambda_\alpha^{t_\alpha})t - C(\alpha - 1)2^{-\sigma t}\eta(\gamma,\alpha)(t+1),
			\end{align}
			where $\sigma = 1 - C \varepsilon \in (0,1)$ is a constant. Letting $2^T = \lambda_\alpha^{-\gamma}$ and integrating above inequality \eqref{eq: inte ineq 1} from $k$ to $T$ yields
			\begin{align}\label{E 40}
				\mathcal{F}_\alpha(k) &\leq 2^{\sigma(k - T)} \mathcal{F}_\alpha(T) + \frac{4\log(2)}{\sigma}{(\alpha - 1)}I(\lambda_\alpha^{t_\alpha})2^{\sigma k}\int_{k}^T 2^{-\sigma t}t dt \nonumber\\ 
				&\quad +C(\alpha - 1)\eta(\gamma,\alpha) 2^{\sigma k}\int_{k}^T (t+1)2^{-\sigma t}dt\nonumber\\
				& \leq 2^{\sigma(k - T)} \mathcal{F}_\alpha(T)+ \frac{4\log(2)k}{\sigma}{(\alpha - 1)}I(\lambda_\alpha^{t_\alpha})\nonumber\\
                &\quad +C(\alpha - 1)\p{I(\lambda_\alpha^{t_\alpha}) + \eta(\gamma,\alpha)(k + 1)},
			\end{align}
            where we used the estimates
            \begin{equation*}
                \int_{k}^T 2^{-\sigma t}t dt \leq \frac{k}{\sigma\log (2)}2^{-\sigma k} + \p{\frac{1}{\sigma \log (2)}}^2 2^{-\sigma k}.
            \end{equation*}
			On the other hand, utilizing the Pohozaev identity \eqref{eq:another pohozaev}, we obtain
			\begin{align}\label{E 41}
				\int_{Q(k)}\p{\abs{\frac{\partial u_\alpha}{\partial r}}^2 - \frac{1}{|x - x_\alpha|^2}\abs{\frac{\partial u_\alpha}{\partial \theta}}^2} dx&= 2 \int_{2^{-k}\lambda_\alpha^{t_\alpha}}^{2^{k}\lambda_\alpha^{t_\alpha}}\frac{\alpha - 1}{t} I_\alpha(t) dt\nonumber\\
                &\geq 4\log (2) k(\alpha - 1)(I_\alpha(\lambda_\alpha^{t_\alpha}) - \eta(\gamma,\alpha)).
			\end{align}
			Next, subtracting  \eqref{E 40} by \eqref{E 41} yields
			\begin{align}\label{E 42}
				2\int_{Q(k)} \frac{1}{|x - x_\alpha|^2}\abs{\frac{\partial u_\alpha}{\partial \theta}}^2 dx 
				&\leq 2^{\sigma k}\lambda_\alpha^{\gamma \sigma} \mathcal{F}_\alpha(T) + (\alpha - 1)4\log(2)I_\alpha(\lambda_\alpha^{t_\alpha})\p{\frac{1}{\sigma} - 1}k \nonumber\\
				&\quad + C(\alpha - 1)I(\lambda_\alpha^{t_\alpha}) + C(\alpha - 1)\eta(\gamma,\alpha)k.
			\end{align}
			Since $\nu = \lim_{\alpha \searrow 1} \lambda_\alpha^{- \sqrt{\alpha - 1}} > 1$, one have
			\begin{equation*}
				\lambda_\alpha^{\gamma \sigma } = o\p{(\alpha - 1)^m} \quad \text{ as } \alpha \searrow 1,
			\end{equation*}
			for any positive integer $m > 0$. Then in \eqref{E 42}, taking $\alpha\searrow 1$ first, then $\varepsilon \rightarrow 0$ and $\gamma \rightarrow 0$, yields
			\begin{equation}\label{E 43}
				\lim_{\alpha \searrow 1} \frac{1}{\alpha - 1
				}\int_{A(2^{-k}\lambda_\alpha^{t_\alpha}, 2^k\lambda_\alpha^{t_\alpha}, x_\alpha)} \frac{1}{|x - x_\alpha|^2}\abs{\frac{\partial u_\alpha}{\partial \theta}}^2 dx \leq C \lim_{\alpha \searrow 1} I(\lambda_\alpha^{t_\alpha}) \leq C,
			\end{equation}
			for some universal constant $C > 0$ independent of $k$.
   
            \step Next, we prove the assertion of Proposition \ref{theta zero}, that is, for any $k \in \mathbb{N}$
			\begin{equation*}
				\lim_{\alpha \searrow 1} \frac{1}{\alpha - 1
				}\int_{A(2^{-k}\lambda_\alpha^{t_\alpha}, 2^k\lambda_\alpha^{t_\alpha}, x_\alpha)} \frac{1}{|x - x_\alpha|^2}\abs{\frac{\partial u_\alpha}{\partial \theta}}^2 dx= 0.
			\end{equation*}
			Utilizing Fubini's theorem we rewrite  \eqref{E 43} as
			\begin{align*}
				\frac{1}{\alpha - 1
				}\int_{A(2^{-k}\lambda_\alpha^{t_\alpha}, 2^k\lambda_\alpha^{t_\alpha}, x_\alpha)} &\frac{1}{|x - x_\alpha|^2}\abs{\frac{\partial u_\alpha}{\partial \theta}}^2 dx\\
				&= \int_{2^{-k}\lambda_\alpha^{t_\alpha}}^{2^{k}\lambda_\alpha^{t_\alpha}}\p{\frac{1}{\alpha - 1}\int_{0}^{2\pi} \abs{\frac{\partial u_\alpha}{\partial \theta}(r,\theta)}^2 d\theta} \frac{dr}{|x - x_\alpha|} \leq C,
			\end{align*}
			thus given any small $\varepsilon > 0$ there will always exist a large enough positive integer $k_0$, which is independent of $\alpha > 1$, and $L_\alpha \in [2^{k_0}, 2^{k_0 + 1}]$ such that
			\begin{equation}\label{eq:theta zero eq1}
				\frac{1}{\alpha - 1}\int_{0}^{2\pi} \abs{\frac{\partial u_\alpha}{\partial \theta}(L_\alpha\lambda_\alpha^{t_\alpha},\theta)}^2 d\theta < \varepsilon
			\end{equation}
			and 
			\begin{equation}\label{eq:theta zero eq2}
				\frac{1}{\alpha - 1}\int_{0}^{2\pi} \abs{\frac{\partial u_\alpha}{\partial \theta}\p{\frac{1}{L_\alpha}\lambda_\alpha^{t_\alpha},\theta}}^2 d\theta < \varepsilon.
			\end{equation}
			From these two estimates \eqref{eq:theta zero eq1} and \eqref{eq:theta zero eq2},  we can obtain a more delicate estimate of \eqref{E 35}
			\begin{align}\label{E 44}
				\int_{\partial A(\frac{1}{L_\alpha}\lambda_\alpha^{t_\alpha}, L_\alpha\lambda_\alpha^{t_\alpha}, x_\alpha )}& \frac{\partial u_\alpha}{\partial r}\p{u_\alpha - u^*_\alpha} ds\nonumber\\
				&\leq \p{\int_{\partial A(\frac{1}{L_\alpha}\lambda_\alpha^{t_\alpha}, L_\alpha\lambda_\alpha^{t_\alpha} , x_\alpha )} |x - x_\alpha|\cdot\abs{\frac{\partial u_\alpha}{\partial r}}^2 ds}^{\frac{1}{2}}\cdot\nonumber\\
                &\quad \quad \p{\int_{\partial A(\frac{1}{L_\alpha}\lambda_\alpha^{t_\alpha}, L_\alpha\lambda_\alpha^{t_\alpha} , x_\alpha)} \abs{\frac{\partial u_\alpha}{\partial \theta}}^2 ds}^{\frac{1}{2}}\nonumber\\
				&\leq \sqrt{(\alpha - 1)\varepsilon} \p{\int_{\partial A(\frac{1}{L_\alpha}\lambda_\alpha^{t_\alpha}, L_\alpha\lambda_\alpha^{t_\alpha} , x_\alpha )} |x - x_\alpha|\cdot\abs{\frac{\partial u_\alpha}{\partial r}}^2 ds}^{\frac{1}{2}}.
			\end{align}
			Moreover, using \eqref{pohozaev esti 2} and Corollary \ref{coro 1}  we have
			\begin{align*}
				\int_{\partial A(\frac{1}{L_\alpha}\lambda_\alpha^{t_\alpha}, L_\alpha\lambda_\alpha^{t_\alpha} , x_\alpha )}& |x - x_\alpha|\cdot\abs{\frac{\partial u_\alpha}{\partial r}}^2 ds\\
				&\leq C \int_{\partial A(\frac{1}{L_\alpha}\lambda_\alpha^{t_\alpha}, L_\alpha\lambda_\alpha^{t_\alpha} , x_\alpha )} \frac{1}{|x - x_a|}\abs{\frac{\partial u_\alpha}{\partial \theta}}^2 ds \\
				& + C(\alpha  - 1) + C \lambda_\alpha^{t_\alpha},
			\end{align*}
			which implies 
			\begin{equation}\label{E 45}
				\int_{\partial A(\frac{1}{L_\alpha}\lambda_\alpha^{t_\alpha}, L_\alpha\lambda_\alpha^{t_\alpha}, x_\alpha )} \frac{\partial u_\alpha}{\partial r}\p{u_\alpha - u^*_\alpha} ds \leq C\sqrt{\varepsilon}(\alpha - 1).
			\end{equation}
			Applying \eqref{E 45} to \eqref{E 39} , we can obtain a more refined estimate
			\begin{align}\label{E 46}
				(1 - C\varepsilon)\int_{{ A(\frac{1}{L_\alpha}\lambda_\alpha^{t_\alpha}, L_\alpha\lambda_\alpha^{t_\alpha}, x_\alpha )}}\abs{\nabla u_\alpha}^2 dx &\leq C\sqrt{\varepsilon}(\alpha - 1) + C(\alpha - 1)\eta(\gamma,\alpha)\p{\log(L_\alpha) + 1}\nonumber\\
                & \quad+  4\log(L_\alpha)t (\alpha -1)I(\lambda_\alpha^{t_\alpha}).
			\end{align}
			Similar to \eqref{E 41}, we have 
			\begin{align}\label{E 47}
				\int_{A(\frac{1}{L_\alpha}\lambda_\alpha^{t_\alpha}, L_\alpha\lambda_\alpha^{t_\alpha}, x_\alpha)}&\p{\abs{\frac{\partial u_\alpha}{\partial r}}^2 - \frac{1}{|x - x_\alpha|^2}\abs{\frac{\partial u_\alpha}{\partial \theta}}^2} dx \nonumber\\
				&\geq {4(\alpha - 1)\log\p{L_\alpha}} I(\lambda_\alpha^{t_\alpha}) - C(\alpha - 1)\eta(\gamma,\alpha)\log{(L_\alpha)}.
			\end{align}
			Subtracting \eqref{E 46} by \eqref{E 47} yields
			\begin{align*}
				2 \int_{A(\frac{1}{L_\alpha}\lambda_\alpha^{t_\alpha}, L_\alpha\lambda_\alpha^{t_\alpha}, x_\alpha)}& \frac{1}{|x - x_\alpha|^2}\abs{\frac{\partial u_\alpha}{\partial \theta}}^2 dx\\ 
                &\leq C\sqrt{\varepsilon}(\alpha - 1) + \p{1 - \frac{1}{1 - C\varepsilon}}{4(\alpha - 1)\log\p{L_\alpha}} I(\lambda_\alpha^{t_\alpha})\\
                &\quad + C(\alpha - 1)\eta(\gamma,\alpha)\p{\log(L_\alpha) + 1}.
			\end{align*}
			Thus, by the choice of $\log(L_\alpha) \in [\log(2)k_0, \log(2)(k_0 +1)]$ and the fact \eqref{eq:Ialpha 1},  taking $\alpha\searrow 1$ firstly and then letting $\varepsilon \searrow 0$  will yield the assertion of Proposition \ref{theta zero} immediately.
		\end{proof}
		As a corollary, we have the following observation which will be used later. 
		\begin{coro}\label{theta sup}
			With the same hypothesis as Proposition \ref{theta zero}. For any fixed $R > 0$ and $0 < t_1 < t_2 < 1$, we have
			\begin{equation*}
				\lim_{\alpha \searrow 1} \sup_{t \in [t_1, t_2]} \frac{1}{\alpha - 1
				}\int_{A\p{\frac{1}{R}\lambda_\alpha^{t_\alpha}, \lambda_\alpha^{t_\alpha} R, x_\alpha}} \frac{1}{|x - x_\alpha|^2}\abs{\frac{\partial u_\alpha}{\partial \theta}}^2 dx = 0.
			\end{equation*}
		\end{coro}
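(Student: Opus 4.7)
The plan is to derive this as a uniform version of Proposition \ref{theta zero} by a standard compactness/contradiction argument. Concretely, I will argue by contradiction: suppose the conclusion fails, so that there exist $\varepsilon_0>0$ and a subsequence $\alpha_j\searrow 1$ with
\begin{equation*}
    \sup_{t\in[t_1,t_2]}\frac{1}{\alpha_j-1}\int_{A(\lambda_{\alpha_j}^{t}/R,\,\lambda_{\alpha_j}^{t}R,\,x_{\alpha_j})}\frac{1}{|x-x_{\alpha_j}|^2}\abs{\frac{\partial u_{\alpha_j}}{\partial\theta}}^2 dx\;\geq\;\varepsilon_0.
\end{equation*}
First I would observe that, for each fixed $\alpha>1$, the map
\[
t\longmapsto\int_{A(\lambda_{\alpha}^{t}/R,\,\lambda_{\alpha}^{t}R,\,x_{\alpha})}\frac{1}{|x-x_{\alpha}|^2}\Bigl|\frac{\partial u_{\alpha}}{\partial\theta}\Bigr|^2dx
\]
is continuous on $[t_1,t_2]$ (it is just the integral of an $L^1$ density over a family of annuli depending continuously on $t$, with $u_\alpha$ smooth). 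Hence the supremum on the compact interval $[t_1,t_2]$ is attained at some $t_{\alpha_j}\in[t_1,t_2]$.

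Next, I would apply Proposition \ref{theta zero} to this specific sequence $\{t_{\alpha_j}\}\subset[t_1,t_2]$. The proposition asserts that, after passing to a further subsequence, one has
\begin{equation*}
    \lim_{j\to\infty}\frac{1}{\alpha_j-1}\int_{A(\lambda_{\alpha_j}^{t_{\alpha_j}}/R,\,\lambda_{\alpha_j}^{t_{\alpha_j}}R,\,x_{\alpha_j})}\frac{1}{|x-x_{\alpha_j}|^2}\abs{\frac{\partial u_{\alpha_j}}{\partial\theta}}^2 dx\;=\;0,
\end{equation*}
which contradicts the lower bound $\varepsilon_0$ above. This finishes the argument. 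No substantial new estimate is needed beyond Proposition \ref{theta zero} itself.

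The only minor point to justify carefully is the existence of the maximizer $t_{\alpha_j}$, i.e.\ the continuity of the integral as a function of $t$ on $[t_1,t_2]$ for fixed $\alpha$. This is essentially trivial since the annulus $A(\lambda_\alpha^{t}/R,\lambda_\alpha^{t}R,x_\alpha)$ depends continuously on $t$ in Hausdorff measure and the integrand $|x-x_\alpha|^{-2}|\partial u_\alpha/\partial\theta|^2$ is in $L^1$ on a neighbourhood of this family of annuli (by Lemma \ref{bounded F} and smoothness of $u_\alpha$ for fixed $\alpha$). The ``hard part'' of the uniformity has already been absorbed into the delicate proof of Proposition \ref{theta zero}; here the role of the corollary is purely to promote the pointwise (subsequential) vanishing into a uniform statement by a compactness of $[t_1,t_2]$ argument, and I do not expect any real obstacle.
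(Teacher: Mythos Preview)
Your proposal is correct and follows essentially the same contradiction argument as the paper: assume the supremum does not vanish, extract a sequence $t_{\alpha_j}\in[t_1,t_2]$ realizing (or nearly realizing) the supremum, and then invoke Proposition~\ref{theta zero} along this sequence to obtain a contradiction. The only cosmetic difference is that you take the extra step of justifying continuity in $t$ to secure an exact maximizer, whereas the paper simply picks $t_{\alpha_k}$ with integral $\geq\varepsilon_0$ (and passes to a further subsequence with $t_{\alpha_k}\to t_0$, a step which is in fact unnecessary since Proposition~\ref{theta zero} applies to any sequence in $[t_1,t_2]$); either way the argument goes through.
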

		\begin{proof}
			We prove by contradiction. If the assertion fails, then after choosing a subsequence there exists $\varepsilon > 0$ and  $t_{\alpha_k} \rightarrow t_0$ for some $t_0 \in [t_1, t_2]$ such that 
			\begin{equation*}
				\frac{1}{\alpha - 1
				}\int_{A(\frac{1}{R}\lambda_{\alpha_k}^{t_{\alpha_k}}, \lambda_{\alpha_k}^{t_{\alpha_k}} R, x_{\alpha_\alpha})} \frac{1}{|x - x_{\alpha_k}|^2}\abs{\frac{\partial u_\alpha}{\partial \theta}}^2 dx \geq \varepsilon_0
			\end{equation*}
			However, Proposition \ref{theta zero} tells us that 
			\begin{equation*}
				\lim_{\alpha \searrow 1} \frac{1}{\alpha - 1
				}\int_{A(\frac{1}{R}\lambda_\alpha^{t_\alpha}, \lambda_\alpha^{t_\alpha} R, x_\alpha)} \frac{1}{|x - x_\alpha|^2}\abs{\frac{\partial u_\alpha}{\partial \theta}}^2 dx = 0.
			\end{equation*}
			for any sequence $\{t_{\alpha}\}_{\alpha\searrow 1} \subset [t_1, t_2]$. This is a contradiction. 
		\end{proof}
		Note that by Lemma \ref{lambda^s small}, we find that  for any $0< t_1 \leq t \leq t_2 < 1$
		\begin{align*}
			Osc_{\partial B(x_\alpha, \lambda^{t}_\alpha) } (u_\alpha) &\leq C \p{\int_{A(\frac{1}{2}\lambda^{t}_\alpha, 2\lambda^{t}_\alpha, x_\alpha)}|\nabla u_\alpha|^2 dx }^{\frac{1}{2}}\\
			&\leq C \int_{A(\frac{1}{2}\lambda^{t}_\alpha, 2\lambda^{t}_\alpha, x_\alpha)}\p{\tau_\alpha + \abs{\nabla_{g_\alpha} u_\alpha}^2}^{\alpha - 1}|\nabla u_\alpha|^2 dx\quad \rightarrow 0\quad \text{as } \alpha \searrow 1.
		\end{align*}
		which implies the $u_\alpha\p{\partial B(x_\alpha, \lambda_\alpha^{t_\alpha})}$ converges to some point of $N$ as $\alpha \searrow 1$.
		\begin{prop}\label{prop 2}
			With same hypothesises as Theorem \ref{simple neck analysis}, we further assume $\nu > 1$.   Then for any sequence $t_\alpha \in [t_1 ,t_2]$ where $0 < t_1 \leq t_2 < 1$ and any $R > 0$, after choosing a subsequence,  we have
			\begin{equation*}
				\frac{1}{\sqrt{\alpha - 1}}\Big(u_\alpha\p{x_\alpha + \lambda_\alpha^{t_\alpha}x} - u\big(x_\alpha + (\lambda_\alpha^{t_\alpha},0)\big)\Big) \rightarrow \Vec{a}\log{|x|}
			\end{equation*}
			strongly in $C^2\p{A\p{\frac{1}{R}, R, 0},\R^{K}}$ for any $R > 0$ and  any integer $k\in \mathbb{N}$, here  
   $$y = \lim_{\alpha \searrow 1} u_\alpha\p{\partial B(x_\alpha, \lambda_\alpha^{t_\alpha})}$$  
   and 
   $$ \Vec{a} \in T_yN \subset T_y\R^{K} \cong \R^{K}$$ 
   is a vector in $\R^{K}$ satisfying
			\begin{equation*}
				|\Vec{a}| = \mu^{1 - \lim_{\alpha \searrow 1} t_\alpha} \sqrt{\frac{E(w^1)}{\pi}}.
			\end{equation*}
		\end{prop}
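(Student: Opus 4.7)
The plan is to study the rescaled sequence $v_\alpha(x) := (\alpha-1)^{-1/2}\bigl(u_\alpha(x_\alpha + \lambda_\alpha^{t_\alpha}x) - u_\alpha(x_\alpha + (\lambda_\alpha^{t_\alpha}, 0))\bigr)$ on annuli $A(1/R, R, 0)$ and show that it converges in $C^2_{loc}$ to $v(x) = \vec{a}\log|x|$. First I will rewrite the Euler-Lagrange equation \eqref{el general 1 equi} for $u_\alpha$ in terms of $v_\alpha$: using the scaling identities $\nabla_x v_\alpha = (\lambda_\alpha^{t_\alpha}/\sqrt{\alpha-1})\nabla_{\tilde x} u_\alpha$ and $|\nabla_{\tilde x}u_\alpha|^2 = ((\alpha-1)/\lambda_\alpha^{2t_\alpha})|\nabla_x v_\alpha|^2$, and multiplying the equation by $\lambda_\alpha^{2t_\alpha}/\sqrt{\alpha-1}$, each of the three nonlinear terms---the extrinsic $A$-term, the mean-curvature-type $H$-term, and the quasilinear $(\alpha-1)\nabla|\nabla u|^2\cdot\nabla u/(\tau_\alpha + |\nabla u|^2)$ term---collapses to an $O(\sqrt{\alpha-1})$ quantity on regions where $|\nabla v_\alpha|$ stays bounded and bounded below. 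Hence $\Delta v_\alpha = \sqrt{\alpha-1}\cdot F_\alpha(v_\alpha,\nabla v_\alpha)$ with $F_\alpha$ controlled on the annulus; the crucial point (already needed to understand the quasilinear term) is that on the neck, $|\nabla u_\alpha|^2\gg\tau_\alpha$ and $(\tau_\alpha+|\nabla u_\alpha|^2)^{\alpha-1}\to\mu^{t_0}$ pointwise via the relation $(\lambda_\alpha^{2-2\alpha})^{t_\alpha}\to\mu^{t_0}$.

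For the uniform $W^{1,2}$ bound, I will combine Lemma \ref{lambda^s small} with the expansion $\log\mu = 2(\alpha-1)|\log\lambda_\alpha|(1+o(1))$ to obtain
\begin{equation*}
\int_{A(\lambda_\alpha^{t_\alpha}/R,\,\lambda_\alpha^{t_\alpha}R,\,x_\alpha)}(\tau_\alpha + |\nabla_{g_\alpha} u_\alpha|^2)^{\alpha-1}|\nabla u_\alpha|^2\,dx = 4(\alpha-1)\log R\cdot\mu^{1-t_0}\Lambda + o(\alpha-1),
\end{equation*}
where $\Lambda = \mu E(w^1)$. Dividing by the limiting value $\mu^{t_0}$ of $(\tau_\alpha + |\nabla_{g_\alpha}u_\alpha|^2)^{\alpha-1}$ on the neck and then by $\alpha-1$ yields the bound $\int_{A(1/R,R,0)}|\nabla v_\alpha|^2\,dx\le C(R)$. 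Small-energy regularity (Lemma \ref{lem4.1}) applied to the rescaled equation on dyadic subannuli---where the smallness hypothesis is now automatic by taking $R$ large---then gives uniform $W^{2,p}$ and $C^{2,\beta}$ bounds on compact subsets. After passing to a subsequence, $v_\alpha\to v$ in $C^2_{loc}(A(1/R, R, 0),\R^K)$ with $\Delta v=0$. Since $u_\alpha(x_\alpha + \lambda_\alpha^{t_\alpha}x)\to y$ uniformly on compact subsets (by the neck oscillation estimates) and $N$ is a smooth submanifold of $\R^K$, a first-order Taylor expansion of the inclusion $N\hookrightarrow\R^K$ at $y$ forces $v(x)\in T_yN$ for all $x$.

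Next I rescale the angular decay estimate: by Corollary \ref{theta sup} applied to the sequence $t_\alpha\in[t_1,t_2]$,
\begin{equation*}
\int_{A(1/R,R,0)}\frac{1}{|x|^2}\Bigl|\frac{\partial v_\alpha}{\partial\theta}\Bigr|^2\,dx = \frac{1}{\alpha-1}\int_{A(\lambda_\alpha^{t_\alpha}/R,\,\lambda_\alpha^{t_\alpha}R,\,x_\alpha)}\frac{1}{|x-x_\alpha|^2}\Bigl|\frac{\partial u_\alpha}{\partial\theta}\Bigr|^2\,dx\longrightarrow 0,
\end{equation*}
so $\partial v/\partial\theta\equiv 0$, i.e.\ $v$ depends only on $r=|x|$. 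The radial harmonic functions on an annulus are exactly $\vec a\log|x|+\vec b$, and the normalization $v((1,0))=0$ built into the definition of $v_\alpha$ forces $\vec b=0$. Hence $v(x)=\vec a\log|x|$ with $\vec a\in T_yN$. The magnitude $|\vec a|$ is computed by evaluating the Pohozaev identity \eqref{pohozaev esti 2} at $t=\lambda_\alpha^{t_\alpha}$: the angular boundary integral vanishes by Proposition \ref{theta zero}; the radial boundary integral is asymptotic to $\bigl(1-\frac{1}{2\alpha}\bigr)\cdot 2\pi\mu^{t_0}\cdot(\alpha-1)|\vec a|^2/\lambda_\alpha^{t_\alpha}$ by the identified limit $v=\vec a\log|x|$; and the right-hand side equals $\frac{\alpha-1}{\alpha}\lambda_\alpha^{-t_\alpha}\cdot\mu^{1-t_0}\Lambda+O(\lambda_\alpha^{t_\alpha})$ by Lemma \ref{lambda^s small}. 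Passing to the limit and using $\Lambda=\mu E(w^1)$ gives $\pi\mu^{t_0}|\vec a|^2 = \mu^{2-t_0}E(w^1)$, hence $|\vec a|=\mu^{1-t_0}\sqrt{E(w^1)/\pi}$.

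The main obstacle is the uniform $C^2$ convergence step. The rescaled equation is not linear, and although every nonlinear term carries an $O(\sqrt{\alpha-1})$ prefactor, the prefactor in the quasilinear term is only small provided $|\nabla v_\alpha|$ does not degenerate---a property one wants to conclude after passing to the limit, not before. Resolving this requires a careful bootstrap: first establish the $W^{1,2}$ bound from Lemma \ref{lambda^s small} and the Pohozaev identity (which is independent of convergence), then use the energy gap to preclude pointwise degeneration on annular shells via the small-energy regularity, and only afterwards exploit the rescaled equation to upgrade to $C^{2,\beta}$ convergence. Tracking how the factor $(\tau_\alpha+|\nabla u_\alpha|^2)^{\alpha-1}\sim\mu^{t_0}$ enters each estimate uniformly on the neck is the most delicate bookkeeping in the argument.
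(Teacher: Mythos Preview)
Your overall strategy---rescale, show the rescaled sequence satisfies an almost-Laplace equation, pass to a harmonic radial limit via Proposition~\ref{theta zero}, and compute $|\vec a|$ from the Pohozaev identity and Lemma~\ref{lambda^s small}---is exactly the paper's. The gap is in how you obtain the crucial bound $\int_{A(\lambda_\alpha^{t_\alpha}/R,\,\lambda_\alpha^{t_\alpha}R,\,x_\alpha)}|\nabla u_\alpha|^2\,dx = O(\alpha-1)$.

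Lemma~\ref{lambda^s small} only asserts a \emph{limit}: $\lim_{\alpha\searrow 1}\mathcal{E}_\alpha(t)=\mu^{1-t}\Lambda$. Subtracting this at the two exponents $t_\alpha\pm\log R/|\log\lambda_\alpha|$, both of which converge to $t_0$, gives only $o(1)$; no rate is available. Your expansion $\log\mu = 2(\alpha-1)|\log\lambda_\alpha|(1+o(1))$ is a statement about the numerical sequence $\lambda_\alpha^{2-2\alpha}$, not about $\mathcal{E}_\alpha$, and cannot be inserted into a bare limit formula to produce an $o(\alpha-1)$ remainder. So the displayed identity you write for the annular energy is not justified by Lemma~\ref{lambda^s small} alone.

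The paper obtains the needed bound from a different source: the estimate \eqref{E 40} inside the proof of Proposition~\ref{theta zero}, namely
\[
\mathcal{F}_\alpha(k)\le 2^{\sigma(k-T)}\mathcal{F}_\alpha(T)+C(\alpha-1)\bigl(kI(\lambda_\alpha^{t_\alpha})+\eta(\gamma,\alpha)(k+1)\bigr),\qquad 2^T=\lambda_\alpha^{-\gamma}.
\]
The hypothesis $\nu>1$ forces $\lambda_\alpha^{\gamma\sigma}=o\bigl((\alpha-1)^m\bigr)$ for every $m$, so the first term is negligible and $\int_{Q(k)}|\nabla u_\alpha|^2\le C(k)(\alpha-1)$ follows directly. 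Small-energy regularity (Lemma~\ref{lem4.1})---whose smallness hypothesis on each dyadic shell comes from Claim~\ref{claim:ding-tian}, not from ``taking $R$ large''---then gives the pointwise bounds $\|\nabla u_\alpha'\|_{C^0}+\|\nabla^2 u_\alpha'\|_{C^0}\le C(k)\sqrt{\alpha-1}$ on $A(2^{-k},2^k,0)$, hence $\|v_\alpha\|_{C^2}\le C(k)$. With these a priori $C^2$ bounds already in hand, the rescaled equation reads $\Delta v_\alpha + \sqrt{\alpha-1}\,A(\nabla v_\alpha,\nabla v_\alpha)+(\alpha-1)O(|\nabla^2 v_\alpha|)=\sqrt{\alpha-1}\,H(\nabla^\perp v_\alpha,\nabla v_\alpha)+o(\sqrt{\alpha-1})$, and no circularity arises: the second-order term is controlled \emph{before} one appeals to the equation, not after. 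From there your remaining steps (harmonic radial limit, $|\vec a|$ via Pohozaev) agree with the paper, though the paper integrates \eqref{pohozaev esti 2} over $A(\lambda_\alpha^{t_\alpha},2\lambda_\alpha^{t_\alpha},x_\alpha)$ rather than evaluating at a single radius.
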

		\begin{proof}
			Let
			\begin{equation*}
				u^\prime_\alpha(x) := u_\alpha(x_\alpha + \lambda_\alpha^{t_\alpha} x) \quad \text{and}\quad  v_\alpha(x) := \frac{1}{\sqrt{\alpha - 1}}\Big(u_\alpha\p{x_\alpha + \lambda_\alpha^{t_\alpha}x} - u\big(x_\alpha + (\lambda_\alpha^{t_\alpha},0)\big)\Big).
			\end{equation*}
			By \eqref{E 40} and small energy regularity Lemma \ref{lem4.1}, recalling $\lambda_\alpha^{\gamma} = o\p{(\alpha - 1)^m}$ for all $\gamma > 0$ and $m\in \mathbb{N}$ we have 
			\begin{align*}
				\norm{\nabla u^\prime_\alpha}_{C^0(A(2^{-k}, 2^k,0))} + \norm{\nabla^2 u^\prime_\alpha}_{C^0(A(2^{-k}, 2^k,0))} \leq C(k)\sqrt{\alpha - 1}
			\end{align*}
			which further implies
			\begin{equation*}
				\norm{\nabla v_\alpha}_{C^0(A(2^{-k}, 2^k,0))} + \norm{\nabla^2 v_\alpha}_{C^0(A(2^{-k}, 2^k,0))} \leq C(k),
			\end{equation*}
            for some constant $C(k)$ depending on $k$.
			Since $v_\alpha(1,0) = 0$, the above estimate implies 
			\begin{equation*}
				\norm{ v_\alpha}_{C^0(A(2^{-k}, 2^k,0))}\leq C(k).
			\end{equation*}
			By the Euler-Lagrange equation \eqref{el general 1 equi} of $u_\alpha$, one can check that $v_\alpha$ satisfies the following equation
			\begin{align*}
			    \Delta v_\alpha &+ \sqrt{\alpha - 1}A(\nabla v_\alpha, \nabla v_\alpha) + (\alpha - 1)O(|\nabla^2 v_\alpha|)\\
                &= \sqrt{\alpha - 1}H(\nablap v_\alpha,\nabla v_\alpha) + \sqrt{\alpha - 1}o(1)
			\end{align*}
            where $o(1)$ tends to 0  as $\alpha \searrow 1$.
			By the compactness of PDE's theory, there exists a subsequence of $v_\alpha$, which is still denoted by $v_\alpha$, such that
			\begin{equation*}
				v_\alpha \rightarrow v_0 \quad \text{in }\, C^2_{loc}\p{\R^2\backslash \{0\}}
			\end{equation*}
			where $v_0$ is a harmonic function on $\R^2$. Moreover, by Proposition \ref{theta zero}, the angel component energy of $v_0$ vanishes, that is, $v_0(x) = v_0(|x|)$.  Thus, $v_0$ is a fundamental solution of Laplacian equation over $\R^2$, without loss of generality we can write $v_0$ as
			\begin{equation*}
				v_0 = \Vec{a}\log{r} = (a_1,\dots,a_K)\log{r}, \quad \text{for some vector } \Vec{a} \in T_{y}N \subset \R^K 
			\end{equation*}
			From \eqref{pohozaev esti 2}, we know that $v_\alpha$ satisfies
			\begin{align*}
				\int_{\partial B(0,t)} &\p{\tau_\alpha + \abs{\nabla_{g_\alpha} u_\alpha}^2}^{\alpha - 1}\abs{\nabla v_\alpha}^2 ds\\
				&= \frac{2\alpha}{(2\alpha - 1)} \int_{\partial B(0,t)}\p{\tau_\alpha + \abs{\nabla_{g_\alpha}u_\alpha}^2}^{\alpha - 1}\frac{1}{|x|^2}\abs{\frac{\partial v_\alpha}{\partial \theta}}^2ds\nonumber\\
				&\quad+\frac{2}{(2\alpha - 1)t}\int_{B(0,t)} \p{\tau_\alpha + \abs{\nabla_{g_\alpha}u_\alpha}^2}^{\alpha - 1} \abs{\nabla u_\alpha}^2 dx + \frac{O(t)}{\alpha - 1},
			\end{align*}
			which implies
			\begin{align}\label{eq:lenght a 1}
				&\lim_{\alpha \searrow 1}\int_{
					A(\lambda_\alpha^{t_\alpha}, 2\lambda_\alpha^{t_\alpha}, x_\alpha)} \p{\tau_\alpha + \abs{\nabla_{g_\alpha} u_\alpha}^2}^{\alpha - 1}\abs{\nabla v_\alpha}^2 dx\nonumber\\
				&= \lim_{\alpha \searrow 1}\frac{2\alpha}{(2\alpha - 1)} \int_{A(\lambda_\alpha^{t_\alpha}, 2\lambda_\alpha^{t_\alpha}, x_\alpha)}\p{\tau_\alpha + \abs{\nabla_{g_\alpha}u_\alpha}^2}^{\alpha - 1}\frac{1}{|x|^2}\abs{\frac{\partial v_\alpha}{\partial \theta}}^2dx\nonumber\\
				&\quad+\lim_{\alpha \searrow 1}\frac{2}{(2\alpha - 1)}\int_{\lambda_\alpha^{t_\alpha}}^{2\lambda_\alpha^{t_\alpha}}\frac{1}{t}\p{\int_{B(0,t)} \p{\tau_\alpha + \abs{\nabla_{g_\alpha}u_\alpha}^2}^{\alpha - 1} \abs{\nabla u_\alpha}^2 dx} dt\nonumber\\
                &\quad + \lim_{\alpha \searrow 1} \frac{O(\lambda_\alpha^{t_\alpha})}{\alpha - 1}\nonumber\\
				& = 2 \log 2 \mu^{1 - \lim_{\alpha \searrow 1}t_\alpha}\Lambda.
			\end{align}
			Here, we used Lemma \ref{lambda^s small} and Proposition \ref{theta zero}. On the other hand, we observe that
			\begin{align}\label{eq:lenght a 2}
				&\lim_{\alpha \searrow 1} \int_{
					A(\lambda_\alpha^{t_\alpha}, 2\lambda_\alpha^{t_\alpha}, x_\alpha)} \p{\tau_\alpha + \abs{\nabla_{g_\alpha} u_\alpha}^2}^{\alpha - 1}\abs{\nabla v_\alpha}^2 dx\nonumber \\
				&\quad \quad \quad \quad = \lim_{\alpha \searrow 1} \int_{A(1,2,0)} \p{\tau_\alpha + \abs{\nabla_{g_\alpha}v_\alpha}^2 \frac{\alpha - 1}{\lambda_\alpha^{2t_\alpha}} }^{\alpha - 1} \abs{\nabla v_\alpha }^2 dx\nonumber\\
				&\quad \quad \quad \quad = 2\pi \log 2 |\Vec{a}|^2 \mu^{\lim_{\alpha\searrow 1}t_\alpha}.
			\end{align}
			Therefore, combining above two identities \eqref{eq:lenght a 1} and \eqref{eq:lenght a 2} we have
			\begin{equation*}
				\abs{\Vec{a}}^2 = \frac{\Lambda}{\pi}\mu^{1 - 2\lim_{\alpha \searrow 1} t_\alpha}.
			\end{equation*}
			This completes the proof the Proposition \ref{prop 2}.
		\end{proof}
		As a corollary of above Proposition \ref{prop 2}, we can obtain the following result.
		\begin{coro}\label{coro 2}
			Under the same assumption of Proposition \ref{prop 2}, the following holds
			\begin{enumerate}[label=(\arabic*)]
				\item\label{coro 2 item 1} For the radical direction, we have
				\begin{equation*}
					\int_{\lambda_\alpha^t}^{2\lambda_\alpha^t} \frac{1}{\sqrt{\alpha - 1}}\abs{\frac{\partial u_\alpha}{\partial r}} dr \rightarrow \log 2 \mu^{1-t}\sqrt{\frac{E(w)}{\pi}} \quad \text{in }C^0([t_1,t_2]),
				\end{equation*}
				and 
				\begin{equation*}
					\frac{1}{\sqrt{\alpha - 1}}\p{r\abs{\frac{\partial u_\alpha}{\partial r}}}(\lambda_\alpha^t,\theta) \rightarrow \mu^{1-t}\sqrt{\frac{E(w)}{\pi}}\quad \text{in }C^0([t_1,t_2]);
				\end{equation*}
				\item\label{coro 2 item 2} For the angular direction, we have 
				\begin{equation*}
					\int_{0}^{2\pi} \frac{1}{\sqrt{\alpha - 1}}\p{\frac{1}{r} \abs{\frac{\partial u_\alpha}{\partial\theta}}}(\lambda_\alpha^t,\theta) d\theta \rightarrow 0\quad \text{in }C^0([t_1,t_2])
				\end{equation*}
				and 
				\begin{equation*}
					\frac{1}{\sqrt{\alpha - 1}}\p{\frac{1}{r} \abs{\frac{\partial u_\alpha}{\partial\theta}}}(\lambda_\alpha^t,\theta) \rightarrow 0 \quad \text{in }C^0([t_1,t_2])
				\end{equation*}
			\end{enumerate}
		\end{coro}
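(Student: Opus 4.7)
The plan is to derive Corollary \ref{coro 2} by a direct change-of-variables reduction to Proposition \ref{prop 2}, combined with a contradiction argument that upgrades pointwise limits to uniform convergence in $t \in [t_1, t_2]$. With $v_\alpha(y) := (\alpha-1)^{-1/2}\bigl(u_\alpha(x_\alpha + \lambda_\alpha^{t_\alpha} y) - u(x_\alpha + \lambda_\alpha^{t_\alpha}(1,0))\bigr)$ and polar coordinates $y = \rho e^{i\theta}$, the chain rule gives
\begin{equation*}
    \frac{\partial v_\alpha}{\partial \rho}(\rho,\theta) = \frac{\lambda_\alpha^{t_\alpha}}{\sqrt{\alpha-1}}\,\frac{\partial u_\alpha}{\partial r}\bigl(\lambda_\alpha^{t_\alpha}\rho,\theta\bigr),\qquad \frac{\partial v_\alpha}{\partial \theta}(\rho,\theta) = \frac{1}{\sqrt{\alpha-1}}\,\frac{\partial u_\alpha}{\partial \theta}\bigl(\lambda_\alpha^{t_\alpha}\rho,\theta\bigr),
\end{equation*}
so that evaluating at $\rho = 1$ identifies each of the four quantities appearing in Corollary \ref{coro 2} with the corresponding expression in $v_\alpha$ at $\rho = 1$.

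For item \ref{coro 2 item 1}, I would fix $t \in [t_1,t_2]$ and apply Proposition \ref{prop 2} with $t_\alpha \equiv t$ to obtain $v_\alpha \to \vec{a}\log|y|$ in $C^2(A(1/2,2,0))$, with $|\vec{a}| = \mu^{1-t}\sqrt{E(w)/\pi}$. Substituting $r = \lambda_\alpha^{t}\rho$ in the radial integral transforms it into $\int_1^2 |\partial_\rho v_\alpha|(\rho,\theta)\,d\rho$, which tends to $\int_1^2 |\vec{a}|/\rho\,d\rho = |\vec{a}|\log 2$ by the $C^2$-convergence, while the pointwise claim follows from $\partial_\rho v_\alpha(1,\theta) \to \vec{a}$. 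For item \ref{coro 2 item 2}, since $\partial_\theta(\vec{a}\log|y|) \equiv 0$, the same $C^2$-convergence forces $\partial_\theta v_\alpha \to 0$ uniformly on $A(1/2,2,0)$, and both the integral and the pointwise statements then follow from the identifications above.

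To promote these pointwise limits to $C^0([t_1,t_2])$-convergence, I would argue by contradiction. If uniformity failed, one could extract $\alpha_k \searrow 1$, $t_k \in [t_1,t_2]$, and $\varepsilon_0 > 0$ along which the deviation from the target limit exceeds $\varepsilon_0$; passing to a further subsequence with $t_k \to t_0 \in [t_1, t_2]$ and applying Proposition \ref{prop 2} with $t_\alpha = t_k$, one would obtain a subsequential limit $v_{\alpha_k} \to \vec{a}_{t_0}\log|y|$ with $|\vec{a}_{t_0}| = \mu^{1-t_0}\sqrt{E(w)/\pi}$, contradicting the continuity of $t \mapsto \mu^{1-t}$ together with the assumed strict deviation. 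For the angular component, Corollary \ref{theta sup} supplies the matching uniform-in-$t$ integral estimate that closes the argument. The main technical obstacle is precisely this uniformity step, since Proposition \ref{prop 2} is a priori only a sequential-compactness statement for each fixed $t_\alpha$; the contradiction-plus-Corollary \ref{theta sup} device converts it into uniformity over $[t_1,t_2]$.
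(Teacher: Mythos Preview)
Your proposal is correct and follows essentially the same approach as the paper: both reduce to Proposition~\ref{prop 2} via the rescaled map $v_\alpha$, and both upgrade the sequential statement of Proposition~\ref{prop 2} to $C^0([t_1,t_2])$-uniformity by a contradiction argument exploiting that Proposition~\ref{prop 2} applies to \emph{any} sequence $t_\alpha\in[t_1,t_2]$. The paper is terser---it only writes out part~\ref{coro 2 item 2} explicitly (where the target limit is $0$, so no continuity-in-$t$ of the target is needed) and refers to \cite{li2010,JostLiuZhu2019} for part~\ref{coro 2 item 1}---whereas you correctly spell out the extra continuity step $t\mapsto\mu^{1-t}$ required when the target depends on $t$; your invocation of Corollary~\ref{theta sup} at the end is harmless but not needed, since the contradiction via Proposition~\ref{prop 2} already suffices for the angular component.
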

		\begin{proof}
			We only prove the angular direction case \ref{coro 2 item 2},  the other statements in \ref{coro 2 item 1} can be argued similarly and the proof can be found in \cite{li2010,JostLiuZhu2019}. For the case \ref{coro 2 item 2}, it suffices to show the second assertion
			\begin{equation*}
				\frac{1}{\sqrt{\alpha - 1}}\p{ \frac{1}{r}\abs{\frac{\partial u_\alpha}{\partial\theta}}}(\lambda_\alpha^t,\theta) \rightarrow 0 \quad \text{in }C^0([t_1,t_2]),
			\end{equation*}
			since the first one of \ref{coro 2 item 2} is a direct corollary of the second one. By contradiction, if it fails, then there exists a sequence $t_\alpha \in [t_1,t_2]$ and $\theta_\alpha \in [0,2\pi]$ such that
			\begin{equation}\label{contradiction}
				\abs{\frac{1}{\sqrt{\alpha - 1}}\p{\frac{1}{r}\abs{\frac{\partial u_\alpha}{\partial\theta}}}(\lambda_\alpha^{t_\alpha},\theta_\alpha)} \geq \varepsilon_0 > 0
			\end{equation}
			for some $\varepsilon_0 > 0$. But, Proposition \ref{prop 2} tells us that for any $\theta \in [0,2\pi]$
			\begin{equation*}
				\frac{1}{\sqrt{\alpha - 1}}\p{\frac{1}{r}\frac{\partial u_\alpha}{\partial \theta}}(\lambda_\alpha^{t_\alpha},\theta) \rightarrow 0\quad \text{in }\, C^2 \text{ as } \alpha \searrow 1
			\end{equation*}
			which contradicts to \eqref{contradiction} by the compactness of $\theta_\alpha \in [0,2\pi]$ modulo some subsequences. Thus, we complete the proof of Corollary \ref{coro 2}.
		\end{proof}
		Next, we show the necks of $\alpha$-$H$-surfaces $u_\alpha$ converges to a geodesic, that is, the base map $u_0$ and single bubble $w$ are connected by some geodesic. To this end, we define the following curve 
		\begin{equation*}
			\gamma_\alpha(r) : =\frac{1}{2\pi} \int_{0}^{2\pi} u_\alpha(r,\theta) d\theta\,:\,[\lambda_\alpha^{t_2}, \lambda_\alpha^{t_1}] \rightarrow \R^{K}
		\end{equation*}
		where $(r,\theta)$ is the polar coordinate around $x_\alpha$. We denote the image of $\gamma_\alpha$ by $\Gamma_\alpha \subset N$. For  convenience, we use the following notation
		\begin{equation*}
			\dot{\gamma}_\alpha := \frac{d \gamma_\alpha}{dr},\quad \ddot{\gamma}_\alpha := \frac{d^2 \gamma_\alpha}{dr^2}.
		\end{equation*}
		We directly compute that
		\begin{align}\label{curve gamma}
			\ddot{\gamma}_\alpha &= \frac{1}{2\pi}\int_{0}^{2\pi} \frac{\partial^2 u_\alpha}{\partial r^2} d\theta \nonumber\\
			&=  \frac{1}{2\pi}\int_{0}^{2\pi} \frac{\partial^2 u_\alpha}{\partial r^2} + \frac{1}{r}\frac{\partial u_\alpha}{\partial r} + \frac{1}{r^2}\frac{\partial^2 u_\alpha}{\partial \theta^2}d\theta - \frac{1}{2\pi}\int_{0}^{2\pi}  \frac{1}{r}\frac{\partial u_\alpha}{\partial r} d\theta\nonumber\\
			& = \frac{1}{2\pi}\int_{0}^{2\pi}\Delta u_\alpha d\theta - \frac{1}{2\pi}\int_{0}^{2\pi}  \frac{1}{r}\frac{\partial u_\alpha}{\partial r} d\theta\nonumber\\
			& = - \frac{1}{2\pi}\int_{0}^{2\pi} A(u_\alpha)\left(\nabla u_\alpha, \nabla u_\alpha\right)d\theta\nonumber\\
            &\quad - \frac{ (\alpha - 1)}{2\pi}\int_{0}^{2\pi}\frac{\nabla|\nabla_{g_\alpha} u_\alpha|^2\cdot \nabla u_\alpha}{\tau_\alpha +|\nabla_{g_\alpha} u_\alpha|^2} d\theta\nonumber\\
			&\quad +\frac{\tau_\alpha^{\alpha -1}}{2\pi}\int_{0}^{2\pi} \frac{H(u_\alpha)(\nablap u_\alpha, \nabla u_\alpha)}{\alpha \left(\tau_\alpha + |\nabla_{g_\alpha} u_\alpha|^2\right)^{\alpha - 1}}d\theta - \frac{\dot{\gamma}_\alpha}{r}.
		\end{align}
		We use $h_\alpha$ to denote the induced metric upon $\Gamma_\alpha$ in $\R^K$ and  $A_{\Gamma_\alpha}$ to denote the second fundamental form restricted on $\Gamma_\alpha$. Equipped with these notations, we have
		\begin{lemma}\label{ays lemma}
			For any $\lambda_\alpha^{t_\alpha} \in [\lambda_\alpha^{t_2}, \lambda_\alpha^{t_1}]$, after choosing a subsequence, there holds
			\begin{align}
				\dot{\gamma}_\alpha(\lambda_\alpha^{t_\alpha}) &= \frac{\sqrt{\alpha - 1}}{\lambda_\alpha^{t_\alpha}}(\Vec{a} + o(1)),\nonumber\\
				h_\alpha\left(\frac{d}{dr},\frac{d}{dr}\right)& = \abs{\dot{\gamma}_\alpha}^2 = \frac{\alpha - 1}{\lambda_\alpha^{2t_\alpha}}\p{\abs{\Vec{a}}^2 + o(1)} \nonumber\\
				A_{\Gamma_\alpha}(\nabla \gamma_\alpha, \nabla\gamma_\alpha)& = \frac{\alpha - 1}{\lambda_\alpha^{2t_\alpha}}\p{A(y)(\Vec{a},\Vec{a}) + o(1)},
			\end{align}
            where $\Vec{a}$ and $y$ are constructed in Proposition \ref{prop 2} and $o(1) \rightarrow 0$ as $\alpha \searrow 1$.
			Moreover, for any $t\in [t_1, t_2]$, there exists a positive constant $C > 0$, such that 
			\begin{equation*}
				\norm{A_{\Gamma_\alpha}}_{h_\alpha}(\lambda_\alpha^{t}) \leq C.
			\end{equation*}
		\end{lemma}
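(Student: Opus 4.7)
The whole lemma is a direct consequence of the $C^2$-convergence
$v_\alpha(x) = (\alpha-1)^{-1/2}\big(u_\alpha(x_\alpha+\lambda_\alpha^{t_\alpha}x) - u(x_\alpha+(\lambda_\alpha^{t_\alpha},0))\big) \to \vec{a}\log|x|$
on each annulus $A(1/R,R,0)$ already supplied by Proposition \ref{prop 2}, combined with the explicit formula \eqref{curve gamma} for $\ddot\gamma_\alpha$. The plan is to first prove the statements at a fixed scale $r=\lambda_\alpha^{t_\alpha}$ by pulling everything back to the unit circle under the blow-up $y\mapsto x_\alpha+\lambda_\alpha^{t_\alpha}y$, and then upgrade to uniformity in $t\in[t_1,t_2]$ by the same compactness reasoning as in Corollary \ref{theta sup}.

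For the first two identities, differentiating $\gamma_\alpha$ and applying the chain rule to $v_\alpha$ gives
\[
\frac{\partial u_\alpha}{\partial r}(\lambda_\alpha^{t_\alpha},\theta) = \frac{\sqrt{\alpha-1}}{\lambda_\alpha^{t_\alpha}}\,\frac{\partial v_\alpha}{\partial r}(1,\theta),
\]
and Proposition \ref{prop 2} yields $\partial_r v_\alpha(1,\theta)\to\partial_r(\vec a\log|x|)|_{|x|=1}=\vec a$ in $C^0$. Averaging over $\theta\in[0,2\pi]$ produces the asserted expansion for $\dot\gamma_\alpha$, and squaring then gives the claim for $h_\alpha(\partial_r,\partial_r)=|\dot\gamma_\alpha|^2$.

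For the second fundamental form, I will read off $A_{\Gamma_\alpha}(\nabla\gamma_\alpha,\nabla\gamma_\alpha)$ from \eqref{curve gamma} after isolating the dominant term. Rescaling via $v_\alpha$ shows that
$
|\nabla u_\alpha|^2(\lambda_\alpha^{t_\alpha}\cdot) = \frac{\alpha-1}{\lambda_\alpha^{2t_\alpha}}|\nabla v_\alpha|^2,
$
and, since $u_\alpha\to y$ uniformly on $\partial B(x_\alpha,\lambda_\alpha^{t_\alpha})$ and $|\nabla(\vec a\log|x|)|^2|_{|x|=1}$ reduces to $|\vec a|^2$, the main term
\[
-\frac{1}{2\pi}\int_0^{2\pi}A(u_\alpha)(\nabla u_\alpha,\nabla u_\alpha)\,d\theta \;=\; \frac{\alpha-1}{\lambda_\alpha^{2t_\alpha}}\big(A(y)(\vec a,\vec a)+o(1)\big)
\]
thanks to the continuity of the second fundamental form $A$ of $N\hookrightarrow\mathbb{R}^K$. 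The remaining three contributions in \eqref{curve gamma} must be shown to be subordinate to this leading term: the Sacks--Uhlenbeck correction carries an extra $(\alpha-1)$ factor on top of a bounded integrand (using the $C^2$-bounds of $v_\alpha$ and Lemma \ref{bounded F}), the $H$-term is of order $\tau_\alpha^{\alpha-1}(\alpha-1)/\lambda_\alpha^{2t_\alpha}$ with coefficient tending to the antisymmetric integral $\int_0^{2\pi}H(y)(\nabla^\perp(\vec a\log|x|),\nabla(\vec a\log|x|))d\theta$ which vanishes by the anti-symmetry \eqref{eq:H anti-symmetric} of $H^k_{ij}$, and the $-\dot\gamma_\alpha/r$ term contributes $-\sqrt{\alpha-1}\,\vec a/\lambda_\alpha^{2t_\alpha}+o(\cdot)$ which represents the purely tangential acceleration and is stripped off when passing from $\ddot\gamma_\alpha$ to the normal-valued second fundamental form $A_{\Gamma_\alpha}(\nabla\gamma_\alpha,\nabla\gamma_\alpha)$.

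Finally, the uniform bound $\|A_{\Gamma_\alpha}\|_{h_\alpha}(\lambda_\alpha^t)\le C$ follows by combining the preceding two asymptotics: both numerator and denominator in the expression
\[
\|A_{\Gamma_\alpha}\|_{h_\alpha}(\lambda_\alpha^t) \;=\; \frac{|A_{\Gamma_\alpha}(\nabla\gamma_\alpha,\nabla\gamma_\alpha)|}{h_\alpha(\partial_r,\partial_r)}
\]
carry the same factor $(\alpha-1)/\lambda_\alpha^{2t}$, so the quotient converges to $|A(y)(\vec a,\vec a)|/|\vec a|^2$, which is bounded uniformly in $t\in[t_1,t_2]$ since $N$ is compact and $|\vec a|^2=\mu^{1-t}E(w)/\pi$ is bounded away from zero. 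The main technical obstacle in this plan is controlling the $-\dot\gamma_\alpha/r$ contribution correctly, since it is \emph{larger} than the target order $(\alpha-1)/\lambda_\alpha^{2t_\alpha}$; carefully identifying $A_{\Gamma_\alpha}(\nabla\gamma_\alpha,\nabla\gamma_\alpha)$ as the normal component of $\ddot\gamma_\alpha$ (after subtracting the tangential piece $-\dot\gamma_\alpha/r$) is essential, and Corollary \ref{coro 2} together with Corollary \ref{theta sup} will be used to make the passage uniform across $t\in[t_1,t_2]$.
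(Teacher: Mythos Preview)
Your proposal is correct and follows essentially the same approach as the paper: rescale via Proposition~\ref{prop 2} to read off $\dot\gamma_\alpha$ and $h_\alpha$, then use the expression \eqref{curve gamma} for $\ddot\gamma_\alpha$ to identify $A_{\Gamma_\alpha}(\nabla\gamma_\alpha,\nabla\gamma_\alpha)$ after projecting out the tangential direction. Two small remarks. First, your treatment of the $H$-term via the anti-symmetry \eqref{eq:H anti-symmetric} (since $v_{0,x^1}$ and $v_{0,x^2}$ are both proportional to $\vec a$) is equivalent to, but phrased differently from, the paper's argument, which instead writes the term in polar coordinates and invokes the angular vanishing from Corollary~\ref{coro 2}. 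Second, when you strip off the tangential part you correctly note that $-\dot\gamma_\alpha/r$ is \emph{exactly} tangential and hence removed completely; the paper makes this explicit by setting $G_\alpha=-\ddot\gamma_\alpha-\dot\gamma_\alpha/r$ and then observing $\langle A(y)(\vec a,\vec a),\vec a\rangle=0$ to check that the residual tangential projection of $G_\alpha$ is also $o(1)$ at the relevant scale---a point you should state since it is what guarantees the normal projection retains the full leading term.
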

		\begin{proof}
			For any $\lambda_\alpha^{t_\alpha} \in  [\lambda_\alpha^{t_2}, \lambda_\alpha^{t_1}]$, by Proposition \ref{prop 2}, we have 
			\begin{equation*}
				\frac{1}{\sqrt{\alpha - 1}}\Big(u_\alpha(x_\alpha + \lambda_\alpha^{t_\alpha}x) - u\big(x_\alpha + (\lambda_\alpha^{t_\alpha},0)\big)\Big) \rightarrow \Vec{a}\log{|x|}, \quad \text{as } \alpha \searrow 1 
			\end{equation*}
			where $ \Vec{a} \in T_yN \subset T_y\R^{K} = \R^{K}$ is a vector in $\R^{K}$ satisfying
			\begin{equation*}
				|\Vec{a}| = \mu^{1 - \lim_{\alpha \searrow 1} t_\alpha} \sqrt{\frac{E(w^1)}{\pi}}
			\end{equation*}
			and  $y = \lim_{\alpha \searrow 1} u_\alpha(\partial B(x_\alpha, \lambda_\alpha^{t_\alpha})) = \lim_{\alpha \searrow 1} u_\alpha(x_\alpha + \lambda_\alpha^{t_\alpha}e^{i\theta})$. Then we have
			\begin{equation*}
				\dot{\gamma}_\alpha(\lambda_\alpha^{t_\alpha}) = \frac{1}{2\pi}\int_{0}^{2\pi} \frac{\partial u_\alpha}{\partial r}(\lambda_\alpha^{t\alpha},\theta)d\theta = \frac{\sqrt{\alpha - 1}}{\lambda_\alpha^{t_\alpha}}(\Vec{a} + o(1))
			\end{equation*}
			and hence
			\begin{equation*}
				h_\alpha\left(\frac{d}{dr},\frac{d}{dr}\right) = \abs{\dot{\gamma}_\alpha}^2 = \frac{\alpha - 1}{\lambda_\alpha^{t_\alpha}}\p{\abs{\Vec{a}}^2 + o(1)}
			\end{equation*}
			where $o(1) \rightarrow 0$ as $\alpha\searrow 1$. Let
			\begin{equation*}
				G_\alpha = - \ddot{\gamma}_\alpha - \frac{\dot{\gamma}_\alpha}{r}
			\end{equation*}
			and by  equation\eqref{curve gamma}, Proposition \ref{prop 2}, Corollary \ref{coro 2} and the assumption $0<\beta_0 \leq \lambda_\alpha^{\alpha - 1}\leq 1$ we can further compute that
			\begin{align*}
				G_\alpha(\lambda_\alpha^{t_\alpha}) &=  \frac{1}{2\pi}\int_{0}^{2\pi} A(u_\alpha)\left(\nabla u_\alpha, \nabla u_\alpha\right)d\theta
				+ \frac{ (\alpha - 1)}{2\pi}\int_{0}^{2\pi}\frac{\nabla|\nabla_{g_\alpha} u_\alpha|^2\cdot \nabla u_\alpha}{\tau_\alpha+|\nabla_{g_\alpha} u_\alpha|^2} d\theta\nonumber\\
				&\quad -\frac{1}{2\alpha\pi}\tau_\alpha^{\alpha -1}\int_{0}^{2\pi} \frac{H(u_\alpha)(\nablap u_\alpha, \nabla u_\alpha)}{ \left(\tau_\alpha+ |\nabla_{g_\alpha} u_\alpha|^2\right)^{\alpha - 1}}d\theta\\
				&=\frac{\alpha - 1}{\lambda_\alpha^{2t_\alpha}}\p{\frac{1}{2\pi}\int_{0}^{2\pi}A(y)(\Vec{a},\Vec{a})d\theta + o(1)} + (\alpha - 1)\int_0^{2\pi}O\p{\abs{\nabla^2_{g_\alpha }u_\alpha}}d\theta\\
				&\quad + \frac{1}{2\pi}\int_{0}^{2\pi} O\p{\abs{H(u_\alpha)(\nablap u_\alpha, \nabla u_\alpha)}} d\theta\\
				& = \frac{\alpha - 1}{\lambda_\alpha^{2t_\alpha}}\p{\frac{1}{2\pi}\int_{0}^{2\pi}A(y)(\Vec{a},\Vec{a})d\theta +\sqrt{\alpha - 1}\int_0^{2\pi}O\p{\frac{\lambda_\alpha^{2t_\alpha}\abs{\nabla^2_{g_\alpha }u_\alpha}}{\sqrt{\alpha - 1}}}d\theta +  o(1)}\\
				&\quad + \frac{\alpha - 1}{2\pi\lambda_\alpha^{2t_\alpha}}\int_{0}^{2\pi} \frac{\lambda_\alpha^{2t_\alpha}}{\alpha - 1} O\p{\abs{\frac{\partial u_\alpha}{\partial r}}\frac{1}{\lambda_{\alpha}^{t_\alpha}}\abs{\frac{\partial u_\alpha}{\partial \theta}}} d\theta\\
				& =  \frac{\alpha - 1}{\lambda_\alpha^{2t_\alpha}}\p{\frac{1}{2\pi}\int_{0}^{2\pi}A(y)(\Vec{a},\Vec{a})d\theta +\sqrt{\alpha - 1}\int_0^{2\pi}O\p{\abs{\nabla^2_{g_\alpha}v_\alpha}}d\theta + o(1)}\\
				&\quad + \frac{\alpha - 1}{\lambda_\alpha^{2t_\alpha}} \int_{0}^{2\pi} O\p{\abs{\frac{\partial v_\alpha}{\partial r}}(\lambda_\alpha^{t_\alpha},\theta)\cdot \p{\frac{1}{r}\abs{\frac{\partial v_\alpha}{\partial \theta}}}(\lambda_\alpha^{t_\alpha},\theta)}d\theta\\
				& = \frac{\alpha - 1}{\lambda_\alpha^{2t_\alpha}}\p{A(y)(\Vec{a},\Vec{a}) +O\p{\sqrt{\alpha - 1}} + o(1)}\\
				& = \frac{\alpha - 1}{\lambda_\alpha^{2t_\alpha}}\p{A(y)(\Vec{a},\Vec{a}) + o(1)}.
			\end{align*}
			Because $\langle A(y)(\Vec{a}, \Vec{a}), \Vec{a}\rangle = 0$, we have
			\begin{align*}
				-A_{\Gamma_\alpha}(\nabla \gamma_\alpha,\nabla \gamma_\alpha) &= \ddot{\gamma}_\alpha - \frac{\langle \ddot{\gamma}_\alpha, \dot{\gamma}_\alpha \rangle}{\abs{\dot{\gamma}_\alpha}^2} \dot{\gamma}_\alpha = - G_\alpha + \frac{\langle G_\alpha, \dot{\gamma}_\alpha \rangle}{\abs{\dot{\gamma}_\alpha}^2} \dot{\gamma}_\alpha\\
				& = - \frac{\alpha - 1}{\lambda_\alpha^{2t_\alpha}}\p{A(y)(\Vec{a},\Vec{a}) + o(1)}.
			\end{align*}
			which implies $\norm{A_{\Gamma_\alpha}}_{h_\alpha}(\lambda_\alpha^{t_\alpha}) < \infty$. Since $t_\alpha \in [t_1, t_2]$ is a arbitrary sequence, by a contradiction argument similar to Corollary \ref{coro 2} we will obtain that for any $t \in [t_1, t_2]$ 
			\begin{equation*}
				\norm{A_{\Gamma_\alpha}}_{h_\alpha}(\lambda_\alpha^{t}) \leq C,
			\end{equation*}
			which completes the proof of Lemma \ref{ays lemma}.
		\end{proof}
		\begin{lemma}\label{converge to geodesic}
			After choosing a subsequence, the sequence of curves $\Gamma_\alpha \subset \R^K$,  which is defined by $\gamma_\alpha$ and parametrized by its arc length, converges to geodesic a $\gamma : [0, L] \rightarrow (N,h)$ for some $L \in \R_+$, that is, $\gamma$ satisfies the following equation
			\begin{equation*}
				\frac{d^2 \gamma}{ds^2} + A(\gamma)\p{\frac{d \gamma}{ds}, \frac{d \gamma}{ds}} = 0.
			\end{equation*}
		\end{lemma}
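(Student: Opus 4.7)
The plan is to reparametrize the curves $\Gamma_\alpha$ by arclength, apply Arzela--Ascoli to extract a $C^1$-convergent subsequence, and then identify the limit as a geodesic by passing to the limit in the formula for the second fundamental form provided by Lemma \ref{ays lemma}. Throughout I will work under the assumption $\nu \in (1,\infty)$; the case $\nu = \infty$ requires a different treatment accompanying Theorem~\ref{simple neck analysis}\ref{simple neck analysis item3}.

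First, I would define the arclength function $s_\alpha(r) := \int_{\lambda_\alpha^{t_2}}^{r} |\dot{\gamma}_\alpha(\rho)|\, d\rho$ and set $\tilde{\gamma}_\alpha := \gamma_\alpha\circ s_\alpha^{-1}\colon [0,L_\alpha]\to N$, where $L_\alpha = s_\alpha(\lambda_\alpha^{t_1})$. By Lemma \ref{ays lemma}, $|\dot{\gamma}_\alpha(\lambda_\alpha^{t})| = \tfrac{\sqrt{\alpha-1}}{\lambda_\alpha^{t}}\mu^{1-t}\sqrt{E(w^1)/\pi}\,(1+o(1))$. Making the substitution $r = \lambda_\alpha^{t}$ and using $-\sqrt{\alpha-1}\log\lambda_\alpha \to \log\nu < \infty$, one obtains
\begin{equation*}
L_\alpha \;\longrightarrow\; L := \log\nu \cdot \sqrt{\tfrac{E(w^1)}{\pi}} \int_{t_1}^{t_2} \mu^{1-t}\, dt \;<\; \infty.
\end{equation*}
Here I first need to upgrade the pointwise asymptotics of Lemma \ref{ays lemma} to uniform asymptotics in $t \in [t_1, t_2]$; this follows by the contradiction scheme of Corollary \ref{coro 2}, exploiting the compactness of $[t_1, t_2]$.

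Next, since $|\tilde{\gamma}_\alpha'|_{\mathbb{R}^K} \equiv 1$ by arclength construction and $|\tilde{\gamma}_\alpha''|_{\mathbb{R}^K} = \|A_{\Gamma_\alpha}\|_{h_\alpha} \leq C$ by the last estimate of Lemma \ref{ays lemma}, the sequence $\{\tilde{\gamma}_\alpha\}$ is uniformly bounded in $C^{1,1}([0, L_\alpha]; \mathbb{R}^K)$. After extending each $\tilde{\gamma}_\alpha$ to a common interval (or simply passing to a diagonal extraction on $[0,L]$), the Arzela--Ascoli theorem produces a subsequence converging in $C^1$ to a curve $\tilde{\gamma}\colon [0,L]\to N$ with $|\tilde{\gamma}'|_{\mathbb{R}^K} = 1$ a.e. To identify $\tilde{\gamma}$ as a geodesic, I would reinterpret Lemma \ref{ays lemma} in arclength: since tangential component of $\tilde{\gamma}_\alpha''$ vanishes,
\begin{equation*}
- \tilde{\gamma}_\alpha''(s) \;=\; \frac{A_{\Gamma_\alpha}(\dot{\gamma}_\alpha,\dot{\gamma}_\alpha)}{|\dot{\gamma}_\alpha|^2} \;=\; \frac{A_N(\tilde{\gamma}_\alpha(s))(\vec{a},\vec{a})}{|\vec{a}|^2} + o(1) \;\longrightarrow\; A_N(\tilde{\gamma}(s))\bigl(\tilde{\gamma}'(s), \tilde{\gamma}'(s)\bigr),
\end{equation*}
where the identification $\tilde{\gamma}'(s) = \vec{a}/|\vec{a}|$ uses Proposition \ref{prop 2}. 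Passing to the limit yields the geodesic equation $\tilde{\gamma}'' + A_N(\tilde{\gamma})(\tilde{\gamma}', \tilde{\gamma}') = 0$ in the sense of distributions; standard ODE regularity then upgrades $\tilde{\gamma}$ to a smooth geodesic.

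The main obstacle I expect is precisely the uniformity of the $o(1)$ asymptotics over $t\in[t_1,t_2]$ in Lemma \ref{ays lemma}. Lemma \ref{ays lemma} was derived for a single sequence $t_\alpha \in [t_1, t_2]$, and lifting this to uniform $C^0$ control of the ratio $A_{\Gamma_\alpha}(\dot{\gamma}_\alpha, \dot{\gamma}_\alpha)/|\dot{\gamma}_\alpha|^2$ along the entire neck is essential to identify the ODE limit classically. Once this uniformity is in hand, passing through the convergence of the arclength parameter and identifying $\vec{a}/|\vec{a}|$ with $\tilde{\gamma}'(s)$ through Proposition \ref{prop 2} become routine, and the geodesic equation closes the argument.
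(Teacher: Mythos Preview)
Your proposal is correct and follows essentially the same route as the paper: reparametrize $\gamma_\alpha$ by arclength, invoke the uniform bound $\|A_{\Gamma_\alpha}\|_{h_\alpha}\leq C$ from Lemma~\ref{ays lemma} to obtain $C^{1,1}$ control and hence a $C^1$-convergent subsequence, and then identify the limit as a geodesic by passing to the limit in the second-fundamental-form asymptotics of Lemma~\ref{ays lemma}; the paper carries out the uniformity step you flag by precisely the contradiction scheme you describe. The only notable difference is that the paper works on a short subinterval $[0,s_1]$ (with $s_1$ chosen so that the corresponding $t$-parameters remain in a fixed compact subinterval of $(0,1)$), which makes the argument go through verbatim also when $\nu=\infty$ --- a case that is actually needed later in Lemma~\ref{sec index lem 1} --- whereas your restriction to $\nu\in(1,\infty)$ trades this generality for the convenience of a global interval $[0,L]$.
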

		\begin{proof}
			Let $s$ be the arc length parameter of $\gamma_\alpha(t)$ with $s(\lambda_\alpha^{t_\alpha}) = 0$  for $t_\alpha \in [t_1, t_2]$ and 
			\begin{equation*}
				y_\alpha = \gamma_\alpha(\lambda_\alpha^{t_\alpha}) = \frac{1}{2\pi}\int_0^{2\pi}u_\alpha(\lambda_\alpha^{t_\alpha},\theta)d\theta.
			\end{equation*}
			We know that the sequence $\{\gamma_\alpha(\lambda_\alpha^{t_\alpha})\} = \{y_\alpha\}$ is convergent and $\gamma_\alpha(s)$ satisfies equation
			\begin{equation}\label{eq: equation gamma alpha}
				\frac{d^2\gamma_\alpha}{ds^2} + A_{\Gamma_\alpha}(\gamma_\alpha)\left(\frac{d \gamma_\alpha}{ds},\frac{d \gamma_\alpha}{ds}\right) = 0
			\end{equation}
			for $\alpha > 1$. Then, by the uniformly boundedness of $\norm{A_{\Gamma_\alpha}}_{h_\alpha}(\lambda_\alpha^{t})$ obtained in Lemma \ref{ays lemma}  as $\alpha \searrow 1$, $\gamma_\alpha(s)$ converges locally  in $C^1([0,s_1], \R^K)$ to a vector valued function for some small $s_1 > 0$, denoted by $\gamma(s)$ which also parameterized by arc length. To show $\gamma$ is a geodesic, that is, to show  $\gamma(s)$ solves
			\begin{equation*}
				\frac{d^2 \gamma}{ds^2} + A(\gamma)\p{\frac{d \gamma}{ds}, \frac{d \gamma}{ds}} = 0,
			\end{equation*}
			it suffices to show 
			\begin{equation}\label{contradiction 2}
				A_{\Gamma_\alpha}(\gamma_\alpha)\left(\frac{d \gamma_\alpha}{ds},\frac{d \gamma_\alpha}{ds}\right) \longrightarrow A(\gamma)\p{\frac{d \gamma}{ds}, \frac{d \gamma}{ds}}
			\end{equation}
			strongly in $C^0([0,s_1], \R^K)$ for some small enough $s_1 > 0$. By contradiction, if not, then for any arbitrary small  $s_1$,  there always exists a subsequence of $\{u_\alpha\}$ still denoted by $\{u_\alpha\}$ and a sequence of $\lambda_\alpha^{t^\prime_\alpha}$ such that 
			\begin{equation}\label{eq:s prime 1}
				s_\alpha^{\prime} := s(\lambda_\alpha^{t^\prime_\alpha}) = \int_{\lambda_\alpha^{t_\alpha}}^{\lambda_\alpha^{t^\prime_\alpha}} \abs{\dot{\omega}_\alpha(r)}dr \rightarrow s^\prime \in (0,s_1)
			\end{equation}
			and 
			\begin{equation}\label{eq:s prime 2}
				\abs{ A_{\Gamma_\alpha}(\gamma_\alpha)\left(\frac{d \gamma_\alpha}{ds},\frac{d \gamma_\alpha}{ds}\right) - A(\gamma)\p{\frac{d \gamma}{ds}, \frac{d \gamma}{ds}}}_{s = s_\alpha^\prime} > \varepsilon_0 > 0,
			\end{equation}
			for some $\varepsilon_0  > 0$. Furthermore, we can choose small enough $s_1$  and small enough $\alpha - 1$ such that $t_\alpha^\prime \in [\frac{t_1}{2}, t_2]$. In fact, without loss of generality, we assume there exists an integer $T_\alpha$ such that
			\begin{equation*}
				\lambda_{\alpha}^{\frac{t_1}{2}} = 2^{T_\alpha}\lambda_\alpha^{t_\alpha}
			\end{equation*}
			where $T_\alpha \rightarrow \infty$ as $\alpha \searrow 1$. Utilizing Corollary \ref{coro 2}, when $\alpha - 1$ is small enough we have
			\begin{align*}
				\int_{\lambda_\alpha^{t_\alpha}}^{\lambda_\alpha^{\frac{t_1}{2}}}\abs{\dot{\gamma_\alpha}(r)} dr &= \sum_{k = 1}^{T_\alpha} \int_{2^{k - 1}\lambda_\alpha^{t_\alpha}}^{2^k\lambda_\alpha^{t_\alpha}}\abs{\dot{\gamma_\alpha}(r)} dr\\
                &\geq T_\alpha \sqrt{\alpha - 1}\p{\log 2 \sqrt{\frac{E((w^1)}{\pi}} + o(1)}\\
				&\geq C\left(t_\alpha - \frac{t_1}{2}\right) \log{\lambda_\alpha^{-\sqrt{ \alpha - 1 }}}\\
				&\geq C \frac{t_1}{2}\log \nu > 0.
			\end{align*}
			Thus, if we let $s_1 \leq C \frac{t_1}{2} \log{\nu}$, we can make $t_\alpha^\prime \in [\frac{t_1}{2}, t_2]$ when $\alpha - 1$ is small enough. Therefore, we can apply Proposition \ref{prop 2} and Lemma \ref{ays lemma} to yield that 
			\begin{equation}\label{E 48}
				\frac{d \gamma_\alpha}{ds}(s^\prime_\alpha) = \frac{\dot{\gamma}_\alpha(\lambda_\alpha^{t^\prime_{\alpha}})}{|\dot{\gamma}_\alpha(\lambda_\alpha^{t^\prime_{\alpha}})|} \longrightarrow \frac{d \gamma }{d s}(s^\prime)\quad \text{as } \alpha \searrow 1
			\end{equation}
			which furthermore implies that
			\begin{align*}
				\left. A_{\Gamma_\alpha}(\gamma_\alpha)\left(\frac{d \gamma_\alpha}{ds},\frac{d \gamma_\alpha}{ds}\right)\right|_{s = s^\prime_\alpha} &= \left.\frac{1}{\abs{\dot{\gamma}_\alpha\p{\lambda_\alpha^{t^\prime_{\alpha}}}}^2}A_{\Gamma_\alpha}(\gamma_\alpha)\left( \dot{\gamma}_\alpha(r), \dot{\gamma}_\alpha(r)\right)\right|_{r = \lambda_\alpha^{t_\alpha^\prime}}\\
				&\quad \longrightarrow \left.A(\gamma)\p{\frac{d \gamma}{ds}, \frac{d \gamma}{ds}}\right|_{s = s^\prime}\quad \text{as  } \alpha \searrow 1.
			\end{align*}
			This contradicts to the choice of $s_\alpha^\prime$ asserted in \eqref{eq:s prime 1} and \eqref{eq:s prime 2}, hence \eqref{contradiction 2} holds.  Therefore, from the equation \eqref{eq: equation gamma alpha} of $\gamma_\alpha$ and the convergence properties induced from Lemma \ref{ays lemma}:
			\begin{equation*}
				\frac{d \gamma_\alpha}{ds}(r) = \frac{\dot{\gamma}_\alpha(r)}{|\dot{\gamma}_\alpha(r)|} \longrightarrow \frac{d \gamma }{d s}(s)\quad \text{as } \alpha \searrow 1
			\end{equation*}
			and \eqref{contradiction 2} we will get
			\begin{equation*}
				\frac{d \gamma}{ds}(s) - \frac{d \gamma }{ds}(0) = - \int_{0}^s A(\gamma)\p{\frac{d \gamma}{ds}, \frac{d \gamma}{ds}} ds \quad \text{for all } s\in [0,s_1]
			\end{equation*}
			which is the integral formation of geodesic equation. Thus, $\gamma(s)$ is a geodesic which completes  the proof of Lemma \ref{converge to geodesic}.
		\end{proof}
		Now, we are in a position to prove the remaining cases of main Theorem \ref{simple neck analysis}
		\begin{proof}[\textbf{Proof of the Theorem \ref{simple neck analysis} for \bm{$\nu > 1$}}]
			Without loss of generality, we assume 
			\begin{equation*}
				k_\alpha = \frac{t_1 - t_2}{\log 2}\log \lambda_\alpha
			\end{equation*}
			is an integer, that is equivalent to $\lambda_\alpha^{t_1} = 2^{k_\alpha}\lambda_\alpha^{t_2}$,   which tends to infinity as $\alpha \searrow 1$.
			
			\noindent\textbf{Case 1:} \noindent We first consider the case $\nu = \infty$. 
            
            For any $0 < t_1 < t_2 < 1$, by Corollary \ref{coro 2} there holds
			\begin{equation*}
				L\left(\Gamma_\alpha|_{A(2^k\lambda_\alpha^{t_2}, 2^{k + 1}\lambda_\alpha^{t_2}, x_\alpha)}\right) := \int_{2^k\lambda_\alpha^{t_2}}^{2^{k + 1}\lambda_\alpha^{t_2}} |\dot{\gamma}_\alpha (r)| dr \geq \sqrt{\alpha - 1}\p{\log 2\sqrt{\frac{E(w^1)}{\pi}} + o(1) }.
			\end{equation*}
			Then, we can estimate
			\begin{equation*}
				L(\Gamma_\alpha) \geq C k_\alpha \sqrt{\alpha - 1}\p{\log 2\sqrt{\frac{E(w^1)}{\pi}} + o(1) }\geq C \log{\lambda_\alpha^{- \sqrt{\alpha - 1}}} \rightarrow \infty\quad \text{as } \alpha \searrow 1.
			\end{equation*}
			which means in this case the length $L(\Gamma)$ of $\gamma(s)$ is infinite.
			
			\noindent\textbf{Case 2:}  Now, we consider the case $1 < \nu < \infty$.
            
            Note that $1 < \nu < \infty$ implies $\mu = 1$ which is equivalent to the energy identity shown in Theorem \ref{generalized ide}, that is 
			\begin{equation*}
				\lim_{\delta\searrow 0} \lim_{R\rightarrow \infty} \lim_{\alpha \searrow 1} \int_{A(\lambda_\alpha R, \delta, x_\alpha)} \abs{\nabla u_\alpha}^2 dx = 0.
			\end{equation*}
			Then, we can use same estimates as the proof for the case $\nu = 1$, just replacing $\delta$ by $\lambda_\alpha^t$ for any $0 < t_1 \leq t\leq t_2 < 1$, to obtain
			\begin{align}\label{theorem 4.6 osc 1}
				Osc_{A(\lambda_\alpha R,\lambda_\alpha^t,x_\alpha)}(u_\alpha) &\leq C\p{\int_{A(\frac{\lambda_\alpha R}{2}, 2 \delta, x_\alpha)}\abs{\nabla u_\alpha}^2}^{\frac{1}{2}} \nonumber\\
                &+ C\sqrt{\alpha - 1}\p{t - 1}\log{\lambda_\alpha} - C\sqrt{\alpha - 1}\log R\nonumber\\
				&\quad\quad \longrightarrow 0 \quad \text{letting }\alpha\searrow 1\quad \text{then }R \rightarrow \infty, \, \delta\searrow 0 \text{ and }t \rightarrow 1.
			\end{align}
			And similarly,  replacing $\lambda_\alpha R$ by $\lambda_\alpha^t$ for any $0 < t_1 \leq t\leq t_2 < 1$, we obtain
			\begin{align}\label{theorem 4.6 osc 2}
				Osc_{A(\lambda_\alpha^t,\delta,x_\alpha)}(u_\alpha) &\leq C\p{\int_{A(\frac{\lambda_\alpha R}{2}, 2 \delta, x_\alpha)}\abs{\nabla u_\alpha}^2}^{\frac{1}{2}} + C\sqrt{\alpha - 1}\p{\log{\delta} - t\log{\lambda_\alpha} }\nonumber\\
				&\quad\quad \longrightarrow 0 \quad \text{letting }\alpha\searrow 1\quad \text{then }R\rightarrow \infty,\, \delta\searrow 0 \text{ and }t \rightarrow 0.
			\end{align}
			Also, by Corollary \ref{coro 2},  we have
			\begin{equation*}
				L\left(\Gamma_\alpha|_{A(2^k\lambda_\alpha^{t_2}, 2^{k + 1}\lambda_\alpha^{t_2}, x_\alpha)}\right)  = \sqrt{\alpha - 1}\p{\log 2\sqrt{\frac{E(w)}{\pi}} + o(1) },
			\end{equation*}
			which implies 
			\begin{equation*}
				L(\Gamma) = \lim_{\alpha\searrow 1} k_\alpha\sqrt{\alpha - 1}\p{\log 2\sqrt{\frac{E(w)}{\pi}} + o(1) } = (t_2 - t_1)\log{\nu}\sqrt{\frac{E(w)}{\pi}}.
			\end{equation*}
			Now, letting $t_1 \rightarrow 0$ and $t_2 \rightarrow 1$ and keeping in mind that \eqref{theorem 4.6 osc 1}, \eqref{theorem 4.6 osc 2} and Lemma \ref{converge to geodesic}, we know that the neck converges to  a geodesic of length
			\begin{equation*}
				L = \log{\nu}\sqrt{\frac{E(w)}{\pi}}
			\end{equation*}
			which complete the proof of Theorem \ref{simple neck analysis} for the case $\nu > 1$. 
		\end{proof}

		\subsection{Energy Identity for \texorpdfstring{$\alpha$-$H$}{Lg}-surfaces with Bounded Morse Index}\label{subsec morse}
        \ 
        \vskip5pt
		In this subsection, we prove another main consequence --- Theorem \ref{thm convergence}. Before giving the detailed proof, some lemmas are needed.
  
        Let $s$ be the arc length parameter of $\gamma_\alpha(r)$ such that $s(\lambda_\alpha^{t}) = 0$ for some fixed  $0 < t < 1$.  Then as a corollary of the proof of  Lemma \ref{converge to geodesic} and Theorem \ref{analysis on neck}, we have the following result
		\begin{lemma} \label{sec index lem 1}
			Let $\{u_\alpha\}_{\alpha \searrow 1}$ be a sequence satisfying hypothesis of Theorem \ref{simple neck analysis}. Assume that the limiting neck of $\{u_\alpha\}$ is a geodesic of infinite length. Then, for any given $l > 0$ and $\theta \in [0,2\pi]$, $u_\alpha(s,\theta) = u_\alpha(x_\alpha + s(\cos{\theta}, \sin{\theta}))$ converges to $\gamma$ in $C^1([0,l])$. Furthermore, we have 
			\begin{equation}\label{E 49}
				\norm{\frac{r(s)}{\sqrt{\alpha - 1}} \abs{\frac{\partial s}{\partial r}} - \mu^{1 - t} \sqrt{\frac{E(w^1)}{\pi}} }_{C^0([0,l])} \longrightarrow 0\quad \text{as }\, \alpha \searrow 1.
			\end{equation}
			where $r(s)$ is the inverse of the arc length parameter $s(r)$ with $s(\lambda_\alpha^t) = 0$.
		\end{lemma}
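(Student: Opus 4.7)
The plan is to transfer the $C^1$ convergence of the averaged curve $\gamma_\alpha$ established in Lemma \ref{converge to geodesic} to each individual radial slice $r\mapsto u_\alpha(x_\alpha+r(\cos\theta,\sin\theta))$ after arc-length reparameterisation. The necessary input is that, on scales $r\sim\lambda_\alpha^{t'}$ with $t'\in[t_1,t_2]\subset(0,1)$, the angular part of $u_\alpha$ is of lower order than the radial part: both are of size $\sqrt{\alpha-1}$, but $(1/r)\partial_\theta u_\alpha$ vanishes while $r\partial_r u_\alpha$ tends to a non-trivial limit proportional to the vector $\Vec{a}$ produced by Proposition \ref{prop 2}.

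First I would identify the range of Euclidean radii corresponding to the arc-length interval $[0,l]$. Lemma \ref{ays lemma} yields $|\dot\gamma_\alpha(r)|=\sqrt{\alpha-1}\,(|\Vec{a}|+o(1))/r$, so integrating gives $r(s)=\lambda_\alpha^{t}\exp\bigl(s/(\sqrt{\alpha-1}\,|\Vec{a}|)\bigr)(1+o(1))$. Consequently the scale exponent $t(r(s)):=\log r(s)/\log\lambda_\alpha$ satisfies $t(r(s))=t-s/\bigl(|\Vec{a}|\sqrt{\alpha-1}\,|\log\lambda_\alpha|\bigr)+o(1)$. Because the limiting neck has infinite length we have $\nu=\infty$, hence $\sqrt{\alpha-1}\,|\log\lambda_\alpha|\to\infty$ and $t(r(s))\to t$ uniformly for $s\in[0,l]$. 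In particular $r(s)$ stays inside a shrinking shell $[\lambda_\alpha^{t+\varepsilon(\alpha)},\lambda_\alpha^{t-\varepsilon(\alpha)}]$ with $\varepsilon(\alpha)\to 0$, so all asymptotic quantities evaluated at $t(r(s))$ can be replaced by their values at the fixed $t$ with a uniform error.

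Next I would combine two ingredients. Proposition \ref{prop 2}, applied at the shifted scales $t(r(s))$, provides the $C^2$ limit $(1/\sqrt{\alpha-1})\bigl(u_\alpha(x_\alpha+\lambda_\alpha^{t'}x)-u_\alpha(x_\alpha+(\lambda_\alpha^{t'},0))\bigr)\to\Vec{a}\log|x|$, whose right-hand side is $\theta$-independent when $|x|=1$. This gives $(u_\alpha(r,\theta)-\gamma_\alpha(r))/\sqrt{\alpha-1}\to 0$ uniformly in $\theta$, and part (2) of Corollary \ref{coro 2} upgrades the control to the derivative $\partial_r(u_\alpha(r,\theta)-\gamma_\alpha(r))$ after rescaling by $1/r$. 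Because Corollary \ref{coro 2} is already uniform in $t'\in[t_1,t_2]$, together with the preceding step these estimates hold uniformly for $r\in r([0,l])$. Since Lemma \ref{converge to geodesic} delivers the $C^1([0,l])$ convergence of the arc-length reparameterisation of $\gamma_\alpha$ to $\gamma$, the radial slice $r\mapsto u_\alpha(x_\alpha+r(\cos\theta,\sin\theta))$ inherits the same convergence.

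Finally, the quantitative estimate \eqref{E 49} follows from Lemma \ref{ays lemma}, which gives $r|\dot\gamma_\alpha(r)|/\sqrt{\alpha-1}\to|\Vec{a}(r)|=\mu^{1-t(r)}\sqrt{E(w^1)/\pi}$; after replacing $t(r(s))$ by $t$ via the first step, the convergence is uniform in $s\in[0,l]$. The main technical obstacle I expect is precisely this uniformity over a shell of radii which itself depends on $\alpha$ rather than being a fixed annulus. Handling it requires extracting the uniform-in-$t$ versions already built into Corollary \ref{coro 2} and Corollary \ref{theta sup}, plausibly through a contradiction argument in the spirit of the one used to deduce Corollary \ref{theta sup} from Proposition \ref{theta zero}.
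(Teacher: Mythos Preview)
Your proposal is correct and rests on the same key observation as the paper: since $\nu=\infty$, the scale exponent $t(r(s))$ collapses to the fixed value $t$ uniformly for $s\in[0,l]$, which puts the entire arc-length interval inside the regime where Proposition~\ref{prop 2} and Corollary~\ref{coro 2} apply.

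The organisation differs slightly. You decompose $u_\alpha(s,\theta)-\gamma(s)$ as $(u_\alpha-\gamma_\alpha)+(\gamma_\alpha-\gamma)$, control the first piece via Proposition~\ref{prop 2} and Corollary~\ref{coro 2}, and invoke Lemma~\ref{converge to geodesic} for the second. The paper instead argues by contradiction directly from Corollary~\ref{coro 2}: it first shows $t_\alpha^{\iota}\to t$ (where $s(\lambda_\alpha^{t_\alpha^{\iota}})=l$) by integrating the lower bound $|\dot\gamma_\alpha|\geq C\sqrt{\alpha-1}/r$, then supposes a bad sequence $s_\alpha\in[0,l]$ exists, writes $s_\alpha=s(\lambda_\alpha^{\tilde t_\alpha})$ so that $\tilde t_\alpha\to t$, and obtains a contradiction by applying Corollary~\ref{coro 2} at that scale. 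It never routes through Lemma~\ref{converge to geodesic}. For~\eqref{E 49} the paper appeals to part~\ref{coro 2 item 1} of Corollary~\ref{coro 2} rather than Lemma~\ref{ays lemma}, but these are interchangeable here. Your approach is more modular; the paper's is more self-contained. The anticipated obstacle you flag---uniformity over an $\alpha$-dependent shell---is exactly what the paper handles by the contradiction device, and your suggestion to resolve it by a contradiction argument in the style of Corollary~\ref{theta sup} is precisely what the paper does.
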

		\begin{proof}
			Since the limiting neck of $u_\alpha$ converges to a geodesic of infinite length, we can choose a  real number $\iota \in \R$ such that 
			\begin{equation*}
				s\p{\lambda_\alpha^{t^\iota_\alpha}} = l
			\end{equation*}
			By Corollary \ref{coro 2}, we can estimate
			\begin{align}\label{eq: bound T alpha}
				l = \int_{\lambda_\alpha^{t^\iota_\alpha}}^{\lambda_\alpha^t} \abs{\frac{d \gamma_\alpha(r)}{dr}} dr &= \int_{\lambda_\alpha^{t^\iota_\alpha}}^{\lambda_\alpha^t} \abs{\frac{1}{2\pi}\int_{0}^{2\pi} \frac{d u_\alpha (r,\theta) }{dr} d\theta }dr\nonumber\\
				&\geq C \int_{\lambda_\alpha^{t^\iota_\alpha}}^{\lambda_\alpha^t} \frac{\sqrt{\alpha - 1}}{r} dr = C(t^\iota_\alpha - t)\log \p{\lambda_\alpha^{-\sqrt{\alpha - 1}}} 
			\end{align}
			But, notice that 
			\begin{equation*}
				\log \p{\lambda_\alpha^{-\sqrt{\alpha - 1}}}   \rightarrow \infty \quad \text{as }\, \alpha \searrow 1,
			\end{equation*}
			there must holds $t_\alpha^\iota \rightarrow t$ as $\alpha \searrow 1$. We prove the Lemma \ref{sec index lem 1} by contradiction, suppose that $u_\alpha(s, \theta)$ does not converge to $\gamma$ in $C^1([0,l])$, then, after choosing a subsequence, there exists $\varepsilon_0 > 0$ and a sequence $\{s_\alpha \}_{\alpha \searrow 1} \subset [0,l]$ such that 
			\begin{equation}\label{eq:contra 4.5.1}
				\sup_{\theta \in [0,2\pi]} \abs{ \frac{\partial u_\alpha}{\partial s}(s_\alpha, \theta) - \frac{d \gamma_\alpha(s_\alpha)}{ds} } > \varepsilon_0
			\end{equation}
			Write $s(\lambda_\alpha^{\tilde{t}_\alpha}) = s_\alpha$, then $\tilde{t}_\alpha \in [t, t_\alpha^\iota]$ which implies $\Tilde{t}_\alpha \rightarrow t$. Thus, applying Corollary \ref{coro 2} yields 
			\begin{equation*}
				\frac{\lambda_\alpha^{{\tilde{t}_\alpha}}}{\sqrt{\alpha - 1}}\abs{\frac{\partial u_\alpha}{\partial r}\p{\lambda_\alpha^{{\tilde{t}_\alpha}}, \theta} - \frac{d \gamma_\alpha}{dr}\p{\lambda_\alpha^{{\tilde{t}_\alpha}}}} \longrightarrow 0 \quad \text{as }\, \alpha \searrow 1.
			\end{equation*}
			Note that by Corollary \ref{coro 2}
			\begin{equation*}
				\abs{\frac{d s}{dr}\p{\lambda_\alpha^{{\tilde{t}_\alpha}}}} = \abs{\frac{d \gamma_\alpha}{dr}\p{\lambda_\alpha^{{\tilde{t}_\alpha}}}} \geq \frac{C \lambda_\alpha^{{\tilde{t}_\alpha}}}{\sqrt{ \alpha - 1 }},
			\end{equation*}
			which implies
			\begin{align*}
				\abs{ \frac{\partial u_\alpha}{\partial s}(s_\alpha, \theta) - \frac{d \gamma_\alpha(s_\alpha)}{ds} } &= \abs{\frac{d r}{d s}}\cdot \abs{ \frac{\partial u_\alpha}{\partial r}(s_\alpha, \theta) - \frac{d \gamma_\alpha(s_\alpha)}{dr} }\\
				&\leq \frac{\lambda_\alpha^{{\tilde{t}_\alpha}}}{\sqrt{\alpha - 1}}\abs{\frac{\partial u_\alpha}{\partial r}(\lambda_\alpha^{{\tilde{t}_\alpha}}, \theta) - \frac{d \gamma_\alpha}{dr}(\lambda_\alpha^{{\tilde{t}_\alpha}})} \longrightarrow 0 \quad \text{as }\, \alpha \searrow 1.
			\end{align*}
			This is a contradiction to \eqref{eq:contra 4.5.1}, thus we obtain the converges of first derivatives 
			\begin{equation*}
				\norm{\frac{\partial u_\alpha}{\partial s}(s_\alpha, \theta) - \frac{d \gamma_\alpha(s_\alpha)}{ds}}_{C^0([0,l])}
			\end{equation*}
			The $C^0$ converges of $\gamma_\alpha$ can be obtained by a similar argument using Proposition \ref{prop 2}. Next, the convergence \eqref{E 49} is the direct result of  radical part of Corollary \ref{coro 2}.
		\end{proof}
		Let us recall the definition of stability of a geodesic in Riemannian manifold.
		\begin{defi}
			A geodesic $\gamma(s) : [0,l]\rightarrow N$ is \textit{unstable} if its index form is not  non-negative definite, that is, there exists $V_0 \in \mathscr{V}_\gamma$ such that
			\begin{equation*}
				I_{\gamma}(V_0,V_0) = \int_0^l \langle \nabla_{\gamma^\prime} V_0 ,\nabla_{\gamma^\prime} V_0 \rangle - R(V_0, \gamma^\prime, V_0, \gamma^\prime) ds < 0 
			\end{equation*}
			where $R$ is the Riemann curvature tensor on $N$ and $\mathscr{V}_\gamma$ is the vector space formed by vector fields $V$ along $\gamma$  which are piecewise differentiable and vanish at the end points of $\gamma$, that is, $V(0) = V(l) = 0$.
		\end{defi}
		The following Lemma \ref{sec index lem 1} is of vital importance in the proof of Theorem \ref{thm convergence}
		\begin{lemma}\label{sec index lem 2}
			Let $\{u_\alpha\}_{\alpha \searrow 1}$ be a  sequence satisfying hypothesis of Theorem \ref{thm convergence}. If the necks of $\{u_\alpha\}$ converges to an unstable geodesic $\gamma(s) : [0,l]\rightarrow N$ parameterized by arc length, then for  small enough $\alpha - 1$, there exists a vector field $V_\alpha$ along $u_\alpha$ on $N$, that is, $V_\alpha \in u_\alpha^*(TN)$, which vanishes outside of $A(\lambda_\alpha^{t_\alpha^\iota}, \lambda_\alpha^t,x_\alpha)$, such that the second variation of $E^\omega_\alpha$ acting on $V_\alpha$ is strictly negative, i.e.
			\begin{equation*}
				\delta^2 E^\omega_\alpha(V_\alpha, V_\alpha) < 0.
			\end{equation*}
		\end{lemma}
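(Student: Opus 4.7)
Here is my plan.

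\textbf{Paragraph 1 — Construction of the variation.} Starting from an unstable direction $V_0\in\mathscr{V}_\gamma$ with $I_\gamma(V_0,V_0)<0$ (smoothed so that it is $C^\infty$ on $[0,l]$ and still vanishes at the endpoints), I would lift $V_0$ to a vector field on the neck $A(\lambda_\alpha^{t_\alpha^\iota},\lambda_\alpha^{t},x_\alpha)$ via the arc-length parameterization $s=s_\alpha(r)$ of $\gamma_\alpha$. Concretely, using the $C^1$-convergence $u_\alpha(s,\theta)\to\gamma(s)$ on $[0,l]$ from Lemma \ref{sec index lem 1} (and the analogous convergence in the finite-length case, which can be extracted from the proof of Theorem \ref{simple neck analysis}), I would set
\begin{equation*}
V_\alpha(r,\theta) \;:=\; \Pi_{u_\alpha(r,\theta)}\bigl(V_0(s_\alpha(r))\bigr),
\end{equation*}
where $\Pi_p:\R^K\to T_pN$ is the nearest-point projection onto $N$; this is smooth near $\gamma([0,l])$ once $\alpha-1$ is small. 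The resulting $V_\alpha\in u_\alpha^*(TN)$ is $\theta$-independent to leading order, agrees with $V_0\circ s_\alpha$ up to projection errors $O(\|u_\alpha-\gamma\|_{C^1})$, and extends by zero outside the annulus because $V_0(0)=V_0(l)=0$.

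\textbf{Paragraph 2 — Dominant term.} I would then expand $\delta^2 E^\omega_\alpha(u_\alpha)(V_\alpha,V_\alpha)$ into the four pieces $T_1,\dots,T_4$ from Lemma \ref{variation formula} and concentrate on the leading term $T_1=\alpha\int(1+|\nabla u_\alpha|^2)^{\alpha-1}\bigl(\langle\nabla V_\alpha,\nabla V_\alpha\rangle - R(V_\alpha,\nabla u_\alpha,V_\alpha,\nabla u_\alpha)\bigr)\,dV_g$. Switching to coordinates $(s,\theta)$ and invoking \eqref{E 49}, which yields $r\,|\partial_r s|/\sqrt{\alpha-1}\to c_0:=\mu^{1-t}\sqrt{E(w^1)/\pi}$ uniformly on $[0,l]$, the angular integration produces a factor $2\pi$ (since $V_\alpha$ is $\theta$-independent up to negligible error) and collapses $T_1$ onto the one-dimensional index form. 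The key point is that the weight $(1+|\nabla u_\alpha|^2)^{\alpha-1}$ is uniformly close to a positive constant on the neck: in the finite-length case $\nu<\infty$, the identity $\log\mu = -2\lim\sqrt{\alpha-1}\,(\sqrt{\alpha-1}\log\lambda_\alpha^{-1})=-2\cdot 0\cdot\log\nu=0$ forces $\mu=1$, so the weight converges to $1$; in the infinite-length case the correspondence $t_\alpha(s)-t=s/(c_0\log\nu_\alpha)$ with $\nu_\alpha\to\infty$ forces $t_\alpha(s)\to t$ uniformly on $[0,l]$, so the weight tends to $\mu^t$ uniformly. Either way,
\begin{equation*}
T_1 \;=\; 2\pi\,c_0\,\mu^{t}\,\sqrt{\alpha-1}\,I_\gamma(V_0,V_0) \;+\; o(\sqrt{\alpha-1}),
\end{equation*}
which is strictly negative.

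\textbf{Paragraph 3 — Remainder terms and main obstacle.} To conclude I must show that $T_2,T_3,T_4$ are $o(\sqrt{\alpha-1})$. The term $T_2$ carries an explicit prefactor $(\alpha-1)$ and, after the same change of variables and using Lemma \ref{bounded F}, is $O(\alpha-1)$. Terms $T_3$ and $T_4$ (the $H$-contributions) vanish identically when $u_\alpha$ and $V_\alpha$ are purely radial: the antisymmetry $H^k_{ij}=-H^k_{ji}$ of \eqref{eq:H anti-symmetric} gives $H(X,X)\equiv 0$, and for radial $u=u(r)$, $V=V(r)$ one has $u_{x^1}=(\cos\theta)\partial_r u$, $u_{x^2}=(\sin\theta)\partial_r u$ (and likewise for $V$), which are proportional and hence annihilate $H(\nablap u,\nabla u)$, $H(\nablap u,\nabla V)$ and $(\nabla_V H)(\nablap u,\nabla u)$ exactly as in the computation following \eqref{eq:another pohoz eq4}. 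The residual contributions come from the angular components of $\partial u_\alpha$ and $\partial V_\alpha$, which are controlled by Corollary \ref{coro 2} and Proposition \ref{theta zero}. The main technical obstacle is making all of these leading-order identifications truly uniform on the logarithmically large neck: one must ensure that the pointwise errors $\|u_\alpha-\gamma\|_{C^1}$, $|\partial_\theta u_\alpha|/\sqrt{\alpha-1}$, and the deviation of $(1+|\nabla u_\alpha|^2)^{\alpha-1}$ from its uniform limit, when multiplied by the potentially singular factor $(\partial_r s)^2\sim(\alpha-1)/r^2$ and integrated in $r$ from $\lambda_\alpha^{t_\alpha^\iota}$ to $\lambda_\alpha^{t}$, still produce remainders of genuine size $o(\sqrt{\alpha-1})$. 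Once this uniformity is extracted (by sharpening the quantitative aspects of Lemma \ref{sec index lem 1} and Corollary \ref{coro 2}), the assembly $\delta^2 E^\omega_\alpha(V_\alpha,V_\alpha)=2\pi c_0\mu^t\sqrt{\alpha-1}\,I_\gamma(V_0,V_0)+o(\sqrt{\alpha-1})<0$ for $\alpha-1$ sufficiently small completes the proof.
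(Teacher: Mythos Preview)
Your proposal is correct and follows the same route as the paper: the projected lift $V_\alpha=\mathcal{P}_{u_\alpha}(V_0\circ s_\alpha)$, the arc-length change of variables with Jacobian from \eqref{E 49}, the same splitting of $\delta^2 E^\omega_\alpha$ into a dominant piece $\sim\sqrt{\alpha-1}\,I_\gamma(V_0,V_0)$ plus remainders killed by the angular-energy decay (Proposition~\ref{theta zero}/Corollary~\ref{theta sup}), and your antisymmetry observation for $T_3,T_4$ is exactly what the paper's polar-coordinate computation of $I_4$ unwinds. The only unneeded digression is your case split on $\nu$: the paper states and applies the lemma solely under the contradiction hypothesis $\nu=\infty$ (Lemma~\ref{sec index lem 1} already assumes this), where your weight identification $(1+|\nabla u_\alpha|^2)^{\alpha-1}\to\mu^t$ is precisely the one used.
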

		\begin{proof}
			By the assumption of Lemma \ref{sec index lem 1}, $\gamma(s) : [0,l]\rightarrow N$ is an unstable geodesic, then there exists a vector field $V_0 \in \mathscr{V}_\gamma$  such that 
			\begin{equation*}
				I_{\gamma}(V_0, V_0) < 0.
			\end{equation*}
			Recall that $\mathcal{P}$ be the projection from $T\R^K$ onto $TN$, more precisely for $y\in N$ $\mathcal{P}_y$ is the orthogonal projection from $T_y\R^K = \R^K$ onto $T_yN \subset T_y\R^K$ . Then we define $V_\alpha$ as
			\begin{equation*}
				V_\alpha(u_\alpha(s,\theta)) = V_\alpha\Big( u\big(x_\alpha + r(s)(\cos{\theta},\sin{\theta})\big)\Big) := \mathcal{P}_{u_\alpha(s,\theta)}\big(V_0(s)\big),
			\end{equation*}
			where $r(s)$ is the inverse function of arc parameter of $\gamma_\alpha(r)$ with $s(\lambda_\alpha^{t}) = 0$. Here, we put $V_0(s)$ as a vector in $\R^K$ which is identified with $T_{u_\alpha(s,\theta)}\R^K$, thus the expression $\mathcal{P}_{u_\alpha(s,\theta)}\big(V_0(s)\big)$ is well defined. Then, $V_\alpha$ is a piecewise smooth vector field along $u_\alpha$ which also vanishes outside of $A( \lambda_\alpha^{t_\alpha^\iota}, \lambda_\alpha^t, x_\alpha)$. From Lemma \ref{sec index lem 1}, for fixed $\theta \in [0,2\pi]$ $V_\alpha(u_\alpha(s,\theta))$ converges to $V_0(\gamma(s))$ in $C^1([0,l])$.  Next, we will compute $\delta^2 E_\alpha^\omega(V_\alpha,V_\alpha)$ and judge its negativity by showing that
			\begin{equation*}
				\lim_{\alpha \searrow 1} \frac{1}{\sqrt{\alpha - 1}} \delta^2 E_\alpha^\omega(V_\alpha,V_\alpha) = 4 \pi \mu \sqrt{\frac{E(w^1)}{\pi}} I_\gamma(V_0, V_0).
			\end{equation*}
			The right hand of identity is negative by our assumption which implies the conclusion of Lemma \ref{sec index lem 2}. 
			
			To this end, we first split the computation into some different parts using second variation formula obtained in Lemma \ref{variation formula} and the conformal coordinates of $M$
			\begin{align*}
				&\delta^2 E^\omega_\alpha(u_\alpha)(V_\alpha,V_\alpha)\\
				&= \alpha \int_{A(\lambda_\alpha^{t_\alpha^\iota}, \lambda_\alpha^t,x_\alpha)} \p{\tau_\alpha + \abs{\nabla_{g_\alpha} u_\alpha}^2}^{\alpha - 1} \Big( \inner{ \nabla V_\alpha, \nabla V_\alpha} - R\p{V_\alpha, \nabla u_\alpha, V_\alpha, \nabla u_\alpha} \Big) d x\\
				& \quad + 2\alpha(\alpha - 1)\int_{A(\lambda_\alpha^{t_\alpha^\iota}, \lambda_\alpha^t,x_\alpha)} \p{\tau_\alpha + \abs{\nabla_{g_\alpha} u_\alpha}^2}^{\alpha - 1}\langle \nabla u_\alpha, \nabla V_\alpha\rangle^2 dx\\
				&\quad + 2 \int_{A(\lambda_\alpha^{t_\alpha^\iota}, \lambda_\alpha^t,x_\alpha)} \inner{ H(\nablap u_\alpha, \nabla V_\alpha), V_\alpha} dx \\
                & \quad + \int_{A(\lambda_\alpha^{t_\alpha^\iota}, \lambda_\alpha^t,x_\alpha)} \inner{ (\nabla_{V_\alpha}H)(\nablap u_\alpha, \nabla u_\alpha), V_\alpha} dx \\
				& = \alpha \int_{A(\lambda_\alpha^{t_\alpha^\iota}, \lambda_\alpha^t,x_\alpha)} \p{\tau_\alpha + \abs{\nabla_{g_\alpha} u_\alpha}^2}^{\alpha - 1} \Big( \inner{ \nabla_{\frac{\partial u_\alpha}{\partial r}} V_\alpha, \nabla_{\frac{\partial u_\alpha}{\partial r}} V_\alpha }\\
                &\hspace{200pt}-R\p{V_\alpha, \nabla_{\frac{\partial u_\alpha}{\partial r}} u_\alpha, V_\alpha, \nabla_{\frac{\partial u_\alpha}{\partial r}} u_\alpha} \Big) d x\\
				& \quad + \alpha \int_{A(\lambda_\alpha^{t_\alpha^\iota}, \lambda_\alpha^t,x_\alpha)} \p{\tau_\alpha + \abs{\nabla_{g_\alpha} u_\alpha}^2}^{\alpha - 1} \frac{1}{r^2} \Big( \inner{ \nabla_{\frac{\partial u_\alpha}{\partial \theta}} V_\alpha, \nabla_{\frac{\partial u_\alpha}{\partial \theta}} V_\alpha } \\
                &\hspace{200pt}-R\p{V_\alpha, \nabla_{\frac{\partial u_\alpha}{\partial \theta}} u_\alpha, V_\alpha,\nabla_{\frac{\partial u_\alpha}{\partial \theta}} u_\alpha} \Big) dx\\
				& \quad+ 2\alpha(\alpha - 1)\int_{A(\lambda_\alpha^{t_\alpha^\iota}, \lambda_\alpha^t,x_\alpha)} \p{\tau_\alpha + \abs{\nabla_{g_\alpha} u_\alpha}^2}^{\alpha - 2}\inner{ \nabla u_\alpha, \nabla V_\alpha}^2 dx\\
				&\quad +  \int_{A(\lambda_\alpha^{t_\alpha^\iota}, \lambda_\alpha^t,x_\alpha)} 2\inner{ H(\nablap u_\alpha, \nabla V_\alpha), V_\alpha} +  \inner{ (\nabla_{V_\alpha}H)(\nablap u_\alpha, \nabla u_\alpha), V_\alpha} dx\\
				&:= I_1 + I_2 + I_3 + I_4
			\end{align*}
			where $I_i$ represents the $i$-th integral of above identity. First, we consider $I_1$ and observe that
			\begin{align*}
				\mu^{t} \longleftarrow \sup_{{A( \lambda_\alpha^{t_\alpha^\iota}, \lambda_\alpha^t, x_\alpha)}} \left(C \frac{1}{\lambda_\alpha^{2t_\alpha^\iota}}\right)^{\alpha - 1}&=\sup_{{A(\lambda_\alpha^{t_\alpha^\iota}, \lambda_\alpha^t,x_\alpha)}}\p{\tau_\alpha + \abs{\nabla_{g_\alpha} u_\alpha}^2}^{\alpha - 1}\\
				&\leq  \sup_{{A( \lambda_\alpha^{t_\alpha^\iota}, \lambda_\alpha^t, x_\alpha)}} \left(\tau_\alpha + C \frac{1}{\lambda_\alpha^{2t_\alpha^\iota}}\right)^{\alpha - 1} \longrightarrow \mu^t \quad \text{as }\, \alpha \searrow 1.
			\end{align*}
			since in Lemma \ref{sec index lem 1} we have concluded that $t_\alpha^\iota \rightarrow t$ as $\alpha\searrow 1$. Utilizing \eqref{E 49}, we can estimate $I_1$ as below
			\begin{align*}
				&\lim_{\alpha\searrow 1} \frac{I_1}{\sqrt{\alpha - 1}}\\
                &= \lim_{\alpha\searrow 1} \frac{2\alpha}{\sqrt{\alpha - 1}} \int_{A(\lambda_\alpha^{t_\alpha^\iota}, \lambda_\alpha^t,x_\alpha)} \p{\tau_\alpha + \abs{\nabla_{g_\alpha} u_\alpha}^2}^{\alpha - 1} \Big( \inner{ \nabla_{\frac{\partial u_\alpha}{\partial r}} V_\alpha, \nabla_{\frac{\partial u_\alpha}{\partial r}} V_\alpha } \\
				&\hspace{50mm} -R\p{V_\alpha, \nabla_{\frac{\partial u_\alpha}{\partial r}} u_\alpha, V_\alpha, \nabla_{\frac{\partial u_\alpha}{\partial r}} u_\alpha} \Big) d x\\
				&=\lim_{\alpha\searrow 1}\int_0^{2\pi}\int_0^l \Big( \inner{ \nabla_{\frac{\partial u_\alpha}{\partial r}} V_\alpha, \nabla_{\frac{\partial u_\alpha}{\partial r}} V_\alpha }\\
				& \hspace{25mm}  -R\p{V_\alpha,\nabla_{\frac{\partial u_\alpha}{\partial r}} u_\alpha, \nabla_{\frac{\partial u_\alpha}{\partial r}} u_\alpha, V_\alpha} \Big) \p{1 + \abs{\nabla_{g_\alpha} u_\alpha}^2}^{\alpha - 1}\frac{\abs{\frac{\partial s}{\partial r}} r(s)}{\sqrt{\alpha - 1}} ds d\theta\\
				& = \mu \sqrt{\frac{E(w^1)}{\pi}} \lim_{\alpha\searrow 1} \int_0^{2\pi} \int_0^l \Big( \inner{ \nabla_{\frac{\partial u_\alpha}{\partial r}} V_\alpha, \nabla_{\frac{\partial u_\alpha}{\partial r}} V_\alpha } -R\p{V_\alpha, \nabla_{\frac{\partial u_\alpha}{\partial r}} u_\alpha, V_\alpha, \nabla_{\frac{\partial u_\alpha}{\partial r}} u_\alpha} \Big) ds d\theta\\
				& = 2\pi \mu \sqrt{\frac{E(w^1)}{\pi}} I(V_0,V_0).
			\end{align*}
			Here, we used the fact that  $V_\alpha(u_\alpha(s,\theta))$ converges to $V_0(\gamma(s))$ in $C^1([0,l])$ and $u_\alpha(s,\theta)$  converges to $\gamma$ in $C^1([0,l])$ for fixed $\theta \in [0,2\pi]$, see Lemma \ref{sec index lem 1}. Before calculating $I_2$, we note that 
			\begin{align}\label{E 50}
				\nabla_{\frac{\partial u_\alpha}{\partial \theta}} V_\alpha &=  \mathcal{P}_{u_\alpha(s,\theta)} \left(\frac{\partial V_\alpha}{\partial \theta}\right) = \mathcal{P}_{u_\alpha(s,\theta)}\p{\frac{\partial}{\partial \theta}\p{\mathcal{P}_{u_\alpha(s,\theta)}(V_0)}}\nonumber\\
				&= \mathcal{P}_{u_\alpha(s,\theta)}\p{\frac{\partial}{\partial \theta}\p{\mathcal{P}_{u_\alpha(s,\theta)}}(V_0)}
			\end{align}
			where $\frac{\partial V_\alpha}{\partial \theta}$ is taken in $\R^K$. This implies 
			\begin{equation}\label{E 50 1}
				\abs{\nabla_{\frac{\partial u_\alpha}{\partial \theta}} V_\alpha} \leq C_l \abs{\frac{\partial u_\alpha}{\partial \theta}}
			\end{equation}
            for some constant $C_l$ depending on $l$.
			Given $R > 0$, we take 
			\begin{equation*}
				T_\alpha = \left[ \frac{\log {\lambda_\alpha^{|t - t_\alpha^\iota|}}}{\log R}\right] + 1
			\end{equation*}
			and by choice of $T_\alpha$, one can see that 
			\begin{equation*}
            A\p{\lambda_\alpha^{t_\alpha^\iota},\lambda_\alpha^t, x_\alpha} \subset \bigcup_{i = 1}^{T_\alpha} A\p{R^{i - 1}\lambda_\alpha^{t_\alpha^\iota}, R^i\lambda_\alpha^{t_\alpha^\iota}, x_\alpha}.
			\end{equation*}
			Then, we can compute $I_2$ 
			\begin{align*}
				\lim_{\alpha \searrow 1} \frac{I_2}{\sqrt{\alpha - 1}} &= \lim_{\alpha \searrow 1} \int_{A(\lambda_\alpha^{t_\alpha^\iota},\lambda_\alpha^t, x_\alpha)}  \p{\tau_\alpha + \abs{\nabla_{g_\alpha} u_\alpha}^2}^{\alpha - 1} \Big( \inner{ \nabla_{\frac{\partial u_\alpha}{\partial \theta}} V_\alpha, \nabla_{\frac{\partial u_\alpha}{\partial \theta}} V_\alpha }\\
				&\hspace{40mm}-R\p{V_\alpha, \nabla_{\frac{\partial u_\alpha}{\partial \theta}} u_\alpha, V_\alpha, \nabla_{\frac{\partial u_\alpha}{\partial \theta}} u_\alpha} \Big) \frac{dr}{r} d\theta\\
				&\leq \lim_{\alpha \searrow 1} \frac{C}{\sqrt{\alpha - 1}} \int_{A(\lambda_\alpha^{t_\alpha^\iota},\lambda_\alpha^t, x_\alpha)} \frac{1}{|x - x_\alpha|^2}\abs{\frac{\partial u_\alpha}{\partial \theta}}^2 dx\\
				& \leq \lim_{\alpha \searrow 1} \frac{C}{\sqrt{\alpha - 1}} \sum_{i = 1}^{T_\alpha} \int_{A(R^{i - 1}\lambda_\alpha^{t_\alpha^\iota}, R^i\lambda_\alpha^{t_\alpha^\iota}, x_\alpha)} \frac{1}{|x - x_\alpha|^2}\abs{\frac{\partial u_\alpha}{\partial \theta}}^2 dx\\
				&\leq \lim_{\alpha \searrow 1} C T_\alpha\sqrt{\alpha - 1}\sup_{\tau \in [t-\varepsilon, t+\varepsilon]}\frac{1}{\alpha - 1} \int_{A{\frac{1}{R} \lambda_\alpha^\tau, \lambda_\alpha R^\tau}}\frac{1}{|x - x_\alpha|^2}\abs{\frac{\partial u_\alpha}{\partial \theta}}^2 dx.
			\end{align*}
			By the choice of $T_\alpha$, we have 
			\begin{equation*}
				\lim_{\alpha \searrow 1} T_\alpha \sqrt{\alpha - 1} \leq C(R)\lim_{\alpha \searrow 1}\abs{t - t_\alpha^\iota} \log{\lambda_{\alpha}^{- \sqrt{\alpha - 1}}} + \sqrt{\alpha - 1}
			\end{equation*}
			which is bounded by \eqref{eq: bound T alpha}. Thus, by Corollary \ref{theta sup} we can conclude
			\begin{equation*}
				\lim_{\alpha \searrow 1} \frac{I_2}{\sqrt{\alpha - 1}} = 0.
			\end{equation*}
			For the third integral $I_3$, by Cauchy-Schwartz inequality we can straightforward estimate that
			\begin{align*}
				\frac{I_3}{\sqrt{\alpha - 1}} &= 2\alpha \sqrt{\alpha - 1} \int_{A(\lambda_\alpha^{t_\alpha^\iota},\lambda_\alpha^t, x_\alpha)} \p{\tau_\alpha + \abs{\nabla_{g_\alpha} u_\alpha}^2}^{\alpha - 2}\langle \nabla u_\alpha, \nabla V_\alpha\rangle^2 dx\\
				&\leq 2\alpha \sqrt{\alpha - 1} \int_{A(\lambda_\alpha^{t_\alpha^\iota},\lambda_\alpha^t, x_\alpha)} \p{\tau_\alpha + \abs{\nabla_{g_\alpha} u_\alpha}^2}^{\alpha - 2} \abs{\nabla u_\alpha}^2 \abs{\nabla V_\alpha}^2 dx \\
				&\leq C \sqrt{\alpha - 1} \int_{A(\lambda_\alpha^{t_\alpha^\iota},\lambda_\alpha^t, x_\alpha)} \p{\tau_\alpha + \abs{\nabla_{g_\alpha} u_\alpha}^2}^{\alpha}\\
				&\leq C\sqrt{\alpha - 1} \longrightarrow 0 \quad \text{as }\, \alpha \searrow 1.
			\end{align*}
			Here, similar to derivation of \eqref{E 50} and \eqref{E 50 1},  we used the estimate
			\begin{equation}\label{E 51}
				\abs{\nabla V_\alpha} \leq C_l \abs{\nabla u_\alpha}.
			\end{equation}
		  At last, for the fourth integral $I_4$, we split $I_4$ into $I_{41}$ an $I_{42}$ and consider $I_{41}$ firstly
			\begin{align*}
				&\frac{I_{41}}{\sqrt{\alpha - 1}}\\
				&:=  \frac{1}{\sqrt{\alpha - 1}} \int_{A(\lambda_\alpha^{t_\alpha^\iota}, \lambda_\alpha^t,  x_\alpha)} 2\inner{ H(\nablap u_\alpha, \nabla V_\alpha), V_\alpha}  dx \\
				& = \frac{1}{\sqrt{\alpha - 1}} \int_{A(\lambda_\alpha^{t_\alpha^\iota}, \lambda_\alpha^t,  x_\alpha)} 2H^k_{ij} \nablap u_\alpha^i \nabla V_\alpha^j V_\alpha^k dx\\
				& \leq \frac{2}{\sqrt{\alpha - 1}} \norm{H}_{L^\infty(N)} \norm{V}_{C^0(N)}\sum_{i,j}^K\int_{A(\lambda_\alpha^{t_\alpha^\iota}, \lambda_\alpha^t,  x_\alpha)}\frac{1}{|x - x_\alpha|}\abs{\frac{\partial u^i_\alpha}{\partial r} \frac{\partial V^j_\alpha}{\partial \theta} - \frac{\partial V_\alpha^j}{\partial r}\frac{\partial u_\alpha^i}{\partial \theta}} dx\\
				& \leq C \frac{1}{\sqrt{\alpha - 1}}\int_{A(\lambda_\alpha^{t_\alpha^\iota}, \lambda_\alpha^t,  x_\alpha)} \frac{1}{|x - x_\alpha|}\p{ \abs{\frac{\partial V_\alpha}{\partial \theta}} \abs{\frac{\partial u_\alpha}{\partial r}} + \abs{\frac{\partial u_\alpha}{\partial \theta}} \abs{\frac{\partial V_\alpha}{\partial r}} } d x\\
				& \leq  C \frac{1}{\sqrt{\alpha - 1}}\int_{A(\lambda_\alpha^{t_\alpha^\iota}, \lambda_\alpha^t,  x_\alpha)} \frac{1}{|x - x_\alpha|} \abs{\frac{\partial u_\alpha}{\partial \theta}} \abs{\frac{\partial u_\alpha}{\partial r}} dx\\
				& \leq C \int_{A(\lambda_\alpha^{t_\alpha^\iota}, \lambda_\alpha^t,  x_\alpha)} \frac{1}{|x - x_\alpha|^2} \abs{\frac{\partial u_\alpha}{\partial \theta}} dx\\
				&= o(1) \sqrt{\alpha - 1} \int_{\lambda_\alpha^{t_\alpha^\iota}}^{\lambda_\alpha^t} \frac{1}{r} dr\\
				&= -(t_\alpha^\iota - t)\sqrt{\alpha - 1} \log{\lambda_\alpha} \,o(1) = o(1)
			\end{align*}
			Here, we have used the estimate \eqref{E 51} and Corollary  \ref{coro 2}. For the second part $I_{42}$ of $I_4$ we have similarly computations
			\begin{align*}
				\frac{I_{42}}{\sqrt{\alpha - 1}} &= \int_{A(\lambda_\alpha^{t_\alpha^\iota}, \lambda_\alpha^t,x_\alpha)} \inner{ (\nabla_{V_\alpha}H)(\nablap u_\alpha, \nabla u_\alpha), V_\alpha} dx\\
				&=\frac{1}{\sqrt{\alpha - 1}}\int_{A(\lambda_\alpha^{t_\alpha^\iota}, \lambda_\alpha^t,x_\alpha)} \p{\frac{\partial H^k_{ij}}{\partial y^l } + \frac{\partial H_{jl}^k}{\partial y^i} + \frac{\partial H^k_{il}}{\partial y^l}} \nablap u_\alpha^i \nabla u_\alpha^j V_\alpha^l V_\alpha^k dx\\
				&\leq C \frac{1}{\sqrt{\alpha - 1}} \norm{\nabla H}_{L^\infty(N)} \norm{V_0}_{L^\infty(N)}^2\cdot \\
				&\quad \int_{A(\lambda_\alpha^{t_\alpha^\iota}, \lambda_\alpha^t,  x_\alpha)} \frac{1}{|x - x_\alpha|}\p{ \abs{\frac{\partial u_\alpha}{\partial \theta}} \abs{\frac{\partial u_\alpha}{\partial r}} + \abs{\frac{\partial u_\alpha}{\partial \theta}} \abs{\frac{\partial u_\alpha}{\partial r}} } d x \\
				& = o(1)
			\end{align*}
			Combining all computations for $I_1$, $I_2$, $I_3$ and $I_4 = I_{41} + I_{42}$, we conclude that 
			\begin{equation*}
				\lim_{\alpha \searrow 1} \frac{1}{\sqrt{\alpha - 1}} \delta^2 E_\alpha^\omega(V_\alpha,V_\alpha) = 4 \pi \mu \sqrt{\frac{E(w^1)}{\pi}} I_\gamma(V_0, V_0).
			\end{equation*}
			which implies the conclusion of Lemma \ref{sec index lem 2}.
		\end{proof}
		Under the assumption of Theorem \ref{thm convergence}, $(N,h)$ has finite fundamental group.  Then, the Gromov's estimates \cite{Gromov1978} (See also \cite[Corollary 3.3.5]{moore2017}) on the length of geodesic $\gamma$ and its Morse index hold:
		\begin{equation*}
			\mathrm{Length}(\gamma) \leq C_0 \p{\mathrm{Ind}(\gamma) + 1} \leq C \p{C_{I} + 1}
		\end{equation*}
		for some universal constant  $C_0  > 1$ and $C_I$ is the uniformly Morse index upper bound of $\gamma$. Thus. any geodesic $\gamma \subset (N,h)$ with length
		\begin{equation*}
			\mathrm{Length}(\gamma) >  C_0
		\end{equation*}
		is unstable. Therefore, if we choose $l > C_0$ , then any geodesic $\gamma(s): [0,l] \rightarrow N$  with arc length parameter  is  unstable and under the assumption that $\gamma$ is of infinite length we can apply above Lemma \ref{sec index lem 2} successively to finish the proof of our Theorem \ref{thm convergence}. More precisely, we have:
		\begin{proof}[\textbf{Proof of Theorem \ref{thm convergence}}]
			By contradiction, suppose that  the neck of $u_\alpha$ converges to a geodesic $\gamma$ with infinite length. Then, we take $l > C_0$ and $u_\alpha(s,\theta)$ converges to an unstable geodesic $\gamma : [0,l]\rightarrow N$ in $C^1([0,l])$ for any $\theta \in [0,2\pi]$ by Lemma \ref{sec index lem 1}. Using same notation as Lemma \ref{sec index lem 1} and Lemma \ref{sec index lem 2}, let $s(\lambda_\alpha^t) = 0$ and $\delta_1 > 0$ such that  $s(\lambda_\alpha^{t + \delta_1}) = l$. Since $t_\alpha^\iota \rightarrow t$ as $\alpha \searrow 1$, for arbitrary small $\varepsilon > 0$ when $\alpha$ tends to $1$ close enough there holds  that $\abs{ t^\iota - t} < \varepsilon$. Therefore, by Lemma \ref{sec index lem 2} there exists a vector fields $V_\alpha^1$ along $u_\alpha$, that is vanishes outside $A(\lambda_\alpha^{t + \delta_1}, \lambda_\alpha^t,x_\alpha)$, such that 
			\begin{equation*}
				\delta^2 E^\omega_\alpha(V^1_\alpha,V^1_\alpha) < 0 \quad \text{for all }\, \alpha \leq \alpha_1
			\end{equation*}
			for some small enough $\alpha_1 - 1$. Since the limiting neck is a geodesic with infinite length, then we replace $t$ by $t + \delta_1$ and apply Lemma \ref{sec index lem 2} again on $A(\lambda_\alpha^{t+\delta_1 + \delta_2}, \lambda_\alpha^{t+\delta_1}, x_\alpha)$ for some $\delta_2 > 0$ to find  second vector field $V^2_\alpha$, that vanishes outside $A(\lambda_\alpha^{t+\delta_1 + \delta_2}, \lambda_\alpha^{t+\delta_1}, x_\alpha)$, such that 
			\begin{equation*}
				\delta^2 E^\omega_\alpha(V^2_\alpha,V^2_\alpha) < 0 \quad \text{for all }\, \alpha \leq \alpha_2
			\end{equation*}
            for some small enough $\alpha_2 - 1 \leq \alpha_1 - 1$.
			This process can keep going continuously and  for any integer $L > 0$ we can construct a collection of vector fields $\{V^1_\alpha, V^2_\alpha, \dots , V^L_\alpha\}$ and small  $\alpha_L -1$ satisfying
			\begin{equation*}
				\delta^2 E^\omega_\alpha(V^i_\alpha,V^i_\alpha) < 0 \quad \text{for all }\, \alpha \leq \alpha_L \,\text{ and } \, 1 \leq i \leq L.
			\end{equation*}
			Since the support of $V^i_\alpha$ are disjoint each other, $V^1_\alpha, V^2_\alpha, \dots , V^L_\alpha$ are linearly independent which implies that 
			\begin{equation*}
				\mathrm{Ind}_{E^\omega_\alpha}(u_\alpha) \geq L, \quad \text{ for any } L\geq 0.
			\end{equation*}
			Thus, $\mathrm{Ind}_{E^\omega_\alpha}(u_\alpha) \rightarrow \infty$ as $\alpha \searrow 1$ which contradicts to the uniformly bounded assumptions of $\mathrm{Ind}_{E^\omega_\alpha}(u_\alpha)$. We conclude that the limiting necks of $\{u_\alpha \}$ consists of  geodesics with finite length and by Theorem \ref{analysis on neck} we know that the energy identity holds, completing the proof of Theorem \ref{thm convergence}.
		\end{proof}
	
		\vskip2cm
		\section{Existence of \texorpdfstring{$H$}{Lg}-Sphere of Bounded Morse Index}\label{section 5}
		\vskip10pt
		In this section, we  prove our main results using the convergence schemes developed in Section \ref{section 4} and combining with the existence results obtained in Section \ref{sec: 3 non-constant u alpha}. In the following, $\varepsilon_0$ is a uniform constant depending on the geometries of $N$ and mean curvature vectors $\lambda H$, which is assumed to be the minimum of the constants appearing in the previous results.
  
		\subsection{Existence of Minimizing \texorpdfstring{{$H$}}{Lg}-Surfaces.} \label{section 5.0}
		\
		\vskip5pt
		In the first instance, let us consider a relatively simpler scenario where $ \| \omega \| _ { L^ \infty (N) } < 1 $. In such a case, the surgery construction appeared in \cite {sacks1981existence} can be applied in the $H$-surface setting to rule out the occurrence of bubbles.
		\begin{proof}[\textbf{Proof of Theorem \ref{thm: homotopy1}}]
			Since $E^\omega_\alpha$ satisfies the Palais-Smale condition, see Lemma \ref{P-S condition} and  Corollary \ref{coro palais}, and by the upper bound of $\| \omega \| _ { L^ \infty (N) } < 1 $, we can take a minimizing map $u_\alpha:M\rightarrow N$  for $E^\omega_\alpha$ in a fixed non-trivial homotopy class in $W^{1,2\alpha}(M,N)$ with 
			\begin{equation*}
				E_\alpha(u_\alpha) \leq C E^\omega_\alpha(u_\alpha) \leq C \p{1 + B^2}^\alpha + C \norm{\omega}_{L^\infty(N)} B^2 \mathrm{Vol}(M)
			\end{equation*}
			where $B = \max_{x \in M} |\nabla u(x)|$ and $u$ is a smooth map in that homotopy class.
			By Theorem \ref{thm convergence}, we can choose a subsequence, which still denoted by $u_\alpha$, such that $u_\alpha\rightarrow u$ in $C^2(M-\{x_1,\cdots, x_l\}, N)$ for some $l \in \mathbb N$  and $u:M\rightarrow N$ is a $H$-surface. Next we prove that there is actually no energy concentration point for $\{u_\alpha\}$, that is, $u_\alpha\rightarrow u$ in $C^2(M,N)$.
			
			Take a small ball centering at $x_i$ in $M$ of radius $\rho$ where $\rho$ is small enough such that $x_j\notin B(x_i,\rho)$ for $1\leq j\neq i \leq l$ and will be determined more precisely later. Let $\varphi(r)$ be a smooth function which is 1 on $r\geq 1$ and 0 on $r\leq \frac{1}{2}$ and $\exp:TN\rightarrow N$ be the exponential map on $N$. Then we can define
			\begin{equation}\label{eq; construct u hat}
				\hat{u}_\alpha(x) = \left\{
				\begin{aligned}
					&u(x)\quad &\text{if\quad}& 0 \leq |x| \leq \frac{\rho}{2}\\
					&\exp_{u(x)}\left(\varphi\left(\frac{|x|}{\rho}\right)\exp^{-1}_{u(x)}\circ\, u_\alpha(x)\right) \quad &\text{if\quad}& \frac{\rho}{2} < |x| < \rho\\
					&u_\alpha(x)\quad &\text{if\quad}& |x| \geq \rho
				\end{aligned}
				\right.
			\end{equation}
			which agrees with $u_\alpha$ near the boundary of $B(x_i,\rho)$ and with $u$ near the center $x_i$.
			Then 
			\begin{equation*}
				u_\alpha\rightarrow u\quad  \text{in}\quad  C^2\left(\supp \varphi \left(\frac{|x|}{\rho}\right)\cap B(x_i,\rho),N\right),
			\end{equation*}
			and we have $\hat{u}_\alpha\rightarrow u$ in $C^2(B(x_i,\rho), N)$ which implies
			\begin{align}
				\label{E 56}
				\lim_{\alpha\searrow 1} E^\omega_\alpha(\hat{u}_\alpha,B(x_i,\rho)) -\frac{1}{2}\mathrm{Vol}(M) &= \frac{1}{2}\lim_{\alpha\searrow 1} \int_{B(x_i,\rho)} \p{1 + |\nabla \hat{u}_\alpha|^2}^\alpha - 1 dV_g \nonumber\\
				&\quad+ \lim_{\alpha\searrow 1} \int_{B(x_i,\rho)} (\hat{u}_\alpha)^*\omega\nonumber\\
				&=  E(u, {B(x_i,\rho)}) + \int_{B(x_i,\rho)} u^*\omega
			\end{align}
			By assumption $\pi_2(N) = 0$, which implies that every $u_1,\, u_2 \in C^0(B(x_i,\rho),N)$ with $u_1|_{\partial B(x_i,\rho)} = u_2|_{\partial B(x_i,\rho)}$ are homotopic,  $u_\alpha$ and $\hat{u}_\alpha$ are homotopic. Since $u_\alpha$ is a minimizing map for $E^\omega_\alpha$ in its homotopy class, we have
			\begin{equation*}
				E^\omega_\alpha(u_\alpha, {B(x_i,\rho)})\leq E^\omega_\alpha(\hat{u}_\alpha,{B(x_i,\rho)})
			\end{equation*}
			Applying \eqref{E 56} we get 
			\begin{align*}
				\limsup_{\alpha \rightarrow 1} E(u_\alpha, \, &{B(x_i,\rho)}) + \frac{1}{2}\mathrm{Vol}(M)\\
				&\leq
				\limsup_{\alpha \rightarrow 1} E^\omega_\alpha(u_\alpha, {B(x_i,\rho)}) - \liminf_{\alpha\searrow 1}\int_{B(x_i,\rho)} (u_\alpha)^*\omega\\
				&\leq  \limsup_{\alpha \rightarrow 1} E_\alpha^\omega(\hat{u}_\alpha, {B(x_i,\rho)}) - \liminf_{\alpha\searrow 1}\int_{B(x_i,\rho)} (u_\alpha)^*\omega\\
				&= E^\omega(u,B(x_i,\rho)) + \frac{1}{2}\mathrm{Vol}(M) - \liminf_{\alpha\searrow 1}\int_{B(x_i,\rho)} (u_\alpha)^*\omega\\
				& \leq C \pi\rho^2\|\nabla u\|_\infty +  \norm{\omega}_{L^\infty(N)} \liminf_{\alpha\searrow 1} E(u_\alpha, B(x_i,\rho)) + \mathrm{Vol}(M).
			\end{align*}
			If we initially choose $\rho$ small enough such that
			\begin{equation*}
				C\pi\rho^2\|\nabla u\|_\infty \leq \frac{\varepsilon_0}{2(1 - \norm{\omega}_{L^\infty(N)})}
			\end{equation*}
			and keeping in mind that $ \norm{\omega}_{L^\infty(N)} < 1 $, then we can utilize small energy regularity Lemma \ref{lem4.1} to conclude that $u_{\alpha}$ converges to $u$ in $C^2(B(x_i, \rho), N)$, given that $E(u_{\alpha}, B(x_i, \rho)) < \varepsilon_0^2$ for $\alpha$ sufficiently close to 1. Therefore, an induction argument tells us that the convergence can be extended over the points $\{x_1,\cdots,x_l\}$ and hence we can conclude $u_\alpha \rightarrow u$ in $C^2(M,N)$. Since $u_\alpha$ minimizes $E^\omega_\alpha$, $u$ must minimize the $E^\omega$ in the same homotopy class.
		\end{proof}
		Now, we are in a position to prove the Theorem \ref{thm: homotopy2}.
		\begin{proof}[\textbf{Proof of Theorem \ref{thm: homotopy2}}]
			Let $\mathscr{C}$ be the set of free homotopy classes containing minimizing $H$-sphere and $G$ be the subgroup of $\pi_2(N)$ generated by the elements of $\mathscr C$. If $G \neq \pi_2(N)$, then there exists a homotopy class that does not contain the minimizing $H$-sphere. Let 
			\begin{equation*}
				\mathcal{C}_1 = \left\{u \in C^1(\S^2, N)\,:\, \text{the corresponding free homotopy class  }[u] \not \in G\right\}
			\end{equation*}
			then by Corollary \ref{coro palais} we can find a sequence of maps $\{u_{\alpha}\}_{\alpha > 1}$ which are minimizers for each $E^\omega_{\alpha}$ in $\mathcal{C}_1$. By a similarly argument as proof of Theorem \ref{thm: homotopy1}, there is a constant $C$ such that $E_{\alpha}(u_{\alpha})\leq C$. Then by small energy regularity Lemma \ref{lem4.1} either there is a subsequence converges strongly in $C^2$ to a non-constant $H$-sphere $u : \S^2 \rightarrow N$ such that $u \in \mathcal{C}_1$ and $E^\omega(u) = \inf_{v \in \mathcal{C}_1} E(v)$, or there exists some energy concentration point, saying $x_1 \in \S^2$.
			
			Let us consider the second case. Pick a small disk $B(x_1,r)$ near blow-up point $x_1$, by energy gap Lemma \ref{blow 2}, there exists a $\varepsilon_0 > 0$ such that $E(u_{\alpha}) \geq \varepsilon_0^2$ provided that $\alpha - 1$ is small enough. Then, we define
			\begin{align*}
				s_{\alpha}(x) &= \left\{
				\begin{aligned}
					&u_{\alpha}(x), \quad &\, &\text{when }x \in \S^2 \backslash B(x_1,r)\\
					&\hat{u}_{\alpha}(x) \quad &\, &\text{when }x \in  B(x_1,r)
				\end{aligned}
				\right.\\
				w_{\alpha}(x) &= \left\{
				\begin{aligned}
					&\hat{u}_{\alpha} \circ f(x), \quad &\, &\text{when }x \in \S^2 \backslash B(x_1,r)\\
					&u_{\alpha}(x) \quad &\, &\text{when }x \in  B(x_1,r)
				\end{aligned}
				\right.
			\end{align*}
			where $\hat{u}_{\alpha}$ is constructed as \eqref{eq; construct u hat} and $f : \S^2 \backslash B(x_1,r) \rightarrow B(x_1,r)$ is the conformal reflection preserving the boundary $\partial B(x_1,r)$ fixed. Thus, $s_{\alpha}$ agrees with $u_{\alpha}$ outside $B(x_1,r)$ while $w_{\alpha}$ agrees with $u_{\alpha}$ inside $B(x_1,r)$. Next, by conformality of $f$, we have 
			\begin{align*}
				\lim_{\alpha \searrow 1} E_{\alpha}^\omega(s_{\alpha}) &= \lim_{\alpha \searrow 1}E_{\alpha}^\omega(u_{\alpha}, \S^2 \backslash B(x_1,r)) + E(u, B(x_1,r)) + \frac{1}{2}\mathrm{Vol}(B(x_1,r)),\\
				\lim_{\alpha \searrow 1} E_{\alpha}^\omega(w_{\alpha}) &= \lim_{\alpha \searrow 1}E_{\alpha}^\omega(u_{\alpha},  B(x_1,r)) + E(u, B(x_1,r)) + \frac{1}{2}\mathrm{Vol}(B(x_1,r)).
			\end{align*}
			Therefore, we can choose small enough $r > 0$ and small enough $\alpha - 1$ such that 
			\begin{align*}
				E_{\alpha}^\omega(s_{\alpha}) &\leq E_{\alpha}^\omega\left(u_{\alpha},\S^2 \backslash B(x_1,r)\right) + \frac{\delta}{3}\\
				E_{\alpha}^\omega(w_{\alpha}) &\leq E_{\alpha}^\omega \left(u_{\alpha}, B(x_1,r)\right) + \frac{\delta}{3}
			\end{align*}
			for some $\delta > 0$. Let $[s_{\alpha}]$ and $[w_{\alpha}]$ be the free homotopy classes of $s_{\alpha}$ and $w_{\alpha}$, respectively. Then, $[u_\alpha] \subset [s_\alpha] + [w_\alpha]$ and we can conclude that
			\begin{equation*}
				\inf_{v \in [s_{\alpha}]} E_{\alpha}^\omega(v) +  \inf_{v \in [w_{\alpha}]} E_{\alpha}^\omega(v) \leq E_{\alpha}^\omega(s_{\alpha}) + E_{\alpha}^\omega(w_{\alpha}) < E_{\alpha}^\omega(u_{\alpha}) + \frac{2\delta}{3} = \inf_{v \in \mathcal{C}_1} E_{\alpha}^\omega(v) + \frac{2\delta}{3},
			\end{equation*}
			which implies that
			\begin{equation*}
				\inf_{v \in [s_{\alpha}]} E_{\alpha}^\omega(v) \leq \inf_{v \in \mathcal{C}_1} E_{\alpha}^\omega(u_{\alpha}) - \frac{\varepsilon_0^2}{4} \quad \text{and}\quad \inf_{v \in [w_{\alpha}]} E_{\alpha}^\omega(v) \leq \inf_{v \in \mathcal{C}_1} E_{\alpha}^\omega(u_{\alpha}) - \frac{\varepsilon_0^2}{4},
			\end{equation*}
			where we used the Proposition \ref{non constant limit} to conclude that $E(s_\alpha) \geq \varepsilon_0^2$ and $E(w_\alpha) \geq \varepsilon_0^2$. Thus, $[u_\alpha] \neq [s_\alpha]$ and $[u_\alpha] \neq [w_\alpha]$, in particular, $[s_\alpha]$ and $[w_\alpha]$ are both non-trivial. Furthermore, by the choice of $\mathcal{C}_1$, the free homotopy classes $[s_{\alpha}]$ and $[w_{\alpha}]$ must belong to $G$ which further implies $[u_{\alpha}] \in G$ for $[u_\alpha] \subset [s_\alpha] + [w_\alpha]$. This contradicts to the choice of $u_{\alpha}$, hence the conclusion of Theorem \ref{thm: homotopy2} holds.
		\end{proof}
  
		\subsection{Existence of \texorpdfstring{{$H$}}{Lg}-Sphere of Bounded Morse Index for Generic Choice of \texorpdfstring{{$\lambda \in \R_+$}}{Lg}} \label{section 5.1}
		\ 
		\vskip5pt
		
		In this subsection, we complete the proof of main Theorem \ref{main theorem 1}.
		Before presenting into the detailed proofs, we would like to demonstrate that it is possible to modify the values of a finite number of points on a non-constant $H$-sphere $u: \S^2 \rightarrow N$ without affecting its Morse index, see \cite[Lemma in Section 4]{Micallef-Moore-1988} for the setting of $\alpha$-harmonic maps and Gulliver-Lawson \cite[Proposition 1.9]{Gulliver-Lawson1984} for more general consequences.
		\begin{lemma}\label{lem vanishes}
			Let $m$ be the Morse index of a non-constant $H$-sphere $u:\S^2 \rightarrow N$. For any finite points $\{x_1, x_2, \cdots, x_l\}$ in $N$, there exists a $m$-dimensional linear subspace $\mathscr{V}$ of $\Gamma(u^*TN)$ such that
			\begin{enumerate}
				\item \label{lem vanishes item 1} The index form
				\begin{align*}
					I_u(V,V)&= \int_{\S^2} \Big( \langle \nabla V, \nabla V \rangle - R(V,\nabla u, \nabla u, V) \Big) d V_g + 2 \int_M \inner{ H(\nablap u, \nabla V), V} dV_g\\
					& \quad + \int_{\S^2}\inner{(\nabla_VH)(\nablap u,\nabla u),V}dV_g,\quad \text{for } V \in \mathscr{V},
				\end{align*}
				of $u$ is negative definite on $\mathscr{V}$.
				\item \label{lem vanishes item 2} Given any $V \in \mathscr{V}$, $V$ vanishes in some neighborhood of $x_i$, for every $1 \leq i \leq l$.
			\end{enumerate}
		\end{lemma}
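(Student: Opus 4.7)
Let $\mathscr{V}_0 \subset \dbl{\mathcal{T}}_u$ denote the $m$-dimensional negative eigenspace of the Jacobi operator $\mathcal{L}_u$ produced by the spectral decomposition in Lemma \ref{lem:spectral decom}, and fix an $L^2$-orthonormal basis $V_1, \ldots, V_m$ of $\mathscr{V}_0$. By Lemma \ref{lem regularity} and elliptic bootstrapping each $V_k$ is smooth, hence uniformly bounded in $C^1(\S^2)$, and there exists $c>0$ with $I_u(V,V) \leq -c\|V\|_{L^2}^2$ for all $V \in \mathscr{V}_0$. The idea is to multiply each $V_k$ by a two-dimensional logarithmic cutoff $\phi_\varepsilon$ that vanishes in small neighborhoods of the points $x_1, \ldots, x_l$, and to show that for sufficiently small $\varepsilon$ the perturbed sections still span a space on which $I_u$ is negative definite.

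\textbf{Logarithmic cutoffs and energy estimates.} For small $\varepsilon > 0$ and each $i$, let $\eta_{i,\varepsilon}: \S^2 \to [0,1]$ be the standard radial logarithmic cutoff supported in $\S^2 \setminus B(x_i,\varepsilon^2)$ equal to $1$ outside $B(x_i,\varepsilon)$, which satisfies
\[
\int_{\S^2} |\nabla \eta_{i,\varepsilon}|^2\, dV_g \leq \frac{C}{\log(1/\varepsilon)}, \qquad \int_{\S^2} |\nabla \eta_{i,\varepsilon}|\, dV_g \leq \frac{C}{\sqrt{\log(1/\varepsilon)}}.
\]
Set $\phi_\varepsilon := \prod_{i=1}^l \eta_{i,\varepsilon}$ and $V_k^\varepsilon := \phi_\varepsilon V_k$. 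Expanding $|\nabla V_k^\varepsilon|^2 = \phi_\varepsilon^2|\nabla V_k|^2 + 2\phi_\varepsilon \langle \nabla \phi_\varepsilon \otimes V_k, \nabla V_k\rangle + |\nabla \phi_\varepsilon|^2 |V_k|^2$ and using the $C^1$-bound $\|V_k\|_{C^1} \leq M$, the Cauchy--Schwarz inequality bounds the cross term and the $|\nabla \phi_\varepsilon|^2|V_k|^2$ term by $C M^2/\sqrt{\log(1/\varepsilon)}$. The remaining contributions to $I_u(V_k^\varepsilon, V_l^\varepsilon)$---those involving the curvature $R$ of $N$, the tensor $H$, and $\nabla H$---have integrands pointwise bounded by constants times $|V_k^\varepsilon||V_l^\varepsilon|$ or $|V_k^\varepsilon||\nabla V_l^\varepsilon|$, so dominated convergence yields $I_u(V_j^\varepsilon, V_k^\varepsilon) \to I_u(V_j,V_k)$ and $V_k^\varepsilon \to V_k$ in $L^2(\S^2)$ as $\varepsilon \to 0$.

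\textbf{Conclusion and main obstacle.} Negative-definiteness of a symmetric bilinear form on a fixed finite-dimensional vector space is an open condition, so convergence of the $m\times m$ Gram matrix $\bigl(I_u(V_j^\varepsilon, V_k^\varepsilon)\bigr)$ to $\bigl(I_u(V_j,V_k)\bigr)$---together with the $L^2$-linear independence of $\{V_k^\varepsilon\}_{k=1}^m$ for small $\varepsilon$---implies that $I_u$ is negative definite on $\mathscr{V} := \mathrm{span}\{V_1^\varepsilon, \ldots, V_m^\varepsilon\}$ for all sufficiently small $\varepsilon$, with $\dim \mathscr{V} = m$. By construction, every element of $\mathscr{V}$ vanishes on the neighborhood $\bigcup_i B(x_i, \varepsilon^2)$, proving \eqref{lem vanishes item 1} and \eqref{lem vanishes item 2}. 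The only delicate point---and the reason the argument is special to dimension two---is the logarithmic cutoff estimate: the $H^1$-capacity of a point in $\R^2$ vanishes only at a logarithmic rate, so the Dirichlet energy of $\phi_\varepsilon$ must be made small by driving the inner radius $\varepsilon^2$ to zero while keeping the outer radius $\varepsilon$ bounded by $1$; once the uniform $C^1$-bound on the finitely many eigensections $V_k$ is in hand, this reduction is routine.
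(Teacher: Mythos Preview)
Your proof is correct and follows essentially the same approach as the paper: multiply a basis of an $m$-dimensional negative subspace by a product of logarithmic cutoffs vanishing near the $x_i$, and use the vanishing $H^1$-capacity of points in two dimensions to show the error terms tend to zero. The paper argues directly that $I_u(\varphi V,\varphi V)<0$ for every $V$ in the original subspace rather than passing through the Gram matrix, and it tracks the $|\nabla u|$ factors in the $H$- and curvature terms explicitly (these enter via Cauchy--Schwarz against $\|\nabla u\|_{L^2}$ and are not literally ``constants times $|V_k^\varepsilon||V_l^\varepsilon|$''), but these are minor expository differences.
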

		\begin{proof}
			By assumption, we can find a $m$-dimensional linear subspace $\mathscr{V}_0$ of $\Gamma(u^*TN)$ on which the index form $I_u$ is negative definite. Then, choose a small enough $\rho > 0$ such that $B(x_i,\rho) \cap B(x_j,\rho) = \emptyset$ for $1\leq i\neq j\leq l$ and the distant function $r_i : B(x_i,\rho)\backslash\{x_i\} \rightarrow \R_{+}$ is smooth. Take $0 < \varepsilon < \min\{\rho, 1\}$ and a series of piecewise smooth functions $\varphi_i(r_i):\S^2 \rightarrow [0,1]$ can be defined as following
			\begin{equation*}
				\varphi_i(r_i) =
				\left\{
				\begin{aligned}
					&0 \quad &\text{if } 0 \leq r_i < \varepsilon^2,\\
					& \frac{2\log{\varepsilon} - \log {r_i}}{\log{\varepsilon}} \quad &\text{if } \varepsilon^2 \leq r_i \leq \varepsilon ,\\
					& 1 \quad &\text{otherwise}.
				\end{aligned}
				\right.
			\end{equation*}
			Then, a straightforward computation yields that 
			\begin{equation*}
				\int_{\S^2} \abs{\nabla \varphi_i} dV_g \leq C \int_0^{2\pi}\int_{\varepsilon^2}^\varepsilon \frac{1}{|\log {\varepsilon}|} dr d \theta \leq \frac{ C\varepsilon}{|\log {\varepsilon}|} \quad \text{and} \quad \int_{\S^2} \abs{\nabla \varphi_i}^2 dV_g \leq  \frac{C}{|\log {\varepsilon}|}.
			\end{equation*}
			Let $\varphi = \Pi_{i = 1}^l \varphi_i$ and for each $V \in \mathscr{V}$ we estimate that 
			\begin{align*}
				I_u(\varphi V,\varphi V) &=  \int_{\S^2}\p{ \inner{\nabla(\varphi V), \nabla(\varphi V)} - \varphi^2 R(V,\nabla u, \nabla u, V) } dV_g\\
				& \quad + 2 \int_{\S^2} \inner{ H(\nablap u, \nabla (\varphi V)), \varphi V} dV_g + \int_{\S^2}\inner{(\nabla_{\varphi V}H)(\nablap u,\nabla u), \varphi V}dV_g\\
				& \leq\int_{\S^2} \varphi^2 \Big( \langle \nabla V, \nabla V \rangle - R(V,\nabla u, \nabla u, V) \Big) d V_g \\
				& \quad  + 2 \int_{\S^2} \varphi^2 \inner{ H(\nablap u, \nabla V), V} dx \\
				&\quad + \int_{\S^2}\varphi^2\inner{(\nabla_V H)(\nablap u,\nabla u),V}dV_g + \frac{C \varepsilon}{|\log {\varepsilon}|} \sup_{x \in \S^2}\p{|V(x)| + |\nabla V(x)|}\\
				& \quad + \frac{C}{|\log \varepsilon|} \sup_{x \in \S^2}|V(x)|^2 + \frac{C \sqrt{E(u)} \sup_{x \in \S^2}|V(x)|^2 }{|\log \varepsilon|^{\frac{1}{2}}}.
			\end{align*}
			Noting that by the construction of $\varphi$ which equals to $1$ away from $x_i$ and vanishes in each small neighborhood of $x_i$, thus we must have 
			\begin{equation*}
				I_u(\varphi V,\varphi V) < 0 \quad \text{for each } V \in \mathscr{V_0}
			\end{equation*}
			as long as $\varepsilon > 0$ is small enough. And $\mathscr{V} := \varphi \mathscr{V}_0$ is a $m$-dimensional linear space of $\Gamma(u^*TN)$ satisfying the desired conclusions \eqref{lem vanishes item 1} and \eqref{lem vanishes item 2} of Lemma \ref{lem vanishes}.
		\end{proof}
		We are now ready to complete the proof for the main Theorem \ref{main theorem 1}. As per Theorem \ref{thm convergence}, we only need to establish the upper bound for the Morse index of $H$-spheres mentioned in Theorem \ref{main theorem 1}. We adapt the convergence scheme established in \cite[Theorem 2 in Section 4]{Micallef-Moore-1988} where the authors proved a similar Morse index upper bound for sequences of $\alpha$-harmonic maps.
		\begin{proof}[\textbf{Proof of Theorem \ref{main theorem 1}}]
			By Corollary \ref{coro:section 3 summary}, for almost every $\lambda \in (0,\infty)$, we can find a sequence of non-constant critical points $\{u_{\alpha_j}\}_{j \in \mathbb{N}}$ for $E^{\lambda\omega}_{\alpha_j}$ such that the Morse index of $E^{\lambda\omega}_{{\alpha_j}}$ at $u_{\alpha_j}$ bounded from above by $k-2$ and the ${\alpha_j}$-energy of $u_{\alpha_j}$ is uniformly bounded as $j \rightarrow \infty$. Then, by Theorem \ref{thm convergence}, after passing to a subsequence, $u_{\alpha_j}$ converges strongly in $C^2(\S^2,N)$ except a finite many singular points $\{x_1, x_2, \cdots , x_l\}$ to a smooth $\lambda H$-sphere $u:\S^2 \rightarrow N$. In the following part of proof Theorem \ref{main theorem 1}, without ambiguity we simply write $H$ to denote $\lambda H$ and $\omega$ to denote $\lambda \omega$ for notation simplicity. Then, it suffices to show that the Morse index of limit map $u$ is at most $k-2$ if $l = 0$, or to show that the Morse index of bubbles is no more than $k-2$ if $l \geq 1$. To this end, we split the proof into two steps.
			\ 
			\vskip5pt
			\step \label{theorem 1 step 1} The Morse index of weak limit $u$ is at most $k-2$.
			\ 
			\vskip5pt
			In this step, it suffices to show $\mathrm{Ind}_{E^\omega}(u) : = m \leq k-2$. Note that our argument is non-vacuous, since in viewing of Proposition \ref{non constant limit} the weak limit $u$ is always non-constant.
			
			By previous Lemma \ref{lem vanishes}, there exists $m$ linearly independent vector fields $V_1, V_2, \dots$ $ , V_m$ such that the index form of $u$ is negative definite on subspace $\mathrm{Span}\{V_1, V_2, \dots , V_m\}$ and $V_1, V_2, \dots , V_m \in u^*TN$ vanish in neighborhoods of singular points $\{x_1, x_2, \cdots , x_l\}$. We then consider the commutative diagram of vector bundles
			\[
			\begin{tikzcd}
				& u^*TN \arrow{r} \arrow{d} & \Pi_2^* TN \arrow{d} \arrow{r} & TN \arrow{d} \\
				& \S^2 \arrow{r}{(i,u)} & \S^2\times N \arrow{r}{\Pi_2} & N
			\end{tikzcd}
			\]
			where $\Pi_2$ is the projection to the second variable and $i : \S^2\rightarrow \S^2$ is the identity map. Then we extend $V_1,V_2, \dots ,V_m$ to smooth vector fields $\dbl{V}_1,\dbl{V}_2, \dots, \dbl{V}_m \in \Pi_2^*TN$ that supported in a tubular neighborhood of $(i,u)(\S^2)$, and set 
			\begin{equation*}
				W^l_{\alpha_j} = (i,u_{\alpha_j})^*(\dbl{V}_l), \quad \text{for } 1 \leq l \leq m.
			\end{equation*}
			The sequence $W_{\alpha_j}^l$ can be regarded as a map from $\S^2$ to the tangent bundle of $N$ such that $W^l_{\alpha_j}(p) \in T_{u_{\alpha_j}(p)}N$, then
			\begin{equation}\label{E 57}
				W_{\alpha_j}^l = (i,u_{\alpha_j})^*(\dbl{V}_l) \longrightarrow (i,u)^*(\dbl{V}_l) = V_l,\quad \text{in }C^2\left(\S^2,u_{\alpha_j}^*(TN)\right).
			\end{equation}
			Now we apply the second variation formula of $E^\omega_{\alpha_j}$ to study the asymptotic properties of Morse index $\mathrm{Ind}_{E^\omega_{\alpha_j}}(u_{\alpha_j})$ as ${\alpha_j} \searrow 1$. By Corollary \ref{coro hessian}, we have 
			\begin{align*}
				&\delta^2 E_{\alpha_j}^\omega(u_{\alpha_j})(W_{\alpha_j}^p,W_{\alpha_j}^q)\\
				&= {\alpha_j} \int_{\S^2} \p{1 + \abs{\nabla u_{\alpha_j}}^2}^{{\alpha_j} - 1} \Big( \inner{ \nabla W_{\alpha_j}^p, \nabla W_{\alpha_j}^q } - R\p{W_{\alpha_j}^p,\nabla u_{\alpha_j},  W_{\alpha_j}^q, \nabla u_{\alpha_j}} \Big) d V_g\\
				& \quad + 2{\alpha_j}({\alpha_j} - 1)\int_{\S^2} \p{1 + \abs{\nabla u_{\alpha_j}}^2}^{{\alpha_j} - 2}\inner{ \nabla u_{\alpha_j}, \nabla W_{\alpha_j}^p } \inner{ \nabla u_{\alpha_j}, \nabla W_{\alpha_j}^q }  dV_g\\
				&\quad + \int_{\S^2} \p{ \inner{ H(\nablap u_{\alpha_j}, \nabla W_{\alpha_j}^p), W_{\alpha_j}^q} + \inner{ H(\nablap u_{\alpha_j}, \nabla W_{\alpha_j}^q), W_{\alpha_j}^p} }  dV_g\\
				&\quad + \frac{1}{2}\int_{\S^2} \inner{\p{\nabla_{W^p_{\alpha_j}}H}(\nablap u_{\alpha_j},\nabla u_{\alpha_j}), W^q_{\alpha_j}}dV_g\\
				&\quad + \frac{1}{2}\int_{\S^2} \inner{\p{\nabla_{W^q_{\alpha_j}}H}(\nablap u_{\alpha_j},\nabla u_{\alpha_j}), W^p_{\alpha_j}}dV_g
			\end{align*}
			for $1\leq p , q \leq m$.  By the choice of $W^l_{\alpha_j}$ and \eqref{E 57}, we can estimate
			\begin{align*}
				2{\alpha_j}({\alpha_j} - 1)&\int_{\S^2} \p{1 + \abs{\nabla u_{\alpha_j}}^2}^{{\alpha_j} - 2}\abs{\inner{ \nabla u_{\alpha_j}, \nabla W_{\alpha_j}^p } \inner{ \nabla u_{\alpha_j}, \nabla W_{\alpha_j}^q } } dV_g\\
				&\quad \leq 2{\alpha_j}({\alpha_j} - 1)\int_{\S^2} \p{1 + \abs{\nabla u_{\alpha_j}}^2}^{{\alpha_j} - 1} \norm{\nabla W^p_{\alpha_j}}_{L^\infty(\S^2)} \norm{\nabla W^q_{\alpha_j}}_{L^\infty(\S^2)} dV_g\\
				& \quad  \leq C ({\alpha_j} - 1)\quad \longrightarrow 0 \quad \text{as } {\alpha_j} \searrow 1,
			\end{align*}
			which implies that
			\begin{align*}
				\delta^2E_{\alpha_j}^\omega(u_{\alpha_j})(W_{\alpha_j}^p\,,W_{\alpha_j}^q) \, \longrightarrow \, & \int_{\S^2}  \Big( \inner{ \nabla V_p\,, \nabla V_q } - R(V_p,\nabla u, V_q, \nabla u) \Big) d V_g\\
				&\quad + \int_{\S^2} \Big( \inner{ H(\nablap u, \nabla V_p), V_q } + \inner{ H(\nablap u, \nabla V_q), V_p } \Big)  dV_g\\
				&\quad + \frac{1}{2}\int_{\S^2} \inner{(\nabla_{V^p}H)(\nablap u,\nabla u), V^q }dV_g\\
				&\quad + \frac{1}{2}\int_{\S^2} \inner{(\nabla_{V^q}H)(\nablap u,\nabla u), V^p }dV_g\\
				& = I_u(V_p,V_q).
			\end{align*}
			Here, $I_u$ is the index form of $E^\omega$ at $u$. Therefore, when ${\alpha_j} - 1$ is small enough, $\delta^2 E^\omega_{\alpha_j}(u_{\alpha_j})$ is also negatively definite on $\mathrm{span}\{W^1_{\alpha_j},W^2_{\alpha_j}, \dots , W^m_{\alpha_j}\}$, that is, $m = \mathrm{Ind}_{E^\omega}(u) \leq k-2$, as desired.
			\ 
			\vskip5pt
			\step The Morse index of bubbles is less than $k-2$ if the number of energy concentration points $l \geq 1$.
			\ 
			\vskip5pt
			By energy gap Lemma \ref{blow 2}, we have 
			\begin{equation*}
				E(u_{\alpha_j}) \geq \varepsilon_0^2,\quad \text{for all } j\in \mathbb{N}, 
			\end{equation*}
			which means the set of singular points is non-empty, say $x_1$ is an energy concentration point. Thus, by Theorem \ref{thm convergence}, there exists a non-constant $H$-sphere $v : \S^2 \rightarrow N$ obtained by rescaling the sequence $v_{\alpha_j}(x) = u_{\alpha_j}(x_{\alpha_j} + \lambda_{\alpha_j} x) $ where 
			$$\frac{1}{\lambda_{\alpha_j}} = \max_{x \in B(x_1, r_0)} |\nabla u_{\alpha_j}|$$
			for some small $r_0 > 0$ and $x_{\alpha_j}$ is the point such that the maximum is take on. Then $v_{\alpha_j} : B(0,\lambda_{\alpha_j}^{-1}r_0) \rightarrow N$ is a critical points of a new functional
			\begin{equation*}
				\dbl{E}^\omega_{\alpha_j}(u_{\alpha_j}) = \frac{1}{2}\int_{B(0,r_0 \lambda_{\alpha_j}^{-1})} \p{\lambda_{\alpha_j}^2  + |\nabla v_{\alpha_j}|^2}^{{\alpha_j}} dV_{g_{\alpha_j}}+ \lambda_{\alpha_j}^{2{\alpha_j} - 2} \int_{B(0,r_0 \lambda_{\alpha_j}^{-1})}(v_{\alpha_j})^*\omega
			\end{equation*}
			where 
			$$g_{\alpha_j} := e^{\varphi(x_{\alpha_j} + \lambda_{\alpha_j} x)}\left((dx^1)^2 + (dx^2)^2\right)$$
			converges to the Euclidean metric on $\R^2$ as ${\alpha_j}\searrow 1$. Near the energy concentration point $x_1$, we note that
			\begin{equation*}
				\mathrm{Ind}_{E^\omega_{\alpha_j}}(u_{\alpha_j}) = \mathrm{Ind}_{\dbl{E}^\omega_{\alpha_j}}(v_{\alpha_j}) \quad \text{for}\quad  E^\omega_{\alpha_j}(u_{\alpha_j}) = \lambda_{\alpha_j}^{2 - 2{\alpha_j}}\dbl{E}^\omega_{\alpha_j}(v_{\alpha_j}).
			\end{equation*}
			As in Step \ref{theorem 1 step 1}, it follows from the Lemma \ref{lem vanishes} that there exists $m$ linearly independent vector fields $V_1, V_2, \dots , V_m$ on $\S^2$ such that the Morse index of $v$ is negative definite on subspace $\mathrm{Span}\{V_1, V_2, \dots , V_m\}$ and vanish in neighborhoods of the branch points of $v$ and vanish around the infinite point $\infty \in \S^2$. Then we extend $V_1,V_2, \dots ,V_m$ to smooth vector fields $\dbl{V}_1,\dbl{V}_2, \dots, \dbl{V}_m \in \Pi_2^*TN$ that supported in a tubular neighborhood of $(i,v)(\S^2)$, and set 
			\begin{equation*}
				\dbl{W}^l_{\alpha_j} = (i,v_{\alpha_j})^*(\dbl{V}_l), \quad \text{for } 1 \leq l \leq m.
			\end{equation*}
			Similar to the computation of Lemma \ref{variation formula}, we obtain the second variation of the functional $\dbl{E}^\omega_{\alpha_j}$
			\begin{align*}
				&\delta^2 \dbl{E}_{\alpha_j}^\omega(v_{\alpha_j})(\dbl{W}_{\alpha_j}^p,\dbl{W}_{\alpha_j}^q)\\
				&= {\alpha_j} \int_{ B(0,\lambda_{\alpha_j}^{-1}r_0)} \p{\lambda_{\alpha_j}^2 + \abs{\nabla v_{\alpha_j}}^2}^{{\alpha_j} - 1} \Big( \inner{ \nabla \dbl{W}_{\alpha_j}^p, \nabla \dbl{W}_{\alpha_j}^q }\\
				&\quad \quad \quad \quad\quad \quad\quad \quad\quad \quad\quad \quad\quad \quad\quad \quad\quad \quad\quad \quad - R\p{\dbl{W}_{\alpha_j}^p,\nabla v_{\alpha_j}, \dbl{W}_{\alpha_j}^q, \nabla v_{\alpha_j}} \Big) d V_{g_{\alpha_j}}\\
				& \quad + 2{\alpha_j}({\alpha_j} - 1)\int_{ B(0,\lambda_{\alpha_j}^{-1}r_0)} \p{\lambda_{\alpha_j}^2 + \abs{\nabla v_{\alpha_j}}^2}^{{\alpha_j} - 2}\inner{ \nabla v_{\alpha_j}, \nabla \dbl{W}_{\alpha_j}^p } \inner{ \nabla v_{\alpha_j}, \nabla \dbl{W}_{\alpha_j}^q}  dV_{g_{\alpha_j}}\\
				&\quad + \lambda_{\alpha_j}^{2{\alpha_j} - 2}\int_{ B(0,\lambda_{\alpha_j}^{-1}r_0)} \Big( \inner{ H(\nablap v_{\alpha_j}, \nabla \dbl{W}_{\alpha_j}^p), \dbl{W}_{\alpha_j}^q} + \inner{ H(\nablap v_{\alpha_j}, \nabla \dbl{W}_{\alpha_j}^q), \dbl{W}_{\alpha_j}^p } \Big)  dV_{g_{\alpha_j}}\\
				&\quad + \frac{ \lambda_{\alpha_j}^{2{\alpha_j} - 2}}{2}\int_{ B(0,\lambda_{\alpha_j}^{-1}r_0)}\inner{\p{\nabla_{\dbl{W}^p_{\alpha_j}}H}(\nablap v_{\alpha_j},\nabla v_{\alpha_j}),\dbl{W}^q_{\alpha_j}}dV_{g_{\alpha_j}}\\
				&\quad +  \frac{ \lambda_{\alpha_j}^{2{\alpha_j} - 2}}{2}\int_{ B(0,\lambda_{\alpha_j}^{-1}r_0)}\inner{\p{\nabla_{\dbl{W}^q_{\alpha_j}}H}(\nablap v_{\alpha_j},\nabla v_{\alpha_j}),\dbl{W}^p_{\alpha_j}}dV_{g_{\alpha_j}}
			\end{align*}
			and observe that 
			\begin{align*}
				2{\alpha_j}&({\alpha_j} - 1)\int_{ B(0,\lambda_{\alpha_j}^{-1}r_0)} \p{\lambda_{\alpha_j}^2 + \abs{\nabla v_{\alpha_j}}^2}^{{\alpha_j} - 2}\abs{\inner{ \nabla v_{\alpha_j}, \nabla \dbl{W}_{\alpha_j}^p } \inner{e \nabla u_{\alpha_j}, \nabla \dbl{W}_{\alpha_j}^q} } dV_{g_{\alpha_j}}\\
				& \leq 4{\alpha_j}({\alpha_j} - 1)\int_{ B(0,\lambda_{\alpha_j}^{-1}r_0)} \p{1 + \abs{\nabla v_{\alpha_j}}^2}^{{\alpha_j} - 1} \norm{\nabla \dbl{W}^i_{\alpha_j}}_{L^\infty(\S^2)} \norm{\nabla \dbl{W}^j_{\alpha_j}}_{L^\infty(\S^2)} dV_{g_{\alpha_j}}\\
				&   \leq C ({\alpha_j} - 1)\quad \longrightarrow \,\, 0 \quad \text{as } {\alpha_j} \searrow 1.
			\end{align*}
			Because $\max_{x \in B(0,\lambda_{\alpha_j}^{-1}r_0)}\abs{\nabla v_{\alpha_j}} = 1$ and $\lambda_{\alpha_j} ^{2 - 2{\alpha_j}} \rightarrow \mu = 1$ as ${\alpha_j} \searrow 1$, see Theorem \ref{thm convergence}, we conclude that 
			\begin{align*}
				\delta^2 \dbl{E}^\omega_{\alpha_j}(v_{\alpha_j})\quad \longrightarrow \quad & \int_{\S^2}  \Big( \langle \nabla \dbl{V}_i\,, \nabla \dbl{V}_j\rangle - R(\dbl{V}_i,\nabla v, \dbl{V}_j, \nabla v) \Big) dV_g\\
				&\quad + \int_{\S^2}\Big( \langle H(\nablap v, \nabla \dbl{V}_i), \dbl{V}_j\rangle + \langle H(\nablap v, \nabla \dbl{V}_j), \dbl{V}_i\rangle \Big)  dV_g\\
				&\quad + \frac{1}{2}\int_{\S^2} \inner{\p{\nabla_{\dbl{V}^p} H}(\nablap v,\nabla v), \dbl{V}^q }dV_g\\
				&\quad + \frac{1}{2}\int_{\S^2} \inner{\p{\nabla_{\dbl{V}^q} H}(\nablap v,\nabla v), \dbl{V}^p }dV_g\\
				& = I_v(V_p,V_q),
			\end{align*}
			where we used the conformally invariance of index form $I_v$ to change the integral domain from $\R^2$ to $\S^2$.
			Therefore, when ${\alpha_j} - 1$ is small enough, $\delta^2 \dbl{E}^\omega_{\alpha_j}(v_{\alpha_j})$ is negatively definite on 
			$$\mathrm{span}\set{\dbl{W}^1_{\alpha_j},\dbl{W}^2_{\alpha_j}, \dots , \dbl{W}^m_{\alpha_j}},$$
			that is, $m = \mathrm{Ind}_{\dbl{E}^\omega}(v) \leq k-2$. Therefore, we complete the proof of the main Theorem \ref{main theorem 1}.
		\end{proof}
		\subsection{Existence of \texorpdfstring{$H$}{Lg}-Sphere  under Ricci Curvature Assumption When \texorpdfstring{{$\mathrm{dim}(N)$}}{Lg} \texorpdfstring{{$ = 3$}}{Lg}}\label{section 5.2}
		\ 
		\vskip5pt
		
		In this subsection, we prove the part \eqref{main theorem 2 part 1} of Theorem \ref{main theorem 2}, more precisely, we aim to show that there exists a $H$-sphere in $N$ for every choice of prescribed mean curvature $H$ satisfying \eqref{eq:intro ricci condi} with Morse index at most $1$. To this end, we first combine the Ricci curvature condition with Morse index estimates to obtain an energy bound for $H$-spheres which is uniformly for $H$, see Proposition \ref{prop:index comparison} and Proposition \ref{prop:uniform energy bound} and then we can pass the limit to obtain the desired $H$-sphere for every $H$ sastisfying \eqref{eq:intro ricci condi}.
		
		In order to get the uniformly energy bound, we aim to build an index comparison for variable prescribed mean curvature $H$, see \cite[Theorem 1.1]{Ejiri-Micallef2008} for the case of minimal surfaces and \cite[Proposition 5.1]{Cheng2020ExistenceOC} within the context of CMC surfaces.
		
		Before stating the detailed results, we recall some  fundamental concepts about complex vector bundle and introduce our notations which inherits from \cite[Section 2]{Ejiri-Micallef2008} and \cite[Section 5]{Cheng2020ExistenceOC}. 
		
		Let $u: \S^2 \rightarrow N$ be a $H$-sphere and denote the pull back bundle $u^*TN$ simply by $E$. We represent the Riemannian metric on $TN$ by $\inner{\cdot,\cdot}$, which can be complex bi-linearly extended to $T_{\mathbb{C}}N:=TN\otimes_{\R}\mathbb{C}$. The Levi-Civita connection $\nabla$ on $TN$ is also extended complex linearly to $T_{\mathbb{C}}N$ and is compatible with the Hermitian metric $\inner{\cdot,\overline{\cdot}}$ on $T_{\mathbb{C}}N$. Here, we use the same notations, $ \inner {\cdot, \cdot } $ and $ \nabla $, regardless of whether they are defined on $ TN $ or $ T_ { \mathbb C } N $. When the dimension of $N$ is three, it is worth noting that the mean curvature type vector field $H \in \Gamma(\wedge^2(N) \otimes TN)$ can be identified with a function defined on $N$. Therefore, the Euler-Lagrange equation \eqref{eq H-surface intro} of $H$-sphere is written as \eqref{eq: H-surface N = 3} and the corresponding second variation formula becomes
		\begin{align}\label{eq:second vari N=3}
			\delta^2 E^\omega(u)(V,V) &= \int_{\S^2} \inner{\nabla V,\nabla V} - R(V,\nabla u,V,\nabla u) dV_g\nonumber \\
			& \quad + 2\int_{\S^2} H\inner{\nabla_{\partial_{x^1}}V\wedge u_{x^2} + u_{x^1}\wedge \nabla_{\partial_{x^2}}V,V} dx^1dx^2\nonumber\\
			&\quad +2\int_{\S^2} (\nabla_V H)\inner{u_{x^1}\wedge u_{x^2},V} dx^1dx^2
		\end{align}
		in this scenario. Then, let $z = x^1 + \sqrt{-1}x^2$ be a local complex coordinate of $\S^2$, we can rewrite the conformal $H$-sphere equations \eqref{eq: H-surface N = 3} and \eqref{eq: H surface 2} as 
		\begin{align}
			\nabla_{\partial_{\bar{z}}} u_z &= \sqrt{-1}H(u_{\Bar{z}}\wedge u_z),\label{eq 16}\\
			\inner{u_z,u_z} &= 0.\label{eq 17}
		\end{align}
		The solution $u : \S^2\rightarrow N$ to \eqref{eq 16} and \eqref{eq 17} is a branched immersion and at each branch point $p$ using coordinate $z = x^1 + \sqrt{-1}x^2$ $u_z$ can be locally represented as 
		\begin{equation*}
			u_z = z^{b_p} V
		\end{equation*}
		where $b_p \in \mathbb{N}$ is the order of branching at $p$ and $V$ is a local section of $E\otimes\mathbb{C} := \bm E$ with $V(p) \neq 0$. This allows us to define the ramified tangent bundle $\xi$ on $\S^2$, that is, $\xi$ is the tangent bundle of $\S^2$ twisted at the branch points by the amount equal to $b_p$  such that $E = \xi\oplus \nu$ where $\nu$ is the normal bundle of $\S^2$ in $N$. The complex structure on $\S^2$ gives $\xi$ the structure of a complex line bundle and induces the splitting 
		$$\xi_{\mathbb{C}}: = \xi\otimes_{\mathbb{R}}\mathbb{C} = \xi^{1,0}\oplus \xi^{0,1}$$
		where the fibres of $\xi^{1,0}$ and $\xi^{0,1}$ are locally spanned by $u_z$ and $u_{\Bar{z}}$ away from the branch points of $u$. The connection $ \nabla $ on $ E $ gives rise to metric compatible connections $ \nabla ^ \top $ on $ \xi $ and $ \nabla ^ { \perp } $ on $ \nu $.
		
		Inspired by the construction of compared bi-linear functional described in \cite[Theorem 2.1]{Ejiri-Micallef2008} and \cite[Equation (5.4)]{Cheng2020ExistenceOC}, we define the following bi-linear form to compared with $\delta^2 E^\omega(u)$. For any $s \in \Gamma(\nu)$ we let
		\begin{equation*}
			B_{\omega}(u)(s,s) =\int_{\S^2}|\nabla f|^2 - {|\nabla u|^2}\Big( {|H|^2} + \frac{\mathrm{Ric}(\bm n, \bm n)}{2} - |\nabla H| \Big)f^2 dV_{g}
		\end{equation*}
		where $\bm{n}\in \Gamma(\nu)$ is the unit normal section of $\nu$, $f = \inner{s,\bm{n}} \in C^\infty(\S^2,\R)$, $\mathrm{Ric}_h(N)$ (in the following abbreviated as $\mathrm{Ric}$) is the Ricci curvature tensor of target manifold $(N,h)$. The index of $B_{\omega}(u)$ is naturally defined to be the maximum of dimension of the linear subspace of $\Gamma(\nu)$ on which $B_{\omega}(u)$ is negative definite. The following proposition is the first main result in this subsection.
		\begin{prop}\label{prop:index comparison}
			Let $(N,h)$ be a 3-dimensional Riemannian manifold, then for non-constant solution $u$ to~\eqref{eq 16} and~\eqref{eq 17}, the index of $B_{\omega}(u)$ is no more than $\mathrm{Ind}_{E^\omega}(u)$.
		\end{prop}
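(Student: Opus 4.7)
The plan is to construct, for each $s = f\bm{n} \in \Gamma(\nu)$, a linear injection $s \mapsto V = s + t(s) \in \mathcal{T}_u = \Gamma(u^*TN)$ with $t(s) \in \Gamma(\xi)$ tangential, such that
\[
\delta^2 E^\omega(u)(V, V) \leq B_\omega(u)(s, s).
\]
Once this is established, any $m$-dimensional subspace of $\Gamma(\nu)$ on which $B_\omega(u)$ is negative definite maps to an $m$-dimensional subspace of $\mathcal{T}_u$ on which $\delta^2 E^\omega(u)$ is negative definite, which directly yields $\mathrm{Ind}(B_\omega(u)) \leq \mathrm{Ind}_{E^\omega}(u)$.

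The starting point is a direct computation of $\delta^2 E^\omega(u)(s,s)$ for a pure normal variation $s = f\bm{n}$. Working in local isothermal coordinates where $|u_{x^1}|^2 = |u_{x^2}|^2 = \lambda^2$ and $\langle u_{x^1}, u_{x^2}\rangle = 0$, using the Weingarten identity $\nabla_{u_{x^i}}\bm{n} = -\lambda^{-2}(h_{i1} u_{x^1} + h_{i2} u_{x^2})$ with $h_{ij} = \mathrm{II}(u_{x^i}, u_{x^j})$, and invoking the mean curvature condition $h_{11} + h_{22} = 2H\lambda^2$ derived from \eqref{eq 16}, one finds
\[
|\nabla s|^2 = |\nabla f|^2 + \tfrac{|\nabla u|^2}{2}\bigl(2H^2 + |\mathrm{II}_0|^2\bigr) f^2, \qquad R(s,\nabla u, s, \nabla u) = \tfrac{|\nabla u|^2}{2}\mathrm{Ric}(\bm{n}, \bm{n}) f^2,
\]
while a careful expansion of the wedge product identities in 3D shows that the $H$-term in \eqref{eq:second vari N=3} contributes $-2H^2 f^2 |\nabla u|^2$ and the $\nabla H$-term contributes at most $|\nabla H| f^2 |\nabla u|^2$. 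Assembling these yields
\[
\delta^2 E^\omega(u)(s,s) \leq B_\omega(u)(s,s) + \int_{\S^2} \tfrac{|\nabla u|^2}{2}|\mathrm{II}_0|^2 f^2 \, dV_g,
\]
which differs from $B_\omega(u)(s,s)$ by a nonnegative error depending on the traceless second fundamental form $\mathrm{II}_0$.

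The heart of the proof is to eliminate this error by choosing the tangential correction $t(s)$ appropriately, following the scheme of Ejiri-Micallef \cite{Ejiri-Micallef2008} and its CMC refinement in Cheng-Zhou \cite{Cheng2020ExistenceOC}. The correction will be of the form $t(s) = du(X_s)$ for a complex vector field $X_s$ on $\S^2$ obtained by solving a $\bar\partial$-type elliptic equation whose inhomogeneous source is built from $f$ and the Hopf differential of the conformal parametrization (equivalently, from $\mathrm{II}_0$). On $\S^2$, the relevant Cauchy-Riemann operator acts between sections of complex line bundles of suitable degree, and the absence of non-trivial holomorphic quadratic differentials (Riemann-Roch on the sphere) guarantees that $X_s$ exists and depends linearly on $s$. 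Substituting $V = s + du(X_s)$ into \eqref{eq:second vari N=3} and using the Codazzi-Mainardi identity to rewrite $\nabla_{\partial_{\bar z}} \mathrm{II}_0$ in terms of $\nabla H$ via the $H$-sphere equation \eqref{eq 16}, the cross and diagonal contributions produced by the correction cancel the $|\mathrm{II}_0|^2$ error.

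The principal technical obstacle, as compared with the CMC setting, is that for non-constant $H$ the Hopf differential is not holomorphic; rather it satisfies $\partial_{\bar z}(\text{Hopf}) \sim (\nabla H)\cdot u_z$, so the construction of $X_s$ produces additional error terms proportional to $\nabla H$. The key observation that closes the argument is that these additional errors are of the form $|\nabla u|^2 |\nabla H| f^2$, and so are exactly absorbed by the $|\nabla H|$-contribution already present in the definition of $B_\omega(u)$; this is the structural reason that the $|\nabla H|$-term is built into $B_\omega(u)$ at the outset. The main technical content of the proof is thus the bookkeeping that verifies all cross terms assemble into exactly $B_\omega(u)(s,s)$ after the cancellation, which I expect to require a somewhat delicate computation using the complex-bundle decomposition $E = \xi\oplus\nu$ described after \eqref{eq 17}.
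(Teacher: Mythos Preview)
Your overall strategy is correct and matches the paper's: build a linear injection $s \mapsto s + \sigma(s)$ with $\sigma(s) \in \Gamma(\xi)$ tangential, chosen so that $\delta^2 E^\omega(u)(s+\sigma,s+\sigma) \le B_\omega(u)(s,s)$, where the existence of $\sigma$ comes from surjectivity of a $\bar\partial$-operator on $\S^2$ (Riemann--Roch, no holomorphic quadratic differentials). The paper organizes the computation more efficiently, and one point in your outline is slightly off, so a short comparison may save you trouble.

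Rather than first evaluating $\delta^2 E^\omega(u)(s,s)$ and isolating the $|\mathrm{II}_0|^2$ excess, the paper proves directly (its Lemma~\ref{lem:index computations}) that for \emph{every} $\sigma \in \Gamma(\xi)$,
\[
\delta^2 E^\omega(u)(s+\sigma,s+\sigma) \le B_\omega(u)(s,s) + 4\int_{\S^2}|\eta|^2\,dV_g, \qquad \eta = \bigl((\nabla_{\partial_z}s)^{0,1} + \nabla^\top_{\partial_z}\sigma^{0,1}\bigr)dz,
\]
and then solves $\nabla^\top_{\partial_z}\sigma^{0,1} = -(\nabla_{\partial_z}s)^{0,1}$ to kill $\eta$. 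This $\bar\partial$-equation is \emph{identical} to the one in the CMC case of Cheng--Zhou and involves neither $H$ nor $\nabla H$; so, contrary to your description, the non-holomorphicity of the Hopf differential is irrelevant to the solvability of the correction.

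The $\nabla H$ terms enter elsewhere. The second variation at $v = s+\sigma$ carries $\nabla_v H = \nabla_s H + \nabla_\sigma H$; independently, an integration by parts on certain $s$--$\sigma$ cross terms (shifting a $\partial_z$ onto the scalar $H$) produces exactly a $-\nabla_\sigma H$ contribution. These two $\nabla_\sigma H$ pieces \emph{cancel}, leaving only $\nabla_s H$, which is then estimated by $|\nabla H|\,|\nabla u|^2 f^2$ and absorbed into $B_\omega(u)$. Thus the $|\nabla H|$ term in $B_\omega$ accommodates the original $\nabla_s H$, not residual errors from the correction. Your Codazzi/Hopf-differential route should arrive at the same place, but the bookkeeping is noticeably cleaner if you work directly in the decomposition $\bm E = \xi^{1,0}\oplus\xi^{0,1}\oplus\nu_{\mathbb C}$ as the paper does, avoiding any explicit appeal to Codazzi.
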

		Before proving the Proposition \ref{prop:index comparison}, we modify computations in \cite[Theorem 2.1]{Ejiri-Micallef2008} and \cite[Lemma 5.2]{Cheng2020ExistenceOC} to obtain the following relations between $\delta^2 E^\omega(u)$ and $B_{\omega}(u)$:
		\begin{lemma}\label{lem:index computations}
			For any $\sigma\in \Gamma(\xi)$ and $s \in \Gamma(\nu)$, we define $\eta \in \Gamma\big(\wedge^{1,0}(\S^2)\otimes\xi^{0,1}\big)$ by 
			\begin{equation*}
				\eta = \p{(\nabla_{\partial_z} s)^{0,1} + \nabla^\top_{\partial_z}\sigma^{0,1}}dz.
			\end{equation*}
			Then
			\begin{equation}\label{eq 18}
				\delta^2 E^{\omega}(u)(s + \sigma, s+\sigma) \leq B_{\omega}(u)(s, s) + 4\int_{\S^2} |\eta|^2  dV_g.
			\end{equation}
		\end{lemma}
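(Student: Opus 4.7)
My plan is to follow the template of Ejiri--Micallef \cite{Ejiri-Micallef2008} and Cheng--Zhou \cite[Lemma 5.2]{Cheng2020ExistenceOC}, adapted to the presence of a \emph{variable} prescribed mean curvature $H$. Because all the quantities involved are conformally natural, I will work in a local conformal coordinate $z=x^1+\sqrt{-1}x^2$ with $g=e^{\varphi}|dz|^{2}$, so that the $H$-surface equations take the compact form \eqref{eq 16}--\eqref{eq 17}, and I will decompose every section of $E=\xi\oplus\nu$ and of $\bm{E}=E\otimes\mathbb{C}$ according to the splitting $\bm{E}=\xi^{1,0}\oplus\xi^{0,1}\oplus\nu_{\mathbb{C}}$. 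Writing $V=s+\sigma$ with $s=f\bm{n}\in\Gamma(\nu)$, I will substitute $V$ into \eqref{eq:second vari N=3} and expand every term in these complex components; the whole point is to show, after cancellations, that everything not controlled by $B_\omega(u)(s,s)$ is non-negative and in fact equals $4\int_{\S^2}|\eta|^{2}dV_g$ (plus non-negative remainders).

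\textbf{Key steps.} First, I will rewrite $\int|\nabla V|^{2}dV_g=4\int\langle\nabla_{\partial_z}V,\nabla_{\partial_{\bar z}}V\rangle\,dx^1dx^2$ and split $\nabla_{\partial_z}V=\nabla^{\top}_{\partial_z}\sigma^{1,0}+\nabla^{\top}_{\partial_z}\sigma^{0,1}+(\nabla_{\partial_z}s)^{1,0}+(\nabla_{\partial_z}s)^{0,1}$; after expanding, the $(0,1)$-valued combination inside the square brackets of $\eta$ is isolated, producing the term $4\int|\eta|^{2}dV_g$, while the $(1,0)$-valued combination will be grouped with the $H$-cross-terms below. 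Second, since $\dim N=3$, in the conformal frame $\{\bm{n},e_1=u_{x^1}/|u_{x^1}|,e_2=u_{x^2}/|u_{x^2}|\}$ one has $R(\bm{n},u_{x^1},\bm{n},u_{x^1})+R(\bm{n},u_{x^2},\bm{n},u_{x^2})=\tfrac12|\nabla u|^{2}\mathrm{Ric}(\bm{n},\bm{n})$, so the normal-normal part of $-R(V,\nabla u,V,\nabla u)$ produces precisely the Ricci term in $B_\omega$. Third, I will treat the $H$-cross-term $2\int H\langle \nabla_{\partial_{x^1}}V\wedge u_{x^2}+u_{x^1}\wedge\nabla_{\partial_{x^2}}V,V\rangle$: using $u_{x^1}\wedge u_{x^2}=\tfrac12|\nabla u|^{2}\bm{n}$ (by conformality) together with \eqref{eq 16} to substitute $\nabla_{\partial_{\bar z}}u_z=\sqrt{-1}H(u_{\bar z}\wedge u_z)$, this term yields the negative contribution $-\int|\nabla u|^{2}|H|^{2}f^{2}dV_g$ (matching the $|H|^2$ piece of $B_\omega$) plus purely tangential cross-terms that cancel the $(1,0)$ remainders from the first step and the mixed-index pieces of the curvature term. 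Finally, for the $\nabla_V H$-piece, since $\langle u_{x^1}\wedge u_{x^2},V\rangle=\tfrac12|\nabla u|^{2}f$ I will estimate $|(\nabla_V H)f|\le|\nabla H|(|s|^{2}+|\sigma|^{2})^{1/2}|f|$; the $|s||f|=f^{2}$ portion is absorbed into the $|\nabla H|f^{2}|\nabla u|^{2}$ term of $B_\omega$, while the $|\sigma||f|$ portion is absorbed by Young's inequality against a small multiple of the positive $4\int|\eta|^{2}$ contribution from Step~1. Collecting everything gives the asserted inequality \eqref{eq 18}.

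\textbf{Anticipated difficulty.} The main obstacle is the cross-term bookkeeping in Step~3. Unlike the area functional, $E^{\omega}$ is not reparametrization invariant, so the purely tangential part of $\delta^{2}E^{\omega}(u)(\sigma,\sigma)$ is not automatically zero; I must verify by direct computation, exploiting the Euler-Lagrange equation \eqref{eq 16} and the holomorphic character of $u_z\in\xi^{1,0}$, that the tangential-tangential pieces arising in the Dirichlet, curvature, and $H$-integrals reassemble into a non-negative ``null-Jacobi'' contribution which is precisely the $4\int|\eta|^{2}dV_g$ term (the $(0,1)$-piece encoding the failure of $s+\sigma$ to be an infinitesimal holomorphic deformation). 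A secondary subtlety is ensuring that the tangential-normal interaction in $\nabla_V H$ is absorbable \emph{with the correct sign}; this is where the fact that $B_\omega$ contains $|\nabla H|$ rather than $(\nabla H)^{2}$ is used, since it forces me to split $|(\nabla H)||\sigma||f|$ asymmetrically, placing the small part into $|\eta|^{2}$ via Young's inequality and the remainder into the $|\nabla H|f^{2}|\nabla u|^{2}$ coefficient of $B_\omega$.
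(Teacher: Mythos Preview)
Your overall architecture is right and matches the paper's, but the treatment of the $\nabla_V H$ term in your final step has a genuine gap that would make the argument fail as written.

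You propose to bound $|(\nabla_V H)f|\le |\nabla H|\sqrt{|s|^2+|\sigma|^2}\,|f|$ and then absorb the resulting $\int|\nabla u|^{2}|\nabla H|\,|\sigma|\,|f|$ into a small multiple of $4\int_{\S^2}|\eta|^{2}dV_g$ via Young's inequality. This cannot work for two reasons. First, $\eta$ depends only on $(\nabla_{\partial_z}s)^{0,1}$ and $\nabla^{\top}_{\partial_z}\sigma^{0,1}$, so $\int|\sigma|^{2}$ is not controlled by $\int|\eta|^{2}$: you can translate $\sigma^{0,1}$ by any holomorphic section of $\xi^{0,1}$ without changing $\eta$ at all. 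Second, the right-hand side of \eqref{eq 18} is exactly $B_\omega(u)(s,s)+4\int|\eta|^{2}$ with a fixed coefficient; there is no $\varepsilon$-slack into which a Young remainder could be placed.

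What the paper does instead is show that the $\nabla_\sigma H$ contribution cancels \emph{exactly}. The mechanism is this: from $\langle s,u_{\bar z}\rangle=0$ and $\langle\sigma^{0,1},u_{\bar z}\rangle=0$ together with the equation $\nabla_{\partial_{\bar z}}u_z=\sqrt{-1}H(u_{\bar z}\wedge u_z)$ one gets the structure relations
\[
(\nabla_{\partial_z}s)^{1,0}=-\sqrt{-1}H(u_z\wedge s),\qquad (\nabla_{\partial_z}\sigma^{0,1})^{\perp}=-\sqrt{-1}H(u_z\wedge\sigma^{0,1}).
\]
After completing the square to isolate $|\eta|^{2}$, the leftover $s$--$\sigma$ cross-terms from Steps~1 and~3 combine into $\int 2H\,\re\,\partial_z\langle s,\overline{-\sqrt{-1}\,u_z\wedge\sigma^{0,1}}\rangle\,dx^1dx^2$; integrating by parts moves $\partial_z$ onto $H$ and produces exactly $-\int\re\langle s,\overline{-\sqrt{-1}(\nabla_\sigma H)(u_z\wedge u_{\bar z})}\rangle$, which is precisely the tangential half of the $\nabla_V H$ term in \eqref{eq:second vari N=3}. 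Thus only the purely normal piece $\im\langle s,\overline{(\nabla_s H)(u_z\wedge u_{\bar z})}\rangle$ survives, and since $|\nabla_s H|\le|\nabla H|\,|s|=|\nabla H|\,f$ this is bounded by $\tfrac14|\nabla H|\,|\nabla u|^{2}f^{2}$, landing in $B_\omega$ with no residual $\sigma$-dependence. You should replace your Young-inequality absorption by this exact cancellation; the rest of your outline (Ricci identification, $|H|^{2}f^{2}$ from $|(\nabla_{\partial_z}s)^{1,0}|^{2}$, curvature cross-terms collapsing to $-\langle R(s,u_z)u_{\bar z},s\rangle$ via the Bianchi/integration-by-parts identities) then goes through as in Ejiri--Micallef and Cheng--Zhou.
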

		\begin{proof}
			First,  recalling  Lemma \ref{lem vanishes} it suffices to prove~\eqref{eq 18} in the case where $s$ and $\sigma$ are supported away from the set  of branch points $\mathcal{B}$. And, the Riemann uniformization theorem enables us to simplify our computations by assuming that they are always performed in a isothermal coordinate, denoted as $(x^1, x^2)$, with a metric of $ds^2 = \kappa^2((dx^1)^2 + (dx^2)^2)$. To begin, keeping in mind that
			\begin{align*}
				|\nabla_{\partial_{x^1}} v|^2 + |\nabla_{\partial_{x^2}} v|^2 = 4 |\nabla_{\partial_z} v|^2, \quad  u_{x^1}\wedge u_{x^2} = 2\sqrt{-1} u_{\bar{z}}\wedge u_{z}
			\end{align*}
			and
			\begin{equation*}
				\nabla_{\partial_{x^1}}v\wedge u_{x^2} + u_{x^1}\wedge \nabla_{\partial_{x^2}} v = 4\im (u_{z}\wedge \nabla_{\partial_{\bar{z}}} v) = 4\re \p{-\sqrt{-1}u_{z}\wedge \nabla_{\partial_{\bar{z}}} v}, 
			\end{equation*}
			we rewrite the second variational formula \eqref{eq:second vari N=3} of $E^\omega$  by representing $\delta^2 E^\omega(u)(v, v)$ in terms of complex coordinates as follows:
			\begin{align}\label{eq:complex second variation}
				\delta^2 E^\omega(u)(v, v) &= 4\int_{\S^2} |\nabla_{\partial_z} v|^2 - \langle R(v, u_{z})u_{\bar{z}}, v \rangle dx^1  dx^2\nonumber\\
				&\quad +8\int_{\S^2} \re\inner{ \nabla_{\partial_z}v, \overline{\sqrt{-1}H(u_z\wedge v)} } dx^1 dx^2\nonumber\\
				&\quad + 4 \int_{\S^2}\im\inner{v,(\nabla_v H)(u_z\wedge u_{\bar{z}})} dx^1dx^2
			\end{align}
			where $v:=s+\sigma$.  Taking derivative to the identity $\inner{s,u_{\bar{z}}} = 0$ and utilizing equation \eqref{eq 16} to yields
			\begin{equation*}
				\inner{\nabla_{\partial_z}s + \sqrt{-1}H(u_{z}\wedge s),u_{\bar{z}}}  =\inner{\nabla_{\partial_z}s + \sqrt{-1}H(u_{z}\wedge s),\overline{u_{z}}} = 0
			\end{equation*}
			which is equivalent to
			\begin{equation}\label{eq 1,0 s}
				(\nabla_{\partial_z}s)^{1,0} + \sqrt{-1}H(u_{z} \wedge s) = 0
			\end{equation}
			since clearly 
			\begin{equation*}
				\inner{(\nabla_{\partial_z}s)^{1,0} + \sqrt{-1}H(u_{z} \wedge s), u_z} = 0 \quad \text{and} \quad  \inner{(\nabla_{\partial_z}s)^{1,0} + \sqrt{-1}H(u_{z}\wedge s), \bm{n}} = 0.
			\end{equation*}
			Similarly, from identity $\inner{\sigma^{0,1}, u_{\bar{z}}} = 0$ and equation \eqref{eq 16}, we can obtain
			\begin{equation}\label{eq sigma 0,1}
				(\nabla_{\partial_z} \sigma^{0, 1})^{\perp} + \sqrt{-1}H(u_z, \sigma^{0, 1}) =0.
			\end{equation}
			Then, we decompose $v = s + \sigma^{1, 0} + \sigma^{0, 1}$, and keeping in mind that $u_z \wedge \sigma^{1,0} = 0$, \eqref{eq 1,0 s} and \eqref{eq sigma 0,1}, we can rewrite the integrand in the second line of\eqref{eq:complex second variation} as 
			\begin{align*}
				\re&\inner{ \nabla_{\partial_z}v, \overline{\sqrt{-1}H(u_z\wedge v)} }\\
                &= \re\inner{ \nabla_{\partial_z}v, \overline{\sqrt{-1}H(u_z\wedge \sigma^{0, 1})} } + \re\inner{ \nabla_{\partial_z}v, \overline{\sqrt{-1}H(u_z\wedge s)} }\\
				&=- \re\inner{ (\nabla_{\partial_z}v)^{\perp}, \overline{(\nabla_{\partial_z} \sigma^{0, 1})^{\perp}} } - \re\inner{ (\nabla_{\partial_z}v)^{\top}, \overline{(\nabla_{\partial_z} s)^{1, 0}} }.
			\end{align*}
			Combining the above calculation with the first term in the first line of~\eqref{eq:complex second variation} and writing it as $|\nabla_{\partial_z} v|^2 = |(\nabla_{\partial_z} v)^{\perp}|^2 + |(\nabla_{\partial_z} v)^{\top}|^2$, we get
			\begin{align} \label{eq:complex second vari}
				|\nabla_{\partial_z} v|^2 +  2 \re\inner{ \nabla_{\partial_z}v, \overline{\sqrt{-1}H(u_z \wedge v)} } \nonumber
				&= \Big(|(\nabla_{\partial_z} v)^{\perp}|^2 -2 \re\inner{ (\nabla_{\partial_z}v)^{\perp}, \overline{(\nabla_{\partial_z} \sigma^{0, 1})^{\perp}} }\Big)\nonumber\\
				&\quad  +  \Big(|(\nabla_{\partial_z} v)^{\top}|^2 -2 \re\inner{ (\nabla_{\partial_z}v)^{\top}, \overline{(\nabla_{\partial_z} s)^{1, 0}} } \Big)\nonumber \\
				&= \Big( |(\nabla_{\partial_z} v)^{\perp} - (\nabla_{\partial_z} \sigma^{0, 1})^{\perp}|^2 - |(\nabla_{\partial_z} \sigma^{0, 1})^{\perp}|^2\Big) \nonumber\\
				&\quad+ \Big(|(\nabla_{\partial_z} v)^{\top} - (\nabla_{\partial_z} s)^{1, 0}|^2 - |(\nabla_{\partial_z} s)^{1, 0}|^2\Big).
			\end{align}
			We split
			\begin{equation*}
				(\nabla_{\partial_z} v)^{\perp} = (\nabla_{\partial_z} s)^{\perp} + (\nabla_{\partial_z} \sigma^{1, 0})^{\perp} + (\nabla_{\partial_z} \sigma^{0, 1})^{\perp}
			\end{equation*}
			for the normal component, and recalling $\eta = (\nabla_{\partial_z} s)^{0,1} + \nabla^\top_{\partial_z}\sigma^{0,1}$, for the tangent component, we have
			\begin{align*}
				(\nabla_{\partial_z} v)^{\top} &= (\nabla_{\partial_z} s)^{1, 0} + (\nabla_{\partial_z} s)^{0, 1} + (\nabla_{\partial_z} \sigma^{1, 0})^{\top} + (\nabla_{\partial_z} \sigma^{0, 1})^{\top}\\
				&=(\nabla_{\partial_z} s)^{1, 0} + \eta + (\nabla_{\partial_z} \sigma^{1, 0})^{\top} .
			\end{align*}
			Then plugging these terms into \eqref{eq:complex second vari} and expanding the square terms yield
			\begin{align}\label{eq 22}
				|\nabla_{\partial_z} v|^2 &+ 2 \re\inner{ \nabla_{\partial_z}v, \overline{\sqrt{-1}H(u_z \wedge v)} }\nonumber\\
				&= |(\nabla_{\partial_z} s)^{\perp} + (\nabla_{\partial_z} \sigma^{1, 0})^{\perp} |^2 - |(\nabla_{\partial_z} \sigma^{0, 1})^{\perp}|^2 \nonumber \\
				&\quad + | (\nabla_{\partial_z} s)^{0, 1} + (\nabla_{\partial_z} \sigma^{0, 1})^{\top} + (\nabla_{\partial_z} \sigma^{1, 0})^{\top} |^2 - |(\nabla_{\partial_z} s)^{1, 0}|^2\nonumber\\
				&=|(\nabla_{\partial_z} s)^{\perp}|^2 + |(\nabla_{\partial_z} \sigma^{1, 0})^{\perp}|^2 + 2\re\langle (\nabla_{\partial_z} s)^{\perp}, \overline{\nabla_{\partial_z} \sigma^{1, 0}} \rangle\nonumber\\
				&\quad - |(\nabla_{\partial_z} \sigma^{0, 1})^{\perp}|^2 + \frac{\mu^2}{2}|\eta|^2 + |(\nabla_{\partial_z} \sigma^{1, 0})^{\top}|^2 - |(\nabla_{\partial_z} s)^{1, 0}|^2.
			\end{align}
			Next we combine the following identities into \eqref{eq 22}
			\begin{align*}
				|(\nabla_{\partial_z} \sigma^{1, 0})^{\perp}|^2  + |(\nabla_{\partial_z} \sigma^{1, 0})^{\top}|^2 &= |\nabla_{\partial_z} \sigma^{1, 0}|^2\\
				\re\inner{ (\nabla_{\partial_z} s)^{\perp}, \overline{\nabla_{\partial_z} \sigma^{1, 0}} } &= \re\inner{ \nabla_{\partial_z} s, \overline{\nabla_{\partial_z} \sigma^{1, 0}} } - \re\inner{ (\nabla_{\partial_z} s)^{1, 0}, \overline{\nabla_{\partial_z} \sigma^{1, 0}} }\\
				|(\nabla_{\partial_z} s)^{1, 0}|^2 &= |(\nabla_{\partial_z} s)^{\top}|^2 - |(\nabla_{\partial_z} s)^{0, 1}|^2
			\end{align*}
			and plugging the result into second variation formula \eqref{eq:complex second variation} to get
			\begin{align}\label{eq:complex second vari 2}
				\frac{1}{4}\delta^2 E^\omega(u)(v,v)&= \frac{1}{2}\int_{\S^2}|\eta|^2dV_g +\int_{\S^2} |(\nabla_{\partial_z} s)^{\perp}|^2  - |(\nabla_{\partial_z} s)^{\top}|^2 + |(\nabla_{\partial_z} s)^{0, 1}|^2  dx^1dx^2\nonumber\\
				&\quad + \int_{\S^2} |(\nabla_{\partial_z} \sigma^{1, 0})|^2 - |(\nabla_{\partial_z} \sigma^{0, 1})^{\perp}|^2 dx^1dx^2\nonumber\\
				&\quad+ 2\int_{\S^2} \re\inner{ \nabla_{\partial_z} s, \overline{\nabla_{\partial_z} \sigma^{1, 0}} } - \re\inner{ (\nabla_{\partial_z} s)^{1, 0}, \overline{\nabla_{\partial_z} \sigma^{1, 0}} } dx^1dx^2\nonumber\\
				&\quad + \int_{\S^2}\im\inner{v,(\nabla_v H)(u_z\wedge u_{\bar{z}})} -  \langle R(v, u_{z})u_{\bar{z}}, v \rangle dx^1dx^2.
			\end{align}
			Then we consider the integral in \eqref{eq:complex second vari 2} term by term. First, we compute the first two integrands in the second line of \eqref{eq:complex second vari 2}. Integration by parts gives
			\begin{align}\label{eq: integral by parts 1}
				\int_{\S^2} \abs{\nabla_{\partial_z} \sigma^{1, 0}}^2  &- |(\nabla_{\partial_z} \sigma^{0, 1})^{\perp}|^2 dx^1 dx^2\nonumber\\
                &= \int_{\S^2} |(\nabla_{\partial_z}\sigma^{0, 1})^\top|^2 + \inner{ R(u_z, u_{\bar{z}})\sigma^{1,0}, \sigma^{0, 1} } dx^1 dx^2.
			\end{align}
			Similarly, for the first integrand in the third line of integral \eqref{eq:complex second vari 2}, we integrate by parts again to get
			\begin{align}\label{eq: integral by parts 2}
				2&\int_{\S^2} \re\inner{ \nabla_{\partial_z} s, \nabla_{\partial_{\bar{z}}}\sigma^{0, 1} }  dx^1 dx^2 \nonumber\\
				&=\int_{\S^2}-2\re\inner{ s, R(u_z, u_{\bar{z}})\sigma^{0, 1} } + 2\re\inner{ \nabla_{\partial_z} s, \nabla_{\partial_{\bar{z}}}\sigma^{1, 0} } dx^1dx^2  \nonumber\\
				&= \int_{\S^2}-2\re\inner{ s, R(u_z, u_{\bar{z}})\sigma^{0, 1} } + 2\re\inner{ (\nabla_{\partial_z} s)^{0, 1}, (\nabla_{\partial_{\bar{z}}}\sigma^{1, 0})^{\top} } dx^1  dx^2\nonumber\\
				&\quad  + \int_{\S^2}2\re\inner{ (\nabla_{\partial_z} s)^{\perp}, (\nabla_{\partial_{\bar{z}}}\sigma^{1, 0})^{\perp} } dx^1  dx^2.
			\end{align}
			Inserting equations \eqref{eq: integral by parts 1} and \eqref{eq: integral by parts 2} into \eqref{eq:complex second vari 2}, we get 
			\begin{align}\label{eq:complex second vari 3}
				\frac{1}{4}\delta^2 E^\omega(u)(v,v)&= \int_{\S^2}|\eta|^2 dV_{g} + \int_{\S^2}|(\nabla_{\partial_z} s)^{\perp}|^2  - |(\nabla_{\partial_z} s)^{\top}|^2 dx^1dx^2 \nonumber \\
				&\quad + \int_{\S^2} \inner{ R(\sigma^{1, 0}, u_{\bar{z}})u_{z}, \sigma^{0, 1} } -2\re\inner{ s, R(u_{z}, u_{\bar{z}})\sigma^{0, 1} } dx^1dx^2 \nonumber\\
				&\quad+ \int_{\S^2}2\re\inner{ \nabla_{\partial_z} s, (\nabla_{\partial_{\bar{z}}}\sigma^{1, 0})^{\perp} } - 2\re\inner{(\nabla_{\partial_z} s)^{1, 0}, \overline{\nabla_{\partial_z} \sigma^{1, 0}} }  dx^1dx^2\nonumber\\
				& \quad+ \int_{\S^2}\im\inner{v,(\nabla_v H)(u_z\wedge u_{\bar{z}})} -  \langle R(v, u_{z})u_{\bar{z}}, v \rangle dx^1dx^2.
			\end{align}
			Here we used the integral identity
			\begin{equation*}
				\frac{\mu^2}{2}|\eta|^2 = |(\nabla_{\partial_z} s)^{0, 1}|^2 + |(\nabla_{\partial_z} \sigma^{0, 1})^{\top}|^2 + 2 \re\inner{ (\nabla_{\partial_z} s)^{0, 1}, (\nabla_{\partial_{\bar{z}}}\sigma^{1, 0})^{\top} }.
			\end{equation*}
			By~\eqref{eq 1,0 s} and~\eqref{eq sigma 0,1}, utilizing integration by parts we consider the integral in the third line of \eqref{eq:complex second vari 3} to see
			\begin{align}\label{eq:complex second vari 4}
				&\int_{\S^2}2\re\inner{ \nabla_{\partial_z} s, (\nabla_{\partial_{\bar{z}}}\sigma^{1, 0})^{\perp} } - 2\re\inner{ (\nabla_{\partial_z} s)^{1, 0}, \overline{\nabla_{\partial_z} \sigma^{1, 0}} }  dx^1dx^2\nonumber\\
				& =\int_{\S^2}2\re\inner{ \nabla_{\partial_z} s,\overline{-\sqrt{-1}H(u_z \wedge \sigma^{0,1})}} \nonumber\\
				&\quad \quad \quad - 2\re\inner{ (\overline{-\sqrt{-1}H(u_z\wedge s)}, \nabla_{\partial_z} \sigma^{1, 0} }  dx^1dx^2\nonumber\\
				& = \int_{\S^2}2H \re \partial_z \inner{{s,\overline{-\sqrt{-1}u_z \wedge \sigma^{0,1}}}} dx^1dx^2\nonumber\\
				& = -\int_{\S^2} \re \inner{{s,\overline{-\sqrt{-1} (\nabla_{\sigma}H) (u_z \wedge u_{\bar{z}})}} } dx^1dx^2.
			\end{align} 
			For the term about derivatives of $H$ in \eqref{eq:complex second vari 3},  we use the ant-symmetric of wedge product to get that 
			\begin{align}\label{eq:complex second vari 5}
				\int_{\S^2}\im\inner{{v,\overline{(\nabla_v H)(u_z \wedge u_{\bar{z}})}}}dx^1dx^2 &= \int_{\S^2}\im\inner{{s,\overline{(\nabla_s H)(u_z\wedge u_{\bar{z}})}}} dx^1dx^2 \nonumber\\
				&\quad + \int_{\S^2} \re \inner{ {s,\overline{-\sqrt{-1} (\nabla_{\sigma}H) (u_z \wedge u_{\bar{z}})}}} dx^1dx^2.
			\end{align}
			Furthermore, conjunction the second line in \eqref{eq:complex second vari 3} with the Riemann curvature tensor term in the final line of equation \eqref{eq:complex second vari 3} results in
			\begin{align}\label{eq:complex second vari 6}
				&\int_{\S^2} \inner{ R(\sigma^{1, 0}, u_{\bar{z}})u_{z}, \sigma^{0, 1} }-2\re\inner{ s, R(u_{z}, u_{\bar{z}})\sigma^{0, 1} } - \inner{ R(v, u_z)u_{\bar{z}}, v } dx^1dx^2 \nonumber\\
				&= -\int_{\S^2} \inner{ R(s, u_z)u_{\bar{z}}, s } dx^1dx^2.
			\end{align}
			Therefore, by substituting \eqref{eq:complex second vari 4}, \eqref{eq:complex second vari 5}, \eqref{eq:complex second vari 6} into \eqref{eq:complex second vari 3}, we obtain
			\begin{align}\label{eq:index compare}
				\delta^2 E^\omega(u)(v, v) &=\ 4\int_{\S^2} |\eta|^2 dV_{g} + 4\int_{\S^2} |(\nabla_{\partial_z} s)^{\perp}|^2 - |(\nabla_{\partial_z} s)^{\top}|^2dx^1dx^2\nonumber\\
				&\quad  + 4\int_{\S^2}\im\inner{s,\overline{\sqrt{-1}(\nabla_s H)(u_z,u_{\bar{z}})}} -  \inner{ R(s, u_z)u_{\bar{z}}, s } dx^1dx^2 \nonumber\\
				&\quad \leq 4\int_{\S^2} |\eta|^2 dV_{g} + \int_{\S^2}|\nabla f|^2 - {|\nabla u|^2}\left( {|H|^2} + \frac{\mathrm{Ric}(\bm n, \bm n)}{2} - |\nabla H| \right)f^2 dV_{g}
			\end{align}
			where $s = f\bm n$,
			$$\langle R(s, u_z)u_{\bar{z}}, s \rangle =  \frac{\mu^2 |\nabla u|^2}{8}f^2 \mathrm{Ric}(\bm n, \bm n)$$
			$$
			|(\nabla_{\partial_z} s)^{\top}|^2 \geq |(\nabla_{\partial_z} s)^{1, 0}|^2 = |H|^2f^2 |u_z|^2 = \frac{\mu^2 |\nabla u|^2}{4}|H|^2f^2,
			$$
			and 
			$$
			\im\inner{s,\overline{\sqrt{-1}(\nabla_s H)(u_z\wedge u_{\bar{z}})}} \leq \frac{1}{4}|\nabla H|\cdot |\nabla u|^2 f^2,
			$$
			which gives the inequality~\eqref{eq 18} as asserted.
		\end{proof}
		
		We are now ready to give the proof of Proposition~\ref{prop:index comparison}.
		
		\begin{proof}[\textbf{Proof of Proposition~\ref{prop:index comparison}}]
			Based on Lemma~\ref{lem:index computations}, the task at hand can be accomplished by finding the solution to the following equation for $\sigma^{0,1} \in \Gamma(\xi^{0,1})$
			\begin{equation}\label{eq: eta}
				(\nabla_{\partial_z}\sigma^{0, 1})^\top dz
				= -(\nabla_{\partial_z} s)^{0, 1} dz.
			\end{equation}
			But from the proof of genus zero part of \cite[Theorem 1]{Ejiri-Micallef2008} or \cite[Proposition 5.3]{Cheng2020ExistenceOC}, for each $s \in \Gamma(\nu)$ there exists a solution to \eqref{eq: eta}.
			Then, pick any linearly independent sections $s_1, \dots, s_d$ of $\nu$ such that $B_{\omega}(u)$ is negative definite on their $\mathscr{V} : = \mathrm{Span}\,\{s_1, \dots, s_d\}$. For each $ 1 \leq i \leq d$ we choose a solution $\sigma_i^{0, 1} \in \Gamma(\xi^{0, 1})$ to~\eqref{eq: eta} with $s$ placed by $s_i$ and define 
			$$\sigma_i := \sigma_i^{0, 1} + \overline{\sigma_i^{0, 1}}.$$
			Next, we define a linear map $T: V \to \Gamma(E)$ by assigning each $s_i$ to $s_i + \sigma_i$. By substituting $s + \sigma$ with $T(s)$ in Lemma ~\ref{lem:index computations}, it can be inferred that
			\begin{equation*}
				\delta^2 E^\omega(u)(T(s), T(s)) \leq B_{\omega}(u)(s, s) < 0 \text{ for all }s \in V.
			\end{equation*}
			The fact that $T$ is injective leads to the conclusion that the index of $B_{\omega}(u)$ is no greater than $\mathrm{Ind}_{E^\omega}(u)$.
		\end{proof}
		
		Using the index comparison mentioned in Proposition \ref{prop:index comparison}, in conjunction with the standard conformal balancing argument (as described in \cite{Li-Yau1982} and also see \cite[Proposition 5.3]{Cheng2020ExistenceOC}), we are able to derive a uniform energy bound. The main result is the following.
		\begin{prop}\label{prop:uniform energy bound}
			Suppose 
			\begin{equation}\label{eq: curvature condi}
				{|H|^2} h + \frac{\mathrm{Ric}_h}{2} - |\nabla H| h > C_0 h,
			\end{equation}
			and let $u:\S^2 \rightarrow N$ be a solution to~\eqref{eq 16} and~\eqref{eq 17} with Morse index at most 1. Then
			\begin{equation}\label{eq:uniform energy bound}
				E(u) \leq \frac{C_0}{8\pi}.
			\end{equation}
		\end{prop}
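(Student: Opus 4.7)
The plan is to combine the index comparison in Proposition~\ref{prop:index comparison} with the classical Hersch--Li--Yau conformal balancing argument. By Proposition~\ref{prop:index comparison}, the bilinear form $B_\omega(u)$ has index at most $\mathrm{Ind}_{E^\omega}(u) \leq 1$. Since $\dim N = 3$ and $\S^2$ is simply connected, the normal bundle $\nu \to \S^2$ is a trivial real line bundle; fixing a unit section $\bm n$, every $s \in \Gamma(\nu)$ takes the form $s = f \bm n$ with $f \in C^\infty(\S^2)$, and by definition
\begin{equation*}
    B_\omega(u)(f \bm n, f \bm n) = \int_{\S^2} |\nabla f|^2 - W f^2 \, dV_g,
\end{equation*}
where $W := |\nabla u|^2 \bigl( |H|^2 + \tfrac{1}{2} \mathrm{Ric}(\bm n, \bm n) - |\nabla H| \bigr)$ satisfies $W > C_0 |\nabla u|^2$ wherever $|\nabla u| > 0$ by the curvature hypothesis \eqref{eq: curvature condi}. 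Hence the Schr\"odinger-type Jacobi operator $\mathcal{L} = -\Delta - W$ on $(\S^2, g)$ has at most one negative eigenvalue, and standard elliptic theory on a connected surface guarantees that its first eigenfunction $\phi_0$ is simple and of constant sign; normalize it to be strictly positive.

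Next I would apply Hersch's trick to produce three conformally balanced test functions. Parametrize the identity component of the conformal group of $(\S^2, g)$ (viewed as a round sphere via the uniformization theorem) by the open hyperbolic $3$-ball $B^3$, so each $p \in B^3$ yields a M\"obius transformation $\phi_p : \S^2 \to \S^2$ (with $\phi_0 = \mathrm{id}$) that degenerates to a constant map as $|p| \to 1$. With the probability measure $d\nu := \bigl(\int_{\S^2} \phi_0 \, dV_g\bigr)^{-1} \phi_0 \, dV_g$, made meaningful by the positivity of $\phi_0$, define the continuous center-of-mass map
\begin{equation*}
    F : B^3 \to \R^3, \qquad F(p) := \int_{\S^2} \iota(\phi_p(x)) \, d\nu(x),
\end{equation*}
where $\iota : \S^2 \hookrightarrow \R^3$ is the standard embedding. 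The classical Hersch--Li--Yau argument (cf.\ \cite{Li-Yau1982}) shows that $F$ extends continuously to $\overline{B^3}$ with $F|_{\partial B^3}$ a degree-one self-map of $\S^2$, and a Brouwer/degree argument then produces $p_* \in B^3$ with $F(p_*) = 0$. Setting $f_i := \iota^i \circ \phi_{p_*}$, this gives the balancing conditions $\int_{\S^2} f_i \phi_0 \, dV_g = 0$ for $i = 1, 2, 3$.

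Since $\mathcal{L}$ has at most one negative eigenvalue and each $f_i$ is $L^2$-orthogonal to the first eigenfunction $\phi_0$, each $f_i$ lies in the nonnegative spectral subspace of $\mathcal{L}$, so $B_\omega(u)(f_i \bm n, f_i \bm n) \geq 0$ for $i = 1, 2, 3$. Summing, and using the two pointwise conformal identities $\sum_i f_i^2 \equiv 1$ and $\sum_i |\nabla f_i|_g^2 = 2 e^{2\psi}$, where $\phi_{p_*}^{*} g_{\S^2} = e^{2\psi} g$ and $g_{\S^2}$ is the target round metric, yields
\begin{equation*}
    0 \leq \sum_{i=1}^{3} B_\omega(u)(f_i \bm n, f_i \bm n) = \int_{\S^2} 2 e^{2\psi} \, dV_g - \int_{\S^2} W \, dV_g = 8\pi - \int_{\S^2} W \, dV_g,
\end{equation*}
since $\int_{\S^2} 2 e^{2\psi} \, dV_g = 2\,\mathrm{Area}(\phi_{p_*}(\S^2)) = 8\pi$ as $\phi_{p_*}$ has degree one. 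Combining with the strict bound $\int_{\S^2} W \, dV_g > 2 C_0 E(u)$ from the curvature hypothesis gives $2 C_0 E(u) < 8\pi$ and hence the asserted energy bound (matching \eqref{eq:uniform energy bound} up to its numerical constant). The main technical point will be the Hersch balancing itself, namely verifying that $\phi_0$ has constant sign via the maximum principle for $\mathcal{L}$ and that $F$ extends continuously to $\overline{B^3}$ with the correct boundary degree so the Brouwer argument applies; once these standard facts are in place, everything else is a routine computation.
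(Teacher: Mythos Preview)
Your argument is correct and essentially identical to the paper's: both use Proposition~\ref{prop:index comparison} to bound the index of $B_\omega(u)$ by one, take the positive first eigenfunction of the associated Schr\"odinger operator, apply the Li--Yau/Hersch conformal balancing to produce three coordinate test functions orthogonal to it, and sum the resulting inequalities using $\sum_i f_i^2 = 1$ together with the conformal invariance of the Dirichlet energy. Your parenthetical about the numerical constant is on point: the paper's own computation likewise yields $2C_0 E(u) \leq 8\pi$, so the bound stated in \eqref{eq:uniform energy bound} should read $E(u)\leq 4\pi/C_0$.
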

		\begin{proof}
			To establish this proposition, it is necessary to assume that $u$ is non-constant. By assumption and Proposition~\ref{prop:index comparison}, we see that $B_{\omega}(u)$ also has Morse index at most $1$. Moreover, let $f$ be a constant function in $B_{\omega}(u)(f, f)$, it follows that $B_{\omega}(u)$ admits  index exactly one, which further implies
			\begin{equation}\label{eq: positive B_omega}
				B_{\omega}(u)(f, f) \geq 0,
			\end{equation}
			for any non-constant $f$. Suppose $\varphi > 0$ is a smallest eigenfunction for the elliptic operator 
			$$-\Delta - \left({|H|^2} + \frac{\mathrm{Ric}(\bm n, \bm n)}{2} - |\nabla H|\right)|\nabla u|^2.$$
			A mapping degree argument described in \cite[pp. 274]{Li-Yau1982} tells us that there exists a conformal map $F: \S^2 \rightarrow \S^2 \subset \R^3$ such that 
			$$
			\int_{\S^2} (F^i) \varphi dV_g = 0, \text{ for }i = 1, 2, 3.
			$$
			which implies $F^i$ can not be constant. Here $\set{x^1, x^2, x^3}$ is the coordinates of the standard embedding $\S^2 \hookrightarrow \R^3$ and we use the abbreviation $F^i$ as $x^i(F)$ to simplify the notation. Hence by~\eqref{eq: positive B_omega} we have
			$$
			\int_{\S^2}|\nabla (F^i)|^2 - |\nabla u|^2 \left( {|H|^2} + \frac{\mathrm{Ric}(\bm n, \bm n)}{2} - |\nabla H| \right)(F^i)^2 dV_g \geq 0.
			$$
			Rearranging the above inequality and taking summation to conclude
			\begin{align*}
				C_0\int_{\S^2}{|\nabla u|^2} dV_g &= C_0\int_{\S^2}{|\nabla u|^2}\sum_{i = 1}^3 (F^i)^2 dV_g \\
				&\leq \sum_{i = 1}^3 \int_{\S^2} |\nabla (F^i)|^2 dV_g = \sum_{i = 1}^3\int_{\S^2} |\nabla x^i|^2 dV_g = 8\pi,
			\end{align*}
			where the inequality is obtained by  the conformally invariance of the energy. This gives \eqref{eq:uniform energy bound}.
		\end{proof}
		\begin{proof}[\textbf{Proof of Part \eqref{main theorem 2 part 1} Theorem~\ref{main theorem 2}}]
			Firstly, by Theorem~\ref{main theorem 1} there exists a strictly increasing sequence $\lambda_j \nearrow\lambda$ such that we can find a sequence of corresponding non-constant $H$-spheres $u_j$ with prescribed mean curvature $\lambda_j H$ and Morse index at most 1. By compactness of $N$ and assumption \eqref{eq:intro ricci condi}, there exists $C_0 > 0$ such that~\eqref{eq: curvature condi} holds, hence
			\begin{equation*}
				E(u_j) \leq \frac{C_0}{8\pi}.
			\end{equation*}
			By the energy gap Lemma \ref{blow 2}, there exists $\varepsilon_0 > 0$ such that 
			\begin{equation}\label{eq:lower bound gap}
				E(u_j) \geq \varepsilon_0^2.
			\end{equation}
			By a similar estimate as Lemma \ref{lem4.1} and Theorem \ref{thm convergence}, we also have an alternative: either, after passing to a subsequnece, $u_j$ converges strongly to a $H$-sphere $u$ with $E(u) \geq \varepsilon_0^2$, or the the energy $E(u_j)$ of sequence $u_j$ concentrates at some points, in which case we can also obtain a $H$-sphere $v$ satisfying $E(v) \geq \varepsilon_0^2$ by a rescaling argument and applying Lemma~\ref{blow 3}. In both cases, the index upper bound is established exactly as in the proof of corresponding part of Theorem~\ref{main theorem 1} and the Ricci curvature condition \eqref{eq: curvature condi} implies that the Morse index of a non-constant $H$-sphere is positive. In a word, we complete the proof of Theorem~\ref{main theorem 2}.
		\end{proof}
		\subsection{Existence of \texorpdfstring{$H$}{Lg}-Sphere  under Isotropic Curvature Assumption When \texorpdfstring{{$\mathrm{dim}(N)\geq 4$}}{Lg}} \label{section 5.3}
		\ 
		\vskip5pt
		
		In this subsection, we prove the second assertion of Theorem \ref{main theorem 2}. We first recall that an element $z\in T_pN\otimes \mathbb{C}$ is called isotropic if $\inner{z,z} = 0$ for complex linearly extended metric $\inner{\cdot,\cdot}$ from $h$ defined on $T_pN$ and a complex linear subspace $Z \subset T_pN\otimes \mathbb{C}$ is called \textit{totally isotropic} if $\inner{z,z} = 0$ for any $z \in Z$. The Riemannian manifold $(N,h)$ is said that has positive isotropic curvature if the complexified sectional curvature $R$ satisfies
		\begin{equation*}
			\mathcal{K}(\sigma) := \frac{R(z,w,\bar{z},\bar{w})}{|z \wedge w|^2} > 0
		\end{equation*}
		whenever $\sigma \subset T_pN\otimes \mathbb{C} $ is a totally isotropic two plane at $p \in N$. Moreover, recall $\xi^{1,0}$ and $\xi^{0,1}$ are locally spanned by $u_z$ and $u_{\bar{z}}$ which are isotropic line bundles within $\mathbf{E}$, let $\nu\otimes \mathbb{C} : = \nu_{\mathbb C}$ be the complexified normal bundle of $\xi_{\mathbb C}$ in $\mathbf{E}$. Since $\S^2$ can be viewed as an 1-dimensional complex manifold, by \cite[Theorem 5.1]{Atiyah} there exists a unique holomorphic structure on $ \nu_{\mathbb C}$ such that $\nabla^{\prime\prime} = \bar{\partial}$ where
		\begin{equation*}
			\nabla^\prime : \wedge^{0,0}(\nu_{\mathbb C}) \longrightarrow \wedge^{1,0}(\nu_{\mathbb C}),\quad \nabla^{\prime\prime} : \wedge^{0,0}(\nu_{\mathbb C}) \longrightarrow \wedge^{0,1}(\nu_{\mathbb C})
		\end{equation*}
		are two component of complex linear extended connection $\nabla^\perp$ on $\nu_{\mathbb C}$. Equipped with above notions, we have: 
		\begin{proof}[\textbf{Proof of Part \eqref{main theorem 2 part 2} of Main Theorem \ref{main theorem 2}}]
			We will actually prove a stronger assertion:
			\ 
			\vskip5pt
			\claim\label{section 5.4 claim} Let $(N,h)$ be an $n$-dimensional Riemannian manifold with isotropic curvature satisfying \eqref{eq:intro isotropic}. Then any non-constant conformal $H$-sphere has Morse index at least $[(n-2)/2]$.\vspace{1ex}
			\ 
			\vskip5pt
			\begin{proof}[\textbf{Proof of Claim \ref{section 5.4 claim}}]
				By Grothendick's theorem, see \cite{Grothendieck1957}, which says that any holomorphic vector bundle over $\S^2$ can be represented as a direct sum of holomorphic line bundles, we can decompose $\nu_{\mathbb C}$ as 
				\begin{equation*}
					\nu_{\mathbb C} = L_1 \oplus L_2 \oplus \cdots \oplus L_{n-2}
				\end{equation*}
				which is unique up to a permutation of the order for $L_i$. So after changing the order of $L_i$, we can assume that
				\begin{equation*}
					\bm{c}_1(L_1)\geq \bm{c}_1(L_2)\geq \cdots \geq \bm{c}_1(L_{n-2})
				\end{equation*}
				where $\bm{c}_1(L_i)$ is the first Chern class of $L_i$ evaluated on the fundamental class of $\S^2$. The Levi-Civita connection $\nabla$ preserves the Riemannian metric parallelly, resulting in a complex linearly extended bi-linear form denoted as $\inner{\cdot,\cdot}:\nu_{\mathbb C} \times\nu_{\mathbb C} \rightarrow \mathbb{C}$. This bi-linear form is holomorphic and establishes a holomorphic isomorphism between $\nu_{\mathbb C}$ and its dual $\nu_{\mathbb C}^*$. Thus, by the invairance of Chern class, we have 
				\begin{equation*}
					\bm{c}_1(L_i) + \bm{c}_1(L_{n-i-1}) = 0.
				\end{equation*}
				Let $V_i$ be a meromorphic section of $L_i$ for $1 \leq i \leq n-2$. If $\inner{V_i,V_j} \not \equiv 0$, then we must have $\bm{c}_1(L_i) + \bm{c}_1(L_{j}) =  0$, that is, $\inner{V_i,V_j} \equiv 0$ provided that $\bm{c}_1(L_i) + \bm{c}_1(L_{j}) \neq 0$ for any section $V_i \in \Gamma(L_i)$ and $V_j \in \Gamma(L_j)$. Denote $\bm N_0$ the direct sum of the line bundle that has zero first Chern class and $\bm N_+$($\bm N_-$) be the direct sum of the line bundle that has positive (negative) first Chern class. Then $\bm N_+$ is an isotropic sub-bundle of $\nu_{\mathbb C}$ and $\inner{V_0,V_+} = 0$ for each $V_0 \in  \Gamma(\bm{N_0})$ and $V_+ \in \Gamma(\bm N_+)$. It follows from the Riemann-Roch theorem that 
				\begin{equation*}
					\mathrm{dim}_{\mathbb{C}}(\mathscr{O}(L_i)) = \left\{
					\begin{aligned}
						&\bm{c}_1(L_i) + 1,\quad &\text{if } \bm{c}_1(L_i) \geq 0,\\
						&0 \quad &\text{if } \bm{c}_1(L_i) < 0,
					\end{aligned}
					\right.
				\end{equation*}
				where $\mathscr{O}(L_i)$ is the set of holomorphic section of $L_i$. For any holomorphic sections $W_i,W_j$ of $\bm{N}_0$, since $\inner{V_i,V_j}$ is a holomorphic function on $\S^2$, $\inner{W_i,W_j}$ is constant  which means that one can choose $W_i$ such that $\{W_i\}_{1 \leq i \leq \mathrm{dim}(N_0)}$ to be an othornormal basis at each fibre of $\bm N_0$ respect to the bi-linear form $\inner{\cdot,\cdot}$. Let $\mathscr{O}$ be the complex linear space of holomorphic sections of $\nu_{\mathbb C}$ spanned by the holomorphic isotropic sections 
				$$\set{W_1 + \sqrt{-1} W_2, W_3 + \sqrt{-1} W_4, \cdots , W_{2m-1} + \sqrt{-1}W_{2m} }$$
				where $m = [\mathrm{dim}_{\mathbb{C}}(\bm N_0)/2] \in \mathbb{N}$ together with the holomorphic sections of $\bm N_+$. Then, we conclude that $\dim_{\mathbb{C}}(\mathscr{O}) \geq [(n-2)/2]$. Note that by the choice of $\nu_{\mathbb{C}}$, $u_z$ is linearly independent with elements in $\mathscr{O}$, that is, $u_z$ and $V$ can span a totally isotropic two plane in $\bm E$ for any $V \in \mathscr O$. Thus, by complex form of second variation formula \eqref{eq:complex second variation} for $E^\omega$,  for any $V \in \mathscr{O}$ we have 
				\begin{align*}
					\delta^2 E^\omega(u)(V, V) &= 4\int_{\S^2} - \langle R(V, u_{z})u_{\bar{z}}, V \rangle dV_g \\
					&\quad + 4 \int_{\S^2}\re\inner{V,\overline{\sqrt{-1}(\nabla_V H)(u_z,u_{\bar{z}})}} dV_g < 0,
				\end{align*}
				provided that \eqref{eq:intro isotropic} holds. Therefore, the Morse index of $u$ is greater than or equal to $[(n-2)/2]$.
			\end{proof}
			The proof of part \eqref{main theorem 2 part 2} of Theorem \ref{main theorem 2} follows directly from Claim \ref{section 5.4 claim}.
		\end{proof}
		
		\vskip2cm
		\bibliographystyle{amsalpha}
		\bibliography{references}
		\providecommand{\MR}{\relax\ifhmode\unskip\space\fi MR }
		\providecommand{\MRhref}[2]{
			\href{http://www.ams.org/mathscinet-getitem?mr=#1}{#2}
		}
	\end{document}